\DeclareMathAlphabet{\mathpzc}{OT1}{pzc}{m}{it}
\newtheorem{theorem}[equation]{Theorem}
\newtheorem{theorem-definition}[equation]{Theorem-Definition}
\newtheorem{lemma-definition}[equation]{Lemma-Definition}
\newtheorem{definition-prop}[equation]{Proposition-Definition}
\newtheorem{corollary}[equation]{Corollary}
\newtheorem{prop}[equation]{Proposition}
\newtheorem{lemma}[equation]{Lemma}
\newtheorem{cor}[equation]{Corollary}
\newtheorem{definition}[equation]{Definition}
\newtheorem{conjecture}[equation]{Conjecture}
\theoremstyle{definition}
\newtheorem{example}[equation]{Example}
\newtheorem{openquestion}[subsection]{Question}
\newtheorem*{question*}{Question}
\newtheorem{remark}[equation]{Remark}
\newcommand{\LL}{\ensuremath{\mathbb{L}}}
\newcommand{\N}{\ensuremath{\mathbb{N}}}
\newcommand{\Z}{\ensuremath{\mathbb{Z}}}
\newcommand{\Q}{\ensuremath{\mathbb{Q}}}
\newcommand{\R}{\ensuremath{\mathbb{R}}}
\newcommand{\C}{\ensuremath{\mathbb{C}}}
\newcommand{\A}{\ensuremath{\mathbb{A}}}
\newcommand{\NN}{\ensuremath{\mathbb{N}}}
\newcommand{\X}{\ensuremath{\mathscr{X}}}
\newcommand{\cX}{\ensuremath{\mathscr{X}}}
\newcommand{\mX}{\ensuremath{\mathfrak{X}}}
\newcommand{\mY}{\ensuremath{\mathfrak{Y}}}
\newcommand{\mH}{\ensuremath{\mathfrak{H}}}
\newcommand{\cA}{\ensuremath{\mathscr{A}}}
\newcommand{\cC}{\ensuremath{\mathscr{C}}}
\newcommand{\cF}{\ensuremath{\mathscr{F}}}
\newcommand{\cG}{\ensuremath{\mathscr{G}}}
\newcommand{\cH}{\ensuremath{\mathscr{H}}}
\newcommand{\cT}{\ensuremath{\mathscr{T}}}
\renewcommand{\R}{\ensuremath{\mathbb{R}}}
\renewcommand{\C}{\ensuremath{\mathbb{C}}}
\renewcommand{\A}{\ensuremath{\mathbb{A}}}
\renewcommand{\X}{\ensuremath{\mathfrak{X}}}
\renewcommand{\mY}{\ensuremath{\mathfrak{Y}}}
\newcommand{\mU}{\ensuremath{\mathfrak{U}}}
\newcommand{\mV}{\ensuremath{\mathfrak{V}}}
\renewcommand{\cA}{\ensuremath{\mathscr{A}}}
\renewcommand{\cH}{\ensuremath{\mathscr{H}}}
\renewcommand{\cF}{\ensuremath{\mathscr{F}}}
\newcommand{\Spec}{\ensuremath{\mathrm{Spec}\,}}
\newcommand{\Spf}{\ensuremath{\mathrm{Spf}\,}}
\newcommand{\Sp}{\ensuremath{\mathrm{Sp}\,}}
\newcommand{\Ima}{\mathrm{Im}}
\newcommand{\Lie}{\mathrm{Lie}}
\newcommand{\Pic}{\mathrm{Pic}}
\newcommand{\ord}{\mathrm{ord}}
\newcommand{\red}{\mathrm{red}}
\newcommand{\nor}{\mathrm{nor}}
\newcommand{\prin}{\mathrm{prin}}
\newcommand{\lcm}{\mathrm{lcm}}
\newcommand{\Jac}{\mathrm{Jac}}
\newcommand{\im}{\mathrm{im}}
\newcommand{\tame}{\mathrm{tame}}
\newcommand{\Var}{\mathrm{Var}}
\newcommand{\Gal}{\mathrm{Gal}}
\newcommand{\Hom}{\mathrm{Hom}}
\newcommand{\coker}{\mathrm{coker}}
\newcommand{\an}{\mathrm{an}}
\newcommand{\sm}{\mathrm{sm}}
\newcommand{\Gr}{\mathrm{Gr}}
\newcommand{\Art}{\mathrm{Art}}
\newcommand{\Sw}{\mathrm{Sw}}
\newcommand{\pot}{\mathrm{pot}}
\newcommand{\length}{\mathrm{length}}
\newcommand{\spl}{\mathrm{spl}}
\newcommand{\free}{\mathrm{free}}
\newcommand{\tors}{\mathrm{tors}}
\newcommand{\ab}{\mathrm{ab}}
\newcommand{\tor}{\mathrm{tor}}
\newcommand{\qc}{\mathrm{qc}}
\newcommand{\rank}{\mathrm{rank}}
\newcommand{\tr}{\mathrm{tr}}
\newcommand{\Trace}{\mathrm{Trace}}
\def\trank#1{t(#1)}
\def\ttame#1{t_{\tame}(#1)}
\def\tpot#1{t_{\pot}(#1)}
\def\Comp#1{\Phi(#1)}
\def \degree{\colon\!}
\def\dgr#1#2{\mbox{$[#1\degree #2]$}}
\def\CompS#1{S^{\Phi}_{#1}}
\numberwithin{equation}{subsection}
\newcommand{\sss}{\vspace{5pt} \subsubsection*{ }\refstepcounter{equation}{\hspace{-24pt}{\bfseries(\theequation)}\ }}
\begin{document}
 \frontmatter
\title{N\'eron models and base change}
\author{Lars Halvard Halle}
\address{Matematisk Institutt\\
Universitetet i Oslo\\Postboks 1053
\\
Blindern\\0316 Oslo\\ Norway} \email{larshhal@math.uio.no}

\author[Johannes Nicaise]{Johannes Nicaise}
\address{KULeuven\\
Department of Mathematics\\ Celestijnenlaan 200B\\3001 Heverlee \\
Belgium} \email{johannes.nicaise@wis.kuleuven.be}

\subjclass[2010]{14K15, 14H40, 14G22, 14E18}

 \keywords{Abelian varieties, Jacobians,
N\'eron models, base change conductor, motivic zeta functions}

\begin{abstract}
We study various aspects of the behaviour of N\'eron models of
semi-abelian varieties under finite extensions of the base field,
with a special emphasis on wildly ramified Jacobians. In Part 1,
we analyze the behaviour of the component groups of the N\'eron
models, and we prove rationality results for a certain generating
series encoding their orders. In Part 2, we discuss Chai's base
change conductor and Edixhoven's filtration, and their relation to
the Artin conductor. All of these results are applied in Part 3 to
the study of motivic zeta functions of semi-abelian varieties.
Part 4 contains some intriguing open problems and directions for
further research. The main tools in this work are non-archimedean
uniformization and a detailed analysis of the behaviour of regular
models of curves under base change.
\end{abstract}
 \maketitle

\tableofcontents

\chapter{Introduction}
%\section{Introduction}

\section{Motivation and background}
\subsection{N\'eron models of abelian varieties}
Let $R$ be a complete discrete valuation ring with
quotient field $K$ and algebraically closed residue field $k$. We
fix a separable closure $K^s$ of $K$.
  Let $A$
be an abelian $K$-variety.  There exists a canonical ``best''
extension of $A$ to a smooth commutative group scheme
$\mathscr{A}$ over $R$, known as the N\'eron model of $A$. Its
distinguishing feature is the so-called N\'eron mapping property,
which says that if $Z$ is a smooth $R$-scheme, then any morphism
of $K$-schemes $ Z \times_R K \to A $ extends uniquely to an
$R$-morphism $ Z \to \mathscr{A}$. In particular, $\mathscr A$ is
the minimal smooth $R$-model of $A$.

The existence of these models was proved by A.~N\'eron \cite{AN},
and they have since become an invaluable tool for studying
arithmetic properties of abelian varieties. There are also
numerous geometrical applications, such as the study of
compactifications of moduli spaces of principally polarized
abelian varieties. For a detailed scheme theoretical account of
the construction and basic properties of N\'eron models, we refer
to the excellent textbook \cite{neron}.

Taking the special fiber of the N\'eron model allows one to define
 a canonical reduction
$$\mathscr{A}_k := \mathscr{A} \times_R k $$
of $A$. This is a smooth commutative group scheme over $k$, but
the geometric structure of $\mathscr{A}_k$ can be substantially
more complicated than that of $A$. In general one cannot expect
that the reduction of $A$ is again an abelian variety; the
reduction $\mathscr{A}_k$ may not be proper, or even connected.

Let $ \mathscr{A}_k^o $ be the  identity component of
$\mathscr{A}_k$. The group
$$\Comp A = \mathscr{A}_k/\mathscr{A}_k^o$$
of connected components of  $\mathscr{A}_k$ is a finite abelian
group, known as the \emph{N\'eron component group} of $A$.
% We
%denote its cardinality by $\phi(A)$.
 The
identity component $ \mathscr{A}_k^o $ is canonically an extension
of an abelian $k$-variety $B$ by a commutative smooth linear
algebraic $k$-group $G$; this is the so-called Chevalley
decomposition of $\mathscr{A}_k^o$. One can show that $G$ is the
product of a torus $T$ and a unipotent group $U$, whose dimensions
are called the toric and unipotent rank of $A$, respectively. In
the special case where $ U = \{ 0 \} $, one says that $A$ has
\emph{semi-abelian reduction}. In the literature, one often uses
the term {\em semistable reduction} instead, but we prefer to
avoid it because it might lead to
 confusion with semistable models of $K$-varieties.

The most important  structural result concerning N\'eron models is
Grothendieck's Semistable Reduction theorem \cite[IX.3.6]{sga7.1}.
It asserts the existence of a unique minimal finite extension $L$
of $K$ in $K^s$ such that $A\times_K L$ has semi-abelian
reduction. We say that $A$ is tamely ramified if $L$ is a tame
extension of $K$, and wildly ramified else.

The N\'eron model is functorial in $A$, but otherwise it does not
 have good functorial properties.
 A first problem is that it behaves poorly in exact sequences. Another important complication is that its formation
 does not commute with base change.  Let $K'$ be a finite
separable extension of $K$ and denote by $R'$ the integral closure
of $R$ in $K'$. We set $A'=A\times_K K'$, and we denote by
$\mathscr{A}'$ the N\'eron model of $A'$. Since $
\mathscr{A}\times_R R' $ is smooth, the N\'eron mapping property
of $\mathscr{A}'$ implies that there exists a unique morphism of
$R'$-group schemes
$$h:\mathscr{A}\times_R R'\to \mathscr{A}'$$
that extends the canonical isomorphism on generic fibers. If $A$
has semi-abelian reduction,  then $h$ is an open immersion;  in
particular, it is an isomorphism on identity components. This
property underlies the importance of Grothendieck's Semistable
Reduction Theorem. In the general case, it is quite hard to
describe the properties of the base change morphism $h$.

\subsection{Motivic zeta functions} The problem of describing  how
N\'eron models behave under
 base extensions lies at the heart of our work on
\emph{motivic zeta functions} of abelian varieties. In order to
explain this notion, we first need to introduce some notation. Let
$\N'$ be the set of positive integers not divisible by the
characteristic of $k$. For each $d \in \N'$ we denote by $K(d)$
the unique degree $d$ extension of $K$ in $K^s$. We put $ A(d) :=
A \times_K K(d) $ and we denote by $\mathscr{A}(d)$ the N\'eron
model of $A(d)$. In \cite{HaNi}, we defined the motivic zeta
function $ Z_A(T) $ as
$$ Z_A(T) = \sum_{d \in \N'} [ \mathscr{A}(d)_k ] \LL^{\mathrm{ord}_A(d)} T^d \in K_0(\Var_k)[[T]]. $$
Here $ K_0(\Var_k)$ denotes the Grothendieck ring of
$k$-varieties,  $\LL$ denotes the class $[\A^1_k]$ of the affine
line in $K_0(\Var_k)$, and $\mathrm{ord}_A$ is a function from
$\N'$ to $\N$ whose definition will be recalled in Chapter
\ref{chap-motzeta}.

One can roughly say that  $ Z_A(T) $ is the generating series for
the reductions $\mathscr{A}(d)_k$ (up to a certain scaling by
$\LL$), and thus encodes in a very precise way how the N\'eron
model of $A$ changes under tamely ramified extensions of $K$.
Moreover, one can view this object as an analog of Denef and
Loeser's motivic zeta function for complex hypersurface
singularities. This link will not be pursued further in this
monograph; we refer to \cite{HaNi-survey} for a detailed survey,
and an overview of the literature.

Since each of the connected components of $\mathscr{A}(d)_k$ is
isomorphic to the identity component $\mathscr{A}(d)^o_k$, we have
the relation
$$ [ \mathscr{A}(d)_k ] = |\Comp{A(d)}| \cdot [ \mathscr{A}(d)^o_k ] $$
 in $K_0(\Var_k)$ for every $ d \in \N' $. Because of this fact, many properties of $ Z_A(T) $,
 such as rationality and the nature of its poles, are closely linked to analogous properties of
 the \emph{N\'eron component series}
 $$ S^{\Phi}_A(T) = \sum_{d \in \N'} |\Comp{A(d)}| T^d \in \Z[[T]] $$
that we introduced in \cite{HaNi-comp} (there it was denoted
$S_{\phi}(A;T)$). This series measures how the number of N\'eron
components varies under tame extensions of $K$. We were able to
prove in \cite[6.5]{HaNi-comp} that  it is a rational function
when $A$ is tamely ramified or $A$ has potential multiplicative
reduction.
%We also showed that, in these cases,
% $S_{\phi}(A;T)$ has a pole at $T=1$ whose order is equal to one
%plus the tame potential toric rank $\ttame{A}$ of $A$ (the maximum
%of the toric ranks of the abelian varieties $A\times_K K(d)$ with
%$d\in \N'$).
 This was a key ingredient of our proof that the motivic zeta function $ Z_A(T) $ is rational if $A$ is tamely ramified \cite{HaNi}.
  Moreover, setting $T=\LL^{-s}$ and viewing $s$ as a formal
  variable, we showed that $Z_A(\LL^{-s})$ has a unique pole.
  Interestingly, this pole coincides with an important arithmetic
  invariant of the abelian variety $A$: the base change conductor
  $c(A)$, which was introduced for tori by
Chai and Yu \cite{chai-yu} and for semi-abelian varieties by Chai
\cite{chai}.  It is a nonnegative rational number that measures
the defect of semi-abelian reduction of $A$.

\subsection{Aim} One of the main purposes of this monograph is to extend the
above mentioned results beyond the case of tamely ramified abelian
varieties. For one thing, it is natural to ask what can be said
without the tameness assumption. In general, it is not even clear
if $ Z_A(T) $ is rational if $A$ is wildly ramified. We establish
this fact for Jacobians in Chapter \ref{chap-motzeta}. It remains
a considerable challenge to understand  the motivic zeta function
of a wildly ramified abelian variety that is not a Jacobian, and
likewise, to understand the N\'eron component series of a wildly
ramified abelian variety that is not a Jacobian and does not have
potential multiplicative reduction.
%The key case is that of potential good reduction.

%\section{Semi-abelian varieties}
Going in a different direction,  one can ask about the situation
for more general group schemes than abelian varieties. In
\cite{HaNi} we developed the general theory of motivic zeta
functions to also include, in particular, the class of
semi-abelian varieties.
 The existence of N\'eron models of semi-abelian $K$-varieties was
 proven in \cite{neron}. An important difference with the case of
 abelian varieties is that the N\'eron model of a semi-abelian
 variety will, in general, only be locally of finite type.
 In order to get a meaningful definition of
the motivic zeta function, one has to consider the maximal
quasi-compact open subgroup scheme of the N\'eron model. On the
level of component groups, this means that we consider the torsion
subgroup of the component group.

 While the methods we developed to study the behaviour of component groups of
 abelian varieties under base change can easily be extended to
  semi-abelian varieties, the torsion subgroup
 is a much more subtle invariant. An important complication is the
 lack of a geometric characterization of this object. A
 natural candidate would be the following: the N\'eron component
 group of the maximal split subtorus of a semi-abelian $K$-variety
 $G$ is a lattice of maximal rank inside the component group
 $\Comp{G}$, and one might hope to capture the torsion part by showing that this injection is
 split. We will show that this is usually not the case. For
 algebraic tori one can encompass this problem by passing to the dual
 torus, but no similar technique seems to exist for general semi-abelian
 varieties. Our approach consists in defining a suitable notion
 of non-archimedean uniformization for semi-abelian varieties.
 However, the uniformization space will no longer be an algebraic
 object, so that the existence of a (formal) N\'eron model is no longer
 guaranteed; instead, we will make a careful study of the properties of the
 sheaf-theoretic N\'eron model defined by Bosch and Xarles
 \cite{B-X}.

\subsection{A guiding principle}\label{subsec-guide}
Before we move  on to present an overview of the contents of this
monograph, it may be instructional to point out some of the main
themes and strategies. One of the basic ideas in our work on
component series and motivic zeta functions of abelian varieties
is the expectation that the N\'eron model of an abelian
$K$-variety $A$ changes ``as little as possible'' under a tame
extension $K'/K$ that is ``sufficiently orthogonal'' to the
minimal extension $L/K$ where $A$ acquires semi-abelian reduction.
 This principle was a crucial ingredient in establishing
rationality and determining the poles of the component series $
S^{\Phi}_A(T) $ and the motivic zeta function $ Z_A(T)$ in
 \cite{HaNi-comp} and \cite{HaNi}.

What do we mean by ``as little as possible''? It is unreasonable
 to require that the base change morphism $$h:\mathscr{A}\times_R R'\to \mathscr{A}'$$ is an isomorphism: even when $A$ has semi-abelian reduction, the number
  of connected components of the N\'eron model might still change (as we'll see, the rate of growth
  is determined by the toric rank $t(A)$ of $A$). It can be shown by
  elementary examples that if $A$ does not have semi-abelian reduction, we cannot even require $h$ to be an open
immersion. The best we can ask for is that the following two
properties are satisfied.
\begin{enumerate}
\item The number of components grows as if $\mathscr{A}$ had
semi-abelian reduction, i.e., if $K'/K $ is a tame extension of degree $d$, then the equality
$$ |\Comp{A \times_K K'}| =  d^{t(A)} \cdot |\Comp{A}|  $$
holds; \item The $k$-varieties $\mathscr{A}^o_k$ and
$(\mathscr{A}')^o_k$ define the same class in $K_0(\Var_k)$. By
\cite[3.1]{Nicaise}, this is equivalent to the property that
$\mathscr{A}^o_k$ and $(\mathscr{A}')^o_k$ have the same unipotent
and reductive ranks, and isomorphic abelian quotients in the
Chevalley decomposition.
\end{enumerate}

The meaning of ``sufficiently orthogonal'' is less clear. The most
natural guess is that the extensions $K'$ and $L$ should be
linearly disjoint over $K$,
  which is equivalent to asking that $\dgr{K'}{K}$ and $\dgr{L}{K}$ are coprime because the extension $K'/K$ is tame.
  We've shown in \cite{HaNi-comp} and \cite{HaNi} that this condition is indeed sufficient when
  $A$ is tamely ramified or  $A$ has potential  multiplicative
  reduction. However, we will see that, for the Jacobian $\mathrm{Jac}(C)$ of a $K$-curve $C$,
  the
  condition has to be modified: one needs to replace the
  degree of $L$ over $K$ by another invariant $e(C)$ that we call the stabilization index of $C$. It is defined in
  terms of the geometry of the $R$-models of the curve $C$. For
  general wildly ramified abelian $K$-varieties, it is not even
  clear if a suitable notion of orthogonality exists; we will come
  back to this problem in Part \ref{sec-ques}.

%Let $\N'$ be the set of strictly positive integers not divisible
%by the characteristic exponent of $k$. For any $d \in \N'$ we
%denote by $K(d)$ the unique degree $d$ extension of $K$ inside a
%fixed separable closure $K^s$. We introduced in \cite{HaNi-comp}
%the so-called N\'eron component series
%$$ S_{\phi}(A;T) = \sum_{d \in \N'} \phi(A \times_K K(d)) T^d \in \Z[[T]], $$
%which measures how the number of components of the special fiber
%of the N\'eron model of $A$ varies under tamely ramified
%extensions of $K$. We were able to prove in \cite[6.5]{HaNi-comp}
%that the series $ S_{\phi}(A;T)$ is a rational function when $A$
%is tamely ramified or $A$ has potential multiplicative reduction.
%We also showed that $S_{\phi}(A;T)$ has a pole at $T=1$ whose
%order is equal to the tame potential toric rank $\tpot{A}$ of $A$
%(the maximum of the toric ranks of the abelian varieties
%$A\times_K K(d)$ with $d\in \N'$).

\section{Content of this monograph}

\subsection{Overview of the chapters}
 We'll now give an overview of the chapters of this text. A
 brief summary can also be found at the beginning of each chapter.

\medskip

 Chapter \ref{chap-preliminaries} contains preliminary
 material on group schemes, models of curves and related topics.
 We recall key results from the literature and prove
 some basic new properties that will be needed in the remainder of
 the text.

\medskip
 {\bf Part \ref{part-comp}} is the longest part of this monograph; it is devoted to the
 study of N\'eron component groups of semi-abelian $K$-varieties.
 One of the main objectives is to prove the rationality of the
 N\'eron component series and to determine the order of its pole
 at $T=1$; this is the pole that influences the behaviour of the
 motivic zeta function.
 In Chapter \ref{chap-jacobians},  we investigate wildly ramified
Jacobian varieties. Even in this situation, many of the methods we
used for tamely ramified abelian varieties are no longer
sufficient, or applicable. To point out just one problem, let us
mention that one can find examples already for elliptic curves
where properties (1) and (2) in Section \ref{subsec-guide} above
do not hold for tame extensions of degree coprime to $[L:K]$. The
definition of the stabilization index $e(C)$ and the study of its
basic properties will occupy an important part of the chapter.

The approach we take is to make use of the close relationship
between N\'eron models of the Jacobian $\mathrm{Jac}(C)$ of a
$K$-curve $C$ and regular models of $C$.
%\ref{theorem-compser}).
%Section \ref{sec-compseries}
 More precisely, if we fix a regular, proper and flat $R$-model $\mathscr{C}$ of a
 smooth and proper $K$-curve $C$ of index one, then the relative Picard scheme
$\Pic^0_{\mathscr{C}/R}$ is canonically isomorphic to the identity
component $\mathscr{A}^o$ of the N\'eron model $\mathscr{A}$ of
the Jacobian $A=\Jac(C)$; this is a fundamental theorem of
M.~Raynaud \cite{raynaud}. Because of this link, many invariants
associated to $\mathscr{A}$ can also be computed on $\mathscr{C}$.
In particular, this is true for the component group. A key
technical step in Chapter \ref{chap-jacobians} is therefore to
provide a detailed description of the behaviour of regular models
of $K$-curves under tame extensions of $K$.

 Assume that the
special fiber $\mathscr{C}_k$ is a strict normal crossings
divisor. Then we'll call $\mathscr{C}$ an $sncd$-model of $C$. For
any $d \in \N'$, we denote by $\mathscr{C}_d$ the normalization of
$\mathscr{C} \times_R R(d)$ and by $\mathscr{C}(d)$ the minimal
desingularization of $\mathscr{C}_d$. We will show
%in Section
%\ref{sec-sncd}
 that $\mathscr{C}(d)$ is an $sncd$-model of $C
\times_K K(d)$ and we explain how its structure can be determined
from the structure of $\mathscr{C}$. In order to obtain these
results, we show that $\mathscr{C}_d$ has at most locally toric
singularities, which can be explicitly resolved using the results
in Kiraly's PhD thesis \cite{Kir}. We provide an appendix with
some
 basic results on the resolution of locally toric singularities, because
 this part of \cite{Kir} has not been published. Alternatively,
 one could use the language of logarithmic geometry \cite{kato}.

%In more restrictive situations similar results can be found in the literature (cf.~\cite{Halle-stable}), but not in the kind of generality that we need here, so we found it worthwile to provide the details. The key step is to show that $\mathscr{C}_d$ has at most tame cyclic quotient singularities, which can be explicitly resolved using the results in Kiraly's thesis \cite{Kir}.

%In Section \ref{sec-e}
 Next, we define the stabilization index $e(C)$.
If $C$ is tamely ramified, it coincides with the degree of the
minimal extension over which $C$ acquires semi-stable reduction,
but this is not true in general. The main property of $e(C)$ is
that one can make a very precise comparison of the special fibers
of $\mathscr{C}$ and $\mathscr{C}(d)$ for any $d \in \N'$ prime to
$e(C)$; see Proposition \ref{prop-data}. The key point is that the
combinatorial structure of $\mathscr{C}(d)_k$ only depends on the
combinatorial structure of $\mathscr{C}_k$, and not on the
characteristic of $k$. This results allows us to extend certain
facts from \cite{HaNi-comp} from tamely ramified abelian varieties
to wildly ramified Jacobians, which is crucial for proving the
rationality of the component series. Our strategy is to use
Winters' theorem on the existence of curves with fixed reduction
type to reduce to the case where $K$ has equal characteristic
zero; then every abelian $K$-variety is tamely ramified. It is
surprisingly hard to give a direct combinatorial proof of our
results on the component series, even for tamely ramified
Jacobians.

\smallskip
 In Chapter \ref{chap-uniform}, we switch our attention to
 N\'eron component groups of semi-abelian varieties. As we've
 explained before, the main problem here is to understand the
 behaviour of the torsion part of the component group under finite
 extensions of the base field $K$.

  Our approach is based on two main tools. The first one is {\em
  non-archimedean uniformization} in the sense of \cite{B-X}. This
  theory shows that, in the rigid analytic category, one can write every abelian $K$-variety as a
  quotient of a semi-abelian $K$-variety $E$ by an \'etale lattice
  such that the abelian part of $E$ has
  potential good reduction; in this way, one can try to reduce the
  study of N\'eron component groups to the case of tori and
  abelian varieties with potential good reduction.
   We extend this construction to
  semi-abelian varieties. The main complication is that, in this
  case, one needs to replace $E$ by an analytic extension of a
  semi-abelian $K$-variety by a $K$-torus. This extension will
  usually not be algebraic, so that one cannot use the theory
  of N\'eron models for algebraic groups. Neither can one apply
  the theory of formal N\'eron models in \cite{bosch-neron},
  because the set of $K$-points on the extension can be
  unbounded.

 This brings us to the second tool in our approach: the
 sheaf-theoretic N\'eron model of Bosch and Xarles \cite{B-X}.
 They interpret the formation of the N\'eron model as a
 pushforward operation on abelian sheaves from the smooth rigid site of $\mathrm{Sp}
 K$ to the smooth formal site on $\Spf R$, and show how one can
 recover the component group from this sheaf-theoretic
 interpretation. An advantage of this approach, which we already
 exploited in \cite{HaNi-comp}, is that one can quantify the lack
 of exactness of the N\'eron functor by means of the derived functors of the
 pushforward. A second advantage is that one can associate a
 sheaf-theoretic N\'eron model and a component group to any smooth abelian sheaf on
 $\mathrm{Sp} K$, and in particular to any commutative rigid
 $K$-group, even when a geometric (formal) N\'eron model does not
 exist. This is particularly useful when considering
 non-archimedean uniformizations of semi-abelian varieties as
  above.

 With these tools at hand, we can control the behaviour of the
 torsion part of the component group of a semi-abelian variety
 under finite extensions of the base field $K$. Our main result is
 Theorem \ref{thm-compsersab}, which says, in particular,
 that the component series of a semi-abelian $K$-variety $G$ is
 rational if the abelian part $G_{\ab}$ of $G$ is tamely ramified or
 a Jacobian, and also if $G_{\ab}$ has potential multiplicative reduction.

\medskip
 In {\bf Part \ref{part-conductor}}, we study Chai's base change conductor
 of a semi-abelian variety, and some related invariants that were
 introduced by Edixhoven. These invariants play a key role in the
 determination of the poles of motivic zeta functions of
 semi-abelian varieties.
 In
Chapter \ref{chap-conductor} we define a new invariant for wildly
ramified abelian $K$-varieties, the \emph{tame base change
conductor} $c_{\mathrm{tame}}(A)$. This value is defined as the
sum of the jumps (counting multiplicities) in Edixhoven's
filtration on the special fiber of the N\'eron model of $A$.
Equivalently, one can also define $c_{\mathrm{tame}}(A)$ as a
limit of base change conductors with respect to all finite tame
extensions of $K$ in $K^s$. If $A$ is the Jacobian of a curve $C$,
 then we show that these jumps, and hence also $c_{\mathrm{tame}}(A)$,
only depend on the combinatorial data of the special fiber of the
minimal $sncd$-model of $C$.

By construction, it is clear  that $c_{\mathrm{tame}}(A) = c(A)$
if $A$ is tamely ramified. In the wild case, this equality may no
longer hold. For instance, we'll show that for an elliptic
$K$-curve $E$,
  the equality $ c(E) = c_{\mathrm{tame}}(E) $ holds if and
only if $E$ is tamely ramified, and we interpret the error term in
the wildly ramified case.
%We also provide a series of wildly
%ramified elliptic curves $E(s)$ for every $ s \in \mathbb{Z}_{>0}
%$ with the property that $c(E(s)) = s$. This is in sharp contrast
%to the tame case; for any semi-abelian variety $G$ it is a basic
%fact that $ c_{\mathrm{tame}}(G) $ is always bounded by the
%dimension of $G$.

\smallskip
It is natural to ask how $c(A)$  is related to other arithmetic
invariants that one can associate to an abelian $K$-variety $A$.
 Recall that, for
algebraic tori, the main result of \cite{chai-yu} states that the
base change conductor of the torus equals one half of the Artin
conductor of its cocharacter module. The situation is more
delicate for abelian varieties: the base change conductor is not
invariant under isogeny, so that it certainly cannot be computed
from the Tate module of $A$ with $\Q_\ell$- coefficients. In
Chapter \ref{chap-artin}, we study the case where $A$ is the
Jacobian of a curve $C$.
 A promising
candidate to consider is the so called \emph{Artin conductor}
$\Art(C)$ of the curve $C$. This important invariant was
introduced by T.~Saito in \cite{saito-cond} and measures the
difference between the Euler characteristics of the geometric
generic and special fibers of the minimal regular model of $C$,
with a correction term coming from the Swan conductor of the
 $\ell$-adic cohomology of $C$. It is not reasonable to expect
 that $\Art(C)$ and $c(A)$ contain equivalent information, since
 $\Art(C)$ vanishes if and only if $C$ has good reduction while
 $c(A)$ vanishes if and only if $C$ has semistable reduction.
 However, one can hope to express $c(A)$ in terms of $\Art(C)$ and
 a suitable measure for the defect of potential good reduction of $C$.

 We will establish such an expression for curves of genus $1$ or
$2$. For elliptic curves, we get a clear picture: we will show
that $-12\cdot c(E)$ is equal to $\Art(E)$ if $E$ has potential
good reduction, and to  $\Art(E)-v_K(j(E))$ else. In the latter
case, one can view $-v_K(j(E))$ as a measure for the defect of
potential good reduction by noticing that it is equal to
$|\Comp{E\times_K L}|/[L:K]$, for any finite extension $L$ of $K$
such that $E\times_K L$ has semi-abelian reduction.

Our method for genus $2$ curves is somewhat indirect; in this
case, the curve is hyperelliptic, which allows one to define a
 minimal discriminant $\Delta_{\mathrm{min}} \in K$ and its
valuation $v_K(\Delta_{\mathrm{min}})$.  We will compare $
v_K(\Delta_{\mathrm{min}}) $ with the base change conductor
$c(A)$. The relationship to $\Art(C)$ then becomes clear due to
work of Liu \cite{liu-genre2} and Saito \cite{saito-cond}, where
$\Art(C)$ and $ v(\Delta_{\mathrm{min}})$ are compared. One should
point out that already for genus $2$, the invariants $c(A)$ and
$\Art(C)$ seem to ``diverge''; the base change conductor $c(A)$
rather seems to behave like $v(\Delta_{\mathrm{min}})$. However,
it is not clear how to generalize the definition of
$v(\Delta_{\mathrm{min}})$ to curves of higher genus.

%In Section ???, we study another series that
%measures how the N\'eron model of an abelian $K$-variety $A$
%changes under tame extensions of $K$, the so-called \emph{motivic
%zeta function} $ Z_A(T) $. In \cite{HaNi} we could show, for $A$
%tamely ramified, that $ Z_A(T) $ is a rational function with a
%unique pole. We moreover computed explicitly this pole, and its
%order. In Theorem \ref{theorem-zetamain} we provide a similar
%result for wildly ramified Jacobian varieties. Again, our results
%in Sections \ref{sec-sncd} and \ref{sec-e} are crucial in the
%proof.

\medskip
We finally  turn our focus to motivic zeta functions in {\bf Part
\ref{part-motzeta}.}
 We recall the definition of the motivic zeta function $Z_G(T)$ of a semi-abelian $K$-variety $G$ in Chapter \ref{chap-motzeta}.
  Our principal result, Theorem
\ref{theorem-zetamain}, extends the main theorem in \cite{HaNi} to
Jacobians and to tamely ramified semi-abelian varieties. More
precisely, let $G$ be a semi-abelian $K$-variety with abelian part
$G_{\mathrm{ab}}$, and assume either that $G$ is tamely ramified
or that $G$ is a Jacobian. Then we prove that $ Z_G(T) $ is a
rational function and that $ Z_G(\LL^{-s}) $ has a unique pole at
$ s = c_{\mathrm{tame}}(G) $ of order $ \ttame{G_{\ab}} + 1$.
 Here $\ttame{G_{\ab}}$ denotes the tame potential toric rank of
$G_{\ab}$, that is, the maximum of the toric ranks of
$G_{\ab}\times_K K'$ as $K'$ runs over the finite tame extensions
of $K$. We will also discuss similar results for Prym varieties.

\smallskip
In Chapter \ref{chap-cohomological}, we establish a cohomological
interpretation of the motivic zeta function $Z_G(T)$ by means of a
trace formula; this is similar to the Grothendieck-Lefschetz trace
formula for varieties over finite fields and the resulting
cohomological interpretation of the Hasse-Weil zeta function. Let
$p$ be the characteristic exponent of $k$ and let $\ell$ be a
prime different from $p$. Denote by $K^t$ the tame closure of $K$
in $K^s$ and choose a topological generator $\sigma$ of the tame
inertia group $\Gal(K^t/K)$. We denote by $P_G(T)$ the
characteristic polynomial of $\sigma$ on the tame $\ell$-adic
cohomology of $G$. One of our main results says that,
 if $G$ has maximal unipotent rank,
the prime-to-$p$ part of the order of the N\'eron component group
of $G$ is equal to $P_G(1)$. We deduce from this result that, for
every tamely ramified semi-abelian $K$-variety $G$, the
$\ell$-adic Euler characteristic of the special fiber of the
quasi-compact part of the N\'eron model of $G$ is equal to the
trace of $\sigma$ on the $\ell$-adic cohomology of $G$. This
yields a trace formula for the specialization of the motivic zeta
function $Z_G(T)$ with respect to the $\ell$-adic Euler
characteristic
$$\chi:K_0(\Var_k)\to \Z:[X]\mapsto \chi(X).$$ We also give an
alternative proof of this result for a Jacobian
$A=\mathrm{Jac}(C)$ using a computation on the tame nearby cycles
on an $sncd$-model $\mathscr C$ for the curve $C$. On the way, we
recover a formula of Lorenzini for the characteristic polynomial
$P_A(T)$ in terms of the geometry of $\mathscr C_k$.

\if false
 An important  part of our work in \cite{HaNi} was to
establish a link between the base change conductor of a tamely
ramified abelian variety $A$ and the eigenvalues for the monodromy
action on the tame cohomology groups of $A$. This link still holds
for tamely ramified abelian varieties, but breaks down in the
wildly ramified case. Indeed, the tame cohomology is zero already
for elliptic curves, and it is in no way obvious if there is a
chomological interpretation of the poles of the zeta function.

Even after all  our efforts in this monograph, it is still
somewhat unclear what properties one might expect the motivic zeta
function to have in the wildly ramified case, except for
Jacobians. The obvious next case to investigate after Jacobians
are Prym varieties, which we consider in ???. The general
philosophy of Prym varieties is that they are quite close to
Jacobians, but from the viewpoint of motivic zeta functions this
is only partly true. On the positive side, we are able to show
rationality of the jumps in Edixhoven's filtration for Prym
varieties (ref). What turns out to be a more serious challenge is
to control the behaviour of the reduction data of Prym varieties
after ramified base extensions. We are only able to do this under
the assumption of potential multiplicative reduction. \fi

\medskip
To conclude, we formulate
  some interesting open problems and directions for further research in
 {\bf Part \ref{sec-ques}}.

% To conclude, we formulate
%  some interesting open problems in
% Section \ref{sec-ques}.
%and we provide an appendix with some
% basic results on the resolution of locally toric singularities, because
% this part of
% Kiraly's PhD thesis \cite{Kir} has not been published.

\subsection{Notation}
%\sss
\sss  We denote by $K$ a complete discretely valued field with
ring of integers $R$  and residue field $k$. We denote by
$\frak{m}$ the maximal ideal of $R$, and by
$$v_K:K\to \Z\cup \{\infty\}$$ the normalized discrete valuation
on $K$.
 We assume that $k$ is
separably closed and we denote by $p$ the characteristic exponent
of $k$. The letter $\ell$ will always denote a prime different
from $p$. For most of the results in this memoir, the conditions
that $R$ is complete and $k$ separably closed are not serious
restrictions: since the formation of N\'eron models commutes with
extensions of ramification index one \cite[10.1.3]{neron}, one can
simply pass to the completion of a strict henselization of $R$.
For some of the results we present, we need to assume that $k$ is
perfect (and thus algebraically closed); this will be clearly
indicated at the beginning of the chapter or section.

\sss  We fix a uniformizing parameter $\pi$ in $R$ and a separable
closure $K^s$ of $K$. We denote by $K^t$ the tame closure of $K$
in $K^s$, and by $P=\Gal(K^s/K^t)$ the wild inertia subgroup of
the inertia group $I=\Gal(K^s/K)$ of $K$.

\sss We denote by $\mathbb{N}'$ the  set of  positive integers
prime to $p$. For every $d \in \mathbb{N}'$, there exists a unique
degree $d$ extension of $K$ inside $K^t$, which we denote by
$K(d)$. It is obtained by joining a $d$-th root of $\pi$ to $K$.
The integral closure $R(d)$ of $R$ in $K(d)$ is again a complete
discrete valuation ring.
%The extension $K(d)/K$ is Galois, with
%Galois group $ \mu_d(k)$.
 For every $K$-scheme $Y$, we put $ Y(d) = Y \times_K K(d) $.

\sss
  We'll
consider the special fiber functor
$$(\cdot)_k:(\mathrm{Schemes}/R)\to (\mathrm{Schemes}/k):\mathscr{X}\mapsto \mathscr{X}_k = \mathscr{X} \times_R k$$
as well as the generic fiber functor
$$(\cdot)_K:(\mathrm{Schemes}/R)\to (\mathrm{Schemes}/K):\mathscr{X}\mapsto \mathscr{X}_K = \mathscr{X} \times_R K.$$
 We will use the same notations for the special fiber functor from
 the category of formal $R$-schemes locally topologically of
 finite type to the category of $k$-schemes locally of finite type, resp.~Raynaud's generic fiber functor from the category of formal
$R$-schemes locally topologically of
 finite type to the category of quasi-separated rigid
 $K$-varieties.

\sss  We denote by $(\cdot)^{\an}$ the rigid analytic GAGA functor
from the category of $K$-schemes of finite type to the category of
quasi-separated rigid $K$-varieties. From now on, all rigid
$K$-varieties will tacitly be assumed to be quasi-separated.

\sss For every separated $k$-scheme of finite type $X$, we denote
by $\chi(X)$ its $\ell$-adic Euler characteristic
$$\chi(X)=\sum_{i\geq 0} (-1)^i\mathrm{dim}\,H^i_c(X,\Q_\ell).$$
 The value $\chi(X)$
does not depend on $\ell$.

\sss For every field $F$,  an algebraic $F$-group is a group
scheme of finite type over $F$. A semi-abelian $F$-variety is a
smooth commutative algebraic $F$-group that is an extension of an
abelian $F$-variety by an algebraic $F$-torus.

\sss For every finitely generated abelian group $\Phi$, we denote
by $\Phi_{\tors}$ the subgroup of torsion elements and by
$\Phi_{\free}$ the free abelian  group $\Phi/\Phi_{\tors}$. These
operations define functors from the category of finitely generated
abelian groups to the category of finite abelian groups and the
category of free abelian  groups of finite rank, respectively.

\section{Acknowledgements}
The second author was supported by the Fund for Scientific
Research - Flanders (G.0415.10).

\mainmatter

\chapter{Preliminaries}\label{chap-preliminaries}
\section{Galois theory of $K$} In this section, we assume that $k$
is algebraically closed.
\subsection{The Artin conductor}
\sss \label{sss-kummer} Let $K'$ be a tame finite extension of $K$
of degree $d$.
%Since $k$ is separably closed,
%this simply means that $d$ is prime to $p$.
 Denote by $R'$ the
integral closure of $R$ in $K'$, and by $\frak{m}'$ the maximal
ideal of $R'$. Then the quotient $\frak{m}'/(\frak{m}')^2$ is a
rank one vector space over $k$, and the left action of
$\Gamma=\Gal(K'/K)$ on $K'$ induces an injective group morphism
$$\Gamma\to \mathrm{Aut}_k(\frak{m}'/(\frak{m}')^2)=k^*$$ whose image
is the group $\mu_d(k)$ of $d$-th roots of unity in $k$. Thus we
can identify $\Gal(K'/K)$ and $\mu_d(k)$ in a canonical way.

% We will
%call $\chi_{\mathrm{Kum}}$ the Kummer character of $\Gamma$, and
%we will identify it with the corresponding degree one
%$\Gamma$-representation over $k$.

\sss \label{sss-ramgr} If $L$ is a finite Galois extension of $K$
with Galois group $\Gamma=\Gal(L/K)$, then the lower numbering
ramification filtration $(\Gamma_{i})_{i\geq -1}$ on $\Gamma$ is
defined in \cite[IV\S1]{serre-corpslocaux}. We quickly recall its
definition. Denote by $R_L$ the valuation ring of $L$, by
$\frak{m}_L$ its maximal ideal and by $\pi_L$ a uniformizer of
$L$.
 For every $i\geq -1$, the subgroup $\Gamma_i$ consists of all elements $\sigma$ in $\Gamma$ such that
 $$v_L(\pi_L-\sigma(\pi_L))\geq i+1.$$
   Note that $\sigma$ belongs to $\Gamma_i$
  if and only if it acts trivially on $R_L/\frak{m}^{i'+1}_L$,
  with $i'$ the smallest integer
 greater than or equal to $i$. Since $k$ is algebraically closed,
 we have $\Gamma_i=\Gamma$ for all $i\leq 0$.

\sss If $K'$ is a finite extension of $K$ in $L$, and if we denote
by $H$ the Galois group $\Gal(L/K')$, then $H_i=\Gamma_i\cap H$
for all $i$ by Proposition 2 in \cite[IV\S1]{serre-corpslocaux}.
If $K'$ is Galois over $K$ and if we set $\Gamma'=\Gal(K'/K)$,
then the image of $\Gamma_i$ under the projection morphism
$\Gamma\to \Gamma'$ equals $\Gamma_{\varphi_{L/K'}(i)}$ for every
$i\geq -1$, where $\varphi_{L/K'}$ is the Herbrand function of the
extension $L/K'$
 \cite[IV\S3]{serre-corpslocaux}. This result is known as
 Herbrand's theorem.

\sss Let $V$ be a finite dimensional vector space over a field
$F$, and consider an action of the Galois group $\Gal(K^s/K)$ on
$V$.
 We assume that there exists a
finite extension $L$ of $K$ in $K^s$ such that the Galois action
on $V$ factors through the finite quotient $\Gamma=\Gal(L/K)$ of
$\Gal(K^s/K)$.   The (exponent of the) Artin conductor $\Art(V)$
of $V$ is defined by
$$\Art(V)=\sum_{i\in \N}\frac{1}{[\Gamma:\Gamma_i]}\dim(V/V^{\Gamma_i})\quad \in \Q.$$ It can be
written as the sum of the so-called tame part
$$\dim(V)-\dim(V^{\Gamma})$$
and the wild part
$$\mathrm{Sw}(V)= \sum_{i\in
\Z_{>0}}\frac{1}{[\Gamma:\Gamma_i]}\dim(V/V^{\Gamma_i})$$ which is
also called the Swan conductor of $V$. The Artin and Swan
conductor do not depend on the choice of $L$. They measure the
ramification, resp.~wild ramification, of the Galois
representation $V$, and vanish if and only if $\Gal(K^s/K)$,
resp.~$P$, act trivially on $V$.

\sss If $V$ is a finite dimensional $\Q_\ell$-vector space endowed
with a continuous and quasi-unipotent action of $\Gal(K^s/K)$,
then we define the Artin and Swan conductor of $V$ as the
corresponding conductor of the semi-simplification
$V^{\mathrm{ss}}$ of the Galois representation $V$. Recall that
the Galois action on the $\ell$-adic cohomology spaces with
compact supports of an algebraic $K$-variety is always
quasi-unipotent, by the Monodromy Theorem
\cite[Exp.~I,\,1.3]{sga7.1}.

\subsection{Isolating the wild part of the conductor}
 The following easy corollary of Herbrand's theorem will be quite
 useful for our purposes.

\begin{prop}\label{prop-ramfilt}
Let $L$ be a finite Galois extension of $K$ in $K^s$, and let $K'$
be a finite tame extension of $K$ of degree $d$ in $K^s$. We set
$L'=K'L\subset K^s$, $\Gamma=\Gal(L/K)$ and $\Gamma'=\Gal(L'/K')$,
and we denote by $e$ the greatest common divisor of $d$ and
$[L:K]$.
 Then for every $i>0$, the image of $\Gamma'_i$ under the natural
 morphism $\Gamma'\to \Gamma$ equals
$\Gamma_{ie/d}$.
\end{prop}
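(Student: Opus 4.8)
The plan is to reduce everything to Herbrand's theorem by passing to the Galois closure of $L'$ over $K$. Since $k$ is algebraically closed and $d$ is prime to $p$, the extension $K'/K$ is the totally ramified cyclic extension $K(\pi^{1/d})$ and $\mu_d\subset R$; hence $L'=K'L$ is Galois over $K$. Set $G=\Gal(L'/K)$, $N=\Gal(L'/L)$ and $H=\Gal(L'/K')$, so that $\Gamma=G/N$, $\Gamma'=H$, and the morphism $\Gamma'\to\Gamma$ in the statement is the composite $H\hookrightarrow G\twoheadrightarrow G/N$. All ramification groups below are taken in the lower numbering, and I will use $k=\bar k$ repeatedly to guarantee that $L'/K$, $L/K$, $K'/K$ and $L'/L$ are totally ramified, so that their degree-zero ramification groups are the full Galois groups.

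The first step is to show that $G_i\subseteq H$ for every $i>0$. The quotient $G/H$ is canonically $\Gal(K'/K)$, which is tamely ramified, so $(G/H)_j=\{1\}$ for all $j>0$. By Herbrand's theorem applied to $G\twoheadrightarrow G/H$, the image of $G_i$ in $G/H$ is $(G/H)_{\varphi_{L'/K'}(i)}$, and $\varphi_{L'/K'}(i)>0$ for $i>0$; hence this image is trivial, i.e.\ $G_i\subseteq H$. Combined with the identity $H_i=G_i\cap H$ (Proposition~2 of \cite[IV\S1]{serre-corpslocaux}), this gives $\Gamma'_i=H_i=G_i$ for every $i>0$. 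Now apply Herbrand's theorem a second time, to $G\twoheadrightarrow G/N=\Gamma$: the image of $\Gamma'_i=G_i$ in $\Gamma$ equals $\Gamma_{\varphi_{L'/L}(i)}$.

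It remains to identify the Herbrand function $\varphi_{L'/L}$. The extension $L'/L$ is totally ramified (as $k=\bar k$) and its degree divides $[K':K]=d$ (since $\Gal(L'/L)$ embeds into $\Gal(K'/K)$), hence is prime to $p$; so $L'/L$ is tamely ramified and $\varphi_{L'/L}(u)=u/[L':L]$ for all $u\geq 0$. Thus the claim reduces to the equality $[L':L]=d/e$, or equivalently $[L\cap K':K]=e$, using that $L/K$ is Galois (which gives $[L':K]=[K':K]\cdot[L:L\cap K']$). This is the one genuinely non-formal point. The field $L\cap K'$ is a subextension of the tame extension $K'/K$, hence is itself tamely ramified over $K$; therefore it is contained in the maximal tame subextension of $L/K$, namely the fixed field $L^{\Gamma_1}=K(\pi^{1/m})$ of the wild inertia, where $m=[\Gamma:\Gamma_1]$ is the prime-to-$p$ part of $[L:K]$. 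Intersecting inside the tower of radical extensions $K(\pi^{1/n})$ then gives $L\cap K'=K(\pi^{1/m})\cap K(\pi^{1/d})=K(\pi^{1/\gcd(m,d)})$, and since $d$ is prime to $p$ we have $\gcd(m,d)=\gcd([L:K],d)=e$. Hence $[L':L]=d/e$, so $\varphi_{L'/L}(i)=ie/d$ and the image of $\Gamma'_i$ in $\Gamma$ is $\Gamma_{ie/d}$, as claimed.

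The main obstacle — really the only step that is not bookkeeping with Herbrand's theorem — is the identity $[L\cap K':K]=e$: one must exploit the tameness of $K'/K$ to confine $L\cap K'$ to the maximal tame subextension $K(\pi^{1/m})$ of $L$, and then observe that replacing $[L:K]$ by its prime-to-$p$ part $m$ does not change the gcd with the prime-to-$p$ integer $d$. A secondary point requiring care is keeping the two applications of Herbrand's theorem oriented correctly — in each case the Herbrand function that appears is that of $L'$ over the fixed field of the subgroup being killed ($K'$ in the first application, $L$ in the second) — and invoking $k=\bar k$ so that all the relevant extensions are totally ramified.
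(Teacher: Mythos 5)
Your proof is correct and takes essentially the same route as the paper's one-line argument: both rest on Herbrand's theorem together with the observation that $L'/L$ is a tame (totally ramified) extension of degree $d/e$, so that $\varphi_{L'/L}(i)=ie/d$. You have simply written out the details the paper leaves implicit, namely the identification $\Gamma'_i=\Gal(L'/K)_i$ for $i>0$ (via tameness of $K'/K$ and compatibility of lower numbering with subgroups) and the computation $[L\cap K':K]=e$ through the maximal tame subextension $K(\pi^{1/m})$ of $L/K$.
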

\begin{proof}
This follows immediately from Herbrand's theorem,  since $L'/L$ is
a tame extension of degree $d/e$ so that $\varphi_{L'/L}(i)=ie/d$
for all $i\geq 0$.
\end{proof}

\sss Proposition \ref{prop-ramfilt} has the following interesting
consequence: even though base change to a finite tame extension
$K'$ of $K$ of degree prime to $[L:K]$ does not change the Galois
group $\Gamma=\Gal(L/K)$, it does alter the ramification
filtration by pushing the higher ramification groups towards
infinity. This allows us to isolate the wild part of the Artin
conductor of a Galois representation of $K$ in the following way.

\begin{prop}\label{prop-swan}
Let $V$ be a finite dimensional vector space over $\Q$ or
$\Q_\ell$, endowed with a continuous action of $\Gal(K^s/K)$. We
denote by $\Art(V)$ and $\Sw(V)$ the Artin conductor, resp.~Swan
conductor, of the Galois representation $V$. For every $d\in \N'$,
we denote by $V(d)$ the restriction of $V$ to $\Gal(K^s/K(d))$.
 If we order the set $\N'$ by the divisibility relation, then
 $$\Sw(V)=\lim_{\stackrel{\longrightarrow}{d\in
 \N'}}\frac{\Art(V(d))}{d}.$$
\end{prop}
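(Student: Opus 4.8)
The plan is to compute $\Art(V(d))$ almost exactly for every $d\in\N'$, using Proposition \ref{prop-ramfilt}, and then divide by $d$ and pass to the limit. Fix a finite Galois extension $L/K$ in $K^s$ through which the action on $V$ factors, put $\Gamma=\Gal(L/K)$, and for $d\in\N'$ set $K'=K(d)$, $L'=K'L$ and $\Gamma'=\Gal(L'/K')$. Since $L/K$ is Galois, restriction gives an injection $\Gamma'\hookrightarrow\Gamma$ with image $\Gal(L/L\cap K')$, and the action of $\Gal(K^s/K(d))$ on $V(d)$ factors through $\Gamma'$ via this injection; hence $\Art(V(d))$ and $\Sw(V(d))$ may be computed from $\Gamma'$ and its lower-numbering filtration (the Artin and Swan conductors being independent of the auxiliary Galois extension chosen). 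Because $k$ is algebraically closed and $K'/K$ is tame, $L\cap K'=L\cap K(d)$ equals $K(e)$ inside $K^t$, where $e=\gcd(d,[L:K])$; indeed $L\cap K^t$ is the maximal tame subextension $L^{\tame}$, which is $K(m)$ for $m=[L^{\tame}:K]$, and $\gcd(m,d)=\gcd([L:K],d)=e$ as $d$ is prime to $p$. In particular $|\Gamma'|=[L:K]/e$, and $e\mid d$.

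First I would isolate the tame part: by definition $\Art(V(d))=\bigl(\dim V-\dim V^{\Gamma'}\bigr)+\Sw(V(d))$, and since $0\le\dim V-\dim V^{\Gamma'}\le\dim V$ this summand stays bounded as $d$ grows, so $\Art(V(d))/d$ and $\Sw(V(d))/d$ have the same limit. It therefore suffices to understand $\Sw(V(d))$. Write $f(j)=|\Gamma_j|\cdot\dim(V/V^{\Gamma_j})$ for $j\ge 1$, so that $\Sw(V)=\tfrac{1}{[L:K]}\sum_{j\ge 1}f(j)$. By Proposition \ref{prop-ramfilt}, for every $i>0$ the subgroup $\Gamma'_i$ maps isomorphically onto $\Gamma_{ie/d}$ (with the usual convention $\Gamma_{t}=\Gamma_{\lceil t\rceil}$) under $\Gamma'\hookrightarrow\Gamma$; since the $\Gamma'$-action on $V$ is the restriction of the $\Gamma$-action, this gives $V(d)^{\Gamma'_i}=V^{\Gamma_{\lceil ie/d\rceil}}$ and $[\Gamma':\Gamma'_i]=|\Gamma'|/|\Gamma_{\lceil ie/d\rceil}|$. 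Substituting into the defining sum for the Swan conductor,
\begin{equation*}
\Sw(V(d))=\sum_{i>0}\frac{|\Gamma_{\lceil ie/d\rceil}|}{|\Gamma'|}\,\dim\bigl(V/V^{\Gamma_{\lceil ie/d\rceil}}\bigr)=\frac{e}{[L:K]}\sum_{i>0}f\bigl(\lceil ie/d\rceil\bigr).
\end{equation*}
Now I would use $e\mid d$: writing $d=ed''$ we have $\lceil ie/d\rceil=\lceil i/d''\rceil$, which takes each value $j\ge 1$ for exactly $d''$ consecutive integers $i$, so $\sum_{i>0}f(\lceil ie/d\rceil)=d''\sum_{j\ge1}f(j)$. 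Hence $\Sw(V(d))=\tfrac{ed''}{[L:K]}\sum_{j\ge1}f(j)=d\cdot\Sw(V)$, exactly, for every $d\in\N'$.

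Combining, $\Art(V(d))=\bigl(\dim V-\dim V^{\Gamma'}\bigr)+d\cdot\Sw(V)$ for all $d\in\N'$, and dividing by $d$ and letting $d\to\infty$ in the divisibility order (the first term being bounded) yields $\Sw(V)=\lim_{d}\Art(V(d))/d$, as claimed. The only real work lies in the bookkeeping of the previous paragraph: correctly identifying the quotient $\Gamma'$ through which $V(d)$ acts and its ramification filtration, tracking the index $[\Gamma':\Gamma'_i]$, and performing the reindexing which relies on $e\mid d$. I expect the main point requiring care to be verifying that Proposition \ref{prop-ramfilt} applies verbatim here (it needs $K'/K$ tame and $L/K$ Galois, both clear) and that the conductors of $V(d)$ are genuinely read off from $(\Gamma',(\Gamma'_i)_i)$; there is no analytic subtlety, since the scaling of the Swan part is exact and the tame part is manifestly bounded by $\dim V$.
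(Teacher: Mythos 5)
Your proof is correct and follows exactly the paper's route: Proposition \ref{prop-ramfilt} gives the exact scaling $\Sw(V(d))=d\cdot\Sw(V)$, while the tame part of $\Art(V(d))$ is bounded by $\dim V$, so dividing by $d$ and passing to the limit finishes the argument. The paper's proof is just this one line, and your computation (identifying $\Gamma'=\Gal(K(d)L/K(d))$, the index $e=\gcd(d,[L:K])$, and the reindexing of the Swan sum) simply makes explicit the bookkeeping it leaves to the reader.
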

\begin{proof} Proposition \ref{prop-ramfilt}
 implies that $\Sw(V(d))=d\cdot \Sw(V)$ for all $d\in \N'$, while
 the tame part of $\Art(V(d))$ is bounded by the dimension of $V$.
\end{proof}

\pagebreak
\section{Subtori of algebraic groups}
\subsection{Maximal subtori}
\sss Let $F$ be a field, and let $G$ be a smooth commutative
algebraic group over $F$. It follows from \cite[3.2]{HaNi-comp}
that the
 abelian presheaf
 $$T\mapsto \Hom_F(T,G)$$
 on the category of $F$-tori
is representable by an $F$-torus $G_{\tor}$, and that the
tautological morphism $G_{\tor}\to G$ is a closed immersion. We
call $G_{\tor}$ the maximal subtorus of $G$.
 It follows from \cite[3.1]{HaNi-comp} and the subsequent remark that, for every field extension $F'$ of
$F$, the torus $G_{\tor}\times_F F'$ is the maximal subtorus of
$G\times_F F'$.
 The
dimension of $G_{\tor}$ is called the {\em reductive rank} of $G$
and denoted by $\rho(G)$. By \cite[3.4]{HaNi-comp}, the reductive
rank of $G/G_{\tor}$ is zero. The algebraic group $G$ is a
semi-abelian $F$-variety if and only if $G/G_{\tor}$ is an abelian
variety; in that case, we denote this quotient by $G_{\ab}$.

\sss Every $F$-torus $T$ has a unique maximal split subtorus
$T_{\spl}$ \cite[3.5]{HaNi-comp}. The cocharacter module of
$T_{\spl}$ is the submodule of Galois-invariant elements of the
cocharacter module of $T$.
 It follows that every smooth commutative algebraic $F$-group
$G$ has a unique maximal split subtorus, which we denote by
$G_{\spl}$.  If $T'$ is a split $F$-torus, then every morphism of
algebraic groups $T'\to G$ factors through $G_{\spl}$.
 We call
the dimension of $G_{\spl}$ the {\em split reductive rank} of $G$,
and denote it by $\rho_{\spl}(G)$. By the remark after
\cite[3.6]{HaNi-comp}, the split reductive rank of $G/G_{\spl}$ is
zero.

\sss \label{sss-anisotropic} An $F$-torus $T$ is called {\em
anisotropic} if $T_{\spl}$ is trivial, i.e., if its only
Galois-invariant character is the trivial character. By the
duality between tori and their character modules, all morphisms
from an anisotropic torus to a split torus or from a split torus
to an anisotropic torus are trivial.

\subsection{Basic properties of the reductive rank}

\begin{lemma}\label{lemm-redisogeny}
The reductive rank of a connected smooth commutative algebraic
$F$-group is invariant under isogeny.
\end{lemma}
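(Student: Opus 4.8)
The plan is to establish the two inequalities $\rho(G)\le\rho(H)$ and $\rho(H)\le\rho(G)$, where $\phi\colon G\to H$ denotes the isogeny, i.e.\ a surjective homomorphism of connected smooth commutative algebraic $F$-groups with finite kernel. First I would reduce to the case $F=\overline{F}$: since the formation of the maximal subtorus commutes with field extension, $\rho(G)=\rho(G\times_F\overline{F})$ and likewise for $H$, while $\phi\times_F\overline{F}$ is again an isogeny. The inequality $\rho(G)\le\rho(H)$ is then immediate: the composition of the inclusion $G_{\tor}\hookrightarrow G$ with $\phi$ is a morphism from a torus, hence factors through $H_{\tor}$, and its kernel, being contained in $\ker\phi$, is finite; therefore $\dim G_{\tor}\le\dim H_{\tor}$.

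For the reverse inequality the crucial point is the auxiliary claim that a connected smooth commutative affine $F$-group $G'$ admitting an isogeny onto a torus is itself a torus. Over $F=\overline{F}$ this follows from the structure theory of commutative affine algebraic groups: one has $G'\cong G'_{\tor}\times U$ with $U$ unipotent, and since there are no nonzero homomorphisms from a unipotent group to a torus, the restriction of the isogeny to $U$ is trivial, so $U$ lies in the finite kernel and is hence trivial. Now let $G'$ be the (reduced) identity component of $\phi^{-1}(H_{\tor})$; it is a smooth connected commutative closed subgroup of $G$ of dimension $\dim\phi^{-1}(H_{\tor})=\dim H_{\tor}$, so, $H_{\tor}$ being connected, $\phi$ maps $G'$ onto $H_{\tor}$ with finite kernel. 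Thus $\phi|_{G'}\colon G'\to H_{\tor}$ is an isogeny; moreover $G'$ is affine, because $\phi|_{G'}$ is a finite morphism (an fppf torsor under the finite group scheme $\ker\phi\cap G'$) and $H_{\tor}$ is affine. By the auxiliary claim $G'$ is a torus, and since $G'$ is a subtorus of $G$ we conclude $\rho(G)\ge\dim G'=\dim H_{\tor}=\rho(H)$.

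Combining the two inequalities yields $\rho(G)=\rho(H)$. The only step carrying real content is the auxiliary claim, together with the verification that $\phi^{-1}(H_{\tor})$ is affine; everything else is dimension bookkeeping. I do not foresee a genuine obstacle, but if one prefers to avoid the reduction to $F=\overline{F}$ one must instead check directly that $G'/G'_{\tor}$ is unipotent over $F$, which again follows by base change using that the maximal subtorus commutes with field extension.
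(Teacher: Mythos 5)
Your argument is correct, but it follows a genuinely different route from the paper's. The paper exploits commutativity to produce a quasi-inverse isogeny: since the finite commutative kernel of $f\colon G\to H$ is killed by the degree $d$, there is a homomorphism $g\colon H\to G$ with $g\circ f=d_G$, and then $f\circ g=d_H$ by faithful flatness of $f$; restricting $f$ and $g$ to the maximal subtori gives maps whose composites are multiplication by $d$, so $f_{\tor}\colon G_{\tor}\to H_{\tor}$ is itself an isogeny and the reductive ranks agree. You instead prove the two inequalities separately: $\rho(G)\le\rho(H)$ from the universal property of $H_{\tor}$ applied to $G_{\tor}\to H$ (which only needs finiteness of the kernel, not surjectivity), and $\rho(H)\le\rho(G)$ by passing to the reduced identity component $G'$ of $\phi^{-1}(H_{\tor})$, checking it is a smooth connected commutative \emph{affine} group isogenous onto $H_{\tor}$, and invoking the decomposition $G'\cong T\times U$ over $\overline{F}$ together with the triviality of homomorphisms from unipotent groups to tori. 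The paper's proof is shorter and yields the stronger conclusion that the maximal subtori are isogenous via $f_{\tor}$; yours avoids the ``kernel killed by its order'' input, shows that $H_{\tor}$ lifts to a subtorus of $G$, and makes visible that the two inequalities hold under weaker separate hypotheses (any homomorphism with finite kernel, respectively any surjection). Both arguments are complete; your affineness and dimension bookkeeping (finiteness of $\phi$, smoothness of the reduced identity component over $\overline F$, all components of a group scheme having equal dimension) are the only points requiring care, and you handle them correctly.
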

\begin{proof}
Let $f:G\to H$ be an isogeny of connected smooth commutative
algebraic $F$-groups, and denote by $d$ the degree of $f$. Since
the kernel of $f$ is killed by $d$, there exists a morphism of
algebraic $F$-groups $g:H\to G$ such that $g\circ f=d_G$, where we
denote by $d_G$ the multiplication by $d$ on $G$. Then $$f\circ g
\circ f=f\circ d_G=d_H\circ f$$ so that $f\circ g=d_H$ because $f$
is faithfully flat.

The morphisms $f$ and $g$ induce morphisms $f_{\tor}:G_{\tor}\to
H_{\tor}$ and $g_{\tor}:H_{\tor}\to G_{\tor}$ such that
$f_{\tor}\circ g_{\tor}$ and $g_{\tor}\circ f_{\tor}$ are given by
multiplication by $d$. In particular, $f_{\tor}$ and $g_{\tor}$
are isogenies, so that $\dim\,G_{\tor}=\dim\,H_{\tor}$.
\end{proof}

\begin{lemma}\label{lemm-isog}
Let $\ell$ be a prime invertible in $F$, and let
$$f:G\to H$$ be a morphism of semi-abelian $F$-varieties such that
the induced morphism of $\ell$-adic Tate modules
$$T_\ell G\to T_\ell H$$ is an isomorphism. Then $f$ is an
isogeny.
\end{lemma}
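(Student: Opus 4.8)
The plan is to reduce to the known isogeny criterion for abelian varieties by exploiting the torus/abelian-variety filtration. First I would observe that since $T_\ell G\to T_\ell H$ is an isomorphism, it is in particular injective with finite (in fact trivial) cokernel, so $f$ has finite kernel; the real content is surjectivity of $f$ with finite kernel, i.e.\ that $\dim G=\dim H$ and that $f$ is dominant. To get the dimension equality, note that $\mathrm{rank}_{\Z_\ell}T_\ell G=2\dim G_{\ab}+\rho(G)$ and likewise for $H$, where $\rho$ denotes the reductive rank; since the Tate modules are isomorphic, these ranks agree. (Here I use that $T_\ell\G_m\cong\Z_\ell(1)$ has rank $1$ and $T_\ell$ of an abelian variety of dimension $g$ has rank $2g$, applied to the connected-étale/torus filtration of the $\ell$-power torsion of the semi-abelian variety.)

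Next I would pass to the abelian quotients. The map $f$ sends $G_{\tor}$ into $H_{\tor}$ by the universal property recalled in \cite[3.2]{HaNi-comp} (any morphism from a torus to $H$ factors through $H_{\tor}$), hence induces a morphism $f_{\ab}:G_{\ab}\to H_{\ab}$ on abelian parts. Applying $T_\ell$ to the short exact sequences $0\to G_{\tor}\to G\to G_{\ab}\to 0$ and the analogue for $H$ yields a commutative diagram of $\Z_\ell$-modules with exact rows (the sequences of Tate modules are exact because $\G_m$ and abelian varieties are divisible and the relevant $\mathrm{Tor}$-terms vanish for $\ell$ invertible in $F$). In that diagram the middle vertical map is an isomorphism by hypothesis, the left vertical map $T_\ell f_{\tor}$ is injective (a torus morphism with trivial kernel on $\ell$-torsion is a closed immersion onto its image), and a diagram chase shows $T_\ell f_{\ab}:T_\ell G_{\ab}\to T_\ell H_{\ab}$ is an isomorphism; then $T_\ell f_{\tor}$ is an isomorphism as well, forcing $f_{\tor}:G_{\tor}\to H_{\tor}$ to be an isogeny of tori (equality of cocharacter ranks plus finite kernel).

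Now I would invoke the classical fact that a morphism of abelian varieties inducing an isomorphism on $\ell$-adic Tate modules is an isogeny (equivalently, $f_{\ab}$ is surjective with finite kernel): faithfulness of $T_\ell$ on $\Hom$ up to isogeny gives that $f_{\ab}$ is an isogeny. Combining: $f_{\tor}$ and $f_{\ab}$ are both isogenies, so $\ker f$ is an extension of a subgroup of $\ker f_{\ab}$ by $\ker f_{\tor}$, hence finite, and $\dim G=\dim H$ from the rank count above, so $f$ is surjective; therefore $f$ is an isogeny. The main obstacle I anticipate is purely bookkeeping: making sure the $T_\ell$ functor is genuinely exact on the torus/abelian-part sequence (which needs $\ell\ne\mathrm{char}\,F$, i.e.\ $\ell$ invertible in $F$, exactly the hypothesis) and that the "isomorphism on Tate modules implies isogeny" input for abelian varieties is being applied to $f_{\ab}$ and not circularly to $f$ itself; once the diagram chase is set up correctly the rest is formal.
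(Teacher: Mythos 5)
Your reduction to the toric and abelian parts has a genuine gap at the diagram chase. Writing the two exact rows $0\to T_\ell G_{\tor}\to T_\ell G\to T_\ell G_{\ab}\to 0$ and $0\to T_\ell H_{\tor}\to T_\ell H\to T_\ell H_{\ab}\to 0$ with the middle vertical map an isomorphism, the snake lemma gives only: $T_\ell f_{\tor}$ is injective, $T_\ell f_{\ab}$ is surjective, and $\ker(T_\ell f_{\ab})\cong \coker(T_\ell f_{\tor})$. Nothing in the chase forces this common module to vanish: a priori it is a free $\Z_\ell$-module of rank $t(H_{\tor})-t(G_{\tor})\geq 0$, and the numerical configuration $\dim G_{\ab}=\dim H_{\ab}+d$, $\dim H_{\tor}=\dim G_{\tor}+2d$ with $d>0$ (a $d$-dimensional abelian kernel of $f_{\ab}$ balanced against a $2d$-dimensional excess of toric rank in $H$) is compatible with everything you have established up to that point. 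Ruling this out is exactly the content of the lemma, so the step ``a diagram chase shows $T_\ell f_{\ab}$ is an isomorphism; then $T_\ell f_{\tor}$ is an isomorphism as well'' is where the argument breaks. Relatedly, your opening claim that the rank equality already gives $\dim G=\dim H$ is incorrect as stated: the rank of the Tate module computes $\dim J_{\tor}+2\dim J_{\ab}$, not the dimension.

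What is missing is a geometric input comparing the image of $f$ with $H$, and this is what the paper's proof supplies. After reducing to $F$ algebraically closed (a reduction you also need for the finite-kernel step: to deduce finiteness of $\ker f$ from the vanishing of its Tate module one uses that the reduced identity component of $\ker f$ is a semi-abelian variety, which is the result \cite[5.2]{HaNi} invoked in the paper and requires a perfect base field), one observes that $J=f(G)$ is a semi-abelian subvariety of $H$ whose Tate module equals $T_\ell H$; for a semi-abelian subvariety one has $\dim J_{\tor}\leq \dim H_{\tor}$ and $\dim J_{\ab}\leq \dim H_{\ab}$, so the equality $\dim J_{\tor}+2\dim J_{\ab}=\dim H_{\tor}+2\dim H_{\ab}$ forces $J=H$, hence $f$ is surjective and, having finite kernel, an isogeny. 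If you insert this argument (or any other input yielding $\dim G_{\tor}\geq\dim H_{\tor}$ or $\dim G_{\ab}\leq\dim H_{\ab}$), your filtration strategy closes and then indeed recovers that $f_{\tor}$ and $f_{\ab}$ are isogenies; without it, the chase alone does not prove the lemma.
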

\begin{proof}
Let $F^a$ be an algebraic closure of $F$, and denote by $F^s$ the
separable closure of $F$ in $F^a$. Let $J$ be a connected smooth
commutative algebraic $F$-group. By definition,
$$T_\ell J=\lim_{\stackrel{\longleftarrow}{n}} (_{\ell^n}J(F^s))$$
where $_{\ell^n}J$ is the kernel of multiplication by $\ell^n$ on
$J$. Since $_{\ell^n}J$ is an \'etale $F$-scheme
\cite[XV.1.3]{sga3.2}, the map
$$_{\ell^n}J(F^s)\to \, _{\ell^n}J(F^a) $$ is a bijection. Thus, as a $\Z_\ell$-module without Galois structure,
 $T_\ell J$ is invariant under base change to $F^a$, so
that we may assume that $F$ is algebraically closed.

The reduction of the identity component of $\ker(f)$ is a
semi-abelian $F$-variety, by \cite[5.2]{HaNi}. It has trivial
$\ell$-adic Tate module, so that it must be trivial. It follows
that $\ker(f)$ is finite. Likewise, the image of $f$
 is a semi-abelian subvariety of $H$ with the same $\ell$-adic Tate module as
 $H$. Since the Tate module of a semi-abelian $F$-variety $J$ is a
 free $\Z_\ell$-module of rank
 $$\dim\,J_{\tor}+2\dim\,J_{\ab},$$ it follows that $f$ is
 surjective. Thus $f$ is an isogeny.
\end{proof}

\begin{lemma}\label{lemm-add}
For every exact sequence
$$0\rightarrow G_1\rightarrow G_2\rightarrow G_3\rightarrow 0$$ of
 connected smooth commutative algebraic $F$-groups, we have
$$\rho_{\spl}(G_2)=\rho_{\spl}(G_1)+\rho_{\spl}(G_3).$$
\end{lemma}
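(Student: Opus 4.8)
The plan is to translate all three quantities into rational cocharacter‑module invariants equipped with a Galois action, and then to extract the additivity from the semisimplicity of the rational group ring of a finite group.

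Set $\Gamma=\Gal(F^s/F)$. For a connected smooth commutative algebraic $F$-group $G$, every split subtorus is a subtorus and hence factors through $G_{\tor}$, so $G_{\spl}=(G_{\tor})_{\spl}$; combined with the description of the cocharacter module of a maximal split subtorus recalled above, this gives
\[ \rho_{\spl}(G)=\rank_{\Z} X_*(G_{\tor})^{\Gamma}=\dim_{\Q}\bigl(X_*(G_{\tor})\otimes_{\Z}\Q\bigr)^{\Gamma}. \]
Since the formation of $G_{\tor}$ commutes with base change, the module $X_*(G_{\tor})$ together with its $\Gamma$-action depends only on $G\times_F\overline F$, and the $\Gamma$-action factors through a finite quotient. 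The morphisms $G_1\to G_2\to G_3$ induce a $\Gamma$-equivariant complex $X_*((G_1)_{\tor})\to X_*((G_2)_{\tor})\to X_*((G_3)_{\tor})$, and I would first prove that, over $\overline F$, this is a short exact sequence up to finite groups.

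Injectivity on the left is clear since $G_1\to G_2$ is a closed immersion; and once we know that $(G_2)_{\tor}\to(G_3)_{\tor}$ is surjective over $\overline F$, the kernel of this map lies in $G_1$, so the reduced identity component of that kernel is a subtorus of $G_1$ and hence equals $(G_1)_{\tor}$, which gives the desired exactness. To prove the surjectivity --- equivalently, the additivity of the reductive rank over $\overline F$ --- I would use Chevalley's structure theorem: if $L$ is the maximal connected linear subgroup of $(G_2)_{\overline F}$, then $(G_2)_{\overline F}/L$ is an abelian variety, hence the quotient of $(G_3)_{\overline F}$ by the image of $L$ is an abelian variety, hence it receives no nonzero map from the torus $(G_3)_{\tor}$, so $(G_3)_{\tor}$ lies in the image of $L$; a short argument separating the maximal subtorus $L_{\tor}$ of $L$ from its unipotent part (over the perfect field $\overline F$) then shows that $L_{\tor}$, and a fortiori $(G_2)_{\tor}$, already maps onto $(G_3)_{\tor}$. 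Alternatively one may quote the vanishing of $\mathrm{Ext}^1(\GG_m,-)$ on connected smooth commutative algebraic groups over an algebraically closed field.

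Granting this, I would tensor the resulting exact sequence $0\to X_*((G_1)_{\tor})\to X_*((G_2)_{\tor})\to X_*((G_3)_{\tor})\to C\to 0$ (with $C$ finite) by $\Q$, which annihilates $C$ and yields a short exact sequence of modules over $\Q[\overline\Gamma]$ for a finite quotient $\overline\Gamma$ of $\Gamma$. By Maschke's theorem $\Q[\overline\Gamma]$ is semisimple, so the functor of $\Gamma$-invariants is exact on these modules, and comparing dimensions of invariants gives $\rho_{\spl}(G_2)=\rho_{\spl}(G_1)+\rho_{\spl}(G_3)$. The only non-formal ingredient is the surjectivity of $(G_2)_{\tor}\to(G_3)_{\tor}$ over $\overline F$ --- the assertion that an extension of a torus by a connected smooth commutative algebraic group splits up to isogeny --- which is where Chevalley's theorem enters and which I expect to be the main obstacle; by contrast the opposite inequality $\rho_{\spl}(G_2)\le\rho_{\spl}(G_1)+\rho_{\spl}(G_3)$ is elementary, as the image of $(G_2)_{\spl}$ in $G_3$ is a split subtorus of $G_3$ and the reduced identity component of the kernel of $(G_2)_{\spl}\to G_3$ is a split subtorus of $G_1$.
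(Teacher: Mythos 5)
Your argument is correct, but it takes a genuinely different route from the paper's. The paper works directly over $F$ (after first passing to the perfect closure) by a two-step d\'evissage with the maximal split subtori $T_i=(G_i)_{\spl}$: Case 1 treats $G_1$ a split torus via the fibre product $G_2\times_{G_3}T_3$, and the general case reduces to $T_1$ trivial and $G_3$ a split torus, where the key point is that $T_2\to G_3$ is surjective with finite kernel because $G_2/T_2$ is an extension of an abelian variety by the product of a unipotent group and an anisotropic torus, hence admits no nonzero homomorphism to a split torus. You instead first prove the geometric statement --- over $\overline F$ the maximal subtori form a sequence that is exact up to isogeny, via Chevalley's theorem and the splitting $L=L_{\tor}\times L_u$ of a connected smooth commutative affine group over a perfect field --- and then descend to split ranks by taking $\Gamma$-invariants of the rational cocharacter modules, where exactness of invariants is automatic since $\Q[\overline\Gamma]$ is semisimple (equivalently, $H^1$ of a finite group with $\Q$-coefficients vanishes). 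In effect you establish the geometric additivity of Corollary \ref{cor-add} first and deduce Lemma \ref{lemm-add} from it, whereas the paper deduces Corollary \ref{cor-add} from Lemma \ref{lemm-add}; there is no circularity, since your geometric step is proved independently. Your route cleanly separates geometry from Galois descent and yields slightly more (an exact sequence of rational cocharacter modules), at the cost of some cocharacter bookkeeping; the paper's d\'evissage stays entirely at the level of group schemes over $F$ and avoids representation theory. One small caveat: the alternative you mention in passing, ``vanishing of $\mathrm{Ext}^1(\GG_m,-)$ on connected smooth commutative groups over an algebraically closed field,'' is not literally true --- for an abelian variety $A$ and an embedding $\mu_n\hookrightarrow A[n]$, the quotient $(\GG_m\times A)/\mu_n$ by the antidiagonal copy of $\mu_n$ is a nonsplit extension of $\GG_m$ by $A$ --- but these extension groups are torsion, i.e.\ such extensions split after an isogeny, which is exactly what your main Chevalley argument proves and all that your proof requires.
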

\begin{proof}
 Base change to the perfect closure of $F$ does not affect the
 split reductive rank of an algebraic $F$-group. Thus we may
 assume that $F$ is perfect.
For $i=1,2,3$, we denote by $T_i$ the maximal split subtorus
$(G_i)_{\spl}$ of $G_i$.

\textit{Case 1: $G_1$ is a split torus.}
 The morphism $T_2\to G_3$ factors through $T_3$.
Replacing $G_3$ by $T_3$ and $G_2$ by the semi-abelian $F$-variety
$G_2\times_{G_3}T_3$, we may assume that $G_3$ is a split
$F$-torus. Then $G_2$ is an extension of two split $F$-tori, and
thus it is again a split $F$-torus (it is diagonalizable
\cite[IX.8.2]{sga3.2}, smooth \cite[VI$_B$.9.2]{sga3.1} and
connected). Therefore, $\rho_{\spl}(G_i)=\dim(G_i)$ for $i=1,2,3$
and the result is clear.

\textit{Case 2: General case.} Dividing $G_1$ and $G_2$ by $T_1$
and applying Case 1 to the exact sequences
 $0\to T_1\to G_1\to G_1/T_1\to 0$ and $0\to T_1\to G_2\to G_2/T_1\to
 0$, we can reduce to the case where $T_1$ is trivial. Arguing as
 in Case 1, we may assume that $G_3$ is a split torus, so that $T_3=G_3$.

 The kernel of the morphism
$T_2\to G_3$ is diagonalizable \cite[IX.8.1]{sga3.2}. It is also a
closed subgroup of $G_1$. Since $T_1$ is trivial, the kernel of
$T_2\to G_3$ must be finite. Thus it suffices to show that $T_2\to
G_3$ is surjective. Denote by $H$ the schematic image of $T_2\to
G_3$. This is a closed subgroup of the split torus $G_3$. The
quotient
 $G_3/H$ is again a split $F$-torus (it is
 connected, smooth \cite[VI$_B$.9.2(xii)]{sga3.1} and
diagonalizable \cite[IX.8.1]{sga3.2}).

 One deduces from the Chevalley decomposition of $G_2$ that the quotient $G_2/T_2$ is an extension of an abelian
$F$-variety by the product of a unipotent $F$-group and an
anisotropic $F$-torus. Thus the morphism of $F$-groups $G_2/T_2\to
G_3/H$ must be trivial, because all morphisms from an anisotropic
torus, a unipotent group or an abelian variety to a split torus
are trivial; this follows from \eqref{sss-anisotropic} and
\cite[XVII.2.4]{sga3.2}, and the fact that every regular function
on an abelian variety is constant.
 On the other hand, the morphism $G_2/T_2\to
G_3/H$ is surjective by surjectivity of $G_2\to G_3$, so that
$G_3/H$ must be trivial, and $H=G_3$. Since the image of $T_2\to
G_3$ is closed \cite[VI$_B$.1.2]{sga3.1}, it follows that $T_2\to
G_3$ is surjective.
\end{proof}
\begin{cor}\label{cor-add}
For every exact sequence
$$0\rightarrow G_1\rightarrow G_2\rightarrow G_3\rightarrow 0$$ of
 connected smooth  commutative algebraic $F$-groups, we have
$$\rho(G_2)=\rho(G_1)+\rho(G_3).$$
\end{cor}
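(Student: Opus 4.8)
The plan is to deduce this from Lemma \ref{lemm-add} by base change to the separable (equivalently, algebraic) closure $F^s$ of $F$, reducing the reductive rank to a split reductive rank. The key observation is that the reductive rank is \emph{geometric}: by the remark recalled in the subsection on maximal subtori, $(G)_{\tor}\times_F F^s$ is the maximal subtorus of $G\times_F F^s$, so $\rho(G)=\dim (G_{\tor})=\dim((G\times_F F^s)_{\tor})$. Over $F^s$ every torus is split, so $(G\times_F F^s)_{\tor}=(G\times_F F^s)_{\spl}$ and hence $\rho(G)=\rho_{\spl}(G\times_F F^s)$. Since base change along $F\to F^s$ is exact and preserves connectedness, smoothness and commutativity, the given exact sequence yields an exact sequence $0\to G_1\times_F F^s\to G_2\times_F F^s\to G_3\times_F F^s\to 0$ of connected smooth commutative algebraic $F^s$-groups.

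First I would record the identity $\rho(G)=\rho_{\spl}(G\times_F F^s)$ for every connected smooth commutative algebraic $F$-group $G$, citing the representability and base-change statements for $G_{\tor}$ from \cite[3.1, 3.2]{HaNi-comp} and the fact that a torus over a separably closed field is split. Then I would apply Lemma \ref{lemm-add} to the base-changed sequence over $F^s$ to obtain
$$\rho_{\spl}(G_2\times_F F^s)=\rho_{\spl}(G_1\times_F F^s)+\rho_{\spl}(G_3\times_F F^s),$$
and finally rewrite each term via the identity of the previous step to conclude $\rho(G_2)=\rho(G_1)+\rho(G_3)$.

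There is essentially no obstacle here: the only point that requires a moment's care is that the additivity lemma was proved for split reductive rank, not reductive rank, so one must genuinely pass to $F^s$ rather than merely to the perfect closure — but this is harmless since all the hypotheses (connected, smooth, commutative, algebraic) are stable under arbitrary field extension, and the maximal subtorus commutes with base change. Everything else is a matter of assembling statements already available in the excerpt.
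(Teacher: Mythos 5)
Your proposal is correct and is essentially the paper's own argument: the proof there consists precisely of the identity $\rho(G)=\rho_{\spl}(G\times_F F^s)$ combined with an application of Lemma \ref{lemm-add} to the base-changed sequence over $F^s$. Your additional remarks (exactness of base change, splitness of tori over a separably closed field, compatibility of the maximal subtorus with base change) just make explicit what the paper leaves implicit.
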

\begin{proof}
This follows at once from Lemma \ref{lemm-add}, since
$\rho(G)=\rho_{\spl}(G\times_F F^s)$ for every commutative
algebraic $F$-group $G$.
\end{proof}
We will also need the following variant of Corollary
\ref{cor-add}.
\begin{lemma}\label{lemm-add2}
Let $\ell$ be a prime invertible in $F$. Let
$$G_1\rightarrow G_2\rightarrow G_3$$
 be a complex of connected smooth commutative algebraic $F$-groups
 such that the sequence of Tate modules
%\begin{equation}\label{eq-tatemod0}
$$0\to T_\ell G_1\to T_\ell G_2\to T_\ell G_3\to 0$$% \end{equation}
 is exact. Then
 $$\rho(G_2)=\rho(G_1)+\rho(G_3).$$
\end{lemma}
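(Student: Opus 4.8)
The plan is to reduce Lemma \ref{lemm-add2} to Corollary \ref{cor-add} by replacing the complex $G_1 \to G_2 \to G_3$ with an honest short exact sequence of semi-abelian varieties having the same reductive ranks. First I would observe that the hypothesis on Tate modules forces $G_1$, $G_2$, $G_3$ to be semi-abelian varieties: indeed, by \cite[5.2]{HaNi} (as used already in the proof of Lemma \ref{lemm-isog}) the reduction of the identity component of the unipotent radical of any connected smooth commutative $F$-group has trivial $\ell$-adic Tate module, and the exactness of the Tate sequence, combined with the rank formula $\mathrm{rk}\,T_\ell J = \dim J_{\tor} + 2\dim J_{\ab}$, pins down the structure. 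More precisely, I would first pass to the perfect (or separable) closure of $F$, which affects neither the reductive rank (by the argument in Corollary \ref{cor-add}) nor the Tate modules (by the argument at the start of Lemma \ref{lemm-isog}), so we may assume $F$ separably closed; then the unipotent parts contribute nothing to $T_\ell$, and injectivity of $T_\ell G_1 \to T_\ell G_2$ together with surjectivity of $T_\ell G_2 \to T_\ell G_3$ should let me replace each $G_i$ by its maximal semi-abelian subvariety (equivalently, kill the unipotent part) without changing $\rho$ or the Tate modules.

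Next, let $f\colon G_1 \to G_2$ and $g\colon G_2 \to G_3$ denote the maps in the complex. Since $T_\ell f$ is injective, Lemma \ref{lemm-isog} applied to $G_1 \to \im(f)$ (the schematic image, a semi-abelian subvariety of $G_2$) shows that $G_1 \to \im(f)$ is an isogeny; hence by Lemma \ref{lemm-redisogeny} we have $\rho(G_1) = \rho(\im f)$, and we may replace $G_1$ by $\im(f)$, i.e. assume $f$ is a closed immersion. Similarly, since $T_\ell g$ is surjective and $T_\ell(\im g) \hookrightarrow T_\ell G_3$, the inclusion $\im(g) \hookrightarrow G_3$ induces an isomorphism on Tate modules, so by Lemma \ref{lemm-isog} it is an isogeny; by Lemma \ref{lemm-redisogeny} again $\rho(\im g) = \rho(G_3)$, so we may replace $G_3$ by $\im(g)$ and assume $g$ is surjective. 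Now $f$ is a closed immersion, $g$ is surjective, and the Tate sequence $0 \to T_\ell G_1 \to T_\ell G_2 \to T_\ell G_3 \to 0$ is exact; comparing $\Z_\ell$-ranks via the dimension formula gives $\dim(G_2)_{\ab} + \tfrac12\dim(G_2)_{\tor} = \dim(G_1)_{\ab} + \tfrac12\dim(G_1)_{\tor} + \dim(G_3)_{\ab} + \tfrac12\dim(G_3)_{\tor}$, hence $\dim G_1 + \dim G_3 = \dim G_2$ for semi-abelian varieties. Thus $G_1 = \ker(g)^o$ up to isogeny — more carefully, $\im(f) \subseteq \ker(g)$ and both have the same dimension, so $G_1 \to \ker(g)^o$ is an isogeny — and $0 \to \ker(g)^o \to G_2 \to G_3 \to 0$ (possibly after replacing $G_3$ by $G_2/\ker(g)^o$, an isogeny) is exact. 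Applying Corollary \ref{cor-add} and one more invocation of Lemma \ref{lemm-redisogeny} yields $\rho(G_2) = \rho(G_1) + \rho(G_3)$.

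The main obstacle I anticipate is the bookkeeping in the reduction to the semi-abelian, short-exact-sequence situation: one must check that schematic images of morphisms of semi-abelian varieties are again semi-abelian (this follows from the fact that a quotient of a semi-abelian variety is semi-abelian and images are closed, \cite[VI$_B$.1.2]{sga3.1}), that killing unipotent parts is compatible with the maps in the complex, and that at each replacement step the relevant Tate-module exactness is preserved. None of this is deep, but it requires care to avoid circularity — in particular one should fix the semi-abelian structure first (using the Tate-module hypothesis and \cite[5.2]{HaNi}) before invoking Lemma \ref{lemm-isog}, which is stated only for semi-abelian varieties.
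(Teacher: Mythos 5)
Your overall strategy coincides with the paper's: pass to an algebraically closed base field, kill the unipotent parts, and then use Lemmas \ref{lemm-isog} and \ref{lemm-redisogeny} to reduce to an honest short exact sequence of semi-abelian varieties to which Corollary \ref{cor-add} applies. Two of your intermediate steps need small repairs. First, ``replace $G_i$ by its maximal semi-abelian subvariety'' is not equivalent to killing the unipotent part: such a subvariety need not map onto $G_i/U_i$ (think of a non-split extension of an abelian variety by $\mathbb{G}_a$); the correct operation, which you also mention and which the paper uses, is to divide each $G_i$ by its unipotent radical, noting that $T_\ell G_i\to T_\ell(G_i/U_i)$ is an isomorphism and that the maps of the complex descend. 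Second, Lemma \ref{lemm-isog} requires an \emph{isomorphism} on Tate modules, not just injectivity of $T_\ell f$, so to apply it to $G_1\to \im(f)$ you must also check surjectivity; this does hold, because the composite $G_1\to G_2\to G_3$ is trivial and $G_1\to\im(f)$ is dominant, so $\im(f)\to G_3$ is trivial and hence $T_\ell(\im f)\subseteq\ker(T_\ell G_2\to T_\ell G_3)=\im(T_\ell G_1)$ --- or, more simply, observe that $G_1\to\im(f)$ is surjective with finite kernel and is therefore an isogeny without invoking Lemma \ref{lemm-isog} at all.

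The genuine gap is the dimension count. Exactness of the Tate sequence gives additivity of $\rank\, T_\ell$, i.e.\ of $\dim(\cdot)_{\tor}+2\dim(\cdot)_{\ab}$; this does \emph{not} imply additivity of $\dim=\dim(\cdot)_{\tor}+\dim(\cdot)_{\ab}$, so the inference ``hence $\dim G_1+\dim G_3=\dim G_2$'' is a non sequitur (indeed, separating the toric and abelian contributions is essentially the content of the reductive-rank additivity you are trying to prove, so arguing this way risks circularity). The conclusion you need can be rescued with your own tools: one has $\im(f)\subseteq(\ker g)^o_{\red}$ (take the reduced identity component --- $\ker g$ need not be smooth in positive characteristic), and $T_\ell((\ker g)^o_{\red})\subseteq\ker(T_\ell G_2\to T_\ell G_3)=T_\ell(\im f)$, so the inclusion induces an isomorphism on Tate modules and is an isogeny by Lemma \ref{lemm-isog}, hence an isomorphism; this yields the exact sequence $0\to\im(f)\to G_2\to G_2/\im(f)\to 0$ with $G_2/\im(f)\to G_3$ an isogeny, and Corollary \ref{cor-add} together with Lemma \ref{lemm-redisogeny} finishes the argument. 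The paper reaches this point more directly: after dividing out the unipotent radicals it forms the quotient of $G_2$ by the image of $G_1$, uses $\ell$-divisibility of $G_1$ to see that $0\to T_\ell G_1\to T_\ell G_2\to T_\ell(G_2/G_1)\to 0$ is exact, concludes that $G_2/G_1\to G_3$ is an isogeny, and applies Corollary \ref{cor-add}, with no kernels or dimension bookkeeping at all.
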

\begin{proof}
As in the proof of Lemma \ref{lemm-isog}, we may suppose that $F$
is algebraically closed. If we denote by $U_i$ the unipotent radical
of $G_i$, then the morphism
$$T_\ell G_i\to T_\ell (G_i/U_i)$$ is an isomorphism, since multiplication by $\ell$ defines an automorphism of
$U_i$. Therefore, dividing $G_i$ by its unipotent radical, we may
assume that $G_i$ is a semi-abelian $F$-variety for $i=1,2,3$.

 Then $G_1$ is $\ell$-divisible, so that the
 sequence
 $$0\to T_\ell G_1\to T_\ell G_2\to T_\ell (G_1/G_2)\to 0$$ is
 exact. This means that
 $$T_\ell (G_1/G_2)\to T_\ell G_3$$ is an isomorphism. But
 $G_1/G_2$ is a semi-abelian $F$-variety, so that the morphism
 $$G_1/G_2\to G_3$$ is an isogeny by Lemma \ref{lemm-isog}, and
 $\rho(G_2/G_1)=\rho(G_3)$ by Lemma \ref{lemm-redisogeny}. Now the
 result follows from Corollary \ref{cor-add}.
\end{proof}

\pagebreak
\section{N\'eron models}
\subsection{The N\'eron model and the component group}
\sss A N\'eron $lft$-model of a smooth commutative algebraic
$K$-group $G$ is a separated smooth $R$-scheme $\mathscr{G}$,
endowed with an isomorphism $\mathscr{G}\times_R K\to G$, such
that for every smooth $R$-scheme $Z$, the natural map
$$\Hom_R(Z,\mathscr{G})\to \Hom_K(Z\times_R K,G)$$ is a bijection.
This universal property implies that a N\'eron $lft$-model is
unique up to unique isomorphism if it exists, so that we can
safely speak of {\em the} N\'eron model of $G$. It also entails
that the group law on $G$ extends uniquely to a commutative group
law on $\mathscr{G}$ that makes $\mathscr{G}$ into a separated
smooth group scheme over $R$. Moreover, the formation of N\'eron
models is functorial: if $f:G\to H$ is a morphism of smooth
commutative algebraic $K$-groups such that $G$ and $H$ have
N\'eron $lft$-models $\mathscr{G}$ and $\mathscr{H}$,
respectively, then the universal property implies that $f$ extends
uniquely to a morphism of group schemes $\mathscr{G}\to
\mathscr{H}$ over $R$.

\sss Every semi-abelian $K$-variety $G$ admits a N\'eron
$lft$-model, by \cite[10.2.2]{neron}. The constant $k$-group
scheme $\Comp{G}=\mathscr{G}_k/\mathscr{G}_k^o$ of connected
components of $\mathscr{G}_k$ is called the N\'eron component
group of $G$, or component group for short. We will identify it
with the abelian group $\Comp{G}(k)$.
 The specialization morphism
$$G(K)=\mathscr{G}(R)\to \mathscr{G}_k(k)$$ induces an
isomorphism $G(K)/\mathscr{G}^o(R)\to \Comp{G}$.

\sss \label{sss-qcneron}  The group $\Comp{G}$ is finitely
generated \cite[5.4]{HaNi}, and its rank is equal to the dimension
of the maximal split subtorus of $G$ \cite[4.11]{B-X}. In
particular, the N\'eron $lft$-model of $G$ is quasi-compact if and
only if $G$ does not contain a subgroup isomorphic to
$\mathbb{G}_{m,K}$. This happens, for instance, if $G$ is an
abelian $K$-variety.

\sss By \cite[7.2.1]{neron}  and  \cite[6.2]{bosch-neron}, the
formal $\frak{m}$-adic
 completion $\widehat{\mathscr{G}}$ of $\mathscr{G}$ is a formal
 N\'eron model of the rigid $K$-group $G^{\an}$ in the sense of
 \cite[1.1]{bosch-neron}.
 The special fibers of $\mathscr{G}$ and $\widehat{\mathscr{G}}$
 are canonically isomorphic. This means that we can use formal and rigid geometry to study component groups
  of semi-abelian $K$-varieties. In particular, we will make extensive use of {\em rigid uniformization} \cite[1.1]{B-X}.

\sss \label{sss-neronqc} From the fact that the torsion part of
$\Comp{G}$ is finite, one can easily deduce that $\mathscr{G}$ has
a unique maximal quasi-compact open subgroup scheme
$\mathscr{G}^{\qc}$ over $R$ \cite[3.6]{HaNi}, which we call the
N\'eron model of $G$. It is characterized by a universal property
in \cite[3.5]{HaNi}. The component group
$\mathscr{G}^{\qc}_k/(\mathscr{G}^{\qc}_k)^o$ is the torsion part
of the component group $\Comp{G}$.
% By \cite[10.2.1]{neron}, we also know that $\rho_{\ur}(G)=0$ if and only if $G$ is bounded.

\sss The notation that we use in this article is slightly
different from the one in \cite{HaNi}: there we denoted the
N\'eron $lft$-model by $\mathscr{G}^{lft}$ and the N\'eron model
by $\mathscr{G}$. In the present article, the notation introduced
above will be more convenient.

\subsection{The toric rank}\label{ss-torrank}
\sss \label{sss-ranks} Besides the component group, we can define
the following fundamental invariants of a semi-abelian $K$-variety
$G$.  Let $\mathscr{G}$ be the N\'eron $lft$-model of $G$. The
toric rank $t(G)$ of $G$ is the reductive rank of
$\mathscr{G}^o_k$, i.e., the dimension of the maximal subtorus $T$
of $\mathscr{G}^o_k$. If $G$ is a torus, then $t(G)$ is equal to
the dimension of the maximal split subtorus of $G$
\cite[3.13]{HaNi-comp}.

\sss \label{sss-unipotabrank} If $k$ is perfect, then we define
the abelian rank $a(G)$ and the unipotent rank $u(G)$ of $G$ as
the dimension of the abelian quotient $B$, resp. the unipotent
part $U$, in the Chevalley decomposition
$$0\to T\times_k U\to \mathscr{G}^o_k\to B\to 0$$
 of $\mathscr{G}^o_k$
\cite{conrad-chevalley}. Note that the sum $t(G)+a(G)+u(G)$ equals
the dimension of $G$.

%We will need the following additivity property of the toric rank.
\begin{prop}\label{prop-add}
%Assume that $k$ is perfect.
 For every exact sequence of
semi-abelian $K$-varieties
$$0\to G_1\to G_2\to G_3\to 0$$ with $G_1$ a torus, we
have
$$t(G_2)=t(G_1)+t(G_3).$$
\end{prop}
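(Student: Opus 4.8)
The plan is to reduce the statement to the additivity of the reductive rank for connected smooth commutative algebraic $k$-groups and then to invoke Lemma \ref{lemm-add2}. Write $\mathscr{G}_i$ for the N\'eron $lft$-model of $G_i$. Functoriality of N\'eron models turns the given exact sequence into a complex of $R$-group schemes $\mathscr{G}_1\to\mathscr{G}_2\to\mathscr{G}_3$, and hence into a complex $\mathscr{G}^o_{1,k}\to \mathscr{G}^o_{2,k}\to \mathscr{G}^o_{3,k}$ of connected smooth commutative algebraic $k$-groups whose reductive ranks are, by definition, $t(G_1)$, $t(G_2)$ and $t(G_3)$. Fixing a prime $\ell\neq p$, it therefore suffices by Lemma \ref{lemm-add2} to show that the induced sequence of $\ell$-adic Tate modules $0\to T_\ell(\mathscr{G}^o_{1,k})\to T_\ell(\mathscr{G}^o_{2,k})\to T_\ell(\mathscr{G}^o_{3,k})\to 0$ is exact.

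This is where the hypothesis that $G_1$ is a torus enters. Although the N\'eron $lft$-model functor is only left exact in general, for a torus kernel the sequence of N\'eron $lft$-models is exact up to finite groups: $\mathscr{G}_1$ is a closed subgroup scheme of $\mathscr{G}_2$, it coincides up to a subgroup scheme of finite index with $\ker(\mathscr{G}_2\to\mathscr{G}_3)$, and the image of $\mathscr{G}_2\to\mathscr{G}_3$ contains the identity component $\mathscr{G}^o_3$. I would extract this from the exactness theory for N\'eron models of tori and semi-abelian varieties in \cite{neron} (where the torus hypothesis on the kernel is precisely what controls the defect of right exactness), or reprove it using the rigid uniformization of \cite{B-X}, which is compatible with the extension structure. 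Passing to the identity components of the special fibres then yields a short exact sequence $0\to\mathscr{H}\to\mathscr{G}^o_{2,k}\to\mathscr{G}^o_{3,k}\to 0$ in which $\mathscr{G}^o_{1,k}\subseteq\mathscr{H}\subseteq\mathscr{G}_{1,k}$ is an open subgroup scheme of finite index. Since $\ell$ is invertible in $k$, every connected smooth commutative algebraic $k$-group is $\ell$-divisible, and the $\ell$-adic Tate module of a finite abelian group vanishes; a routine passage to the inverse limit over $\ell$-power torsion (the inverse systems in play being Mittag--Leffler) converts the last sequence into the desired short exact sequence of Tate modules, and Lemma \ref{lemm-add2} finishes the proof.

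One can reduce the amount of N\'eron-theoretic input by first passing to the case where $G_3$ is an abelian variety. As $G_1$ is a subtorus of $G_2$, it lies in the maximal subtorus $(G_2)_{\tor}$, and functoriality of the maximal subtorus gives $(G_2)_{\tor}/G_1=(G_3)_{\tor}$, so that $(G_2)_{\ab}\cong(G_3)_{\ab}$. The equality $t(T)=\dim T_{\spl}$ for tori \cite[3.13]{HaNi-comp}, together with the additivity of the split reductive rank (Lemma \ref{lemm-add}) applied to the exact sequence of tori $0\to G_1\to(G_2)_{\tor}\to(G_3)_{\tor}\to 0$ and the asserted additivity for the two extensions $0\to(G_i)_{\tor}\to G_i\to(G_i)_{\ab}\to 0$ ($i=2,3$), then reduces everything to an extension of an abelian variety by a torus. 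In either formulation the main obstacle is the second step: one must bound the cokernel of $\mathscr{G}_2\to\mathscr{G}_3$, and it is exactly the torus hypothesis on $G_1$---through the finiteness of the Galois group of a splitting field of $G_1$, $k$ being separably closed---that keeps this cokernel finite on identity components. Without that hypothesis the complex of N\'eron models is far from exact even on identity components, since the monodromy attached to the extension class creates toric directions in the N\'eron model of $G_2$ invisible in those of $G_1$ and $G_3$; additivity then genuinely fails, already for extensions of a Tate curve by an abelian variety with good reduction.
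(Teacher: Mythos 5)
Your overall architecture agrees with the paper's at the endpoints: both arguments conclude by applying Lemma \ref{lemm-add2} to the complex of identity components of the special fibres of the N\'eron $lft$-models, so everything hinges on producing the exact sequence $0\to T_\ell(\mathscr{G}_1)^o_k\to T_\ell(\mathscr{G}_2)^o_k\to T_\ell(\mathscr{G}_3)^o_k\to 0$. But the way you propose to get it contains the genuine gap. The paper never claims any exactness property of the N\'eron models themselves: it works entirely on the generic fibre, using that $0\to T_\ell G_1\to T_\ell G_2\to T_\ell G_3\to 0$ is exact because $G_1(K^s)$ is $\ell$-divisible, then exploits the torus hypothesis through $T_\ell G_1\cong X(G_1)^{\vee}\otimes_{\Z}\Z_\ell(1)$ to choose $\ell$ prime to the degree of the splitting field of $G_1$, so that $H^1(I,T_\ell G_1)=0$ and taking $I$-invariants stays exact; the invariants are then identified with the Tate modules of $(\mathscr{G}_i)^o_k$ via \cite[IX.2.2.5]{sga7.1}. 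Your route instead asserts "exactness up to finite groups" of the sequence of N\'eron $lft$-models — surjectivity of $\mathscr{G}^o_{2,k}\to\mathscr{G}^o_{3,k}$ together with the claim that its kernel is sandwiched, up to finite index, between $\mathscr{G}^o_{1,k}$ and $\mathscr{G}_{1,k}$ — and proposes to "extract" this from \cite{neron} or \cite{B-X}. This is precisely the delicate point: the closest statement in the literature, \cite[4.2]{B-X}, is known to be false in the perfect residue field case (see \cite[4.8(b)]{chai}), and the corrected replacement in this very monograph, Proposition \ref{prop-bx}, only gives surjectivity of $\mathscr{G}^o_2(R)\to\mathscr{G}^o_3(R)$ (which does yield surjectivity on special fibres, but uses $H^1(K,T)=0$ and hence $k$ algebraically closed, whereas the proposition is stated with $k$ merely separably closed) and right exactness of component groups. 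It gives nothing about the kernel: you still need to know that $\mathscr{G}^o_{1,k}\to\mathscr{G}^o_{2,k}$ has finite kernel, i.e.\ that the scheme-theoretic kernel of $\mathscr{G}_2\to\mathscr{G}_3$ does not acquire extra positive-dimensional pieces in the special fibre. Sheaf-theoretic left exactness of $j_*$ does not give this, because $\Spec k$ and its infinitesimal thickenings are not smooth $R$-schemes, so one cannot test the kernel on special fibres. Until that finiteness is proved, your passage to the sequence $0\to\mathscr{H}\to\mathscr{G}^o_{2,k}\to\mathscr{G}^o_{3,k}\to 0$ is unjustified, and it is exactly the step the authors' Galois-cohomological detour is designed to avoid.

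Two further remarks. Your reduction to the case where $G_3$ is abelian is fine as a reduction (the sequence $0\to G_1\to (G_2)_{\tor}\to (G_3)_{\tor}\to 0$ is exact by Corollary \ref{cor-add} over $K$, and Lemma \ref{lemm-add} plus \cite[3.13]{HaNi-comp} handle it), but, as you say yourself, it does not remove the hard step. Finally, your closing claim that additivity "genuinely fails, already for extensions of a Tate curve by an abelian variety with good reduction" is not correct: such an extension is proper, hence an abelian variety, and by Poincar\'e reducibility it is isogenous to the product of the two factors; since the toric rank of an abelian variety is an isogeny invariant, $t$ is additive in that situation. The role of the torus hypothesis in the paper is not that additivity fails without it, but that it makes $T_\ell G_1$ an induced-from-finite-quotient module twisted by $\Z_\ell(1)$, which is what allows one to kill $H^1(I,T_\ell G_1)$ by a good choice of $\ell$.
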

\begin{proof}
 We
denote by $\mathcal{G}_i$ the N\'eron $lft$-model of $G_i$, for
$i=1,2,3$.
 For every prime
$\ell$ invertible in $k$, the sequence of Tate modules
\begin{equation}\label{eq-tatemod0}
0\to T_\ell G_1\to T_\ell G_2\to T_\ell G_3\to 0\end{equation} is
exact because the group $G_1(K^s)$ is $\ell$-divisible. Moreover,
if we denote by $X(G_1)$ the character module of $G_1$, then there
exists an $I$-equivariant isomorphism of $\Z_\ell$-modules
$$T_\ell G_1\cong X(G_1)^{\vee}\otimes_{\Z}\Z_\ell(1)$$
and thus an isomorphism
$$H^1(I,T_\ell G_1)\cong H^1(I,X(G_1)^{\vee})\otimes_{\Z}\Z_\ell.$$
 Thus, choosing the prime $\ell$ such that it does not divide the degree of the
 splitting field of $G_1$ over $K$, we may assume that
$$H^1(I,T_\ell G_1)=0.$$ Then, taking $I$-invariants in the sequence
\eqref{eq-tatemod0}, we get an exact sequence
$$0\to T_\ell G_1(K)\to T_\ell G_2(K)\to T_\ell G_3(K)\to
0$$ that we can identify with the sequence
\begin{equation*}\label{eq-tatemod2}
0\to T_\ell (\mathcal{G}_1)^o_k\to T_\ell (\mathcal{G}_2)^o_k\to
T_\ell (\mathcal{G}_3)^o_k \to 0\end{equation*} by the arguments
in \cite[IX.2.2.5]{sga7.1}. Now the result follows from Lemma
\ref{lemm-add2}.
\end{proof}
\begin{corollary}\label{cor-torrank}
For every semi-abelian $K$-variety $G$, we have
$$t(G)=t(G_\tor)+t(G_\ab).$$
\end{corollary}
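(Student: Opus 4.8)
The plan is to deduce Corollary \ref{cor-torrank} from Proposition \ref{prop-add} by applying it to a canonical exact sequence that extracts the torus part of $G$. First I would recall that for a semi-abelian $K$-variety $G$, the maximal subtorus $G_{\tor}$ sits in a short exact sequence of semi-abelian $K$-varieties
$$0\to G_{\tor}\to G\to G_{\ab}\to 0,$$
where $G_{\ab}=G/G_{\tor}$ is an abelian variety; this is exactly the content of the discussion in the subsection ``Maximal subtori'', where it is shown that $G_{\tor}$ is an $F$-torus, that the tautological map $G_{\tor}\to G$ is a closed immersion, and that $G$ is semi-abelian precisely when the quotient $G/G_{\tor}$ is an abelian variety. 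So the quotient $G_{\ab}$ is legitimately an abelian variety and the sequence above is a genuine exact sequence of semi-abelian $K$-varieties with first term a torus.

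Next I would simply invoke Proposition \ref{prop-add} with $G_1=G_{\tor}$, $G_2=G$, $G_3=G_{\ab}$, which immediately yields
$$t(G)=t(G_{\tor})+t(G_{\ab}),$$
as desired. No further work is needed: Proposition \ref{prop-add} already handles the additivity of the toric rank in any short exact sequence of semi-abelian varieties whose kernel is a torus, and the sequence $0\to G_{\tor}\to G\to G_{\ab}\to 0$ is of exactly this shape.

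Since this is a one-line consequence, there is essentially no obstacle; the only thing to be careful about is making sure the sequence $0\to G_{\tor}\to G\to G_{\ab}\to 0$ really consists of semi-abelian $K$-varieties (so that Proposition \ref{prop-add} applies verbatim) and that $G_{\ab}$ here agrees with the notation fixed earlier in the excerpt. Both points are already settled in the preliminaries: $G_{\tor}$ is a (sub)torus, hence semi-abelian, $G$ is semi-abelian by hypothesis, and $G_{\ab}=G/G_{\tor}$ is by definition the abelian quotient, which is an abelian variety and in particular semi-abelian. Thus the corollary follows at once.
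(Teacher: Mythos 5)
Your proposal is correct and is exactly the argument the paper intends: the corollary is stated without proof right after Proposition \ref{prop-add}, precisely because it follows by applying that proposition to the canonical exact sequence $0\to G_{\tor}\to G\to G_{\ab}\to 0$, whose terms are semi-abelian with torus kernel as you verify. Nothing is missing.
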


\subsection{N\'eron models and base change} \sss \label{sss-nerbc} Let $G$ be a
semi-abelian $K$-variety, with N\'eron $lft$-model $\mathscr{G}$.
 Let $K'$ be a finite extension of $K$, with valuation ring $R'$,
 and denote by
 $\mathscr{G}'$ the N\'eron $lft$-model of $G'=G\times_K K'$.
By the universal property of the N\'eron $lft$-model, there exists
a unique morphism of $R'$-group schemes
$$h:\mathscr{G}\times_R R'\to \mathscr{G}'$$ that extends the
natural isomorphism between the generic fibers. It is not an
isomorphism, in general. The morphism $h$ induces a morphism of
component groups
\begin{equation}\label{eq-alpha}
\alpha_G:\Comp{G} \to \Comp{G'}.\end{equation}
 One of the principal aims of this monograph is to study the
properties of $\alpha_G$. Here we give an elementary example,
which we will need in some of the proofs in Part \ref{part-comp}.

\subsection{Example: the N\'eron $lft$-model of a split algebraic
torus} \label{sss-torcomp} \sss For every integer $n>0$, the
N\'eron $lft$-model of $\mathbb{G}^n_{m,K}$ is constructed by
gluing copies of $\mathbb{G}^n_{m,R}$ along their generic fibers.
For $n=1$, this procedure is described in \cite[10.1.5]{neron}.
The general case follows from the fact that the formation of
N\'eron $lft$-models commutes with products.

\sss Let $T$ be a split $K$-torus of dimension $n$, and let
$\mathscr{T}$ be its N\'eron $lft$-model.
  It follows from
the construction of the N\'eron $lft$-model of
$\mathbb{G}^n_{m,K}$ that there exists a canonical isomorphism
\begin{equation}\label{eq-torcomp}\Comp{T}\cong X(T)^{\vee}
\otimes_{\Z}(K^*/R^*)=\Hom_{\Z}(X(T),K^*/R^*)\end{equation} where
$X(T)$ denotes the character group of $T$. In particular,
$\Comp{T}$ is a free $\Z$-module of rank $n=\mathrm{dim}(T)$.

\sss Under the isomorphism \eqref{eq-torcomp}, a group morphism
$f:X(T)\to K^*/R^*$ corresponds to the unique connected component
of $\mathscr{T}_k$ that contains the specializations of all the
$K$-points $x$ in $T$ such that $f(\chi)$ is the class of
$\chi(x)$ in $K^*/R^*$ for every character $\chi$ of $T$.

\sss Let $K'$ be a finite extension of $K$, with valuation ring
$R'$. Then under the isomorphism \eqref{eq-torcomp} applied to $T$
and $T'=T\times_K K'$, the morphism
$$\alpha_T:\Comp{T}\to \Comp{T'}$$
corresponds to the group morphism
$$X(T)^{\vee} \otimes_{\Z}(K^*/R^*)\to X(T')^{\vee} \otimes_{\Z}((K')^*/(R')^*)$$
induced by the inclusion of $K^*$ in $(K')^*$ and the isomorphism
$X(T)\to X(T')$. It follows that $\alpha$ is an isomorphism from
$\Comp{T}$ onto the sublattice $e\cdot \Comp{T'}$ of $\Comp{T'}$,
where $e$ denotes the ramification index of the extension $K'/K$.
Thus $\alpha$ is injective, and its cokernel is isomorphic to
$(\Z/e\Z)^n$.

\subsection{The N\'eron component series}\label{subsec-compseries}
\sss Let $G$ be an abelian $K$-variety.  In \cite{HaNi-comp}, we
introduced a generating series that encodes the orders  of the
component groups of $G$ after base change to finite tame
extensions of $K$. The N\'eron component series $\CompS{G}(T)$ of
$G$ is defined as
$$\CompS{G}(T)=\sum_{d\in \N'}|\Comp{G(d)}|\cdot T^d\quad \in \Z[[T]].$$
Recall that we denote by $G(d)$ the abelian variety obtained from
$G$ by base change to the unique degree $d$ extension $K(d)$ of
$K$ in $K^s$.

\sss We can extend this definition to semi-abelian $K$-varieties.
Since in this case, the component group $\Comp{G}$ might be
infinite, we will consider the order of the torsion part
$\Comp{G}_{\tors}$. As we've seen, this torsion part is precisely
the group of connected components of the special fiber of the
N\'eron model $\mathscr{G}^{\qc}$ of $G$. We define the N\'eron
component series $\CompS{G}(T)$ of $G$ by
$$\CompS{G}(T)=\sum_{d\in \N'}|\Comp{G(d)}_{\tors}|\cdot T^d\quad \in \Z[[T]].$$
\subsection{Semi-abelian reduction}

\sss It can be quite difficult to describe the behaviour of the
N\'eron model of a semi-abelian $K$-variety under finite
extensions of the base field $K$. The most important tool is
Grothendieck's Semi-Stable Reduction Theorem, which we will now
recall.

\sss Let $G$ be a semi-abelian $K$-variety with N\'eron
$lft$-model $\mathscr{G}$. We say that $G$ has multiplicative,
resp.~semi-abelian, reduction if $\mathscr{G}_k^o$ is a torus,
resp.~a semi-abelian $k$-variety. We say that $G$ has good
reduction if it has semi-abelian reduction and if, in addition, $G_{\ab}$ has good reduction, i.e., if the N\'eron
model of $G_{\ab}$ is an abelian $R$-scheme.

\sss \label{sss-sabtorab} If $G$ is a $K$-torus, then $G$ has
semi-abelian reduction if and only if it has multiplicative
reduction; by \eqref{sss-ramgr}, this happens if and only if $G$
is split. It follows from \cite[4.1]{HaNi-comp} that a
semi-abelian $K$-variety $G$ has semi-abelian reduction if and
only if $G_{\ab}$ has semi-abelian reduction and $G_{\tor}$ is
split.

\begin{prop}\label{lemm-semiab}
Let $G$ be a semi-abelian $K$-variety with semi-abelian reduction,
and let $H$ be a subtorus of $G$. Then $H$ and $G/H$ have
semi-abelian reduction.
\end{prop}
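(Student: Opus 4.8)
The plan is to deduce both assertions from the criterion recalled in \eqref{sss-sabtorab}: a semi-abelian $K$-variety has semi-abelian reduction if and only if its abelian part has semi-abelian reduction and its maximal subtorus is split, together with the fact that a $K$-torus has semi-abelian reduction precisely when it is split. No ramification-theoretic input beyond this criterion should be needed.

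First I would treat $H$. Since $H$ is a torus, the closed immersion $H\hookrightarrow G$ factors through the maximal subtorus $G_{\tor}$ by the defining property of $G_{\tor}$, so $H$ is a closed subtorus of $G_{\tor}$. As $G$ has semi-abelian reduction, $G_{\tor}$ is split; dualizing the resulting surjection of character modules, $X(H)$ is a quotient of the trivial $\Gal(K^s/K)$-module $X(G_{\tor})$, hence itself trivial, so $H$ is split and therefore has multiplicative, in particular semi-abelian, reduction.

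Next I would treat $G/H$. Because $H\subseteq G_{\tor}$, the exact sequence $0\to G_{\tor}\to G\to G_{\ab}\to 0$ induces an exact sequence $0\to G_{\tor}/H\to G/H\to G_{\ab}\to 0$ with $G_{\tor}/H$ a torus and $G_{\ab}$ an abelian variety, so $G/H$ is again a semi-abelian $K$-variety. The one step that requires a little care — and the point I would regard as the crux — is to identify the invariants of $G/H$: any subtorus of $G/H$ has trivial image in the abelian variety $G_{\ab}$, hence is contained in $G_{\tor}/H$, so $(G/H)_{\tor}=G_{\tor}/H$ and consequently $(G/H)_{\ab}=(G/H)/(G_{\tor}/H)=G/G_{\tor}=G_{\ab}$. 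Now $G_{\tor}/H$ is a quotient of the split torus $G_{\tor}$, hence split (its character module embeds into that of $G_{\tor}$, which is a trivial Galois module), and $(G/H)_{\ab}=G_{\ab}$ has semi-abelian reduction by hypothesis. Applying \eqref{sss-sabtorab} to $G/H$ then gives the claim.
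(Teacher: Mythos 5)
Your proof is correct and follows essentially the same route as the paper: both reduce to the criterion of \eqref{sss-sabtorab}, note that $H\subseteq G_{\tor}$ so that $H$ and $G_{\tor}/H$ are split (the paper cites \cite[IX.8.1]{sga3.2} where you argue directly with character modules), and conclude from the extension $0\to G_{\tor}/H\to G/H\to G_{\ab}\to 0$. Your extra verification that $(G/H)_{\tor}=G_{\tor}/H$ and $(G/H)_{\ab}=G_{\ab}$ is a harmless elaboration of a step the paper leaves implicit.
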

\begin{proof}
 The
torus $H$ is a subtorus of $G_{\tor}$, so that $H$ and
$G_{\tor}/H$ must be split \cite[IX.8.1]{sga3.2}. The quotient
$G/H$ is an extension of $G_{\ab}$ by the split torus
$G_{\tor}/H$. Since $G_{\ab}$ has semi-abelian reduction, we
obtain that $G/H$ has semi-abelian reduction.
\end{proof}

\begin{theorem}[Semi-Stable Reduction Theorem]\label{prop-semiab}
\item \begin{enumerate} \item The semi-abelian variety $G$ has
semi-abelian reduction if and only if the action of $\Gal(K^s/K)$
on $T_\ell G$ is unipotent.
 \item  There exists a unique minimal finite extension $L$ of $K$
in $K^s$ such that $G\times_K L$ has semi-abelian reduction. The
field $L$
 is a finite Galois extension of $K$. \item If
$G$ has semi-abelian reduction, then $G\times_K K'$ has
semi-abelian reduction for every finite separable extension $K'$
of $K$.
\end{enumerate}
\end{theorem}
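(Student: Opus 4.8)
The plan is to reduce the statement to the two cases where it is already known: algebraic tori, where it is elementary, and abelian varieties, where it is Grothendieck's theorem \cite[IX.3.5, IX.3.6]{sga7.1}. The bridge is the exact sequence $0\to G_{\tor}\to G\to G_{\ab}\to 0$: since $G_{\tor}(K^s)$ is $\ell$-divisible, it induces, for every prime $\ell\neq p$, a $\Gal(K^s/K)$-equivariant short exact sequence of Tate modules $0\to T_\ell G_{\tor}\to T_\ell G\to T_\ell G_{\ab}\to 0$. First I would dispose of the torus case. Because $k$ is separably closed and $\ell\neq p$, all $\ell$-power roots of unity already lie in $K$, so $\Gal(K^s/K)$ acts trivially on $\Z_\ell(1)$; hence for any $K$-torus $S$ the isomorphism $T_\ell S\cong X(S)^{\vee}\otimes_{\Z}\Z_\ell(1)$ shows that the Galois action on $T_\ell S$ factors through the finite quotient $\Gal(L_S/K)$, where $L_S\subseteq K^s$ is the splitting field of $S$. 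An element of finite order acting on a $\Q_\ell$-vector space is unipotent if and only if it acts trivially, so $\Gal(K^s/K)$ acts unipotently on $T_\ell S$ if and only if it acts trivially on $X(S)$, i.e.\ if and only if $S$ is split; and $L_S$, being the fixed field of the kernel of $\Gal(K^s/K)\to \mathrm{Aut}_{\Z}(X(S))$, is a finite Galois extension of $K$ and is the minimal extension over which $S$ becomes split.

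For part (1), I would use that an endomorphism of a finite-dimensional $\Q_\ell$-vector space preserving a subspace is unipotent if and only if its restriction to the subspace and the induced endomorphism of the quotient are both unipotent (if $(g-1)^a$ kills the quotient and $(g-1)^b$ kills the subspace, then $(g-1)^{a+b}$ kills the whole space, and the converse is clear). Applying this elementwise to $0\to T_\ell G_{\tor}\to T_\ell G\to T_\ell G_{\ab}\to 0$ shows that $\Gal(K^s/K)$ acts unipotently on $T_\ell G$ if and only if it acts unipotently on both $T_\ell G_{\tor}$ and $T_\ell G_{\ab}$; by the torus case the first holds exactly when $G_{\tor}$ is split, and by Grothendieck's Galois criterion for abelian varieties \cite[IX.3.5]{sga7.1} the second holds exactly when $G_{\ab}$ has semi-abelian reduction. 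By \eqref{sss-sabtorab} the conjunction of these two conditions is precisely the condition that $G$ has semi-abelian reduction, which is (1). Part (3) is then immediate: for $K'/K$ finite separable, realised inside $K^s$, the residue field of $K'$ is again separably closed of characteristic exponent $p$, so $\ell$ stays invertible, $T_\ell(G\times_K K')=T_\ell G$ with $\Gal(K^s/K')$ acting by restriction, and a unipotent action restricts to a unipotent action on the subgroup $\Gal(K^s/K')$; now apply (1) over $K'$.

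For part (2), I would set $L_2\subseteq K^s$ to be the splitting field of $G_{\tor}$ and $L_1\subseteq K^s$ to be the minimal extension over which $G_{\ab}$ acquires semi-abelian reduction, so that $L_2$ is finite Galois over $K$ by the first paragraph and $L_1$ is finite Galois over $K$ by \cite[IX.3.6]{sga7.1}; then $L:=L_1L_2$ is again finite Galois over $K$. Using \eqref{sss-sabtorab} in both directions: $G\times_K L$ has semi-abelian reduction because $G_{\ab}\times_K L$ does (as $L\supseteq L_1$, by part (3)) and $G_{\tor}\times_K L$ is split (as $L\supseteq L_2$); conversely, if $K'\subseteq K^s$ is finite and $G\times_K K'$ has semi-abelian reduction, then $G_{\ab}\times_K K'$ has semi-abelian reduction and $G_{\tor}\times_K K'$ is split, so $K'\supseteq L_1$ and $K'\supseteq L_2$, whence $K'\supseteq L$. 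Thus $L$ is the unique minimal extension claimed. The only serious input is Grothendieck's theorem for abelian varieties; everything else is formal, and the one step requiring attention is the torus case, where one genuinely uses that $\Z_\ell(1)$ is a trivial Galois module in this setting (a consequence of $k$ being separably closed) in order to pass from unipotency of $T_\ell G_{\tor}$ all the way to splitness of $G_{\tor}$.
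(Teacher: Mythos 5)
Your proof is correct, and it takes a noticeably more self-contained route than the paper. For (1) the paper simply cites its earlier work \cite[4.1]{HaNi-comp}, whereas you reprove it by splitting the Tate module along $0\to T_\ell G_{\tor}\to T_\ell G\to T_\ell G_{\ab}\to 0$, handling the torus factor by the triviality of $\Z_\ell(1)$ (legitimate here because $k$ is separably closed and $\ell\neq p$, so all $\ell$-power roots of unity lift to $K$) and the abelian factor by Grothendieck's Galois criterion; combined with the criterion ``$G$ has semi-abelian reduction iff $G_{\ab}$ does and $G_{\tor}$ is split'', this is exactly the content of the cited reference, so nothing is lost. For (3) you deduce stability under base change from (1) by restricting a unipotent action to the subgroup $\Gal(K^s/K')$ (after checking the residue field of $K'$ is again separably closed), while the paper cites \cite[3.3]{sga7.1}; both are fine. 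The real divergence is in (2): the paper first produces some $K_0$ over which $G$ has semi-abelian reduction and then defines $L$ as the fixed field of the set $I'$ of elements acting unipotently on $T_\ell G$, asserting that $I'$ is an open normal subgroup — a slightly delicate point, since unipotent elements of a linear group do not form a subgroup in general and the paper offers no justification. You instead construct $L$ explicitly as the compositum $L_1L_2$ of the minimal field of semi-abelian reduction of $G_{\ab}$ (from SGA7 IX.3.6) and the splitting field of $G_{\tor}$, and verify minimality by running the splitting criterion in both directions; this sidesteps the subgroup issue entirely, at the modest cost of invoking the full minimality (and Galois) statement of IX.3.6 for the abelian part rather than only the existence of some extension achieving semi-abelian reduction. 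Both arguments are sound; yours is arguably the cleaner derivation of (2), and if one wanted to avoid citing Galois-ness of $L_1$, it also follows by noting that $\sigma(L_1)$ has the same minimality property for every $\sigma\in\Gal(K^s/K)$, forcing $\sigma(L_1)=L_1$.
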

\begin{proof}
Point (1) follows from \cite[4.1]{HaNi-comp}, and (3) follows from
 \eqref{sss-sabtorab} and \cite[3.3]{sga7.1}. Thus it is enough
 to prove (2).
  It follows from Grothendieck's Semi-Stable Reduction Theorem
for abelian varieties \cite[IX.3.6]{sga7.1} that there exists a
finite separable extension $K_0$ of $K$ such that $G_{\ab}\times_K
K_0$ has semi-abelian reduction. By (3), we may assume that the
torus $G_{\tor}\times_K K_0$ is split; then $G\times_K K_0$ has
semi-abelian reduction by \eqref{sss-sabtorab}. Denote by $I'$ the
subset of $\Gal(K^s/K)$ consisting of elements that act
unipotently on $T_\ell(G)$. This is a normal subgroup of
$\Gal(K^s/K)$. It is open, because it contains the open subgroup
 $\Gal(K^s/K_0)$ of $\Gal(K^s/K)$, by (1). The fixed field $L$ of $I'$
satisfies the properties in the statement.
\end{proof}

\sss The importance of the Semi-Stable Reduction Theorem lies in
the following fact. A semi-abelian $K$-variety $G$ has
semi-abelian reduction if and only if the base change morphism $f$
in \eqref{sss-nerbc} is an open immersion for every finite
extension $K'$ of $K$. This is an immediate consequence of the
Semi-Stable Reduction Theorem and \cite[3.1(e)]{sga7.1}. Thus if
$G$ has semi-abelian reduction, then the formation of the identity
component $\mathscr{G}^o$ of the N\'eron $lft$-model of $G$
commutes with base change to finite extensions of $K$. However, we
shall see later on that the component group $\Comp{G}$ will still
change, unless $G$ is an abelian $K$-variety with good reduction.

\sss The potential toric rank of $G$ is defined as the toric rank
of $G\times_K L$ and denoted by $t_{\pot}(G)$. The potential
abelian rank is defined analogously. We say that $G$ has potential
good reduction if
 $G\times_K L$ has good reduction. Likewise, we say that $G$ has
 potential multiplicative reduction if $G\times_K L$ has
 multiplicative reduction.

\subsection{Non-archimedean uniformization}
\sss Let $A$ be an abelian $K$-variety. We denote by $L$ the
minimal extension of $K$ in $K^s$ such that $A\times_K L$ has
semi-abelian reduction.

\sss \label{sss-uniformAV} The \textit{non-archimedean
uniformization} of $A$ consists of the following data
\cite[1.1]{B-X}:
\begin{itemize}
\item a semi-abelian $K$-variety $E$ that is the extension of an
abelian $K$-variety $B$ with potential good reduction by a
$K$-torus $T$:
$$0\rightarrow T\rightarrow E\rightarrow B\rightarrow 0.$$

\item an \'etale lattice $M$ in $E$, of rank $\mathrm{dim}\,T$,
and an \'etale covering of $K$-analytic groups
$$E^{\an}\rightarrow A^{\an}$$
with kernel $M$.
\end{itemize}
The lattice $M$ and the torus $T$ split over $L$, and $B\times_K
L$ has good reduction. In particular, we can view $M$ as a
$\Gal(L/K)$-module.

\sss The non-archimedean uniformization behaves well under base
change, in the following sense. Let $K'$ be a finite extension of
$K$. If we denote by $(\cdot)'$ the base change functor from $K$
to $K'$, then
$$ 0\to
(M')^{\an} \to (E')^{\an} \to (A')^{\an} \to 0
$$
is the non-archimedean uniformization of $A'=A\times_K K'$.

\section{Models of curves}
In this section, we assume that $k$ is algebraically closed.
\subsection{Sncd-models and combinatorial data}
\sss  Let $C$ denote a smooth, proper and geometrically connected
$K$-curve of genus $g$.
 An $R$-model of $C$ is a  proper and flat $R$-scheme $ \mathscr{C} $
endowed with an isomorphism of $K$-schemes $\mathscr{C}_K\to C$.
We say that a model $ \mathscr{C} $ of $C$ is an $sncd$-model if
$\mathscr{C}$ is regular and the special fiber $$ \mathscr{C}_k =
\sum_{i \in I} N_i E_i $$  is a divisor with strict normal
crossings on $ \mathscr{C} $. It is well known that $C$ always
admits an $sncd$-model. Moreover, if $g>0$, there exists a minimal
$sncd$-model of $C$, which is unique up to unique isomorphism (see
\cite[9.3]{Liubook}). We will often impose the condition that $C$
has index one, i.e., that $C$ admits a divisor of degree one. By
\cite[7.1.6]{raynaud}, this condition is equivalent to the
property that
 $$\gcd\{N_i\,|\,i\in I\}=1.$$

%\subsection{}

\sss  Let $ \mathscr{C}$ be a regular flat $R$-scheme whose
special fiber
 $ \mathscr{C}_k = \sum_{i \in I} N_i E_i $ is a divisor with strict normal crossings. For each
irreducible component $E_i$ of $ \mathscr{C}_k $, we put $
E_i^{\circ} =  E_i \setminus \cup_{j \neq i} E_j $. We associate a
graph $\Gamma(\mathscr{C}_k)$ to $ \mathscr{C}_k $ as follows. We
let the vertex set $\{\upsilon_i\}_{i \in I}$ correspond
bijectively to the set of irreducible components $\{E_i\}_{i \in
I}$ of $\mathscr{C}_k$. Whenever $ i \neq j$, the vertices $
\upsilon_{i} $ and $ \upsilon_{j} $ are connected by $ \vert E_i
\cdot E_j \vert $ distinct edges. If $\mathscr{C}$ is proper over
$R$, then $\Gamma(\mathscr{C}_k)$ is simply the dual graph
associated to the semi-stable curve $ (\mathscr{C}_k)_{\red} $. By
\emph{the combinatorial data} of $ \mathscr{C}_k $ we mean the
graph $\Gamma(\mathscr{C}_k)$ with each vertex $ \upsilon_{i} $
labelled by a couple $(N_i,g_i)$, where $N_i$ denotes the
multiplicity of $E_i$ in $\mathscr{C}_k$ and $g_i$ denotes the
genus of $E_i$ if $E_i$ is proper over $k$, and $g_i=-1$ else. We
use this definition for $g_i$ to treat in a uniform way the case
where $\mathscr{C}$ is proper over $R$ and the case where
$\mathscr{C}$ is obtained by resolving the singularities of an
excellent local $R$-scheme.

\subsection{A theorem of Winters}
 The following result by Winters will play a crucial role in this
 paper. It will allow us to transfer certain results from residue
 characteristic $0$ to positive residue characteristic.

\begin{theorem}[Winters]\label{theo-winters}
Let $C$ be a smooth, proper, geometrically connected curve over
$K$, and let $\mathscr{C}$ be an $sncd$-model for $C$. Assume that
$C$ admits a divisor of degree one. Then there exist a smooth,
proper, geometrically connected curve $D$ over $\C((t))$ and an
$sncd$-model $\mathscr{D}$ for $D$ over $\C[[t]]$ such that
$\mathscr{C}_k$ and $\mathscr{D}_k$ have the same combinatorial
data.
\end{theorem}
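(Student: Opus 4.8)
The plan is to reduce the statement to the construction of a single projective surface over $\C$ fibered over a curve, to realize the reduced special fiber of the sought model by hand, and --- this is where the real work lies --- to install the prescribed multiplicities while transferring everything from positive to zero characteristic by a flatness argument over $\Spec\Z$. First, since $\mathscr{C}$ is proper over $R$, each component $E_i$ of $\mathscr{C}_k$ is proper over $k$, so every $g_i$ occurring in the combinatorial data is $\geq 0$, and by \cite[7.1.6]{raynaud} the index-one hypothesis is equivalent to $\gcd_{i\in I}N_i=1$. It suffices to build a smooth, projective, connected surface $S$ over $\C$ together with a dominant morphism $f\colon S\to\mathbb{P}^1_{\C}$ whose generic fiber is a smooth curve and such that $f^{-1}(0)$ is a strict normal crossings divisor with the same combinatorial data as $\mathscr{C}_k$: pulling $f$ back along $\Spec\C[[t]]\to\mathbb{P}^1_{\C}$ centered at $0$ yields an $sncd$-model $\mathscr{D}$ over $\C[[t]]$ whose generic fiber $D$ is smooth and proper, and $D$ is automatically geometrically connected because $\gcd_iN_i=1$ (were $H^0(\mathscr{D},\mathcal O_{\mathscr{D}})$ a nontrivial extension $\C[[s]]$ of $\C[[t]]$, its ramification index would divide every $N_i$).

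To build $S$, I would first realize the reduced special fiber over $\C$: choose smooth projective $\C$-curves $F_i$ of genus $g_i$, mark on $F_i$ as many points as the vertex $v_i$ has incident edges in $\Gamma(\mathscr{C}_k)$, and glue the $F_i$ transversally according to $\Gamma(\mathscr{C}_k)$, producing a nodal $\C$-curve $X_0$ with the prescribed dual graph and component genera. This step uses only the non-emptiness over an arbitrary field of the topological strata of the moduli of (pre)stable curves --- equivalently, the smoothness over $\Spec\Z$ of $\overline{\mathcal M}_{g}$ --- which is exactly the kind of characteristic-uniformity that makes Winters' theorem possible. Along the way one records the numerical constraints that the combinatorial data of any $sncd$-fiber of a regular fibered surface must satisfy: $\Gamma(\mathscr{C}_k)$ is connected and loopless, $N_i\mid\sum_{j\neq i}N_j(E_i\cdot E_j)$ for each $i$, the resulting integers $E_i^2$ are strictly negative as soon as $\#I\geq 2$ (Zariski's lemma, using $\mathscr{C}_k=\mathrm{div}(\pi)$), and the arithmetic genus of $\sum_iN_iE_i$ equals $g$; all of these conditions are insensitive to the characteristic.

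The main obstacle is to promote $X_0$ to a one-parameter family whose closed fiber carries the prescribed multiplicities $N_i$, rather than being the reduced curve $X_0$ that a generic smoothing yields. The local models cause no trouble: near a node of $X_0$ where $E_i$ meets $E_j$, the ring $\C[[x,y]]$, regarded as a $\C[[t]]$-algebra by letting $t$ act as $x^{N_i}y^{N_j}$, is regular and exhibits exactly those two multiplicities, and likewise at a smooth point of $E_i$. The difficulty is to glue such local models into a projective $f\colon S\to\mathbb{P}^1_{\C}$ with the correct special fiber; I would parametrize all candidates --- equivalently, the $sncd$-fiber germs with fixed combinatorial data --- by a scheme of finite type over $\Spec\Z$ (using Hilbert schemes and spreading out), and prove that this scheme is smooth, hence flat, over $\Spec\Z$ by a tangent--obstruction computation in which the obstructions are $H^1$- and $H^2$-classes on the curves $E_i$ governed by Riemann--Roch and therefore independent of the characteristic. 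Since the model $\mathscr{C}$ furnishes a point of this scheme over the residue characteristic of $R$, flatness over $\Spec\Z$ forces a point over $\overline{\Q}$, hence over $\C$, and Grothendieck's existence theorem --- applicable because $\mathscr{C}_k$ is projective --- then algebraizes the resulting formal family into the desired surface $S$. A more hands-on alternative, available once a semistable smoothing has been arranged, is to perform the cyclic base change $t\mapsto t^{N}$ with $N=\lcm_iN_i$, resolve the resulting locally toric singularities explicitly as in the appendix, and contract the superfluous exceptional components. In either approach the genuinely delicate input is this smoothness (hence flatness) over $\Spec\Z$ --- the assertion that the obstruction to globalizing the local models does not see the characteristic --- together with the boundedness required for finite type over $\Spec\Z$; by contrast, checking that the self-intersections, the component genera and the arithmetic genus of the output agree with those of $\mathscr{C}_k$ is routine bookkeeping.
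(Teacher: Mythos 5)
There is a genuine gap, and it sits exactly where you yourself locate ``the real work.'' Note first that the paper does not prove this statement at all: it is Winters' theorem, quoted from \cite{winters} (3.7 and 4.3), whose proof constructs, directly over $\C$, a fibered surface realizing any abstract numerical type satisfying the characteristic-free constraints (connectedness, $\sum_j N_j(E_i\cdot E_j)=0$, adjunction, $\gcd_i N_i=1$), by exhibiting the configuration on explicit surfaces and moving it in a pencil; no lifting from characteristic $p$ is involved. Your route instead tries to lift the given pair $(\mathscr{C},\mathscr{C}_k)$ along $\Spec\Z$, and the pivotal claim --- that a finite-type $\Z$-scheme parametrizing $sncd$-fiber germs with fixed combinatorial data is \emph{smooth} (hence flat) over $\Spec\Z$ because the obstructions are $H^1$- and $H^2$-classes on the $E_i$ ``governed by Riemann--Roch and therefore independent of the characteristic'' --- is asserted, not proved. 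Unobstructedness of such degenerations is precisely the content of the theorem; obstruction spaces for deforming a fibered surface together with its non-reduced fiber are not controlled by Riemann--Roch on the individual components, need not vanish, and ``same dimension in all characteristics'' would in any case not give smoothness. Without smoothness, the existence of a $k$-point (char $p$) says nothing about points in characteristic zero: the relevant component could lie entirely over $\F_p$. So the argument assumes the hard part.

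Your ``hands-on alternative'' also fails as stated: cyclic base change $t\mapsto t^N$ applied to a semistable smoothing, followed by resolution and contraction, is the semistable-reduction algorithm run forward --- it produces \emph{reduced} special fibers again, and cannot create the prescribed multiplicities $N_i$. Introducing multiple fibers from a semistable family requires the opposite operation (a quotient by a suitable $\mu_N$-action, i.e.\ a logarithmic-transform-type construction), and constructing such an action compatible with the prescribed data is nontrivial and not supplied. The surrounding bookkeeping in your sketch (properness forcing $g_i\ge 0$, $\gcd_i N_i=1$ via \cite[7.1.6]{raynaud}, geometric connectedness of $D$ from $\gcd_i N_i=1$, the local models $t=x^{N_i}y^{N_j}$, and the list of numerical constraints) is fine, but the globalization step is exactly Winters' theorem, and here it should simply be cited rather than re-derived; if you do want an independent proof, you would need either Winters' explicit construction over $\C$ or a genuine unobstructedness theorem, neither of which your sketch provides.
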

\begin{proof}
This follows from \cite[3.7 and 4.3]{winters}.
\end{proof}
\sss Note that the curve $D$ automatically has the same genus as
$C$, since the genus can be computed from the combinatorial data
of $\mathscr{C}_k$ (see for instance \cite[3.1.1]{ni-saito}). If
$\mathscr{C}$ is relatively minimal, then so is $\mathscr{D}$,
since the existence of $(-1)$-curves can also be read off from the
combinatorial data.

\subsection{N\'eron models of Jacobians}
\sss %Assume that $k$ is algebraically closed.
 Let $C$ be a smooth,
proper, geometrically connected curve over $K$ of index one. One
can use the geometry of $R$-models of $C$ to study the N\'eron
model of the Jacobian variety $A=\Jac(C)$ of $C$, because of the
following fundamental theorem of Raynaud \cite[9.5.4]{neron}: if
$\mathscr{C}$ is a regular $R$-model of $C$, then the relative
Picard scheme $\mathrm{Pic}^{0}_{\mathscr{C}/R}$ is canonically
isomorphic to the identity component of the N\'eron model
$\mathscr{A}$ of $A$.

\sss This result has several interesting consequences. Let $
\mathscr{C}/R$ be an $sncd$-model of $C$, with special fiber $
\mathscr{C}_k = \sum_{i \in I} N_i E_i $.  The abelian quotient in
the Chevalley decomposition of $\mathscr{A}^o_k$ is isomorphic to
$$\prod_{i\in I}\mathrm{Pic}^o_{E_i/k},$$ by \cite[9.2.5 and
9.2.8]{neron}, and the toric rank $\trank{A}$ of $A$  is equal to
the first Betti number of the graph
 $\Gamma(\mathscr{C}_k)$ \cite[9.2.5 and 9.2.8]{neron}.

\sss It is also possible to compute the component group $ \Comp A
$ from the combinatorial data of $ \mathscr{C}_k $, as follows.
 Consider the complex of abelian groups
$$ \mathbb{Z}^I \overset{\alpha}{\longrightarrow} \mathbb{Z}^I \overset{\beta}{\longrightarrow} \mathbb{Z}, $$
where $ \alpha = (E_i \cdot E_j)_{i,j \in I} $ is the intersection
matrix of $\mathscr{C}_k$ and $ \beta$ sends the $i$-th standard
basis vector of $\Z^I$ to $N_i \in \Z$, for every $i\in I$. Then,
by \cite[9.6.1]{neron}, there is a canonical isomorphism $ \Comp A
\cong \ker(\beta)/\im(\alpha) $.

\sss In particular, both the
 component group $\Comp{A}$ and the toric rank $\trank{A}$ only
 depend on the combinatorial data of $\mathscr{C}_k$, and not on
 the characteristic exponent $p$ of $k$.

\subsection{Semi-stable reduction}
\sss Let $C$ be a smooth, proper, geometrically connected curve
over $K$. An $sncd$-model $\mathscr{C}$ is called semi-stable if
its special fiber $\mathscr{C}_k$ is reduced. We say that $C$ has
semi-stable reduction if $C$ has a semi-stable $sncd$-model.
 T.~Saito has proven in \cite[3.8]{Saito} that, unless $C$ is
a genus one curve without rational point, the curve $C$ has
semi-stable reduction if and only if the action of $\Gal(K^s/K)$
on $$H^1(C\times_K K^s,\Q_\ell)$$ is unipotent; see also
\cite[3.4.2]{ni-saito}.

\sss Assume that $C$ has genus $g\neq 1$ or that $C$ is an
elliptic curve, and set $A=\Jac(C)$. Then there exist canonical
Galois-equivariant isomorphisms \begin{eqnarray*} H^1(A\times_K
K^s,\Q_\ell)&\cong& H^1(C\times_K K^s,\Q_\ell),
\\H^1(A\times_K
K^s,\Q_\ell)&\cong&\Hom_{\Z_\ell}(T_\ell A,\Q_\ell).
\end{eqnarray*}
 Thus Theorem \ref{prop-semiab}(1) implies that $A$ has
 semi-abelian reduction if and only if $C$ has semi-stable
 reduction. If $C$ has index one, this equivalence can also be
 deduced (with some additional work) from Raynaud's isomorphism $$\mathrm{Pic}^{0}_{\mathscr{C}/R}\cong \mathscr{A}^o.$$

\part{N\'eron component groups of semi-abelian
varieties}\label{part-comp}
\chapter[Models of curves]{Models of curves and the N\'eron component series of a
Jacobian}\label{chap-jacobians}

In this chapter, we assume that $k$ is algebraically closed. Let
$C$ be a smooth, proper, geometrically connected curve over $K$.
We will study the behaviour of $sncd$-models of $C$ under finite
tame extensions of the base field $K$. Our main technical result
is that these models can be compared in a very explicit way if the
degree of the base change is prime to the {\em stabilization
index} $e(C)$ of $C$, a new invariant that we introduce in
Definition \ref{def-stabind}. Using this result, we prove the
rationality of the N\'eron component series of a Jacobian variety
over $K$ (Theorem \ref{theorem-compser}).

\section{Sncd-models and tame base change}\label{sec-sncd}
\subsection{Base change and normalization}

\sss \label{sss-desingsetup} Let $C$ be a smooth, proper,
geometrically connected curve over $K$. Let $\mathscr{C}/R$ be an
$sncd$-model of $C$ and let $d$ be an element of $\mathbb{N}'$. We
denote by $\mathscr{C}_{d}$ the normalization of
$\mathscr{C}\times_R R(d)$ and by
$$
f : \mathscr{C}_{d} \to \mathscr{C} $$ the canonical morphism. We
denote by
$$
\rho : \mathscr{C}(d) \rightarrow \mathscr{C}_d $$ the minimal
desingularization of $\mathscr{C}_d$. For the applications we have
in mind, it is important to describe $ \mathscr{C}_{d} $ and $
\mathscr{C}(d) $ in a precise way. Such a description will be
given in Proposition \ref{prop-normdesing}. In particular, we will
show that $ \mathscr{C}(d) $ is an $sncd$-model. These results are
well known in more restrictive settings (cf.~\cite{Halle-stable}).
However, to our best knowledge, they have not appeared in the
literature in the generality that we need (although some of them
are claimed without proof in Section 3 of \cite{lorenzini}). In
particular, we need to deal with the situation where
$\mathscr{C}_k$ contains irreducible components with
multiplicities divisible by $p$ that intersect each other, and
this case is not covered in \cite{Halle-stable}.

\sss  The main tool we use is the description of the minimal
desingularization of a \emph{locally toric singularity} given in
Kiraly's PhD thesis \cite{Kir}. This provides a convenient
 combinatorial description of the minimal desingularization of a
 tame cyclic quotient singularity that also applies to the case where $R$ has  mixed characteristic
 (the equal characteristic case was worked out in \cite{CES}).
 Since this part of \cite{Kir} has not been published, we
have gathered the results that we need as an appendix,
in Section \ref{sec-loctor}. Alternatively, one could use tools
from logarithmic geometry (desingularization of log-regular
schemes) since Kiraly's locally toric singularities correspond
precisely to Kato's toric singularities %\cite[2.1]{kato} by
\cite[3.1]{kato}.

\sss  Let $x$   be a closed point on $\mathscr{C}_k$, and let
$y_1, \ldots, y_r$ denote the finitely many points in the special
fiber of $\mathscr{C}_{d}$ mapping to $x$. Since normalization of
an excellent scheme commutes with localization and completion
\cite[\S2.2]{Halle-stable}, the homomorphism $
\widehat{\mathscr{O}}_{\mathscr{C},x} \to
\widehat{\mathscr{O}}_{\mathscr{C}_d,y_i} $ induced by $f$ can be
identified with the composed sequence of homomorphisms

$$
\widehat{\mathscr{O}}_{\mathscr{C},x} \to
\widehat{\mathscr{O}}_{\mathscr{C},x} \otimes_R R(d) \to
(\widehat{\mathscr{O}}_{\mathscr{C},x} \otimes_R R(d))^{\nor}
\cong \prod_{i=1}^r \widehat{\mathscr{O}}_{\mathscr{C}_d,y_i} \to
\widehat{\mathscr{O}}_{\mathscr{C}_d,y_i},
$$
where, for any reduced ring $A$, we write $A^{\nor}$ for the
integral closure of $A$ in its total ring of fractions.

 \sss  The situation where only one irreducible
component of $\mathscr{C}_k$ passes through $x$ is completely
described in \cite[\S2.4]{Halle-stable}, so we assume in the
following that there are two distinct irreducible components of
$\mathscr{C}_k$ passing through $x$. In that case, we can find an
isomorphism
$$ \widehat{\mathscr{O}}_{\mathscr{C},x} \cong R[[x_1,x_2]]/(\pi-u \cdot x_1^{m_1}x_2^{m_2}), $$
where $m_1$ and $m_2$ are the multiplicities of the components of
$\mathscr{C}_k$ intersecting at $x$, and where $ u \in R[[x,y]] $
is a unit. As we do not want to assume that either $m_1$ or $m_2$
is prime to $p$, we cannot get rid of the unit $u$ by a coordinate
change. This prevents us from simply transferring the results in
\cite{Halle-stable}.

\subsection{Local computations}\label{subsec-loccomp}

\sss  Let $d \in \mathbb{N}'$ and put $R' = R(d)$. We choose a
uniformizer $ \pi' $ in $R'$ such that $ \pi'^d = \pi$. We will
now explain how to normalize $A \otimes_R R'$, where
$$ A = R[[x_1,x_2]]/(\pi-u \cdot x_1^{m_1}x_2^{m_2}). $$

To do this, write $ c = \gcd(d,m_1,m_2) $. Since $c$ is prime to
$p$, we can choose a unit $ v \in R[[x_1,x_2]] $ such that $ v^c =
u $. Then
$$ (A \otimes_R R')^{\nor} \cong  \prod_{\xi \in \mu_c(k)}\left(R'[[x_1,x_2]]/(\pi'^{d'} - \xi v \cdot x_1^{m_1'} x_2^{m_2'})\right)^{\nor}, $$
where $d' = d/c$, $m_1'=m_1/c$ and $m_2'=m_2/c$.  Let us put $ e_i
= \gcd(d',m_i') $ so that $ d' = e_1 e_2 d'' $ and $ m_i' = e_i
m_i'' $ for $ i = 1, 2 $, with $d''$ and $m_i''$ in $\N$. We
moreover fix a unit $ w \in R[[x_1,x_2]] $ such that $ w^{e_1e_2}
= \xi v $. Then one can argue as in the proof of
\cite[2.4]{Halle-stable} to see that the $R'$-homomorphism
$$
\alpha: R'[[x_1,x_2]]/(\pi'^{d'} - \xi v \cdot x_1^{m_1'}
x_2^{m_2'}) \to R'[[y_1,y_2]]/(\pi'^{d''} - w \cdot y_1^{m_1''}
y_2^{m_2''})$$ defined by $ x_1 \mapsto y_1^{e_2} $ and $ x_2
\mapsto y_2^{e_1} $ is finite and injective, and that it
 induces an isomorphism of fraction fields. Thus the source and
 target of $\alpha$ have the same normalization.

We next fix a unit $ w' \in R[[x_1,x_2]] $ such that $w'^{d''} = w
$, and define an $R'$-homomorphism
$$ \beta : R'[[y_1,y_2]]/(\pi'^{d''} - w \cdot y_1^{m_1''} y_2^{m_2''}) \to R'[[z_1,z_2]]/(\pi' - w' \cdot z_1^{m_1''} z_2^{m_2''}) $$
by $ y_i \mapsto z_i^{d''} $ for $ i = 1, 2 $. We let $
\mu_{d''}(k) $ act on the latter ring by $ \zeta\ast z_1= \zeta
z_1 $ and $ \zeta\ast z_2 = \zeta^r z_2 $, where $ 0 < r < d'' $
is the unique integer such that $ r m_2'' + m_1'' \equiv 0 $
modulo $d''$. Note that $ w' \in R[[x_1,x_2]] $ is invariant under
this action.

\begin{lemma}\label{lemma-invariantring}
The image of $\beta$ is contained in $$(R'[[z_1,z_2]]/(\pi' - w'
\cdot z_1^{m_1''} z_2^{m_2''}))^{\mu_{d''}(k)},$$ and the
homomorphism
 $$R'[[y_1,y_2]]/(\pi'^{d''}
- w \cdot y_1^{m_1''} y_2^{m_2''}) \to (R'[[z_1,z_2]]/(\pi' - w'
\cdot z_1^{m_1''} z_2^{m_2''}))^{\mu_{d''}(k)}$$ is a
normalization morphism.
 %$$R'[[y_1,y_2]]/(\pi'^{d''} - w \cdot y_1^{m_1''} y_2^{m_2''}).$$
 Thus we obtain an isomorphism
$$ ( R'[[x_1,x_2]]/(\pi'^{d'} - \xi v \cdot x_1^{m_1'}
x_2^{m_2'}))^{\nor} \cong (R'[[z_1,z_2]]/(\pi' - w' \cdot
z_1^{m_1''} z_2^{m_2''}))^{\mu_{d''}(k)}. $$
\end{lemma}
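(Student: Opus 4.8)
The plan is to verify that the composite map in the statement --- call it $\beta\colon B\to C^{\mu_{d''}(k)}$, where $B=R'[[y_1,y_2]]/(\pi'^{d''}-w\, y_1^{m_1''}y_2^{m_2''})$ and $C=R'[[z_1,z_2]]/(\pi'-w'\, z_1^{m_1''}z_2^{m_2''})$ --- exhibits its target as the normalization of $B$, which amounts to checking three things: that $C^{\mu_{d''}(k)}$ is a normal domain, that it is module-finite over $B$, and that $B\to C^{\mu_{d''}(k)}$ is birational. The first assertion of the lemma, that $\beta$ actually lands in the invariant ring, is a monomial computation: $\zeta\ast z_1^{d''}=\zeta^{d''}z_1^{d''}=z_1^{d''}$ and $\zeta\ast z_2^{d''}=\zeta^{rd''}z_2^{d''}=z_2^{d''}$ for every $\zeta\in\mu_{d''}(k)$, so $\beta(y_i)=z_i^{d''}$ is invariant; one uses here that the $\mu_{d''}(k)$-action on $C$ is well defined, which is why $w'$ was chosen invariant and $r$ chosen with $m_1''+rm_2''\equiv 0\bmod d''$. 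For normality, note that $R'[[z_1,z_2]]$ is a regular local ring of dimension $3$ with regular system of parameters $\pi',z_1,z_2$, and that the image of $\pi'-w' z_1^{m_1''}z_2^{m_2''}$ modulo the square of the maximal ideal is nonzero (it is $\pi'$, or $\pi'$ minus a unit times a variable when $m_1''+m_2''=1$); hence $C$ is a regular local ring of dimension $2$, in particular a normal domain. Since $\mu_{d''}(k)$ is a finite group of ring automorphisms of $C$, the ring $C^{\mu_{d''}(k)}$ is then again a normal domain, and $\mathrm{Frac}(C^{\mu_{d''}(k)})=\mathrm{Frac}(C)^{\mu_{d''}(k)}$.

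Module-finiteness is immediate: $R'[[z_1,z_2]]$ is free of rank $(d'')^2$ over $R'[[z_1^{d''},z_2^{d''}]]$, so $C$ is module-finite over $\beta(B)$, which --- since $\beta(y_i)=z_i^{d''}$ --- is exactly the image of $R'[[z_1^{d''},z_2^{d''}]]$ in $C$; as $\beta(B)$ is Noetherian and fixed pointwise by $\mu_{d''}(k)$, the submodule $C^{\mu_{d''}(k)}\subseteq C$ is module-finite over $\beta(B)$, hence over $B$. The substantive point is birationality, that is, $\mathrm{Frac}(\beta(B))=\mathrm{Frac}(C)^{\mu_{d''}(k)}$. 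One inclusion is clear, and since $\mu_{d''}(k)$ acts faithfully on the domain $C$ (because $\zeta\ast z_1=\zeta z_1$), Artin's theorem on group actions on fields gives $[\mathrm{Frac}(C):\mathrm{Frac}(C)^{\mu_{d''}(k)}]=d''$; it therefore suffices to prove $[\mathrm{Frac}(C):\mathrm{Frac}(\beta(B))]\le d''$. From $e_i=\gcd(d',m_i')$, $d'=e_1e_2d''$, $m_i'=e_im_i''$ and $\gcd(d',m_1',m_2')=1$ one extracts $\gcd(d'',m_1'')=\gcd(d'',m_2'')=1$. Moreover $w$, seen in $C$, is a power series in $z_1^{d''},z_2^{d''}$ with unit constant term, so a $d''$-th root of it can be extracted already inside $R'[[z_1^{d''},z_2^{d''}]]$; hence $w'\in\beta(B)$. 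Now the defining relation of $C$ reads $z_1^{m_1''}z_2^{m_2''}=(w')^{-1}\pi'\in\mathrm{Frac}(\beta(B))$, so $z_2^{m_2''}=(w')^{-1}\pi'z_1^{-m_1''}\in\mathrm{Frac}(\beta(B))(z_1)$; combining this with $z_2^{d''}\in\beta(B)$ and $\gcd(m_2'',d'')=1$ shows $z_2\in\mathrm{Frac}(\beta(B))(z_1)$, so $\mathrm{Frac}(C)=\mathrm{Frac}(\beta(B))(z_1)$, which is of degree at most $d''$ over $\mathrm{Frac}(\beta(B))$ because $z_1^{d''}\in\beta(B)$.

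Assembling the three properties, $C^{\mu_{d''}(k)}$ is a normal, module-finite, birational extension of $B$ and is therefore the normalization of $B$; since $\alpha$ was already seen to be finite and injective with an isomorphism on fraction fields onto $B$, the ring $R'[[x_1,x_2]]/(\pi'^{d'}-\xi v\, x_1^{m_1'}x_2^{m_2'})$ has the same normalization, which gives the displayed isomorphism. The step I expect to be the main obstacle is birationality, and within it the observation that $w'$ may be taken in $\beta(B)$ --- this is exactly what lets the coprimality argument place $z_2$ in $\mathrm{Frac}(\beta(B))(z_1)$; a secondary technical point, which I would import from the local computations in \cite{Halle-stable}, is verifying that the rings in play are domains, so that ``normalization'' and ``fraction field'' behave as expected.
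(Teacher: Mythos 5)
Your overall strategy---show that the invariant ring is normal, module-finite over the image of $\beta$, and has the same fraction field, hence is the normalization---is the right one, and the steps you give are correct: the invariance of $z_i^{d''}$ and of the defining relation, the regularity (hence normality) of $C=R'[[z_1,z_2]]/(\pi'-w'z_1^{m_1''}z_2^{m_2''})$, the freeness of $R'[[z_1,z_2]]$ over $R'[[z_1^{d''},z_2^{d''}]]$, and the degree count combining $\gcd(d'',m_1'')=\gcd(d'',m_2'')=1$, $w'\in\beta(B)$ and Artin's theorem. This is essentially the argument the paper itself delegates to \cite[Prop.~2.7]{Halle-stable}.

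There is, however, one genuine gap: what your argument establishes is that $C^{\mu_{d''}(k)}$ is the normalization of the image $\beta(B)$, whereas the lemma asserts that $B\to C^{\mu_{d''}(k)}$ is a normalization morphism for $B$ itself. For that you need $\beta$ to be injective, equivalently that $f=\pi'^{d''}-w\,y_1^{m_1''}y_2^{m_2''}$ generates a prime ideal of $R'[[y_1,y_2]]$; your closing remark that the domain property of ``the rings in play'' can be imported from \cite{Halle-stable} is precisely the kind of transfer the paper warns against in this setting (both multiplicities may be divisible by $p$ and the unit $w$ cannot be removed), and in any case you never connect it to injectivity. The connection is short once the domain property is known: $\beta(B)\subseteq C$ is a domain, so $\ker\beta$ is prime, and finiteness of $C$ over $\beta(B)$ gives $\dim\beta(B)=\dim C=2=\dim B$, forcing $\ker\beta$ to be a minimal, hence zero, prime of the $2$-dimensional domain $B$. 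The primality of $(f)$ itself is where the coprimality hypotheses enter a second time and needs an argument, for instance: in $R'[[z_1,z_2]]$ one has $f=\prod_{\zeta\in\mu_{d''}(k)}(\pi'-\zeta w'z_1^{m_1''}z_2^{m_2''})$, and these factors generate pairwise distinct height-one primes (each quotient is regular, and two factors can only generate the same ideal if it contained $(\pi',z_1)$ or $(\pi',z_2)$, which have height two); the group $\mu_{d''}(k)\times\mu_{d''}(k)$ scaling $z_1,z_2$ acts by $R'[[y_1,y_2]]$-algebra automorphisms (it fixes $z_i^{d''}$ and $w'$) and permutes these primes transitively because $\eta\mapsto\eta^{m_1''}$ is surjective on $\mu_{d''}(k)$; since every minimal prime of $fR'[[y_1,y_2]]$ is a contraction of one of them, $(f)$ has a unique minimal prime, and since each factor occurs with multiplicity one, $f$ cannot be a proper power of an irreducible, so $(f)$ is prime. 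With that supplement (or any other proof that $B$ is a domain and $\beta$ is injective), your proof is complete and yields the displayed isomorphism exactly as you assemble it.
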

\begin{proof}
This can be proved in a similar way as
\cite[Prop.~2.7]{Halle-stable}.
\end{proof}

\subsection{Minimal desingularization}\label{subsec-mindesing}
\sss  Let $ \mathscr{C}/R $ be an $sncd$-model of $C$ with special
fiber $ \mathscr{C}_k = \sum_{i \in I} N_i E_i $. For any $ d \in
\mathbb{N}' $, let $ f : \mathscr{C}_d \to \mathscr{C} $ be the
composition of base change to $R(d)$ and normalization  and let $
\rho : \mathscr{C}(d) \to \mathscr{C}_d $ be the minimal
desingularization as in Section \ref{sss-desingsetup}. Proposition
\ref{prop-normdesing} below lists some properties of these
morphisms, and of the schemes $ \mathscr{C}_d $ and $
\mathscr{C}(d) $, that are relevant for the applications further
on.

%For this purpose, we introduce the function
%\begin{equation}\label{eq-lambda}
%\lambda:\mathbb{Z}_{>0}\times \mathbb{Z}_{>0}\times \mathbb{Z}_{>0}\rightarrow
%\mathbb{N}. \end{equation}
%The image $\lambda(a,b,c)$ of a triple $(a,b,c)$
%is defined as follows. We put $a'=a/\gcd(a,c)$, $b'=b/\gcd(b,c)$,
%and $$c'=c\cdot \gcd(a,b,c)/(\gcd(a,c)\cdot \gcd(b,c)).$$
%Note that $a'$ and $b'$ are prime to $c'$.

%If $c'=1$ we put
%$\lambda(a,b,c)=0$. If $c'>1$ we define $\gamma$ as the unique
%integer in $\{1,\ldots,c'-1\}$ which is prime to $c'$ and such
%that
%$$b'\gamma\equiv -a'\ \mathrm{mod}\ c'$$ and we put $\lambda(a,b,c)$
%equal to the number of components in the Hirzebruch-Jung continued
%fraction expansion of $c'/\gamma$.
%~: if we write
%$$c'/\gamma=z_1-\frac{1}{z_2-\frac{1}{\ldots-\frac{1}{z_w}}}$$
%with $z_i\in \mathbb{Z}_{\geq 2}$, then $\lambda(a,b,c)=w$.

\begin{prop}\label{prop-normdesing}
 For every $ d \in \mathbb{N}' $,  the following properties hold:
\begin{enumerate}
\item For each irreducible component $E_i$ of $ \mathscr{C}_k $,
the scheme $$F_i=\mathscr{C}_d\times_{\mathscr{C}}E_i$$ is a
disjoint union of smooth irreducible curves $F_{ij}$. The
multiplicity $N_i'$ of $ (\mathscr{C}_d)_k $ along each component
$ F_{ij} $ is given by
$$ N_i' = N_i/\gcd(d,N_i), $$ and the morphism
$$\mathscr{C}_d\times_{\mathscr{C}}E_i^o\to E_i^o$$ is a Galois
cover of degree $\gcd(N_i,d)$.

\item If $E_i$ is a rational curve that intersects the other
components of $\mathscr{C}_k$ in precisely one (resp. two) points,
then each $F_{ij}$ is a rational curve that intersects the other
components of $(\mathscr{C}_d)_k$ in precisely one (resp. two)
points. In both cases, the number of connected components of
$F_{i}$ is equal to $n_i=\gcd(N_i,N_{a},d)$ where $a$ is any
element of $I\setminus \{i\}$ such that $E_{a}$ intersects $E_i$.
In particular, $n_i$ does not depend on the choice of $a$.

\item Each singular point of $ \mathscr{C}_d $ is an intersection
point of two distinct irreducible components of the special fiber.
Moreover, let $x$ be a point that belongs to the intersection of
two distinct irreducible components $F$ and $F'$ of $
(\mathscr{C}_d)_k $ which dominate irreducible components $E$ and
$E'$ of $\mathscr{C}_k$, respectively. Let $N$ and $N'$ be the
multiplicities of $E$  and $E'$ in $ \mathscr{C}_k $.  Then the
special fiber of the minimal desingularization $\mathscr{D}$ of
the local germ $\Spec \mathscr{O}_{\mathscr{C}_d,x}$  is a divisor
with strict normal crossings
 whose combinatorial data only depend on $N$, $N'$ and
$d$. Moreover, each exceptional component of $\mathscr{D}_k$ is a
rational curve that meets
 the other irreducible components of $\mathscr{D}_k$ in precisely
 two points.
\end{enumerate}
In particular, the $R(d)$-scheme $\mathscr{C}(d)$ is an
$sncd$-model of $C(d)$.
\end{prop}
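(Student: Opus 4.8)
The plan is to reduce everything to the local computations of Section \ref{subsec-loccomp}, so that the global statements become consequences of the explicit normal forms obtained there. First I would fix $d \in \mathbb{N}'$ and $R' = R(d)$, and analyze $\mathscr{C}_d = (\mathscr{C} \times_R R')^{\nor}$ one closed point $x \in \mathscr{C}_k$ at a time, using that normalization commutes with completion and localization for excellent schemes. Two cases occur at $x$: either a single component $E_i$ passes through $x$, in which case the local structure is governed by $\widehat{\mathscr{O}}_{\mathscr{C},x} \cong R[[x_1,x_2]]/(\pi - u\cdot x_1^{m_1})$ and the computation in \cite[\S2.4]{Halle-stable} applies verbatim (this handles most of (1), in particular the multiplicity formula $N_i' = N_i/\gcd(d,N_i)$ and the assertion that the cover $\mathscr{C}_d \times_{\mathscr{C}} E_i^o \to E_i^o$ is Galois of degree $\gcd(N_i,d)$, since over $E_i^o$ the second monomial factor is a unit and one is just taking a Kummer cover of the generic point of $E_i$); or two distinct components $E_i, E_j$ meet at $x$, in which case I invoke $\widehat{\mathscr{O}}_{\mathscr{C},x} \cong R[[x_1,x_2]]/(\pi - u\cdot x_1^{m_1}x_2^{m_2})$ and apply Lemma \ref{lemma-invariantring}. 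That lemma identifies the completed local ring of $\mathscr{C}_d$ at each point above $x$ with an invariant ring $(R'[[z_1,z_2]]/(\pi' - w'\cdot z_1^{m_1''}z_2^{m_2''}))^{\mu_{d''}(k)}$, i.e. a tame cyclic quotient singularity — equivalently a locally toric singularity in the sense of Kiraly \cite{Kir}.

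For part (1) the local analysis at the double points, combined with the single-component case, shows that $F_i = \mathscr{C}_d \times_{\mathscr{C}} E_i$ has only smooth points (the singularities of $\mathscr{C}_d$ sit at intersection points, handled in (3)), so each connected component $F_{ij}$ is a smooth irreducible curve; the multiplicity and degree-of-cover statements are read off from the exponents in the normal forms. For part (2), I would exploit that $E_i \cong \mathbb{P}^1_k$ with one or two marked points: the cover $F_i \to E_i$ is, away from the marked points, an étale $\mu_{\gcd(N_i,d)}$-cover of $\mathbb{A}^1_k$ or $\mathbb{G}_{m,k}$, hence its components are again rational with the prescribed number of marked points; a Hurwitz / direct-branch-count argument in the Kummer cover identifies the number of connected components of $F_i$ with $\gcd(N_i, N_a, d)$ for a neighbour $E_a$, the point being that the monodromy of the cover is cyclic and its order is controlled simultaneously by $N_i$ along $E_i^o$ and by $N_a$ at the intersection point with $E_a$ (this forces the stated $\gcd$, and in particular independence of the choice of $a$, since any two neighbours give the same answer — here one uses that $E_i$ rational with $\le 2$ marked points severely constrains the cover).

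For part (3), the key input is the resolution theory of locally toric / tame cyclic quotient singularities collected in the appendix (Section \ref{sec-loctor}, following \cite{Kir}): the minimal desingularization of $(R'[[z_1,z_2]]/(\pi' - w'\cdot z_1^{m_1''}z_2^{m_2''}))^{\mu_{d''}(k)}$ is an $sncd$ whose dual graph is a chain of $\mathbb{P}^1$'s (a Hirzebruch–Jung string), with combinatorial data — chain length, multiplicities, self-intersections — determined by a continued-fraction expansion depending only on the numerical data $N$, $N'$, $d$ of the two branches and not on $p$ or on the units $w, w'$. Each exceptional $\mathbb{P}^1$ in the string meets the rest of $\mathscr{D}_k$ in exactly two points (its two neighbours in the chain, or a neighbour and one of the strict transforms $F, F'$), which is exactly the claim. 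Gluing these local resolutions along $\mathscr{C}_d$ — legitimate because $\mathscr{C}_d \to \mathscr{C}$ is an isomorphism away from finitely many points and the resolutions are canonical hence compatible on overlaps — produces $\mathscr{C}(d)$ and shows $\mathscr{C}(d)_k$ is a strict normal crossings divisor; since $\mathscr{C}(d)$ is regular, proper and flat over $R(d)$ with generic fibre $C(d)$, it is an $sncd$-model of $C(d)$, which is the final assertion. The main obstacle I anticipate is part (2), specifically pinning down the number of connected components of $F_i$ as $\gcd(N_i, N_a, d)$ and proving independence of $a$: this is a genuinely global statement about the Kummer cover of a rational curve and requires carefully matching the local monodromy data at $E_i^o$ with the ramification behaviour at each node, rather than following from any single local computation; the rationality and marked-point count follow once the cover is understood, but the component count needs the constraint that $E_i$ is rational with at most two special points to rule out the cover being disconnected in unexpected ways.
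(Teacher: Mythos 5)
Your plan follows essentially the same route as the paper's proof: a pointwise local analysis via the computations of Section \ref{subsec-loccomp} and Lemma \ref{lemma-invariantring} (with single-component points handled as in \cite[\S 2.4]{Halle-stable}), and part (3) deduced from the minimal resolution of the resulting tame cyclic quotient singularities in the appendix (Proposition \ref{prop-mindesing}), whose Hirzebruch--Jung chain data depend only on $(N,N',d)$. For part (2), where you sketch a direct monodromy/branch-count argument for the cyclic cover of the rational component, the paper simply cites Propositions 3.1 and 3.2 of \cite{Halle-stable}, noting that the extra hypothesis imposed there can be removed using the local computations of Section \ref{subsec-loccomp} --- which is exactly the argument you outline.
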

\begin{proof}
(1) For any $i \in I$, an easy local computation shows that
 $$\mathscr{C}_d\times_{\mathscr{C}}E_i^o\to E_i^o$$ is a Galois cover of degree $\gcd(N_i,d)$ and that $N'_i=N_i/\gcd(N_i,d)$
(cf.~\cite[\S2.4]{Halle-stable}). Let $ x $ be a closed point of
$\mathscr{C}_d$ such that $f(x) $ belongs to the intersection of
$E_i$ with another component $E_j$. Then, using the explicit
computation of $ \widehat{\mathscr{O}}_{\mathscr{C}_d,x} $ in
Lemma \ref{lemma-invariantring}, the proof of
\cite[2.9]{Halle-stable} shows that $F_i$ is smooth at $x$ also.

(2) This is shown in the proofs of Propositions 3.1 and 3.2 in
\cite{Halle-stable}. To be precise, these proofs are written in
\cite{Halle-stable} under an additional assumption on the
multiplicities of the components of $\mathscr{C}_k$, but this
assumption can be removed by using the computations in Section
\ref{subsec-loccomp} instead of the ones in
\cite[\S2]{Halle-stable}.

(3) Since $\mathscr{C}_d$ is normal, the singular locus consists
of finitely many closed points in the special fiber. If $y \in
\mathscr{C}_d$ is a closed point that belongs to a unique
irreducible component of $(\mathscr{C}_d)_k$, then $
\mathscr{C}_d$ is regular at $y$ by \cite[Cor.~2.2]{Halle-stable}.
Let now $x$ be a point as in the statement of the lemma. Then, by
Lemma \ref{lemma-invariantring}, the germ $\Spec
\mathscr{O}_{\mathscr{C}_d,x}$ is a tame cyclic quotient
singularity, and its formal structure only depends on the triple
$(N,N',d)$ by the computations in Section \ref{subsec-loccomp}.
The structure of its minimal resolution is described in
Proposition \ref{prop-mindesing}.
% (see also \cite[4.3]{Halle-stable} and
% \cite[2.4.1]{CES}).
  In particular, it follows from that
description that the special fiber of the minimal resolution is a
divisor with strict normal crossings.
\end{proof}

\section{The characteristic polynomial and the stabilization index}\label{sec-e}
In this section, we introduce two invariants of a smooth, proper
and geometrically connected $K$-curve $C$: the characteristic
polynomial $P_C(t)$ and the stabilization index $e(C)$. They have
a natural cohomological interpretation when $C$ is tamely
ramified, but their meaning is somewhat mysterious in the wildly
ramified case. A crucial feature of the stabilization index $e(C)$
is that the behaviour of a relatively minimal $sncd$-model of $C$
under tamely ramified base change can be controlled completely if
the degree of the base change is prime to $e(C)$.
 This property will be
essential for our results on N\'eron component groups of
Jacobians.

\subsection{The characteristic polynomial}
\sss \label{sssec-notC} Let $C$ be a smooth, proper, geometrically
connected $K$-curve of genus $g$. Let $\mathscr{C}/R$ be an
$sncd$-model of $C$ with special fiber $\mathscr{C}_k = \sum_{i
\in I} N_i E_i $.

\begin{definition}\label{def-charpol}
The characteristic polynomial of $C$ is the monic polynomial
$$P_C(t)=(t-1)^2\prod_{i\in I}(t^{N_i}-1)^{-\chi(E_i^o)}$$ in
$\Z[t]$ of degree $2g$.
\end{definition}
The fact that $P_C(t)$ is indeed a polynomial of degree $2g$ was
proven by the second author in \cite[3.1.6]{ni-saito}, and
previously by Lorenzini in \cite{lorenzini} under the assumption
that $\gcd\{N_i\,|\,i\in I\}=1$. Although $P_C(t)$ is defined in
terms of the $sncd$-model $\mathscr{C}$, it is easy to see that it
does not depend on the choice of such a model, since the
expression in Definition \ref{def-charpol} does not change if we
blow up $\mathscr{C}$ at a closed point of $\mathscr{C}_k$.
 We do not know how to define $P_C(t)$ intrinsically on $C$,
 without reference to an $sncd$-model. However, in \cite{ni-saito}, the second author
 proved the following result.

 \begin{prop}\label{prop-charpol}
Let $\sigma$ be a topological generator of the tame inertia group
$\mathrm{Gal}(K^t/K)$, and denote by $P'_{C}(t)$ the
characteristic polynomial of $\sigma$ on $$H^1(C\times_K
K^t,\Q_\ell).$$ For every $i\in I$, we denote by $N'_i$ the
prime-to-$p$ part of $N_i$. Then the following properties hold:
\begin{enumerate}
\item $P'_C(t)=(t-1)^2\prod_{i\in I}(t^{N'_i}-1)^{-\chi(E_i^o)}$,
\item $P'_C(t)$ divides $P_C(t)$, and they are equal if and only
if $C$ is tamely ramified.
\end{enumerate}
 \end{prop}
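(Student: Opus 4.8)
The plan is to prove~(1) by computing the tame nearby cycles on an $sncd$-model of $C$ and reading off the characteristic polynomial of $\sigma$ from the $\ell$-adic analogue of A'Campo's monodromy zeta function formula, and then to deduce~(2) by a purely combinatorial comparison of cyclotomic factors combined with Saito's semi-stable reduction criterion.

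For~(1), fix an $sncd$-model $\mathscr{C}/R$ with $\mathscr{C}_k=\sum_{i\in I}N_iE_i$ as in Definition~\ref{def-charpol}, and let $R\psi^t\Q_\ell$ be the complex of tame nearby cycles on $\mathscr{C}_k$, so that $H^q(C\times_K K^t,\Q_\ell)\cong H^q(\mathscr{C}_k,R\psi^t\Q_\ell)$ compatibly with the action of $\sigma$. The crucial point is that $R\psi^t\Q_\ell$, together with its $\sigma$-action, depends only on $(\mathscr{C}_k)_{\red}$ and on the prime-to-$p$ parts $N'_i$ of the multiplicities $N_i$, and not on their $p$-parts: the tame nearby fibre at a point of $\mathscr{C}_k$ is built from tame covers, whose degrees are prime to $p$, and the local analysis of Section~\ref{sss-desingsetup} and Section~\ref{subsec-loccomp} shows that near an intersection point of two components of multiplicities $m_1,m_2$ only greatest common divisors of the $m'_i$ with prime-to-$p$ integers intervene; hence one may replace each $N_i$ by $N'_i$ throughout. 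The $\ell$-adic analogue of A'Campo's formula for the monodromy zeta function (cf.~\cite[Exp.~I]{sga7.1}, \cite{ni-saito}) then gives
$$\prod_{q\geq 0}\det\bigl(1-t\,\sigma\mid H^q(C\times_K K^t,\Q_\ell)\bigr)^{(-1)^q}=\prod_{i\in I}(1-t^{N'_i})^{\chi(E_i^o)}.$$
Since $k$ is separably closed, $H^0(C\times_K K^t,\Q_\ell)=\Q_\ell$ and $H^2(C\times_K K^t,\Q_\ell)=\Q_\ell(-1)$ carry the trivial inertia action, so the left-hand side equals $(1-t)^2\det(1-t\sigma\mid H^1(C\times_K K^t,\Q_\ell))^{-1}$. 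Solving for $\det(1-t\sigma\mid H^1)$ and passing to the reversed polynomial $P'_C(t)=\det(t-\sigma\mid H^1(C\times_K K^t,\Q_\ell))=t^{\dim H^1}\det(1-t^{-1}\sigma\mid H^1)$, a short bookkeeping of powers of $t$ yields
$$P'_C(t)=(t-1)^2\prod_{i\in I}(t^{N'_i}-1)^{-\chi(E_i^o)};$$
the manipulation is self-consistent precisely because the identity forces $\dim_{\Q_\ell} H^1(C\times_K K^t,\Q_\ell)=2-\sum_{i\in I}N'_i\chi(E_i^o)$, which is then $=\deg P'_C$. (Equivalently one phrases everything in terms of the $\ell$-adic Tate module of $\Jac(C)$ via the Galois-equivariant isomorphism $H^1(C\times_K K^s,\Q_\ell)\cong\Hom_{\Z_\ell}(T_\ell\Jac(C),\Q_\ell)$.)

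For~(2), expand both polynomials into cyclotomic factors: writing $t^N-1=\prod_{m\mid N}\Phi_m(t)$ with $\Phi_m$ the $m$-th cyclotomic polynomial, one obtains $P_C(t)=\prod_{m\geq 1}\Phi_m(t)^{b_m}$ and $P'_C(t)=\prod_{m\geq 1}\Phi_m(t)^{b'_m}$ with $b_1=b'_1=2-\sum_{i\in I}\chi(E_i^o)$, and with $b_m=-\sum_{i:\,m\mid N_i}\chi(E_i^o)$ and $b'_m=-\sum_{i:\,m\mid N'_i}\chi(E_i^o)$ for $m\geq 2$. Because $P_C$ is a polynomial of degree $2g$ \cite[3.1.6]{ni-saito}, and $P'_C$ is one by~(1), all the exponents $b_m,b'_m$ are $\geq 0$. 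Now if $p\nmid m$ then $m\mid N_i\iff m\mid N'_i$, whence $b_m=b'_m$; and if $p\mid m$ then $m\nmid N'_i$ for every $i$, whence $b'_m=0\leq b_m$. Therefore $P'_C(t)$ divides $P_C(t)$, with $P_C(t)/P'_C(t)=\prod_{p\mid m}\Phi_m(t)^{b_m}$. As $\deg\Phi_m=\varphi(m)\geq 1$, the equality $P'_C=P_C$ holds if and only if $b_m=0$ for every $m$ divisible by $p$, equivalently $\deg P'_C=\deg P_C=2g$, i.e.\ $\dim H^1(C\times_K K^t,\Q_\ell)=\dim H^1(C\times_K K^s,\Q_\ell)$. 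Since $H^1(C\times_K K^t,\Q_\ell)$ is canonically the space of $P$-invariants in $H^1(C\times_K K^s,\Q_\ell)$, on which the pro-$p$ group $P$ acts semisimply (as $\ell\neq p$), this holds exactly when $P$ acts trivially on $H^1(C\times_K K^s,\Q_\ell)$; by the Monodromy Theorem \cite[Exp.~I]{sga7.1} and Saito's semi-stable reduction criterion \cite[3.8]{Saito} (the case of a genus-one curve without rational point being treated separately), this is in turn equivalent to $C$ acquiring semi-stable reduction over a finite tame extension of $K$, that is, to $C$ being tamely ramified.

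The main difficulty is the first step of~(1): showing rigorously that the tame nearby cycle complex of an $sncd$-model is insensitive to the $p$-parts of the multiplicities, including at points where two components with $p$-divisible multiplicities intersect --- precisely the configuration not addressed in the classical references (see the discussion in Section~\ref{sss-desingsetup}), which is why the local normalization computations of Section~\ref{subsec-loccomp} are needed. Granting this, the rest is formal manipulation of the A'Campo-type identity and of products of cyclotomic polynomials.
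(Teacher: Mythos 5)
Your proposal reduces part (1) to exactly the statement that the paper imports from elsewhere, and the reduction is where the real content lies. The paper's own proof is a one-line citation of formula (3.4) and Corollary 3.1.7 of \cite{ni-saito}; formula (3.4) is precisely the arithmetic A'Campo formula (quoted in this monograph in Theorem \ref{acampo}), i.e.\ the assertion that the tame monodromy zeta function of $C$ equals $\prod_{i\in I}(t^{N'_i}-1)^{-\chi(E_i^o)}$ with the prime-to-$p$ parts $N'_i$. In your write-up this is the ``crucial point'' that $R\psi^t\Q_\ell$, with its $\sigma$-action, only sees $(\mathscr{C}_k)_{\red}$ and the $N'_i$, and you justify it by pointing to Section \ref{sss-desingsetup} and Section \ref{subsec-loccomp}. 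Those sections do not prove it: they compute normalizations of $\mathscr{C}\times_R R(d)$ for $d\in\N'$, not stalks of tame nearby cycles, and passing from one to the other (for instance by realizing the tame Milnor fibre at a closed point as a limit over $d\in\N'$ of the fibres of $\mathscr{C}_d\to\mathscr{C}$ and controlling both the cohomology of these fibres and the $\mu_d(k)$-actions, or by a logarithmic nearby-cycle computation) is precisely the nontrivial work in the wild case, above all at an intersection point of two components whose multiplicities are both divisible by $p$ --- the configuration you yourself identify as the main difficulty and then ``grant''. As written, your argument for (1) is therefore a derivation of the proposition from the theorem of \cite{ni-saito} that the paper cites, not an independent proof of it; the gap sits exactly at the step that makes the statement nontrivial.

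By contrast, your part (2) is complete once (1) is available, and it matches the intended content of \cite[3.1.7]{ni-saito}: the cyclotomic bookkeeping is correct ($b_m=b'_m$ for $p\nmid m$, $b'_m=0\leq b_m$ for $p\mid m$, nonnegativity of all exponents coming from the polynomiality of $P_C$ and the fact that $P'_C$ is a characteristic polynomial), and equality is equivalent to $\dim_{\Q_\ell}H^1(C\times_K K^t,\Q_\ell)=2g$, i.e.\ to triviality of the $P$-action on $H^1(C\times_K K^s,\Q_\ell)$, since $H^1(C\times_K K^t,\Q_\ell)$ is the space of $P$-invariants and $P$ acts through a finite $p$-group. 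Note that with the cohomological notion of tame ramification for curves used in \cite{ni-saito}, this last condition is the definition, so the appeal to Saito's criterion \cite[3.8]{Saito} --- and the genus-one-without-rational-point caveat that you mention but do not actually treat --- is only needed if one insists on reading ``tamely ramified'' as ``semi-stable reduction after a finite tame extension''.
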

 \begin{proof}
This follows from formula (3.4) and Corollary 3.1.7 in
\cite{ni-saito}.
 \end{proof}
 %It would be quite interesting to give a cohomological
 %interpretation for $P_C(t)$ in the wildly ramified case.
 We will
 now explain the behaviour of $P_C(t)$ under tamely ramified base
 change.

\begin{definition}
 For every monic
polynomial
$$Q(t)=\prod_{j=1}^r (t-\alpha_j)\in \C[t]$$ and every integer
$d>0$, we set
$$Q^{(d)}(t)=\prod_{j=1}^r (t-\alpha^d_j)\in \C[t].$$
\end{definition}
\begin{lemma}\label{lemma-combinat}
Consider integers $a,b,d>0$, and set $e=\gcd(a,d)$. If
$Q(t)=(t^a-1)^b$, then
$$Q^{(d)}(t)=(t^{a/e}-1)^{eb}.$$
\end{lemma}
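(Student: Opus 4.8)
The plan is to reduce to the case $b=1$ and then factor everything over $\C$. First I would observe that the operation $Q\mapsto Q^{(d)}$ is multiplicative: if $Q=Q_1Q_2$ is a product of monic polynomials, then $Q^{(d)}=Q_1^{(d)}Q_2^{(d)}$, since $Q^{(d)}$ is just the monic polynomial obtained by raising every root of $Q$ to the $d$-th power. In particular $Q^{(d)}(t)=\bigl((t^a-1)^{(d)}\bigr)^{b}$, so it suffices to prove that $(t^a-1)^{(d)}=(t^{a/e}-1)^{e}$.

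For this, write the roots of $t^a-1$ as the $a$-th roots of unity $\zeta^0,\zeta^1,\dots,\zeta^{a-1}$, where $\zeta$ is a fixed primitive $a$-th root of unity in $\C$. By definition,
$$(t^a-1)^{(d)}=\prod_{j=0}^{a-1}\bigl(t-\zeta^{jd}\bigr).$$
The key step is to analyze the multiset $\{\,\zeta^{jd}:0\le j<a\,\}$. Multiplication by $d$ on the cyclic group $\Z/a\Z$ has image the unique subgroup of order $a/e$, namely $e\,\Z/a\Z$, and each element of this image has exactly $e$ preimages (the fibers are cosets of $\ker$, which has order $e=\gcd(a,d)$). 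Under $j\mapsto\zeta^j$, the subgroup $e\,\Z/a\Z$ corresponds to $\{\zeta^{em}:0\le m<a/e\}$, and since $\zeta^{e}$ is a primitive $(a/e)$-th root of unity this is exactly the set of $(a/e)$-th roots of unity. Hence each $(a/e)$-th root of unity occurs exactly $e$ times in the displayed product, so
$$(t^a-1)^{(d)}=\Bigl(\prod_{m=0}^{a/e-1}\bigl(t-\zeta^{em}\bigr)\Bigr)^{e}=(t^{a/e}-1)^{e},$$
and the lemma follows by raising both sides to the $b$-th power.

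There is no real obstacle here; this is an elementary computation with roots of unity. The only point that deserves a moment's care is the bookkeeping in the counting argument — that the fibers of multiplication by $d$ on $\Z/a\Z$ all have cardinality $\gcd(a,d)$, and that the image of this map, transported through $j\mapsto\zeta^j$, is precisely the group of $(a/e)$-th roots of unity. (If one prefers to avoid complex roots of unity, the same argument can be phrased purely in terms of the substitution: $(t^a-1)^{(d)}$ is monic of degree $a$, its roots are the $d$-th powers of the $a$-th roots of unity, and the above counting shows these are the $(a/e)$-th roots of unity with uniform multiplicity $e$.)
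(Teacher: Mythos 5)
Your proof is correct and is essentially the paper's argument: the paper also proves the lemma by observing that $\mu_a(\C)\to\mu_{a/e}(\C),\ \zeta\mapsto\zeta^d$ is surjective with every fiber of cardinality $e$, which is exactly your counting of the multiset $\{\zeta^{jd}\}$. Your reduction to $b=1$ via multiplicativity of $Q\mapsto Q^{(d)}$ is implicit in the paper's one-line proof, so the two arguments coincide.
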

\begin{proof}
The morphism
$$\mu_a(\C)\to \mu_{a/e}(\C):\zeta\mapsto \zeta^d$$ is a
surjection, and every fiber contains precisely $e$ elements.
\end{proof}

  If $C$
 is tamely ramified, then Proposition \ref{def-charpol} implies at
 once that $P_{C(d)}(t)=P^{(d)}_C(t)$ for every element $d$ in $\N'$.
 The following proposition states that this remains true in the wildly
 ramified case (see also \cite[2.6]{lorenzini}).

 \begin{prop}\label{prop-charpolbc}
Let $d$ be an element in $\N'$. For every $i\in I$, we set
$d_i=\gcd(d,N_i)$. Then we have
$$P_{C(d)}(t)=P^{(d)}_C(t)=(t-1)^2\prod_{i\in I}(t^{N_i/d_i}-1)^{-d_i\chi(E_i^o)}.$$
 \end{prop}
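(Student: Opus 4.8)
The strategy is to prove the two equalities in the displayed formula separately: the identity $P^{(d)}_C(t)=(t-1)^2\prod_{i\in I}(t^{N_i/d_i}-1)^{-d_i\chi(E_i^o)}$ is a purely formal consequence of Lemma \ref{lemma-combinat}, whereas the identity $P_{C(d)}(t)=(t-1)^2\prod_{i\in I}(t^{N_i/d_i}-1)^{-d_i\chi(E_i^o)}$ requires the explicit description of the $sncd$-model $\mathscr{C}(d)$ furnished by Proposition \ref{prop-normdesing}.

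For the formal identity, I would first note that $Q\mapsto Q^{(d)}$ is multiplicative on monic polynomials, hence extends to a group homomorphism on the subgroup of $\C(t)^{*}$ generated by them. Applying Lemma \ref{lemma-combinat} with $b=1$ and $e=d_i=\gcd(N_i,d)$ gives $\bigl((t^{N_i}-1)\bigr)^{(d)}=(t^{N_i/d_i}-1)^{d_i}$, while $\bigl((t-1)^2\bigr)^{(d)}=(t-1)^2$ since $\gcd(1,d)=1$. Now $P_C(t)$, although written in Definition \ref{def-charpol} as a product of factors $(t^{N_i}-1)$ with possibly negative integer exponents (and $(t-1)^2$), is a genuine polynomial of degree $2g$, so its multiset of complex roots equals the corresponding (necessarily effective) virtual multiset of roots of unity; as $z\mapsto z^{d}$ commutes with forming and realizing such virtual multisets, raising all roots to the $d$-th power turns the factor $(t^{N_i}-1)^{-\chi(E_i^o)}$ into $(t^{N_i/d_i}-1)^{-d_i\chi(E_i^o)}$, which is the claim. (For tamely ramified $C$ this reproduces the remark preceding the proposition.)

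For the geometric identity, I would use that $\mathscr{C}(d)$ is an $sncd$-model of $C(d)$ by Proposition \ref{prop-normdesing}, so Definition \ref{def-charpol} gives $P_{C(d)}(t)=(t-1)^2\prod_{F}(t^{N_F}-1)^{-\chi(F^o)}$, the product ranging over the irreducible components $F$ of $\mathscr{C}(d)_k$. These fall into two classes. The exceptional components of the minimal desingularization $\rho:\mathscr{C}(d)\to\mathscr{C}_d$ are, by Proposition \ref{prop-normdesing}(3), rational curves meeting the rest of $\mathscr{C}(d)_k$ in precisely two points, so $\chi(F^o)=0$ and they contribute a trivial factor. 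The remaining components are the strict transforms of the smooth curves $F_{ij}$ of Proposition \ref{prop-normdesing}(1) lying over $E_i$; each is isomorphic to $F_{ij}$ (the $F_{ij}$ are smooth and the singular points of $\mathscr{C}_d$ are isolated) and has multiplicity $N_i/d_i$ in $\mathscr{C}(d)_k$ by Proposition \ref{prop-normdesing}(1). Hence $P_{C(d)}(t)=(t-1)^2\prod_{i\in I}\prod_{j}(t^{N_i/d_i}-1)^{-\chi(F_{ij}^o)}$, and it remains to prove that $\sum_{j}\chi(F_{ij}^o)=d_i\,\chi(E_i^o)$ for every $i\in I$.

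To establish this last identity I would argue as follows. The Euler characteristic of $F_{ij}^o$ computed inside $\mathscr{C}(d)$ equals that of $F_{ij}$ with its intersection points with the other components of $(\mathscr{C}_d)_k$ removed, since $\rho$ merely replaces each such point lying at a singular point of $\mathscr{C}_d$ by a single point where $F_{ij}$ meets an exceptional chain, changing neither the number of removed points nor the Euler characteristic. By the local description underlying Proposition \ref{prop-normdesing}(1) (two components through each singular point of $\mathscr{C}_d$, one over each of the two components of $\mathscr{C}_k$ through the image point), distinct components lying over the same $E_i$ never meet, so $\bigsqcup_j F_{ij}^o$ is exactly $\mathscr{C}_d\times_{\mathscr{C}}E_i^o$, which by Proposition \ref{prop-normdesing}(1) is a finite étale Galois cover of $E_i^o$ of degree $d_i$; multiplicativity of the $\ell$-adic Euler characteristic under finite étale morphisms then yields $\sum_j\chi(F_{ij}^o)=d_i\,\chi(E_i^o)$, completing the proof. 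The main obstacle is precisely the bookkeeping in this paragraph — verifying that passing from $\mathscr{C}$ through $\mathscr{C}_d$ to $\mathscr{C}(d)$ distorts neither the Euler characteristics of the horizontal components nor renders the exceptional locus visible — but all the geometric input needed is already packaged in Proposition \ref{prop-normdesing}.
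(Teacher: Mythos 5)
Your proof is correct and follows essentially the same route as the paper: the formal identity $P_{C(d)}=P_C^{(d)}$ via multiplicativity of $Q\mapsto Q^{(d)}$ and Lemma \ref{lemma-combinat}, and the geometric identity by computing on the $sncd$-model $\mathscr{C}(d)$, discarding the exceptional components (which satisfy $\chi(F^o)=0$ by Proposition \ref{prop-normdesing}(3)) and using the degree-$d_i$ cover of $E_i^o$ to obtain $\sum_j\chi(F_{ij}^o)=d_i\,\chi(E_i^o)$. The only cosmetic difference is that you invoke multiplicativity of the Euler characteristic for the (étale) cover of $E_i^o$ and spell out the bookkeeping through $\mathscr{C}_d$, where the paper appeals to additivity of $\chi$ and Hurwitz' theorem.
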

\begin{proof}
We will compute $P_{C(d)}(t)$ on the $sncd$-model $\mathscr{C}(d)$
for $C(d)$ from Section \ref{subsec-mindesing}. By Proposition
 \ref{prop-normdesing}(3), every exceptional component $F$ of the
minimal desingularization $\rho:\mathscr{C}(d)\to \mathscr{C}_d$
satisfies $\chi(F^o)=0$, so that these exceptional components do
not contribute to $P_{C(d)}(t)$. For every $i\in I$, we have that
$$\mathscr{C}(d)\times_{\mathscr{C}}E_i^o$$ is a disjoint
union of strata $F^o_{ij}$, with $F_{ij}$ the irreducible
components of $\mathscr{C}(d)_k$ dominating $E_i$. By Proposition
 \ref{prop-normdesing}(1), each of these components $F_{ij}$ has
multiplicity $N_i/d_i$, and
$$\mathscr{C}(d)\times_{\mathscr{C}}E_i^o\to E_i^o$$ is a Galois
cover of degree $d_i$. Since $d$ is prime to $p$, we know that
$$\sum_{j}\chi(F_{ij}^o)=\chi(\mathscr{C}(d)\times_{\mathscr{C}}E_i^o)=d_i\cdot \chi(
E_i^o)$$ where the first equality follows from the additivity of
the Euler characteristic and the second from Hurwitz' theorem.
 Thus
$$P_{C(d)}(t)=(t-1)^2\prod_{i\in
I}(t^{N_i/d_i}-1)^{-d_i\chi(E_i^o)}.$$ It remains to show that
this expression is equal to $P_C^{(d)}(t)$. Since the operator
$Q(t)\mapsto Q^{(d)}(t)$ is clearly multiplicative, this follows
immediately from Lemma \ref{lemma-combinat}.
\end{proof}

\subsection{The stabilization index}
\sss  We keep the notations introduced in \eqref{sssec-notC}.

\begin{definition}\label{def-prin}
We say that $E_i$ is a principal component of $\mathscr{C}_k$ if
the genus of $E_i$ is non-zero
 or $ E_i \setminus E_i^{\circ} $ contains at least three
points. We denote by $I_{\prin} \subseteq I$ the subset
corresponding to principal components of $\mathscr{C}_k$.
\end{definition}

\begin{definition}\label{def-stabind}
We set
$$ e(\mathscr{C}) = \lcm_{i \in I_{\prin}}\{N_i\}. $$
If $\mathscr{C}_{\min}$ is a relatively minimal $sncd$-model of
$C$, then we set
$$ e(C) = e(\mathscr{C}_{\min}). $$
 We call $ e(C) $ the stabilization index of $C$.
\end{definition}
\noindent If $g=0$, then $C=\mathbb{P}^1_K$ and the special fiber
of every relatively minimal $sncd$-model of $C$ is
 isomorphic to $\mathbb{P}^1_k$, so that $e(C)=1$. If $g>0$, then
 $C$ has a unique minimal $sncd$-model, so that it is clear that
 $e(C)$ is well-defined. It should be noted that the value $e(\mathscr{C})$ depends on the
model $\mathscr{C}$ and not only on $C$. Every irreducible
component $E$ of $\mathscr{C}_k$ can be turned into a principal
component by blowing up $3- |E\setminus E^o|$ distinct points on
$E^o$. On the other hand, $e(C)$ always divides $e(\mathscr{C})$,
since no additional principal components can be created in the
contraction of $\mathscr{C}$ to a relatively minimal $sncd$-model.

 In the tamely ramified case, the stabilization index can be
 interpreted as follows.
\begin{prop}\label{prop-min}  Assume that $g\neq 1$ or that
$C$ is an elliptic curve. Let $L$ be the minimal extension of $K$
in $K^s$ such that $C\times_K L$ has semi-stable reduction, and
let $\sigma$ be a topological generator of the tame inertia group
$\Gal(K^t/K)$.

 The curve $C$ is tamely ramified if and only
if $e(C)$ is prime to $p$.  In that case, $e(C)$ is equal to
$[L:K]$, and this value is the smallest element $d$ in $\N'$ such
that $\sigma^d$ acts unipotently on
$$H^1(C\times_K K^t,\Q_\ell).$$
\end{prop}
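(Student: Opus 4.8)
The plan is to reduce all three assertions to the single statement
\[
(\star)\qquad \text{for every } d\in\N',\quad C(d)\text{ has semi-stable reduction}\iff e(C)\mid d,
\]
and then to prove $(\star)$ by a model-theoretic analysis. Granting $(\star)$: if $e(C)$ is prime to $p$ then $e(C)\in\N'$, so $C(e(C))$ has semi-stable reduction, hence $L\subseteq K(e(C))$ and $[L:K]\mid e(C)$ is prime to $p$, so $C$ is tamely ramified; conversely, if $C$ is tamely ramified then $L/K$ is totally tamely ramified (as $k$ is algebraically closed), so $[L:K]\in\N'$ and $L=K([L:K])$, and $(\star)$ applied with $d=[L:K]$ gives $e(C)\mid[L:K]$, so $e(C)$ is prime to $p$. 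Combining the two divisibilities coming from $(\star)$ with $d=e(C)$ and with $d=[L:K]$ yields $e(C)=[L:K]$ in the tame case. For the cohomological description, assume $C$ tamely ramified; then by Proposition \ref{prop-charpol} the wild inertia acts trivially on $H^{1}(C\times_{K}K^{s},\Q_{\ell})$, so this representation agrees with $H^{1}(C\times_{K}K^{t},\Q_{\ell})$ as a $\Gal(K^{t}/K)$-module, and $\sigma^{d}$ topologically generates $\Gal(K^{t}/K(d))$. By T.~Saito's semi-stability criterion (applicable by the hypothesis on $C$, since base change preserves genus and ellipticity), $C(d)$ has semi-stable reduction if and only if $\sigma^{d}$ acts unipotently on $H^{1}(C\times_{K}K^{t},\Q_{\ell})$; by $(\star)$ the least such $d\in\N'$ is then $e(C)=[L:K]$.

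It remains to prove $(\star)$. First I would fix the minimal $sncd$-model $\mathscr{C}$, with $\mathscr{C}_{k}=\sum_{i\in I}N_{i}E_{i}$, and for $d\in\N'$ let $\mathscr{C}(d)$ be the $sncd$-model of $C(d)$ produced in Proposition \ref{prop-normdesing}. Since blowing down a $(-1)$-curve inside the category of $sncd$-models preserves reducedness of the special fiber, $C(d)$ is semi-stable if and only if the minimal $sncd$-model of $C(d)$ has reduced special fiber, and the latter model is obtained from $\mathscr{C}(d)$ by a chain of such blow-downs. For ``$C(d)$ semi-stable $\Rightarrow e(C)\mid d$'': by Proposition \ref{prop-normdesing}(1) a component of $\mathscr{C}(d)_{k}$ lying over a principal component $E_{i}$ has multiplicity $N_{i}/\gcd(N_{i},d)$ and is again principal (by Hurwitz if $g_{i}\geq1$; and if $E_{i}$ is rational meeting the rest in at least three points, the cover is \'etale over $E_{i}^{\circ}$ of positive degree, so its open stratum again has negative Euler characteristic). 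A blow-down in the $sncd$-category contracts a $(-1)$-curve meeting at most two other components, so it can never contract a principal component; hence the minimal $sncd$-model of $C(d)$ still carries over each principal $E_{i}$ a principal component of multiplicity $N_{i}/\gcd(N_{i},d)$, and semi-stability forces $N_{i}\mid d$ for every such $i$, i.e.\ $e(C)\mid d$.

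The hard part will be the converse implication in $(\star)$: that $e(C)\mid d$ forces $C(d)$ to be semi-stable. Here one must understand the non-principal components of $\mathscr{C}(d)_{k}$, which are rational curves meeting the rest in at most two points; together with the exceptional components of $\rho\colon\mathscr{C}(d)\to\mathscr{C}_{d}$ (described combinatorially in Proposition \ref{prop-normdesing}(3)) they organize into Hirzebruch--Jung chains that either join two principal components or terminate in a rational tail. Running the adjunction relations $N_{j-1}+N_{j+1}=(-E_{j}^{2})N_{j}$ along such a chain in $\mathscr{C}_{k}$ shows that the multiplicity of every chain component, and in particular of every tail, divides the multiplicity of the principal component at the far end of its chain; as $e(C)\mid d$ kills the multiplicities of all principal components after base change, I would combine this with the local computations of Section \ref{subsec-loccomp} and Proposition \ref{prop-normdesing}(3) to show that in $\mathscr{C}(d)_{k}$ each such chain again joins multiplicity-one components and can be contracted step by step --- its extremal components successively becoming $(-1)$-curves --- down to a reduced special fiber. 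The technical heart is the bookkeeping needed to rule out that the interplay of principal components and rational tails leaves, after base change and minimal desingularization, a surviving component of multiplicity $>1$. An equivalent packaging works with the characteristic polynomial: by Proposition \ref{prop-charpol} one has $C$ tamely ramified $\iff P'_{C}=P_{C}$, by Proposition \ref{prop-charpolbc} the base-change law reads $P_{C(d)}=P_{C}^{(d)}$, and one then compares the roots of unity surviving in $P_{C}$ with $e(C)=\lcm_{i\in I_{\prin}}N_{i}$.
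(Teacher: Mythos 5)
Your reduction of the statement to the equivalence $(\star)$ ($C(d)$ semi-stable $\iff e(C)\mid d$, for $d\in\N'$) is logically sound, and your proof of the direction ``$C(d)$ semi-stable $\Rightarrow e(C)\mid d$'' is essentially correct (it is the same argument as in the proof of Proposition \ref{prop-e(C)}: components over principal components stay principal with multiplicity $N_i/\gcd(N_i,d)$, and principal components survive the contraction to the minimal $sncd$-model). But the converse direction of $(\star)$ is where the entire content of the proposition sits -- it is equivalent to ``$e(C)$ prime to $p$ implies $C$ tame and $e(C)=[L:K]$'' -- and you do not prove it: you defer it to unspecified ``bookkeeping'', and the one concrete lemma you propose as its basis is false. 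Namely, it is not true that every component of a rational chain has multiplicity dividing that of the principal component at the far end of the chain: for Kodaira type $II^*$ the principal component has multiplicity $6$ and the adjacent chain component on the long arm has multiplicity $5$. The intersection relations $N_{j-1}+N_{j+1}=(-E_j^2)N_j$ give convexity only when $-E_j^2\geq 2$, and both $\mathscr{C}(d)$ and the minimal $sncd$-model can contain $(-1)$-curves (already for type $II$ the principal component has self-intersection $-1$), so the contraction analysis you envisage is genuinely delicate and is not supplied. The alternative ``packaging via $P_C(t)$'' is circular: identifying roots of $P_C$ with monodromy eigenvalues (Proposition \ref{prop-charpol}) requires tameness, which is what has to be proven.

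For comparison, the paper does not attempt a self-contained combinatorial proof. The implication ``tame $\Rightarrow e(C)=[L:K]$'' is quoted from earlier work (\cite[7.5]{Halle-stable}, \cite[3.4.4]{ni-saito}); the converse ``$e(C)$ prime to $p$ $\Rightarrow$ tame'' uses that a component with $p\mid N_i$ is then non-principal, hence $\chi(E_i^o)\geq 0$, combined with the arithmetic inputs \cite[3.1.5]{ni-saito} (forcing $\chi(E_i^o)=0$ for such components) and \cite[3.1.7]{ni-saito} (tameness criterion); the cohomological characterization is Saito's criterion \cite[3.11]{Saito}. Some such arithmetic input about wildly ramified fibers is exactly what your purely model-theoretic plan is missing; as it stands, the proposal has a genuine gap at its acknowledged ``hard part''.
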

\begin{proof}
% We may suppose that $p>1$, since otherwise, there is nothing
%to prove.
 First, assume that $C$ is tamely ramified.  Then
$e(C)=\dgr{L}{K}$ by \cite[7.5]{Halle-stable} or
\cite[3.4.4]{ni-saito}. Since $\dgr{L}{K}$ is prime to $p$,  we
see that $e(C)$ is prime to $p$. Suppose, conversely, that $e(C)$
is prime to $p$, and that $\mathscr{C}$ is relatively minimal.
Then we have $\chi(E_i^o)\geq 0$ for every component $E_i$ of
$\mathscr{C}_k$ such that $N_i$ is not prime to $p$, because such
a component can not be principal. Now \cite[3.1.5]{ni-saito}
implies that $\chi(E_i^o)= 0$ for every such component $E_i$, and
\cite[3.1.7]{ni-saito} tells us that $C$ is tamely ramified. The
remainder of the statement is a consequence of \cite[3.11]{Saito};
see also \cite[3.4.2]{ni-saito}.
\end{proof}
\noindent The equality $e(C)=\dgr{L}{K}$ can fail for wildly
ramified curves: even the prime-to-$p$ parts of $e(C)$ and
$\dgr{L}{K}$ can be different, as is illustrated by Examples
\ref{ex-diff} and \ref{ex-diff2}.
 It
would be quite interesting to find a cohomological interpretation
of $e(C)$ in the wildly ramified case.

\begin{example}\label{ex-diff}
Assume that $k$ has characteristic $2$ and that $R=W(k)$. Let $C$
be the elliptic $K$-curve with Weierstrass equation
$$ y^2 = x^3 + 2. $$
 It is easily computed, using Tate's algorithm, that this equation is minimal, and that $C$ has
reduction type $II$ over $R$ and acquires good reduction over the
wild Kummer extension $L=K(\sqrt{2})$ of $K$. Thus $e(C)=6$
whereas $\dgr{L}{K}=2$.
\end{example}

\begin{example}\label{ex-diff2}
Assume that $k$ has characteristic $2$ and that $R=k[[\pi]]$. Let $C$
be the elliptic $K$-curve with Weierstrass equation
$$ y^2 + \pi^2 y = x^3 + \pi^3 . $$
Using Tate's algorithm, we find that $C$ has reduction type
$I_0^*$ over $R$, so that $e(C)=2$.

On the other hand, let $\alpha$ be an element of $K^s$ satisfying
$$ \alpha^2 + \pi^2 \alpha = \pi^3, $$ and set $K'=K(\alpha)$.
This is a quadratic Artin-Schreier extension of $K$.
 Let $ L = K'(3) $ be the unique tame extension of $K'$ in $K^s$ of
degree $3$. Then it is easy to check that $C\times_K L$ has good
reduction and that $L$ is the minimal extension of $K$ with this
property, so that $[L:K] = 6$.
\end{example}

 \sss  Thanks to Propositions
\ref{prop-charpol} and \ref{prop-min}, one can compute $e(C)$ from
the characteristic polynomial $P_C(t)$ if $C$ is tamely ramified.
 We'll now show that this recipe is also valid in the wild case.
This will then allow us to control the behaviour of $e(C)$ under
tame base change, using Proposition \ref{prop-charpolbc}.

\begin{prop}\label{prop-charpol-e}
Assume that $g\neq 1$ or that $C$ is an elliptic curve. The
stabilization index $e(C)$ is the smallest integer $e>0$ such that
$\zeta^e=1$ for every complex root $\zeta$ of $P_C(t)$.
\end{prop}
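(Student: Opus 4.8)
The plan is to reduce the wildly ramified case to the tamely ramified one by means of Proposition \ref{prop-charpolbc}, i.e.\ by passing to a suitable tame base change. First I would fix a relatively minimal $sncd$-model $\mathscr{C}$ of $C$ with special fiber $\mathscr{C}_k=\sum_{i\in I}N_iE_i$, so that by Definition \ref{def-stabind} we have $e(C)=\lcm_{i\in I_{\prin}}\{N_i\}$, where $I_{\prin}$ is the set of principal components. Write $e:=e(C)$ and let $e'$ denote the smallest positive integer such that $\zeta^{e'}=1$ for every complex root $\zeta$ of $P_C(t)$. The goal is the equality $e=e'$.

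For the inequality $e'\mid e$: by Proposition \ref{prop-charpolbc} applied with $d=e$ we get
$$P_{C(e)}(t)=(t-1)^2\prod_{i\in I}(t^{N_i/d_i}-1)^{-d_i\chi(E_i^o)},\qquad d_i=\gcd(e,N_i).$$
Now $C(e)$ is tamely ramified: indeed its minimal $sncd$-model is, by Proposition \ref{prop-normdesing}, obtained from $\mathscr{C}$ by base change to $R(e)$, normalization and minimal desingularization, and every principal component $E_i$ of $\mathscr{C}_k$ has $N_i\mid e$, hence $d_i=N_i$ and the corresponding component of $\mathscr{C}(e)_k$ has multiplicity $N_i/d_i=1$; one checks that after contracting to a relatively minimal model no principal component with multiplicity divisible by $p$ survives, so Proposition \ref{prop-min} gives that $C(e)$ is tamely ramified and $e(C(e))$ is prime to $p$. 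Since for a tamely ramified curve Propositions \ref{prop-charpol} and \ref{prop-min} identify $e(C(e))$ with the order of the roots of $P_{C(e)}(t)=P_C^{(e)}(t)$, all roots $\zeta^e$ of $P_C^{(e)}(t)$ are roots of unity of order dividing $e(C(e))$; hence every root $\zeta$ of $P_C(t)$ satisfies $\zeta^{e\cdot e(C(e))}=1$. A little more care — using that $e(C(e))$ and the contraction process do not reintroduce factors — shows $\zeta^e=1$ already, i.e.\ $e'\mid e$; in any case $P_C(t)$ has all roots roots of unity, so $e'$ is well-defined.

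For the reverse divisibility $e\mid e'$: fix a principal component $E_i$, $i\in I_{\prin}$; I must show $N_i\mid e'$. The factor $(t^{N_i}-1)^{-\chi(E_i^o)}$ contributes primitive $N_i$-th roots of unity to $P_C(t)$ unless they are cancelled by factors coming from other components or by the $(t-1)^2$ in front. Because $E_i$ is principal, $\chi(E_i^o)=2-2g_i-|E_i\setminus E_i^o|<0$, so the exponent $-\chi(E_i^o)$ is strictly positive; the delicate point — and this is the main obstacle — is to rule out cancellation, i.e.\ to show that the multiplicity of a primitive $N_i$-th root of unity as a root of $P_C(t)$ is strictly positive. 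This is exactly the content (in the tame case) of the non-cancellation argument behind the degree count in \cite[3.1.6]{ni-saito} and \cite{lorenzini}, and I would invoke that analysis: summing $\chi(E_j^o)$ over all $j$ with $N_i\mid N_j$ and over the intersection points contributes only finitely much, and one shows by a direct combinatorial estimate (as in Lorenzini's proof, \cite[2.6]{lorenzini}) that the total exponent of $\Phi_{N_i}(t)$ in $P_C(t)$ is $\ge$ the number of principal components of multiplicity exactly $N_i$, hence positive. Therefore $\Phi_{N_i}(t)\mid P_C(t)$, so $N_i\mid e'$; taking the lcm over $i\in I_{\prin}$ gives $e\mid e'$. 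Combining the two divisibilities yields $e=e'$, as claimed.

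The hardest part is this last non-cancellation step: a priori, primitive $N_i$-th roots of unity contributed by a principal component could be killed by contributions from non-principal components of the same multiplicity $N_i$ (which have $\chi=0$, so actually contribute nothing) or, more worryingly, by components of multiplicity a proper multiple of $N_i$. I expect the cleanest route is to not prove non-cancellation directly but to again use Winters' theorem (Theorem \ref{theo-winters}) to replace $\mathscr{C}$ by a model $\mathscr{D}$ over $\C[[t]]$ with the same combinatorial data; then $D$ is automatically tamely ramified, so $P_D(t)=P_C(t)$ and by Proposition \ref{prop-min} the order of its roots is $e(\mathscr{D}_{\min})=e(\mathscr{C}_{\min})=e(C)$, since $e(\mathscr{C})$ depends only on the combinatorial data. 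This sidesteps the combinatorial estimate entirely and reduces everything to the already-established tame case.
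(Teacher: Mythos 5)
Your proposal does not close the hard direction, and the route you finally recommend proves a weaker statement than the one asserted. Concretely: (a) the non-cancellation step (that for each multiplicity $N>1$ of a principal component the primitive $N$-th roots of unity really occur among the roots of $P_C(t)$) is exactly what needs proving, and you leave it as an appeal to \cite[3.1.6]{ni-saito} and \cite[2.6]{lorenzini}; those results give the degree of $P_C(t)$ and its behaviour under tame base change, not the survival of primitive $N_i$-th roots of unity. The paper settles precisely this point by quoting \cite[3.2.2 and 3.2.3]{ni-saito}: a principal component of multiplicity $N$ forces $\mathscr{C}$ not to be $N$-tame, and this implies $N\mid e'$. (b) Your argument for the easy divisibility $e'\mid e(C)$ is broken in the only case at stake: you apply Proposition \ref{prop-charpolbc} and the model constructions with $d=e(C)$, but all of these require $d\in \N'$, while $p$ divides $e(C)$ whenever $C$ is wildly ramified (Proposition \ref{prop-min}), so $K(e(C))$ is not even available. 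No base change is needed here: in $P_C(t)=(t-1)^2\prod_{i}(t^{N_i}-1)^{-\chi(E_i^o)}$ only components with $\chi(E_i^o)<0$ contribute positive exponents, and such components are principal, so every root $\zeta$ already satisfies $\zeta^{N_i}=1$ for some $N_i$ dividing $e(C)$. This one-line observation is the paper's entire argument for that direction.

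Your fallback via Winters' theorem is attractive and is indeed the technique the monograph uses elsewhere (Proposition \ref{prop-compfu}, Theorem \ref{Theorem-jumps}, Corollary \ref{cor-minindex}), and under the extra hypothesis that $C$ has index one it does work: the combinatorial data determine both $P_C(t)$ and $e(\mathscr{C})$, relative minimality is preserved, and in the genus one case the index-one curve $D$ over $\C((t))$ has a rational point, so Propositions \ref{prop-charpol} and \ref{prop-min} apply to $D$ and give the statement there, whence for $C$. But Theorem \ref{theo-winters} requires $C$ to admit a divisor of degree one (equivalently $\gcd_i N_i=1$), a hypothesis that Proposition \ref{prop-charpol-e} does not impose and that the paper's own proof, which goes through the $N$-tameness results of \cite{ni-saito} rather than through Winters, never needs. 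So as written the proposal establishes the proposition only for index-one curves; to prove it as stated you must either supply the non-cancellation argument directly or cite the results the paper actually uses.
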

\begin{proof}
Assume that $\mathscr{C}$ is a relatively minimal $sncd$-model of
$C$. Let $e$ be the smallest strictly positive integer such that
$\zeta^e=1$ for every complex root $\zeta$ of $P_C(t)$. It is
clear that $e|e(C)$, because every irreducible component $E$ of
$\mathscr{C}_k$ with $\chi(E^o)<0$ is principal. Thus it suffices
to show that $N$ divides $e$ if $N>1$ is the multiplicity of a
principal component in $\mathscr{C}_k$. This follows at once from
\cite[3.2.3]{ni-saito}, because \cite[3.2.2]{ni-saito} implies
that $\mathscr{C}$ is not $N$-tame.
\end{proof}

%\begin{cor}\label{cor-primetop}
%Let $L$ be the minimal extension of $K$ in $K^s$ such that
%$C\times_K L$ has semi-stable reduction. The stabilization index
%$e(C)$ is divisible by the prime-to-$p$ part of $[L:K]$.
%\end{cor}
%\begin{proof}
%If $g\neq 1$ or $C$ is an elliptic curve, this follows from
%Propositions \ref{prop-charpol} and \ref{prop-charpol-e}, since
%$\Gal(K^t/L\cap K^t)$ acts unipotently on $$H^1(C\times_K
%K^t,\Q_\ell).$$ Thus we may assume that $C$ is a genus one curve
%without rational point. Then $\Jac(C)$ is an elliptic curve, and
%it follows from the discussion in Section
%\ref{subsection-genus1withoutpoint} below that $e(C)$ is a
%multiple of $e(\Jac(C))$. Since the field $L$ is the same for $C$
%and $\Jac(C)$, the result now follows from the elliptic curve
%case.
%\end{proof}
%\begin{question}
%Is it true that $[L:K]$ always divides $e(C)$?
%\end{question}

\begin{prop}\label{prop-e(C)}
For every integer $ d \in \mathbb{N}' $, we have that
$$ e(\mathscr{C}(d)) = e(\mathscr{C})/\gcd(e(\mathscr{C}),d). $$
\end{prop}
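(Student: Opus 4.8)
The plan is to identify the principal components of $\mathscr{C}(d)_k$ in terms of those of $\mathscr{C}_k$ using the explicit description in Proposition \ref{prop-normdesing}, and then to reduce the claimed equality to an elementary computation with least common multiples. Recall that every irreducible component of $\mathscr{C}(d)_k$ is either an exceptional curve of the minimal desingularization $\rho\colon\mathscr{C}(d)\to\mathscr{C}_d$, or one of the smooth proper curves $F_{ij}$ lying over an irreducible component $E_i$ of $\mathscr{C}_k$, and in the latter case it has multiplicity $N_i/\gcd(N_i,d)$ in $\mathscr{C}(d)_k$. By Proposition \ref{prop-normdesing}(3), every exceptional component is a rational curve meeting the rest of $\mathscr{C}(d)_k$ in exactly two points, hence is not principal. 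By parts (2) and (3) of Proposition \ref{prop-normdesing}, if $i\notin I_{\prin}$ — i.e.\ $E_i$ is rational and meets the other components of $\mathscr{C}_k$ in at most two points — then each $F_{ij}$ is rational and meets the rest of $\mathscr{C}(d)_k$ in at most two points, hence is not principal. So every principal component of $\mathscr{C}(d)_k$ is an $F_{ij}$ with $i\in I_{\prin}$, of multiplicity $N_i/\gcd(N_i,d)$.

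Conversely, I would check that for each $i\in I_{\prin}$ at least one $F_{ij}$ is principal. If $g_i\geq 1$, then Riemann--Hurwitz applied to the finite cover $F_{ij}\to E_i$ of smooth proper curves gives $2g(F_{ij})-2\geq \deg(F_{ij}/E_i)\,(2g_i-2)\geq 0$, so $g(F_{ij})\geq 1$ and in fact every $F_{ij}$ over $E_i$ is principal. If $g_i=0$, then $i\in I_{\prin}$ forces $|E_i\setminus E_i^{\circ}|\geq 3$, hence $\chi(E_i^{\circ})\leq -1$; since $d$ is prime to $p$, the Euler-characteristic bookkeeping carried out in the proof of Proposition \ref{prop-charpolbc} gives $\sum_j\chi(F_{ij}^{\circ})=\gcd(N_i,d)\cdot\chi(E_i^{\circ})<0$, so some $F_{ij}$ has $\chi(F_{ij}^{\circ})<0$, which forces $g(F_{ij})\geq 1$ or $|F_{ij}\setminus F_{ij}^{\circ}|\geq 3$, i.e.\ $F_{ij}$ is principal. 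Combined with the previous paragraph, this shows that the multiplicities occurring among the principal components of $\mathscr{C}(d)_k$ are exactly the numbers $N_i/\gcd(N_i,d)$ for $i\in I_{\prin}$, so
$$ e(\mathscr{C}(d))=\lcm_{i\in I_{\prin}}\frac{N_i}{\gcd(N_i,d)}. $$

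It then remains to verify the arithmetic identity $\lcm_{i\in I_{\prin}}\bigl(N_i/\gcd(N_i,d)\bigr)=e(\mathscr{C})/\gcd(e(\mathscr{C}),d)$, where $e(\mathscr{C})=\lcm_{i\in I_{\prin}}N_i$; this is immediate by comparing $\ell$-adic valuations prime by prime, both sides equalling $\max\bigl(\max_{i\in I_{\prin}}v_\ell(N_i)-v_\ell(d),\,0\bigr)$ (and both sides are $1$ when $I_{\prin}=\emptyset$). I expect the only genuine subtlety to lie in the second step — showing that a principal component of $\mathscr{C}_k$ always lies below a principal component of $\mathscr{C}(d)_k$; the case $g_i=0$ is the one that really needs the Euler-characteristic estimate, and reusing the computation already made for Proposition \ref{prop-charpolbc} keeps it short, while the borderline case of a genus-one component of $\mathscr{C}_k$ meeting no other component is subsumed in the Riemann--Hurwitz argument.
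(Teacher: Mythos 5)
Your proof is correct and follows essentially the same route as the paper's: both use Proposition \ref{prop-normdesing} to see that the exceptional components of $\rho$ and the components lying over non-principal components of $\mathscr{C}_k$ are non-principal, that components over $E_i$ have multiplicity $N_i/\gcd(N_i,d)$, and then reduce the claim to the elementary lcm identity. The only (harmless) difference is in the step showing that each principal $E_i$ still contributes: the paper proves that \emph{every} component of $\mathscr{C}(d)_k$ dominating a principal component is itself principal (the genus does not drop and the number of intersection points does not decrease), whereas you prove the weaker but sufficient statement that at least one such component is principal, via Riemann--Hurwitz and the Euler-characteristic count already used for Proposition \ref{prop-charpolbc}.
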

\begin{proof}
Let $E$ be an irreducible component of $\mathscr{C}_k$, and let
$E'$ be an irreducible component of $\mathscr{C}(d)_k$ that
dominates $E$. Then $g(E') \geq g(E)$ by
\cite[IV.2.5.4]{hartshorne} and it
 %follows readily from the computations in \ref{subsec-loccomp}
  is clear that,
if $E$ meets the other components of $\mathscr{C}_k$ in $n$
distinct points, then $ E'$ meets the other components of
$\mathscr{C}(d)_k$ in at least $n$ points. Thus $E'$ is principal
if $E$ is principal. By Proposition \ref{prop-normdesing}(1), the
multiplicity $N'$ of $E'$ in $\mathscr{C}(d)_k$ equals
$N/\gcd(N,d)$, where $N$ is the multiplicity of $E$ in
$\mathscr{C}_k$.

On the other hand, it follows from Proposition
\ref{prop-normdesing}(2) that $E'$ is non-principal if $E$ is
non-principal.
 By Proposition \ref{prop-normdesing}(3) any
exceptional component of $ \rho : \mathscr{C}(d) \to \mathscr{C}_d
$ is non-principal. If we write $ \mathscr{C}(d)_k = \sum_{j \in
I(d)} N_j' E'_j $, it follows that
$$ e(\mathscr{C}(d)) = \lcm_{j \in I(d)_{\prin}} \{N'_j\} = \lcm_{i \in I_{\prin}} \{N_i/\gcd(N_i,d)\} = e(\mathscr{C})/\gcd(e(\mathscr{C}),d). $$
%where the last equality follows from Lemma \ref{lemma-lcm} below.
\end{proof}

\begin{cor}\label{cor-e(C)} Assume that $C$ is not a genus one curve without rational point whose Jacobian has
 additive reduction and potential multiplicative reduction. Then for every $ d \in \mathbb{N}' $, we have that
$$ e(C(d)) = e(C)/\gcd(e(C),d). $$
\end{cor}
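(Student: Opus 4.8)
The plan is to split according to the genus, so that the clean cohomological machinery of the previous subsections covers all but the genuinely exceptional curves.

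\emph{Case A: $g(C)\neq 1$ or $C$ is an elliptic curve.} Since the genus of a smooth proper geometrically connected curve is unchanged by extension of the base field, and since a rational point base changes to a rational point, $C(d)$ again satisfies the hypothesis of Proposition \ref{prop-charpol-e}. That proposition then applies to both $C$ and $C(d)$: $e(C)$ (resp.\ $e(C(d))$) is the smallest positive integer $e$ such that $\zeta^{e}=1$ for every complex root $\zeta$ of $P_C(t)$ (resp.\ of $P_{C(d)}(t)$); equivalently, $e(C)$ is the least common multiple of the orders of the roots of $P_C(t)$. By Proposition \ref{prop-charpolbc} we have $P_{C(d)}(t)=P_C^{(d)}(t)$, whose roots are exactly the $d$-th powers of the roots of $P_C(t)$, and a root of order $m$ is sent to a root of order $m/\gcd(m,d)$. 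Comparing $\ell$-adic valuations for each prime $\ell$: the $\ell$-adic valuation of $e(C(d))$ is $\max_{m}\max\bigl(v_\ell(m)-v_\ell(d),0\bigr)=\max\bigl(v_\ell(e(C))-v_\ell(d),0\bigr)$, where $m$ runs over the orders of the roots of $P_C(t)$; this equals the $\ell$-adic valuation of $e(C)/\gcd(e(C),d)$. Hence $e(C(d))=e(C)/\gcd(e(C),d)$.

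\emph{Case B: $g(C)=1$ and $C$ has no rational point} (so, by hypothesis, $E=\Jac(C)$ does not simultaneously have additive reduction and potential multiplicative reduction). Let $\mathscr{C}_{\min}$ be the minimal $sncd$-model of $C$, so $e(C)=e(\mathscr{C}_{\min})$. One divisibility is automatic: by Proposition \ref{prop-normdesing} the model $\mathscr{C}_{\min}(d)$ is an $sncd$-model of $C(d)$, so by Proposition \ref{prop-e(C)} together with the remark after Definition \ref{def-stabind} we get $e(C(d))\mid e(\mathscr{C}_{\min}(d))=e(\mathscr{C}_{\min})/\gcd(e(\mathscr{C}_{\min}),d)=e(C)/\gcd(e(C),d)$. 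For the reverse divisibility I would use the classification of special fibres of minimal regular models of genus-one curves, organised by the reduction of $E$. If $E$ has multiplicative reduction, then $(\mathscr{C}_{\min})_k$ is a (possibly non-reduced) cycle of rational curves, hence has no principal component, so $e(C)=1$ and the equality is trivial. If $E$ has potential good reduction, then $\mathscr{C}_{\min}$ carries a principal component; by Proposition \ref{prop-normdesing}(1)--(3) each principal component $E_i$ of $\mathscr{C}_{\min}$ admits a dominating component in $\mathscr{C}_{\min}(d)$ which is again principal and of multiplicity $N_i/\gcd(N_i,d)$, while all exceptional components introduced by the desingularization are non-principal rational curves meeting the rest in two points. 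The remaining point is that these principal components stay principal when $\mathscr{C}_{\min}(d)$ is contracted to the minimal $sncd$-model of $C(d)$; granting this, $e(C(d))=e(\mathscr{C}_{\min}(d))=e(C)/\gcd(e(C),d)$.

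\emph{Main obstacle.} The hard part is precisely the last verification in Case B: that contracting $\mathscr{C}_{\min}(d)$ down to a relatively minimal model does not make any principal component non-principal, and hence does not lower the stabilization index below $e(\mathscr{C}_{\min}(d))$. This is exactly what \emph{fails} for the excluded curves: when $E=\Jac(C)$ has additive reduction of type $I_n^{*}$ with $n\geq 1$, a base change whose degree is divisible by $2$ turns the $I_n^{*}$-type special fibre into a cycle of rational curves, so the multiplicity of the former principal components -- which encodes the period of the torsor $C$ -- disappears from the list of principal multiplicities, and $e$ collapses by more than $\gcd$ permits. Thus the proof reduces to bookkeeping of which components remain principal through the three operations (base change, normalization, minimal desingularization, and the final contraction), and the stated hypothesis is what rules out the collapse; by contrast, Case A is formal once Propositions \ref{prop-charpol-e} and \ref{prop-charpolbc} are in hand.
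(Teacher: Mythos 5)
Your Case A is exactly the paper's argument (Propositions \ref{prop-charpolbc} and \ref{prop-charpol-e}), and the divisibility $e(C(d))\mid e(C)/\gcd(e(C),d)$ in Case B is also obtained the same way, via Proposition \ref{prop-e(C)} and the remark after Definition \ref{def-stabind}. But the reverse divisibility in Case B is left as a genuine gap: you reduce everything to the claim that the principal components of $\mathscr{C}_{\min}(d)$ remain principal after contraction to the minimal $sncd$-model of $C(d)$, and you explicitly say you are ``granting this''. That claim is precisely the nontrivial content in the genus-one case (as your own discussion of the excluded $I_n^*$ curves shows, contraction \emph{can} destroy principal components), and nothing in your write-up rules this out for the curves that are allowed by the hypothesis; moreover, even if \emph{some} principal component survived, you would still need to control which multiplicities survive, since a priori the lcm could drop.

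The paper closes this gap without ever proving that any individual principal component survives. The key input is \cite[6.6]{liu-lorenzini-raynaud}: the reduction type of $C$ is $m$ times that of $E=\Jac(C)$, and inspection of the Kodaira--N\'eron table shows that \emph{all} principal components of $\mathscr{C}_k$ have one and the same multiplicity $N$; hence, by the analysis in the proof of Proposition \ref{prop-e(C)}, all principal components of $\mathscr{C}(d)_k$ have multiplicity $N/\gcd(N,d)$. Since contraction creates no new principal components, every principal component of the minimal $sncd$-model of $C(d)$ is one of these, so $e(C(d))$ equals $N/\gcd(N,d)=e(C)/\gcd(e(C),d)$ as soon as the minimal model of $C(d)$ has \emph{at least one} principal component. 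The only genus-one reduction types without principal components are the multiplicative ones, i.e.\ this fails only if $E(d)$ has multiplicative reduction; the hypothesis (no additive plus potentially multiplicative reduction) then forces $E$ itself to have multiplicative reduction, in which case $e(C)=e(C(d))=1$. So the missing step in your argument is replaced by the ``constant multiplicity plus existence of one principal component'' argument, and this is where the excluded case genuinely enters; without supplying it (or an equivalent), your proof of Case B is incomplete.
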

\begin{proof}
If $g\neq 1$ or $C$ is an elliptic curve, this follows from
Propositions \ref{prop-charpolbc} and \ref{prop-charpol-e}. Thus
we may assume that $C$ is a genus one curve without rational
point. Its Jacobian is an elliptic curve $E$, and $C$ is an
$E$-torsor over $K$. If we denote by $m$ the order of the class of
$C$ in the Weil-Ch\^atelet group $H^1(K,E)$, then the reduction
type of $C$ is $m$ times the reduction type of $E$, by
\cite[6.6]{liu-lorenzini-raynaud}. Assume that $\mathscr{C}$ is
the minimal $sncd$-model of $C$. Looking at the Kodaira-N\'eron
reduction table, we see that all principal components of the
special fiber $\mathscr{C}_k$ have the same multiplicity $N$.
We've shown in the proof of Proposition \ref{prop-e(C)} that every
component of $\mathscr{C}(d)_k$ that dominates a principal
component of $\mathscr{C}_k$ is itself principal, and that these
are the only principal components of $\mathscr{C}(d)_k$. Their
multiplicities are all equal to
$$N/\gcd(N,d)=e(C)/\gcd(e(C),d).$$ No new principal components are
created by the contraction of $\mathscr{C}(d)$ to the minimal
$sncd$-model $\mathscr{C}(d)_{\min}$ of $C(d)$, and the special
fiber of $\mathscr{C}(d)_{\min}$ contains at least one principal
component, unless $E(d)$ has multiplicative reduction. In the
latter case, it follows from our assumptions that $E$ already had
multiplicative reduction, so that $e(C)=e(C(d))=1$.
\end{proof}

\sss \label{subsection-genus1withoutpoint} Let us briefly comment
on the case where $C$ is a genus one curve without rational point;
this case is irrelevant for the applications in the following
sections, since we will only be interested in the Jacobian of $C$,
which is an elliptic curve.

 Let $E$ be an elliptic curve over $K$. The genus one $K$-curves
 with Jacobian $E$ are classified by the Weil-Ch\^atelet group
 $H^1(K,E)$. It is known that the group $H^1(K,E)$ is non-trivial
when $E$ has semi-stable reduction. If $E$ has additive reduction,
then $H^1(K,E)$ is a $p$-group, and it is
 non-trivial when $p>1$  \cite[6.7]{liu-lorenzini-raynaud}.
 Now let $C$ be a genus one curve with Jacobian $E$. We denote by $m$ the order of the class of $C$ in $H^1(K,E)$.  As we already recalled in the proof of
Corollary \ref{cor-e(C)}, the reduction type of $C$ is $m$ times
the reduction type of $E$ \cite[6.6]{liu-lorenzini-raynaud}. This
implies that $m$ is equal to the index of $C$ (the greatest common
divisor of the degrees of the closed points of $C$), and also to
the minimum of the degrees of the closed points on $C$
\cite[9.1.9]{neron}.
\begin{enumerate}
\item  Proposition \ref{prop-min} fails for $C$ if $p>1$,
$\mathscr{C}_k$ has a principal component, $m$ is divisible by $p$
and $E$ is tamely ramified. This can happen, for instance, if
$p\geq 5$ and $E$ has additive reduction.

\item Even if $k$ has characteristic zero, Proposition
\ref{prop-min} can fail for $C$.  If $E$ has multiplicative
reduction, then $e(C)=1$ but $\dgr{L}{K}=m$. Likewise, Proposition
\ref{prop-charpol-e} might fail: if $E$ has good reduction, then
$P_C(t)=(t-1)^2$ while $e(C)=m$.

 \item
 The
   case excluded in the
 statement of Corollary \ref{cor-e(C)} never occurs if $p=1$,
 because then $H^1(K,E)=0$ if $E$ has additive reduction.
 However, it does occur if $p>1$. Assume that $E$ has additive reduction and acquires multiplicative reduction over $K(d)$, for some $d\in \N'$. Then
 $m$ is a power of $p$ and $e(C)=m\cdot e(E)$ while $e(C(d))=1$, so that the property in Corollary
 \ref{cor-e(C)} does not hold if $m>1$.
 \end{enumerate}

%\begin{lemma}\label{lemma-lcm}
%Let $ \{a_i\}_{i\in I} $ be a finite set of positive integers, and let $\alpha = lcm\{a_i\}_{i\in I}$. Let $d$ be a positive integer that divides $\alpha$. Then $lcm\{a_i/\gcd(a_i,d)\}_{i\in I} = \alpha/d$.
%\end{lemma}
%\begin{proof}
%Omitted.
%\end{proof}

\subsection{Applications to $sncd$-models and base change}
\sss  We keep the notations from \eqref{sssec-notC}. An important
feature of the invariant $e(\mathscr C)$ is that we can give a
rather precise description of the special fiber of $\mathscr C(d)$
in terms of the special fiber of $\mathscr C$, as long as $d \in
\N'$ is prime to $e(\mathscr C)$.

\begin{lemma}\label{lemma-fiberdescription} Assume that $C$ is not
a genus one curve without rational point whose Jacobian has
multiplicative reduction. Let $d$ be an element of $\mathbb{N}'$
that is prime to $e(\mathscr{C})$. For every irreducible component
$E_i$ of $\mathscr{C}_k$, the $k$-scheme
$$F_i=\mathscr{C}_d\times_{\mathscr{C}}E_i$$ is smooth and
irreducible. We denote by $N'_i$ the multiplicity of
$(\mathscr{C}_d)_k$ along $F_i$. Then $N'_i=N_i/\gcd(N_i,d)$, and
$F_i\to E_i$ is a ramified tame  Galois cover of degree
$\gcd(N_i,d)$.

If $E_i$ is principal, or  $E_i$ is a rational curve such that
$E_i\cdot \sum_{j\neq i}E_j=1$, then $ F_i\rightarrow E_i$ is an
isomorphism, and $N'_{i} = N_i$.  If $E_i$ is a rational curve
such that $E_i\cdot \sum_{j\neq i}E_j=2$, then $F_i\cong
\mathbb{P}^1_k$ and $ F_i \rightarrow E_i $ is either an
isomorphism, or
 ramified over the two points of $E_i\setminus E_i^o$.  Moreover, if $i$ and $j$ are distinct elements
of $I$, then over any point of $E_i\cap E_j$ lies exactly one
point of $F_i\cap F_j$.
\end{lemma}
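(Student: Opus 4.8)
The plan is to deduce the statement from Proposition~\ref{prop-normdesing}; the only extra ingredient is a combinatorial divisibility fact in which both hypotheses (coprimality of $d$ with $e(\mathscr C)$, and the exclusion of the bad genus-one curves) are used. Precisely, I would first establish: for every prime $q$ dividing $d$ — equivalently, every prime dividing no multiplicity of a principal component, since $e(\mathscr C)=\lcm_{i\in I_{\prin}}N_i$ — one has $q\nmid N_i$ whenever $E_i$ is principal or is a rational curve with $E_i\cdot\sum_{j\neq i}E_j=1$, and $q\nmid\gcd(N_i,N_j)$ whenever $E_i,E_j$ are distinct components with $E_i\cap E_j\neq\emptyset$. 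Granting this, $\gcd(N_i,d)=1$ for principal and for one-point rational components, and $\gcd(N_i,N_a,d)=1$ for every component $E_i$ meeting a component $E_a$, and all assertions of the lemma follow.

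To prove the divisibility fact, fix a prime $q\mid d$. Then $q$ divides no principal multiplicity, so every component $E_i$ with $q\mid N_i$ is a rational curve meeting $\mathscr C_k-E_i$ in at most two points, hence has valence $\le 2$ in the dual graph $\Gamma(\mathscr C_k)$. Let $\Gamma_q$ be the full subgraph on these vertices; each of its connected components, being connected with all valences $\le 2$, is a simple path, a simple cycle, or a loop on two vertices. I claim each such component $Z$ is a single vertex. Using $\mathscr C_k\cdot E_i=0$ for all $i$ (the special fibre is a principal divisor on the regular proper surface $\mathscr C$): if $Z$ has $\ge 2$ vertices and is a cycle or loop, all its vertices already have valence $2$ inside $Z$, hence valence $2$ in $\Gamma(\mathscr C_k)$, so $Z$ meets nothing outside $Z$; if $Z$ is a path of length $\ge 2$ and $E_i$ is an endpoint, it has one neighbour $E_{i'}$ in $Z$ (with $q\mid N_{i'}$) and at most one neighbour $E_\alpha$ outside, and if $E_\alpha$ existed the relation $0=\mathscr C_k\cdot E_i=N_iE_i^2+N_{i'}+N_\alpha$ read modulo $q$ would give $q\mid N_\alpha$, contradicting $q\nmid N_\alpha$; so both endpoints have valence $1$ in $\Gamma(\mathscr C_k)$ and again $Z$ meets nothing outside itself. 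In either case $Z$ is a connected component of $\mathscr C_k$, hence equals the connected fibre $\mathscr C_k$, so $\mathscr C_k$ is a chain or a cycle of rational curves with all multiplicities divisible by $q$. In the chain case $\Gamma(\mathscr C_k)$ is a tree, so $C$ has genus $0$, hence $C\cong\mathbb P^1_K$ (as $\mathrm{Br}(K)=0$ since $k$ is separably closed), so $C$ has index one and $\gcd\{N_i\}=1$ by \cite[7.1.6]{raynaud}, contradicting $q\mid N_i$; in the cycle/loop case $\Gamma(\mathscr C_k)$ has first Betti number $1$, so $C$ has genus $1$, it has no rational point since $\gcd\{N_i\}$ is divisible by $q>1$ (again by \cite[7.1.6]{raynaud}), and $\mathrm{Jac}(C)$ has multiplicative reduction because $(\mathscr C_k)_{\mathrm{red}}$ is a cycle of rational curves — the excluded case. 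Hence every component of $\Gamma_q$ is a single vertex, so each $E_i$ with $q\mid N_i$ meets only components $E_a$ with $q\nmid N_a$. For such $E_i$ the case $E_i\cdot\sum_{j\neq i}E_j=0$ forces $\mathscr C_k=E_i$ and $N_i=1$, and if $E_i$ met a single component at a single point then $N_iE_i^2+N_a=0$ would give $q\mid N_i\mid N_a$, impossible; this gives the first assertion (the principal case being trivial), and the second is immediate since $q\mid\gcd(N_i,N_j)$ would join $i$ and $j$ in $\Gamma_q$.

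With these equalities in hand, Proposition~\ref{prop-normdesing}(1) shows $F_i$ is a disjoint union of smooth irreducible curves, that its multiplicity in $(\mathscr C_d)_k$ is $N_i/\gcd(N_i,d)=N'_i$, and that $F_i\to E_i$ restricts over $E_i^\circ$ to a Galois cover of degree $\gcd(N_i,d)$, which is tame since this degree divides $d$ and is prime to $p$; as $E_i$ is a smooth curve this extends, once $F_i$ is connected, to a tame Galois cover $F_i\to E_i$ of the same degree. Connectedness holds in every case: when $E_i$ is principal, when $E_i$ is a rational curve meeting the rest in one point, and in the trivial case $\mathscr C_k=E_i$, the degree is $\gcd(N_i,d)=1$ and $F_i\xrightarrow{\ \sim\ }E_i$ with $N'_i=N_i$; when $E_i$ is a rational curve with $E_i\cdot\sum_{j\neq i}E_j=2$, Proposition~\ref{prop-normdesing}(2) gives that the number of connected components of $F_i$ is $\gcd(N_i,N_a,d)=1$ and that $F_i$ is a rational curve, hence $F_i\cong\mathbb P^1_k$ (alternatively, Riemann--Hurwitz for the connected tame cyclic cover $F_i\to E_i\cong\mathbb P^1_k$, unramified over $E_i^\circ$ and totally ramified over the two missing points, yields $g(F_i)=0$), and $F_i\to E_i$ is an isomorphism or is ramified exactly over the two points of $E_i\setminus E_i^\circ$. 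Finally, for distinct components $E_i,E_j$ with a common point $x$, the local computations of Section~\ref{subsec-loccomp} identify the points of $\mathscr C_d$ over $x$ with the factors of a normalization indexed by $\mu_c(k)$ with $c=\gcd(N_i,N_j,d)=1$, so there is exactly one such point, and it lies on both $F_i$ and $F_j$; thus exactly one point of $F_i\cap F_j$ lies over $x$.

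The principal difficulty is the combinatorial fact of the second paragraph, and within it the dichotomy that a connected subgraph of $q$-divisible components is either a single vertex or all of $\mathscr C_k$, together with the elimination of the chain and cycle degenerations: this is exactly where the hypothesis on $C$ and the coprimality of $d$ with $e(\mathscr C)$ are genuinely needed, and I would isolate it as a standalone lemma. The remaining arguments are essentially bookkeeping on top of Proposition~\ref{prop-normdesing}.
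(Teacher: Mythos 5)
Your proof is correct, and its overall skeleton agrees with the paper's: everything is reduced to the coprimality statements ($\gcd(N_i,d)=1$ for principal and one-point rational components, $\gcd(N_i,N_j,d)=1$ for intersecting components), after which Proposition \ref{prop-normdesing}, Hurwitz/Kummer theory, and the local computations of Section \ref{subsec-loccomp} give the lemma by bookkeeping, exactly as in the paper. Where you genuinely differ is in how the coprimality claim is proved. The paper classifies the position of a non-principal component into three configurations (a chain of rational curves meeting a principal component, a chain fiber, a loop fiber), handles the two degenerate global cases via \cite[3.1.1]{ni-saito} and \cite[6.6]{liu-lorenzini-raynaud}, and then cites the propagation argument of \cite[6.3]{Halle-stable} to push coprimality along the chain. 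You instead argue prime by prime: for $q\mid d$ you look at the full subgraph $\Gamma_q$ of $q$-divisible components, use the relations $\mathscr{C}_k\cdot E_i=0$ to show that any component of $\Gamma_q$ with at least two vertices has no outside edges and hence is the whole (connected) fiber, and then eliminate the resulting chain and cycle fibers. This is self-contained (no appeal to \cite[6.3]{Halle-stable}), and it isolates cleanly where the hypothesis on $C$ and the coprimality with $e(\mathscr{C})$ enter; the paper's version is shorter because it outsources the propagation step.

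One justification needs repair: ``the dual graph is a tree, so $C$ has genus $0$'' and ``first Betti number $1$, so genus $1$'' are not valid inferences as stated, since reading the genus off the graph alone is only legitimate for reduced fibers (a fiber of type $I_0^*$ consists of rational curves with a tree as dual graph, yet $g=1$). In your situation the conclusions are nonetheless true because the fiber is a chain, respectively a cycle, of rational curves: adjunction gives $2g-2=\sum_i N_i(2g_i-2-E_i^2)$, and substituting $N_iE_i^2=-\sum_{j\neq i}N_j(E_i\cdot E_j)$ yields $2g-2=-(N_1+N_r)$ for a chain and $2g-2=0$ for a cycle; alternatively just cite \cite[3.1.1]{ni-saito}, as the paper does. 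Note that in the chain case this computation already forces the end multiplicities to equal $1$, which contradicts $q$ dividing all multiplicities directly, so the detour through $\mathrm{Br}(K)=0$, $C\cong\mathbb{P}^1_K$ and \cite[7.1.6]{raynaud} is not needed (though it is also valid here, as $k$ is algebraically closed in this chapter). With that one-line repair your argument is complete.
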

\begin{proof} We've already seen in Proposition \ref{prop-normdesing}(1)
that $F_i$ is smooth, that $N'_i=N_i/\gcd(N_i,d)$ and that
$\mathscr{C}_d\times_{\mathscr{C}}E^o_i\to E^o_i$ is a tame Galois
cover of degree $\gcd(N_i,d)$.

If $E_i$ is principal, then $N_i$ divides $e(\mathscr{C})$, so
that $N_i$ is prime to $d$, $N'_i=N_i$ and $F_i\to E_i$ is an
isomorphism. Now assume that $E_i$ is a non-principal component;
in particular, it is rational.
 If $E_i$ is the only component of $\mathscr{C}_k$, then it follows
from \cite[3.1.1]{ni-saito} that $g=0$ and $\mathscr{C}_k=E_i$, so
that the result is clear. Thus we may suppose that $E_i$ meets
another component $E_j$ of $\mathscr{C}_k$. We choose a point $x$
in $E_i\cap E_j$.

  We claim that
 $\gcd(N_i,N_j,d)=1$. Point (2) in Proposition
 \ref{prop-normdesing}
 then implies that $F_i$ is a connected smooth rational curve. If $F_i\to E_i$ is not an isomorphism, one deduces from
 Hurwitz' theorem that $F_i\to E_i$ must be ramified over
 $E_i\setminus E_i^o$ and that $|E_i\setminus E_i^o|=2$.

It remains to prove our claim that $\gcd(N_i,N_j,d)=1$. We are in
one of the following three cases:
 \begin{enumerate}
 \item $E_i$ is part of a chain of
rational components $\mathscr{C}_k$ that meets a principal
component $E$ of $\mathscr{C}_k$, \item  $\mathscr{C}_k$ is a
chain of rational curves, \item $\mathscr{C}_k$ is a loop of
rational curves.
\end{enumerate}
  In case (1), the multiplicity $N$ of $E$
in $\mathscr{C}_k$ is prime to $d$ by our assumption that $d$ is
prime to $e(\mathscr{C})$. In case (2), one deduces from
 \cite[3.1.1]{ni-saito} that
$g=0$ and that the ends of the chain have multiplicity one. In
case (3), we must have $g=1$ by \cite[3.1.1]{ni-saito} and
$\Jac(C)$ has multiplicative reduction by
\cite[6.6]{liu-lorenzini-raynaud}. Then by our assumptions, $C$
has a rational point so that at least one of the components of
$\mathscr{C}_k$ has multiplicity one.
 In all cases, the argument in \cite[6.3]{Halle-stable} now shows
 that $\gcd(N_i,N_j,d)=1$.
\end{proof}

\begin{prop}\label{prop-data}  For each element $d$ of $\mathbb{N}'$
prime to $e(\mathscr{C})$, the combinatorial data of
$\mathscr{C}(d)_k $ only depend on the combinatorial data of
$\mathscr{C}_k $. In particular, they do not depend on the
characteristic exponent $p$ of $k$.
\end{prop}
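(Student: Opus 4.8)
The plan is to reconstruct the combinatorial data of $\mathscr{C}(d)_k$ piece by piece from the combinatorial data of $\mathscr{C}_k$, using the detailed description already provided in Proposition \ref{prop-normdesing} and in Lemma \ref{lemma-fiberdescription}. Recall that the combinatorial data consists of the dual graph $\Gamma(\mathscr{C}(d)_k)$ together with the labels $(N'_j, g'_j)$ at each vertex. First I would dispose of the excluded case in Lemma \ref{lemma-fiberdescription}: if $C$ is a genus one curve without rational point whose Jacobian has multiplicative reduction, then $e(C) = 1$, so the only $d \in \N'$ prime to $e(\mathscr{C})$ to consider reduce (after noting $e(C) \mid e(\mathscr{C})$, and handling the blow-up discrepancy) to cases where $\mathscr{C}_k$ is a loop of rational curves; here $\mathscr{C}(d)_k$ can be described directly, or one simply passes to the minimal model where $e(\mathscr{C}_{\min}) = 1$ forces $d$ arbitrary but the reduction is a multiple Néron polygon whose base change is classical. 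In the main case, Lemma \ref{lemma-fiberdescription} already tells us that each $F_i = \mathscr{C}_d \times_{\mathscr{C}} E_i$ is smooth irreducible, with multiplicity $N'_i = N_i/\gcd(N_i,d)$, and that $F_i \to E_i$ is a tame Galois cover of degree $\gcd(N_i, d)$.

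The key steps are then: (i) compute $g'_i = g(F_i)$ from $g_i = g(E_i)$ and the combinatorial data. When $E_i$ is principal, $\gcd(N_i,d) = 1$ and $F_i \to E_i$ is an isomorphism, so $g'_i = g_i$. When $E_i$ is non-principal it is rational, and Lemma \ref{lemma-fiberdescription} shows $F_i \cong \mathbb{P}^1_k$, so $g'_i = 0$; the only genuinely combinatorial input is the count $|E_i \setminus E_i^o| \in \{1,2\}$, which determines whether $F_i \to E_i$ is ramified, but either way $g'_i = 0$. (ii) Determine the edges of $\Gamma(\mathscr{C}_d)_k$: the last sentence of Lemma \ref{lemma-fiberdescription} says that over each point of $E_i \cap E_j$ lies exactly one point of $F_i \cap F_j$, so the number of edges between the vertices for $F_i$ and $F_j$ equals $|E_i \cdot E_j|$ — i.e. $\Gamma(\mathscr{C}_d)_k \cong \Gamma(\mathscr{C}_k)$, combinatorially, with new multiplicities $N'_i$. (iii) Resolve the singularities: by Proposition \ref{prop-normdesing}(3), every singular point of $\mathscr{C}_d$ is an intersection point of two components $F_i, F_j$ over a point of $E_i \cap E_j$, and the special fiber of the minimal desingularization of the local germ is an sncd divisor whose combinatorial data depend only on $N = N_i$, $N' = N_j$, and $d$ — hence only on the combinatorial data of $\mathscr{C}_k$. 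The resolution inserts a chain of rational curves (each meeting the rest in exactly two points, by the same proposition) whose multiplicities and lengths are dictated purely by the continued-fraction expansion associated to the tame cyclic quotient singularity of type $(N_i, N_j, d)$, as recorded in Proposition \ref{prop-mindesing} of the appendix.

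Assembling these: $\mathscr{C}(d)_k$ is obtained from $\mathscr{C}_k$ by replacing each vertex label $(N_i, g_i)$ with $(N_i/\gcd(N_i,d), g'_i)$ — where $g'_i = g_i$ for principal components and $g'_i = 0$ otherwise — and by subdividing each edge of $\Gamma(\mathscr{C}_k)$ (corresponding to a point of $E_i \cap E_j$) by inserting the chain prescribed by the triple $(N_i, N_j, d)$. Every ingredient in this recipe is a function of the combinatorial data of $\mathscr{C}_k$ and of $d$ alone, with no reference to $p$ beyond the hypothesis $p \nmid d$ (which is built into $d \in \N'$); in particular it is independent of the characteristic exponent of $k$. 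I expect the main obstacle to be purely bookkeeping rather than conceptual: one must check carefully that the local resolution data in Proposition \ref{prop-mindesing} glues correctly along the components $F_i$ — i.e. that the multiplicity $N'_i$ coming from the global computation in Lemma \ref{lemma-fiberdescription} matches the multiplicity of $F_i$ seen as a boundary component of each local germ — and that, for non-principal $E_i$ of chain-type with $|E_i \setminus E_i^o| = 2$, the possibly-ramified cover $F_i \to E_i$ does not introduce extra combinatorial dependence; but both of these are already implicit in the proofs of Proposition \ref{prop-normdesing} and Lemma \ref{lemma-fiberdescription}, so the proof is essentially a matter of organizing what has been established.
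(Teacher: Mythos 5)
Your proposal is correct and follows essentially the same route as the paper: the published proof simply combines Proposition \ref{prop-normdesing} with Lemma \ref{lemma-fiberdescription}, treating the excluded genus-one case by observing that the lemma's proof still applies unless $\mathscr{C}_k$ is a loop of rational curves, in which case Proposition \ref{prop-normdesing} gives the description directly. Your explicit recipe (genera and multiplicities unchanged on principal components, non-principal components staying rational, exactly one point of $F_i\cap F_j$ over each point of $E_i\cap E_j$, and resolution chains determined by the triples $(N_i,N_j,d)$) is precisely the content the paper leaves implicit; only your aside about passing to the minimal model in the exceptional case is unnecessary --- the direct description via Proposition \ref{prop-normdesing} is the intended argument there.
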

\begin{proof}
This follows immediately from Proposition \ref{prop-normdesing}
and Lemma \ref{lemma-fiberdescription}, unless $C$ is a genus one
curve without rational point whose Jacobian has multiplicative
reduction. In fact, even that case is covered by the proof of
Lemma \ref{lemma-fiberdescription}, unless $\mathscr{C}_k$ is a
loop of rational curves. In that situation, the result follows
 easily from Proposition \ref{prop-normdesing}.
\end{proof}

\section{The N\'eron component series of a Jacobian}\label{sec-compseries}
In this Section, we study the N\'eron component series of a
Jacobian variety. This series was defined in Section
\ref{subsec-compseries} of Chapter \ref{chap-preliminaries}.
 We keep the notations from \eqref{sssec-notC}. We assume that $g$
 is positive, and we denote by $A$ the Jacobian $\Jac(C)$ of the curve $C$.
\subsection{Rationality of the component series}

\begin{prop}\label{prop-compfu}
Let $K'/K$ be a finite tame extension of $K$ whose degree $d$ is
prime to $e(C)$. Then
$$ |\Comp{A\times_K K'}|=d^{\trank{A}}\cdot |\Comp{A}|.$$
\end{prop}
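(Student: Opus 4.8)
The plan is to reduce the statement to the tamely ramified case, already established in \cite{HaNi-comp}, by transporting the combinatorial situation to a base of equal characteristic zero via Winters' theorem; over such a base every abelian variety is tamely ramified. First I would make two harmless reductions. Since $k$ is separably closed, $K(d)$ is the unique tame extension of $K$ of degree $d$, so we may take $K'=K(d)$ and must prove $|\Comp{A(d)}|=d^{\trank{A}}\cdot |\Comp{A}|$; and we may assume $C$ has index one, because if $g=1$ and $C$ has no rational point we can replace $C$ by its Jacobian, an elliptic curve, which by \eqref{subsection-genus1withoutpoint} replaces $e(C)$ only by a divisor of it, so the hypothesis $\gcd(d,e(C))=1$ is preserved. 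Now fix a relatively minimal $sncd$-model $\mathscr{C}$ of $C$, so that $e(C)=e(\mathscr{C})$, and let $\mathscr{C}(d)$ be the $sncd$-model of the (again index one) curve $C(d)$ provided by Proposition \ref{prop-normdesing}. By Raynaud's description of the identity component of the N\'eron model recalled in Chapter \ref{chap-preliminaries}, both $\trank{A}$ and $|\Comp{A}|$ are determined by the combinatorial data of $\mathscr{C}_k$, and likewise $\trank{A(d)}$ and $|\Comp{A(d)}|$ by the combinatorial data of $\mathscr{C}(d)_k$.

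The key input is Proposition \ref{prop-data}: as $d$ is prime to $e(\mathscr{C})$, the combinatorial data of $\mathscr{C}(d)_k$ depend only on those of $\mathscr{C}_k$ (and on $d$), and in particular not on $p$. I would then apply Winters' theorem \ref{theo-winters} to $\mathscr{C}$ to obtain a smooth, proper, geometrically connected curve $D$ over $\C((t))$ together with a relatively minimal $sncd$-model $\mathscr{D}$ over $\C[[t]]$ (relatively minimal by the remark after \ref{theo-winters}) such that $\mathscr{D}_k$ and $\mathscr{C}_k$ carry the same combinatorial data. Put $B=\Jac(D)$ and let $B(d)$ be its base change to the degree $d$ extension of $\C((t))$, so that $|\Comp{B(d)}|$ and $\trank{B(d)}$ are read off from the $sncd$-model $\mathscr{D}(d)$. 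Because the stabilization index of a relatively minimal model is the least common multiple of the multiplicities of its principal components, which is a combinatorial quantity, we get $e(D)=e(\mathscr{D})=e(\mathscr{C})=e(C)$; hence $d$ is prime to $e(D)$, Proposition \ref{prop-data} applies to $\mathscr{D}$ as well, and $\mathscr{D}(d)_k$ and $\mathscr{C}(d)_k$ carry the same combinatorial data. Combining these facts, $|\Comp{A}|=|\Comp{B}|$, $\trank{A}=\trank{B}$, $|\Comp{A(d)}|=|\Comp{B(d)}|$ and $\trank{A(d)}=\trank{B(d)}$.

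It then suffices to prove the identity for $B$. Since $\C((t))$ has residue characteristic zero, $B$ is tamely ramified; letting $L$ be the minimal extension of $\C((t))$ over which $B$ acquires semi-abelian reduction, Proposition \ref{prop-min} (with $D$ taken elliptic if $g=1$) gives $e(D)=[L:\C((t))]$, so $d$ is prime to $[L:\C((t))]$. Then property (1) of Section \ref{subsec-guide}, which is proved for tamely ramified abelian varieties and extensions of degree prime to the degree of the semi-stable reduction field in \cite{HaNi-comp}, yields $|\Comp{B(d)}|=d^{\trank{B}}\cdot |\Comp{B}|$, and substituting the four identities above proves the proposition. The genuine content of the argument is carried by Proposition \ref{prop-data} and Winters' theorem \ref{theo-winters}; what requires attention in the assembly is only that $\trank{\cdot}$, $|\Comp{\cdot}|$ and $e(\cdot)$ really are read off from the combinatorial data --- so that the degenerate cases (a genus-one curve with no rational point, or a curve with potential multiplicative reduction, where $e(C)=1$) are harmless --- and that the invariant $e(C)$ of the possibly wildly ramified curve $C$ is matched with the degree $[L:K]$ of the semi-stable reduction extension of the characteristic-zero model, which is precisely what makes the result of \cite{HaNi-comp} applicable.
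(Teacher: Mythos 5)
Your argument is essentially the paper's own proof: reduce to $K'=K(d)$, transfer the combinatorial data to a curve over $\C((t))$ via Winters' theorem, use the invariance of the combinatorial data of $\mathscr{C}(d)_k$ for $\gcd(d,e(\mathscr{C}))=1$ (Lemma \ref{lemma-fiberdescription}/Proposition \ref{prop-data}) to identify component groups and toric ranks, and then apply Proposition \ref{prop-min} together with \cite[5.7]{HaNi-comp} in the tame equal-characteristic-zero situation. The extra attention you pay to the index-one hypothesis and the genus-one case is a reasonable refinement but does not change the route.
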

\begin{proof}
 Embedding $K'$ in $K^s$, we may assume that $K=K(d)$.
 Let $\mathscr{C}$ be the minimal $sncd$-model of
$C$. By Theorem \ref{theo-winters} we can find a smooth, proper
and geometrically connected $\mathbb{C}((t))$-curve $D$ with
minimal $sncd$-model $\mathscr{D}$ over $\C[[t]]$ such that the
special fiber $\mathscr{D}_{\C}$  has the same combinatorial data
as $\mathscr{C}_k$. This implies that $e(C)=e(D)$.
 By Lemma \ref{lemma-fiberdescription}, the
combinatorial data of $\mathscr{D}(d)_k$ and $\mathscr{C}(d)_k$
coincide. Thus if we set $B=\Jac(D)$, then $\trank{A}=\trank{B}$,
$\Comp{A}\cong \Comp{B}$ and $ \Comp{A(d)}\cong \Comp{B(d)}$, so
that we may assume that $K=\C((t))$. Then $C$ is tamely ramified.
By Proposition \ref{prop-min}, the integer $e(C)$ equals the
degree of the minimal extension of $K$ over which $C$ acquires
semi-stable reduction, so that the result follows from
\cite[5.7]{HaNi-comp}.
\end{proof}
\noindent The following example, which was included already in
\cite[5.9]{HaNi-comp}, shows that in the statement of Proposition
\ref{prop-compfu}, we cannot replace $e(C)$ by the degree of the
minimal extension of $K$ where $C$ acquires semi-stable reduction.

\begin{example}\label{ex-comp}
We consider again the elliptic curve $C$ in Example \ref{ex-diff}.
Then $\Comp{C}=0$ because $C$ has reduction type $II$. On the
other hand, using Tate's algorithm one checks that $C(3)$ has
reduction type $I_0^*$ so that $|\Comp{C(3)}|=4$.
\end{example}

\begin{lemma}\label{lemma-torrank}
For every $d \in \mathbb{N}'$ prime to $e(C)$, we have that
$\trank{A(d)} = \trank{A}$.
\end{lemma}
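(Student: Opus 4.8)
The plan is to compute both toric ranks on $sncd$-models of the curve, using the description of the special fibre of $\mathscr{C}(d)$ obtained in the previous subsection. Since the formation of Jacobians commutes with base change we have $\Jac(C(d))=A(d)$, and by the facts on N\'eron models of Jacobians recalled in Chapter~\ref{chap-preliminaries} the toric rank of the Jacobian of a curve equals the first Betti number of the dual graph of the special fibre of any of its $sncd$-models. So it suffices to produce an $sncd$-model $\mathscr{C}$ of $C$ and an $sncd$-model of $C(d)$ whose dual graphs have the same first Betti number. I would take $\mathscr{C}$ to be the minimal $sncd$-model of $C$, which exists since $g>0$, so that $e(\mathscr{C})=e(C)$ is prime to $d$, and I would use the model $\mathscr{C}(d)$ of $C(d)$ furnished by Proposition~\ref{prop-normdesing}.

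The core step is to analyse the dual graph $\Gamma(\mathscr{C}(d)_k)$. Writing $\mathscr{C}_k=\sum_{i\in I}N_iE_i$, Lemma~\ref{lemma-fiberdescription} tells us that each base-changed component $F_i=\mathscr{C}_d\times_{\mathscr{C}}E_i$ is irreducible and that over each intersection point of $E_i$ and $E_j$ (for $i\neq j$) lies a single point of $F_i\cap F_j$; hence $E_i\mapsto F_i$ identifies the dual graph of $(\mathscr{C}_d)_k$ with $\Gamma(\mathscr{C}_k)$. By Proposition~\ref{prop-normdesing}(3) the minimal desingularization $\mathscr{C}(d)\to\mathscr{C}_d$ affects only the finitely many singular points, each of which is an intersection point of two components of $(\mathscr{C}_d)_k$, and replaces such a point by a chain of rational curves, each meeting the rest of $\mathscr{C}(d)_k$ in exactly two points. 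Combinatorially this says that $\Gamma(\mathscr{C}(d)_k)$ is obtained from the connected graph $\Gamma(\mathscr{C}_k)$ by subdividing edges, and subdivision leaves the first Betti number unchanged; this yields $\trank{A(d)}=\trank{A}$.

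There is one case not covered by Lemma~\ref{lemma-fiberdescription}, namely that in which $C$ is a genus one curve without rational point whose Jacobian has multiplicative reduction, i.e.\ $\mathscr{C}_k$ is a loop of rational curves; I would dispose of this at the outset. In that situation $A$ has multiplicative, hence semi-abelian, reduction, so by Theorem~\ref{prop-semiab} and the ensuing remark the identity component $\mathscr{A}^o$ of the N\'eron $lft$-model is unchanged by base change to $K(d)$, and therefore so is its reductive rank $\trank{A}$. (The same remark in fact settles, more cheaply, any $A$ with semi-abelian reduction, so the real content of the lemma lies in the case of additive reduction.) The only point requiring genuine care is the passage from the local statements in Proposition~\ref{prop-normdesing} to the global assertion that $\Gamma(\mathscr{C}(d)_k)$ is a subdivision of $\Gamma(\mathscr{C}_k)$: one must check that, at each affected intersection point, the exceptional chain attaches precisely to the two strict transforms $F_i,F_j$ and creates no new cycles. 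This follows immediately from the fact, recorded in Proposition~\ref{prop-normdesing}(3), that every exceptional component is a rational curve meeting the rest of the special fibre in exactly two points, so I do not anticipate any serious obstacle here.
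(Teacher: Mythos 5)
Your proof is correct and is essentially the paper's own argument: starting from the minimal $sncd$-model, Proposition \ref{prop-normdesing} and Lemma \ref{lemma-fiberdescription} show that $\Gamma(\mathscr{C}(d)_k)$ is obtained from $\Gamma(\mathscr{C}_k)$ by subdividing edges, so the first Betti numbers, and hence (by Raynaud's description of $\Pic^0_{\mathscr{C}/R}$) the toric ranks, coincide. Your separate treatment of the genus one curve without rational point whose Jacobian has multiplicative reduction, via stability of the identity component under base change in the semi-abelian case, is a sensible precaution for the case excluded from Lemma \ref{lemma-fiberdescription}, which the paper's proof leaves implicit.
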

\begin{proof}
% If $C$ is a genus one curve without rational point
%whose Jacobian has multiplicative reduction, then $A$ is an
%elliptic curve and $e(C)=e(A)=1$ by
%\cite[6.7]{liu-lorenzini-raynaud}. Thus we can discard this case.
 Let $\mathscr{C}/R$ be the minimal $sncd$-model of $C$. We
consider the graphs $\Gamma = \Gamma(\mathscr{C}_k)$ and
$\Gamma(d) = \Gamma(\mathscr{C}(d)_k)$, but we forget the weights.
From Proposition \ref{prop-normdesing} and
 Lemma \ref{lemma-fiberdescription}, we deduce that $\Gamma(d)$ is
obtained from $\Gamma$ by subdividing every edge $ \varepsilon$
 into a chain
 of $n_{\varepsilon}$ edges, for some $n_{\varepsilon}\geq 1$. This clearly implies that $Gamma$ and $\Gamma(d)$ have the same first Betti number, which means
that $\trank{A(d)} = \trank{A}$.
\end{proof}

\sss  Before we can state the main result in this section, we need
to recall the definition of the \emph{tame potential toric rank}
of $A$ which was introduced in \cite[6.3]{HaNi-comp}. This value
is defined by
$$ \ttame{A} = \max \{\trank{A(d)} \mid d \in \mathbb{N}' \}. $$
If we denote by $L$ the minimal extension of $K$ in $K^s$ such
that $A\times_K L$ has semi-abelian reduction, and by $d$ the
prime-to-$p$ part of $[L:K]$, then $\ttame{A}=t(A(d))$ by
\cite[6.4]{HaNi-comp}. In particular, if $A$ is tamely ramified, $
\ttame{A}$ equals the potential toric rank $\tpot{A}$ of $A$.
 In general, we have $\ttame{A}\leq
\tpot{A}$ by \cite[3.9]{HaNi-comp}.

\begin{theorem}\label{theorem-compser} Let $C/K$ be a smooth, projective and geometrically connected
curve of genus $g > 0$. Assume that $C$ admits a zero divisor of
degree one, and set $A=\Jac(C)$.  The component series
$$S^{\Phi}_A(T) = \sum_{d \in \N'} |\Comp{A(d)}| T^d $$ is rational.
More precisely, it belongs to the subring
$$ \mathscr Z = \Z \left[T, \frac{1}{T^j - 1} \right]_{j \in \Z_{>0}} $$
of $ \Z[[T]]$. It has degree zero if $p=1$ and $A$ has potential
good reduction, and negative degree in all other cases. Moreover,
$S^{\Phi}_A(T)$ has a pole at $T=1$ of order $\ttame{A} + 1$.
\end{theorem}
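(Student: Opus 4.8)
The plan is to split into two cases according to whether $e(C)$ is prime to $p$ (equivalently, by Proposition \ref{prop-min}, $A$ is tamely ramified) or not, reducing the wild case to the tame one by Winters' theorem.

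First I would treat the tamely ramified case. Here $e(C) = [L:K]$ by Proposition \ref{prop-min}, and the series $S^{\Phi}_A(T)$ was already shown to be rational in \cite[6.5]{HaNi-comp}, lying in $\mathscr Z$ with a pole at $T=1$ of order $\ttame{A}+1 = \tpot{A}+1$. So in this case the statement is essentially a citation, once one checks that the degree count matches: if $p=1$ and $A$ has potential good reduction then $\trank{A(d)}=0$ and $|\Comp{A(d)}|$ is eventually constant, giving degree zero; in all other cases either some $\trank{A(d)}>0$, forcing $|\Comp{A(d)}| \to \infty$, or $p>1$, where the presence of components with multiplicities divisible by $p$ makes the orders grow — this needs a short separate argument about the growth of $|\Comp{A(d)}|$.

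The heart of the matter is the wildly ramified case. Here I would exploit the combinatorial periodicity coming from the stabilization index. Set $e = e(C)$. For each residue class $r$ modulo $e$, restrict attention to $d \in \N'$ with $d \equiv r \bmod e$; write $d = d_0 \cdot d_1$ where $d_0 = \gcd(d, e^\infty)$ collects the prime factors of $d$ dividing $e$, so that $d_1$ is prime to $e$. By Corollary \ref{cor-e(C)} (applicable since $g>0$, so $C$ is not an elliptic torsor of the excluded type — or handle that case directly), passing first from $C$ to $C(d_0)$ we have $e(C(d_0)) = e/\gcd(e,d_0)$, and for the genuinely "new" part of the extension, $d_1$ is prime to $e(C(d_0))$. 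Now apply Proposition \ref{prop-compfu} to the curve $C(d_0)$ and the degree-$d_1$ extension: $|\Comp{A(d_0 d_1)}| = d_1^{\trank{A(d_0)}} \cdot |\Comp{A(d_0)}|$, and by Lemma \ref{lemma-torrank} applied to $C(d_0)$ we have $\trank{A(d_0 d_1)} = \trank{A(d_0)}$. The point is that as $d$ ranges over a fixed residue class $r \bmod e$ with a fixed value of $d_0$ (there are only finitely many possible $d_0$, all dividing a power of $e$), the quantity $|\Comp{A(d)}|$ is a fixed constant times $d_1^{t_r}$ where $t_r := \trank{A(d_0)}$ depends only on $d_0$. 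This expresses the relevant sub-sum of $S^{\Phi}_A(T)$ as a finite $\Z$-linear combination of series of the form $\sum_{d_1 \text{ prime to } e,\ d_1 \equiv * } (d_1)^{t_r} T^{d_0 d_1}$, each of which lies in $\mathscr Z$ by a standard computation with geometric-type series (differentiate $\sum T^m = 1/(1-T)$ repeatedly, then pick out arithmetic progressions using roots of unity filters, all of which stays in $\mathscr Z$). Summing over the finitely many pairs $(r, d_0)$ gives rationality and membership in $\mathscr Z$.

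The order of the pole at $T=1$ and the degree are then read off from this decomposition: the series $\sum_{d_1} d_1^{t} T^{d_1}$ has a pole at $T=1$ of order $t+1$, so the top-order contribution comes from the residue classes with the largest $t_r = \trank{A(d_0)}$, and the maximum of $\trank{A(d)}$ over all $d \in \N'$ is exactly $\ttame{A}$ by definition; hence the pole has order $\ttame{A}+1$, and the contributions do not cancel because all coefficients $|\Comp{A(d)}|$ are positive. For the degree: the summand for residue class $(r,d_0)$ has degree $t_r + 1 - (\text{something}) \le 0$, with equality to $0$ only when $t_r = 0$ and the "$d_0$-shift" is trivial, i.e.\ $d_0 = 1$ and $\trank{A} = 0$; and even then the $p$-growth of $|\Comp{A(d)}|$ for $p>1$ pushes the degree negative, leaving degree zero precisely in the case $p = 1$ with potential good reduction.

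**Main obstacle.** I expect the delicate point to be the bookkeeping that makes the two-step reduction $C \rightsquigarrow C(d_0) \rightsquigarrow C(d_0 d_1)$ rigorous: one must check that Proposition \ref{prop-compfu} and Lemma \ref{lemma-torrank} apply to $C(d_0)$ (they do, since those statements hold for arbitrary curves satisfying the hypotheses, and $C(d_0)$ still has positive genus and a degree-one divisor), and that $\ttame{A} = \max_{d_0} \trank{A(d_0)}$ with the maximum attained among the finitely many $d_0$ dividing a power of $e(C)$ times the prime-to-$p$ part — this last fact needs \cite[6.4]{HaNi-comp}, which identifies $\ttame{A}$ with $t(A(d))$ for $d$ the prime-to-$p$ part of $[L:K]$, combined with the observation that $\trank{A(d)}$ only increases under further prime-to-$e(C(d))$ extensions up to the stable value. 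The genus-one-torsor case excluded from Corollary \ref{cor-e(C)} does not arise because $g > 0$ forces us to deal only with $\Jac(C)$, but one should still note explicitly that when $C$ is such a torsor its Jacobian is an elliptic curve to which the elliptic-curve version of all cited results applies.
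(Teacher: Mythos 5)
There is a genuine gap in the wildly ramified case, and it is exactly at the point you flag as "bookkeeping". You decompose $d=d_0d_1$ with $d_0=\gcd(d,e^\infty)$ (the part of $d$ supported on the primes dividing $e=e(C)$) and then assert that "there are only finitely many possible $d_0$, all dividing a power of $e$". They do divide a power of $e$, but there are infinitely many of them: if $q\neq p$ is a prime dividing $e$, then $d_0$ can be $q, q^2, q^3,\ldots$, and fixing the residue class of $d$ modulo $e$ does not pin $d_0$ down (e.g.\ for $e=6$, $p=3$, the class $d\equiv 2 \bmod 6$ contains $d=2,8,14,20,32,\ldots$ with $d_0=2,8,2,4,32,\ldots$). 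So your "finite $\Z$-linear combination" of elementary series is in fact an infinite sum indexed by all such $d_0$, with coefficients $|\Comp{A(d_0)}|$ and exponents $\trank{A(d_0)}$ that you have no way to control: the extension $K(d_0)/K$ has degree built from primes dividing $e$, so Proposition \ref{prop-compfu} and Lemma \ref{lemma-torrank} say nothing about it, and this is precisely the hard wild phenomenon (cf.\ Example \ref{ex-comp}). An infinite sum of elements of $\mathscr Z$ need not lie in $\mathscr Z$, so rationality, the degree bound and the pole order all remain unproved as written.

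The repair is to group by $a=\gcd(d,e)$ instead of by $\gcd(d,e^\infty)$. Every $d\in\N'$ factors uniquely as $d=ad'$ with $a$ a divisor of $e$ prime to $p$ and $d'$ prime to $e/a$ (check valuations: $v_q(a)=\min(v_q(d),v_q(e))$ forces $v_q(e/a)=0$ whenever $v_q(d')>0$), and $e(C(a))=e/a$ by Corollary \ref{cor-e(C)}. This is the decomposition the paper uses: $S^{\Phi}_A(T)=\sum_{a}S'_{A(a)}(T^a)$, where $S'_{A(a)}$ runs only over $d'$ prime to $e(C(a))$, the index set is the \emph{finite} set of divisors of $e$ prime to $p$, and the only unknown quantities are the finitely many constants $|\Comp{A(a)}|$ and $\trank{A(a)}$; Proposition \ref{prop-compfu} and Lemma \ref{lemma-torrank} applied to $C(a)$ then reduce each piece to sums $\sum_{q\in\N}(qe+b)^{\trank{A(a)}}T^{a(qe+b)}$, which are handled by the explicit computation in \cite[6.2]{HaNi-comp}, and the pole/degree statements follow from $\ttame{A}=\max_a\trank{A(a)}$ together with \cite[6.1]{HaNi-comp}. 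The rest of your outline (citing \cite[6.5]{HaNi-comp} for the tame case, reducing to Jacobians of elliptic curves in the genus-one situation, positivity of coefficients preventing cancellation at $T=1$) matches the paper's argument and is fine, but the central reduction collapses without the correct, finite grouping.
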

\begin{proof}
To ease notation, we put $ e = e(C) $ in this proof. Since the
tame case is covered in \cite[6.5]{HaNi-comp}, we can and will
assume here that $A$ is wildly ramified. This means in particular
that $p>1$ and $p \vert e$, by Proposition \ref{prop-min}.

Let us denote by $\mathscr{S}$ the set of divisors of $ e $ that
 are prime to $p$. We define the auxiliary series
$$ S'_A(T) = \sum_{d \in \mathbb{N}',\, \gcd(d,e)=1} |\Comp{A(d)}| T^d. $$
Then we can write
$$ S^{\Phi}_A(T) = \sum_{a \in \mathscr{S}} S'_{A(a)}(T^a), $$
since $ e(C(a)) = e(C)/a $ by Corollary \ref{cor-e(C)}.

By Lemma \ref{lemma-torrank}, we have $ \ttame{A} = \max
\{\trank{A(a)} \mid a \in \mathscr{S}\} $. Using
\cite[6.1]{HaNi-comp}, we see that it is enough to prove the
following claims, for every $a$ in $\mathscr{S}$:
\begin{enumerate}
\item the series $ S'_{A(a)}(T^a) $ belongs to $ \mathscr Z $ and
has a pole at $ T=1$ of order $\trank{A(a)} + 1$. \item the degree
of $ S'_{A(a)}(T^a) $ is  negative.
\end{enumerate}

 To prove these claims, it suffices to consider the case $a=1$.
 We denote by $ \mathscr P_{e} $ the set of elements in $\{1,
\ldots, e \} $ that are prime to $e$. Since $p$ divides $e$, we
can write
\begin{eqnarray*} S'_A(T)& =& |\Comp{A}| \cdot \sum_{d \in
\mathbb{N}',\, \gcd(d,e)=1} d^{\trank{A}} T^d  \\ &=& |\Comp{A}|
\cdot \sum_{b \in \mathscr P_{e}} \left( \sum_{q \in \N}
(qe+b)^{t(A)} T^{qe+b} \right)\end{eqnarray*} where the first
equality follows from Proposition \ref{prop-compfu}. An easy
computation \cite[6.2]{HaNi-comp} shows that for each $b \in
\mathscr P_{e}$,
 the series $ \sum_{q \in \N} (qe+b)^{t(A)} T^{qe+b} $  belongs to $
\mathscr Z $, has a pole at $T=1$ of order $\trank{A} + 1$, and
has negative degree. This clearly implies claim (2), and claim (1)
now follows from \cite[6.1]{HaNi-comp}.
\end{proof}

\sss  We expect that Theorem \ref{theorem-compser} is valid for
all abelian $K$-varieties $A$. If $A$ is tamely ramified or $A$
has potential multiplicative reduction, we proved this in
\cite[6.5]{HaNi-comp}. The crucial open case is the one where $A$
is a wildly ramified abelian variety with potential good
reduction; in that case, it is not clear how to control the
behaviour of the $p$-part of $\Comp A$ under finite tame
extensions of $K$ (see Example \ref{ex-comp}). If $A$ is tamely
ramified and $A$ has potential good reduction, then the $p$-part
of $\Comp A$ is trivial by Theorem 1 in \cite{ELL}.

\section{Appendix: Locally toric rings}\label{sec-loctor}
\subsection{Resolution of locally toric singularities}

\begin{definition}\label{def-loctor}
Let $A$ be a complete Noetherian local  $R$-algebra. We say that
$A$ is a locally toric ring if it is of the form
$$ A \cong R[[S]]/ \mathscr{I} = R[[\chi^s : s \in S]]/ \mathscr{I}, $$
where $S$ is a finitely generated integral saturated sharp monoid
and the ideal $\mathscr{I}$ is generated by an element of the form
$ \pi - \varphi $ with
$$ \varphi \equiv 0 \mod (\chi^s : s \in S, s \neq 0). $$
\end{definition}

We have made a slight modification of Definition 2.6.2 in
\cite{Kir} in order to treat the cases where $R$ is of equal,
resp.~mixed characteristic  in a uniform way.
 The condition that $S$ is sharp seems to be missing in \cite{Kir}
 (without this condition, the ring $R[[S]]$ is not well-defined).
 The condition that $S$ is saturated is equivalent to the
 assumption in \cite[2.6.2]{Kir} that $A$ is normal. The condition
 that $S$ is finitely generated, integral and sharp
 implies that the abelian group $S^{\mathrm{gp}}$ is a free
 $\Z$-module of finite rank and that the natural morphism $S\to
 S^{\mathrm{gp}}$ is injective. Thus $S$ is a submonoid of a free
 $\Z$-module of finite rank, as required in \cite[2.6.2]{Kir}.
 Definition \ref{def-loctor} is equivalent to Kato's definition of
 a toric singularity in \cite[2.1]{kato}, by \cite[3.1]{kato}.

\sss  We shall only be concerned here with the case where $A$ has
Krull dimension $2$; then $S^{\mathrm{gp}}$ has rank two. By
\cite[27.3]{Lip} there exists a minimal desingularization $ \rho :
X \to \Spec A$ which, if $A$ is locally toric, turns out to be
entirely determined by the monoid $S$. We explain briefly the main
ingredients in this procedure, following the treatment in
\cite[Ch.\,2]{Kir}.

\sss  Set $M=S^{\mathrm{gp}}$ and denote by $ N $ the dual lattice
$ M^{\vee} = \Hom(M,\Z) $. We put $ M_{\R} = M \otimes_{\Z} \R $
and $ N_{\R} = N \otimes_{\Z} \R $. The monoid $S$ yields a cone $
\sigma^{\vee} = S \otimes_{\N} \R_{\geq 0} \subset M_{\R} $ and a
dual cone $ \sigma = (\sigma^{\vee})^{\vee} \subset N_{\R} $.
 There exists a fan $ F = \{\sigma_i\}_{i \in I_F} $ of cones in
$N_{\R} $ that is a regular refinement of $\sigma$.
%This means
%that each $ \sigma_i \in F $ is a finitely generated subcone of
%$\sigma$ such that $ \sigma_i \cap N$ is generated by part of a
%basis of $N$ and such that $ \cup_i \sigma_i = \sigma $.
 Moreover, since $M$ has rank two,
there exists a \emph{minimal} regular refinement $F_{\min}$, in
the sense that any other regular refinement of $\sigma$ is also a
refinement of $ F_{\min} $.

\sss  For each $ \sigma_i \in F_{\min} $, put $ S_{\sigma_i} =
(\sigma_i)^{\vee} \cap M $ and define $ A_{\sigma_i} =
A[S_{\sigma_i}] $ (the notation is ambiguous; this is not the
monoid $A$-algebra associated to $S_{\sigma_i}$, but the subring
of the quotient field of $A$ obtained by joining to $A$ the
elements $\chi^s$ with $s$ in $S_{\sigma_i}$). Note that $ A_{\sigma_i} $ will in general not be local.
%The ring $A_{\sigma_i} $ is regular, since $\sigma_i $ is regular.

\begin{lemma}\label{lemma-torreg}
Let $ \sigma_i \subset N_{\R} $ be part of a regular refinement of $\sigma$. Then $ A_{\sigma_i} $ is regular.
\end{lemma}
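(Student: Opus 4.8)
The plan is to unwind the definitions and reduce the statement to the standard fact that an affine toric variety associated to a cone spanned by part of a $\Z$-basis is smooth. Write $M = S^{\mathrm{gp}}$, a free $\Z$-module of rank two, and $N = M^\vee$. By hypothesis $\sigma_i$ is a cone in a \emph{regular} refinement of $\sigma$; since $N$ has rank two and $\sigma_i$ is strongly convex, $\sigma_i$ is therefore generated by part of a $\Z$-basis $(n_1,n_2)$ of $N$ (this is exactly what ``regular'' means for a cone in a refinement). Then $S_{\sigma_i} = \sigma_i^\vee \cap M$ is the submonoid of $M$ generated by the corresponding part of the dual basis $(m_1, m_2)$ of $M$, together with $\pm$ of the remaining basis vectors if $\sigma_i$ has dimension strictly less than two. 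Concretely: if $\dim \sigma_i = 2$ then $S_{\sigma_i} = \N m_1 \oplus \N m_2$; if $\dim\sigma_i = 1$ then $S_{\sigma_i} = \N m_1 \oplus \Z m_2$; and if $\sigma_i = \{0\}$ then $S_{\sigma_i} = M$.

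The next step is to compute $A_{\sigma_i} = A[S_{\sigma_i}]$ explicitly in each of these three cases, using the presentation $A \cong R[[S]]/(\pi - \varphi)$ with $\varphi \equiv 0$ modulo the maximal monomial ideal. In the top-dimensional case, adjoining $\chi^{m_1}, \chi^{m_2}$ to $A$ inside its fraction field identifies $A_{\sigma_i}$ with a localization of $R[[\chi^{m_1}, \chi^{m_2}]]/(\pi - \varphi)$; since $S$ is generated over $S_{\sigma_i}$-completion by $m_1, m_2$ (because $\sigma^\vee \supseteq \sigma_i^\vee$ dualizes the inclusion $\sigma_i \subseteq \sigma$), every $\chi^s$ for $s \in S$ becomes a monomial in $\chi^{m_1}, \chi^{m_2}$, so $A_{\sigma_i}$ is (a localization of) a two-variable formal power series ring over $R$ modulo one equation $\pi - \varphi$. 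As $\varphi$ lies in the maximal monomial ideal, $\pi - \varphi$ is part of a regular system of parameters at every point of $\Spec$ of that power series ring, hence the quotient is regular; a localization of a regular ring is regular. In the one-dimensional case one gets instead $R[[\chi^{m_1}]][\chi^{\pm m_2}]$-type rings modulo $\pi - \varphi$, which are again regular by the same parameter argument (now the Laurent variable $\chi^{m_2}$ is simply a unit, contributing nothing to singularities). The case $\sigma_i = \{0\}$ gives back (a localization of) $A$ itself on the generic fiber, where $\pi$ is invertible, so regularity is immediate from $\Spec A_K$ being smooth over $K$ — or one simply observes this cone does not occur in a genuine refinement resolving a singular $\Spec A$.

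The main obstacle, and the point requiring the most care, is the bookkeeping identifying $S_{\sigma_i}$ with a free (or partially free) monoid and checking that $A_{\sigma_i}$ really is a localization of the expected power series ring rather than something larger: one must verify that adjoining $\chi^s$ for $s \in S_{\sigma_i}$ to $A$ automatically produces all of $\chi^{s'}$ for $s'$ in the saturation and that no spurious elements of the fraction field sneak in. This is precisely the content of the toric dictionary between faces of the fan and open affine charts, and it is handled in \cite[Ch.~2]{Kir}; I would cite that treatment for the identification and then give the short regularity argument above. One should also note explicitly that ``regular'' for $A_{\sigma_i}$ means regular as a ring (all localizations at primes are regular local rings), which is what the desingularization statement in \cite[27.3]{Lip} requires, and that this is stable under the localizations appearing because $A_{\sigma_i}$ is not assumed local.
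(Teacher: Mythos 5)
There is a genuine gap at the central step. You identify $A_{\sigma_i}=A[S_{\sigma_i}]$ with ``a localization of $R[[\chi^{m_1},\chi^{m_2}]]/(\pi-\varphi)$'', but this is false: by the paper's definition $A_{\sigma_i}$ is the $A$-subalgebra of the fraction field of $A$ generated by finitely many monomials, hence a non-complete ring of finite type over $A$, whereas $R[[S_{\sigma_i}]]/(\pi-\varphi)$ is a complete local ring containing arbitrary power series in $\chi^{m_1},\chi^{m_2}$; no localization of the latter equals the former. What is true, and what actually requires proof, is a statement about the completed local rings of $A_{\sigma_i}$ at points of its special fiber --- and even then one must separately treat the maximal ideals of $A_{\sigma_i}$ that do \emph{not} contain the monomial ideal $J=(\pi,\chi^s: s\in S)$. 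Such maximal ideals exist: already for the blow-up chart $B=\C[[x,y]][y/x]$, the contraction to $B$ of the prime generated by $x-y^2$ in $\C[[x,y]][1/x]$ is maximal (the quotient is $\C[[y]][1/y]=\C((y))$) and contains neither $x$ nor $y$. So regularity cannot be checked on a formal model alone; off the special fiber one argues instead that $\Spec A$ has an isolated singularity and the chart is an isomorphism over the punctured spectrum. This two-step structure (explicit check at the points over the closed point, isolated-singularity argument elsewhere) is exactly how the paper handles the case it actually needs, in Proposition \ref{prop-mindesing}.

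Your fallback --- citing \cite[Ch.~2]{Kir} for the ``toric dictionary'' identification --- runs into precisely the difficulty the paper flags: since $\pi-\varphi$ is not homogeneous for the torus action, there is no torus acting on $\Spec A_{\sigma_i}$ that moves an arbitrary point to the distinguished one, so the classical toric chart arguments (and the proof of \cite[2.6.8]{Kir}) do not carry over; the paper states explicitly that Kiraly's proof is incomplete for this reason. The paper's route for the general statement is instead Kato's logarithmic criterion \cite[10.4]{kato}, supplemented by the self-contained explicit computation of Proposition \ref{prop-mindesing} for tame cyclic quotient singularities, the only case used in the chapter. Your regularity computation for $R[[u,v]]/(\pi-\varphi)$ itself (the equation is a regular parameter because $\varphi$ lies in the monomial maximal ideal) is fine, but it only becomes relevant once the completed-local-ring identification and the off-special-fiber argument are supplied; note also that in the one-dimensional-cone case $S_{\sigma_i}$ is not sharp, so ``$R[[S_{\sigma_i}]]$'' is not even defined under the paper's conventions.
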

\begin{proof}
 This follows from \cite[2.6.8]{Kir}, but the proof that is given
 there is not complete (one cannot directly carry over the methods
 from the classical theory of toric varieties over an algebraically closed
 field, since the equation $\pi-\varphi$ is not homogeneous with respect to the torus
 action). %We provide another proof for the reader's convenience.
 Instead, one can invoke the logarithmic approach in
 \cite[10.4]{kato}. We will give below a proof by explicit
 computation for tame cyclic quotient singularities (Proposition \ref{prop-mindesing}); this is the only case we used in this chapter.
\end{proof}
\if false
 We first prove regularity of $R[[S]][S_{\sigma_i}]$.
This is equivalent to regularity of $R[[S]]\{S_{\sigma_i}\}$, by
which we mean the completion of $R[[S]][S_{\sigma_i}]$ with
respect to the ideal $J=(\pi,\chi^s, s\in S)$, because $J$ is
contained in all the maximal ideals of $R[[S]][S_{\sigma_i}]$. But
$R[[S]]\{S_{\sigma_i}\}$ is also the completion of
$R[S][S_{\sigma_i}] = R[S_{\sigma_i}] $ with respect to the ideal
$J'=J\cap R[S_{\sigma_i}]$,  and $R[S_{\sigma_i}]$ is regular
(this $R$-algebra is isomorphic to a polynomial ring $R[x,y]$ in
two variables $x=\chi^{s_1}$, $y=\chi^{s_2}$ by regularity of
$\sigma_i$).

Thus, to see that $ A_{\sigma_i} $ is  regular, it is enough to
show that $\pi-\varphi$ is not contained in the square of any
maximal ideal of $R[[S]][S_{\sigma_i}]$; then it is part of a
regular system of parameters at every point where it vanishes.

 Every maximal ideal of $R[[S]][S_{\sigma_i}]$ contains the ideal
$J$, and the ring morphism
$$\theta:R[S_{\sigma_i}]/(J')^2\to R[[S]][S_{\sigma_i}]/J^2$$ is an
isomorphism.
% Moreover,
%$R[S][S_{\sigma_i}] = R[S_{\sigma_i}] $, because $S$ is contained
%in $ S_{\sigma_i} $ so the result follows since
 The residue class of  $\pi-\varphi$ modulo $J^2$ is of the form
$$\theta(\pi- \mod (J')^2).$$
 were
  contained in the square of a maximal ideal of
  $R[[S]][S_{\sigma_i}]$, then it would also be contained in the
  square of a maximal ideal of $R[S_{\sigma_i}]=R[x,y]$.
\fi

\sss The natural maps $ \Spec A_{\sigma_i} \to \Spec A $ are birational,
and glue to give a desingularization
$$ \rho_{F_{\min}} : X_{F_{\min}} = \cup_i \Spec A_{\sigma_i} \to \Spec A. $$
The minimality of $F_{\min} $ implies that $\rho_{F_{\min}}$ is
the minimal toric desingularization of $\Spec A$. In fact, it is
also the minimal resolution: this can be seen, for instance, by
computing the self-intersection numbers of the exceptional curves
in the resolution. In the case we're interested in, these
self-intersections will be at most $-2$, so that none of the
exceptional curves in the resolution satisfies Castelnuovo's
contractibility criterion and the resolution is indeed minimal
(see Proposition \ref{prop-mindesing}).

\subsection{Tame cyclic quotient singularities}\label{subsec-tamecyclic}
\sss  We consider the regular local ring
\begin{equation*}
A = R[[t_1,t_2]]/(\pi - w \cdot t_1^{m_1} t_2^{m_2}),
\end{equation*}
where $ w$ is a unit in  $ R[[t_1,t_2]] $. Let $ n \in \mathbb{N}'
$ be an integer prime to $m_1$ and $m_2$, and let $r$ be the
unique integer $ 0 < r < n $ such that $ m_1 + r m_2 \equiv 0 $
modulo $n$.

  We let $ \mu_n(k) $ act on the $R$-algebra $ R[[t_1,t_2]] $
  by $ (\xi,t_1) \mapsto \xi t_1 $ and $ (\xi,t_2) \mapsto \xi^r t_2 $ for any $ \xi \in \mu_n(k) $. We also assume
  that the unit $ w $ is $\mu_n(k)$-invariant, so that there is an induced $\mu_n(k)$-action on $A$.

\begin{lemma}\label{lemm-torsing}
We set
$$ M = \{(s_1,s_2) \in \Z^2 \,|\, n \mbox{ divides } s_1 + r s_2 \} = (n,0) \Z + (-r,1) \Z \subset \Z^2 $$
and $ S = M \cap (\Z_{\geq 0})^2 $. Then $B = A^{\mu_n(k)}$ is a
locally toric ring with respect to the monoid $S$. More precisely,
$$ B \cong R[[t_1^{s_1} t_2^{s_2} \,|\, (s_1,s_2) \in S]]/(\pi - w \cdot t_1^{m_1} t_2^{m_2}). $$
\end{lemma}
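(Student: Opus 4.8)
The plan is to reduce the computation of $B = A^{\mu_n(k)}$ to two inputs: the description of the $\mu_n(k)$-invariant subring of the ambient power series ring $R[[t_1,t_2]]$, and the exactness of the functor of $\mu_n(k)$-invariants. Throughout I would use that $n$ is prime to $p$, hence a unit in $R$, and that $\mu_n(k)$ lifts canonically (Teichm\"uller/Hensel) to the group $\mu_n(R)\subset R^*$ of $n$-th roots of unity, so that the action on $R[[t_1,t_2]]$ is by $R$-algebra automorphisms and the Reynolds operator $\mathrm{e}=\tfrac1n\sum_{\xi\in\mu_n(k)}\xi$ is a well-defined continuous $R[[t_1,t_2]]^{\mu_n(k)}$-linear projection onto the invariants.

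\emph{Step 1 (invariants of the power series ring).} The action is diagonal on monomials: $\xi$ sends $t_1^{a_1}t_2^{a_2}$ to $\xi^{a_1+ra_2}\,t_1^{a_1}t_2^{a_2}$. Applying $\mathrm{e}$ to a general element $\sum_{(a_1,a_2)\in\N^2}c_{a_1,a_2}\,t_1^{a_1}t_2^{a_2}$ and using that $\sum_{\xi\in\mu_n(k)}\xi^{j}$ equals $n$ or $0$ according as $n\mid j$ or not (here $R$ being a domain is what makes this identity hold in $R$), one sees that an element is invariant exactly when its only monomials are the $\chi^s:=t_1^{s_1}t_2^{s_2}$ with $s=(s_1,s_2)\in M\cap(\Z_{\geq 0})^2=S$. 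Since $S\subset\N^2$ and each total degree contains only finitely many elements of $\N^2$, the resulting $R$-subalgebra $\{\sum_{s\in S}c_s\chi^s : c_s\in R\}$ is precisely the completed monoid ring $R[[S]]=R[[\chi^s : s\in S]]$, that is, the closure of $R[S]$ inside $R[[t_1,t_2]]$.

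\emph{Step 2 (admissibility of $S$, and the shape of $\varphi$).} I would check that $S=M\cap(\Z_{\geq 0})^2$ meets all the hypotheses of Definition \ref{def-loctor}: it is integral and sharp because it is a submonoid of $\N^2$; it is saturated because $\N^2$ is saturated in $\Z^2$ while $M$ is a subgroup of $\Z^2$; and it is finitely generated by Gordan's lemma, being the set of lattice points of the rational cone $\R_{\geq 0}^2$ in the lattice $M$. Next, the defining relation: by the choice of $r$ we have $m_1+rm_2\equiv 0$ modulo $n$, so $(m_1,m_2)\in S$ and $(m_1,m_2)\neq 0$; and $w$, being $\mu_n(k)$-invariant, lies in $R[[S]]$ by Step 1. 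Hence $\varphi:=w\cdot t_1^{m_1}t_2^{m_2}=w\cdot\chi^{(m_1,m_2)}$ belongs to $R[[S]]$ and satisfies $\varphi\equiv 0$ modulo $(\chi^s : s\in S,\ s\neq 0)$.

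\emph{Step 3 (descent to the quotient).} Set $f=\pi-\varphi$; this is $\mu_n(k)$-invariant, and $A=R[[t_1,t_2]]/(f)$ carries the induced action. Applying the (exact) functor of $\mu_n(k)$-invariants — exact because $\mu_n(k)$ is a finite group of order invertible in $R$ — to $0\to (f)\to R[[t_1,t_2]]\to A\to 0$ gives $A^{\mu_n(k)}=R[[t_1,t_2]]^{\mu_n(k)}/(f)^{\mu_n(k)}$. Since $R[[t_1,t_2]]$ is a domain and $f$ is invariant, $gf$ is invariant if and only if $g$ is, so $(f)^{\mu_n(k)}=f\cdot R[[t_1,t_2]]^{\mu_n(k)}$; combined with Step 1 this yields
$$B=A^{\mu_n(k)}\cong R[[S]]/(\pi-\varphi)=R[[t_1^{s_1}t_2^{s_2}:(s_1,s_2)\in S]]/(\pi-w\cdot t_1^{m_1}t_2^{m_2}),$$
which is locally toric with respect to $S$ by Steps 1 and 2. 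The step I expect to need the most care is Step 1 — matching the completion topology on $R[[S]]$ with the subspace topology from $R[[t_1,t_2]]$, so that the invariant subring is exactly the \emph{completed} monoid ring and nothing larger — together with verifying, via Gordan's lemma and saturation, that $S$ is genuinely the kind of monoid appearing in Definition \ref{def-loctor}; the passage from $R[[t_1,t_2]]$ to $A$ is then formal, the only real ingredient being that $n$ is invertible in $R$.
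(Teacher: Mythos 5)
Your proof is correct and follows essentially the same route as the paper's: the paper also passes to invariants of $R[[t_1,t_2]]$ (using that $n$ is prime to $p$ so invariants commute with the quotient by the invariant element $\pi - w\cdot t_1^{m_1}t_2^{m_2}$) and identifies the invariant ring as the ring topologically generated by the monomials $t_1^{s_1}t_2^{s_2}$ with $(s_1,s_2)\in S$. You merely spell out the details the paper leaves implicit (Teichm\"uller lifting of $\mu_n(k)$, the Reynolds operator, and the verification that $S$ is finitely generated, sharp and saturated), all of which are fine.
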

\begin{proof}
Since $n$ is prime to $p$, we have an isomorphism of $R$-algebras
$$B\cong R[[t_1,t_2]]^{\mu_n(k)} /(\pi - w
\cdot t_1^{m_1} t_2^{m_2}).$$ The lemma then follows from the fact
that the $R$-algebra $ R[[t_1,t_2]]^{\mu_n(k)} $ is topologically
generated by invariant monomials, which are exactly the elements
$t_1^{s_1}t_2^{s_2}$ with $(s_1,s_2) \in S$.
\end{proof}

\sss The $R$-scheme $\Spec B$ is called a {\em tame cyclic
quotient singularity}. Thanks to Lemma \ref{lemm-torsing}, we can
use the theory of locally toric singularities to construct the
minimal resolution of $\Spec B$. First, we need to introduce some
notation.

\begin{definition}\label{def-HJ}
For relatively prime integers $ a,b \in \mathbb{Z}_{>0} $ with $ a
> b $ we use the compact notation  $ a/b = [z_1, \ldots, z_L]_{\mathrm{HJ}}
$ for the Hirzebruch-Jung continued fraction expansion
$$a/b=z_1-\frac{1}{z_2-\frac{1}{\ldots-\frac{1}{z_L}}}$$
with $z_i\in \mathbb{Z}_{\geq 2}$.
\end{definition}

\begin{prop}\label{prop-mindesing}
Let $ \rho : X = X_{F_{\min}} \to \Spec B $ be the minimal
 toric desingularization of $\Spec B$. The special fiber $ X_k = \sum_{i=0}^{L+1}
\mu_i E_i $ is a strict normal crossings divisor and, renumbering
the irreducible components of $X_k$ in a suitable way, we can
arrange that the following properties hold:
\begin{enumerate}
\item $E_0$ and $E_{L+1}$ are the strict transforms of the
irreducible components of the special fiber of $\Spec B$.

\item $E_i \cong \mathbb{P}^1_k$ for each $ 1 \leq i \leq L $.

\item For each $ 1 \leq i \leq L $, $E_i$ intersects $E_{i-1}$ and
$E_{i+1}$  in a unique point, and no other components of $X_k$.
Moreover, $E_0$ (resp.~$E_{L+1}$) intersects $E_1$ (resp.~$E_L$)
 in a unique point, and no other components of $X_k$.

\item $ E_i^2 \leq - 2 $ for each $ 1 \leq i \leq L $.
\end{enumerate}
In particular, $\rho$ is the minimal resolution of $\Spec B$.
\end{prop}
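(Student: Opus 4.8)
The plan is to recast the whole statement as the explicit Hirzebruch--Jung resolution of a cyclic quotient surface singularity and then read off the geometry chart by chart. First I would record the combinatorial input: by Lemma~\ref{lemm-torsing}, $B$ is locally toric for the monoid $S=M\cap(\Z_{\geq 0})^2$ with $M=(n,0)\Z+(-r,1)\Z\subset\Z^2$ of index $n$, so the cone $\sigma^{\vee}=S\otimes_{\N}\R_{\geq 0}$ is the full first quadrant of $M_{\R}=\R^2$ and its dual $\sigma\subset N_{\R}$ is again the first quadrant, but taken with respect to the overlattice $N=M^{\vee}=\Z^2+\Z\cdot\tfrac1n(1,r)\supset\Z^2$ of index $n$ (here $\gcd(n,r)=1$ since $n$ is prime to $m_1,m_2$). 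Thus, apart from the distinguished role of $\pi=w\,\chi^{(m_1,m_2)}$, the scheme $\Spec B$ is the classical cyclic quotient surface singularity of type $\tfrac1n(1,r)$. I would stress at this point that the equation $\pi-\varphi$ is \emph{not} homogeneous for the torus action, so the description of the charts of the resolution cannot simply be transported from classical toric surface theory; this is exactly the gap discussed after Lemma~\ref{lemma-torreg}, and it is where the genuine work lies.

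Next I would construct $F_{\min}$ and extract the chain. Between the two primitive $N$-generators $v_0=(1,0)$ and $v_{L+1}=(0,1)$ of the edges of $\sigma$ there is a unique minimal regular refinement: a sequence of primitive vectors $v_0,v_1,\dots,v_L,v_{L+1}$ in which each consecutive pair $\{v_{i-1},v_i\}$ is a $\Z$-basis of $N$ and $v_{i-1}+v_{i+1}=b_i v_i$ with integers $b_i\geq 2$, the $b_i$ being the entries of the Hirzebruch--Jung expansion $n/(n-r)=[b_1,\dots,b_L]_{\mathrm{HJ}}$ of Definition~\ref{def-HJ}; minimality of $F_{\min}$ is precisely the condition that every $b_i\geq 2$. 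The $L$ interior rays yield the $L$ exceptional prime divisors $E_1,\dots,E_L$, and the two edge rays yield the strict transforms $E_0,E_{L+1}$ of the two components of the special fiber of $\Spec B$, which is~(1). Since $F_{\min}$ refines a planar sector, the maximal cone $\sigma_i=\langle v_{i-1},v_i\rangle$ meets $\sigma_{i+1}$ exactly along the ray through $v_i$ and meets no other maximal cone, while the two extreme cones abut the edge rays; translating incidences of cones and rays into incidences of charts and divisors gives the full intersection pattern in~(3).

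Then I would pass to geometry chart by chart. For each maximal cone $\sigma_i$ I would take the basis $s_1^{(i)},s_2^{(i)}$ of $S_{\sigma_i}=\sigma_i^{\vee}\cap M$ dual to $\{v_{i-1},v_i\}$; by Lemma~\ref{lemma-torreg} the ring $A_{\sigma_i}$ is regular, with coordinates $u=\chi^{s_1^{(i)}}$ and $v=\chi^{s_2^{(i)}}$. Writing $(m_1,m_2)\in M$ in this basis as $(a_i,c_i)\in(\Z_{\geq 0})^2$, the defining relation becomes $\pi=(\text{unit})\cdot u^{a_i}v^{c_i}$, so the special fiber in this chart is $\{u^{a_i}v^{c_i}=0\}$, a strict normal crossings divisor; gluing the charts shows $X_k=\sum_{i=0}^{L+1}\mu_i E_i$ is a strict normal crossings divisor with the $\mu_i$ the exponents just computed. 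For $1\leq i\leq L$ the curve $E_i$ is covered by the two charts $\Spec A_{\sigma_i}$ and $\Spec A_{\sigma_{i+1}}$, on each of which its reduction is a coordinate line isomorphic to $\A^1_k$, the two copies being glued along $\G_{m,k}$; hence $E_i\cong\mathbb{P}^1_k$, which is~(2), and the behaviour of $E_0$ and $E_{L+1}$ in~(3) follows in the same way from the two boundary charts. Finally, the standard toric computation of self-intersection numbers on the resolution gives the relation $v_{i-1}+v_{i+1}=-E_i^2\cdot v_i$, whence $-E_i^2=b_i\geq 2$, which is~(4); since no exceptional curve is then a $(-1)$-curve, Castelnuovo's criterion applies to none of them and $\rho$ is the minimal resolution of $\Spec B$, not merely the minimal toric one.

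The step I expect to be the real obstacle is the regularity and special-fiber analysis of the individual charts $A_{\sigma_i}$: one must show directly, in the completion of $R[S_{\sigma_i}]$, that the element $\pi-\varphi$ belongs to a regular system of parameters along its zero locus, since --- as noted after Lemma~\ref{lemma-torreg} --- the usual homogeneous toric argument does not apply and one has to invoke either an explicit parameter computation for the tame cyclic quotient case or the logarithmic desingularization of log-regular schemes \cite[10.4]{kato}. The remaining ingredient, converting minimality of $F_{\min}$ into the inequalities $b_i\geq 2$ and pinning down the length $L$, is the classical Hirzebruch--Jung algorithm and is routine by comparison.
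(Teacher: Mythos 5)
Your outline follows the same route as the paper (Hirzebruch--Jung fan, chart-by-chart analysis, chain of rational curves, $E_i^2=-b_i\leq -2$, Castelnuovo), but it leaves out precisely the content that this proposition is meant to supply. You justify regularity of the charts by citing Lemma \ref{lemma-torreg}; inside the paper that is circular, since the proof of Lemma \ref{lemma-torreg} explicitly declares the argument in \cite{Kir} incomplete and defers the case at hand to the explicit computation in Proposition \ref{prop-mindesing} (the only alternative offered being Kato's log-regular desingularization). You flag this as ``the real obstacle'' but do not carry it out. The paper's proof does: it writes down the chart coordinates $u_0=t_1^n$, $v_0=t_1^{-r}t_2$, $u_i=v_{i-1}^{-1}$, $v_i=u_{i-1}v_{i-1}^{b_i}$, notes that $\Spec B$ is singular only at its closed point so regularity need only be checked at points above it, identifies the zero loci of $u_i$ and $v_i$ as regular one-dimensional schemes ($\Spec k[u_i]$, $\Spec k[v_i]$, resp.\ $\Spec k[[u_0]]$, $\Spec k[[v_L]]$ at the ends), and deduces that $(u_i,v_i)$ is a regular system of parameters; this simultaneously gives regularity and the snc property. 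The same issue recurs in your step (4): the ``standard toric computation'' $v_{i-1}+v_{i+1}=-E_i^2\,v_i$ is exactly the kind of transfer from classical toric geometry that is not directly available here, because $\pi-\varphi$ is not homogeneous for the torus action. The paper instead introduces the multiplicities $\mu_0=m_2$, $\mu_1=(m_1+rm_2)/n$, $\mu_{i+1}=b_i\mu_i-\mu_{i-1}$ (so that $t_1^{m_1}t_2^{m_2}=u_i^{\mu_{i+1}}v_i^{\mu_i}$) and computes $0=X_k\cdot E_i=\mu_{i-1}+\mu_iE_i^2+\mu_{i+1}$, which yields $E_i^2=-b_i$ without any appeal to a torus action.

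There is also a concrete combinatorial slip. With the lattice $N=\Z^2+\Z\cdot\frac1n(1,r)$ and the first quadrant as cone, the self-intersection data are given by the Hirzebruch--Jung expansion of $n/r$ (reading the chain from the ray $(0,1)$; equivalently of $n/r'$ with $rr'\equiv 1 \bmod n$ if you read from $(1,0)$), not of $n/(n-r)$ as you claim. For instance, for $n=3$, $r=1$ your recipe gives $3/2=[2,2]_{\mathrm{HJ}}$, i.e.\ two $(-2)$-curves, whereas the minimal refinement inserts the single ray through $\frac13(1,1)$ and the exceptional locus is one $(-3)$-curve, in accordance with $3/1=[3]_{\mathrm{HJ}}$. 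This error does not affect the qualitative statements (1)--(4), since they only use $b_i\geq 2$, but it would give the wrong self-intersection numbers and the wrong multiplicities, which are used elsewhere in the chapter.
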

\begin{proof}
Let $n$ and $r$ be as above, and write $ n/r =
[b_1,\ldots,b_L]_{\mathrm{HJ}} $. The
 monoid $S$ from Lemma \ref{lemm-torsing} defines the lattice
  $$ N= (0,1) \mathbb{Z} + \frac{1}{n}(1,r) \mathbb{Z}, $$
 which is dual to $M=S^{\mathrm{gp}}$, and the cone $\sigma=\R^2_{\geq 0}$ in $N\otimes_{\Z}\R$.
  The minimal regular refinement
of $ \sigma $ is the union of subcones $ \sigma_{0}, \ldots,
\sigma_L $ that can be computed as in \cite[\S2.6]{fulton}.
%,
%where now
%$$ N = (0,1) \mathbb{Z} + \frac{1}{n}(1,r) \mathbb{Z}. $$
 If we put $ e_0 = (0,1) $, $e_1 = \frac{1}{n}(1,r)$, and inductively define $ e_{i+1} = b_i e_i - e_{i-1} $
  (in particular $ e_{L+1} = (1,0) $), then the cone $ \sigma_i $ is generated by $ e_i$ and $e_{i+1}$.

We put $ u_0 = t_1^n $ and $ v_0 = t_1^{-r} t_2 $, and define
inductively elements $ u_i = (v_{i-1})^{-1} $  and $ v_i = u_{i-1}
(v_{i-1})^{b_i} $ in the fraction field of $B$, for $ 1 \leq i
\leq L $. We moreover put $ \mu_0 = m_2 $ and $ \mu_1 =
(m_1+rm_2)/n $ and define inductively integers $ \mu_{i+1} = b_i
\mu_i - \mu_{i-1} $. In particular $\mu_i \in \NN $ for all $i$
and $\mu_{L+1} = m_1$. Moreover, an easy induction argument shows
that $ t_1^{m_1} t_2^{m_2} = u_i^{\mu_{i+1}} v_i^{\mu_i} $ for all
$i$.

 We identify $\Z^2$ with the group of monomials in the variables $(t_1,t_2)$ via the map $(a_1,a_2)\mapsto
 t_1^{a_1}
 t_2^{a_2}$.
 Then for each $ i \in \{0, \ldots, L \} $, the elements $u_i$ resp.~$v_i$ are dual to $e_i$ resp.~$ e_{i+1} $ and generate the monoid $S_{\sigma_i}$. Moreover, we get
$$ B_{\sigma_i} = R[[S]][S_{\sigma_i}]/(\pi - w \cdot u_i^{\mu_{i+1}} v_i^{\mu_i}). $$
%In fact we will see that on $ \Spec B_{\sigma_i} $, the elements $u_i$ and $v_i$ are affine coordinates for the exceptional divisors.

 First, we show that $\Spec B_{\sigma_i}$ is regular.
 The scheme $\Spec B$ has an isolated singularity at its unique
 closed point $x$, and $\rho$ is an isomorphism over the
 complement of this closed point. Thus we only need to prove
 regularity of $\Spec B_{\sigma_i}$ at the points lying above $x$.
 In such points we either have $u_i=0$ or $v_i=0$, so that the
 result follows from the fact that $\Spec B_{\sigma_i}$ is two-dimensional and the zero loci of $u_i$ and
 $v_i$ are regular one-dimensional schemes. More precisely, the
 zero locus of $v_i$ is isomorphic to $\Spec k[[u_0]]$ if $i=0$
 and to $\Spec k[u_i]$ else; likewise, the zero locus of $u_i$ is
 isomorphic to $\Spec k[[v_L]]$ if $i=L$ and to $\Spec k[v_i]$
 else.

  The ideal $(u_i,v_i)$ is maximal in $B_{\sigma_i}$, so that
  $u_i,v_i$ form a regular system of parameters and the special
  fiber of $\Spec B_{\sigma_i}$ is a divisor with strict normal
  crossings. Thus we can conclude that $X$ is regular and that
  $X_k$ is a strict normal crossings divisor of the form described in (1) and (3).   The relation $ u_i = (v_{i-1})^{-1} $
implies that the components $E_1,\ldots,E_{L}$ are all isomorphic
to $\mathbb{P}^1_k$, so that (2) holds as well. Moreover, the
component $E_j$ has multiplicity $\mu_j$ in $X_k$, for each $j$ in
$\{0,\ldots,L+1\}$, so that
$$0=X_k\cdot E_i=(\sum_{j=0}^{L+1}\mu_j E_j)\cdot
E_i=\mu_{i-1}+\mu_iE_i^2+\mu_{i+1}$$ for all $1\leq i\leq L$. This
implies that the self-intersection number of $E_i$ is equal to
$-b_i$. In particular, each of these exceptional components has
self-intersection number at most $-2$, such that (4) holds and
$\rho$ is minimal.
\end{proof}

\if false
 We first note that $ C_{\sigma_i} = R[[S]][S_{\sigma_i}]
$ is regular.  To see this, let $\frak{m}_{\sigma_i}$ denote the
maximal ideal of $C_{\sigma_i}$ generated by $\pi$, $u_i$ and
$v_i$. The completion of $ R[[S]][S_{\sigma_i}] $ at
$\frak{m}_{\sigma_i}$ equals $ R[[S_i]] = R[[u_i,v_i]] $, thus it
is regular. This implies that  $ C_{\sigma_i} $ is regular at
$\frak{m}_{\sigma_i}$, and thus regular everywhere because the
torus $\mathbb{G}_{m,R}$ acts transitively on $\Spec
C_{\sigma_i}$.

It follows that the completion of $B_{\sigma_i}$ in $M_{\sigma_i}$
is $R[[u_i,v_i]]/(\pi - w \cdot u_i^{\mu_{i+1}} v_i^{\mu_i})$. In
particular, the special fiber is a normal crossings divisor at the
origin of each chart.

Now we look more specifically at  the special fiber in each chart.
We will show, for any $ 1 \leq i \leq L $ that $
B_{\sigma_{i-1}}/(u_{i-1}) \cong k[v_{i-1}] $ and $
B_{\sigma_i}/(v_i) \cong k[u_i] $. Because we have the relation $
u_i = (v_{i-1})^{-1} $ this glues to a $ \mathbb{P}^1_k$.

We will proceed inductively, starting with $ B_{\sigma_{0}} $. Note that $S_0$ is the monoid consisting of monomials $u_0^iv_0^j$ with $(i,j) \in S_0 = (\mathbb{Z}_{\geq0})^2 $, and $S$ is the submonoid $S_0$ consisting of those $(i,j) $ such that $ 0 \leq j \leq  (n/r)i $ holds. Introducing dummy variables $u,v$, we see that we can describe $ B_{\sigma_{0}} $ as follows
$$ B_{\sigma_{0}} = R[[S]][S_0]/(\pi - w \cdot u^{\mu_1} v^{\mu_0}, u - u_0, u_0 v_0 - uv). $$
Thus, dividing out $u_0$ yields the ring
$$ B_{\sigma_{0}}/(u_0) = k[[S]][v_0]/(u, uv, u^{\mu_1} v^{\mu_0}) $$
and we would like to see that this is isomorphic to $ k[v_0] $.

Having dealt with the $i=0$ case, consider now $B_{\sigma_1}$. Here $ u_1 = v_0^{-1} = t_1^r t_2^{-1} $ and
$$ v_1 = u_0 v_0^{b_1} = t_1^{n - rb_1} t_2^{b_1} = t_1^{-r_1} t_2^{b_1}. $$
We either have $r=1$, $b_1=n$ and $r_1=0$, in which case $ v_1 \in S $. Otherwise, $r>1$, $r_1 \geq 1$ and $ v_1 \notin S $. In either case, $ u_1 \notin S $.

This shows that the special fiber is a strict normal crossings
divisor. Assertions (1) - (4) now follow immediately from this
description, and since the self intersections are the integers
$b_i$, it is clear that this resolution of singularities is
minimal. \fi

\chapter{Component groups and non-archimedean
uniformization}\label{chap-uniform}
 In this chapter, we will study
the behaviour of the torsion part of the N\'eron component group
of a semi-abelian $K$-variety under ramified extension of the base
field $K$. Our main goal is to prove the rationality of the
component series (Theorem \ref{thm-compsersab}). We discussed the
case of an abelian $K$-variety in \cite{HaNi-comp}; in that case,
the component group is finite. The main complication that arises
in the semi-abelian case is the fact that it is difficult in
general to identify the torsion part of the component group in a
geometric way. This problem is related to the {\em index} of the
semi-abelian $K$-variety, an invariant that we introduce in
Section \ref{subsec-index}. For tori, the torsion part of the
component group has a geometric interpretation in terms of the
dual torus, and we can explicitly
compute the index from the character group.% (Section\ref{ss-torus}).
  ~The case of a semi-abelian variety is
substantially more difficult; there we need to construct a
suitable uniformization, which is no longer an algebraic group but
a rigid analytic group. In order to deal with N\'eron component
groups of rigid analytic groups, we will use the cohomological
theory of Bosch and Xarles \cite{B-X}, that we recall and extend
in Section \ref{sec-BX}. We correct an error in their paper, which
was pointed out by Chai, and we show that all of the principal
results in \cite{B-X} remain valid.

 \section{Component groups of smooth sheaves}\label{sec-BX}
%\section{Sheaves on the smooth site}
\subsection{The work of Bosch and Xarles}
\sss In \cite{B-X}, Bosch and Xarles developed a powerful
cohomological approach to the study of component groups of abelian
$K$-varieties. They interpret the N\'eron model in terms of a
push-forward functor from the rigid smooth site on $\Sp K$ to the
formal smooth site on $\Spf R$, and they show that the N\'eron
component group of an abelian variety can be recovered from the
smooth sheaf associated to the N\'eron model. Combining this
interpretation with non-archimedean uniformization of abelian
varieties, they deduce several deep results on the structure of
the component group.

\sss
 Unfortunately, it is known that the perfect residue field case of Lemma 4.2  in
\cite{B-X} is not correct (see \cite[4.8(b)]{chai}), and the
proofs of some of the main results in \cite{B-X}
 rely on this lemma.
 % (in particular Theorem 5.6 and Corollary
% 5.9). In this section,
 We will now show that one can replace the
 erroneous lemma by another statement that suffices to prove the
 validity of all the other results in \cite{B-X}.

\begin{prop}\label{prop-bx}
Assume that $k$ is algebraically closed. Let
$$0\longrightarrow T\longrightarrow G\longrightarrow H\longrightarrow 0$$ be a short exact sequence of
semi-abelian $K$-varieties such that $T$ is a torus.
%Assume that
%the maximal split subtorus of $H$ is trivial.
 We denote by
$f:\mathscr{G}\rightarrow \mathscr{H}$ the unique morphism of
N\'eron $lft$-models that extends the morphism $G\rightarrow H$.
 Then the following properties hold.
\begin{enumerate}
\item The map
$$f^o(R):\mathscr{G}^o(R)\rightarrow \mathscr{H}^o(R)$$ is
surjective. \item The sequence of component groups
\begin{equation}\label{eq-phi}
\Comp T\rightarrow \Comp G\rightarrow \Comp H\rightarrow
0\end{equation} is exact.
\end{enumerate}
\end{prop}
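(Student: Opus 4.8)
The plan is to deduce both parts from the exactness properties of the Bosch--Xarles sheaf-theoretic N\'eron functor, the new ingredient (replacing the erroneous Lemma 4.2 of \cite{B-X}) being a direct verification that $R^{1}j_{*}$ of the analytification of a $K$-torus has trivial component group.

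First I would record the sheaf-level exact sequence. Analytifying $0\to T\to G\to H\to 0$ produces a sequence $0\to T^{\an}\to G^{\an}\to H^{\an}\to 0$ of abelian sheaves on the smooth rigid site of $\Sp K$ that is exact, because $G\to H$ is smooth and faithfully flat, hence admits sections \'etale-locally and is therefore an epimorphism of sheaves. Pushing forward along $j_{*}$ to the smooth formal site on $\Spf R$, and using that $j_{*}$ of the analytification of a semi-abelian $K$-variety is the $\mathfrak m$-adic completion of its N\'eron $lft$-model \cite[7.2.1]{neron}, \cite[6.2]{bosch-neron}, yields a four-term exact sequence
\[0\to \widehat{\mathscr{T}}\to \widehat{\mathscr{G}}\xrightarrow{\ \widehat f\ }\widehat{\mathscr{H}}\xrightarrow{\ \delta\ }R^{1}j_{*}T^{\an}\]
of sheaves on the smooth formal site of $\Spf R$. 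The crux is the claim $(\ast)$: the component group of $R^{1}j_{*}T^{\an}$ is trivial. Granting $(\ast)$, the connecting map $\delta$ kills the identity component $\widehat{\mathscr{H}}^{o}$, which gives part (1) (a sheaf epimorphism of smooth connected formal $R$-groups is surjective on $R$-points, $R$ being strictly henselian with algebraically closed residue field), and then part (2) follows by a diagram chase: combining the four-term sequence with the specialization sequences $0\to\mathscr{G}^{o}(R)\to G(K)\to\Comp{G}\to 0$ and their analogues for $T$ and $H$, and using $(\ast)$ once more, the snake lemma yields surjectivity of $\Comp{G}\to\Comp{H}$ and exactness of \eqref{eq-phi} in the middle. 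As an independent consistency check for part (1) one can also verify that $\mathscr{G}^{o}_{k}\to\mathscr{H}^{o}_{k}$ is faithfully flat, hence $\mathscr{G}^{o}\to\mathscr{H}^{o}$ smooth and surjective on $R$-points: choose $\ell$ prime to $p$ and to the degree of the splitting field of $T$ over $K$, take inertia invariants in $0\to T_{\ell}T\to T_{\ell}G\to T_{\ell}H\to 0$ as in the proof of Proposition \ref{prop-add}, and combine the resulting short exact sequence of Tate modules of identity components with the dimension count of Proposition \ref{prop-add}.

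It remains to prove $(\ast)$, and for this I would carry out a d\'evissage. Filtering a split torus into copies of $\mathbb{G}_{m}$, and reducing a general $K$-torus to a quasi-trivial one by a standard resolution $0\to F\to Q\to T\to 0$ with $Q$ a product of Weil restrictions of $\mathbb{G}_{m}$ (then inducting on $F$ via the long exact sequence), reduces $(\ast)$ to the case of $\mathbb{G}_{m}$ over a finite extension of $K$. There the N\'eron $lft$-model is the explicit gluing of copies of $\mathbb{G}_{m,R}$ \cite[10.1.5]{neron}, so $R^{1}j_{*}\mathbb{G}_{m}^{\an}$ and its component group can be computed by hand, the essential vanishing being Hilbert's Theorem 90. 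The main obstacle is precisely $(\ast)$ for a torus that is not split, and above all for a wildly ramified torus: this is exactly the situation in which the original argument in \cite{B-X} fails (cf.\ \cite[4.8(b)]{chai}), and the point of the approach is that, by never leaving the category of N\'eron $lft$-models of tori, every sheaf occurring is represented by an explicit formal group scheme, so that the descent from the splitting field can be performed concretely rather than through an abstract cohomological vanishing.
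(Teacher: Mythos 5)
Your proposal hinges on claim $(\ast)$, and this is where it breaks down, in two ways. First, $(\ast)$ as you state it (triviality of the \emph{component group} of $R^1j_*T^{\an}$) is not what your deduction uses: a morphism of abelian sheaves on $(\Spf R)_{\sm}$ sends identity components into identity components, so knowing that $R^1j_*T^{\an}$ is connected says nothing about $\delta$ killing $\widehat{\mathscr{H}}^{o}$; what you actually need is (at least) the vanishing of its sections over $R$, which by Lemma \ref{lemm-smtop}(1) amounts to the vanishing of $H^1((\Sp K)_{\sm},T^{\an})=H^1(K,T)$. Second, your d\'evissage does not prove any such statement in the only hard case: the split and quasi-trivial cases do reduce to Hilbert 90, but the resolution $0\to F\to Q\to T\to 0$ shifts the problem to $R^2j_*F$ with $F$ another (in general wildly ramified) torus, so the induction does not close, and the wildly ramified case is precisely where \cite[Lemma 4.2]{B-X} fails (Chai's counterexample \cite[4.8(b)]{chai}); your remark that ``every sheaf occurring is represented by an explicit formal group scheme'' is unfounded for the sheaves $R^qj_*$, so no concrete descent is available. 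The paper sidesteps $R^1j_*$ altogether: since $k$ is algebraically closed, Chai's theorem \cite[4.3]{chai} gives $H^1(K,T)=0$, hence exactness of $0\to T(K)\to G(K)\to H(K)\to 0$ on the nose; in effect the corrected form of your $(\ast)$ is equivalent to that Galois-cohomological vanishing, which you never invoke and which your tame d\'evissage cannot reach.

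Even granting $(\ast)$ in its corrected form, your deduction of part (1) has a further gap: knowing that $\widehat{\mathscr{H}}^{o}$ lies in the sheaf image of $\widehat f$ only lets you lift a point of $\mathscr{H}^{o}(R)$ to $\mathscr{G}(R)$ (smooth covers of $\Spf R$ admit sections), not to $\mathscr{G}^{o}(R)$; you have not shown that $\widehat{\mathscr{G}}^{o}\to\widehat{\mathscr{H}}^{o}$ is itself a sheaf epimorphism. The paper closes exactly this step by combining surjectivity of $\mathscr{G}(R)\to\mathscr{H}(R)$ with the finite generation of $\Comp{G}$ \cite[3.5]{HaNi} and \cite[9.6.2]{neron}; some ingredient of this kind is indispensable. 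Your ``consistency check'' does not repair it: faithful flatness of $\mathscr{G}^{o}_k\to\mathscr{H}^{o}_k$ would not make $\mathscr{G}^{o}\to\mathscr{H}^{o}$ smooth (the kernel over the special fibre need not be smooth), flat and surjective on special fibres does not imply surjectivity on $R$-points (consider the $p$-th power map on $\mathbb{G}_{m,R}$), and the Tate-module argument is blind to the unipotent parts, so it does not even yield surjectivity of $\mathscr{G}^{o}_k\to\mathscr{H}^{o}_k$. Part (2) is then a correct diagram chase, but it rests on these unestablished inputs.
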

\begin{proof}
(1) By \cite[4.3]{chai}, the Galois cohomology group $H^1(K,T)$
vanishes, so that the sequence
\begin{equation}\label{eq-chai}0\rightarrow T(K)\rightarrow
G(K)\rightarrow H(K)\rightarrow 0\end{equation} is exact.
 In particular, the map
 $$G(K)=\mathscr{G}(R)\to \mathscr{H}(R)=H(K)$$ is surjective. Since $\mathscr{G}(R)/\mathscr{G}^o(R)$ is finitely generated by
 \cite[3.5]{HaNi}, it
 follows from \cite[9.6.2]{neron} that
 $$f^o(R):\mathscr{G}^o(R)\rightarrow \mathscr{H}^o(R)$$ is
 surjective.

\medskip
\noindent (2)  It follows at once from the exactness of
\eqref{eq-chai} that $\Comp G\to \Comp H$ is surjective. Then by
an elementary diagram chase one deduces from point (1) that the
sequence
$$\Comp T=T(K)/\mathscr{T}^o(R)\rightarrow \Comp G=G(K)/\mathscr{G}^o(R) \rightarrow \Comp H=H(K)/\mathscr{H}^o(R)\rightarrow 0 $$
is exact.
\end{proof}

\sss \label{sss-BXcorrect} Let us check that replacing
\cite[4.2]{B-X} (perfect residue field case) by Proposition
\ref{prop-bx} suffices to prove all the subsequent results in
\cite{B-X} (note that one can immediately reduce to our setting
where $R$ is complete and $k$ is separably closed, since the
formation of N\'eron models commutes with base change to the
completion of a strict henselization). The result \cite[4.2]{B-X}
was applied at the following places.
\begin{itemize}
\item In the proof of \cite[4.11(i)]{B-X} and \cite[5.8(ii)]{B-X},
the result in \cite[4.2]{B-X} was used to prove the exactness of
the sequence
$$\begin{CD}\Comp{T_{\spl}}@>>> \Comp{T}@>>> \Comp{T'}@>>> 0 \end{CD}$$
where $T$ is a $K$-torus and $T'=T/T_{\spl}$.
 This also follows from the proof of \cite[10.1.7]{neron}, since
$T'$ is anisotropic. If $k$ is perfect, it follows from
Proposition \ref{prop-bx}.
 \item The proof of \cite[4.11(ii)]{B-X} (perfect residue field case); this result
is a special case of Proposition \ref{prop-bx}(2).
\end{itemize}

\subsection{Identity component and component group of a smooth sheaf}
\sss  We denote by $(\Spf R)_{\sm}$ and $(\Sp K)_{\sm}$ the small
smooth sites over $\Spf R$ and $\Sp K$, respectively
\cite[\S3]{B-X}. For every smooth affinoid $K$-algebra $A$ and
every
 abelian sheaf $F$ on $(\Sp K)_{\sm}$, we will write $F(A)$
 instead of $F(\Sp A)$, and we will use the analogous notation for
 smooth topological $R$-algebras of finite type and abelian
 sheaves on $(\Spf R)_{\sm}$.

\sss \label{sss-smlift} An important property of the site $(\Spf
R)_{\sm}$ is the following: if $\mX$ is a non-empty smooth formal
$R$-scheme, then the structural morphism  $\mX\to \Spf R$ has a
section. This follows from the infinitesimal lifting criterion for
smoothness and the fact that
 the $k$-rational points on $\mX\times_R k$ are dense because $k$ is separably closed
\cite[2.2.13]{neron}.

\sss \label{sss-neronsheaf} The generic fiber functor that
associates to a (smooth) formal $R$-scheme its (smooth) rigid
generic fiber over $K$ induces a morphism of sites
$$j:(\Sp K)_{\sm}\to (\Spf R)_{\sm}.$$ For every abelian sheaf $F$ on
$(\Sp K)_{\sm}$, we define the N\'eron model $\mathscr{F}$ of $F$
by
$$\mathscr{F}=j_* F$$ as in \cite[\S3]{B-X}. This is an abelian sheaf on $(\Spf R)_{\sm}$.

\sss Bosch and Xarles define in \cite[\S4]{B-X} the {\em identity
component} $\mathscr{F}^o$ of an abelian sheaf $\mathscr{F}$ on
$(\Spf R)_{\sm}$, which is a subsheaf of $\mathscr{F}$, and the
{\em component sheaf}
$$\Comp{\mathscr{F}}=\mathscr{F}/\mathscr{F}^o.$$ The identity component and component group are functorial in
$\cF$. The sheaf property of $\cF$ is never used in the
construction of the identity component, so that we can immediately
extend this definition to abelian presheaves on $(\Spf R)_{\sm}$,
as follows.

\sss Let $\cF$ be an abelian presheaf on $(\Spf R)_{\sm}$.  We
define $\mathscr{F}^o(R)$ as the subgroup of $\mathscr{F}(R)$
consisting of elements $\sigma$ such that there exists a smooth
connected formal $R$-scheme $\mX$, an element $\tau$ in
$\mathscr{F}(\mX)$ and points $x_0$ and $x_1$ in $\mX(R)$ such
that $x_0^*\tau=0$ and $x_1^*\tau=\sigma$ in $\mathscr{F}(R)$.
%In such a situation, we will say that $\tau$
%connects $\sigma$ to $0$.
 If $\mY$ is any smooth formal
$R$-scheme, then an element of $\mathscr{F}(\mY)$ belongs to
$\mathscr{F}^o(\mY)$ if and only if its image in $\mathscr{F}(R)$
lies in $\mathscr{F}^o(R)$ for every $R$-morphism $\Spf R\to \mY$.
We will say that $\cF$ is connected if $\cF^o=\cF$. If $\cF$ is a
sheaf, then so is $\cF^o$.

\begin{prop}\label{prop-compcrit}
If $\mathscr{F}$ is an abelian presheaf on $(\Spf R)_{\sm}$, $\mX$
is a connected smooth formal $R$-scheme and $x$ is an element of
$\mX(R)$, then an element $\sigma$ of $\cF(\mX)$ belongs to
$\cF^o(\mX)$ if and only if $x^*\sigma$ belongs to $\cF^o(R)$.
\end{prop}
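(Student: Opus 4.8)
The plan is to deduce the statement from the characterization of $\cF^o(\mY)$ recalled just above the proposition, together with one auxiliary observation: for any $\sigma\in\cF(\mX)$ and any two points $x,y\in\mX(R)$, the difference $y^*\sigma-x^*\sigma$ lies in $\cF^o(R)$. Granting this, the proposition follows immediately. The ``only if'' direction is trivial: if $\sigma\in\cF^o(\mX)$, then $z^*\sigma\in\cF^o(R)$ for every $z\in\mX(R)$, and in particular for $z=x$. For the ``if'' direction, suppose $x^*\sigma\in\cF^o(R)$; then for an arbitrary $y\in\mX(R)$ one writes $y^*\sigma=(y^*\sigma-x^*\sigma)+x^*\sigma$, which lies in $\cF^o(R)$ since $\cF^o(R)$ is a subgroup of $\cF(R)$ and, by the auxiliary observation and the hypothesis, both summands belong to it. As $y$ was arbitrary, the characterization of $\cF^o(\mX)$ gives $\sigma\in\cF^o(\mX)$.

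It therefore remains to prove the auxiliary observation, and here the key device is to subtract from $\sigma$ its ``constant value'' at $x$. Let $p\colon\mX\to\Spf R$ be the structural morphism and set $\tau=\sigma-(x\circ p)^*\sigma\in\cF(\mX)$. Since $p\circ x=\mathrm{id}_{\Spf R}$ and $p\circ y=\mathrm{id}_{\Spf R}$, one has $x\circ p\circ x=x$ and $x\circ p\circ y=x$, so the contravariant functoriality of $\cF$ yields $x^*\tau=x^*\sigma-x^*\sigma=0$ and $y^*\tau=y^*\sigma-x^*\sigma$. Now $\mX$ is connected and smooth by hypothesis and carries the two $R$-points $x$ and $y$; applying the very definition of $\cF^o(R)$ with the connected smooth formal $R$-scheme $\mX$, the section $\tau$, and the points $x$ and $y$, we conclude that $y^*\sigma-x^*\sigma=y^*\tau$ lies in $\cF^o(R)$, as desired.

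The argument is essentially formal. The only points requiring attention are the bookkeeping of the identities $p\circ x=p\circ y=\mathrm{id}_{\Spf R}$ and the associated pullback computations, and the observation that the connected smooth formal $R$-scheme demanded in the definition of $\cF^o(R)$ may simply be taken to be $\mX$ itself, which already satisfies all the required properties. I do not expect a genuine obstacle here; the real content is the ``constant section'' trick, which turns a relation between two $R$-points of a connected smooth formal $R$-scheme into membership in $\cF^o(R)$.
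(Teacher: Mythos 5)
Your proof is correct and follows essentially the same route as the paper's: both arguments rest on the ``constant section'' trick of replacing $\sigma$ by $\sigma - f^*x^*\sigma$ (your $\tau$), applying the definition of $\cF^o(R)$ to this section with the two points $x$ and $y$ of the connected smooth $\mX$, and then using that $\cF^o(R)$ is a subgroup together with the pointwise characterization of $\cF^o(\mX)$. The only difference is organizational: you isolate the statement that $y^*\sigma - x^*\sigma \in \cF^o(R)$ as an auxiliary observation, while the paper folds it directly into the argument.
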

\begin{proof} The ``only if'' part is a direct consequence of the
definition of the identity component $\cF^o$, so that it suffices
to prove the converse implication. Let $y$ be a point of $\mX(R)$.
We must show that $y^*\sigma$ lies in $\cF^o(R)$. Denote by
$$f:\mX\to \Spf R$$ the structural morphism, and set
$$\sigma_0=\sigma-f^*x^*\sigma\quad \in \cF(\mX).$$ Then
$x^*\sigma_0=0$, so that $y^*\sigma_0$ lies in $\cF^o(R)$. But
$$y^*\sigma_0=y^*\sigma-x^*\sigma$$ and since $x^*\sigma$ lies in
$\cF^o(R)$, we find that $y^*\sigma$ lies in $\cF^o(R)$, as well.
\end{proof}

\begin{prop}\label{prop-compgr}
For every abelian sheaf $\mathscr{F}$ on $(\Spf R)_{\sm}$, the
component sheaf $\Comp{\mathscr{F}}$ is the constant sheaf on
$(\Spf R)_{\sm}$ associated to the abelian group
 $$\mathscr{F}(R)/\mathscr{F}^o(R).$$
\end{prop}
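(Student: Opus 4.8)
The plan is to prove directly that the \emph{presheaf} quotient $\mX\mapsto \mathscr{F}(\mX)/\mathscr{F}^o(\mX)$ on $(\Spf R)_{\sm}$ is already a sheaf, and that it is isomorphic to the constant sheaf attached to $\Phi:=\mathscr{F}(R)/\mathscr{F}^o(R)$. Since $\Comp{\mathscr{F}}=\mathscr{F}/\mathscr{F}^o$ is by definition the sheafification of this presheaf quotient (the cokernel of the monomorphism $\mathscr{F}^o\hookrightarrow\mathscr{F}$ in abelian sheaves), the proposition will follow.

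I would start with a non-empty \emph{connected} smooth formal $R$-scheme $\mX$, with structural morphism $f\colon \mX\to \Spf R$. By \eqref{sss-smlift} there is a section $x\in\mX(R)$ (every $R$-point is automatically a section of $f$), and I set $\phi_{\mX}(\sigma)=[x^{*}\sigma]\in\Phi$ for $\sigma\in\mathscr{F}(\mX)$. The map $\phi_{\mX}$ does not depend on $x$: this is exactly the computation in the proof of Proposition \ref{prop-compcrit}, namely, for a second section $y$ the element $\sigma_{0}=\sigma-f^{*}x^{*}\sigma$ has $x^{*}\sigma_{0}=0$, hence $\sigma_{0}\in\mathscr{F}^{o}(\mX)$ by Proposition \ref{prop-compcrit}, hence $y^{*}\sigma_{0}=y^{*}\sigma-x^{*}\sigma\in\mathscr{F}^{o}(R)$, i.e. $[x^{*}\sigma]=[y^{*}\sigma]$. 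Moreover $\phi_{\mX}$ is surjective, since $\phi_{\mX}(f^{*}\tau)=[\tau]$ for every $\tau\in\mathscr{F}(R)$, and its kernel is exactly $\mathscr{F}^{o}(\mX)$ by Proposition \ref{prop-compcrit}; thus $\phi_{\mX}$ induces an isomorphism $\mathscr{F}(\mX)/\mathscr{F}^{o}(\mX)\cong\Phi$. Functoriality is immediate from base-point independence: for a morphism $g\colon \mU\to\mX$ of non-empty connected smooth formal $R$-schemes and any $x'\in\mU(R)$ one has $\phi_{\mU}(g^{*}\sigma)=[(g\circ x')^{*}\sigma]=\phi_{\mX}(\sigma)$, since $g\circ x'\in\mX(R)$.

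Next I would treat an arbitrary smooth formal $R$-scheme $\mX$. Its underlying topological space coincides with that of the $k$-scheme $\mX_{k}$, which is locally of finite type, so the connected components $\{\mX_{\alpha}\}$ of $\mX$ are open-and-closed smooth formal $R$-subschemes, $\mX=\coprod_{\alpha}\mX_{\alpha}$, and $\{\mX_{\alpha}\hookrightarrow\mX\}$ is a covering in $(\Spf R)_{\sm}$ with empty pairwise intersections. Since $\mathscr{F}$ is a sheaf and $\mathscr{F}(\emptyset)=0$, this gives $\mathscr{F}(\mX)=\prod_{\alpha}\mathscr{F}(\mX_{\alpha})$; and since $\Spf R$ is connected, every $R$-point of $\mX$ factors through exactly one component, so the $R$-pointwise characterization of the identity component recorded just before Proposition \ref{prop-compcrit} yields $\mathscr{F}^{o}(\mX)=\prod_{\alpha}\mathscr{F}^{o}(\mX_{\alpha})$. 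Combined with the connected case, this identifies the presheaf $\mX\mapsto\mathscr{F}(\mX)/\mathscr{F}^{o}(\mX)$, functorially, with the presheaf sending $\mX$ to the group of locally constant $\Phi$-valued functions on $\mX$, i.e. with the constant sheaf $\Phi_{(\Spf R)_{\sm}}$. In particular it is a sheaf, so it agrees with its own sheafification $\Comp{\mathscr{F}}$, which is what we wanted.

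I expect the only genuinely delicate point to be this reduction to connected components: one must use that formal $R$-schemes locally topologically of finite type decompose into open-and-closed connected pieces and that the identity-component construction respects this decomposition, which is precisely where the explicit $R$-pointwise description of $\mathscr{F}^{o}$ is needed. The connected case itself is routine given Proposition \ref{prop-compcrit} and the existence of sections \eqref{sss-smlift}.
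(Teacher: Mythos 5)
Your proof is correct and its core coincides with the paper's: in both arguments the decisive step is the connected case, settled by choosing a section $x$ of the structural morphism $f\colon\mX\to\Spf R$ and observing, via Proposition \ref{prop-compcrit}, that $\tau-f^{*}x^{*}\tau$ lies in $\mathscr{F}^{o}(\mX)$, so that $x^{*}$ induces an inverse to the natural map $\mathscr{F}(R)/\mathscr{F}^{o}(R)\to\mathscr{F}(\mX)/\mathscr{F}^{o}(\mX)$. The paper stops there and leaves the passage to disconnected objects implicit, while you spell it out by decomposing an arbitrary smooth formal $R$-scheme into its (open and closed) connected components and checking that the presheaf quotient is already the sheaf of locally constant functions with values in $\mathscr{F}(R)/\mathscr{F}^{o}(R)$; that additional verification is sound.
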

\begin{proof}
Let $\mX$ be a non-empty connected smooth formal $R$-scheme. We
will show that the morphism
$$i:\mathscr{F}(R)/\mathscr{F}^o(R)\to
\mathscr{F}(\mX)/\mathscr{F}^o(\mX)$$ induced by the structural
morphism $f:\mX\to \Spf R$ is an isomorphism.
 Since $\mX$ is smooth and $k$ is separably closed, the morphism $f$ has a
 section $x:\Spf R\to \mX$, which induces a section
 $$s:\mathscr{F}(\mX)/\mathscr{F}^o(\mX)\to
 \mathscr{F}(R)/\mathscr{F}^o(R)$$ of $i$.
 If $\tau$ is an
 element of $\mathscr{F}(\mX)$, then $$x^*(\tau-f^*x^*\tau)=0$$ in
 $\mathscr{F}(R)$, so that $\tau-f^*x^*\tau$ must lie in
 $\mathscr{F}^o(\mX)$. This implies that
 $$\tau \!\!\mod \mathscr{F}^o(\mX)=(i\circ s)(\tau \!\!\mod \mathscr{F}^o(\mX)).$$
 Thus $s$ is inverse to $i$.
\end{proof}

\sss With a slight abuse of notation, we will usually write
$\Comp{\mathscr{F}}$ for the abelian group
 $\mathscr{F}(R)/\mathscr{F}^o(R)$. When $F$ is an abelian sheaf
 on $(\Sp K)_{\sm}$, we write $\Comp{F}$ for $\Comp{j_*F}$.

\sss If $G$ is a smooth commutative rigid $K$-group, then the
 associated presheaf  on $(\Sp
K)_{\sm}$ is a sheaf \cite[3.3]{B-X}, which we'll denote again by
$G$. If $G$ admits a formal N\'eron model $\mathscr{G}$ in the
sense of \cite{bosch-neron}, then $\mathscr{G}$ represents the
N\'eron model $j_* G$ on $(\Spf R)_{\sm}$, and $\mathscr{G}^o$
represents the identity component $(j_* G)^o$. It follows that the
 component group $\Comp{G}$ of the abelian sheaf $G$ is
 canonically isomorphic to the group
 $\mathscr{G}_k/\mathscr{G}^o_k$ of connected components of
 $\mathscr{G}_k$.

\subsection{Some basic properties of the component group}
\begin{lemma}\label{lemm-smtop}\item
\begin{enumerate}
\item
 If $\cF$ is an abelian presheaf on $(\Spf R)_{\sm}$ and
$\cF\to \cF'$ is a sheafification, then $\cF(R)\to \cF'(R)$ is an
isomorphism. \item The functor $\cF\to \cF(R)$ from the category
of abelian sheaves on $(\Spf R)_{\sm}$ to the category of abelian
groups is exact.
\end{enumerate}
\end{lemma}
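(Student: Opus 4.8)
The plan is to exploit the special position of $\Spf R$ in the site $(\Spf R)_{\sm}$. By \eqref{sss-smlift}, every non-empty smooth formal $R$-scheme admits an $R$-section, so the trivial covering $\{\mathrm{id}\colon\Spf R\to \Spf R\}$ refines every smooth covering of $\Spf R$; in other words, it is a terminal object of the system of coverings of $\Spf R$ ordered by refinement. For part (1), I would compute the sheafification $\cF'$ by Grothendieck's plus construction, $\cF'=\cF^{++}$, where $\cF^+(\mX)=\varinjlim_{\mathcal U}\check H^0(\mathcal U,\cF)$ with the colimit ranging over coverings $\mathcal U$ of $\mX$. Since the trivial covering of $\Spf R$ is cofinal in this system and $\check H^0$ of the trivial covering is just evaluation at $\Spf R$, one obtains $\cF^+(R)=\cF(R)$; iterating gives $\cF'(R)=\cF^{++}(R)=\cF^+(R)=\cF(R)$, and by construction the resulting identification is the canonical map $\cF(R)\to\cF'(R)$.

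For part (2), let $0\to\cF_1\to\cF_2\to\cF_3\to 0$ be exact in the category of abelian sheaves on $(\Spf R)_{\sm}$. Since $\cF_1\to\cF_2$ is a monomorphism of sheaves, its kernel presheaf is zero, hence $\cF_1(R)\to\cF_2(R)$ is injective. Consider the presheaf cokernel $\cG\colon\mX\mapsto\cF_2(\mX)/\cF_1(\mX)$; by the standard description of cokernels in a category of sheaves, the sheafification of $\cG$ is canonically isomorphic to $\cF_3$. Applying part (1) to $\cG$ then yields $\cF_3(R)\cong\cG(R)=\cF_2(R)/\cF_1(R)$, so that $0\to\cF_1(R)\to\cF_2(R)\to\cF_3(R)\to 0$ is exact, which is the assertion.

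The only substantive input is the first step, namely that the trivial covering is cofinal among all smooth coverings of $\Spf R$; this is where the standing hypothesis that $k$ is separably closed enters, through the section-existence statement \eqref{sss-smlift}. Everything else is a formal manipulation of the plus construction together with the universal property of the cokernel of a morphism of sheaves, and I do not anticipate any obstacle there. Equivalently, one may summarise the whole argument by saying that since $\Spf R$ admits no non-trivial covering, the functor of sections over $\Spf R$ coincides on presheaves and on sheaves, and is therefore exact on sheaves because it is trivially exact on presheaves.
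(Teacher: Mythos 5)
Your argument is correct and rests on exactly the same key input as the paper's proof: by \eqref{sss-smlift} every smooth cover of $\Spf R$ admits a section, so the trivial cover of $\Spf R$ is cofinal and taking sections over $\Spf R$ is unaffected by sheafification. Packaging part (1) through the plus construction, and part (2) through the cokernel presheaf rather than the image presheaf of a surjection, are only cosmetic variations on the paper's direct verification.
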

\begin{proof}
(1) Injectivity of $\cF(R)\to \cF'(R)$ follows immediately from
the fact that every surjective smooth morphism of formal schemes
$\mX\to \Spf R$ has a section, so that an element of $\cF(R)$
vanishes as soon as it vanishes on some smooth cover of $\Spf R$.
It remains to prove surjectivity. Any element $\sigma'$ of
$\cF'(R)$ can be represented by an element
 $\sigma$ of $\cF(\mX)$ where $\mX$ is a non-empty smooth formal
 $R$-scheme and $\sigma$ satisfies the gluing condition with
 respect to the smooth cover $\mX\to \Spf R$. If $x$ is any point in
 $\mX(R)$, then the image of $x^*(\sigma)\in \cF(R)$  by
 the morphism $\cF(R)\to \cF'(R)$ equals $\sigma'$.
 %, since these
 %elements have the same image in $\cF'(\mX)$.

(2) Taking sections of an abelian sheaf  always defines a left
exact functor; right exactness
  is proven by applying (1) to the
  sheafification of the image presheaf of a surjective morphism of sheaves.
\end{proof}

\begin{prop}\label{prop-sheafify} Let $\cF$ be an abelian presheaf on $(\Spf R)_{\sm}$, and denote
by $\cF\to \cF'$ its sheafification. \begin{enumerate}  \item If
$\cF$ is connected, then so is $\cF'$. \item The identity
component $(\cF')^o$ is the sheafification of $\cF^o$. \item The
component sheaf $\Comp{\cF'}$ is the sheafification of the
quotient presheaf $\cF/\cF^o$.
\end{enumerate}
\end{prop}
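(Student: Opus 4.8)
The plan is to reduce all three parts to one statement at the level of $R$-sections: the equality $\cF^o(R)=(\cF')^o(R)$ as subgroups of $\cF'(R)$, where $\cF(R)$ is identified with $\cF'(R)$ via the isomorphism of Lemma \ref{lemm-smtop}(1). One inclusion is immediate, since any quadruple $(\mX,\tau,x_0,x_1)$ witnessing $\sigma\in\cF^o(R)$ maps under $\cF\to\cF'$ to a quadruple witnessing the image of $\sigma$ in $(\cF')^o(R)$. The content is the reverse inclusion. Here I would start from $\sigma'\in(\cF')^o(R)$ witnessed by a connected smooth formal $R$-scheme $\mX$, a section $\tau'\in\cF'(\mX)$ and $R$-points $x_0,x_1$ with $x_0^*\tau'=0$ and $x_1^*\tau'=\sigma'$. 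By the local nature of the sheafification there is a smooth covering $\{\mX_\gamma\to\mX\}$ and sections $\tau_\gamma\in\cF(\mX_\gamma)$ mapping to $\tau'|_{\mX_\gamma}$ in $\cF'$; after refining I may assume each $\mX_\gamma$ is connected, the underlying spaces being Noetherian, hence locally connected.

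Since smooth morphisms are open and a covering is jointly surjective, the images of the $\mX_\gamma$ form an open cover of the connected space $|\mX|$, so the points of $|\mX|$ below $x_0$ and $x_1$ are joined by a chain of these opens with nonempty successive intersections. Over each such intersection the fibre product $\mX_\gamma\times_\mX\mX_\delta$ is a nonempty smooth formal $R$-scheme, hence has an $R$-point by \eqref{sss-smlift}; similarly there are $R$-points of the first and last piece of the chain lying over $x_0$ and $x_1$. Pulling the $\tau_\gamma$ back along these $R$-points and using that $\cF(R)\hookrightarrow\cF'(R)$ is injective (Lemma \ref{lemm-smtop}(1)), I obtain a string of elements of $\cF(R)$ agreeing on successive overlaps, beginning at $0$ and ending at $\sigma'$. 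A telescoping sum then exhibits $\sigma'$ as a finite sum of differences of the form $a^*\tau_\gamma-b^*\tau_\gamma$ with $a,b\in\mX_\gamma(R)$ and $\mX_\gamma$ connected; each such difference lies in $\cF^o(R)$ (use the section $\tau_\gamma-f_\gamma^*b^*\tau_\gamma$, where $f_\gamma\colon\mX_\gamma\to\Spf R$ is the structural morphism), and since $\cF^o(R)$ is a subgroup of $\cF(R)$ this gives $\sigma'\in\cF^o(R)$. This chain-and-telescope argument is the technical heart of the proof and the step where I expect the most care will be needed --- in particular, checking that the fibre products over the chain overlaps are nonempty (because the corresponding open images in $|\mX|$ meet) and smooth over $R$.

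Granting the $R$-section identity, part (1) is immediate: if $\cF$ is connected then $\cF^o(R)=\cF(R)$, hence $(\cF')^o(R)=\cF'(R)$, and by the criterion recalled just before Proposition \ref{prop-compcrit} --- a section of a sheaf lies in its identity component exactly when all its pullbacks to $R$ do --- this forces $(\cF')^o=\cF'$. For part (2) I would show that $(\cF^o)'$ and $(\cF')^o$ agree as subsheaves of $\cF'$. For $(\cF^o)'\subseteq(\cF')^o$: the composite $\cF^o\hookrightarrow\cF\to\cF'$ lands in $(\cF')^o$, since for $\tau\in\cF^o(\mY)$ every pullback $x^*\tau$ lies in $\cF^o(R)=(\cF')^o(R)$ and then the identity-component criterion for $\cF'$ applies; as $(\cF')^o$ is a sheaf, this map factors through the sheafification, giving a monomorphism $(\cF^o)'\hookrightarrow(\cF')^o$. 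For the reverse inclusion: given $\sigma\in(\cF')^o(\mY)$, pick a covering $\{\mY_\beta\to\mY\}$ (refined so the $\mY_\beta$ are connected) over which $\sigma|_{\mY_\beta}$ is the image of some $\sigma_\beta\in\cF(\mY_\beta)$; choosing an $R$-point $y_\beta$ of $\mY_\beta$ by \eqref{sss-smlift}, the element $y_\beta^*\sigma_\beta\in\cF(R)$ maps into $(\cF')^o(R)=\cF^o(R)$, hence lies in $\cF^o(R)$, so Proposition \ref{prop-compcrit} gives $\sigma_\beta\in\cF^o(\mY_\beta)$; thus $\sigma$ is locally the image of a section of $\cF^o$, i.e.\ $\sigma\in(\cF^o)'(\mY)$.

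Finally, part (3) follows formally. Sheafification is exact, so applying it to the exact sequence of presheaves $0\to\cF^o\to\cF\to\cF/\cF^o\to 0$ yields an exact sequence of sheaves $0\to(\cF^o)'\to\cF'\to(\cF/\cF^o)'\to 0$, whence $(\cF/\cF^o)'\cong\cF'/(\cF^o)'$, which equals $\cF'/(\cF')^o=\Comp{\cF'}$ by part (2) and the definition of the component sheaf. In short, I expect parts (1)--(3) to be routine once the $R$-section identity is in hand; essentially all the difficulty is concentrated in that identity.
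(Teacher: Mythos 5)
Your proposal is correct, and it runs on the same ingredients as the paper's proof --- Lemma \ref{lemm-smtop}, the existence of $R$-points on nonempty smooth formal $R$-schemes \eqref{sss-smlift}, Proposition \ref{prop-compcrit}, and connectedness --- but it is organized differently. The paper proves part (2) directly at the level of sections over an arbitrary connected smooth $\mX$: its Step 1 removes the discrepancy between a local lift in $\cF(\mX)$ and a chosen element of $\cF^o(R)$ by subtracting a constant section lying in the kernel of $\cF\to\cF'$, and its Step 2 compares two charts $\mY_1,\mY_2\to\mX$ through two $R$-points and asserts that their images meet because $\mX$ is connected; part (1) is handled via Proposition \ref{prop-compgr} and part (3) by exactness of sheafification. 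You instead concentrate all the work into the single identity $\cF^o(R)=(\cF')^o(R)$, proved by joining the two $R$-points through a chain of charts on which $\tau'$ lifts to $\cF$, lifting $R$-points to the charts and to the fibre products over the overlaps via \eqref{sss-smlift}, and writing the target element as a telescoping sum of differences $a^*\tau_\gamma-b^*\tau_\gamma$, each of which lies in $\cF^o(R)$ by the definition of the identity component; parts (1)--(3) then follow formally, with the reverse inclusion in (2) obtained by the same local-gluing step the paper uses at the end of its Step 2. Your version is in fact slightly more robust at one point: two open images in a connected space need not intersect, so the paper's two-chart assertion really requires precisely the chain argument you spell out, and your telescoping replaces the paper's kernel-correction trick; what the paper's section-level formulation buys in return is the statement $\cG(\mX)=(\cF')^o(\mX)$ for every connected $\mX$ directly, rather than deducing it afterwards from the $R$-level identity.
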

\begin{proof}
(1) We know by Lemma \ref{lemm-smtop} that the morphism $\cF(R)\to
\cF'(R)$ is surjective, so that this is a direct consequence of
Proposition \ref{prop-compgr} and the functoriality of the
identity component.

(2) The sheafification $\cG$ of $\cF^o$ is a subsheaf of $\cF'$,
and by $(1)$, it is contained in $(\cF')^o$. We will show that
$\cG=(\cF')^o$.
%; the arguments are similar to the proof of
%\cite[4.8]{B-X}.
%% Let $h:\mX\to \Spf R$ be a smooth morphism of formal schemes, and
%%assume that $\mX$ is connected. Let $\sigma$ be a section of
%%$(\cF')^o(\mX)$. We must show that $\sigma$ belongs to $\cG(\mX)$.
%
%
% and let $x$ be a point in $\mX(R)$. It is enough
%to show that there exists a smooth morphism of formal $R$-schemes
%$h:\mY\to \mX$ whose image contains $x$ and such that the
%restriction of $\sigma$ to $\mY$ belongs to $\cG(\mY)$.

\smallskip
{\em Step 1.} Let $\mX$ be a connected smooth formal $R$-scheme,
 let $\sigma$ be an element in $\cF(\mX)$ and assume that the
 image $\tau$ of $\sigma$ in $\cF'(\mX)$ lies in $(\cF')^o(\mX)$.
 Assume, moreover, that there exists a point $x$ in $\mX(R)$ such
 that $x^*\tau$ lies in $\cG(R)$.
 We will prove that $\tau$
 lies in $\cG(\mX)$.

By Lemma \ref{lemm-smtop}, we can find
 an element $\rho$ in $\cF^o(R)$ such that the image of $\rho$ in
 $(\cF')^o(R)$ is equal to $x^*\tau$.
We denote by $h$ the structural morphism $\mX\to \Spf R$, and we
set $$\sigma_0=h^*(x^*\sigma-\rho)$$ in $\cF(\mX)$.  Then
$\sigma_0$ lies in the kernel of $\cF\to \cF'$, so that we may
assume that $x^*\sigma\in \cF^o(R)$ by replacing $\sigma$ by
$\sigma-\sigma_0$. This implies that $\sigma$ lies in
$\cF^o(\mX)$, by Proposition \ref{prop-compcrit}, so that $\tau$
lies in $\cG(\mX)$.

\smallskip
{\em Step 2.} Let $\mX$ be a connected smooth formal $R$-scheme,
and let $\tau$ be an element of $(\cF')^o(\mX)$. Assume that there
exists a point $x_1$ in $\mX(R)$ such that $x_1^*\tau$ lies in
$\cG(R)$. We will prove that $\tau$ lies in $\cG(\mX)$.

 Let $x_2$ be a point of $\mX(R)$. For each $i$ in $\{1,2\}$, we can find a smooth
 morphism of connected formal $R$-schemes $\mY_i\to \mX$ whose image contains
 $x_i$ and such that the restriction $\tau_i$ of $\tau$ to $\mY_i$ lifts to an
 element $\sigma_i$ of $\cF(\mY_i)$.  Since $\mX$ is
connected, the intersection of the images of $\mY_1\to \mX$ and
$\mY_2\to \mX$ is non-empty, and thus contains an $R$-point $x_3$.

 By Step 1, we know that
 $\tau_1$ lies in $\cG(\mY_1)$, since we can lift $x_1$ to a point
 $y_1$ in $\mY_1(R)$ and $y_1^*\tau_1=x_1^*\tau$ lies in $\cG(R)$.
Thus $x_3^*\tau$ lies in $\cG(R)$, because $x_3$ lies in the image
of $\mY_1\to \mX$. Again applying Step 1, we find that $\tau_2$
lies in $\cG(\mY_2)$. This means that $\tau$ is a section of $\cG$
locally at every $R$-point of $\mX$ with respect to the smooth
topology, so that $\tau$ must lie in $\cG(\mX)$.

\smallskip
{\em Step 3.} Now we prove that $\cG=(\cF')^o$. Step 2 implies at
once that $\cG(R)=(\cF')^o(R)$: for every element $\rho$ of
$(\cF')^o(R)$, we can find a connected smooth formal $R$-scheme
$\mX$, an element $\tau$ in $(\cF')^o(\mX)$ and points $x_0,\,x_1$
in $\mX(R)$ such that $x_0^*\tau=0$ and $x_1^*\tau=\rho$. By Step
2, we know that $\tau$ lies in $\cG(\mX)$, so that $\rho$ must lie
in $\cG(R)$. Now, again by Step 2, we see that
$\cG(\mY)=(\cF')^o(\mY)$ for every connected smooth formal
$R$-scheme, which implies that $\cG=(\cF')^o$.

\smallskip
(3) This follows from (2) and exactness of the sheafification
functor \cite[II.2.15]{milne}.
\end{proof}

\begin{lemma}\label{lemm-smoothtop}  Let $K'$ be a finite
 extension of $K$ with valuation ring $R'$, and let $\mX$ be a
 smooth formal $R'$-scheme. Then we can cover $\mX$ by open formal
 subschemes $\mU$ with the following property: there exist a
 connected smooth formal $R$-scheme
 $\mY$ and a smooth surjective morphism of $R'$-schemes $h:\mY\times_R R'\to \mU$
 such that, for every point $x$ of $\mU(R')$, there exists a point
 $y$ in $\mY(R)$ whose image in $(\mY\times_R
 R')(R')$ is mapped to $x$ by the morphism $h$.
\end{lemma}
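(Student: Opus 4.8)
The statement we need to prove is Lemma~\ref{lemm-smoothtop}: given a finite extension $K'/K$ with valuation ring $R'$ and a smooth formal $R'$-scheme $\mX$, we want to cover $\mX$ by open formal subschemes $\mU$ each of which descends, in a suitable weak sense, to a smooth formal $R$-scheme $\mY$. Concretely we want a connected smooth formal $R$-scheme $\mY$ and a smooth surjective $R'$-morphism $h:\mY\times_R R'\to \mU$ such that every $R'$-point of $\mU$ is hit by the image of an $R$-point of $\mY$ under $h$.

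Let me think carefully about what's really being asked and sketch a proof.

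**Plan.** The plan is to work locally on $\mX$ and reduce to an explicit affine situation where one can write down $\mY$ by hand. First I would use that $\mX$ is a smooth formal $R'$-scheme to cover it by affine open formal subschemes that admit an étale morphism to a formal affine space $\widehat{\mathbb{A}}^n_{R'}=\Spf R'\{t_1,\dots,t_n\}$; this is the standard local structure of smooth formal schemes (cf.\ \cite[2.2]{neron} in the algebraic setting, carried over formally). So we may replace $\mX$ by such an open $\mU$ and assume there is an étale morphism $g:\mU\to \widehat{\mathbb{A}}^n_{R'}$. Now take $\mY_0=\widehat{\mathbb{A}}^n_R=\Spf R\{s_1,\dots,s_n\}$, which is visibly a connected smooth formal $R$-scheme, and note $\mY_0\times_R R'=\widehat{\mathbb{A}}^n_{R'}$. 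The issue is that an étale cover of $\widehat{\mathbb{A}}^n_{R'}$ need not itself be defined over $R$; but we only need a \emph{smooth surjective} $h$ landing in $\mU$, not an isomorphism, so we have room to maneuver.

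**The key step.** The trick is to build $\mY$ so that $\mY\times_R R'$ maps smoothly and surjectively onto $\mU$, by absorbing the étale map $g$ into the base change. Here is the cleanest way I see to do it: shrink $\mU$ further so that $g:\mU\to\widehat{\mathbb{A}}^n_{R'}$ factors as $\mU\xrightarrow{\sim}\Spf R'\{t_1,\dots,t_n\}[T]/(P)_{\text{(a distinguished open)}}$ for a monic polynomial $P\in R'\{t_1,\dots,t_n\}[T]$ whose derivative $P'$ is invertible on $\mU$ — this is the standard étale-local model. Write $P=\sum_{j}c_j T^j$ with $c_j\in R'\{t_1,\dots,t_n\}$. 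Now I enlarge the coordinate ring: let $\mY=\Spf R\{s_1,\dots,s_n,\;u_0,\dots,u_{d-1}\}\big[T,W\big]/\big(Q,\,WQ'-1\big)$ where $Q=T^d+u_{d-1}T^{d-1}+\cdots+u_0$ is the "generic" monic polynomial of degree $d=\deg P$ and $Q'$ is its $T$-derivative. This $\mY$ is smooth over $R$ (it is étale over $\widehat{\mathbb{A}}^{n+d}_R$, by construction $Q'$ is inverted) and geometrically connected. Over $R'$, there is a morphism $\widehat{\mathbb{A}}^n_{R'}\to \widehat{\mathbb{A}}^{n+d}_{R'}$ given by $s_i\mapsto t_i$, $u_j\mapsto$ (the image of $c_j/c_d$, after normalizing $P$ to be monic), and pulling back $\mY\times_R R'$ along this recovers exactly the étale model of $\mU$; so we get a smooth (indeed, a composite of a base change along a morphism of affine spaces with an étale map — one checks this is smooth) surjection $h:\mY\times_R R'\to\mU$. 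Surjectivity and the lifting-of-$R'$-points property then follow because $\mY\to\Spf R$ has a section through any prescribed residue point (as $k$ is separably closed, by \eqref{sss-smlift}), and one can arrange the section to sit over a given $R'$-point of $\mU$ by first choosing the $s_i,u_j$-coordinates to match the $t_i(x),c_j(x)$ and then lifting the root $T$-coordinate using that the residue field is separably closed and $Q'$ is a unit (Hensel).

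$\textbf{Main obstacle.}$ The delicate point will be the last one: ensuring that \emph{every} $R'$-point $x$ of $\mU$ lifts to an $R$-point $y$ of $\mY$ mapping to it, rather than merely that the image of $h$ is all of $\mU$. The subtlety is that $h$ factors through the base change $\mY\times_R R'$, and an $R'$-point of $\mY\times_R R'$ is the same as an $R'$-point of $\mY$, not an $R$-point; so I need the composite $\mY(R)\to(\mY\times_R R')(R')\xrightarrow{h(R')}\mU(R')$ to be surjective. This is where one uses that $\mY$ was built as a smooth scheme over $R$ with all the "moduli" ($u_j$ and the root $T$) made into free parameters: given $x\in\mU(R')$, its image in $\widehat{\mathbb{A}}^n_{R'}(R')$ gives elements $t_i(x)\in R'$; one then needs to choose $s_i\in R$ specializing to the \emph{residue} classes $\overline{t_i(x)}\in k$ (possible since $R\twoheadrightarrow k$), set the $u_j$ to lift the corresponding $\overline{c_j(x)}\in k$, and Hensel-lift a root — but then $h(y)$ and $x$ need only agree on special fibers, not on the nose. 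To repair this I would, after choosing the residue data, further adjust by passing to a smaller open $\mU$ on which $g$ is an \emph{isomorphism} onto a distinguished open of $\widehat{\mathbb{A}}^n_{R'}$ (always possible after shrinking, since $g$ is étale and we only need a cover), so that an $R'$-point of $\mU$ is literally an $R'$-point of an open of affine $n$-space, which manifestly lifts a chosen residue point to an actual $R$-point of the corresponding open of $\widehat{\mathbb{A}}^n_R\subset\mY$. With that reduction the lifting property becomes transparent, and the rest is bookkeeping. I expect the write-up to consist mostly of spelling out this local reduction and verifying smoothness and surjectivity of $h$ carefully.
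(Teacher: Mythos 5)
There is a genuine gap, and it sits exactly at the two places where your sketch is vaguest. First, the morphism $h$ is never actually constructed. From your standard \'etale presentation of $\mU$ (coordinates $t_1,\dots,t_n$, a monic $P$ with $P'$ inverted), the ``generic monic polynomial'' space $\mY=\Spf R\{s,u\}[T,W]/(Q,WQ'-1)$ receives a classifying morphism \emph{from} $\mU$ (send $s_i\mapsto t_i$, $u_j\mapsto c_j$, $T\mapsto$ the tautological root); equivalently, the pullback of $\mY\times_R R'$ along your map of affine spaces is identified with $\mU$, which yields a projection $\mU\to\mY\times_R R'$. That is the wrong direction: a point of $\mY\times_R R'$ is an arbitrary monic polynomial together with a marked simple root, and it determines no point of $\mU$, so there is no candidate for a smooth surjection $\mY\times_R R'\to\mU$ in your construction. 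Second, your proposed repair of the $R$-point lifting problem fails: you cannot shrink $\mU$ so that the \'etale map $g$ becomes an isomorphism onto a distinguished open of formal affine space. \'Etale morphisms are not Zariski-locally open immersions; if such a shrinking were always possible, every smooth formal $R'$-scheme would be Zariski-locally an open formal subscheme of $\Spf R'\{t_1,\dots,t_n\}$, which already fails on special fibres (take $\mX$ whose special fibre is a smooth non-rational variety, e.g.\ a curve of genus at least one: no nonempty open of it embeds as an open of affine space). Without that repair, your recipe only matches a given $x\in\mU(R')$ modulo the maximal ideal, as you yourself observe, so the crucial property that every $x\in\mU(R')$ comes from some $y\in\mY(R)$ is not obtained.

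What is missing is a mechanism for converting $R'$-coordinates into $R$-coordinates and for descending the \'etale part of the presentation from $R'$ to $R$; this is the actual content of the lemma. The paper's proof chooses an $R$-basis $e_1,\dots,e_d$ of $R'$ and uses the smooth surjection $\mathbb{B}^{md}_{R'}\to\mathbb{B}^m_{R'}$, $X_i\mapsto\sum_{j}e_jX_{i,j}$, which has the feature that every $R'$-point of the target lifts to a point of the source with coordinates in $R$. It then forms $\mY'=\mathbb{B}^{md}_{R'}\times_{\mathbb{B}^m_{R'}}\mU$, which is \'etale over $\mathbb{B}^{md}_{R'}$, and descends it to an \'etale morphism $\mY\to\mathbb{B}^{md}_R$ by invariance of the \'etale site under the finite, radicial and surjective morphisms $\Spec R'/\mathfrak{m}^nR'\to\Spec R/\mathfrak{m}^n$ (this uses that $k'/k$ is purely inseparable because $k$ is separably closed). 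Given $x\in\mU(R')$, the pair $(x,b)$ with $b\in\mathbb{B}^{md}_R(R)$ lifting the image of $x$ is an $R'$-point of $\mY'$ with $k$-rational reduction, and uniqueness of \'etale lifting over $b$ shows it lies in $\mY(R)$ --- precisely the conclusion your free-parameter construction cannot reach. If you want to salvage your approach, the ``spread out over an $R$-basis'' step and some descent statement of this kind (or a Weil restriction/Greenberg-type substitute) have to be built in; the open-immersion reduction cannot replace them.
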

\begin{proof}
 We denote by $k'$ the residue field of $R'$. This is a finite purely inseparable extension of the separably closed field $k$.
  We choose a basis  $e_1,\ldots,e_d$ for the $R$-module
$R'$.

 Shrinking $\mX$, we may
assume that $\mX$ is connected and admits an \'etale $R'$-morphism
to
$$\mathbb{B}^m_{R'}=\Spf R'\{X_1,\ldots,X_m\},$$ for some integer
$m\geq 0$.  We consider the morphism of formal $R'$-schemes
$$g:\mathbb{B}^{md}_{R'}=\Spf R'\{X_{i,j}\,|\,i=1,\ldots,m,\, j=1,\ldots,d\}\to
\mathbb{B}^m_{R'}$$ defined by
$$X_i\mapsto \sum_{j=1}^d e_jX_{i,j}.$$ This morphism is clearly
smooth and surjective. Moreover, over every $R'$-point of
$\mathbb{B}^m_{R'}$ we can find a point of
$\mathbb{B}^{md}_{R'}(R')$ whose $X_{i,j}$-coordinates lie in $R$.

 We set
 $$\mY'=\mathbb{B}^{md}_{R'}\times_{\mathbb{B}_{R'}^m}\mX.$$
 The second projection morphism $$h:\mY'\to \mX$$ is smooth and
 surjective, and the first projection morphism $$\mY'\to
 \mathbb{B}^{md}_{R'}$$ is \'etale.
Since the morphism
$$\Spec R'/\frak{m}^n R'\to \Spec R/\frak{m}^n$$ is finite, radicial and surjective for
every integer $n>0$, the invariance of the \'etale site under
 such morphisms \cite[IX.4.10]{sga1} implies that there exists an
 \'etale morphism of formal $R$-schemes
$$\mY\to \mathbb{B}_R^{md}=\Spf
 R\{X_1,\ldots,X_{md}\}$$ together with an isomorphism of formal
 $\mathbb{B}^{md}_{R'}$-schemes
 $$\mY'\to \mY\times_R R'.$$

Now let $x$ be any point of $\mX(R')$. We will construct a point
$y$ in $\mY(R)\subset \mY'(R')$ whose image in $\mX(R')$ is $x$.
 Let $b$ be a point of $\mathbb{B}_{R}^{md}(R)$ with the same image
as $x$ in $\mathbb{B}^m_{R'}(R')$. The couple $(x,b)$ defines a
point $y$ in
$$\mY(R')=\mathbb{B}^{md}_{R'}(R')\times_{\mathbb{B}_{R'}^m(R')}\mX(R').$$
Since $\mY\to \mathbb{B}^{md}_R$ is \'etale, the reduction $y_0$
of
 $y$ in $\mY'(k')$ lies in $\mY(k)$, because the reduction of $b$
 is $k$-rational and $k'$ is purely inseparable over $k$.
 Moreover, the point $b$ can be
lifted in a unique way to a point $z$ of $\mY(R)$ whose reduction
in $\mY(k)$ coincides with $y_0$. Repeating this uniqueness
argument after base change to $R'$, we see that $z$ must coincide
with $y$. In particular, $y$ lies in $\mV(R)\subset \mV'(R')$.
\end{proof}

\begin{lemma}\label{lemm-exacth}
Let $K'$ be a finite
 extension of $K$ with valuation ring $R'$. We denote the
 morphism $\Spf R'\to \Spf R$ by $h$. Then the functor $h_*$ from the category of smooth abelian sheaves on $\Spf R'$
  to the category of smooth abelian sheaves on $\Spf R$ is exact.
\end{lemma}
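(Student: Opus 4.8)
The plan is this. The functor $h_*$ is the pushforward for the morphism of sites induced by $\Spf R'\to\Spf R$, hence it is right adjoint to the pullback $h^*$ and therefore left exact; since it is clearly additive, the only thing left to prove is that $h_*$ carries epimorphisms of abelian sheaves to epimorphisms. Note that $h^*$ sends the sheaf represented by a smooth formal $R$-scheme $\mX$ to the sheaf represented by $\mX\times_R R'$, so that $(h_*\cF)(\mX)=\cF(\mX\times_R R')$ for every abelian sheaf $\cF$ on $(\Spf R')_{\sm}$.

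So let $\phi\colon\cF\to\cG$ be an epimorphism of smooth abelian sheaves on $\Spf R'$, and I verify the local criterion for $h_*\phi$ to be an epimorphism: given a smooth formal $R$-scheme $\mX$ and a section $b\in(h_*\cG)(\mX)=\cG(\mX\times_R R')$, I must produce a smooth covering $\{\mX_i\to\mX\}$ in $(\Spf R)_{\sm}$ such that the pullback of $b$ to $\cG(\mX_i\times_R R')$ lies in the image of $\cF(\mX_i\times_R R')$ for every $i$. Since $\phi$ is an epimorphism, $b$ already lifts to $\cF$ over some smooth covering $\{\mW_l\to\mX\times_R R'\}$ of $\mX\times_R R'$ in $(\Spf R')_{\sm}$. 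The whole content of the lemma is to replace this $R'$-covering by one that is the base change along $R\to R'$ of a smooth covering of $\mX$ over $R$.

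This is exactly the purpose of Lemma \ref{lemm-smoothtop}: applied to the $\mW_l$, it allows one, after refining to an open covering, to dominate each member by a smooth surjective morphism $\mY_\alpha\times_R R'\to\mW_l$ with $\mY_\alpha$ a connected smooth formal $R$-scheme and with the property that every $R'$-point of the target is the image of an $R$-point of $\mY_\alpha$. One thereby obtains a smooth covering $\{\mY_\alpha\times_R R'\to\mX\times_R R'\}$ over which $b$ still lifts, with all $\mY_\alpha$ smooth over $R$; combining the $R$-point property with the exactness of the $R'$-section functor from Lemma \ref{lemm-smtop}(2) (which makes $\cF(R')\to\cG(R')$ itself surjective), one descends the lifting of $b$ to a smooth covering of $\mX$ over $R$, which is what was wanted. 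I expect the main obstacle to lie precisely in this descent step -- passing from an $R'$-local lifting of $b$ over $\mX\times_R R'$ to a lifting over a covering pulled back from $\mX$. Concretely, the mechanism is that a smooth morphism admits sections \'etale-locally, so the $R'$-covering $\{\mW_l\}$ may be refined by an \'etale one, and an \'etale covering of $\mX\times_R R'$ is the base change of an \'etale covering of $\mX$ over $R$ by topological invariance of the \'etale site together with \cite[IX.4.10]{sga1}; this is also where the standing hypothesis that $k$ is separably closed enters, since it guarantees that $k'/k$ is purely inseparable and hence that the \'etale site is insensitive to the extension $R\to R'$.
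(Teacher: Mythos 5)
Your proof is correct and follows essentially the same route as the paper, whose own argument consists of exactly these two steps: left exactness from the adjunction, and right exactness from the fact that, by Lemma \ref{lemm-smoothtop}, smooth formal $R'$-schemes are smooth-locally base changes of smooth formal $R$-schemes. The decisive point is the one you isolate at the end — refine the $R'$-smooth covering to an \'etale one and descend it along the radicial extension $R\to R'$ via \cite[IX.4.10]{sga1}, which is precisely the technique used in the paper's proof of Lemma \ref{lemm-smoothtop} and is what makes the covering maps come from $R$; your intermediate appeal to the $R$-point lifting property and to Lemma \ref{lemm-smtop}(2) does not by itself achieve that descent and can simply be dropped.
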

\begin{proof}
A direct image functor such as $h_*$ is always left exact, because
it has a right adjoint $h^*$. It follows from Lemma
\ref{lemm-smoothtop} that every smooth formal $R'$-scheme is
Zariski-locally of the form $\mX\times_R R'$, for some smooth
formal $R$-scheme $\mX$.
 This easily implies that $h_*$ is right exact.
\end{proof}

\begin{prop}\label{prop-compweilres} Let $K'$ be a finite
 extension of $K$ with valuation ring $R'$. We denote the
 morphism $\Spf R'\to \Spf R$ by $h$.
 For every abelian sheaf $\cF$ on $(\Spf R')_{\sm}$, the
 following properties hold.
 \begin{enumerate}
 \item If $\cF$ is connected, then $h_*\cF$ is connected.
\item The natural morphism $h_*(\cF^o)\to h_*\cF$ induces an
isomorphism $h_*(\cF^o)\to (h_*\cF)^o$. \item The natural morphism
$h_*\cF\to h_*\Comp{\cF}$ induces an isomorphism
$$\Comp{h_*\cF}\to h_*\Comp{\cF}.$$
\end{enumerate}
\end{prop}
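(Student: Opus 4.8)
The plan is to prove (1) directly and then obtain (2) and (3) by formal arguments from the left-exactness (resp. exactness) of $h_*$. For (1), assume $\cF$ is connected. By Proposition \ref{prop-compgr} (applied over $\Spf R'$ and over $\Spf R$) it suffices to show $(h_*\cF)^o(R)=(h_*\cF)(R)$, and since $(h_*\cF)(R)=\cF(R')$ by definition of the direct image, I must show that every $\sigma\in\cF(R')$ lies in $(h_*\cF)^o(R)$. Connectedness of $\cF$ produces a connected smooth formal $R'$-scheme $\mathfrak{X}'$, a section $\tau'\in\cF(\mathfrak{X}')$ and points $y_0,y_1\in\mathfrak{X}'(R')$ with $y_0^*\tau'=0$ and $y_1^*\tau'=\sigma$. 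First I would apply Lemma \ref{lemm-smoothtop} to cover $\mathfrak{X}'$ by open formal subschemes $\mathfrak{U}$ admitting a connected smooth formal $R$-scheme $\mathfrak{Y}_{\mathfrak{U}}$ together with a smooth surjective morphism $\mathfrak{Y}_{\mathfrak{U}}\times_R R'\to\mathfrak{U}$ through which every $R'$-point of $\mathfrak{U}$ lifts to an $R$-point of $\mathfrak{Y}_{\mathfrak{U}}$.

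The key step is a chain argument. Since $\mathfrak{X}'$ is connected, I can join the member of the cover containing $y_0$ to the one containing $y_1$ by a finite chain $\mathfrak{U}_0,\dots,\mathfrak{U}_n$ with consecutive members overlapping; choosing $R'$-points $p_l\in(\mathfrak{U}_l\cap\mathfrak{U}_{l+1})(R')$ for $0\le l\le n-1$ (non-empty smooth formal $R'$-schemes have $R'$-points, as $k'$ is separably closed) and setting $p_{-1}=y_0$, $p_n=y_1$, it is enough to show $p_l^*\tau'-p_{l-1}^*\tau'\in(h_*\cF)^o(R)$ for each $l$, since summing telescopes to $\sigma=y_1^*\tau'-y_0^*\tau'$ and $(h_*\cF)^o(R)$ is a subgroup of $(h_*\cF)(R)$. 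To treat a single $l$, pull $\tau'|_{\mathfrak{U}_l}$ back along $\mathfrak{Y}_{\mathfrak{U}_l}\times_R R'\to\mathfrak{U}_l$ to a section $\tilde\tau_l\in(h_*\cF)(\mathfrak{Y}_{\mathfrak{U}_l})$, lift $p_{l-1},p_l$ to $R$-points $q_{l-1},q_l$ of $\mathfrak{Y}_{\mathfrak{U}_l}$, observe that $q_{l-1}^*\tilde\tau_l=p_{l-1}^*\tau'$ and $q_l^*\tilde\tau_l=p_l^*\tau'$ in $(h_*\cF)(R)$, and subtract the pullback along $\mathfrak{Y}_{\mathfrak{U}_l}\to\Spf R$ of $q_{l-1}^*\tilde\tau_l$: the resulting section of $h_*\cF$ over the connected scheme $\mathfrak{Y}_{\mathfrak{U}_l}$ vanishes at $q_{l-1}$ and takes value $p_l^*\tau'-p_{l-1}^*\tau'$ at $q_l$, which is precisely what the definition of $(h_*\cF)^o(R)$ demands. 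Then Proposition \ref{prop-compgr} gives that $h_*\cF$ is connected.

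For (2), the inclusion $\cF^o\hookrightarrow\cF$ stays injective after $h_*$ (direct images are left exact), so $h_*(\cF^o)\to h_*\cF$ is a monomorphism; moreover $\cF^o$ is connected, since $(\cF^o)^o(R')=\cF^o(R')$ by Proposition \ref{prop-compcrit}, hence $h_*(\cF^o)$ is connected by (1), and by functoriality of the identity component the monomorphism factors through $(h_*\cF)^o$. For the reverse inclusion, take a connected smooth formal $R$-scheme $\mathfrak{Z}$ and $s\in(h_*\cF)^o(\mathfrak{Z})\subseteq\cF(\mathfrak{Z}\times_R R')$; picking an $R$-point $z$ of $\mathfrak{Z}$ and writing $z'=z\times_R R'$, one has $z^*s=(z')^*s$ in $(h_*\cF)(R)=\cF(R')$ and $z^*s\in(h_*\cF)^o(R)$. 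Unwinding the definition of the identity component shows $(h_*\cF)^o(R)\subseteq\cF^o(R')$ (base change the witnessing $R$-scheme and its two $R$-points to $R'$); since $\mathfrak{Z}\times_R R'$ is connected (its underlying space is that of $\mathfrak{Z}$, as $k'/k$ is purely inseparable), Proposition \ref{prop-compcrit} gives $s\in\cF^o(\mathfrak{Z}\times_R R')=h_*(\cF^o)(\mathfrak{Z})$, so the map is an epimorphism, hence an isomorphism. Finally, for (3), apply $h_*$ — exact by Lemma \ref{lemm-exacth} — to the short exact sequence $0\to\cF^o\to\cF\to\Comp{\cF}\to 0$ to get $0\to h_*(\cF^o)\to h_*\cF\to h_*\Comp{\cF}\to 0$; combining this with the identification $h_*(\cF^o)=(h_*\cF)^o$ from (2) shows that the natural morphism induces an isomorphism $\Comp{h_*\cF}=h_*\cF/(h_*\cF)^o\xrightarrow{\ \sim\ }h_*\Comp{\cF}$.

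I expect the chain argument in (1) to be the main obstacle: it requires descending the "path" inside $\mathfrak{X}'$ that witnesses connectedness over $R'$ to pieces of the special form $\mathfrak{Y}\times_R R'$ — over which sections of $\cF$ can be reinterpreted as sections of $h_*\cF$ — while keeping careful track of the pullbacks of $\tau'$ along the lifts of the auxiliary $R'$-points provided by Lemma \ref{lemm-smoothtop}. Once (1) is in hand, (2) and (3) are essentially formal consequences of the (left-)exactness of $h_*$ together with the pointwise criterion of Proposition \ref{prop-compcrit}.
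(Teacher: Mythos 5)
Your proof is correct and follows essentially the same route as the paper: Lemma \ref{lemm-smoothtop} combined with Proposition \ref{prop-compcrit} for part (1), the "base change the witnessing scheme and its two points" argument together with left exactness of $h_*$ for part (2), and exactness of $h_*$ (Lemma \ref{lemm-exacth}) for part (3). The only divergence is in (1), where the paper bypasses your chain/telescoping argument by noting that a connected smooth formal scheme is irreducible, so the two neighbourhoods of $x_0$ and $x_1$ furnished by Lemma \ref{lemm-smoothtop} already meet in a point of $\mX(R')$; both versions are valid, yours being slightly longer but not relying on irreducibility.
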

\begin{proof}
(1) Let $\sigma$ be an element of $(h_*\cF)(R)=\cF(R')$. Since
$\cF$ is connected, we can find a connected smooth formal
$R'$-scheme $\mX$, a section $\tau$ in $\cF(\mX)$ and points
$x_0$, $x_1$ in $\mX(R')$ such that $x_0^*\tau=0$ and
$x_1^*\tau=\sigma$. By Lemma \ref{lemm-smoothtop}, we can find for
each $i\in \{0,1\}$ an open neighbourhood $\mU_i$ of $x_i$ in
$\mX$, a connected smooth formal $R$-scheme $\mY_i$
 and a smooth surjective morphism of formal
$R'$-schemes $$\mY_i\times_R R'\to \mU_i$$ such that every
$R'$-point on $\mU_i$ lifts to an $R$-point of $\mY_i$. We choose
for each $i$ a  point $y_i$ on $\mY_i(R)$ whose image in
$\mU_i(R')$ is $x_i$.
 We write $\tau_i$ for the restriction of
$\tau$ to $\mY_i\times_R R'$, and for the corresponding element of
$(h_*\cF)(\mY_i)$.

 Since $\mX$ is smooth and connected, we can find a point $x_2$ of
 $\mX(R')$ that lies in the intersection of $\mU_0$ and $\mU_1$.
 We can lift this point to a point $y_{2,i}$ in $\mY_i(R)$, for
 $i=0,1$. Then $y_{2,0}^*\tau_0$ lies in $(h_*\cF)^o(R)$ by
 Proposition
 \ref{prop-compcrit}, because $\mY_0$ is smooth and connected and
 $y_0^*\tau_0=x_0^*\tau=0$. But
 $$y_{2,0}^*\tau_0=x_2^*\tau=y_{2,1}^*\tau_1$$
 so that $y_{2,1}^*\tau_1$ and $y_1^*\tau_1=\sigma$ must also lie in
 $(h_*\cF)^o(R)$.

(2) By (1) and left exactness of $h_*$, it is enough to show the
following property: if $\mX$ is a connected smooth formal
$R$-scheme and $\sigma$ is an element of $(h_*\cF)^o(\mX)$, then
the corresponding element of $\cF(\mX\times_R R')$ belongs to
$\cF^o(\mX\times_R R')$. To prove this property, it suffices to
consider the case $\mX=\Spf R$, which follows immediately from the
definition of the identity component and the fact that
 $\mY\times_R R'$ is
connected for every connected formal $R$-scheme $\mY$.

(3) This follows from (2) and right exactness of $h_*$.
 \end{proof}

\subsection{The trace map}

\sss  Let
 $K'$ be a finite separable extension of $K$, with valuation ring $R'$.  Then we have a commutative diagram of morphisms of
sites
$$\begin{CD}(\Sp K')_{\sm} @>j'>> (\Spf R')_{\sm}
\\ @Vh_KVV @VVhV
\\ (\Sp K)_{\sm} @>j>> (\Spf R)_{\sm}.
\end{CD}$$
%The pullback functor $h^*$ will play an important role in the
%sequel. Since $h$ is not smooth, it cannot simply be considered as
%a restriction operation, but we still have the following nice
%property.

\sss \label{sss-trace} Let $F$ be an abelian sheaf on $(\Sp
K)_{sm}$. Since $(h_K)_*$ is left adjoint to $h_K^*$, we have a
tautological morphism
$$\tau:F\to (h_K)_*(h_K)^*F.$$ Applying the functor $j_*$, this yields
a morphism \begin{equation}\label{eq-shweilres0} j_*F\to
h_*j'_*h_K^*F\end{equation} of smooth abelian sheaves on $\Spf R$.
 In
\cite[2.3]{HaNi-comp}, we defined a {\em trace map}
$$\tr:(h_K)_* h_K^* F\rightarrow F$$ such
that the composition $\tr\circ \tau$ is multiplication by
$d=[K':K]$. Applying the functor $j_*$ to $\tr$, we obtain a
morphism of smooth abelian sheaves
\begin{equation}\label{eq-shweilres}  h_*j'_*h^*_KF\cong j_*(h_K)_*h_K^* F \to
j_*F\end{equation} on $\Spf R$.

\sss Now we apply the functor $\Comp{\cdot}$ to the morphisms
\eqref{eq-shweilres0} and \eqref{eq-shweilres}. This yields
morphisms of component groups \begin{eqnarray*}\alpha&:&\Comp F\to
\Comp{h_K^*F} \\ \tr &:& \Comp{h_K^*F}\to \Comp F,
\end{eqnarray*} where we used Proposition \ref{prop-compweilres}
to identify the component group of $h_*j'_*h^*_KF$ with
$\Comp{h_K^*F}$. The composition $\tr\circ \alpha$ is
multiplication by $d$. In particular, for every smooth commutative
rigid $K$-group, we obtain a trace map $$\Comp{G\times_K K'}\to
\Comp{G}$$ such that the precomposition with the base change
morphism
$$\Comp{G}\to \Comp{G\times_K K'}$$ is multiplication by $d$ on $\Comp{G}$.

\begin{prop}\label{prop-trace}
Let $K'$ be a finite separable extension of $K$, and let $G$ be a
smooth commutative rigid $K$-group. Then the kernel of the base
change morphism
$$\Comp{G}\to \Comp{G\times_K K'}$$ is killed by $[K':K]$.
\end{prop}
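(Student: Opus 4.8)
The plan is to deduce the statement at once from the trace map constructed in \eqref{sss-trace}. Since $G$ is a smooth commutative rigid $K$-group, its associated presheaf on $(\Sp K)_{\sm}$ is already a sheaf by \cite[3.3]{B-X}, so the construction of \eqref{sss-trace} applies to it with $F=G$. That construction produces, for the finite separable extension $K'/K$, a trace morphism
$$\tr\colon \Comp{G\times_K K'}\to \Comp{G}$$
— obtained by applying the component-group functor to the sheaf map \eqref{eq-shweilres} and using Proposition \ref{prop-compweilres} to identify $\Comp{h_*j'_*h_K^*G}$ with $\Comp{G\times_K K'}$ — whose precomposition with the base change morphism $\alpha\colon \Comp{G}\to \Comp{G\times_K K'}$ equals multiplication by $d=[K':K]$ on $\Comp{G}$.

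First I would record this relation $\tr\circ\alpha=[K':K]\cdot\mathrm{id}_{\Comp{G}}$ verbatim from \eqref{sss-trace}. The conclusion is then a one-line diagram chase: if $x\in\Comp{G}$ satisfies $\alpha(x)=0$, then
$$[K':K]\cdot x=(\tr\circ\alpha)(x)=\tr(0)=0,$$
so every element of $\ker(\alpha)$ is annihilated by $[K':K]$. There is essentially no obstacle to overcome: all of the substantive content — the construction of the trace map at the level of smooth abelian sheaves, and the compatibility of identity components and component groups with the pushforward $h_*$ along $\Spf R'\to \Spf R$ — has already been established in \eqref{sss-trace} and Proposition \ref{prop-compweilres}. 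The only point worth spelling out in the write-up is that the morphism $\alpha$ coming from the sheaf-theoretic adjunction $G\to (h_K)_*h_K^*G$ is precisely the base change map on component groups referred to in the statement, which is immediate from the commutative square of sites recorded in \eqref{sss-trace}.
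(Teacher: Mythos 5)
Your proposal is correct and matches the paper's argument exactly: the paper also deduces the statement immediately from the trace map of \eqref{sss-trace}, using the identity $\tr\circ\alpha=[K':K]\cdot\mathrm{id}_{\Comp{G}}$ to kill any element of $\ker(\alpha)$. Your write-up just makes explicit the short diagram chase and the identification via Proposition \ref{prop-compweilres} that the paper leaves implicit.
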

\begin{proof}
This in an immediate consequence of the existence of the trace
map.
\end{proof}
%\begin{cor}
%Let $G$ be a semi-abelian $K$-variety, and let $L$ be a finite
%separable extension of $K$ such that $G\times_K L$ has good
%reduction. Then $\Comp{G}_{\tors}$ is killed by $[L:K]$.
%\end{cor}
%\begin{proof}
% The torus $G_{\tor}\times_K L$ is split, and the abelian variety
% $G_{\ab}\times_K L$ has good reduction. Thus it follows from
% Proposition \ref{}
%\end{proof}

\section{The index of a semi-abelian
$K$-variety}\label{subsec-index}
\subsection{Definition of the index}
\begin{prop}\label{prop-split} Let $G$ be a semi-abelian
$K$-variety, denote by $G_{\spl}$ its maximal split subtorus, and
set $H=G/G_{\spl}$. Then $\Comp{G_{\spl}}$ is a free $\Z$-module
of rank $\rho_{\spl}(G)$, and the sequence
\begin{equation}\label{eq-split} 0\longrightarrow
\Comp{G_{\spl}}\longrightarrow \Comp G\longrightarrow \Comp
H\longrightarrow 0\end{equation} is exact. Moreover, $\Comp H$ is
finite, and the rank of
 $\Comp{G}$ equals $\rho_{\spl}(G)$.
\end{prop}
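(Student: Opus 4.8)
The plan is to deduce the statement from the exactness result of Proposition~\ref{prop-bx} together with the explicit description of the N\'eron $lft$-model of a split torus; the one point that needs a separate argument is the injectivity of $\Comp{G_{\spl}}\to\Comp{G}$, which I will get from a rank count.

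First, $G_{\spl}$ is a split $K$-torus of dimension $\rho_{\spl}(G)$, so the canonical isomorphism \eqref{eq-torcomp} identifies $\Comp{G_{\spl}}$ with $X(G_{\spl})^{\vee}\otimes_{\Z}(K^{*}/R^{*})$; since $v_{K}$ identifies $K^{*}/R^{*}$ with $\Z$, this module is free of rank $\rho_{\spl}(G)$. Next, since $G$ is commutative, $G_{\spl}$ is a subtorus of $G$ and $H=G/G_{\spl}$ is again a semi-abelian $K$-variety, so Proposition~\ref{prop-bx}(2) applies to the short exact sequence $0\to G_{\spl}\to G\to H\to 0$ and yields the exactness of
$$\Comp{G_{\spl}}\longrightarrow\Comp{G}\longrightarrow\Comp{H}\longrightarrow 0.$$
It therefore remains to show that $\Comp{H}$ is finite, that the rank of $\Comp{G}$ equals $\rho_{\spl}(G)$, and that $\Comp{G_{\spl}}\to\Comp{G}$ is injective.

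Finiteness of $\Comp{H}$ follows because the split reductive rank of $H=G/G_{\spl}$ is zero, by the remark after \cite[3.6]{HaNi-comp}: then $H$ contains no subgroup isomorphic to $\mathbb{G}_{m,K}$, so by \eqref{sss-qcneron} its N\'eron $lft$-model is quasi-compact and $\Comp{H}$ is finite. The equality $\rank\Comp{G}=\rho_{\spl}(G)$ is \cite[4.11]{B-X}, recalled in \eqref{sss-qcneron}; this is not circular, since the only ingredient of \cite[4.11]{B-X} affected by the erroneous \cite[4.2]{B-X} is the three-term exactness of component groups for tori, which is re-established in Proposition~\ref{prop-bx} and \eqref{sss-BXcorrect}. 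Finally, from the displayed exact sequence the cokernel of $\Comp{G_{\spl}}\to\Comp{G}$ is the finite group $\Comp{H}$, so the image of $\Comp{G_{\spl}}$ in $\Comp{G}$ has rank $\rank\Comp{G}=\rho_{\spl}(G)$; since the source is free of rank $\rho_{\spl}(G)$, the kernel has rank $0$, hence is a torsion subgroup of the free module $\Comp{G_{\spl}}$, hence is trivial. This gives the exact sequence \eqref{eq-split} and all the remaining assertions.

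The real obstacle is the injectivity: concretely it says that a $K$-point of $G_{\spl}$ whose image in $G(K)=\mathscr{G}(R)$ lies in $\mathscr{G}^{o}(R)$ already lies in $(\mathscr{G}_{\spl})^{o}(R)$, and a direct proof via characters fails because a character of $G_{\spl}$ need not extend to a character of $G$ (the obstruction lives in $\mathrm{Ext}^{1}(G_{\ab},\mathbb{G}_{m,K})$). The rank count circumvents this by importing the value of $\rank\Comp{G}$ from \cite{B-X}; a self-contained argument would instead have to analyse directly how the specialization map on $G_{\spl}(K)$ sits inside that on $G(K)$.
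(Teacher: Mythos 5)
Your outline can be made to work, but it is genuinely different from the paper's argument and, as written, falls short of the stated proposition on two concrete points. The paper's proof observes that $H=G/G_{\spl}$ is an extension of an abelian variety by an anisotropic torus, hence admits no non-trivial morphism to $\mathbb{G}_{m,K}$, and then invokes the proof of \cite[10.1.7]{neron}: under that hypothesis one gets the \emph{full} exact sequence \eqref{eq-split}, left exactness included, in one stroke; finiteness of $\Comp{H}$ comes from quasi-compactness of its N\'eron $lft$-model, and the equality $\rank\Comp{G}=\rho_{\spl}(G)$ is then a consequence of the sequence rather than an input. In other words, the injectivity you single out as ``the real obstacle'' is precisely what \cite[10.1.7]{neron} supplies, with no need for characters of $G_{\spl}$ to extend to $G$.

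The two points to repair in your version are these. First, you derive right exactness from Proposition~\ref{prop-bx}(2), which is stated (and proved, via the vanishing of $H^1(K,T)$ from \cite[4.3]{chai}) under the hypothesis that $k$ is algebraically closed; Proposition~\ref{prop-split} carries no such hypothesis beyond the standing assumption that $k$ is separably closed, and the route through \cite[10.1.7]{neron} works in that generality, so your proof only establishes a weaker statement unless you add that hypothesis or justify the exactness differently. Second, you force injectivity by a rank count after importing $\rank\Comp{G}=\rho_{\spl}(G)$ from \cite[4.11]{B-X} as recalled in \eqref{sss-qcneron}. This is not formally circular, since that external result stands once \eqref{sss-BXcorrect} is taken into account, but the rank equality is itself one of the conclusions of the proposition, so your argument proves that part only by citation, and it leans on a result of \cite{B-X} whose (repaired) proof runs through exactly the kind of split-subtorus sequence you are trying to establish. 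Both defects disappear simultaneously if you replace the combination ``Proposition~\ref{prop-bx}(2) plus rank count'' by the observation that $\Hom_K(H,\mathbb{G}_{m,K})=0$ together with the appeal to the proof of \cite[10.1.7]{neron}; what your route buys instead is an illustration that, once the rank of $\Comp{G}$ is known, left exactness comes for free from right exactness and finiteness of $\Comp{H}$.
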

\begin{proof}
 It follows from the example in Section \ref{sss-torcomp} of Chapter \ref{chap-preliminaries} that $\Comp{G_{\spl}}$ is
free of rank $\rho_{\spl}(G)$. Since $H$ is an extension of an
abelian variety by an anisotropic torus, the N\'eron $lft$-model
of $H$ is quasi-compact, so that $\Comp{H}$ is finite.
 There are no non-trivial morphisms of
algebraic $K$-groups from $H$ to $\mathbb{G}_{m,K}$, so that the
conditions of \cite[10.1.7]{neron} are fulfilled. The exactness of
\eqref{eq-split} is shown in the proof of \cite[10.1.7]{neron}. It
follows that $\Comp{G}$ has rank $\rho_{\spl}(G)$.
\end{proof}

\sss We keep the notations of Proposition \ref{prop-split}. Since
$\Comp{H}$ is finite, the injective morphism of free $\Z$-modules
$$\Comp{G_{\spl}}\to \Comp{G}_{\free}$$ has finite cokernel.

\begin{definition}
We define the index of $G$, denoted by $i(G)$, as
$$i(G)=|\coker(\Comp{G_{\spl}}\to \Comp{G}_{\free})|.$$
\end{definition}

\sss The torsion part of $\Comp G$ is isomorphic to the component
group $\Comp H$ if and only if the index $i(G)$ is one. This
happens, for instance, if $G$ is the product of $G_{\spl}$ and
$H$, but $i(G)$ can be different from one in general, as shown by
the example below. In the next section, we will study the
behaviour of the index under finite extensions of $K$.

\subsection{Example: The index of a $K$-torus}\label{ss-torus}
\sss Let $T$ be a $K$-torus. We will compute the index of $T$ in
 terms of the character module $X(T)$ of $T$.
 Let $K'$ be a splitting field of
 $T$, and put $\Gamma=\Gal(K'/K)$. We consider the trace map
 $$\mathrm{tr}:X(T)\to X(T)^\Gamma:x\mapsto \sum_{g\in \Gamma}g*x.$$
By the proof of \cite[3.5]{HaNi-comp}, the maximal split subtorus
$T_{\spl}$ of $T$ has character module $X(T)/\ker(\mathrm{tr})$,
so that we have a canonical isomorphism $$\Comp{T_{\spl}}\cong
(X(T)/\ker(\mathrm{tr}))^{\vee}.$$ On the other hand, we can look
at the maximal anisotropic subtorus $T_{\mathrm{a}}$ of $T$. It
has character module $X(T)/X(T)^\Gamma$, and the quotient
$T/T_{\mathrm{a}}$ is a split $K$-torus with character module
$X(T)^\Gamma$. It is the dual of the maximal split subtorus of the
dual torus of $T$.

\sss We will see in Proposition \ref{prop-bx2} that the sequence
$$\Comp{T_{\mathrm{a}}}\longrightarrow \Comp{T}\longrightarrow
\Comp{T/T_{\mathrm{a}}}=(X(T)^\Gamma)^{\vee}\longrightarrow 0$$ is
exact. Since $\Comp {T_{\mathrm{a}}}$ is finite, the induced
morphism
$$\Comp{T}_{\free}\to (X(T)^\Gamma)^{\vee}$$ is an isomorphism.
 Thus the index $i(T)$ of $T$ is equal to the cardinality of the
cokernel of the injective morphism of abelian groups
$$ X(T)^\Gamma\to X(T)/\ker(\mathrm{tr})$$ that is
induced by the inclusion $X(T)^\Gamma\subset X(T)$.

\sss This index can be different from one. For instance, let $K$
be the field $\C((t))$ of complex Laurent series and denote by
$K'$ the degree two Galois extension $\C((\sqrt{t}))$ of $K$. Let
$T$ be the torus corresponding to the character module $X(T)=\Z^2$
with an action of $\Gal(K'/K)\cong \mu_2(\C)$
 given by
 $$(-1)*v=\left(\begin{array}{cc}
  0 & 1 \\ 1 & 0\end{array}\right)\cdot v.$$ Then $X(T)^\Gamma$ is the
  submodule of $\Z^2$
generated by $(1,1)$ and $\ker(\mathrm{tr})$ is the submodule of
$\Z^2$ generated by $(1,-1)$, so that $i(G)=2$.

\section{Component groups and base change}
\subsection{Uniformization of semi-abelian varieties}
\sss Our aim is to study the behaviour of the torsion part of the
component group of a semi-abelian $K$-variety $G$ under finite
extensions of the field $K$.
 To extend the results for abelian varieties from \cite{HaNi-comp}, we need a suitable
 notion of uniformization for $G$.

 \sss \label{sss-sabuniform} Let
 $$ 0\to M\to E^{\an}\to (G_{\ab})^{\an}\to 0
$$ be the uniformization of the abelian $K$-variety $G_{\ab}$, with $E$ a semi-abelian
$K$-variety with potential good reduction and $M$ an \'etale
lattice in $E$ of rank $\rho(E)$; see Chapter
\ref{chap-preliminaries}, \eqref{sss-uniformAV}.
 We put
$$H=E^{\an}\times_{(G_{\ab})^{\an}} G^{\an}.$$
This is a smooth rigid $K$-group that fits into the short exact
sequences of rigid $K$-groups
\begin{equation}\label{eq-seq1} 0\to M \to H
\to G^{\an}\to 0, \end{equation} and
\begin{equation}\label{eq-seq2} 0\longrightarrow G^{\an}_{\tor} \to H
\to E^{\an}\longrightarrow 0. \end{equation}
 We will call the sequence \eqref{eq-seq1} the non-archimedean
 uniformization of $G$.

 \sss It is important to keep in mind that $H$
 is usually not algebraic, so that the theory of N\'eron
 $lft$-models in \cite{neron} cannot be applied. One can deduce from \cite[1.2]{bosch-neron} and Proposition \ref{prop-bounded} below that the rigid
 $K$-group $H$ admits a quasi-compact formal N\'eron model in the
 sense of \cite[1.1]{bosch-neron} if and only if the split
 reductive ranks of $G_{\tor}$ and $E$ are zero. It seems
 plausible that $H$ always has a formal N\'eron model, but we will not prove this.
 Instead, we will work with the smooth sheaf associated to $H$ and
 use the theory of component groups of smooth abelian sheaves developed in Section \ref{sec-BX}.

\subsection{Bounded rigid varieties and torsors under analytic tori}\label{subsec-bounded}

\if false
\begin{prop}
Assume that $K$ is complete. Let $T$ be the analytification of an
anisotropic $K$-torus, and let
$$1\rightarrow T\rightarrow G \rightarrow H\rightarrow 1$$ be an
exact sequence of rigid $K$-groups. If $H$ is bounded, then $G$ is
bounded, as well.
\end{prop}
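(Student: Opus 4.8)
The plan is to produce a quasi-compact formal N\'eron model of $G$; by \cite[1.2]{bosch-neron} this is equivalent to $G$ being bounded (recall that, since $k$ is separably closed, boundedness of a smooth rigid $K$-group $X$ amounts to $X(K)$ being contained in a quasi-compact admissible open of $X$). I would look for this model inside the sheaf-theoretic framework of Section \ref{sec-BX}. Two inputs go in. First, the algebraic torus of which $T$ is the analytification is anisotropic, so by \eqref{sss-qcneron} it contains no copy of $\mathbb{G}_{m,K}$ and its N\'eron $lft$-model $\mathscr{T}$ is of finite type over $R$; hence $\mathscr{T}$ is a smooth affine $R$-group scheme of finite type, $\widehat{\mathscr{T}}$ represents $j_*T$, it is quasi-compact, and $\widehat{\mathscr{T}}(R)=T(K)$. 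Second, $H$ being bounded, its formal N\'eron model $\mathscr{H}=j_*H$ is a quasi-compact smooth formal $R$-group scheme with $\mathscr{H}(R)=H(K)$. Finally I note that $H^1(K,T)=0$ by \cite[4.3]{chai}, since $k$ is separably closed, so that the sequence $0\to T(K)\to G(K)\to H(K)\to 0$ is exact.

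Now the sequence $0\to T\to G\to H\to 0$ is exact already as a sequence of abelian sheaves on $(\Sp K)_{\sm}$, because $G\to H$ is a torsor under the smooth group $T$ and is therefore split smooth-locally. Applying the left-exact functor $j_*$ gives an exact sequence $0\to \widehat{\mathscr{T}}\to j_*G\to \mathscr{H}$ of abelian sheaves on $(\Spf R)_{\sm}$. The crucial point, which I expect to be the main obstacle, is to show that $j_*G\to \mathscr{H}$ is an \emph{epimorphism} of sheaves; this is the precise rigid-analytic analogue of Proposition \ref{prop-bx}(1). Once this is known, $j_*G$ is a torsor over $\mathscr{H}$ under the smooth affine formal group scheme $\widehat{\mathscr{T}}$ of finite type over $R$, so it is representable by a smooth formal $R$-scheme $\mathscr{G}$, and $\mathscr{G}$ is quasi-compact because $\widehat{\mathscr{T}}$ is of finite type and $\mathscr{H}$ is quasi-compact. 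Thus $\mathscr{G}$ is a quasi-compact formal N\'eron model of $G$, and $G$ is bounded.

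It remains to prove the surjectivity of $j_*G\to \mathscr{H}$, and this is where anisotropy of $T$ is essential. Let $\mathscr{Z}$ be a connected smooth formal $R$-scheme and $s\in \mathscr{H}(\mathscr{Z})=H(\mathscr{Z}_\eta)$ a section; shrinking $\mathscr{Z}$, we may assume it has an $R$-point by \eqref{sss-smlift}. The pullback $s^*G\to \mathscr{Z}_\eta$ is a $T$-torsor, hence (by smoothness of $T$) trivial over some \'etale cover of $\mathscr{Z}_\eta$; I would like to spread it out to a $\mathscr{T}$-torsor over a smooth formal model of an admissible cover of $\mathscr{Z}$, using that — precisely because $T$ is anisotropic — the $T(\,\cdot\,)$-valued transition cocycles land in the \emph{finite-type} integral model $\mathscr{T}$. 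A $\mathscr{T}$-torsor over such a base becomes trivial after a further smooth cover, since over $\Spf R$ (and more generally over any strictly henselian local $R$-scheme with residue field $k$) every torsor under the smooth group scheme $\mathscr{T}$ has a section, $k$ being separably closed. This provides the desired local lift of $s$ and hence the epimorphism. Alternatively, one can bypass the sheaf language and argue concretely: using boundedness of $H$, replace it by the quasi-compact open subgroup $\mathscr{H}_\eta$, pull back the torsor $G\to H$, spread it out as above to a quasi-compact formal $\mathscr{T}$-torsor $\mathscr{G}_0$ over a formal model of $\mathscr{H}_\eta$, and check directly that the quasi-compact admissible open $(\mathscr{G}_0)_\eta\subseteq G$ contains $G(K)$: given $g\in G(K)$, lift its image in $H(K)=\mathscr{H}(R)$ through the torsor $\mathscr{G}_0$ — trivial over $\Spf R$ as above — to a point of $\mathscr{G}_0(R)$, and then use that translation by $T(K)=\mathscr{T}(R)$ preserves $\mathscr{G}_0(R)$, together with the exactness of $0\to T(K)\to G(K)\to H(K)\to 0$, to conclude that $g$ itself lies in $\mathscr{G}_0(R)$.
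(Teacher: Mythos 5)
Your reduction of boundedness to the existence of a quasi-compact formal N\'eron model is fine, but the step that you yourself flag as the main obstacle is a genuine gap, and the sketch you offer does not close it. To make $j_*G\to j_*H$ an epimorphism you need to spread out the $T^{\an}$-torsor $s^*G$ over the generic fiber of a smooth formal scheme to a torsor under $\widehat{\mathscr{T}}$, smooth-locally on the formal model. Anisotropy of $T$ only gives you that the N\'eron $lft$-model $\mathscr{T}$ is of finite type and that $T^{\an}$ is \emph{bounded}; boundedness controls $K$-rational points and says nothing about affinoid-valued points, and the transition cocycles of an \'etale trivialization are morphisms from rigid spaces to $T^{\an}$ which have no reason to factor through the quasi-compact part of $T^{\an}$ or to extend to $\mathscr{T}$-valued points of formal models: the N\'eron mapping property of $\widehat{\mathscr{T}}$ applies only to \emph{smooth formal $R$-schemes}, and the trivializing \'etale cover of the rigid generic fiber need not be the generic fiber of a smooth formal scheme over your base. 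Worse, a statement of exactly this type (formal-local extension or trivialization of torsors under a non-split torus over generic fibers of smooth formal schemes) is the perfect-residue-field case of Lemma 4.2 in \cite{B-X}, which is known to be incorrect (see \cite[4.8(b)]{chai} and \eqref{sss-BXcorrect}); only the split case survives and is the one used in this text. Note also that the logical order in this monograph is the reverse of yours: in Proposition \ref{prop-exact2}, surjectivity statements of the kind you need are \emph{deduced} from the boundedness of $G$, i.e.\ from Proposition \ref{prop-bounded}, so your plan is circular relative to the available tools. Two smaller points: the vanishing $H^1(K,T)=0$ from \cite[4.3]{chai} is invoked here only under the hypothesis that $k$ is algebraically closed, not merely separably closed; and the affineness of $\mathscr{T}$ and the effective descent of affine formal schemes along smooth covers, which you need for representability of the sheaf torsor, are asserted without justification.

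The paper's proof constructs no integral model of $G$ at all. After reducing to the case where $H$ has a connected smooth formal model and $G(K)\neq\emptyset$, it base-changes to a finite Galois splitting field $K'$ of $T$, where $G'$ is a torsor under a split torus, and uses the canonical map $\psi:G'(K')\to\Comp{T'}$ of Proposition \ref{prop-tortors}: the preimage of any finite subset of $\Comp{T'}$ is contained in a quasi-compact open, $\psi$ is $\Gal(K'/K)$-equivariant by its uniqueness, and $\Comp{T'}^{\Gal(K'/K)}=0$ precisely because $T$ is anisotropic, so $G(K)\subset\psi^{-1}(0)$ lies in a quasi-compact open of $G'$ and $G$ is bounded; the quasi-compact formal N\'eron model of $G$ is then a \emph{consequence}, via \cite[1.2]{bosch-neron}, rather than an ingredient. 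If you want to salvage your construction, you must first prove boundedness by some such argument, at which point the formal model you were aiming for comes for free.
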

\begin{proof}
 The morphism $G\to H$ is flat, so that the image of an affinoid
 open subvariety of $G$ is a quasi-compact open
 subvariety of $H$ \cite[5.11]{formrigII}. Since $H$ is bounded,
 we can find a quasi-compact open subvariety $U$ of $G$ whose
 image in $H$ contains the set $H(K)$ and such that $U(K)$ is
 non-empty.

 The morphism $G\to Q$ is flat, so that the image of an affinoid
 open subvariety of $G$ is a quasi-compact open
 subvariety of $Q$ \cite[5.11]{formrigII}. Since $Q$ is bounded,
 we can find a quasi-compact open subvariety $U$ of $G$ whose
 image in $Q$ contains the set $Q(K)$.
 If we set $$G'=G\times_{Q} U,$$ then $G'$ is isomorphic to
 $$H\times_K U.$$ Since $H$ is bounded, we can find  a quasi-compact open
 subvariety $H_0$ of $H$ that contains all the $K$-rational points of
 $H$.
 Then $V=H_0\times_K U$ is a quasi-compact open subvariety of
 $$H\times_K U\cong G'.$$  The image of $V$ under the projection
 morphism
$$G'=G\times_{Q} U\to G$$ contains all the $K$-rational points of
$G$, because every point of $Q(K)$ lifts to a (not necessarily
$K$-rational) point of $U$. Since $V$ is quasi-compact, we can
cover its image, and therefore $G(K)$, by a finite number of
affinoid open subvarieties of $G$. Thus $G$ is bounded.
\end{proof}
\fi

\begin{prop}\label{prop-tortors}
Let $T$ be a split algebraic $K$-torus with
%character module $X(T)$ and
 N\'eron $lft$-model $\mathscr{T}$. Let $\mH$ be a
smooth connected formal $R$-scheme, and let $G$ be a
$T^{\an}$-torsor on $H=\mH_K$. Let $g$ be a point of $G(K)$, and
let $c$ be an element of the component group $\Comp T$. Then there
exists a unique map
$$\psi:G(K)\to \Comp T$$ with the following properties.
\begin{enumerate}
\item We have $\psi(g)=c$. \item The map $\psi$ is equivariant
with respect to the action of $T(K)$ on $G(K)$ and $\Comp
T=T(K)/\mathscr{T}^o(R)$. \item For every smooth connected formal
$R$-scheme $\X$, every $R$-morphism $\X\to \mH$ and every morphism
of rigid $H$-varieties
$$f:\X_K\to G,$$ the map
$$\psi\circ f:\X_K(K)\to \Comp T$$ is constant.
\end{enumerate}
Moreover, for every finite subset $\Phi_0$ of $\Comp{T}$, the set
$\psi^{-1}(\Phi_0)$ is contained in a quasi-compact open
subvariety of $G$.
\end{prop}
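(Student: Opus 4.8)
The plan is to reduce at once to $T=\mathbb{G}_{m,K}$, to define $\psi$ by a parallel-transport prescription along connected smooth formal families over $\mH$, and then to check that this prescription is unambiguous; the crux will be a rigidity statement for invertible analytic functions. First I would choose a $\mathbb{Z}$-basis of the character module, identifying $T$ with $\mathbb{G}^n_{m,K}$, and push $G$ out along the $n$ coordinate characters $T^{\an}\to\mathbb{G}^{\an}_{m,K}$; this writes $G$ as a fibre product over $H$ of $\mathbb{G}^{\an}_{m,K}$-torsors $G_1,\dots,G_n$, while $\Comp{T}=\bigoplus_i\Comp{\mathbb{G}_{m,K}}$. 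A map $\psi$ with properties (1)–(3) is then the same datum as an $n$-tuple of such maps for the $G_i$, and, using that a finite fibre product of quasi-compact opens over the separated $H$ is quasi-compact, the final assertion reduces likewise; so I may assume $T=\mathbb{G}_{m,K}$ and $\Comp{T}=K^*/R^*$.

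On $G(K)$ I would consider the elementary moves: (a) replace $g'$ by $t\cdot g'$ for $t\in\mathscr{T}^{o}(R)$; (b) pass between $f(x_0)$ and $f(x_1)$ whenever $\X$ is a connected smooth formal $R$-scheme with an $R$-morphism $\X\to\mH$, $f\colon\X_K\to G$ is a morphism of rigid $H$-varieties, and $x_0,x_1\in\X(R)$. By (2) and (3) any $\psi$ is invariant under (b) and shifts by the class of $t$ under (a); and any two points of $G(K)$ are joined by a finite chain of such moves, because $H^1(K,T)=0$ for split $T$ makes $G(K)\to H(K)$ surjective, $\mH$ is connected and covered by connected quasi-compact formal opens that can be chained, every non-empty smooth formal $R$-scheme has an $R$-point since $k$ is separably closed, cf.\ \eqref{sss-smlift}, and $G$ is admissible-locally trivial on $H$ so that sections through prescribed points exist locally and propagate along a chain. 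This already forces $\psi$ to be unique. For existence I would set $\psi(g')$ equal to the value obtained by transporting $c$ from $g$ along any such chain, and the task is to show this is independent of the chain.

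Chain-independence reduces to the rigidity statement: if $g_0,g_1\in G(K)$ lie over the same point of $H(K)$, if $g_1=t\cdot g_0$ with $t\in T(K)$, and if $g_0$ and $g_1$ are joined by a chain of moves of type (b) alone, then $t\in\mathscr{T}^{o}(R)$. Granting this, the equivalence relation on $G(K)$ generated by (a) and (b) carries a free transitive $\Comp{T}$-action, $\psi$ is the unique equivariant bijection of the quotient onto $\Comp{T}$ taking the class of $g$ to $c$, and (1)–(3) hold by construction. For the trivial torsor $G=\mathbb{G}^{\an}_{m,K}\times H$ the rigidity statement is immediate: a move of type (b) is then an invertible analytic function $u$ on $\X_K$, and one needs $u(x_0)$ and $u(x_1)$ to have the same image in $K^*/R^*$ for $x_0,x_1\in\X(R)$; but $u$ extends uniquely to a morphism from $\X$ into the formal N\'eron $lft$-model of $\mathbb{G}^{\an}_{m,K}$, namely the $\mathfrak{m}$-adic completion $\widehat{\mathscr{T}}$ of $\mathscr{T}$, and the special fibre of the connected formal scheme $\X$ is connected, hence lands in a single component of $\widehat{\mathscr{T}}_k$. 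The step I expect to be the real obstacle is deducing the rigidity statement for a general torsor $G$ from this: the plan would be to trivialise $G$ over an admissible cover of $H$, to realise that cover — after an admissible formal blow-up of $\mH$ — by a formal open cover along which a given chain of type-(b) moves is subdivided, and to glue the computation just made over the overlaps, again using connectedness and $H^1(K,\mathbb{G}_{m,K})=0$. Handling these blow-ups while $G$ itself has no quasi-compact formal model, and aligning the local trivialisations along a chain, is where the work lies.

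For the boundedness assertion it suffices to bound $\psi^{-1}(\{c_0\})$ for a single $c_0$, and here I would use that $\mH$ is quasi-compact (as in the applications, where $\mH$ comes from a proper $R$-model). Covering $H$ by finitely many quasi-compact opens over which $G$ is trivial, over each such open $\psi^{-1}(\{c_0\})$ is, in a trivialising coordinate $s$, of the form $\{\,|s|=|\pi|^{m}\,\}\times(\text{that open})$ for a fixed integer $m$, which is a quasi-compact affinoid; hence $\psi^{-1}(\{c_0\})$ is a finite union of quasi-compact open subvarieties of $G$, and finitely many of these cover $\psi^{-1}(\Phi_0)$ for $\Phi_0$ finite.
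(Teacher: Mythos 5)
There is a genuine gap, and you have in fact located it yourself: the reduction from a general $T^{\an}$-torsor to the trivial one is the entire content of the proposition, and your proposal does not supply it. The paper's proof rests on a single key input, namely \cite[4.2]{B-X} in the split case (equivalently, the vanishing $R^1j_*T^{\an}_{\spl}=0$): a torsor under the analytification of a split torus over the generic fiber of a smooth formal $R$-scheme becomes trivial over the generic fibers $\mU_K$ of the members of a cover of $\mH$ by \emph{formal open subschemes} $\mU$. Given this, since $\mH$ is smooth and connected, any two such opens can be compared through a $K$-point of an overlap (using \eqref{sss-smlift}), so one reduces at once to $G=H\times_K T^{\an}$, where $\psi(h,t)=\overline{t}$ (suitably translated) visibly is the unique map with properties (1)--(3), and the boundedness assertion is read off from the explicit description, taking $T_0$ to be the generic fiber of the completion of the finitely many relevant components of $\mathscr{T}$. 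Your chain-transport scheme needs this same input twice: the connectivity of $G(K)$ under your moves (a),(b) -- ``sections through prescribed points exist locally and propagate along a chain'' -- already presupposes trivializations over formal opens of $\mH$ with $K$-points in the overlaps, so even your uniqueness argument is not complete without it; and your ``rigidity statement'' for a general torsor is exactly the existence half of the reduction.

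The substitute you sketch -- trivializing $G$ over an admissible cover of $H$ and realizing that cover by formal opens after an admissible formal blow-up of $\mH$ -- does not work in this setting. A formal blow-up destroys smoothness of the model, and with it everything your argument leans on: the existence of $R$-points on connected pieces \eqref{sss-smlift}, the extension of units $u$ on $\X_K$ to morphisms $\X\to\widehat{\mathscr{T}}$ (the formal N\'eron property is a universal property with respect to \emph{smooth} formal $R$-schemes), and property (3) itself, which quantifies over smooth formal $\X$ mapping to the fixed smooth $\mH$ rather than to some modification of it. Moreover nothing guarantees that a trivializing admissible cover of $H$ can be aligned with the chains of type-(b) moves after such a modification. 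Your treatment of the trivial torsor (constancy of the class of $u(x)$ in $K^*/R^*$ on $\X(R)$ for $\X$ smooth and connected, via the formal N\'eron model of $\mathbb{G}_{m,K}^{\an}$) is correct and is essentially the mechanism underlying the paper's explicit formula, and the preliminary reduction to $T=\mathbb{G}_{m,K}$ is harmless but unnecessary; once \cite[4.2]{B-X} is invoked, the whole parallel-transport apparatus can be discarded in favour of the paper's direct gluing argument.
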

\begin{proof}
  We may assume that $c=0$, since we can always compose $\psi$ with a translation. By \cite[4.2]{B-X}, we can cover $\mH$ by
open formal subschemes $\mU$ such that the torsor $G$ is trivial
over $\mU_K$. Any intersection of such opens $\mU$ will contain a
$K$-valued point in its generic fiber, because $\mH$ is smooth and
connected. Thus we may assume that $G=H\times_K T^{\an}$. Let
$\mathscr{T}$ be the N\'eron $lft$-model of $T$ and denote for
each $t$ in $T(K)$ by $\overline{t}$ the residue class of $t$ in
$\Comp{T}=T(K)/\mathscr{T}^o(R)$. Then the map
$$\psi:G(K)=H(K)\times T(K)\to \Comp T:(h,t)\mapsto \overline{t}$$
is clearly the unique map satisfying properties (1)-(3) in the
statement. If we denote by $\mathscr{T}_0$ the union of the
connected components of $\mathscr{T}$ that belong to $\Phi_0$, and
by $T_0$ the generic fiber of the $\frak{m}$-adic completion of
$\mathscr{T}_0$, then $T_0$ is a quasi-compact open subvariety of
$T^{\an}$ and
 the set $\psi^{-1}(\Phi_0)$ is contained in the quasi-compact open subvariety $H\times_K T_0$ of $G$.
\end{proof}

\begin{definition}
We say that a rigid $K$-variety $X$ is bounded if $X$ admits a
quasi-compact open rigid subvariety that contains all the
$K$-rational points of $X$.
\end{definition}

\sss It follows from \cite[1.2]{bosch-neron} that a smooth rigid
$K$-group is bounded if and only if it admits a quasi-compact
formal N\'eron model in the sense of \cite[1.1]{bosch-neron}. If
$G$ is a semi-abelian $K$-variety, then $G^{\an}$ is bounded if
and only if it is the extension of an abelian $K$-variety by an
anisotropic $K$-torus.

\begin{prop}\label{prop-bounded}
Let $H$ be a bounded smooth  rigid $K$-variety and let $T$ be an
anisotropic $K$-torus. Then every $T^{\an}$-torsor $G$ on $H$ is
bounded.
\end{prop}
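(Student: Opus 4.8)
The plan is to reduce to the split case, which is Proposition \ref{prop-tortors}, by base change along a splitting field of $T$, and then to extract boundedness from a Galois–descent argument for the map $\psi$ produced there. If $G(K)=\emptyset$ there is nothing to prove, so assume $G(K)\neq\emptyset$. Since $H$ is bounded, choose a quasi-compact open $H_0\subseteq H$ with $H(K)\subseteq H_0$; as the structural morphism $\pi\colon G\to H$ carries $G(K)$ into $H(K)\subseteq H_0$, it suffices to bound the induced $T^{\an}$-torsor on $H_0$, so we may assume $H$ is quasi-compact. Covering $H$ by finitely many opens that are generic fibers of smooth connected formal $R$-schemes (possible because $H$ is smooth), and noting that every $K$-point of $G$ lies over one of these opens, we reduce to the case $H=\mathfrak H_K$ for a smooth connected formal $R$-scheme $\mathfrak H$.

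Now fix a finite Galois extension $K'/K$ splitting $T$, with $\Gamma=\Gal(K'/K)$ and valuation ring $R'$; recall that $R'$ is again a complete discrete valuation ring with separably closed residue field. Writing $(\cdot)'$ for base change to $K'$, resp.\ $R'$, the scheme $\mathfrak H'=\mathfrak H\times_R R'$ is a smooth connected formal $R'$-scheme with generic fiber $H'$, the torus $T'$ is split, and $G'$ is a $(T')^{\an}$-torsor on $H'$, all carrying compatible $\Gamma$-actions. Since $G(K)\neq\emptyset$ we may pick a point $g$ in $G'(K')$, and Proposition \ref{prop-tortors}, applied over $R'$, yields a map $\psi\colon G'(K')\to\Comp{T'}$ satisfying conditions (1)--(3) stated there, with the feature that $\psi^{-1}(\Phi_0)$ lies in a quasi-compact open of $G'$ for every finite subset $\Phi_0$ of $\Comp{T'}$.

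The key step is to show that $\psi$ is constant on $G(K)$, which under Galois descent is identified with the set $G'(K')^{\Gamma}$ of $\Gamma$-fixed points. For $\sigma\in\Gamma$ the map $\psi^{\sigma}\colon x\mapsto\sigma^{-1}\psi(\sigma x)$ again satisfies conditions (2) and (3) of Proposition \ref{prop-tortors} by transport of structure along $\sigma$, so the uniqueness clause forces $\psi^{\sigma}=\psi+c_{\sigma}$ for an additive constant $c_{\sigma}\in\Comp{T'}$; equivalently $\psi(\sigma x)=\sigma(\psi(x))+\sigma(c_{\sigma})$ for all $x$. Hence if $x,x'\in G'(K')^{\Gamma}$ then $(\mathrm{id}-\sigma)(\psi(x)-\psi(x'))=0$ for every $\sigma$, so $\psi(x)-\psi(x')\in\Comp{T'}^{\Gamma}$. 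By \eqref{eq-torcomp} we have $\Comp{T'}\cong\Hom_{\Z}(X(T),(K')^{*}/(R')^{*})$ with $\Gamma$ acting through its action on $X(T)$ and trivially on $(K')^{*}/(R')^{*}\cong\Z$; therefore $\Comp{T'}^{\Gamma}\cong\Hom_{\Z}(X(T)_{\Gamma},\Z)$, which vanishes because $X(T)^{\Gamma}=0$ (as $T$ is anisotropic) forces the coinvariant group $X(T)_{\Gamma}$ to be finite. So $\psi$ is constant on $G(K)$, say with value $c_0$. By Proposition \ref{prop-tortors} the set $\psi^{-1}(\{c_0\})$ is then contained in a quasi-compact open subvariety $U'$ of $G'$, so $G(K)\subseteq U'$. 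Finally the projection $p\colon G'=G\times_K K'\to G$ is finite and faithfully flat, hence open, so $p(U')$ is a quasi-compact open subvariety of $G$; and every $K$-point of $G$ is the image under $p$ of its base change, which lies in $G'(K')^{\Gamma}\subseteq U'$, so $G(K)\subseteq p(U')$. Thus $G$ is bounded.

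I expect the main obstacle to be the Galois-descent bookkeeping in the key step: verifying carefully that $\psi^{\sigma}$ satisfies conditions (2) and (3) of Proposition \ref{prop-tortors}, which requires knowing that $\Gamma$ acts on the Néron $lft$-model of $T'$ and preserves its identity component, and that $\sigma$ carries the formal families occurring in (3) to formal families, together with pinning down the precise equivariance used to identify $\Comp{T'}^{\Gamma}$. The reduction to $H=\mathfrak H_K$ is routine but also needs the (standard) fact that a smooth quasi-compact rigid $K$-variety is, in the Tate topology, a finite union of generic fibers of smooth formal $R$-schemes.
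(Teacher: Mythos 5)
Your proof follows essentially the same route as the paper's: reduce to the situation where the $K$-points of $H$ lie in generic fibers of connected smooth formal $R$-schemes, pass to a finite Galois splitting field $K'$ of $T$, apply Proposition \ref{prop-tortors} over $R'$ to get $\psi\colon G'(K')\to\Comp{T'}$, compare $\psi$ with its Galois twists via the uniqueness clause, and use anisotropy of $T$ to get $\Comp{T'}^{\Gamma}=0$, so that $\psi$ is constant on $G(K)$ and $G(K)$ lands in a quasi-compact open of $G'$. Your computation $\Comp{T'}^{\Gamma}\cong\Hom_{\Z}(X(T)_{\Gamma},\Z)=0$ and the equivariance bookkeeping for $\psi^{\sigma}$ are exactly the paper's argument, and your final step — pushing the quasi-compact open down along the finite flat projection $G'\to G$, using that flat morphisms send quasi-compact opens to quasi-compact opens (\cite[5.11]{formrigII}) — is actually spelled out more explicitly than in the paper, where this descent is left implicit.

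The one genuine defect is the justification of the first reduction. The ``standard fact'' you invoke — that a smooth quasi-compact rigid $K$-variety is a finite union of generic fibers of smooth formal $R$-schemes — is false: by \eqref{sss-smlift}, every non-empty smooth formal $R$-scheme has an $R$-section (since $k$ is separably closed), so its generic fiber always contains a $K$-rational point; hence a smooth affinoid without $K$-rational points, such as the subdomain $\{\,x : |x|^{2}=|\pi|\,\}$ of the closed unit disc over $K=\C((t))$, admits no such covering, and adding components with $K$-points does not repair this. What your argument actually uses is only that finitely many such opens cover $H(K)$ (you say so yourself: ``every $K$-point of $G$ lies over one of these opens''), and that weaker statement is precisely the existence of a formal weak N\'eron model of the smooth bounded rigid variety $H$ — a nontrivial theorem, proved by N\'eron smoothening in the formal setting (\cite[3.3]{bosch-neron}), and exactly what the paper cites at this point. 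Replace your covering claim by that citation (noting also, as you do implicitly, that $R'$ again has separably closed residue field so that Proposition \ref{prop-tortors} applies over $R'$), and the rest of your proof stands.
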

\begin{proof}
 Let $\mH$ be a formal weak N\'eron model of $H$. Recall that this
 means that $\mH$ is a quasi-compact smooth formal $R$-scheme, endowed with an
 open immersion of rigid $K$-varieties
  $\mH_{K}\to H$ whose image contains all the $K$-rational
  points of $H$. Every bounded smooth rigid $K$-variety admits a formal weak N\'eron model, by \cite[3.3]{bosch-neron} (recall that we assume all rigid $K$-varieties to be
  quasi-separated). If we let $\mU$ run through the connected components of $\mH$, then the set $H(K)$ is covered by the quasi-compact
  open subvarieties $\mU_K$ of $H$. Thus it suffices to prove the
  result after base change from $H$ to any of these subvarieties
  $\mU_K$. Therefore, we may assume that $H$ has a connected
  smooth formal $R$-model.

 We may also assume that there exists a $K$-rational point $x$ on $G$,
since $G$ is obviously bounded if $G(K)$ is empty.
 Let $K'$ be a finite Galois extension of $K$ such that
$T'=T\times_K K'$ is split, let $R'$ be the integral closure of
 $R$ in $K'$,
 and set $G'=G\times_K K'$ and $H'=H\times_K K'$. Then $H'$ has a
 connected smooth formal $R'$-model, and
$G'$ is a $(T')^{\an}$-torsor over $H'$. Let $$\psi:G'(K')\to
\Comp{T'}$$ be the function from Proposition \ref{prop-tortors}
such that $\psi(x)=0$. We will prove that $\psi$ sends each
element of  $G(K)\subset G'(K')$ to $0$. Then $G(K)$ is contained
in a quasi-compact open subvariety of $G'$, by Proposition
\ref{prop-tortors}, and this implies that $G$ is bounded.

 The canonical isomorphism
 $$\Comp{T'}\cong X(T)^{\vee}\otimes_{\Z}(K')^*/(R')^*$$ from \eqref{eq-torcomp} in Chapter \ref{chap-preliminaries} is
 equivariant with respect to the Galois action of $\Gal(K'/K)$.
 The Galois action on the value group $(K')^*/(R')^*$ is trivial, and $X(T)^{\Gal(K'/K)}=0$ because $T$ is
 anisotropic. Thus $\Comp{T'}^{\Gal(K'/K)}=0$. On the other hand,
 the uniqueness of the map $$\psi:G'(K')\to \Comp{T'}$$ easily implies that it is
 $\Gal(K'/K)$-equivariant: if $\sigma$ is an element of
 $\Gal(K'/K)$, then $\psi^{\sigma}:=\sigma\circ \psi\circ
 \sigma^{-1}$ maps $x$ to $0$ and still satisfies properties (2)
 and (3) in Proposition \ref{prop-tortors}. Thus $\psi$ maps each element of
 $G(K)$ to an element of $\Comp{T'}^{\Gal(K'/K)}=0$.
\end{proof}

\if false
 We will construct a quasi-compact open subvariety $G'_0$
of $G'$ such that the image of $G'_0$ in $G$ contains all the
$K$-rational points of $G$. The existence of $G'_0$ implies that
$G$ is bounded.

 Let $\mU'$ be a connected component of $\mH'$ and set
 $U'=\mU'_{K'}$. Let $g$ be a point of
 $$(G'\times_{H'}U')(K')\cap G(K)\subset G'(K').$$
 We denote by
%$\Phi_0$ the image of the base change morphism $\Comp{T}\to
%\Comp{T'}$, and by
$$\psi:G'\times_{H'}U'\to \Comp{T'}$$ the map
from
 Proposition \ref{prop-tortors} such that $\psi(g)=0$.
% Then $\psi^{-1}(\Phi_0)$ is contained in a quasi-compact open
% subvariety of $G'\times_{H'}U'$, by Proposition
% \ref{prop-tortors}, so that it suffices to prove that
% $\psi^{-1}(\Phi_0)$ contains $$(G'\times_{H'}U')(K')\cap G(K)\subset G'(K').$$

By \cite[4.2]{B-X}, we can cover $\mH'$ by finitely many connected
open formal subschemes
 $\mU'$ such that the $(T')^{\an}$-torsor $G'$ is split over the
 generic fiber
  $U'=\mU'_{K'}$. We choose a section $s:U'\to G'\times_{H'}U'$ of the projection
  $G'\times_{H'}U'\to U'$ such that the image of $s$ contains a point $g$ of $G(K)\subset G'(K')$.

 Let

  $\mathscr{T}$ and
  $\mathscr{T}'$ the N\'eron $lft$-models of $T$ and $T'$,
  respectively, and by $\mathscr{T}'_0$ the union of the connected components
  of $\mathscr{T}'$ that intersect the image of the base change
  morphism $$\mathscr{T}\times_R R'\to \mathscr{T}'.$$ We set
  $T'_0=(\mathscr{T}'_0)_{K'}$. This is a quasi-compact rigid
  $K'$-variety, because $\mathscr{T}$, and thus $\mathscr{T}'_0$, are quasi-compact.

 The action of $T'$ on $G'$ gives rise to an open immersion of rigid $K'$-varieties
 $$T'_0\times_{K'}U'\to G':(u,t)\mapsto t\cdot s(u).$$ We claim that
 the image of this morphism contains the set $$\mathrm{Im}(G(K)\to G'(K'))\cap (G'\times_{H'}U')(K').$$
 Then we can take for  $G'_0$ the union of the quasi-compact open subvarieties
 $T'_0\times_{K'}U'$ of $G'$, where $\mU$ ranges over our finite open
 cover of $\mH'$.

  Thus it suffices to prove our claim.

\end{proof}
\fi
\subsection{Behaviour of the component group under base
change}
\begin{prop}\label{prop-bx2} Assume that $k$ is algebraically closed. Let
$$
0\longrightarrow T\longrightarrow G\longrightarrow
H\longrightarrow 0$$ be a short exact sequence of semi-abelian
$K$-varieties, such that $T$ is a torus.
%If we denote
%by $f:\mathscr{G}\rightarrow \mathscr{H}$ the unique morphism of
%N\'eron $lft$-models that extends the morphism $G\rightarrow H$,
%then the following properties hold.
%\begin{enumerate}
%\item The map
%$$f^o(R):\mathscr{G}^o(R)\rightarrow \mathscr{H}^o(R)$$ is
%surjective. \item
 Then the sequence of component groups
\begin{equation}\label{eq-phi2}
\Comp T\rightarrow \Comp G\rightarrow \Comp H\rightarrow
0\end{equation} is exact, and the kernel of $\Comp T\rightarrow
\Comp G$ is torsion.
%\end{enumerate}
\end{prop}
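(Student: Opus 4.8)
The exactness of the sequence \eqref{eq-phi2} comes for free: its hypotheses are precisely those of Proposition~\ref{prop-bx}, and part~(2) of that proposition already asserts that $\Comp T\to \Comp G\to \Comp H\to 0$ is exact. So the only genuinely new claim is that $N:=\ker(\Comp T\to \Comp G)$ is torsion, and the plan is to reduce this assertion to the completely explicit situation of split tori by passing to maximal split subtori.

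First I would introduce $T_{\spl}\subseteq T$ and $G_{\spl}\subseteq G$, the maximal split subtori. Since $T_{\spl}$ is split, the inclusion $T_{\spl}\hookrightarrow T\hookrightarrow G$ factors through $G_{\spl}$, and the factored morphism $T_{\spl}\to G_{\spl}$ is again a monomorphism, hence a closed immersion, so that $T_{\spl}$ is a subtorus of the split torus $G_{\spl}$. Restriction of characters then gives a surjection $X(G_{\spl})\to X(T_{\spl})$ of free $\Z$-modules; as $X(T_{\spl})$ is projective this surjection splits, so $X(T_{\spl})^{\vee}$ is a direct summand of $X(G_{\spl})^{\vee}$. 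Tensoring with $K^{*}/R^{*}$ and using the description \eqref{eq-torcomp} of the component group of a split torus, I conclude that the induced map $\Comp{T_{\spl}}\to \Comp{G_{\spl}}$ realizes $\Comp{T_{\spl}}$ as a direct summand of $\Comp{G_{\spl}}$, and in particular is injective. Composing with the injection $\Comp{G_{\spl}}\hookrightarrow \Comp G$ furnished by Proposition~\ref{prop-split} applied to $G$, and using functoriality of the component group to identify the composite $\Comp{T_{\spl}}\to \Comp{G_{\spl}}\to \Comp G$ with $\Comp{T_{\spl}}\to \Comp T\to \Comp G$ (both arising from the one morphism $T_{\spl}\hookrightarrow G$), I obtain that $\Comp{T_{\spl}}\to \Comp G$ is injective.

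To finish, I would apply Proposition~\ref{prop-split} to the torus $T$ itself. Its quotient $T/T_{\spl}$ is anisotropic, so $\Comp{T/T_{\spl}}$ is finite, and the exact sequence there exhibits $\Comp{T_{\spl}}$ as a subgroup of finite index in $\Comp T$. Since $N\cap \Comp{T_{\spl}}\subseteq \ker(\Comp{T_{\spl}}\to \Comp G)=0$, the group $N$ embeds into the finite quotient $\Comp T/\Comp{T_{\spl}}\cong \Comp{T/T_{\spl}}$; hence $N$ is finite, and in particular torsion.

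Once Proposition~\ref{prop-bx} is on hand the argument is essentially formal, so I do not foresee a serious obstacle. The only points that will need a little care are the verification that $T_{\spl}$ lands inside $G_{\spl}$ with $X(G_{\spl})\to X(T_{\spl})$ surjective, and the naturality bookkeeping ensuring that the two ways of mapping $\Comp{T_{\spl}}$ into $\Comp G$ agree.
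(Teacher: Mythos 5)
Your proposal is correct, but it proves the torsion statement by a different route than the paper. The paper's own proof is a pure rank count: having exactness from Proposition \ref{prop-bx}, it observes that it suffices to check $\rank(\Comp G)=\rank(\Comp T)+\rank(\Comp H)$, which follows from Proposition \ref{prop-split} (identifying $\rank \Comp{\cdot}$ with $\rho_{\spl}(\cdot)$) together with the additivity of the split reductive rank in short exact sequences (Lemma \ref{lemm-add}); exactness of \eqref{eq-phi2} then forces the kernel of $\Comp T\to\Comp G$ to have rank zero. You instead work directly with the maximal split subtori: you show $T_{\spl}\hookrightarrow G_{\spl}$ is a closed immersion, deduce from surjectivity of $X(G_{\spl})\to X(T_{\spl})$ and the explicit description \eqref{eq-torcomp} that $\Comp{T_{\spl}}\to\Comp{G_{\spl}}$ is (split) injective, compose with the injection $\Comp{G_{\spl}}\hookrightarrow\Comp G$ from Proposition \ref{prop-split}, and then embed the kernel into the finite group $\Comp{T/T_{\spl}}$. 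Your argument avoids Lemma \ref{lemm-add} entirely (whose proof via the Chevalley decomposition is the least formal ingredient of the paper's route), does not even use exactness of \eqref{eq-phi2} for the torsion claim, and yields finiteness of the kernel directly (though, $\Comp T$ being finitely generated, torsion and finite are equivalent here); the paper's count is shorter given its prior lemmas and reuses a rank identity needed elsewhere. The two small points you flag are genuinely the only ones needing care, and both check out: a closed subgroup of a split torus is diagonalizable with surjective character restriction, and the identification of the two maps $\Comp{T_{\spl}}\to\Comp G$ follows from functoriality of N\'eron models together with the description of \eqref{eq-torcomp} via specializations of $K$-points (the paper states the functorial description only for base change maps, but the same argument applies to any homomorphism of split tori since the specialization map $T(K)\to\Comp T$ is surjective and functorial).
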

\begin{proof}
We already proved exactness of the sequence \eqref{eq-phi2} in
Proposition \ref{prop-bx}, so it remains to prove that the kernel
of $\Comp T\rightarrow \Comp G$ is torsion. It is enough to show
that
$$\rank(\Comp G)=\rank(\Comp T)+\rank(\Comp H).$$ This follows from Proposition \ref{prop-split} and
Lemma \ref{lemm-add} in Chapter 2.
\end{proof}

\begin{cor}\label{cor-trace}
Let $G$ be a semi-abelian $K$-variety, and let $L$ be a finite
separable extension of $K$ such that $G\times_K L$ has good
reduction. Then $\Comp{G}_{\tors}$ is killed by $[L:K]$.
\end{cor}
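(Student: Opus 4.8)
The plan is to deduce the statement from the trace map of Proposition \ref{prop-trace}, once we know that $\Comp{G\times_K L}$ is torsion free. Throughout, write $G'=G\times_K L$.

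First I would pass from the algebraic group $G$ to its analytification $G^{\an}$. As recalled in \eqref{sss-qcneron} and in Section \ref{sec-BX}, the formal $\frak{m}$-adic completion of the N\'eron $lft$-model of a semi-abelian $K$-variety is a formal N\'eron model of its analytification in the sense of \cite{bosch-neron}, and hence represents the sheaf $j_*G^{\an}$ on $(\Spf R)_{\sm}$; passing to special fibres identifies the N\'eron component group $\Comp{G}$ with the component group $\Comp{G^{\an}}$ of the smooth abelian sheaf $G^{\an}$. The same holds for $G'$, and via the canonical isomorphism $G^{\an}\times_K L\cong (G')^{\an}$ the base change morphism of \eqref{sss-trace} becomes a group homomorphism $\alpha\colon\Comp{G}\to\Comp{G'}$ to which Proposition \ref{prop-trace} applies: its kernel is killed by $[L:K]$.

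Next I would verify that $\Comp{G'}$ is a free $\Z$-module. Since $G'$ has good reduction it has semi-abelian reduction, so by \eqref{sss-sabtorab} the maximal subtorus $G'_{\tor}$ is split; as $G'_{\spl}\subseteq G'_{\tor}$ always holds, this forces $G'_{\tor}=G'_{\spl}$ and hence $G'/G'_{\spl}=G'_{\ab}$. Good reduction of $G'$ means precisely that the N\'eron model of $G'_{\ab}$ is an abelian $R$-scheme, which is proper with connected special fibre, so $\Comp{G'_{\ab}}=0$. Applying Proposition \ref{prop-split} to $G'$ then gives an isomorphism $\Comp{G'_{\spl}}\to\Comp{G'}$, and $\Comp{G'_{\spl}}$ is free of rank $\rho_{\spl}(G')$ by that same proposition; in particular $\Comp{G'}$ has no torsion. (Alternatively one can run this step through Proposition \ref{prop-bx2}.)

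To conclude, note that $\alpha$, being a homomorphism, carries torsion elements to torsion elements, so $\alpha(\Comp{G}_{\tors})\subseteq\Comp{G'}_{\tors}=0$; thus $\Comp{G}_{\tors}\subseteq\ker\alpha$, which is killed by $[L:K]$, and hence so is $\Comp{G}_{\tors}$. I do not expect a genuine obstacle here: the argument is essentially formal once Proposition \ref{prop-trace} is in place, and the only point requiring attention is the bookkeeping identifying $\Comp{G}$ with $\Comp{G^{\an}}$ and checking that the analytic base change morphism is compatible with the algebraic one — but this is exactly what the machinery of Section \ref{sec-BX} was built to handle.
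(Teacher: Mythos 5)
Your proof is correct and follows essentially the same route as the paper: show $\Comp{G\times_K L}$ is torsion free (splitness of the torus part plus good reduction of the abelian part), so $\Comp{G}_{\tors}$ lands in the kernel of the base change map, which Proposition \ref{prop-trace} kills by $[L:K]$. The only cosmetic differences are that you make the identification of $\Comp{G}$ with the sheaf-theoretic component group of $G^{\an}$ explicit and derive freeness from Proposition \ref{prop-split} rather than Proposition \ref{prop-bx2}, which you yourself note as the alternative the paper uses.
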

\begin{proof}
 The torus $G_{\tor}\times_K L$ is split, and the abelian variety
 $G_{\ab}\times_K L$ has good reduction. Thus it follows from
 Proposition \ref{prop-bx2} that $\Comp{G\times_K L}$ is free, so
 that $\Comp{G}_{\tors}$ must be contained in the kernel of the
 base change morphism
 $$\Comp{G}\to \Comp{G\times_K L}.$$ This kernel is killed by
 $[L:K]$, by Proposition \ref{prop-trace}.
\end{proof}

\sss We will need the following generalization of Proposition
\ref{prop-bx}. The main example to keep in mind is the following:
$H$ is the analytification of a semi-abelian $K$-variety, and $G$
is an analytic extension of $H$ by an algebraic $K$-torus $T$.
Then for every finite separable extension $L$ of $K$, the rigid
$K$-group $H\times_K L$ has a formal N\'eron model in the sense of
\cite[1.1]{bosch-neron}, by \cite[6.2]{bosch-neron}.

\begin{prop}\label{prop-exact2} Assume that $k$ is algebraically
closed.
 Let \begin{equation}\label{eq-rigexact} 0\longrightarrow
T^{\an}\longrightarrow G\longrightarrow H\longrightarrow 0
\end{equation}
be an exact sequence of smooth commutative rigid $K$-groups, where
$T^{\an}$ is the rigid analytification of an algebraic $K$-torus
$T$. Assume that $H$ has a formal N\'eron model
 and
 that the abelian sheaf $G/T^{\an}_{\spl}$ on  $(\Sp
K)_{sm}$ is representable by a smooth commutative rigid $K$-group
$G'$. Then the following properties hold.
\begin{enumerate}
\item \label{itm:exact} The sequence of component groups
\begin{equation}\label{eq-compgr}
 \Comp T\longrightarrow
\Comp G \longrightarrow \Comp H\longrightarrow 0
\end{equation}
is exact. \item \label{itm:torsion} If $H \times_K L$ admits a
formal N\'eron model for every finite separable extension $L$ of
$K$, then
 the kernel of $\Comp T\rightarrow \Comp G$ is finite.
 \end{enumerate}
\end{prop}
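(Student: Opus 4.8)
The plan is to follow the strategy of Proposition~\ref{prop-bx}, working throughout with the push-forward $j_*$ and the sheaf-theoretic formalism of Section~\ref{sec-BX}, and to use the rigidity of torsors under analytic tori (Proposition~\ref{prop-tortors}) in place of the scheme-theoretic input \cite[9.6.2]{neron} that is available when $G$ is algebraic. Set $\mathscr{T}=j_*(T^{\an})$, $\mathscr{G}=j_*G$ and $\mathscr{H}=j_*H$, so that $\mathscr{T}$ is the $\frak{m}$-adic completion of the N\'eron $lft$-model of $T$, $\mathscr{H}$ is represented by the given formal N\'eron model of $H$, and left exactness of $j_*$ gives $0\to\mathscr{T}\to\mathscr{G}\to\mathscr{H}$. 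One may either argue with $T$ a general algebraic torus, or break \eqref{eq-rigexact} along $0\to T_{\spl}\to T\to T_{\mathrm{a}}\to 0$ (with $T_{\mathrm{a}}=T/T_{\spl}$ anisotropic) and treat the split and anisotropic pieces separately; it is for this second approach that one needs the hypothesis that $G/T^{\an}_{\spl}$ be a rigid $K$-variety $G'$, namely to realize $G$ as a torsor under $T^{\an}_{\spl}$ over $G'$ so that \cite[4.2]{B-X} applies. I will sketch the direct route.

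For part~(1) the two ingredients are: (a) $G(K)\to H(K)$ is surjective, and (b) $\mathscr{G}^o(R)\to\mathscr{H}^o(R)$ is surjective. Granting both, the exactness of \eqref{eq-compgr} follows precisely as in the proof of Proposition~\ref{prop-bx}(2): surjectivity of $\Comp G\to\Comp H$ is immediate from~(a), and the elementary diagram chase of \emph{loc.\ cit.}, using~(b) together with the exact sequence $0\to T(K)\to G(K)\to H(K)\to 0$, gives exactness in the middle. Assertion~(a) holds because every torsor under $T^{\an}$ over $\Sp K$ is trivial, which follows from $\mathrm{Pic}(\Sp K)=0$ together with the vanishing of $H^1(K,T)$ \cite[4.3]{chai}. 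For~(b) I would argue as follows: an element $\rho\in\mathscr{H}^o(R)$ is witnessed by a connected smooth formal $R$-scheme $\mX$, a section $\bar\sigma\in\mathscr{H}(\mX)=H(\mX_K)$ and points $x_0,x_1\in\mX(R)$ with $x_0^*\bar\sigma=0$ and $x_1^*\bar\sigma=\rho$. By the N\'eron property of $\mathscr{H}$, $\bar\sigma$ extends to $\mX\to\mathscr{H}$, and this factors through $\mathscr{H}^o$ because $\mX$ is connected and $x_0$ lands in $\mathscr{H}^o_k$; pulling back \eqref{eq-rigexact} along $\bar\sigma\colon\mX_K\to\mathscr{H}^o_K$ yields a torsor under $T^{\an}$ over $\mX_K$ which, by \cite[4.2]{B-X}, is trivial over $\mU_K$ for the members $\mU$ of a suitable open cover of $\mX$. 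Joining $x_0$ to $x_1$ by a chain of such opens (possible since $\mX$ is connected), choosing local lifts of $\bar\sigma$ over their generic fibres, comparing these lifts at $R$-points of the overlaps---which exist by \eqref{sss-smlift}---and correcting by the resulting elements of $T(K)$, one produces an element of $\mathscr{G}^o(R)$ mapping to $\rho$.

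For part~(2) I would first treat the case where $T$ is split and prove the stronger statement that then $\Comp T\to\Comp G$ is injective. Let $\mathscr{H}^o$ be the identity component of the formal N\'eron model of $H$; it is a connected smooth formal $R$-scheme whose generic fibre contains the origin, and the restriction $G'$ of $G$ over $\mathscr{H}^o_K$ is a torsor under $T^{\an}$ with $0\in G'(K)$. Proposition~\ref{prop-tortors} furnishes a map $\psi\colon G'(K)\to\Comp T$ with $\psi(0)=0$, and by $T(K)$-equivariance its restriction to $T(K)\subseteq G'(K)$ is the projection $T(K)\to\Comp T$. Now if $t\in T(K)$ has trivial image in $\Comp G$, i.e.\ $t\in\mathscr{G}^o(R)$, then any witnessing section over a connected smooth formal $R$-scheme factors through $G'$ (again by the N\'eron property of $\mathscr{H}$ and connectedness of the formal scheme), so condition~(3) of Proposition~\ref{prop-tortors} forces $\psi(t)=\psi(0)=0$, whence the class of $t$ in $\Comp T$ is zero. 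For general $T$, choose a finite separable extension $L/K$ splitting $T$; by hypothesis $H\times_K L$ has a formal N\'eron model, so the split case applies over $L$ and gives $\ker(\Comp{T\times_K L}\to\Comp{G\times_K L})=0$. Commutativity of the base-change square shows that $\ker(\Comp T\to\Comp G)$ maps to $0$ in $\Comp{T\times_K L}$, hence lies in the kernel of $\Comp T\to\Comp{T\times_K L}$, which is killed by $[L:K]$ by Proposition~\ref{prop-trace}. Since $\Comp T$ is finitely generated, a subgroup of bounded exponent is finite, and part~(2) follows.

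The step I expect to be the main obstacle is ingredient~(b) of part~(1), the surjectivity of $\mathscr{G}^o(R)\to\mathscr{H}^o(R)$: when $G$ is algebraic this is immediate from $\mathscr{G}$ being a scheme together with \cite[9.6.2]{neron}, and here it must be replaced by the patching argument for torsors under analytic tori over formal models sketched above, for which the corrected statement \cite[4.2]{B-X} is the essential input.
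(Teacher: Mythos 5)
Your ingredient (a), the diagram chase via Proposition \ref{prop-compgr}, and your treatment of part (2) (restrict the torsor over $\mathscr{H}^o_K$, apply Proposition \ref{prop-tortors} in the split case, then descend the finiteness statement from a splitting field $L$ using Proposition \ref{prop-trace}) are essentially the paper's argument. The genuine gap is in ingredient (b) of part (1), which you yourself single out as the crux. The local triviality you invoke there --- that the $T^{\an}$-torsor obtained by pulling back \eqref{eq-rigexact} along $\bar\sigma$ becomes trivial over $\mU_K$ for the members $\mU$ of an open cover of the formal model --- is precisely the vanishing $R^1j_*T^{\an}=0$, and for a torus that is not split this is exactly the erroneous ``perfect residue field'' case of \cite[4.2]{B-X} that this chapter corrects (see \cite[4.8(b)]{chai} and \eqref{sss-BXcorrect}); its replacement, Proposition \ref{prop-bx}, concerns algebraic semi-abelian varieties and gives no such local triviality for analytic torsors. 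Only the split case of \cite[4.2]{B-X} may be used; after splitting off $T_{\spl}$ --- and it is here that the hypothesis that $G/T^{\an}_{\spl}$ is representable enters, a hypothesis your ``direct route'' never uses, which is itself a warning sign --- one is left with an anisotropic torus, which is exactly the case where no local triviality is available. So your patching argument, although its combinatorics (chains of opens, $R$-points in overlaps via \eqref{sss-smlift}, correcting by elements of $T(K)$) would be fine granted local trivializations, cannot serve as the substitute for \cite[9.6.2]{neron}.

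For the record, the paper's surrogate is different: the split part is disposed of by applying $j_*$ to $0\to T^{\an}_{\spl}\to G\to G'\to 0$, using $R^1j_*T^{\an}_{\spl}=0$ (split case of \cite[4.2]{B-X}) and \cite[4.8]{B-X} to see that $\mathscr{G}^o(R)\to(\mathscr{G}')^o(R)$ is surjective; this reduces to $T$ anisotropic, hence $T^{\an}$ bounded. One then replaces $H$ by $H^o=\mathscr{H}^o_K$ (which does not change identity components), so that $H$ is bounded; Proposition \ref{prop-bounded} then shows that $G$ is bounded, hence by \cite[1.2]{bosch-neron} its Néron model $j_*G$ is represented by a quasi-compact smooth formal $R$-scheme, and surjectivity of $\mathscr{G}(R)\to\mathscr{H}(R)$ (your (a)) upgrades to surjectivity of $\mathscr{G}^o(R)\to\mathscr{H}^o(R)$ by the formal-scheme version of \cite[9.6.2]{neron}, whose proof only involves the Greenberg schemes of the truncations $\mathscr{G}\times_R(R/\mathfrak{m}^n)$. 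In short: boundedness plus representability of the formal N\'eron model of $G$, not torsor patching, is what replaces \cite[9.6.2]{neron}; as written, your proof of (1) is incomplete at exactly this step, while your part (2) stands.
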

\begin{proof}
%We denote by $j$ the specialization morphism of \'etale sites
%$$j:(\Sp K)_{\mathrm{\acute{e}t}}\longrightarrow (\Spf R)_{\mathrm{\acute{e}t}}.$$
% Since the formation of N\'eron $lft$-models commutes with the
% base change $R\to \widehat{R^{\sh}}$, we may assume that $k$ is
% algebraically closed.
 The main problem is that we no longer know
 if $G$ admits a formal N\'eron model, so that we cannot apply the
 arguments in the proof of Proposition \ref{prop-bx2} in a direct
 way. Instead of trying to prove the existence of a formal N\'eron model for $G$, we will adapt the arguments of Proposition \ref{prop-bx2} to the  N\'eron
 model $j_*G$ of the smooth sheaf $G$ -- see \eqref{sss-neronsheaf}.

% We begin by showing that the sequence \eqref{eq-compgr} is exact.

\eqref{itm:exact} The sequence \eqref{eq-rigexact} defines an
exact sequence of \'etale sheaves on $\Sp K$, and
$$H^1_{\mathrm{\acute{e}t}}(\Sp K,T^{\an})=H^1(K,T)=0$$ by
\cite[4.3]{chai}. It follows that the sequence
\begin{equation}\label{eq-rigexact2}
0\longrightarrow T(K)\longrightarrow G(K)\longrightarrow
H(K)\longrightarrow 0
\end{equation} is exact. We denote by $\mathscr{G}=j_*G$ and $\cH=j_*H$ the N\'eron
models of $G$ and $H$ on $(\Spf R)_{\sm}$. Looking at the proof of
Proposition \ref{prop-bx} and applying Proposition
\ref{prop-compgr}, we see that it is enough to show that the map
$$\mathscr{G}^o(R)\to \mathscr{H}^o(R)$$ is surjective.

 First, we reduce to the case where
$T^{\an}$ is bounded. It is shown in  \cite[4.2]{B-X} (split case)
that $R^1j_*T^{\an}_{\spl}=0$. Thus, applying
 the functor $j_*$ to the exact sequence
$$0\to T^{\an}_{\spl}\to G\to G'\to 0$$ of abelian sheaves on $(\Sp
 K)_{\mathrm{sm}}$,
   we get an exact
 sequence
 $$0\to j_*T^{\an}_{\spl}\to \mathscr{G}\to \mathscr{G}'\to 0 $$ of abelian sheaves on $(\Spf
 R)_{\mathrm{sm}}$. In
 particular, $\mathscr{G}\to \mathscr{G}'$ is an epimorphism. Then
 \cite[4.8]{B-X} implies that $\mathscr{G}^o\to
(\mathscr{G}')^o$ is an epimorphism, as well. It follows from
Lemma \ref{lemm-smtop} that $\mathscr{G}^o(R)\to
(\mathscr{G}')^o(R)$ is surjective. Therefore, it is enough to
prove that the map $(\mathscr{G}')^o(R)\to \mathscr{H}^o(R)$ is
surjective, so that we may replace the sequence
\eqref{eq-rigexact} by the exact sequence
$$0\to (T/T_{\spl})^{\an}\to G'\to H\to 0$$
of abelian sheaves on $(\Sp
 K)_{\mathrm{sm}}$. This means that we may assume that $T_{\spl}$ is
 trivial, and thus that $T^{\an}$ is bounded. Then $G'=G$.

Now, we reduce to the case where $H$ is quasi-compact. By our
assumptions, the N\'eron model $\cH$ of $H$ is representable by a
quasi-compact smooth formal $R$-scheme, that we denote again by
$\cH$. We denote by $\cH^o$ its identity component. The generic
fiber of $\mathscr{H}^o$
 is a quasi-compact open rigid subgroup of $H$, which we denote by
 $H^o$. It is
 easily seen that  $\mathscr{H}^o$ is a formal N\'eron
 model for $H^o$.

The inverse image $\widetilde{G}$ of $H^o$ in $G$ is an open rigid
subgroup of $G$ that fits into a short exact sequence of smooth
rigid $K$-groups
$$0\rightarrow T^{\an}\rightarrow \widetilde{G}\rightarrow
H^o\rightarrow 0.$$ Since the functor $j_*$ commutes with fibred
products, the N\'eron model
$\widetilde{\mathscr{G}}=j_*\widetilde{G}$ is isomorphic to
$\mathscr{G}\times_{\mathscr{H}}\mathscr{H}^o$. Clearly, the
morphism
 $\widetilde{\mathscr{G}}\to \mathscr{G}$ induces an isomorphism
 between the respective identity components. Thus we may assume
 that $H=H^o$ and $G=\widetilde{G}$. In particular, we may assume
 that $H$ is quasi-compact.

We've reduced to the case where both $T^{\an}$ and $H$ are
bounded. Then $G$ is bounded by Proposition \ref{prop-bounded}, so
that its N\'eron model $\cG$ is represented by a quasi-compact
smooth formal $R$-scheme \cite[1.2]{bosch-neron}.
 Exactness of \eqref{eq-rigexact2} implies that the natural map
$\mathscr{G}(R)\to \mathscr{H}(R)$ is surjective. Then it follows
from \cite[9.6.2]{neron} that $\mathscr{G}^o(R)\to
\mathscr{H}^o(R)$ is surjective, which is what we wanted to prove.
 To be precise, \cite[9.6.2]{neron} is formulated for algebraic schemes, but the proof of \cite[9.6.2]{neron} immediately carries over to the
formal scheme case, since it only involves the Greenberg schemes
of the algebraic schemes $\mathscr{G}\times_R (R/\mathfrak{m}^n)$
and $\mathscr{H}\times_R (R/\mathfrak{m}^n)$ for $n>0$.

\eqref{itm:torsion} Let $L$ be a finite separable extension of $K$
such that the
 torus $T\times_K L$ is split. Then the square
 $$\begin{CD} \Comp T@>>> \Comp G
\\ @VVV @VVV
\\ \Comp {T\times_K L}@>>> \Comp {G\times_K L}
\end{CD}$$ commutes, and the kernel of $\Comp T\to \Comp {T\times_K L}$ is
torsion by \cite[5.3]{HaNi-comp}. Thus it is enough to show that
$$\Comp {T\times_K L}\to \Comp {G\times_K L}$$ is injective. Therefore, we may
assume that $T$ is split.

 We set $\widetilde{G}=G\times_H H^o$ and $\widetilde{\mathscr{G}}=j_*\widetilde{G}$ as in \eqref{itm:exact}.
  Then $\widetilde{\mathscr{G}}$ is a subsheaf of $\mathscr{G}$
  and these sheaves have the same identity component.
 It follows that $\Comp{\widetilde{G}}$ is a subgroup
 of $\Comp{G}$ that contains the image of $\Comp {T}$ in $\Comp
 {G}$. Thus
  replacing $H$ by $H^o$ and $G$ by $\widetilde{G}$ does not affect the
  kernel of
$\Comp {T}\to \Comp {G}.$
   Therefore, we may assume that $H=H^o$. In
particular, $H$ admits a smooth connected formal $R$-model.

 We denote by $\psi$ the function associated to the
 $T^{\an}$-torsor $G\to H$ as in Proposition \ref{prop-tortors},
 normalized by $\psi(e_G)=0$ (here $e_G$ denotes the identity
 point of $G$).
 Let $t$ be an element of $T(K)$ whose class $\overline{t}$ in
$\Comp T=T(K)/\mathscr{T}^o(R)$ belongs to the kernel of $\Comp
T\to \Comp G$. We denote again by $t$ the image of $t$ in $G(K)$.
Then it follows from property (2) in Proposition
\ref{prop-tortors} that $\psi(t)=\overline{t}$.  Thus it is enough
to show that $\psi(t)=0$.

By definition of $(j_*G)^o$, there exist a
  smooth connected formal $R$-scheme $\X$, a morphism of rigid $K$-varieties
$$f:\X_K\to G$$
and elements $x_0$ and $x_1$ of $\X_K(K)$ such that $f$ maps $x_0$
to $e_G$ and $x_1$ to $t$. By the universal property of the formal
N\'eron model, the induced morphism $\X_K\to H$ extends uniquely
to an $R$-morphism $\X\to \mathscr{H}$. Thus we can apply property
(3) of Proposition \ref{prop-tortors}, and we see that
$\psi(t)=\psi(e_G)=0$, as required.
 %We denote by
%$$h:\X_K\to H$$ the composition of $g$ with the projection $G\to
%H$.
\end{proof}

\sss \label{sss-condition} Let $e$ be a positive integer, and let
$G$ be an abelian $K$-variety. We denote by $B$ the abelian
$K$-variety with potential good reduction that appears in the
non-archimedean uniformization of $G$ (see \eqref{sss-uniformAV}
in Chapter \ref{chap-preliminaries}). Then we consider the
following condition on the couple $(G,e)$:

\begin{center}
For every finite separable extension $K'$ of $K$ of degree prime
to $e$, the base change morphism $\Comp{B}\to \Comp{B\times_K K'}$
is an isomorphism.
\end{center}

\sss \label{sss-satisf}This condition is satisfied, for instance,
in each of the following cases:
 \begin{itemize}
 \item $G$ has potential
multiplicative reduction and $e$ is any positive integer (in this
case, $B$ is trivial); \item $G$ is tamely ramified and $e$ is a
multiple of the degree of the minimal extension $L$ of $K$ such
that $G\times_K L$ has semi-abelian reduction
\cite[5.5]{HaNi-comp}; \item $B$ is isomorphic to the Jacobian of
a smooth projective $K$-curve $C$ of index one, and $e$ is a
multiple of
%both the degree of the minimal extension $L$ of $K$ in
%$K^s$ such that $C \times_K L$ has semi-stable reduction and
 the
stabilization index $e(C)$ (Proposition \ref{prop-compfu} in
Chapter \ref{chap-jacobians}).
\end{itemize}
Moreover, if $(G,e)$ and $(G',e')$ are two couples satisfying
\eqref{sss-condition}, then the couple $$(G\times_K
G',\lcm\{e,e'\})$$ also satisfies \eqref{sss-condition}, since the
formation of N\'eron models commutes with finite products.

\begin{prop}\label{prop-goodred} Assume that $k$ is algebraically
closed. Let $G$ be a semi-abelian $K$-variety, with toric part
$G_{\tor}$ and abelian part $G_{\ab}$. Suppose that $G_{\ab}$ has
potential good reduction, and denote by $L$ the minimal extension
 of $K$ in $K^s$ such that $G\times_K L$ has semi-abelian reduction.
 Then the group $\Comp G_{\tors}$ is killed by $[L:K]$. For every
finite separable extension $K'$ of $K$ of degree $d$ prime to
$[L:K]$,
 the base change morphism
$$\alpha:\Comp G\rightarrow \Comp{G\times_K K'}$$ is injective.

Let $e>0$ be a multiple of $[L:K]$ such that $(G_{\ab},e)$
satisfies condition \eqref{sss-condition}. If $d=[K':K]$ is prime
to $e$,  then image of the morphism
$$\alpha_{\free}:\Comp{G}_{\free} \to \Comp{G\times_K K'}_{\free}
$$
equals $d\cdot \Comp{G\times_K K'}_{\free}$, and the morphism
$$\alpha_{\tors}:\Comp{G}_{\tors} \rightarrow \Comp{G\times_K K'}_{\tors}$$ is an
isomorphism. In particular, the cokernel of $\alpha$ is isomorphic
to $(\Z/d\Z)^{\rho_{\spl}(G)}$.
\end{prop}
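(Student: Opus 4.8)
The plan is to reduce everything to the short exact sequence
\[
0\longrightarrow G_{\tor}\longrightarrow G\longrightarrow G_{\ab}\longrightarrow 0
\]
of semi-abelian $K$-varieties and its base change to $K'$, and to feed in Proposition~\ref{prop-bx2}, the known behaviour of component groups of tori under base change, and the hypothesis on $G_{\ab}$. For the first two assertions (which do not use \eqref{sss-condition}): since $G\times_K L$ has semi-abelian reduction and the formation of the maximal subtorus commutes with base change, $(G\times_K L)_{\ab}=G_{\ab}\times_K L$ has semi-abelian reduction; as $G_{\ab}$ has potential good reduction, the toric rank of $G_{\ab}\times_K L$ equals the potential toric rank of $G_{\ab}$, which is $0$, by the Semi-Stable Reduction Theorem~\ref{prop-semiab}, so $G_{\ab}\times_K L$ has good reduction and hence so does $G\times_K L$; Corollary~\ref{cor-trace} then shows $\Comp{G}_{\tors}$ is killed by $[L:K]$. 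For the injectivity of $\alpha$ when $d=[K':K]$ is prime to $[L:K]$: by Proposition~\ref{prop-trace} the kernel of $\alpha$ is killed by $d$, so it is finite, hence contained in $\Comp{G}_{\tors}$; being killed by both $d$ and $[L:K]$ it is trivial.

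For the last statement, put $\rho=\rho_{\spl}(G)=\rho_{\spl}(G_{\tor})$ and write $'=\times_K K'$; note $d$ is prime to $[L:K]\mid e$. The two inputs are: (i) the base change morphism $\Comp{G_{\ab}}\to\Comp{G_{\ab}'}$ is an isomorphism — since $G_{\ab}$ has potential good reduction, the abelian variety $B$ occurring in its non-archimedean uniformization is $G_{\ab}$ itself, so this is precisely condition~\eqref{sss-condition} for $(G_{\ab},e)$, which applies because $d$ is prime to $e$; and (ii) for the torus $G_{\tor}$, whose splitting field has degree dividing $[L:K]$ and so prime to $d$, the map $\alpha_{G_{\tor}}$ is injective, its restriction to torsion is an isomorphism, and $\coker(\alpha_{G_{\tor}})\cong(\Z/d\Z)^{\rho}$ — this follows from the analysis of component groups of tori under base change in \cite[\S5]{HaNi-comp}, using the explicit description in Section~\ref{sss-torcomp} and the fact that $K'/K$ is totally ramified of degree $d$ since $k$ is algebraically closed (alternatively one argues directly with the character lattice as in \S\ref{ss-torus}). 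Applying $\Comp{\cdot}$ and base change to the displayed sequence, Proposition~\ref{prop-bx2} gives a commutative ladder with exact rows $\Comp{G_{\tor}}\to\Comp{G}\to\Comp{G_{\ab}}\to 0$ and $\Comp{G_{\tor}'}\to\Comp{G'}\to\Comp{G_{\ab}'}\to 0$, whose leftmost maps have torsion kernels. Splitting each row as $0\to\im\to\Comp{G}\to\Comp{G_{\ab}}\to 0$ and $0\to\ker\to\Comp{G_{\tor}}\to\im\to 0$, and observing that (ii) forces $\alpha_{G_{\tor}}$ to induce an isomorphism on the torsion kernels, two applications of the snake lemma show that $\alpha$ is injective with $\coker(\alpha)\cong\coker(\alpha_{G_{\tor}})\cong(\Z/d\Z)^{\rho}$; this gives the asserted value of the cokernel.

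It then remains to split off torsion. Injectivity of $\alpha$ yields injectivity of $\alpha_{\tors}$ and of $\alpha_{\free}$, and $d\cdot\Comp{G'}\subseteq\im(\alpha)$ since $\coker(\alpha)$ is killed by $d$. For surjectivity of $\alpha_{\tors}$: given $t'\in\Comp{G'}_{\tors}$, write $dt'=\alpha(x)$; by the first assertion of the proposition applied to $G'$ over $K'$ (here $[L':K']=[L:K]$, because the minimal extension of $K'$ over which $G'$ acquires semi-abelian reduction is $LK'$ and $L\cap K'=K$, as $[L\cap K':K]$ divides both $d$ and $[L:K]$) the element $t'$ is killed by $[L:K]$, which is prime to $d$; choosing $d'$ with $dd'\equiv 1\pmod{[L:K]}$ gives $t'=\alpha(d'x)$, and injectivity forces $d'x\in\Comp{G}_{\tors}$. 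Thus $\alpha_{\tors}$ is an isomorphism, so from $0\to\coker(\alpha_{\tors})\to\coker(\alpha)\to\coker(\alpha_{\free})\to 0$ we obtain $\coker(\alpha_{\free})\cong(\Z/d\Z)^{\rho}$; and a sublattice of a rank-$\rho$ lattice with quotient $(\Z/d\Z)^{\rho}$ is $d$ times the ambient lattice (elementary divisors), so $\im(\alpha_{\free})=d\cdot\Comp{G'}_{\free}$, as claimed.

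The main obstacle is input (ii): controlling the component group of $G_{\tor}$ — and in particular of its anisotropic part, which may well be nontrivial — under an extension whose degree is prime to the splitting field degree. This is exactly the point where one leaves the present section and must invoke the torus theory of \cite{HaNi-comp} (or run a hands-on computation with the character lattice). Everything else is diagram-chasing, the only care required being the repeated bookkeeping identities $(G\times_K K')_{\spl}=G_{\spl}\times_K K'$, $\rho_{\spl}(G\times_K K')=\rho_{\spl}(G)$ and $[L':K']=[L:K]$, all of which follow from $d$ being prime to $[L:K]$ together with $k$ being algebraically closed.
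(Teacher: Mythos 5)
Your reduction to the sequence $0\to G_{\tor}\to G\to G_{\ab}\to 0$ is sound, and most of the steps check out: the first two assertions are proved essentially as in the paper (good reduction of $G\times_K L$ plus Corollary \ref{cor-trace}; for injectivity the paper quotes \cite[5.5]{HaNi-comp}, but your direct trace-map argument is fine), your identification of $B=G_{\ab}$ in condition \eqref{sss-condition} is correct, the two snake-lemma applications are valid given your inputs, and the final torsion/free bookkeeping (including $[LK':K']=[L:K]$ and the elementary-divisor argument identifying $\im(\alpha_{\free})$ with $d\cdot\Comp{G\times_K K'}_{\free}$) agrees in substance with the paper's. The one genuine weak point is your input (ii). What you require for $G_{\tor}$ --- injectivity, an isomorphism on torsion parts, and $\coker(\alpha_{G_{\tor}})\cong(\Z/d\Z)^{\rho_{\spl}(G)}$ \emph{as a group} --- is precisely the statement of the proposition applied to the torus $G_{\tor}$, which in general is neither split nor anisotropic. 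You cite \cite{HaNi-comp} (or a ``hands-on'' character-lattice computation) for this, but the results of that paper invoked elsewhere in the text are cardinality statements (e.g.\ $|\coker(\alpha_T)|=d^{t(T)}$ from loc.\ cit.\ 5.6), and, as the present paper remarks at the end of Chapter \ref{chap-uniform}, they do not by themselves give the group structure of the cokernel or the isomorphism on torsion. So, as written, the argument outsources exactly the nontrivial content.

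The paper's proof avoids this by a different d\'evissage: it filters $G$ by its maximal split subtorus, $0\to G_{\spl}\to G\to H\to 0$ with $\rho_{\spl}(H)=0$, so that the only inputs needed are the completely explicit split-torus computation (Example \ref{sss-torcomp}, giving cokernel $(\Z/d\Z)^{\rho_{\spl}(G)}$) and the statement that $\alpha$ is an \emph{isomorphism} when all component groups are finite ($\rho_{\spl}=0$); the latter is obtained from the toric--abelian ladder for $H$, using \cite[5.5]{HaNi-comp} only for the anisotropic toric part and condition \eqref{sss-condition} for the abelian part. The snake lemma then transports the split-torus cokernel to $\coker(\alpha)$, and the torsion/free analysis proceeds as in your last paragraph. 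Your proof becomes complete if you establish input (ii) by running this same split/anisotropic reduction on $G_{\tor}$ (or work with $G_{\spl}\subset G$ directly, as the paper does), rather than quoting it; without that, the reduction to the toral case leaves untouched the point where the actual work lies.
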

\begin{proof}
For notational convenience, we'll denote by $(\cdot)'$ the base
change functor from $K$ to $K'$. In particular,  $G'=G\times_K
K'$. We proved the injectivity of $\alpha$ in
\cite[5.5]{HaNi-comp}, and we showed in Corollary \ref{cor-trace}
that $\Comp{G}_{\tors}$ is killed by $[L:K]$. Since, by
Proposition \ref{prop-semiab}, $G'$ acquires semi-abelian
reduction on the degree $[L:K]$ extension $K'\otimes_K L$ of $K'$,
Corollary \ref{cor-trace} also implies that $\Comp {G'}_{\tors}$
is killed by $[L:K]$.
 We prove the remainder of the theorem by considering the
 following
 cases.

 \medskip
\textit{Case 1: $G$ is a split torus.} This case was discussed in
Example \ref{sss-torcomp} in Chapter \ref{chap-preliminaries}.
Recall that $\Comp G$ and $\Comp{G'}$ are free $\Z$-modules of
rank $\rho_{\spl}(G)=\mathrm{dim}\,G$.

  \medskip
\textit{Case 2: $\rho_{\spl}(G)=0$.} In this case, we need to show
that $\alpha$ is an isomorphism.
%It follows from \cite[4.2]{HaNi-comp}
%that $t(G')=t(G)=0$, so that $\Comp G$ and $\Comp {G'}$ are torsion
%by Lemma \ref{lemm-tor}. Since $\alpha$ is injective, $\Comp G$ and
%$\Comp {G'}$ are killed by $m$, and $d$ is prime to $m$, it is
%enough to show that the cokernel of $\alpha$ is killed by a power
%of $d$.
 We have a commutative diagram
$$\begin{CD}
\Comp {G_\tor}@>>> \Comp G@>>>  \Comp {G_\ab}@>>> 0
\\ @V\beta VV @V\alpha VV @VV\gamma V @.
\\ \Comp {G'_\tor}@>>> \Comp {G'}@>>>  \Comp {G'_\ab}@>>> 0
\end{CD}$$
whose rows are exact by Proposition \ref{prop-bx}, and such that
all vertical morphisms are injective. Moreover, $\beta$ and
$\gamma$ are isomorphisms, by \cite[5.5]{HaNi-comp} and condition
 \eqref{sss-condition} for $(G_{\ab},e)$. A straightforward diagram chase shows that
$\alpha$ is an isomorphism.

\medskip
\textit{Case 3: General case.} Set $H=G/G_{\spl}$ and consider the
commutative diagram
$$\begin{CD} 0@>>> \Comp {G_{\spl}}@>>> \Comp {G}@>>> \Comp {H}@>>> 0
\\ @. @V\beta VV @V\alpha VV @VV\gamma V @.
\\ 0@>>> \Comp {G'_{\spl}}@>>\delta> \Comp {G'}@>>> \Comp {H'}@>>> 0
\end{CD}$$
The rows are exact by Proposition \ref{prop-bx2} and the fact that
$\Comp {G_{\spl}}$ and $\Comp {G'_{\spl}}$ are free $\Z$-modules.
By Lemma \ref{lemm-semiab}, we know that $H\times_K L$ has
semi-abelian reduction. Thus Case 2 implies that $\gamma$ is an
isomorphism. By the Snake Lemma, $\delta$ induces an isomorphism
between the cokernels of $\beta$ and $\alpha$. It follows from
Case 1 that the cokernel of $\beta$, and thus the cokernel of
$\alpha$, is isomorphic to $(\Z/d\Z)^{\rho_{\spl}(G)}$.

By Proposition \ref{prop-split}, the $\Z$-module $\Comp
{G}_{\free}$ has rank $\rho_{\spl}(G)$. Since $d$ is prime to $e$
and $\Comp {G}_{\tors}$ and $\Comp {G'}_{\tors}$ are killed by
$[L:K]$, the morphism $\alpha_{\tors}$ must be an isomorphism (it
is injective because $\alpha$ is injective, and its cokernel is
killed by both $d$ and $e$). It follows that the natural morphism
$\mathrm{coker}(\alpha)\to \mathrm{coker}(\alpha_{\free})$ is an
isomorphism, so that the image of $\alpha_{\free}$ equals $d\cdot
\Comp {G'}_{\free}$.
\end{proof}

\begin{prop}\label{prop-condjac}
Assume that $k$ is algebraically closed. Let $C$ be a smooth
projective $K$-curve of index one with Jacobian $G$. We denote by
$L$ the minimal extension of $K$ in $K^s$ such that $G\times_K L$
has semi-abelian reduction. If $e$ is a multiple of both $[L:K]$
and the stabilization index $e(C)$, then $(G,e)$ satisfies
condition \eqref{sss-condition}.
\end{prop}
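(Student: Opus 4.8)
The plan is to dispose of the tamely ramified case at once and then concentrate on the wild case. If $C$ is tamely ramified, then $G=\Jac(C)$ is a tamely ramified abelian $K$-variety, and since $e$ is a multiple of $[L:K]$, the pair $(G,e)$ satisfies \eqref{sss-condition} by the case of tamely ramified $G$ recorded in \eqref{sss-satisf} (i.e. by \cite[5.5]{HaNi-comp}). So I may assume that $C$ is wildly ramified; then $p\mid e(C)$ by Proposition \ref{prop-min}, hence $p\mid e$. Consequently every finite separable extension $K'/K$ of degree prime to $e$ is tamely ramified, and since $k$ is algebraically closed it is of the form $K'=K(d)$ with $d=[K':K]\in\N'$ prime to $e$. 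Thus it suffices to show that, for each such $d$, the base change morphism $\alpha\colon\Comp{B}\to\Comp{B(d)}$ is an isomorphism, where $B$ denotes the abelian $K$-variety with potential good reduction occurring in the non-archimedean uniformization of $G$.

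Injectivity of $\alpha$ is easy: since $B\times_K L$ has good reduction and $[L:K]\mid e$, Corollary \ref{cor-trace} shows that $\Comp{B}$ is killed by $[L:K]$, hence by an integer prime to $d$; on the other hand $\ker\alpha$ is killed by $d$ by Proposition \ref{prop-trace}, so $\ker\alpha=0$. It remains to prove that $|\Comp{B(d)}|=|\Comp{B}|$, and for this I would reduce to equal characteristic zero exactly as in the proof of Proposition \ref{prop-compfu}. We may assume $g>0$, since otherwise $B$ is trivial. Let $\mathscr{C}$ be the minimal $sncd$-model of $C$; by Theorem \ref{theo-winters} there is a smooth, proper, geometrically connected curve $D$ over $\C((t))$ with minimal $sncd$-model $\mathscr{D}$ over $\C[[t]]$ such that $\mathscr{C}_k$ and $\mathscr{D}_{\C}$ have the same combinatorial data; then $e(D)=e(C)$, and since $d$ is prime to $e(C)$, Proposition \ref{prop-data} shows that $\mathscr{C}(d)_k$ and $\mathscr{D}(d)_{\C}$ also have the same combinatorial data. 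Writing $B_D$ for the abelian part of the uniformization of $\Jac(D)$, the key claim is that $|\Comp{B}|=|\Comp{B_D}|$ and $|\Comp{B(d)}|=|\Comp{B_D(d)}|$, i.e. that the order of the component group of the abelian part of the uniformization of the Jacobian of a curve of positive genus depends only on the combinatorial data of the minimal $sncd$-model of the curve. Granting this, since $\C((t))$ has residue characteristic zero the curve $D$ is tamely ramified, so $B_D$ is tamely ramified and acquires good reduction over an extension of degree $e(D)=e(C)$, which is prime to $d$; hence $\Comp{B_D}\to\Comp{B_D(d)}$ is an isomorphism by \cite[5.5]{HaNi-comp}, so $|\Comp{B_D(d)}|=|\Comp{B_D}|$. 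Chaining the three equalities gives $|\Comp{B(d)}|=|\Comp{B}|$, which together with the injectivity of $\alpha$ concludes the argument.

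The main obstacle is precisely the key claim, namely the combinatorial invariance of $|\Comp{B}|$. For elliptic curves it is classical: either $B$ is trivial (potential multiplicative reduction), or $B=C$ has potential good reduction and then $\Comp{C}$ is read off the Kodaira type of the minimal $sncd$-model. In general I see two routes. The first is geometric: pass to a finite extension $L'/K$ over which $C$ acquires semi-stable reduction, where the Raynaud extension of $\Jac(C\times_K L')$, its defining lattice, and its toric part are all encoded in the dual graph of the semi-stable special fiber while the reduction of $B\times_K L'$ is the product of the Jacobians of the normalizations of its components, and then check that the Galois descent datum computing $\Comp{B}$ only involves the combinatorial data of $\mathscr{C}_k$. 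The second, perhaps better suited to the machinery of the present chapter, would feed the two short exact sequences of the uniformization into the Bosch--Xarles exact sequences of component groups (Propositions \ref{prop-bx2} and \ref{prop-exact2}) and combine them with the vanishing statements of Section \ref{sec-BX} and with the combinatorial descriptions of $\Comp{G}$, of the potential toric rank, and of the component group of the toric part $T$, so as to isolate $|\Comp{B}|$; the delicate point there is controlling the derived-pushforward ($R^1j_*$) contributions. Either way, this combinatorial-invariance input is the heart of the proof, everything else being formal.
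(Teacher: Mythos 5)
Your reduction to showing that $\Comp{B}\to\Comp{B(d)}$ is an isomorphism for every $d\in\N'$ prime to $e$, and your injectivity argument via Corollary \ref{cor-trace} and Proposition \ref{prop-trace}, are both fine. The problem is the step you yourself flag as the ``key claim'': that $|\Comp{B}|$, for $B$ the abelian part of the uniformization of $\Jac(C)$, depends only on the combinatorial data of the minimal $sncd$-model of the curve. This is not established, and neither of your sketched routes obviously closes it. Winters' theorem only matches the special fibers of the models; it gives no control over the uniformization data of the Jacobian -- the Raynaud extension $E$, the lattice $M$ with its Galois action, the splitting field of the toric part -- and these are genuinely arithmetic objects. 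In particular, for a wildly ramified abelian variety with potential good reduction the $p$-part of the component group is governed by wild ramification data that are invisible in the combinatorics of $\mathscr{C}_k$, so there is no a priori reason why $|\Comp{B}|$ should agree with $|\Comp{B_D}|$ for the Winters curve $D$ over $\C((t))$. Your key claim is, if anything, harder than the proposition itself (which only asserts invariance of $\Comp{B}$ under prime-to-$e$ tame base change, not combinatorial invariance of its order), so the heart of the argument is left open.

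The paper's proof avoids this by applying the Winters-based combinatorial invariance only to the component group of the Jacobian itself, where it is legitimate: $\Comp{G}$ is computed from the intersection matrix of $\mathscr{C}_k$, and Proposition \ref{prop-compfu} gives $|\Comp{G(d)}|=d^{t(G)}\cdot|\Comp{G}|$. This growth is then transferred through the uniformization: the proof of \cite[5.7]{HaNi-comp} shows (the tameness hypothesis is not used at that point) that the base change morphisms $\Comp{G}\to\Comp{G(d)}$ and $\Comp{E}\to\Comp{E(d)}$ have isomorphic cokernels, so the latter has cardinality $d^{t(G)}$. Since the cokernel of $\Comp{T}\to\Comp{T(d)}$ already has cardinality $d^{t(T)}=d^{t(G)}$ by \cite[5.6]{HaNi-comp}, since the kernels of $\Comp{T}\to\Comp{E}$ and $\Comp{T(d)}\to\Comp{E(d)}$ are torsion (Proposition \ref{prop-bx2}) and hence killed by $[L:K]$, which is prime to $d$, and since $\Comp{T}_{\tors}\to\Comp{T(d)}_{\tors}$ is an isomorphism by Proposition \ref{prop-goodred}, a Snake Lemma argument on the diagram built from the exact rows $\Comp{T}\to\Comp{E}\to\Comp{B}\to 0$ forces $\Comp{B}\to\Comp{B(d)}$ to be an isomorphism. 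To salvage your plan you should replace the direct cross-characteristic comparison of $\Comp{B}$ by a transfer argument of this kind.
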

\begin{proof} %We denote by $L$ the minimal extension of $K$ in
%$K^s$ such that $G\times_K L$ has semi-abelian reduction.
%Let $K'$ be a finite separable extension of $K$ of degree $d$ prime to
%both $[L:K]$ and to $e(C)$.
%Then $d$ is also prime to $[L:K]$, by Proposition
%\ref{prop-min} and Corollary \ref{cor-primetop}.

Let $K'$ be a finite separable extension of $K$ of degree $d$ prime to
both $[L:K]$ and to $e(C)$.
Let $E^{\an}\to G^{\an}$ be the non-archimedean uniformization of
$G$, and denote by $(\cdot)'$ the base change functor from $K$ to
$K'$. The proof of \cite[5.7]{HaNi-comp} shows that the base
change morphisms $\Comp{G}\to \Comp{G'}$ and $\Comp{E}\to
\Comp{E'}$ have isomorphic cokernels (in that proof, the
assumption that $A$ is tamely ramified or has potential
multiplicative reduction is only used at the very end, to apply
\cite[5.6]{HaNi-comp}). By Proposition \ref{prop-compfu}, this
implies that the cokernel of $\Comp{E}\to \Comp{E'}$ has
cardinality $d^{\trank{G}}$.

 We denote by $T$ and $B$ the toric, resp.~abelian part of $E$.
 Consider the commutative diagram
 $$\begin{CD}
\Comp{T}@>\delta >> \Comp{E}@>>> \Comp{B}@>>> 0
\\ @V\alpha VV @V\beta VV @V\gamma VV @.
\\ \Comp{T'}@>\varepsilon>> \Comp{E'}@>>> \Comp{B'}@>>> 0.
 \end{CD}$$
 The rows are exact by Proposition \ref{prop-bx}, the vertical
 morphisms are injective by \cite[5.7]{HaNi-comp}, and the
 cokernel of $\alpha$ also has cardinality
 $d^{\trank{G}}=d^{\trank{T}}$ by \cite[5.6]{HaNi-comp}.
 The kernels of $\delta$ and $\varepsilon$ are torsion by Proposition \ref{prop-bx2}, and
 thus killed by $[L:K]$, by Corollary \ref{cor-trace}. But $d$ is
 prime to
 $[L:K]$, and, moreover, $\alpha_{\tors}$ is an isomorphism by Proposition \ref{prop-goodred}. Thus $\alpha$ maps $\ker(\delta)$ surjectively onto
 $\ker(\varepsilon)$, and the Snake Lemma implies that $\gamma$ is
 an isomorphism.
\end{proof}

\begin{theorem}\label{thm-main}
 Assume that $k$ is algebraically closed. Let $G$ be a semi-abelian $K$-variety, with toric part $G_{\tor}$
and abelian part $G_{\ab}$. Let $L$ be the minimal extension of
$K$ in $K^s$ such that $G\times_K L$ has semi-abelian reduction.
Let $K'$ be a finite separable extension of $K$ of degree $d$.

\begin{enumerate}
\item If $d$ is prime to $[L:K]$, then the base change morphism
$$\alpha:\Comp G\rightarrow \Comp {G\times_K K'}$$ is injective.

\item Let $e$ be a multiple of $[L:K]$ in $\Z_{>0}$ such that
$(G_{\ab},e)$ satisfies condition \eqref{sss-condition}. If $d$ is
prime to $e$, then the cokernels of the morphisms
\begin{eqnarray*}
\alpha&:&\Comp G  \rightarrow  \Comp {G\times_K K'}
\\ \alpha_{\tors}&:&\Comp G_{\tors}  \rightarrow  \Comp {G\times_K K'}_{\tors}
\\ \alpha_{\free}&:&\Comp G_{\free}  \rightarrow  \Comp {G\times_K K'}_{\free}
\end{eqnarray*} are isomorphic to  $(\Z/d\Z)^{t(G)}$, resp.
$(\Z/d\Z)^{t(G_{\ab})}$, resp. $(\Z/d\Z)^{\rho_{\spl}(G)}$.
\end{enumerate}
%Moreover,
%$$\alpha_{\free}:(\Comp G)_{\free}  \rightarrow  (\Comp {G\times_K
%K'})_{\free}$$ is injective, and its image equals
%$$d\cdot (\Comp {G\times_K K'})_{\free}.$$ Thus
%$\coker(\alpha_{\free})$ is isomorphic to
%$(\Z/d\Z)^{t(G_{\tor})}$.
\end{theorem}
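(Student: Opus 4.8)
The plan is to deduce both assertions from Proposition~\ref{prop-goodred} by passing to the non-archimedean uniformization of $G$ set up in \eqref{sss-sabuniform}, namely the two short exact sequences of smooth commutative rigid $K$-groups, compatible with base change to $K'$: the lattice covering \eqref{eq-seq1}, $0\to M\to H\to G^{\an}\to 0$, and the torus extension \eqref{eq-seq2}, $0\to G^{\an}_{\tor}\to H\to E^{\an}\to 0$, in which $E$ is a semi-abelian $K$-variety whose abelian part $B$ has potential good reduction and whose toric part splits over $L$, and $M$ is an \'etale lattice that also splits over $L$. The idea is to prove the statement first for $E$ — where it is Proposition~\ref{prop-goodred} — then transport it to $H$ through \eqref{eq-seq2}, and finally descend to $G$ through \eqref{eq-seq1}. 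Since $H$ is in general not algebraic, one must work throughout with the smooth sheaf-theoretic N\'eron models of Section~\ref{sec-BX} rather than with formal N\'eron models.

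First I would collect the inputs. The abelian variety occurring in the non-archimedean uniformization of $G_{\ab}$ is $B=E_{\ab}$, so $(E_{\ab},e)=(B,e)$ satisfies condition \eqref{sss-condition} because $(G_{\ab},e)$ does; moreover $B$ has potential good reduction, so $t(E_{\ab})=0$, and the minimal extension of $K$ over which $E$ acquires semi-abelian reduction is contained in $L$ and hence has degree dividing $e$. Thus Proposition~\ref{prop-goodred} applies to $E$ and gives: $\Comp{E}_{\tors}$ is annihilated by $[L:K]$; $\Comp{E}\to\Comp{E\times_K K'}$ is injective whenever $\gcd(d,[L:K])=1$; and when $\gcd(d,e)=1$ the cokernels of $\alpha_E$, $\alpha_{E,\tors}$ and $\alpha_{E,\free}$ are $(\Z/d\Z)^{\rho_{\spl}(E)}$, $0$ and $(\Z/d\Z)^{\rho_{\spl}(E)}$. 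For the other end of the sequences, Section~\ref{sss-torcomp} describes $\Comp{G_{\tor}}$ completely: it is free of rank $\rho_{\spl}(G_{\tor})=\rho_{\spl}(G)$ and injects into $\Comp{G_{\tor}\times_K K'}$ with cokernel $(\Z/d\Z)^{\rho_{\spl}(G)}$. I would also invoke the compatibility of non-archimedean uniformization with toric ranks, $\rho_{\spl}(E)=t(G_{\ab})$, which with Corollary~\ref{cor-torrank} gives $t(G)=\rho_{\spl}(G)+\rho_{\spl}(E)$.

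Next I would transport this information to $H$ along \eqref{eq-seq2}. Each $E^{\an}\times_K K'$ admits a formal N\'eron model, namely the $\mathfrak{m}$-adic completion of its algebraic N\'eron $lft$-model (cf.~\eqref{sss-qcneron}), and the quotient of $H$ by the split part of $G^{\an}_{\tor}$ is representable; so Proposition~\ref{prop-exact2} applies and yields, functorially in $K'$, an exact sequence $\Comp{G_{\tor}}\to\Comp{H}\to\Comp{E}\to 0$ with finite kernel. Inserting the facts about $\Comp{G_{\tor}}$ and $\Comp{E}$ into the resulting commutative ladder and using everywhere that $d$ is prime to $e$ (hence to $[L:K]$) to kill the finite torsion subquotients that the Snake Lemma produces, one obtains the analogue of the theorem for the sheaf $H$: injectivity of its base change map when $\gcd(d,[L:K])=1$, and explicit cokernels when $\gcd(d,e)=1$, with $\Comp{H}_{\free}$ of rank $\rho_{\spl}(G)+\rho_{\spl}(E)=t(G)$.

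The final step, descending from $H$ to $G$ along the lattice covering \eqref{eq-seq1}, is the one I expect to be the main obstacle. Because $M$ is discrete and $H$ is not algebraic, one compares $\Comp{G}$ with $\Comp{H}$ through the Galois cohomology of the lattice $M$, working with the sheaf-theoretic N\'eron models $j_*G$ and $j_*H$, in the spirit of the abelian case \cite[5.5--5.7]{HaNi-comp}; this is where the index $i(G)$ and the decomposition of $\Comp{G}$ into its torsion and free parts come into play. Passing from $H$ to $G$ moves a piece of rank $\rho_{\spl}(E)=t(G_{\ab})$ out of the free part of $\Comp{H}$ and into the torsion of $\Comp{G}$, leaving $\Comp{G}_{\free}$ of rank $\rho_{\spl}(G)$ as in Proposition~\ref{prop-split}; correspondingly, for $\gcd(d,e)=1$ the cokernel of $\alpha_{\free}$ is $(\Z/d\Z)^{\rho_{\spl}(G)}$ and that of $\alpha_{\tors}$ is $(\Z/d\Z)^{t(G_{\ab})}$. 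For statement (1), injectivity of $\alpha$ when $\gcd(d,[L:K])=1$ follows because the base change maps on $\Comp{E}$ and on $\Comp{G_{\tor}}$ are injective (the former by Proposition~\ref{prop-goodred}, the latter by the results of \cite{HaNi-comp} on tori) and the contribution of $M$ is unaffected by a base change of degree prime to $[L:K]$, since then $K'$ and $L$ are linearly disjoint over $K$. For statement (2), the Snake Lemma applied to $0\to\Comp{G}_{\tors}\to\Comp{G}\to\Comp{G}_{\free}\to 0$ gives $|\coker\alpha|=d^{t(G_{\ab})}\cdot d^{\rho_{\spl}(G)}=d^{t(G)}$ by Corollary~\ref{cor-torrank}, and since all three cokernels are annihilated by $d$ one concludes that they are free $\Z/d\Z$-modules of the stated ranks. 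The real work lies in the lattice step and in keeping the torsion and free parts separated while transporting the cokernel computations from $E$ through $H$ to $G$; the remaining arguments are diagram chases whose recurring engine is the coprimality of $d$ with $e$, respectively with $[L:K]$.
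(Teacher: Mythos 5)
Your proposal follows essentially the same route as the paper's proof: Proposition \ref{prop-goodred} applied to $E$ and $G_{\tor}$, Proposition \ref{prop-exact2} applied to the extension \eqref{eq-seq2} to control $\Comp{H}$, and then the lattice sequence \eqref{eq-seq1}, the cohomology $H^1(I,M)$ (killed by $[L:K]$) and the trace maps to descend from $H$ to $G$ --- this is exactly the content of the paper's Claims 1--3 and the surrounding diagram chases. The only organizational difference is that the paper first exploits the algebraic sequence $0\to G_{\tor}\to G\to G_{\ab}\to 0$ (Proposition \ref{prop-bx}) to prove part (1) directly from the known abelian-variety case and to split $\coker(\alpha)$ into the pieces $\coker(\beta)$ and $\coker(\gamma)$, so that the uniformization is only needed to compute $\coker(\alpha_{\tors})$, whereas you establish the full statement at the level of $H$ before descending; the mathematical content is the same.
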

\begin{proof}
To simplify notation, we'll denote by $(\cdot)'$ the base change
functor from $K$ to $K'$.
% We consider the Chevalley decomposition
%$$0\rightarrow G_{\tor}\rightarrow G\rightarrow G_{\ab}\rightarrow 0$$
%of $G$.
 First, we prove (1). It follows from Proposition \ref{prop-bx} that the
commutative diagram
$$\begin{CD}
\Comp {G_{\tor}}@>\delta>>\Comp {G}@>\epsilon>> \Comp
{G_{\ab}}@>>> 0
\\ @V\beta VV @V\alpha VV @VV \gamma V @.
\\ \Comp {G'_{\tor}}@>>\delta'>\Comp {G'}@>>\epsilon'> \Comp {G'_{\ab}}@>>> 0
\end{CD}$$
has exact rows. Let $x$ be an element of $\Comp G$ such that
$\alpha(x)=0$. Then $dx=0$ by Proposition \ref{prop-trace}.
Moreover,
 $\epsilon(x)=0$ because $\gamma$ is injective \cite[5.7]{HaNi-comp}, so
 that $x$ lifts to an element $\tilde{x}$ in $\Comp {G_{\tor}}$. This
 element $\tilde{x}$ must be torsion, because the kernel of $\delta$ is torsion by Proposition \ref{prop-bx2}. %\cite[4.11(i)]{B-X}.
 But the torsion part of
$\Comp {G_{\tor}}$ is killed by $[L:K]$ (Corollary
\ref{cor-trace}), so that $x=0$ since $d$ is prime to $[L:K]$.
Hence, $\alpha$ is injective.
%By Proposition \ref{prop-goodred}, the map $a$ is
%injective, and its cokernel has cardinality $d^{t(G_{\tor})}$.
%, and its
%cokernel has cardinality $d^{t(G_{\ab})}$.

% The map $a$ is an isomorphism on torsion parts, since these are
%killed by $e$ \cite[?]{?}.  The kernel of $b$ is killed by $d$
%\cite[5.3]{HaNi-comp}.

 Now, we prove (2).
 The restriction of $\beta$ to $\ker(\delta)$ is a surjection onto
 $\ker(\delta')$, since $\alpha$ is injective, the kernel of $\delta'$ is torsion %\cite[4.11(i)]{B-X}
   and
$\beta$ is an isomorphism on torsion parts (Proposition
\ref{prop-goodred}). Thus it follows from the Snake Lemma that
 the sequence
\begin{equation}\label{eq-exact}0\rightarrow \mathrm{coker}(\beta)\rightarrow
\mathrm{coker}(\alpha)\rightarrow
\mathrm{coker}(\gamma)\rightarrow 0\end{equation} is exact.
Applying
 Proposition \ref{prop-goodred} to $G_{\tor}$, we see that the
cokernel of $\beta$ is isomorphic to $(\Z/d\Z)^{\rho_{\spl}(G)}$
(note that $\rho_{\spl}(G_{\tor})=\rho_{\spl}(G)$).

 Assume for a moment that we can prove that the cokernel of $\alpha_{\tors}$ is
 isomorphic to $(\Z/d\Z)^{t(G_{\ab})}$. Replacing $G$ by
 $G_{\ab}$, this also implies that
 the cokernel of $\gamma$
 is isomorphic to $(\Z/d\Z)^{t(G_{\ab})}$, because
  the component group of an abelian variety is finite.
  Moreover, the cokernel of $\alpha$ is killed by $d$, by the
 existence of the trace map \eqref{sss-trace}, and this implies
 that the short exact sequence \eqref{eq-exact} is split.
  Thus $\mathrm{coker}(\alpha)$ is isomorphic to
 $$\coker{\beta}\oplus \coker{\gamma}\cong(\Z/d\Z)^{\rho_{\spl}(G)}\oplus (\Z/d\Z)^{t(G_{\ab})}\cong (\Z/d\Z)^{t(G)}$$
 where the equality $\rho_{\spl}(G)+t(G_{\ab})=t(G)$ follows from
Corollary \ref{cor-torrank}. Another application of the Snake
Lemma shows that the natural morphism $\coker(\alpha_{\tors})\to
\coker(\alpha)$ is injective, and that its cokernel is
 isomorphic to $\coker(\alpha_{\free})$. This implies that
 $\coker(\alpha_{\free})$is isomorphic to
$(\Z/d\Z)^{\rho_{\spl}(G)}$.

Thus it suffices to determine the cokernel of the morphism
$$\alpha_{\tors}:(\Comp G)_{\tors}  \rightarrow  (\Comp {G'})_{\tors}.$$  We
consider the non-archimedean uniformization
$$0\to M\to H\to G^{\an}\to 0$$ of $G$ as in
\eqref{sss-sabuniform}. We set $I=\Gal(K^s/K)$ and
$I'=\Gal(K^s/K')$. Like in the proof of \cite[5.7]{HaNi-comp}, we
get a commutative diagram with exact rows
$$\begin{CD}
0@>>> M^I @>>> \Comp H@>>> \Comp G@>\widetilde{\gamma}_1>>
H^1(I,M)
\\ @. @V\alpha_1 V\wr V @V\widetilde{\alpha}_2 VV @V\widetilde{\alpha}_3 VV @V\wr V\alpha_4 V
\\ 0@>>> M^{I'}@>>> \Comp {H'}@>>> \Comp {G'}@>\widetilde{\gamma}_2>>
H^1(I',M)
\\ @. @. @. @V\widetilde{\beta}_3VV @V\wr V\beta_4 V
\\ @. @. @. \Comp G@>\widetilde{\gamma}_1>> H^1(I,M)
\end{CD}$$
where $\widetilde{\beta_3} \circ \widetilde{\alpha_3}$ and
$\beta_4\circ \alpha_4$ are multiplication by $d$, and where
$\alpha_1,\,\alpha_4$ and $\beta_4$ are isomorphisms. Here we view
$M$ as a discrete $I$-module. Since $H^1(I,M)$ is killed by
$[L:K]$, and $d$ is prime to $[L:K]$, the isomorphism $\alpha_4$
identifies $\widetilde{\gamma}_1(\Comp G_{\tors})$ and
$\widetilde{\gamma}_2(\Comp {G'}_{\tors})$ (see the argument for
Claim 3 in the proof of \cite[5.7]{HaNi-comp}). Hence, looking at
the commutative diagram with exact rows
$$\begin{CD}
0@>>> (\Comp H/M^I)_{\tors}@>>> \Comp G_{\tors}@>>>
\widetilde{\gamma}_1(\Comp G_{\tors})@>>>0
\\ @. @VVV @VVV @VV\wr V @.
\\ 0@>>> (\Comp {H'}/M^{I'})_{\tors}@>>> \Comp {G'}_{\tors}@>>>
\widetilde{\gamma}_2(\Comp {G'}_{\tors})@>>>0
\end{CD}$$
we see that it is enough to show that the cokernel of the
injective morphism
$$\phi:(\Comp H/M^I)_{\tors}\rightarrow (\Comp {H'}/M^{I'})_{\tors}$$ is isomorphic to $(\Z/d\Z)^{t(G_{\ab})}$.
  Since $M^I$ is a free $\Z$-module of
rank $t(G_{\ab})$ \cite[3.13]{HaNi-comp}, it suffices to prove the
following claims.

\vspace{5pt} \textit{Claim 1: The morphism $\Comp H\rightarrow
\Comp {H'}$ is injective.}

\vspace{5pt} \textit{Claim 2: The morphism $\Comp
H_{\tors}\rightarrow \Comp {H'}_{\tors}$ is an isomorphism.}

\vspace{5pt} \textit{Claim 3: The image of the morphism $\Comp
H_{\free}\rightarrow \Comp {H'}_{\free}$ equals $d\cdot (\Comp
{H'})_{\free}$.}

\vspace{5pt}
 From the exact sequence \eqref{eq-seq2}, we deduce the commutative diagram
$$\begin{CD}
\Comp {G_{\tor}}@>\phi_1>> \Comp H @>\phi_2>> \Comp E @>>> 0
\\ @V\delta_1VV @V\delta_2VV @VV\delta_3 V @.
\\ \Comp {G'_{\tor}}@>>\phi'_1> \Comp {H'} @>>\phi'_2> \Comp {E'} @>>> 0
\\ @. @V\epsilon_2VV @VV\epsilon_3V @.
\\ @. \Comp H @>>\phi_2> \Comp E @>>> 0
\end{CD}$$
where $\epsilon_2$ and $\epsilon_3$ are the trace maps; thus
$\epsilon_i\circ \delta_i$ is multiplication by $d$, for $i=2,3$.
 The rows of this diagram are exact and the kernels of $\phi_1$ and $\phi'_1$ are
torsion, by Proposition \ref{prop-exact2}. Moreover, $\delta_1$
and $\delta_3$ are injective, by point (1). An easy diagram chase
shows that the kernel of $\delta_2$ must be contained in the
torsion part of $\Comp {G_{\tor}}$. But  $\Comp
{G_{\tor}}_{\tors}$ is killed by $[L:K]$ (Corollary
\ref{cor-trace}) and $\ker(\delta_2)$ is killed by $d$, so that
$\delta_2$ must be injective. This settles Claim 1.

 We know that
$\Comp {G_{\tor}}_{\tors}$ and $\Comp E_{\tors}$ are killed by
$[L:K]$, and that the kernel of $\phi_1$ is torsion. This easily
implies that $\Comp{H}_{\tors}$ is killed by $[L:K]^2$.
 Moreover,
 $(\delta_1)_{\tors}$ and $(\delta_3)_{\tors}$ are isomorphisms, by Proposition
\ref{prop-goodred}. Since $d$ is prime to $[L:K]$, it is
invertible in $\Comp{E}_{\tors}$, and $(1/d)(\epsilon_3)_{\tors}$
is inverse to $(\delta_3)_{\tors}$. Likewise,
 $(1/d)(\epsilon_2)_{\tors}$ is a right inverse of
$(\delta_2)_{\tors}$. Thus to prove that $(\delta_2)_{\tors}$ is
an isomorphism, it is enough to show that $(\epsilon_2)_{\tors}$
is injective. Let $x$ be an element of $\Comp{H'}_{\tors}$, and
assume that $\epsilon_2(x)=0$. Then $\phi_2'(x)=0$, so that we can
lift $x$ to an element $y$ of $\Comp{G'_{\tor}}$. This element
must be torsion, because the kernel of $\phi_1'$ is torsion. If we
set $$z=(\phi_1\circ (\delta_1)^{-1}_{\tors})(y)$$ then
$\delta_2(z)=x$ and $$z=(1/d)(\epsilon_2\circ \delta_2)(z)=
(1/d)\epsilon_2(x)=0.$$ Thus $x=0$ and $\delta_2$ is injective.
 This proves Claim 2.

\if false
 $\iota=(1/d)(\epsilon_2)_{\tors}$ is a right inverse of
$(\delta_3)_{\tors}$. To prove that  Let $x$ be an element of
$\Comp{H'}_{\tors}$, and set $y=((\delta_2)_{\tors}\circ
\iota)(x)$. Then $\phi_2'(x-y)=0$, so that we can lift $x-y$ to an
element $z$ in $\Comp{G'_{\tor}}_{\tors}$. , so that we can write
it as $\delta_1(w)$ for some $w$ in $\Comp{G_{\tor}}_{\tors}$.

The morphisms $(\delta_1$ and $\delta_3$ induce isomorphisms on
the torsion parts, by Proposition \ref{prop-goodred}.

Since $\epsilon_2\circ \delta_2$ is multiplication by $d$ and $d$
is prime to $[L:K]$, we see that $(\epsilon_2\circ
\delta_2)_{\tors}$ is an isomorphism.

and $d$ is prime to $[L:K]$, the isomorphism $(\delta_3)_{\tors}$
identifies the images of $(\phi_2)_{\tors}$ and
$(\phi'_2)_{\tors}$. Since the kernels of $\phi_1$ and $\phi'_1$
are torsion, a diagram chase shows that $(\delta_2)_{\tors}$ is
surjective, and hence, an isomorphism. This proves Claim 2. \fi

Now we prove Claim 3. We have a commutative diagram with exact
rows
$$\begin{CD}
0@>>>\Comp {G_{\tor}}_{\free}@>(\phi_1)_{\free}>> \Comp H_{\free}
@>\widetilde{\phi}_2>> \Comp E/\Ima((\phi_2)_{\tors}) @>>> 0
\\ @. @V(\delta_1)_{\free}VV @V(\delta_2)_{\free}VV @VV\widetilde{\delta}_3 V @.
\\ 0@>>>\Comp {G'_{\tor}}_{\free}@>>(\phi'_1)_{\free}> \Comp {H'}_{\free}
@>>\widetilde{\phi}'_2> \Comp {E'}/\Ima((\phi'_2)_{\tors}) @>>> 0
%\\ @. @. @V\epsilon_2VV @VV\epsilon_3V @.
%\\ @. @. (\Comp H)_{\free} @>>\widetilde{\phi_2> \Comp E @>>> 0
\end{CD}$$
where all the vertical maps are injective. Since $\epsilon_2\circ
\delta_2$ is multiplication by $d$, and $\Comp H$ and $\Comp {H'}$
have the same rank, we know that
\begin{equation}\label{eq-inc}d\cdot \Comp {H'}_{\free}\subset
\Ima((\delta_2)_{\free})\end{equation} so that it remains to prove
the converse implication.

 Since $E'$ acquires semi-abelian
reduction over the degree $[L:K]$ extension $L\otimes_K K'$ of
$K'$, and $d$ is prime to $[L:K]$, Corollary \ref{cor-trace}
implies that $d$ is invertible in $\Comp{E'}_{\tors}$. Likewise,
$d$ is invertible in $\Comp{G'_{\tor}}_{\tors}$. Applying
Proposition \ref{prop-goodred} to $E$ and $G_{\tor}$, we see that
every element in the image of $\delta_1$ or $\delta_3$
 is divisible by $d$. Thus every element in the image of
 $(\delta_1)_{\free}$ or $\widetilde{\delta}_3$ is divisible by
 $d$. We will deduce that every element in the image of
 $(\delta_2)_{\free}$ is divisible by $d$.

Let $x$ be an element of $\Comp H_{\free}$ and set
$y=(\delta_2)_{\free}(x)$.
 Then there exists
 an element $z$ in $\Comp {E'}/\Ima((\phi'_2)_{\tors})$ such
 that $$dz=(\widetilde{\delta}_3\circ \widetilde{\phi}_2)(x)=\widetilde{\phi}'_2(y).$$ We can lift $z$ to an element
 $y'$ in $\Comp {H'}_{\free}$ such that
 $\widetilde{\phi}'_2(y-dy')=0$. By \eqref{eq-inc}, we know that $dy'$ lies in the image of $(\delta_2)_{\free}$, so that the element
 $y-dy'$ can be written as $(\delta_2)_{\free}(x')$, with $x'$ in
 $\Comp H_{\free}$. By injectivity of $\widetilde{\delta}_3$, we
 know that $x'$ belongs to $\Comp {G_{\tor}}_{\free}$. Thus $y-dy'$ lies
 in the image of
 $(\delta_1)_{\free}$, and  we can find an element $w$ in $\Comp {G'_{\tor}}_{\free}$
 such that $dw=y-dy'$. It follows that $y$ is divisible by $d$ in
 $\Comp {H'}_{\free}$.  \end{proof}

\begin{cor}\label{cor-index}
Assume that $k$ is algebraically closed. Let $G$ be a semi-abelian
$K$-variety, and let $L$ be the minimal extension of $K$ in $K^s$
such that $G\times_K L$ has semi-abelian reduction.  Let $e$ be a
multiple of $[L:K]$ in $\Z_{>0}$ such that $(G_{\ab},e)$ satisfies
condition \eqref{sss-condition}. Then for every finite separable
extension $K'$ of $K$ of degree prime to $e$,
 we have
$$i(G)=i(G\times_K K').$$
\end{cor}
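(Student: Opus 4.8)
The plan is to rewrite the index as the order of a cokernel of a map of lattices, and then to compare the situations over $K$ and $K'$ by means of a commutative square of base change morphisms, using Theorem \ref{thm-main} to control the two vertical maps.

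Set $d=[K':K]$ and $G'=G\times_K K'$. Since $[L:K]$ divides $e$, the degree $d$ is prime to $[L:K]$, and since $(G_{\ab},e)$ satisfies condition \eqref{sss-condition}, both parts of Theorem \ref{thm-main} apply. First I would record the relevant consequences: the base change morphism $\alpha\colon\Comp G\to\Comp{G'}$ is injective, and the induced morphism $\alpha_{\free}\colon\Comp G_{\free}\to\Comp{G'}_{\free}$ has cokernel isomorphic to $(\Z/d\Z)^{\rho_{\spl}(G)}$. Since $\Comp G_{\free}$ and $\Comp{G'}_{\free}$ are free $\Z$-modules of ranks $\rho_{\spl}(G)$ and $\rho_{\spl}(G')$ respectively (Proposition \ref{prop-split}), the finiteness of this cokernel forces $\rho_{\spl}(G)=\rho_{\spl}(G')$. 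Then $G_{\spl}\times_K K'$ is a split subtorus of $G'$ of dimension $\rho_{\spl}(G)=\rho_{\spl}(G')=\dim G'_{\spl}$, hence equals $G'_{\spl}$; so the base change morphism $\beta\colon\Comp{G_{\spl}}\to\Comp{G'_{\spl}}$ is the one described in the example of Section \ref{sss-torcomp} of Chapter \ref{chap-preliminaries}, and it is injective with cokernel $(\Z/d\Z)^{\rho_{\spl}(G)}$ (the ramification index of $K'/K$ is $d$ because $k$ is algebraically closed).

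Functoriality of component groups then yields a commutative square
$$\begin{CD}
\Comp{G_{\spl}} @>f>> \Comp G_{\free}\\
@VV\beta V @VV\alpha_{\free}V\\
\Comp{G'_{\spl}} @>f'>> \Comp{G'}_{\free},
\end{CD}$$
where, by Proposition \ref{prop-split} and the definition of the index, $f$ and $f'$ are injections of free $\Z$-modules of rank $\rho_{\spl}(G)$ with $|\coker f|=i(G)$ and $|\coker f'|=i(G')$. For a composition $A\hookrightarrow B\hookrightarrow C$ of injections of free abelian groups of the same finite rank there is an exact sequence $0\to B/A\to C/A\to C/B\to 0$ (with all terms finite), so the order of the cokernel is multiplicative along the composition. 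Applying this to the two factorisations of $\alpha_{\free}\circ f=f'\circ\beta$ gives
$$i(G)\cdot d^{\rho_{\spl}(G)}=|\coker(\alpha_{\free}\circ f)|=|\coker(f'\circ\beta)|=i(G')\cdot d^{\rho_{\spl}(G)},$$
whence $i(G)=i(G')=i(G\times_K K')$.

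The substantive inputs are all drawn from earlier in the paper — Theorem \ref{thm-main}(2) for the cokernel of $\alpha_{\free}$ and the example of Section \ref{sss-torcomp} for the cokernel of $\beta$ — so the only point requiring genuine care is the identification $G'_{\spl}=G_{\spl}\times_K K'$, which I expect to be the main (minor) obstacle; it follows either from the rank count above or, alternatively, from the observation that the splitting field of $G_{\tor}$ lies in $L$ and is therefore linearly disjoint from $K'$. The remaining cokernel bookkeeping is entirely elementary.
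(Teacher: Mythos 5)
Your proof is correct and follows essentially the same route as the paper's: the same commutative square comparing $\Comp{G_{\spl}}\to \Comp{G}_{\free}$ over $K$ and over $K'$, with the two vertical cokernels both identified as $(\Z/d\Z)^{\rho_{\spl}(G)}$ via Theorem \ref{thm-main}(2) and the split-torus example of Section \ref{sss-torcomp}, and then multiplicativity of cokernel orders for injections of free $\Z$-modules of equal rank. The only (harmless) difference is in how the identification $G_{\spl}\times_K K'\cong (G\times_K K')_{\spl}$ is justified: the paper cites the proof of \cite[4.2]{HaNi-comp}, whereas you deduce it from the rank count forced by the finite cokernel of $\alpha_{\free}$ (or from linear disjointness of $K'$ with the splitting field of $G_{\tor}$, which sits inside $L$), and both arguments are valid.
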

\begin{proof} We set $G'=G\times_K K'$ and $d=[K':K]$.
 The index of $G$ is given by
$$i(G)=|\mathrm{coker}(\Comp {G_{\spl}}\to \Comp G_{\free})|.$$
 We have a commutative square of free $\Z$-modules
 $$\begin{CD}
\Comp {G_{\spl}}@>>> \Comp G_{\free}
\\ @V\alpha_{\spl}VV @VV\alpha_{\free}V
\\ \Comp {(G')_\spl}@>>> \Comp {G'}_{\free}
 \end{CD}$$
 with injective morphisms. By the proof of \cite[4.2]{HaNi-comp},
 we know that the natural morphism $G_{\spl}\times_K K'\to
 (G')_\spl$ is an isomorphism. Thus all the $\Z$-modules in the
 diagram have rank $t(G)=t(G')$. The cokernels of $\alpha_{\spl}$
 and $\alpha_{\free}$ have the same cardinality, namely,
 $d^{t(G_{\tor})}=d^{\rho_{\spl}(G)}$. Thus the cokernels of
 the horizontal morphisms also have the same cardinality, which
 means that $i(G)=i(G')$.
\end{proof}

\sss If $G$ is an abelian variety with potential multiplicative
reduction or a tamely ramified abelian variety, then we proved
Theorem \ref{thm-main} in \cite[5.7]{HaNi-comp} without the
assumption that $k$ is algebraically closed. To be precise,
 \cite[5.7]{HaNi-comp} only gives a formula for the cardinality of the cokernel of $\alpha$, but
 using the trace map as in the proof of Theorem \ref{thm-main} one can determine the full structure of the cokernel. Unfortunately, there is a slight error in
the statement of \cite[5.7]{HaNi-comp}: the cokernel of the base
change morphism $\alpha$ is isomorphic to $(\Z/r\Z)^{t(G)}$, with
$r$ the ramification index of $K'/K$ instead of the degree. Of
course, when $k$ is algebraically closed, these values coincide.

\subsection{The component series of a semi-abelian variety}
\sss
Theorem \ref{thm-main} makes it possible to extend our previous
results on the component series for abelian varieties
(\cite[5.7]{HaNi-comp} and Theorem \ref{theorem-compser} in
Chapter \ref{chap-jacobians}) to semi-abelian varieties.

\begin{theorem}\label{thm-compsersab} Assume that $k$ is algebraically closed. Let $G$ be a semi-abelian
$K$-variety with toric part $G_{\tor}$ and abelian part $G_\ab$.
Let $L$ be the minimal extension of $K$ in $K^s$ such that
$G\times_K L$ has semi-abelian reduction, and let $e$ be a
multiple of $[L:K]$ in $\Z_{>0}$ such that $(G(d),e/gcd(d,e))$
satisfies condition \eqref{sss-condition} for all $d$ in $\N'$.

Then the component series
$$S^{\Phi}_G(T) = \sum_{d \in \N'} |\Comp{G(d)}_\tors| T^d $$ is rational.
More precisely, it belongs to the subring
$$ \mathscr Z = \Z \left[T, \frac{1}{T^j - 1} \right]_{j \in \Z_{>0}} $$
of $ \Z[[T]]$. It has degree zero if $p=1$ and $G$ has potential
good reduction, and degree $<0$ in all other cases. Moreover,
$S^{\Phi}_G(T)$ has a pole at $T=1$ of order $\ttame{G_{\ab}} +
1$.
\end{theorem}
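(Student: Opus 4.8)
The plan is to follow the proof of Theorem~\ref{theorem-compser} as closely as possible, with Theorem~\ref{thm-main} playing the role of Proposition~\ref{prop-compfu} and Lemma~\ref{lemma-torrank}. First I would replace $e$ by $pe$ if necessary; this is harmless, since $[L:K]$ still divides $pe$, and for $d\in\N'$ one has $\gcd(d,pe)=\gcd(d,e)$, so that $pe/\gcd(d,pe)$ is a multiple of $e/\gcd(d,e)$ and condition~\eqref{sss-condition} only becomes weaker when its parameter is enlarged. Thus we may assume that $p$ divides $e$ (a vacuous condition when $p=1$). Let $\mathscr{S}$ be the set of divisors of $e$ lying in $\N'$, and set
$$ S'_G(T)=\sum_{d\in\N',\ \gcd(d,e)=1}|\Comp{G(d)}_{\tors}|\,T^d. $$
Each $n\in\N'$ is uniquely of the form $n=ad$ with $a=\gcd(n,e)\in\mathscr{S}$ and $\gcd(d,e/a)=1$, so regrouping yields the identity $S^{\Phi}_G(T)=\sum_{a\in\mathscr{S}}S'_{G(a)}(T^a)$, where $S'_{G(a)}$ is the analogous auxiliary series attached to the semi-abelian $K(a)$-variety $G(a)$ with parameter $e/a$. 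For this one must know that $(G(a),e/a)$ still satisfies the hypotheses needed to apply Theorem~\ref{thm-main} over $K(a)$: condition~\eqref{sss-condition} for the abelian part of $G(a)$ with parameter $e/a$ is precisely the theorem's hypothesis applied with $d=a$, and the fact that $[L\cdot K(a):K(a)]=[L:K]/[L\cap K(a):K]$ divides $e/a$ follows because $\lcm(a,[L:K])$ divides $e$ and because $\gcd(a,[L:K])$ divides $[L\cap K(a):K]$; the latter uses that $a$ is prime to $p$, so that $\gcd(a,[L:K])$ divides the degree of the maximal tame subextension of $L/K$, which is the cyclic extension $K(m)$ with $m$ the prime-to-$p$ part of $[L:K]$ and hence contains $K(\gcd(a,[L:K]))\subseteq K(a)$.

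By symmetry it then suffices to prove that $S'_G(T)$ lies in the ring $\mathscr{Z}$ of the statement, has a pole at $T=1$ of order $t(G_{\ab})+1$, and has negative degree unless $p=1$ and $G$ has potential good reduction. For $d\in\N'$ prime to $e$ --- hence prime to $[L:K]$ --- Theorem~\ref{thm-main}(1) shows that $\Comp{G}\to\Comp{G(d)}$ is injective and Theorem~\ref{thm-main}(2) identifies the cokernel of $\Comp{G}_{\tors}\to\Comp{G(d)}_{\tors}$ with $(\Z/d\Z)^{t(G_{\ab})}$, so that $|\Comp{G(d)}_{\tors}|=d^{t(G_{\ab})}\cdot|\Comp{G}_{\tors}|$. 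Since $p$ divides $e$, the constraint $d\in\N'$ in the definition of $S'_G$ is automatic once $\gcd(d,e)=1$, and writing $\mathscr{P}_e$ for the set of elements of $\{1,\dots,e\}$ prime to $e$ we get
$$ S'_G(T)=|\Comp{G}_{\tors}|\sum_{b\in\mathscr{P}_e}\Bigl(\sum_{q\in\N}(qe+b)^{t(G_{\ab})}\,T^{qe+b}\Bigr). $$
By the elementary computation of \cite[6.2]{HaNi-comp}, each inner series lies in $\mathscr{Z}$, has a pole at $T=1$ of order $t(G_{\ab})+1$, and has negative degree, except that it has degree $0$ when $t(G_{\ab})=0$ and the parameter is $1$. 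Applying this to $G(a)$ with parameter $e/a$ for every $a\in\mathscr{S}$, and using that $t(G_{\ab}(a))=t(G_{\ab})$ for such $a$ --- which follows from the uniformization of $G_{\ab}$, whose abelian part has potential good reduction (hence toric rank $0$ after any extension of degree prime to $e$) and whose toric part splits over a subextension of $L$ --- one obtains $\ttame{G_{\ab}}=\max_{a\in\mathscr{S}}t(G_{\ab}(a))$, and the degree of $S'_{G(a)}(T^a)$ is $0$ only when $a=e$ (which forces $e\in\mathscr{S}$, i.e.\ $p=1$) and $t(G_{\ab})=0$, that is, exactly when $p=1$ and $G$ has potential good reduction. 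Feeding all of this into \cite[6.1]{HaNi-comp}, which assembles the sum $\sum_{a\in\mathscr{S}}S'_{G(a)}(T^a)$ and controls its poles and degree, yields that $S^{\Phi}_G(T)\in\mathscr{Z}$, has a pole at $T=1$ of order $\ttame{G_{\ab}}+1$, has degree $0$ in the stated case, and has negative degree otherwise.

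The substance of the argument --- controlling the growth of $|\Comp{G(d)}_{\tors}|$ under a ramified base change --- is entirely contained in Theorem~\ref{thm-main}, which was proved earlier, so what remains here is essentially bookkeeping. The one genuinely delicate step is the verification sketched in the first paragraph, that the auxiliary pair $(G(a),e/a)$ still satisfies the hypotheses of Theorem~\ref{thm-main} over $K(a)$ for every $a\in\mathscr{S}$; this rests on the structure of $\Gal(L/K)$ and crucially on $a$ being prime to $p$. The degree statement also requires a little care in the boundary case $p=1$ with potential good reduction, where the relevant rational function tends to a nonzero constant at infinity rather than to zero, so the degree is $0$ rather than negative.
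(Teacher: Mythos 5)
Your overall strategy coincides with the paper's: the printed proof is a one-line reduction ("copy the arguments of \cite[6.5]{HaNi-comp}, invoking Theorem \ref{thm-main} instead of \cite[5.7]{HaNi-comp}"), and your write-up is exactly that argument spelled out — the decomposition of $S^{\Phi}_G(T)$ over the divisors $a$ of $e$ prime to $p$, the growth formula $|\Comp{G(ad)}_{\tors}|=d^{\,t(G_{\ab}(a))}\cdot|\Comp{G(a)}_{\tors}|$ for $\gcd(d,e/a)=1$ supplied by Theorem \ref{thm-main}, and the elementary series bookkeeping of \cite[6.1 and 6.2]{HaNi-comp}. Your verification that the pair $(G(a),e/a)$ again satisfies the hypotheses of Theorem \ref{thm-main} over $K(a)$, via the maximal tame subextension $K(m)$ of $L/K$, is correct and fills in a point the paper leaves implicit.

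There is, however, one misstated step, in the treatment of toric ranks. The claim that $t(G_{\ab}(a))=t(G_{\ab})$ for every $a\in\mathscr{S}$ is false: take $G=G_{\ab}$ an elliptic curve with additive reduction acquiring multiplicative reduction over the tame quadratic extension $K(2)=L$ (so $B$ is trivial and condition \eqref{sss-condition} holds for $e=2$); then $a=2\in\mathscr{S}$ and $t(G_{\ab}(2))=1\neq 0=t(G_{\ab})$. Had this claim been true, your identity $\ttame{G_{\ab}}=\max_{a\in\mathscr{S}}t(G_{\ab}(a))$ would collapse to $\ttame{G_{\ab}}=t(G_{\ab})$, which is equally false. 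Fortunately, only the identity $\ttame{G_{\ab}}=\max_{a\in\mathscr{S}}t(G_{\ab}(a))$ is needed, and it is available directly: by \cite[6.4]{HaNi-comp} one has $\ttame{G_{\ab}}=t(G_{\ab}(m))$ with $m$ the prime-to-$p$ part of $[L:K]$, and $m$ divides $e$ and lies in $\N'$, so $m\in\mathscr{S}$; the reverse inequality is trivial. (Alternatively, the stability $t(G_{\ab}(ad))=t(G_{\ab}(a))$ for $d$ prime to $e/a$ does hold and can be proved by the uniformization argument you gesture at — the abelian part $B$ has toric rank zero after \emph{any} extension, and the split rank of the toric part, which splits over $L$, is unchanged under extensions linearly disjoint from $L$ over $K(a)$ — but the orthogonality hypothesis in your parenthesis is attached to the wrong extensions.) The same slip propagates to your degree analysis: the criterion for the $a=e$ summand to have degree zero is $t(G_{\ab}(e))=\ttame{G_{\ab}}=0$, i.e.\ potential good reduction, not $t(G_{\ab})=0$. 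With these corrections your proof is complete and agrees with the paper's.
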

\begin{proof}
One can simply copy the arguments in \cite[6.5]{HaNi-comp},
invoking Theorem \ref{thm-main} instead of \cite[5.7]{HaNi}.
\end{proof}

\sss  The conditions on $G$ and $e$ in the statement of Theorem
\ref{thm-compsersab} are satisfied, for instance, if $G_{\ab}$ is
tamely ramified or has potential multiplicative reduction  and
$e=[L:K]$, and also if $G_{\ab}$ is isomorphic to the Jacobian of
a smooth projective curve $C$ of index one and $e$ is the
least common multiple of $[L:K]$ and the stabilization index
 $e(C)$ of $C$ (Corollary \ref{cor-e(C)} in Chapter \ref{chap-jacobians} and Proposition \ref{prop-condjac}).
 We expect that
Theorem \ref{theorem-compser} is valid for all semi-abelian
$K$-varieties $G$.

\part{Chai and Yu's base change conductor and Edixhoven's
filtration}\label{part-conductor} \chapter{The base change conductor and Edixhoven's
filtration}\label{chap-conductor}

In this chapter, we recall the definition of the base change
conductor of a semi-abelian $K$-variety $G$ and of Edixhoven's
filtration on the special fiber of the N\'eron model of $G$. We
use Edixhoven's construction to define a new invariant, the tame
base change conductor, which is important for the applications to
motivic zeta functions in Part \ref{part-motzeta}.
 We compare the base change conductor and its tame counterpart on
 some explicit examples. The main result of this section states
 that the jumps of the Jacobian variety of a $K$-curve $C$ only
 depend on the combinatorial reduction data of $C$ (Theorem \ref{Theorem-jumps}). This
 generalizes a previous result of the first author, where an additional condition
 on the reduction data was imposed.

\section{Basic definitions}
\subsection{The conductor of a morphism of modules}
\begin{definition}
Let $f:M\to N$ be an injective morphism of free $R$-modules of
finite rank $r$. The tuple of elementary divisors of $f$ is the
unique monotonically increasing tuple of non-negative integers
$(c_1(f),\ldots,c_r(f))$ such that
$$\coker(f)\cong \oplus_{i=1}^r R/\frak{m}^{c_i(f)}.$$
The conductor of $f$ is the non-negative integer
$$c(f)=\sum_{i=1}^r c_i(f)=\length_R \coker(f).$$
\end{definition}
Note that the elementary divisors of $f$ are the valuations of the
diagonal elements in a Smith normal form of $f$, so that
 $c(f)$ is equal to the
exponent of the determinant ideal of $f$ in $R$.

\subsection{The base change conductor of a semi-abelian variety}
 \sss \label{sssec-bcnot} Let $G$ be a semi-abelian
$K$-variety of dimension $g$ with N\'eron $lft$-model
$\mathscr{G}/R$. Let $ K'/K $ be a  finite separable field
extension of ramification index $e(K'/K)$, and denote by $R'$ the
integral closure of $R$ in $K'$.
%We set $d=\dgr{K'}{K}$ and
 We
denote by $\mathscr{G}'/R'$ the N\'eron model of $G'=G \times_K
K'$.

The canonical base change morphism
$$
 h : \mathscr{G} \times_R
R' \to \mathscr{G}' $$ induces an injective homomorphism
\begin{equation}\label{eq-Lieh}
 \Lie(h) : \Lie(\mathscr{G}) \otimes_R R' \to
\Lie(\mathscr{G}') \end{equation} of free $R'$-modules of rank
$g$.
\begin{definition}\label{def-bc}
 We call $$\left(\frac{c_1(\Lie(h))}{e(K'/K)},\ldots,\frac{c_g(\Lie(h))}{e(K'/K)}\right)$$ the tuple of
 $K'$-elementary divisors of $G$, and we denote it by
 $$(c_1(G,K'),\ldots,c_g(G,K')).$$
 If $G'$
 has semi-abelian reduction, we set $c_i(G) = c_i(G,K')$ for every $i$ in $\{1,\ldots,g\}$ and we call
  $(c_1(G),\ldots,c_g(G))$ the tuple of elementary divisors
  of $G$.

We call the rational number
$$ c(G,K') := \sum_{i=1}^gc_i(G,K')=\frac{1}{e(K'/K)} \cdot \mathrm{length}_{R'} (\coker(\Lie(h))) $$
 the $K'$-base change conductor of $G$. If $G'$
 has semi-abelian reduction, we set $c(G) = c(G,K')$ and we call this invariant the
base change conductor of $G$.
\end{definition}
\sss The values $c_i(G)$ and $c(G)$ are independent of the choice
of a finite separable extension $K'$ of $K$ such that $G\times_K
K'$ has semi-abelian reduction. This follows from the fact that
$\Lie(h)$ is an isomorphism for every $K'$ if $G$ has semi-abelian
reduction.

\sss \label{sss-bc} The base change conductor and the elementary
divisors were introduced for tori by Chai and Yu in \cite{chai-yu}
and  for semi-abelian varieties by Chai in \cite{chai}. The base
change conductor $c(G)$ vanishes if and only if $G$ has
semi-abelian reduction \cite[4.16]{HaNi}, and one can consider
$c(G)$ as a measure for the defect of semi-abelian reduction of
$G$. More generally, $c(G,K')$ measures the difference between the
identity components $(\mathscr{G}\times_R R')^o$ and
$(\mathscr{G}')^o$, and vanishes if and only if the morphism
$$
 h : \mathscr{G} \times_R
R' \to \mathscr{G}' $$ is an open immersion (same proof as
\cite[4.16]{HaNi}).

\sss \label{sss-addbc} It is straightforward to check that
$c(G,K')$ behaves additively in towers, in the following sense: if
$K\subset K'\subset K''$ is a tower of finite separable
extensions, then
$$c(G,K'')=c(G,K')+\frac{c(G',K'')}{e(K'/K)}.$$ Moreover, if $c(G,K')$
is zero,
 then $$c_i(G,K'')=\frac{c_i(G',K'')}{e(K'/K)}$$ for all $i$ in
$\{1,\ldots,g\}$, and if $c(G',K'')$ is zero, then
$c_i(G,K')=c_i(G,K'')$ for all $i$ in $\{1,\ldots,g\}$. Choosing
$K''$ in such a way that $G\times_K K''$ has semi-abelian
reduction, we see in particular that
$$c(G)=c(G,K')+\frac{c(G')}{e(K'/K)}$$ for all finite separable extensions $K'$ of $K$,  and that $$c_i(G)=\frac{c_i(G')}{e(K'/K)}$$ for all $i$ in $\{1,\ldots,g\}$ if
$c(G,K')=0$.

\subsection{Jumps and Edixhoven's filtration}
\sss  One can define a filtration on $\mathscr{G}_k$ by closed
subgroups that measures the behaviour of $\mathscr{G}$ under tame
base change.
  This construction is due to  Edixhoven if $G$ is an abelian variety
  \cite{edix} and extends to semi-abelian varieties in a straightforward way; see
  \cite[\S4.1]{HaNi}. We'll briefly recall its construction.

\sss Let $d$ be an element of $\N'$. Recall that we denote by
 $K(d)$ the unique degree $d$ extension of $K$  in $K^s$, and by $R(d)$ its valuation ring. We
denote by $\mu$ the Galois group
 $\Gal(K(d)/K)$, and by $\cG(d)$ the N\'eron $lft$-model of $G(d)=G \times_K
 K(d)$. The Weil restriction $$\mathscr{W}=\prod_{R(d)/R}\cG(d)$$ carries a
 natural $\mu$-action, and
 its fixed locus
 $\mathscr{W}^{\mu}$ is canonically isomorphic to $\cG$  \cite[4.1]{HaNi}.
 Denote by $\frak{m}(d)$ the maximal ideal in $R(d)$.
 For every $i$ in $\{0,\ldots,d\}$, the reduction modulo $\frak{m}(d)^i$ defines a morphism of
 $k$-group schemes $$\mathscr{W}^{\mu}_k\to
 \prod_{(R(d)/\frak{m}(d)^i)/k}(\cG(d)\times_{R(d)}(R(d)/\frak{m}(d)^i))$$ whose
 kernel we denote by $F^i_d\cG_k$. In this way, we obtain a
 descending filtration
$$ \mathscr{G}_k = F^0_d \mathscr{G}_k \supset F^1_d \mathscr{G}_k \supset \ldots \supset F^d_d \mathscr{G}_k =
0$$ on $\mathscr{G}_k$ by closed subgroups, and $F^i_d
\mathscr{G}_k$ is a smooth
 connected unipotent $k$-group for all $ i > 0 $ \cite[4.8]{HaNi}.
 The
graded quotients of this filtration are denoted by
$$ \Gr_d^i \mathscr{G}_k = F^i_d \mathscr{G}_k/F^{i+1}_d \mathscr{G}_k.$$

\begin{definition}
We say that $ j \in \{0, \ldots, d-1\} $ is a $K(d)$-jump of $G$
if $\Gr_d^j \mathscr{G}_k \neq 0$.  We denote the set of
$K(d)$-jumps of $G$ by $\mathcal{J}_{G,K(d)}$. The dimension of
$\Gr_d^j \mathscr{G}_k$ is called the multiplicity of the
$K(d)$-jump $j$ and is denoted by $m_{G,K(d)}(j)$. We extend
$m_{G,K(d)}$ to a function
$$m_{G,K(d)}:[0,d[\,\to \N:j\mapsto m_{G,K(d)}(j)$$ by sending $j$ to $0$
if $j$ is not a $K(d)$-jump of $G$, and we call this function the
 $K(d)$-multiplicity function of $G$.
\end{definition}

\sss \label{sss-mudjump} It is explained in \cite[5.3]{edix} and
\cite[4.8]{HaNi} how the function $m_{G,K(d)}$ can be computed
from the action of $\mathrm{Gal}(K(d)/K)\cong \mu_d(k)$ on
$\Lie(\mathcal{G}(d)_k)$: for every $i$ in $\{0,\ldots,d-1\}$, the
value of $m_{G,K(d)}$ at $i/d$ equals the dimension of the maximal
subspace of $\Lie(\mathcal{G}(d)_k)$ where each $\zeta \in
\mu_d(k)$ acts by multiplication with $\zeta^i$.

\sss  In \cite{edix}, Edixhoven also introduced a filtration of
$\mathscr{G}_k$ with rational indices that  captures the
filtrations $F^\bullet_d \mathscr{G}_k$ introduced above
simultaneously for
 all $d$ in $\N'$. It is defined as follows.
 If $p=1$ then we set $\Q'=\Q$, and if $p>1$ we set $\Q'=\Z_{(p)}$.
 For each rational number $\alpha = a/b $ in $\Q'\cap [0,1[$ with $ a \in
\mathbb{N} $ and $ b \in \mathbb{N}' $, we put $$
\mathscr{F}^{\alpha} \mathscr{G}_k = F^a_b \mathscr{G}_k. $$ By
\cite[4.11]{HaNi} this definition does not depend on the choice of
$a$ and $b$, and we obtain in this way a decreasing filtration
$\mathscr{F}^{\bullet} \mathscr{G}_k$ of $\mathscr{G}_k$ by closed
subgroups. Note that only finitely many subgroups occur in the
filtration $\mathscr{F}^{\bullet} \mathscr{G}_k$ since
$\mathscr{G}_k$ is Noetherian. One can define jumps also for this
filtration, in the following way. Let $\rho $ be an element of $
\mathbb{R} \cap [0,1[ $. We put $ \mathscr{F}^{> \rho}
\mathscr{G}_k = \mathscr{F}^{\beta} \mathscr{G}_k $, where $\beta$
is any value in $ \Q' \cap \,]\rho,1[ $ such that $
\mathscr{F}^{\beta'} \mathscr{G}_k = \mathscr{F}^{\beta}
\mathscr{G}_k $ for all $ \beta' \in \Q' \cap\, ]\rho, \beta] $.
 If $\rho \neq 0 $, we put $ \mathscr{F}^{< \rho} \mathscr{G}_k = \mathscr{F}^{\gamma} \mathscr{G}_k $, where $\gamma$ is any value in $ \Q' \cap [0,\rho[ $ such that
   $ \mathscr{F}^{\gamma'} \mathscr{G}_k = \mathscr{F}^{\gamma} \mathscr{G}_k $ for all $ \gamma' \in \Q' \cap [\gamma, \rho[ $.
   We set $ \mathscr{F}^{< 0} \mathscr{G}_k = \mathscr{G}_k $. Then we define
$$ \Gr^{\rho} \mathscr{G}_k = \mathscr{F}^{< \rho} \mathscr{G}_k/\mathscr{F}^{> \rho} \mathscr{G}_k $$
 for every $\rho$ in $\R\cap [0,1[$.
\begin{definition}\label{def-tamebc}
Let $ j$ be an element of $\mathbb{R} \cap [0,1[ $. We say that
$j$ is a jump of $\mathscr{G}_k$ if $ \Gr^{j} \mathscr{G}_k \neq 0
$. We denote the set of jumps of $G$ by $\mathcal{J}_G$. The
multiplicity of $ j $ is the dimension of $ \Gr^{j} \mathscr{G}_k
$ and is denoted by $m_G(j)$. If $j$ is not a jump of $G$, we set
$m_G(j)=0$. In this way, we obtain a function
$$m_G:\mathbb{R} \cap [0,1[\to \N$$ which we call the multiplicity
function of $G$. We define the tame base change conductor of $G$
to be the sum
$$ c_{\tame}(G) = \sum_{j\in \mathcal{J}_G}m_G(j)\cdot j. $$
\end{definition}

\sss \label{sss-jumpslim}  The jumps of $G$ can also be described
as follows. Let
 $K_0\subset K_1\subset \ldots$ be a tower of finite tame extensions of $K$ in $K^s$ that is
 cofinal in the set of all finite tame extensions of $K$ in $K^s$,
 ordered by inclusion. Set $d_n=\dgr{K_n}{K}$ for every $n\in \N$. Then $\mathcal{J}_G$ is the limit of
 the sequence of
 subsets $$\frac{1}{d_n}\cdot \mathcal{J}_{G,K_n}$$ of $[0,1[$ for $n\to
 \infty$. More precisely, if we count the $K_n$-jumps of $G$ with
 their multiplicities and put them in ascending order, we get a
 tuple $$0\leq j_{n,1}\leq \ldots \leq j_{n,g}$$ in
 $\{0,\ldots,d_n-1\}$. For every $i\in \{1,\ldots,g\}$, the sequence
 $(j_{n,i}/d_n)_{n\in \N}$ is a monotonically increasing sequence in
 $[0,1]$, whose limit we denote by $j_i$. Then it is easy to see that $j_i<1$ for every
 $i$ and that
 $$0\leq j_{1}\leq \ldots \leq j_{g}<1$$ are the jumps of $G$,
 counted with multiplicities.

\sss For tamely ramified abelian $K$-varieties, there exists an
interesting relation between the jumps and the Galois action on
the $\ell$-adic Tate module of the abelian variety. We refer to
\cite{HaNi-jumps} for details.

\sss We established in \cite{HaNi} the following relationship
between the base change conductor and the jumps.
% We emphasize the fact that
%this result does not require $G$ to be tamely ramified.

\begin{prop} \label{prop-numrel}
For every finite tame extension $K'$ of $K$ in $K^s$ of degree
$d$, the tuple
$$(c_1(G,K')\cdot d,\ldots,c_g(G,K')\cdot d)$$ is equal to the
tuple of $K'$-jumps of $G$, if we count every $K'$-jump with its
multiplicity and put them in ascending order. In particular,
$$ c(G,K') \cdot d = \sum_{j\in \mathcal{J}_{G,K'}} m_{G,K'}(j)\cdot j. $$
If $G$ is tamely ramified, then the tuple
$$(c_1(G),\ldots,c_g(G))$$ is equal to the
tuple of jumps of $G$, if we count every jump with its
multiplicity and put them in ascending order. In particular,
$$ c(G)= c_{\tame}(G). $$
\end{prop}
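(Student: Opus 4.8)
The plan is to compute the elementary divisors of the map $\Lie(h)$ from \eqref{eq-Lieh} completely explicitly, by diagonalizing the Galois action on a Lie algebra, and then to read off the resulting integers directly from the definition of Edixhoven's filtration. Since the formation of N\'eron models commutes with completion of a strict henselization, I may assume $k$ is separably closed; then every finite tame extension of $K$ of degree $d$ equals $K(d)$, so I take $K'=K(d)$, $R'=R(d)$, and fix a uniformizer $\pi'$ with $\pi'^d=\pi$. By \eqref{sss-kummer} the group $\mu=\Gal(K'/K)$ acts by $\sigma(\pi')=\chi(\sigma)\pi'$, where $\chi\colon\mu\xrightarrow{\sim}\mu_d(k)$ is the canonical character. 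Write $\Lambda=\Lie(\mathscr{G}')$, a free $R'$-module of rank $g$. By the N\'eron property, the $\sigma$-semilinear automorphisms of $G'$ induced by $\Gal(K'/K)$ extend to $\mathscr{G}'$ and equip $\Lambda$ with a semilinear $\mu$-action; under $\mathscr{G}=\mathscr{W}^{\mu}$, $\mathscr{W}=\prod_{R'/R}\mathscr{G}'$ (from \cite[4.1]{HaNi}), one has $\Lie(\mathscr{G})=\Lambda^{\mu}$, and $\Lie(h)$ is identified with the canonical map $\Lambda^{\mu}\otimes_R R'\to\Lambda$.

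First I would diagonalize this action. Because $d$ is prime to $p$, Hensel's lemma lifts $\mu_d(k)$ to $\mu_d(R)$, so the idempotents $e_i=\tfrac1d\sum_{\sigma\in\mu}\chi(\sigma)^{-i}\sigma\in R[\mu]$ make sense and produce an $R$-module decomposition $\Lambda=\bigoplus_{i=0}^{d-1}\Lambda^{(i)}$ into weight spaces $\Lambda^{(i)}=\{\lambda:\sigma(\lambda)=\chi(\sigma)^i\lambda\}$, compatibly with reduction to the special fibre $\Lambda_k$. Choosing a $k$-basis of $\Lambda_k$ made of eigenvectors and lifting it weight by weight (the reduction $\Lambda^{(i)}\to\Lambda_k^{(i)}$ is surjective since $e_i$ is defined over $R$), Nakayama's lemma yields an $R'$-basis $w_1,\dots,w_g$ of $\Lambda$ with $w_m$ of some weight $a_m\in\{0,\dots,d-1\}$. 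A short computation then shows $\Lambda^{\mu}=\Lambda^{(0)}$ is free over $R$ with basis $(\pi'^{j_m}w_m)_m$, where $j_m\equiv-a_m\pmod d$ and $0\le j_m<d$; hence in the bases $(\pi'^{j_m}w_m)_m$ of $\Lambda^{\mu}\otimes_R R'$ and $(w_m)_m$ of $\Lambda$ the matrix of $\Lie(h)$ is $\mathrm{diag}(\pi'^{j_1},\dots,\pi'^{j_g})$. So the elementary divisors of $\Lie(h)$ are the $j_m$ arranged in ascending order, and $c_m(G,K')=j_m/d$ since $e(K'/K)=d$.

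Next I would recognize the $j_m$ as the $K'$-jumps. By construction $F^i_d\mathscr{G}_k$ is the kernel of the truncated reduction morphism $\mathscr{W}^{\mu}_k\to\prod_{(R'/\mathfrak{m}'^i)/k}(\mathscr{G}'\times_{R'}R'/\mathfrak{m}'^i)$, so on Lie algebras $\Lie(F^i_d\mathscr{G}_k)=\ker\!\big(\Lambda^{\mu}\otimes_R k\to\Lambda/\mathfrak{m}'^i\Lambda\big)$, which is spanned by the images of the $\pi'^{j_m}w_m$ with $j_m\ge i$. Since $F^i_d\mathscr{G}_k$ is smooth for $i>0$ by \cite[4.8]{HaNi} (and $\dim F^0_d\mathscr{G}_k=g$), this gives $\dim_k F^i_d\mathscr{G}_k=\#\{m:j_m\ge i\}$, hence $m_{G,K'}(i)=\dim\Gr^i_d\mathscr{G}_k=\#\{m:j_m=i\}$ for every $i$. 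Thus the multiset of $K'$-jumps of $G$ is exactly $\{j_1,\dots,j_g\}=\{d\,c_1(G,K'),\dots,d\,c_g(G,K')\}$, which is the first assertion, and summing gives $c(G,K')\cdot d=\sum_m j_m=\sum_{j\in\mathcal{J}_{G,K'}}m_{G,K'}(j)\cdot j$. For the tamely ramified statement, choose a tame $K'$ over which $G$ acquires semi-abelian reduction, so $c_m(G)=c_m(G,K')$ by \ref{def-bc}; combining additivity in towers \eqref{sss-addbc} with the fact that $\Lie(h)$ is an isomorphism once $G$ has semi-abelian reduction gives $c_m(G,K_n)=c_m(G)$ for all tame $K_n\supseteq K'$, so by the first part the $K_n$-jumps are $j_{n,m}=d_n\,c_m(G)$, and the limit description \eqref{sss-jumpslim} shows the jumps of $G$ counted with multiplicity are precisely $c_1(G)\le\dots\le c_g(G)$; summing yields $c_{\tame}(G)=\sum_m c_m(G)=c(G)$.

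The hard part will be the bookkeeping of the third paragraph: matching Edixhoven's filtration — defined via Weil restriction and truncated reductions — with the weight‑space picture, in particular pinning down the relation $j_m\equiv-a_m\pmod d$ and passing from the Lie‑algebra kernel to $\dim_k F^i_d\mathscr{G}_k$ using smoothness of the $F^i_d\mathscr{G}_k$. One also has to justify carefully that $\Lie(\mathscr{W}^{\mu})=(\Lie\mathscr{W})^{\mu}$ and that the composite $\mathscr{W}^{\mu}\times_R R'\to\mathscr{W}\times_R R'\to\mathscr{G}'$ is indeed the base change morphism $h$ (both being the unique $R'$-morphism extending the canonical isomorphism $\Lie(G)\otimes_K K'\xrightarrow{\sim}\Lie(G')$ on generic fibres); once these identifications are in place, everything else is linear algebra over the discrete valuation ring $R'$.
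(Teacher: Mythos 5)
Your proof is correct and follows essentially the same route as the paper, whose own proof of Proposition \ref{prop-numrel} is simply a citation of \cite[4.16 and 4.18]{HaNi}: there, as in Edixhoven's original argument, one diagonalizes the semilinear $\Gal(K'/K)$-action on $\Lie(\mathscr{G}')$, identifies $\Lie(\mathscr{G})$ with the invariants via $\mathscr{G}\cong\mathscr{W}^{\mu}$, and reads off both the elementary divisors of $\Lie(h)$ and the dimensions of the groups $F^i_d\mathscr{G}_k$ from the same exponents, exactly as you do. The identifications you flag at the end ($\Lie(\mathscr{W}^{\mu})=(\Lie\mathscr{W})^{\mu}$, and that the composite through the Weil restriction agrees with $h$ by the N\'eron mapping property on generic fibres) are indeed the only points needing care, and they go through as you indicate.
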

\begin{proof}
See \cite[4.18 and 4.18]{HaNi}.
\end{proof}
\begin{cor}\label{cor-sup}\item
\begin{enumerate}
\item If $d$ and $d'$ are elements of $\N'$ such that $d$ divides
$d'$, then $$c_i(G,K(d))\leq c_i(G,K(d'))$$ for every $i$ in
$\{1,\ldots,g\}$. In particular, $$c(G,K(d))\leq c(G,K(d')).$$
 \item We have
$$c_{\tame}(G)=\sup_{d\in \N'} c(G,K(d))=\lim_{\stackrel{\longrightarrow}{d\in
\N'}}c(G,K(d))$$ where we order $\N'$ by the divisibility
relation.
\end{enumerate}
\end{cor}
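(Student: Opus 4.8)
Here is the plan.

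For part (1), the key input is the additivity of the base change conductor in towers from \eqref{sss-addbc}, combined with the fact that the $K'$-elementary divisors are computed via the filtration and the jump data. First I would observe that since $d$ divides $d'$, we have a tower $K\subset K(d)\subset K(d')$, where $K(d')$ is a finite tame extension of $K(d)$ of degree $d'/d$. By \eqref{sss-addbc}, $c_i(G,K(d'))$ is related to $c_i(G,K(d))$ and $c_i(G(d),K(d'))$; but the cleanest route is via Proposition \ref{prop-numrel}, which identifies the tuple $(c_1(G,K(d'))\cdot d',\ldots,c_g(G,K(d'))\cdot d')$ with the tuple of $K(d')$-jumps of $G$, counted with multiplicity and in ascending order. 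So the statement $c_i(G,K(d))\le c_i(G,K(d'))$ reduces to the corresponding monotonicity of (normalized) jumps. That monotonicity is exactly the content of \eqref{sss-jumpslim}: for a cofinal tower of tame extensions, the sequences $(j_{n,i}/d_n)$ are monotonically increasing in $[0,1]$. Applying this with any tower passing through both $K(d)$ and $K(d')$ (which exists since $d\mid d'$), one gets $j_{d,i}/d\le j_{d',i}/d'$ for each $i$, which after multiplying through and using Proposition \ref{prop-numrel} gives precisely $c_i(G,K(d))\le c_i(G,K(d'))$. Summing over $i$ yields $c(G,K(d))\le c(G,K(d'))$.

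For part (2), I would use that the jumps $j_1\le\cdots\le j_g$ of $G$ (with multiplicity) are, by \eqref{sss-jumpslim}, the limits of the increasing sequences $(j_{n,i}/d_n)$ along a cofinal tower, and by part (1) this limit equals the supremum over all $d\in\N'$ of the tuple $(c_1(G,K(d)),\ldots,c_g(G,K(d)))$ entrywise. Then $c_{\tame}(G)=\sum_i j_i=\sum_i \sup_d c_i(G,K(d))$. Since by part (1) each sequence $d\mapsto c_i(G,K(d))$ is monotone (with respect to divisibility), the supremum of the sum equals the sum of the suprema and also equals the directed limit; hence $c_{\tame}(G)=\sup_{d\in\N'}\sum_i c_i(G,K(d))=\sup_{d\in\N'}c(G,K(d))=\lim_{d\in\N'}c(G,K(d))$, where $\N'$ is ordered by divisibility. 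The identity $c(G,K(d))=\sum_i c_i(G,K(d))$ is the definition in \ref{def-bc}, and the passage to $c_{\tame}(G)=\sum_{j\in\mathcal J_G}m_G(j)\cdot j$ is Definition \ref{def-tamebc}.

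The only point requiring a little care — and the main (mild) obstacle — is making the limit/supremum identification in \eqref{sss-jumpslim} precise: one must check that every $d\in\N'$ can be embedded in the cofinal tower $K_0\subset K_1\subset\cdots$ (or, equivalently, that the directed system of all finite tame extensions of $K$ in $K^s$ ordered by inclusion is cofinal with any particular chain of $K(d)$'s indexed by divisibility), so that the entrywise suprema over the tower agree with the entrywise suprema over all of $\N'$. This is immediate from the structure of tame extensions: $K(d)\subset K(d')$ iff $d\mid d'$, and $\bigcup_{d\in\N'}K(d)=K^t$, so any chain cofinal in the divisibility order is cofinal in the inclusion order of tame extensions. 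Granting this, both parts follow by assembling Proposition \ref{prop-numrel}, \eqref{sss-jumpslim}, and \eqref{sss-addbc} as above, with no genuinely hard step remaining.
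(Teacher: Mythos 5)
Your proposal is correct and follows essentially the same route as the paper: the paper's proof simply invokes Proposition \ref{prop-numrel} (identifying the tuples $(c_i(G,K(d))\cdot d)_i$ with the ordered $K(d)$-jumps) together with the monotone-limit description of the jumps in \eqref{sss-jumpslim}, which is exactly how you obtain both the entrywise monotonicity in (1) and the supremum/limit identity in (2). Your extra appeal to \eqref{sss-addbc} and the cofinality check are harmless refinements of the same argument.
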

\begin{proof}
This follows from Proposition \ref{prop-numrel} and the
description of the jumps in \eqref{sss-jumpslim}.
\end{proof}
 If $G$ is wildly ramified, it can happen that $ c_{\tame}(G) \neq
c(G) $, as we will illustrate in Examples \ref{ex-compartori},
\ref{ex-elliptic-good} and \ref{ex-elliptic-mult}.

 \sss  As noted by Edixhoven in \cite[5.4(5)]{edix}, it is not
at all clear whether the jumps of a semi-abelian $K$-variety $G$
are always rational numbers. We will come back to this issue in
Part \ref{sec-ques}. Let $L$ be the minimal extension of $K$ in
$K^s$ such that $G\times_K L$ has semi-abelian reduction, and set
$e=[L:K]$. If $L$ is a tame extension of $K$, then it follows from
Proposition \ref{prop-numrel} that $m_G(j/e)=m_{G,L}(j)$ for every
$j$ in $[1,e[$. In
 particular, the jumps belong to the set $(1/e)\Z$. Moreover, in
 that case, one can easily compute the $K'$-jumps of $G$ and their
 multiplicities from the jumps of $G$, for all finite tame
 extensions $K'$ of $K$ \cite[4.13]{HaNi}. The crucial point is
 that the identity component of the N\'eron model of $A\times_K L$
 is stable under finite separable extensions of $L$, by
 \cite[IX.3.3]{sga7.1}. Combining the formula in \cite[4.13]{HaNi} with \cite[4.20]{HaNi}, we see
 that $e$ equals the smallest positive integer $m$ such that $m\cdot j$ is
 integer for every jump $j$ of $G$.
 If $G$ is the (possibly wildly ramified) Jacobian of a smooth proper geometrically connected $K$-curve $C$ of index
 one, then we'll show in Corollary \ref{cor-minindex} that the
 jumps of $G$ are rational, and that
 the stabilization index $e(C)$ defined in Chapter \ref{chap-jacobians} is the smallest positive integer
 $m$ such that $mj$ is integer for every jump $j$ of $G$.

\begin{example}\label{ex-compartori}
Assume that $K$ has equal characteristic two. Let $\alpha$ be an
element of $K$, and assume that the valuation  $v_K(\alpha)$ of
$\alpha$ is odd and strictly negative. We denote by $L$ the
Artin-Schreier extension
 $$L=K[t]/(t^2+t+\alpha)$$ of $K$, and by $R_L$ its valuation ring. We
  denote by $T$ the unique non-split one-dimensional $K$-torus with splitting field
  $L$. It has character module $X(T)\cong \Z$ where the action of
   the generator $\sigma$ of $\Gal(L/K)\cong \Z_2$ on $\Z$ is given by $m\mapsto
   -m$. The torus $T$ is the norm torus of the quadratic extension
   $L/K$.

First, we compute the tame base change conductor $c_{\tame}(T)$.
For notational convenience, we set $s=(1-v_K(\alpha))/2$. This is
an element of $\Z_{>0}$.
 The torus $T$ is isomorphic to
$$\Spec K[u,v]/ (u^2+\alpha v^2+uv+1)$$ with multiplication given
by
$$(u_1,v_1)\cdot (u_2,v_2)=(u_1u_2+\alpha v_1 v_2,u_1v_2+u_2v_1+v_1v_2)$$ and inversion
given by $$(u,v)^{-1}=(u+v,v).$$ Let $\pi$ be a uniformizer in
$K$. For every $d\in \N'$, we choose a uniformizer $\pi(d)$ in
$K(d)$ such that $\pi(d)^d=\pi$. We set
$\alpha'=\pi^{2s-1}\alpha$, $x=\pi(d)^{(d-1)/2-ds}(u+1)$ and
$y=\pi^{1-2s}v$. Then $\alpha'$ is a unit in $R$ and we can
rewrite the equation of $T$ as
\begin{equation*}\label{eq-neron}
\pi(d)x^2+\alpha'y^2+\pi(d)^{ds-(d-1)/2}xy+y=0.
\end{equation*}
 The coefficients of this equation belong to $R(d)$.
 We set
$$\cT(d)=\Spec
R(d)[x,y]/ (\pi(d)x^2+\alpha'y^2+\pi(d)^{ds-(d-1)/2}xy+y).$$ It is
 easily checked that the group structure on $T(d)$ extends to $\cT(d)$.
Moreover, for every point $(x_0,y_0)$ in $T(K(d))$, we have the
following properties.
\begin{enumerate}
\item  If $v_{K(d)}(x_0)< v_{K(d)}(y_0)$, then the terms
$\pi(d)x^2_0$ and $y_0$ must have the same valuation;
 \item If
$v_{K(d)}(x_0)\geq v_{K(d)}(y_0)$, then $y^2_0$ and $y_0$ must
have the same valuation.
\end{enumerate}
Thus the coordinates $(x_0,y_0)$ lie in $R(d)$. Now
 \cite[7.1.1]{neron} implies that
$\mathscr{T}(d)$ is a N\'eron $lft$-model of $T(d)$. We'll write
$\cT$ instead of $\cT(1)$.

 The base
change morphism
$$\mathscr{T}\times_R R(d)\to \mathscr{T}(d)$$ is defined by the
morphism of $R(d)$-algebras $$R(d)[x,y]/
(\pi(d)x^2+\alpha'y^2+\pi(d)^{ds-(d-1)/2}xy+y)\to
 R(d)[x,y]/ (\pi x^2+\alpha'y^2+\pi^{s}xy+y)$$ that sends $x$ to
$\pi(d)^{(d-1)/2}x$ and $y$ to $y$. The $R(d)$-module
$\Lie(\mathscr{T}(d))$ is generated by $\partial/\partial x$, so
that $c(T,K(d))=(d-1)/2$ for every $d$ in $\N'$. Using Corollary
\ref{cor-sup}, we see that
$$c_{\tame}(T)=\sup_{d\in \N'} c(T,K(d))=1/2.$$

Now, we  compute the base change conductor $c(T)$. If we set
$u'=u+tv+1$ and $v'=u+(t+1)v+1$ then
$$\cT^o_L=\Spec R_L[u',v']/(u'v'+u'+v')$$ is the identity component of the
N\'eron model of the split torus $T\times_K L$. The $R_L$-module
$\Lie(\cT^o_L)$ is generated by $\partial/\partial u'$, and the
base change morphism
$$\cT^o\times_R R_L\to \cT^o_L$$ is given by $$u'\mapsto
\pi^sx+t\pi^{2s-1}y\mbox{ and }v'\mapsto \pi^s
x+(t+1)\pi^{2s-1}y.$$ Thus $c(T)=s$. In particular,
$c_{\tame}(T)\neq c(T)$.
\end{example}

\section{Computing the base change conductor}
\subsection{Invariant differential forms}
\sss Let $G$ be a semi-abelian $K$-variety of dimension $g$ with
N\'eron $lft$-model $\mathscr{G}$ and let $K'$ be a finite
separable extension of $K$ with valuation ring $R'$. We will
explain a possible strategy to compute the $K'$-base change
conductor $c(G,K')$ in concrete examples.

\sss For every smooth group scheme $\mathscr{H}$ of relative
dimension $g$ over
 a local scheme $S=\Spec A$, we denote by $e_{\mathscr{H}}$ the unit section
 $S\to \mathscr{H}$ and we set
 $\omega_{\mathscr{H}/S}=e_{\mathscr{H}}^*\Omega^g_{\mathscr{H}/S}$.
 This is a free $A$-module of rank $1$, which can be identified
 with the $A$-module of translation-invariant elements of
 $ \Omega^g_{\mathscr{H}/S} (\mathscr{H})$ by \cite[4.2.1]{neron}. The
 module $\omega_{\mathscr{H}/S}$ is the dual of the determinant of
 the free $A$-module $\Lie(\mathscr{H})$ of rank $g$.

\sss  We set $G'=G\times_K K'$ and we denote by $\mathscr{G}'$ the
N\'eron $lft$-model of $G'$. Let $$ h : \mathscr{G} \times_R R'
\to \mathscr{G}' $$ be the canonical base change morphism. Pulling
back
 the natural morphism $$h^* \Omega^1_{\mathscr{G}'/ R'}  \to
\Omega^1_{\mathscr{G} \times_R R' / R'}$$
 through the
unit section $ e_{\mathscr{G} \times_R R'} $, we obtain a morphism
of $R'$-modules
 $$\phi: e^*_{\mathscr{G}'}\Omega^1_{\mathscr{G}'/ R'}  \to
e^*_{\mathscr{G}\times_R R'}\Omega^1_{\mathscr{G} \times_R R' /
R'}$$ which is the dual of the morphism $\Lie(h)$. Putting
$\Lie(h)$ in Smith normal form, it is easy to see that the
cokernels of $\phi$ and $\Lie(h)$ have the same length, and that
this length is equal to the valuation of the determinant of $\phi$
in $R'$.
 Thus we can also compute the
$K'$-base change conductor of $G$ as \begin{eqnarray*}
 c(G,K')
&=& v_{K'}(\det(\phi))/e(K'/K) \\
&=&\frac{1}{e(K'/K)}\cdot \mathrm{length}_{R'}(\coker(\det(\phi):
\omega_{\mathscr{G}' / R'}\to \omega_{\mathscr{G}/R} \otimes_R
R')) \end{eqnarray*} where $v_{K'}$ denotes the normalized
discrete valuation on $K'$.

\sss In practice, one computes $v_{K'}(\det(\phi))$ as follows:
choose a translation-invariant volume form $\omega$ on $G$ that
extends to a relative volume form on the $R$-scheme $\mathscr{G}$,
and denote by $\omega'$ its pullback to $G'$. Then $\omega'$ will,
in general, not extend to a relative volume form on
$\mathscr{G}'$, but it will have poles along the components of the
special fiber $\mathscr{G}'_k$. The order of the pole does not
depend on the choice of the component, by translation invariance
of $\omega'$, and  it is equal to $-v_{K'}(\det(\phi))$.

 \subsection{Elliptic curves}

\begin{prop}[see also Proposition 3.7.2 in \cite{lu}] \label{prop-c-elliptic}
We denote by $v_K$ the normalized discrete valuation on $K$. If
$E$ is an elliptic curve over $K$ with $j$-invariant $j(E)$ and
minimal discriminant $\Delta$, then
$$
c(E)=\left\{\begin{array}{ll}v_K(\Delta)/12&\mbox{ if }E \mbox{
has potential good reduction},
\\[2ex] (v_K(\Delta)+v_K(j(E)))/12 &\mbox{ if }E
\mbox{ has potential multiplicative reduction}.
\end{array}\right.
$$
 Moreover, for every finite separable extension $K'$ of $K$, we
 have $$c(E,K')=\frac{1}{12}(v_{K}(\Delta)-v_{K'}(\Delta')/e(K'/K))$$
 where $\Delta'$ denotes a minimal discriminant of $E\times_K K'$ and $v_{K'}$ denotes the normalized discrete
valuation on $K'$.
\end{prop}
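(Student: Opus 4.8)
The plan is to compute the $K'$-base change conductor $c(E,K')$ directly, by means of the invariant-differential recipe of the previous subsection, and then to specialise the field $K'$. So fix a finite separable extension $K'/K$ with valuation ring $R'$ and ramification index $e=e(K'/K)$, set $E'=E\times_K K'$, and choose a minimal Weierstrass equation for $E$ over $R$ and one for $E'$ over $R'$; write $\omega,\omega'$ for the associated invariant differentials and $\Delta,\Delta'$ for the associated discriminants, so that $v_K(\Delta)$ and $v_{K'}(\Delta')$ are the valuations of the minimal discriminants (independent of the chosen equations). The classical fact I would invoke here is that the smooth locus of a minimal Weierstrass model of an elliptic curve over a discrete valuation ring is the identity component of its N\'eron model; hence $\omega$ generates $\omega_{\mathscr{E}/R}$ and $\omega'$ generates $\omega_{\mathscr{E}'/R'}$, where $\mathscr{E},\mathscr{E}'$ denote the respective N\'eron $lft$-models.

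Next I would compare $\omega$ and $\omega'$. Base-changing the minimal equation of $E$ to $R'$ gives a (possibly non-minimal) Weierstrass equation for $E'$, and the standard change-of-variables formalism produces a unique $\mu\in (K')^{\times}$, coming from a substitution $x=\mu^{2}x'+\cdots,\ y=\mu^{3}y'+\cdots$, with $\omega'=\mu\,\omega$ and $\Delta'=\mu^{-12}\Delta$ in $(K')^{\times}$; since $\Delta'$ is minimal, $v_{K'}(\Delta')\le v_{K'}(\Delta)=e\cdot v_K(\Delta)$, so $v_{K'}(\mu)=\tfrac{1}{12}\bigl(e\,v_K(\Delta)-v_{K'}(\Delta')\bigr)\ge 0$. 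Because the canonical base-change morphism $h\colon\mathscr{E}\times_R R'\to\mathscr{E}'$ restricts to the identity on generic fibres, the map $\phi$ of the previous subsection sends the generator $\omega'$ of $\omega_{\mathscr{E}'/R'}$ to $\omega'=\mu\,\omega$ inside $\omega_{\mathscr{E}/R}\otimes_R R'=R'\cdot\omega$; thus $\mathrm{coker}(\phi)\cong R'/\mu R'$ has length $v_{K'}(\mu)$, and the recipe $c(E,K')=e^{-1}\cdot\mathrm{length}_{R'}\,\mathrm{coker}(\phi)$ yields
$$ c(E,K')=\frac{1}{12}\Bigl(v_K(\Delta)-\frac{v_{K'}(\Delta')}{e}\Bigr), $$
which is the last assertion of the proposition.

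It then remains to evaluate the right-hand side for a suitable $K'$. Every elliptic $K$-curve has potential good or potential multiplicative reduction, and by the Semi-Stable Reduction Theorem (\ref{prop-semiab}) there is a finite separable $K'/K$ with $E'$ of semi-abelian (i.e.\ good or multiplicative) reduction; for any such $K'$ one has $c(E)=c(E,K')$ by the additivity in towers (\ref{sss-addbc}), since then $c(E',K'')=0$. If $E$ has potential good reduction, I would take $K'$ with $E'$ of good reduction; then the minimal equation of $E'$ is smooth over $R'$, so $\Delta'\in (R')^{\times}$, $v_{K'}(\Delta')=0$, and $c(E)=v_K(\Delta)/12$. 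If $E$ has potential multiplicative reduction, I would enlarge $K'$ (harmless, again by \ref{sss-addbc}) so that $E'$ has split multiplicative reduction; then $E'$ is a Tate curve of Kodaira type $I_n$, so $v_{K'}(\Delta')=n=-v_{K'}(j(E'))$, and since $j(E')=j(E)\in K$ this equals $-e\,v_K(j(E))$. Substituting into the displayed formula gives $c(E)=\tfrac{1}{12}\bigl(v_K(\Delta)+v_K(j(E))\bigr)$.

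The computation itself is short; the genuine ingredients are the identification of the N\'eron differential with a minimal Weierstrass differential, the transformation law for $(\omega,\Delta)$ under a change of Weierstrass coordinates, and the shape of the minimal discriminant of a Tate curve (equivalently, the relevant entry of the Kodaira--N\'eron table), all of which are standard; see also \cite{lu}. The only point requiring a little care is keeping track of the two normalisations $v_K$ and $v_{K'}$, and, in the multiplicative case, reducing to split multiplicative reduction so that the Tate parametrisation applies; I do not anticipate a real obstacle there.
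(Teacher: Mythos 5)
Your proposal is correct and follows essentially the same route as the paper: identify the N\'eron differential with the minimal Weierstrass differential, read off $c(E,K')$ from the scalar $\mu$ (the paper's $r$) comparing the two minimal differentials, convert via the transformation law $\Delta'=\mu^{-12}\Delta$ into the discriminant formula, and then specialise to an extension realising good, resp.\ multiplicative, reduction, using $v_{K'}(\Delta')=0$, resp.\ $v_{K'}(\Delta')=-e\,v_K(j(E))$. The only cosmetic differences are your explicit passage to split multiplicative reduction (automatic here since $k$ is separably closed) and the sanity check $v_{K'}(\mu)\ge 0$, neither of which changes the argument.
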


\begin{proof} Let $K'$ be a finite separable extension of $K$, and set $E'=E\times_K K'$. Let $\mathscr{E}$ be the N\'eron model of $E$.
We consider a Weierstrass equation
\begin{equation*}\tag{$\mathcal E$}\label{eq-Weierstrass}
y^2 + (a_1x + a_3)y = x^3 + a_2x^2 + a_4x + a_6
\end{equation*}
 for $E$, with $ a_i \in R $ for all $i$. To Equation \eqref{eq-Weierstrass}
one associates a translation invariant differential form
$$ \omega = \frac{dx}{2y + a_1x + a_3} \in \Omega_{E/K}^1(E). $$
If we also assume that \eqref{eq-Weierstrass} is a minimal
equation, then $\omega$ extends to a generator of the free rank
one $\mathscr{O}_{\mathscr{E}}$-module
 $ \Omega^1_{\mathscr{E}/R}
$ (see e.g.~\cite[9.4.35]{Liubook}). Moreover, pulling back
$\omega$ via the unit section $e_{\mathscr E} $ yields a generator
of the free rank one $R$-module $\omega_{\mathscr{E}/ R}$.

%Similarly, if we denote by $ \pi : \mathscr{A} \times_R R' \to
%\mathscr{A} $ the canonical projection, we find that
%$\Omega^1_{\mathscr{A} \times_R R' / R'} = \pi^*(\omega)
%\mathscr{O}_{\mathscr{A}\times_R R' }$ and that the pull back of
%$\pi^*(\omega)$ via $e_{\mathscr A \times_R R'} $ generates
%$\omega_{\mathscr{A} \times_R R' / R'}$.

Let $(\mathcal E')$ be a minimal Weierstrass equation for the
elliptic curve $E'$. It yields an invariant differential form $
\omega' \in \Omega_{E'/K'}^1(E') $ in the same way as above. If we
denote by $\pi$ the projection $E'\to E$, then  $ \omega' = r
\cdot \pi^*\omega $ for some element $ r \in R' =
\mathscr{O}_{\mathscr{E}'}(\mathscr{E}') $, and
$$ c(E,K') = \frac{v_{K'}(r)}{e(K'/K)}. $$  Moreover, if we denote by $\Delta$ and
$\Delta'$ the discriminants of the Weierstrass equations
\eqref{eq-Weierstrass} and ($\mathcal{E}')$, then
$$ v_{K'}(\Delta')=e(K'/K)\cdot v_{K}(\Delta)-12v_{K'}(r) $$
by \cite[III.1.1.2]{silverman}. Thus we find that
$$c(E,K')=\frac{1}{12}(v_{K}(\Delta)-\frac{v_{K'}(\Delta')}{e(K'/K)}).$$

Now assume that $E'$ has semi-abelian reduction. Then
$v_{K'}(\Delta')=0$ if $E'$ has good reduction, and
$v_{K'}(\Delta')=-v_{K}(j(E))\cdot e(K'/K)$ if $E'$ has
multiplicative reduction. This yields the required formula for
$c(E)=c(E,K')$.
\end{proof}

\sss Using Proposition \ref{prop-c-elliptic}, we can easily
compute the base change conductor of a tamely ramified elliptic
curve from the data in the Kodaira-N\'eron reduction table (Table
4.1 in \cite[IV.9]{silverman-advanced}). We will see in Theorem
\ref{Theorem-jumps} that the {\em tame} base change conductor of
an elliptic curve only depends on the reduction type, even for
wildly ramified curves. If $p=1$, then $c_{\tame}(E)=c(E)$ for
every
 elliptic $K$-curve $E$ (Proposition \ref{prop-numrel}), so that  we
obtain the following table of values for the tame base change
conductor (see also \cite[5.4.5]{edix} and
\cite[Table~8.1]{halle-neron}).

\begin{table}[h!]
\begin{tabular*}{0.8\textwidth}
     {@{\extracolsep{\fill}}
     |c||c|c|c|c|c|c|c|c|} \hline     & & & & & & & & \\  Type & $I_{\geq 0}$ &
$II$ & $III$ & $IV$ & $I^*_{\geq 0}$ & $IV^*$ & $III^*$ & $II^*$
\\[1ex] \hline & & & & & & & &
\\ $c_{\tame}(E)$ & 0  &1/6 & 1/4 & 1/3 & 1/2 & 2/3 & 3/4 & 5/6
\\[1ex] \hline
\end{tabular*}
\\[2ex]
\refstepcounter{subsubsection} \caption{The tame base change
conductor for elliptic curves} \label{table-elliptic}
\end{table}

\begin{example}\label{ex-elliptic-good}
 Assume that $k$ is an algebraically closed field of characteristic $2$ and that $R=W(k)$. Let $E$
be the elliptic curve given by the minimal Weierstrass equation
\begin{equation*}%\tag{$\mathscr E$}
y^2 = x^3 + 2. \end{equation*} As we've already noticed in Example
\ref{ex-diff}, $E$ has reduction type $II$ and potential good
reduction. Therefore, $c_{\tame}(E) = 1/6 $ by Table
\ref{table-elliptic}, and
$$c(E)=v_K(\Delta)/12=6/12=1/2$$ by Proposition
\ref{prop-c-elliptic}.
 In particular, $c(E) \neq c_{\tame}(E) $.
\end{example}

\begin{example}\label{ex-elliptic-mult}
In this example we assume that $K$ has equal characteristic $2$.
We'll construct an elliptic curve $E/K$ with  potential
multiplicative reduction such that $c(E)\neq c_{\tame}(E)$.
Lorenzini has proven in \cite[2.8]{lorenzini-wild} that
   every elliptic curve $E/K$ with additive reduction
 and potential multiplicative reduction has reduction of type $I^*_{\nu + 4s}$
for certain integers $\nu > 0$ and $s > 0$ and acquires
multiplicative
 reduction over a degree two extension of $K$.
  In particular, such a curve $E$ is
wildly ramified and $c_{\tame}(E) = 1/2$ by Table
\ref{table-elliptic}. We will now compute $c(E)$.

By the proof of \cite[Thm.~2.8]{lorenzini-wild}, we can find a
Weierstrass equation
$$ y^2 +xy = x^3 + D x^2 + a_6 $$
over $K$, where $ D = u \pi^{-2s + 1} $ for a suitable unit $ u
\in R $ and $ a_6 \in R $ with $ v_K(a_6) > 0 $. We can use Tate's
algorithm to verify that
\begin{equation*}\tag{$\mathfrak E$}\label{eq-Weierstrass-intK}
y^2 + \pi^s xy = x^3 + \pi^{2s} D x^2 + \pi^{6s} a_6,
\end{equation*}
is a minimal Weierstrass equation for $E$ over $R$. Using
Proposition \ref{prop-c-elliptic}, we compute that $c(E)=s$. Thus
$c(E)\neq c_{\tame}(E)$. Also, note that this example shows that
$c(E)$ can be arbitrarily large, whereas the tame base change
conductor of a semi-abelian variety is always strictly bounded by
the dimension.
\end{example}

\subsection{Behaviour under non-archimedean uniformization}
\sss Another technique that is quite useful to compute the jumps
and elementary divisors of an abelian variety is the use of
non-archimedean uniformization. Let $A$ be an abelian $K$-variety,
and let
$$0\to M\to E^{\an}\to A^{\an}\to 0$$ be its non-archimedean uniformization  as in \eqref{sss-uniformAV} of Chapter \ref{chap-preliminaries}. Thus $E$ is the
 extension of an abelian $K$-variety with potential good reduction
 by a $K$-torus, and $M$ is an \'etale $K$-lattice in $E^{\an}$.
% whose rank equals the dimension of the toric part of $E$, which
% is also equal to the toric rank $\tor{A}$ of $A$.

 \begin{prop}\label{prop-bcuniform}
For every finite separable extension $K'$ of $K$, the
$K'$-elementary divisors of $E$ and $A$ coincide, so that
$c(E,K')=c(A,K')$. In particular, the elementary divisors of $E$
and $A$ coincide and $c(E)=c(A)$. Moreover, if $K'$ is tame over
$K$, then the multiplicity functions $m_{E,K'}$ and $m_{A,K'}$ are
equal. Thus $m_E=m_A$ and $c_{\tame}(E)=c_{\tame}(A)$.
 \end{prop}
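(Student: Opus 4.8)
The plan is to show that the uniformization morphism $\pi\colon E^{\an}\to A^{\an}$ induces a canonical isomorphism between the identity components of the formal N\'eron models of $E$ and of $A$, compatible with base change and with the Galois actions entering Edixhoven's construction; all assertions then follow by transporting the base change morphisms on Lie algebras through this isomorphism. First I would record the local structure of $\pi$: since $M$ is an \'etale lattice and $E^{\an}\to A^{\an}$ is an \'etale covering with kernel $M$ (see \eqref{sss-uniformAV}), the morphism $\pi$ is a local isomorphism of rigid $K$-groups in a neighbourhood of the identity section; in particular $E$ and $A$ have the same dimension $g$, and $\Lie(\pi)$ is an isomorphism $\Lie(E)\to\Lie(A)$.

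The key step is to upgrade this to an integral statement. Write $\widehat{\mathscr E}$, $\widehat{\mathscr A}$ for the formal $\mathfrak{m}$-adic N\'eron models of $E^{\an}$ and $A^{\an}$; by \cite[7.2.1]{neron} and \cite[6.2]{bosch-neron} their identity components are the formal completions of the identity components of the N\'eron $lft$-model $\mathscr E$ of $E$ and of the N\'eron model $\mathscr A$ of $A$, so it suffices to produce a canonical isomorphism $\widehat{\mathscr E}^o\cong\widehat{\mathscr A}^o$. One way to obtain this is via the sheaf-theoretic N\'eron models of Section~\ref{sec-BX}: the sequence $0\to M\to E^{\an}\to A^{\an}\to 0$ is exact for the smooth topology on $\Sp K$, so applying $j_*$ and passing to identity components, the contribution of $M$ disappears because the \'etale sheaf $M$ has trivial identity component, while the surjectivity of $(j_*E^{\an})^o\to(j_*A^{\an})^o$ is obtained by a local lifting argument for the smooth topology, exactly as in Steps~1--3 of the proof of Proposition~\ref{prop-sheafify}, using that $\pi$ is a local isomorphism. (Alternatively one may invoke the theory of \cite{B-X} directly.) Because the uniformization of $A'=A\times_K K'$ is obtained from that of $A$ by base change (see \eqref{sss-uniformAV}), this isomorphism is compatible with base change to the valuation ring $R'$ of any finite separable extension $K'/K$, and when $K'=K(d)$ it is equivariant for the action of $\mu_d\cong\Gal(K(d)/K)$.

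Granting this, the rest is bookkeeping. Passing to Lie algebras and using $\Lie(\mathscr E)=\Lie(\mathscr E^o)=\Lie(\widehat{\mathscr E}^o)$ together with the corresponding equalities for $A$, $E'=E\times_K K'$ and $A'$, the isomorphisms over $R$ and over $R'$ identify $\Lie(h_E)\colon\Lie(\mathscr E)\otimes_R R'\to\Lie(\mathscr E')$ with $\Lie(h_A)\colon\Lie(\mathscr A)\otimes_R R'\to\Lie(\mathscr A')$, where $h_E$ and $h_A$ denote the base change morphisms of N\'eron models. Hence these two injections of free $R'$-modules of rank $g$ have isomorphic cokernels, so $E$ and $A$ have the same tuple of $K'$-elementary divisors and $c(E,K')=c(A,K')$ by Definition~\ref{def-bc}; choosing $K'$ over which $A$, equivalently $E$, acquires semi-abelian reduction yields the elementary divisors of $E$ and $A$ and $c(E)=c(A)$. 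For the multiplicities, reduce the $\mu_d$-equivariant isomorphism $\Lie(\mathscr E(d))\cong\Lie(\mathscr A(d))$ modulo $\mathfrak{m}(d)$; by the description in \eqref{sss-mudjump} of the multiplicity function as the dimension of a $\mu_d$-eigenspace of $\Lie(\mathscr G(d)_k)$, this gives $m_{E,K(d)}=m_{A,K(d)}$ for every $d\in\N'$, in particular $m_{E,K'}=m_{A,K'}$ for every finite tame $K'/K$. Passing to the limit as in \eqref{sss-jumpslim} yields $m_E=m_A$, hence $\mathcal{J}_E=\mathcal{J}_A$ and $c_{\tame}(E)=\sum_{j}m_E(j)\,j=\sum_{j}m_A(j)\,j=c_{\tame}(A)$ by Definition~\ref{def-tamebc}.

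The main obstacle is the integral comparison $\widehat{\mathscr E}^o\cong\widehat{\mathscr A}^o$: once one knows that the \'etale lattice $M$ contributes nothing to identity components and that $j_*$ is sufficiently exact, the remaining steps are a diagram chase and a translation between Lie algebras. Some care is needed to arrange this isomorphism canonically enough that it is simultaneously compatible with base change and with the $\mu_d$-actions, since the first is needed to handle arbitrary $K'$ in the statement on elementary divisors and the second is needed for the multiplicity functions and the jumps.
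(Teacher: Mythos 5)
Your proposal follows essentially the same route as the paper's proof: the paper likewise extends the uniformization morphism to the formal $\frak{m}$-adic completions of the N\'eron ($lft$-)models, invokes \cite[2.3]{B-X} to see that this morphism is a local isomorphism and hence induces an isomorphism of $R$-modules $\Lie(\mathscr{E})\to\Lie(\mathscr{A})$, and concludes by the compatibility of uniformization with base change; the only cosmetic difference is that the paper deduces the statements about $m_{E,K'}$ and $m_{A,K'}$ from Proposition \ref{prop-numrel}, whereas you argue directly with the $\mu_d$-eigenspace description \eqref{sss-mudjump}, which is equally fine. The one point to flag is your primary justification of the integral comparison: the surjectivity of $(j_*E^{\an})^o\to(j_*A^{\an})^o$ does not follow ``exactly as in Steps 1--3 of Proposition \ref{prop-sheafify}'' --- that proposition compares a presheaf with its sheafification and says nothing about lifting a section of $A^{\an}$ over $\mathfrak{X}_K$ through the \'etale covering $E^{\an}\to A^{\an}$, which amounts to trivializing the pullback of the $M$-torsor smooth-locally over the formal base and is precisely the nontrivial content of the analysis of $j_*$ and $R^1j_*$ of lattices in \cite{B-X}. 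Fortunately you do not need the isomorphism of identity components at all: every conclusion you draw uses only the isomorphism on Lie algebras, which already follows from the local isomorphism of formal N\'eron models, i.e.\ from your parenthetical alternative of invoking \cite{B-X} directly --- and that is exactly the citation the paper uses.
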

\begin{proof}
By Proposition \ref{prop-numrel}, we only need to prove the claim
about the $K'$-elementary divisors. This can be done by using the
same arguments as in \cite[5.1]{chai}: if we denote by
$\mathscr{E}$ and $\mathscr{A}$ the N\'eron $lft$-models of $E$
and $A$, and by $\widehat{\mathscr{E}}$ and
$\widehat{\mathscr{A}}$ their formal $\frak{m}$-adic completions,
then the morphism $E^{\an}\to A^{\an}$ extends uniquely to a
morphism of formal $R$-group schemes $\widehat{\mathscr{E}}\to
\widehat{\mathscr{A}}$. This morphism is a local isomorphism by
\cite[2.3]{B-X}, and thus induces an isomorphism of $R$-modules
$$\Lie(\mathscr{E})\to \Lie(\mathscr{A}).$$ Now the result follows
from the fact that $E^{\an}\times_K K'\to A^{\an}\times_K K'$ is
the non-archimedean uniformization of $A\times_K K'$.
\end{proof}

\begin{example}
 Let $E$, $K$ and $D$ be as in Example \ref{ex-elliptic-mult}, and consider the Artin-Schreier extension
 $$ L = K[w]/(w^2 + w + D)$$ of $K$. We claim that $E\times_K L$ has multiplicative reduction.
  To see this, it suffices to make a change of
variables $ y = y' + w x $, which yields an integral
 Weierstrass equation
\begin{equation*}\tag{$\mathfrak{E}'$}\label{eq-Weierstrass-intL}
y'^2 + xy' = x^3 + a_6
\end{equation*}
over the valuation ring of $L$. Tate's algorithm shows that
\eqref{eq-Weierstrass-intL} is minimal and that $E\times_K L$ has
multiplicative reduction. Thus the uniformization morphism of $E$
is of the form $u:T^{\an}\to E^{\an}$, where $T$ is the norm torus
of $L/K$, because this is the unique one-dimensional $K$-torus
with minimal splitting field $L$. The norm torus of $L/K$ is
precisely the torus from Example \ref{ex-compartori}, with
$\alpha=D$. We've seen that, both for $E$ and for $T$, the tame
base change conductor equals $1/2$ and the base change conductor
equals $(1-v_K(D))/2$.
\end{example}

\sss In view of Proposition \ref{prop-bcuniform}, it would be
quite useful to have a formula that expresses the base change
conductor of a semi-abelian $K$-variety in terms of the conductors
of its toric and abelian part, even if one is only interested in
the base change conductors of abelian varieties. The natural guess
is the following.

\begin{conjecture}[Chai]
Assume that $k$ is algebraically closed. If $G$ is a semi-abelian
$K$-variety with toric part $G_\tor$ and abelian part $G_\ab$,
then
$$c(G)=c(G_\tor)+c(G_\ab).$$
\end{conjecture}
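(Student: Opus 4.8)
The plan is to deduce the conjecture from additivity of length along a snake-lemma comparison, exactly as the base change conductor itself is ``computed by a snake lemma''. First I would fix the minimal finite extension $L$ of $K$ in $K^s$ such that $G\times_K L$ has semi-abelian reduction; by Theorem \ref{prop-semiab} it is Galois, and by \eqref{sss-sabtorab} the torus $G_{\tor}\times_K L$ is then split while $G_{\ab}\times_K L$ has semi-abelian reduction. Write $\mathscr{G},\mathscr{G}_{\tor},\mathscr{G}_{\ab}$ for the N\'eron $lft$-models over $R$ of $G,G_{\tor},G_{\ab}$, and $\mathscr{H},\mathscr{H}_{\tor},\mathscr{H}_{\ab}$ for those over $R_L$ of the corresponding base changes. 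By Definition \ref{def-bc},
\[ c(G)=\frac{1}{[L:K]}\length_{R_L}\coker\bigl(\Lie(\mathscr{G})\otimes_R R_L\longrightarrow \Lie(\mathscr{H})\bigr), \]
and likewise for $G_{\tor}$ and $G_{\ab}$. I would then try to organise the three base change morphisms on Lie algebras into a commutative diagram with exact rows
\[ 0\longrightarrow \Lie(\mathscr{G}_{\tor})\otimes_R R_L\longrightarrow \Lie(\mathscr{G})\otimes_R R_L\longrightarrow \Lie(\mathscr{G}_{\ab})\otimes_R R_L\longrightarrow 0 \]
\[ 0\longrightarrow \Lie(\mathscr{H}_{\tor})\longrightarrow \Lie(\mathscr{H})\longrightarrow \Lie(\mathscr{H}_{\ab})\longrightarrow 0. \]
The vertical maps are the base change morphisms; they are injective because their sources are free $R_L$-modules and they are isomorphisms on generic fibres. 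If both rows are exact, the snake lemma yields a short exact sequence relating the three cokernels, hence $c(G)\cdot[L:K]=c(G_{\tor})\cdot[L:K]+c(G_{\ab})\cdot[L:K]$, which is the assertion. One may also first invoke Proposition \ref{prop-bcuniform} to replace $G_{\ab}$ by the semi-abelian variety uniformizing it, whose abelian part has potential good reduction; this is convenient but does not remove the difficulty below.

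The bottom row is the harmless one. Over $R_L$ the exact sequence $0\to G_{\tor}\times_K L\to G\times_K L\to G_{\ab}\times_K L\to 0$ consists of semi-abelian varieties with semi-abelian reduction, so the base change morphisms of their N\'eron $lft$-models are open immersions on identity components (Semi-Stable Reduction Theorem together with \cite[IX.3.3]{sga7.1} and Proposition \ref{lemm-semiab}), and the standard structure theory of such Néron models gives a short exact sequence $0\to \mathscr{H}_{\tor}^o\to \mathscr{H}^o\to \mathscr{H}_{\ab}^o\to 0$ of smooth $R_L$-group schemes with $\mathscr{H}_{\tor}^o$ a split torus. Since the quotient morphism of smooth group schemes is smooth, $\Lie$ carries this to a short exact sequence, which is the bottom row. (The three conductors over $R_L$ vanish, but what I use is only the exactness of this reference sequence.)

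The heart of the matter, and the step I expect to be the genuine obstruction, is the exactness of the top row: that the N\'eron $lft$-model functor turns $0\to G_{\tor}\to G\to G_{\ab}\to 0$ into a short exact sequence of Lie algebras over $R$, even though $G$ need not have semi-abelian reduction and the functor is far from exact in general. Left exactness of the first segment is automatic, since $\Lie(\mathscr{G}_{\tor})$ is $R$-torsion free and $\Lie(\mathscr{G}_{\tor})\to \Lie(\mathscr{G})$ is injective on the generic fibre. The real content is that $\Lie(\mathscr{G})\to \Lie(\mathscr{G}_{\ab})$ be surjective with kernel exactly $\Lie(\mathscr{G}_{\tor})$. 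What is available towards this is Proposition \ref{prop-bx}, which gives surjectivity of $\mathscr{G}^o(R)\to \mathscr{G}_{\ab}^o(R)$; I would try to promote this to the statement that the morphism $\widehat{\mathscr{G}^o}\to \widehat{\mathscr{G}_{\ab}^o}$ of formal completions along the unit sections is a formally smooth epimorphism of smooth formal $R$-groups with kernel the formal completion of $\mathscr{G}_{\tor}^o$, which would force exactness of the top row on formal Lie algebras. Making this rigorous in the wildly ramified case seems to require a delicate ``congruence'' analysis in the spirit of Chai, controlling the interaction of the non-split extension with the unipotent part of the special fibre, and this is precisely the point the conjecture leaves open. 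In the tamely ramified case, though, one can finish without it: over the tame extension $L$ the group $\mu=\Gal(L/K)$ acts on the exact sequence $0\to (\mathscr{H}_{\tor})^o_k\to \mathscr{H}^o_k\to (\mathscr{H}_{\ab})^o_k\to 0$, hence on the associated exact sequence of Lie algebras over $k$, and by \eqref{sss-mudjump} the jumps of $G$, $G_{\tor}$ and $G_{\ab}$ are the exponents $i$ read off from the dimensions of the $\zeta^i$-eigenspaces; additivity of these dimensions in the exact sequence, combined with Proposition \ref{prop-numrel} (which gives $c=c_{\tame}$ for tamely ramified groups), proves the conjecture for every tamely ramified $G$.
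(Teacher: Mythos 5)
The statement you are addressing is stated in the paper as a \emph{conjecture} (Chai's conjecture); the paper gives no proof and only records the known cases: the tame case by an elementary argument (\cite[4.23]{HaNi}), the case of residue characteristic zero and the case of finite residue field by Chai \cite{chai}, and a second proof in characteristic zero via a Fubini property of motivic integration \cite{CLN}. Your proposal, as you yourself acknowledge, does not close the general case, and the point where it stops is exactly the open content of the conjecture. Concretely: the snake-lemma scheme works only if the top row
$0\to \Lie(\mathscr{G}_{\tor})\to \Lie(\mathscr{G})\to \Lie(\mathscr{G}_{\ab})\to 0$
is exact over $R$, and this is not a consequence of anything available here. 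Proposition \ref{prop-bx} gives surjectivity of $\mathscr{G}^o(R)\to \mathscr{G}_{\ab}^o(R)$ on integral points, but surjectivity on $R$-points does not imply surjectivity on Lie algebras (nor that the induced map of formal completions along the unit sections is a formally smooth epimorphism); the ``promotion'' step you describe is unsupported, and the failure of this kind of right-exactness of the N\'eron functor is precisely why the conjecture is delicate. Indeed, the proofs that do exist (Chai's congruence method over finite residue fields together with spreading-out in characteristic zero, and the motivic Fubini argument of \cite{CLN}) do not proceed by exhibiting such an exact Lie sequence over $R$; they compare $c(G)$ with $c(G_{\tor})+c(G_{\ab})$ by global/integration-theoretic means rather than termwise.

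Two smaller remarks. First, your ``harmless'' bottom row over $R_L$ also deserves a justification (exactness of $0\to\mathscr{H}^o_{\tor}\to\mathscr{H}^o\to\mathscr{H}^o_{\ab}\to 0$ when all three groups have semi-abelian reduction), though this is indeed standard and not where the difficulty lies. Second, your treatment of the tame case is correct and is essentially the elementary argument the paper alludes to: over the tame extension $L$ the $\Gal(L/K)$-equivariant exact sequence of Lie algebras of the special fibres gives additivity of the eigenspace dimensions, hence of the multiplicity functions by \eqref{sss-mudjump}, and Proposition \ref{prop-numrel} converts this into $c(G)=c(G_{\tor})+c(G_{\ab})$; this recovers \cite[4.23]{HaNi} but adds nothing in the wild case, where the statement remains open.
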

If $G$ is tamely ramified, then one can prove this statement in an
elementary way; see \cite[4.23]{HaNi}. In \cite{chai}, Chai proves
 his conjecture when $K$ has
 characteristic zero and also when $k$ is finite. In \cite{CLN},
 the characteristic zero case is reproven by reducing it to a
 Fubini property for motivic integrals.

%\subsection{Jumps and monodromy}

\section{Jumps of Jacobians}
In this section, we assume that $k$ is algebraically closed.
\subsection{Dependence on reduction data}
\sss  Let $C$ be a smooth, projective, geometrically connected
$K$-curve of genus $g>0$ and index one. We denote by $A$ the
Jacobian of $C$. It is possible to obtain detailed information
concerning the
  jumps of $A$ and their multiplicities in terms of the geometry of $sncd$-models of $C$.
   Let $\mathscr{C}/R$ be an $sncd$-model of $C$, and consider the following condition on $\mathscr{C}$:\\

$ (*) $ \emph{If $E$ and $E'$ are irreducible components of
$\mathscr C_k$ such that $E\cap E'$ is non-empty, then the
multiplicity of $E$ or $E'$ in $\mathscr{C}_k$ is prime to $p$.}\\

It is easy to see that if condition $(*)$ holds for some
$sncd$-model of $C$, then it also holds for the minimal
$sncd$-model. In particular, this is the case if $C$ is tamely
ramified, by Saito's criterion \cite[3.11]{Saito}. In
\cite{halle-neron}, the first author proved the following result.

\begin{theorem} \label{Theorem-jumps*}
 Let $\mathscr{C}/R$ be an $sncd$-model of $C$ and assume
that $\mathscr{C}$ satisfies condition $(*)$. Then the jumps of
$A$ and their multiplicities only depend on the combinatorial data
of $ \mathscr{C}_k $.
\end{theorem}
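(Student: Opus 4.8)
The plan is to reduce the statement, via Raynaud's theorem and the description of the models $\mathscr{C}(d)$ in Section~\ref{subsec-mindesing}, to an explicit $\mu_d(k)$-equivariant computation of a coherent cohomology group that can be read off from the combinatorial data, and then carry out that computation. First I would recall that the jumps of $A$ and their multiplicities are completely determined by the family of multisets $\mathcal{J}_{A,K(d)}$, $d\in\N'$: by \eqref{sss-jumpslim} and Corollary~\ref{cor-sup} the $i$-th jump of $A$ is $\sup_{d\in\N'}(j_{d,i}/d)$, where $(j_{d,1}\le\cdots\le j_{d,g})$ is the ascending list of $K(d)$-jumps counted with multiplicity. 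Moreover, by \eqref{sss-mudjump}, the function $m_{A,K(d)}$ (and hence $\mathcal{J}_{A,K(d)}$) is read off from the $\Gal(K(d)/K)\cong\mu_d(k)$-action on $\Lie(\mathscr{A}(d)_k)$: the multiplicity of the jump $i/d$ is the dimension of the subspace on which each $\zeta$ acts by $\zeta^{i}$. So it is enough to show that for every $d\in\N'$ the $\mu_d(k)$-representation $\Lie(\mathscr{A}(d)_k)$ depends only on the combinatorial data of $\mathscr{C}_k$.

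Next I would pass from the Néron model to the model of the curve. Since forming Jacobians commutes with base change and $C(d)$ again has index one, Raynaud's theorem \cite[9.5.4]{neron} provides a \emph{canonical} isomorphism $\mathscr{A}(d)^{o}\cong\Pic^{0}_{\mathscr{C}(d)/R(d)}$, where $\mathscr{C}(d)$ is the $sncd$-model of $C(d)$ of Proposition~\ref{prop-normdesing}, equipped with the natural action of $\mu_d=\Gal(K(d)/K)$ (acting on $R(d)$, hence on the normalization $\mathscr{C}_d$ of $\mathscr{C}\times_R R(d)$, hence on its minimal desingularization). Canonicity makes the isomorphism $\mu_d$-equivariant. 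Taking Lie algebras and using that $C(d)$ has index one (so that $\mathscr{C}(d)$ is cohomologically flat in degree zero over $R(d)$ and the Lie algebra of the Picard functor specializes), one obtains a $\mu_d(k)$-equivariant isomorphism $\Lie(\mathscr{A}(d)_k)\cong H^{1}(\mathscr{C}(d)_k,\mathcal{O}_{\mathscr{C}(d)_k})$. Thus the task becomes: compute this $H^{1}$, together with its $\mu_d(k)$-action, from the combinatorial data of $\mathscr{C}_k$.

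For the computation, Proposition~\ref{prop-normdesing} together with the local computations of Section~\ref{subsec-loccomp}---which under hypothesis $(*)$ specialize to the simpler normalizations of \cite{Halle-stable}---describe $\mathscr{C}(d)_k$ explicitly as a $\mu_d$-scheme: for each component $E_i$ of $\mathscr{C}_k$ the preimage $F_i=\mathscr{C}_d\times_{\mathscr{C}}E_i$ is a tame cyclic $\mu_{\gcd(N_i,d)}(k)$-cover of $E_i$ ramified precisely over $E_i\cap\bigcup_{j\ne i}E_j$, with local monodromies determined by the triples $(N_i,N_j,d)$ (using the canonical identification of $\Gal$ with $\mu(k)$ of \eqref{sss-kummer}); the exceptional components are rational with multiplicity pattern determined by the triples $(N,N',d)$; and the dual graph of $\mathscr{C}(d)_k$ is obtained from that of $\mathscr{C}_k$ by combinatorial edge-subdivision. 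I would then run the standard dévissage of $H^{1}(\mathscr{C}(d)_k,\mathcal{O})$: the normalization (Mayer--Vietoris) sequence contributes a ``graph part'' isomorphic to $H^{1}$ of the dual graph of $\mathscr{C}(d)_k$, with $\mu_d$-action induced by the (combinatorially determined) action on that graph; the ``component part'' is assembled from the groups $H^{1}(F_i,\mathcal{O}_{F_i})$, whose decomposition into $\mu_{\gcd(N_i,d)}(k)$-characters is given by the Chevalley--Weil formula for tame cyclic covers of curves and therefore depends only on $g(E_i)$, the degree $\gcd(N_i,d)$, and the local monodromies---in particular it is insensitive to $p$; and the contributions of the components of multiplicity $>1$ are obtained by building $\mathscr{C}(d)_k$ up one component at a time inside the regular surface $\mathscr{C}(d)$, so that the successive graded pieces are cohomologies of line bundles (of combinatorially computable degree) on the $F_i$ and the exceptional $\mathbb{P}^1$'s. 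Collating these contributions shows that $\Lie(\mathscr{A}(d)_k)$ as a $\mu_d(k)$-module is determined by the combinatorial data of $\mathscr{C}_k$, which by the first paragraph proves the theorem; one also gets in passing that the jumps are rational and lie in $(1/e(\mathscr{C}))\Z$.

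The main obstacle is the last, equivariant, part of the dévissage: one must track the $\mu_d(k)$-action carefully through the normalization sequence and through the multiplicity filtration of $\mathcal{O}_{\mathscr{C}(d)_k}$, and verify that the thickening contributions (line bundles of degree in the range $0\le\deg\le 2g-2$ on the positive-genus components) have dimensions and eigenspace decompositions that are combinatorial and characteristic-free---this is where the rigidity coming from the canonical local monodromies and from $\dim\Lie(\mathscr{A}(d)_k)=g$ has to be used. Hypothesis $(*)$ is precisely what keeps this manageable: it forbids two components of $p$-divisible multiplicity from meeting, so that one never leaves the situation covered by the explicit normalizations of \cite{Halle-stable} and can avoid the general locally toric analysis of the appendix.
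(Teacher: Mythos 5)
Your first two reductions coincide with the paper's (and with the proof in \cite{halle-neron} that the paper cites for this statement): the jumps are recovered from the $K(d)$-jumps, which by \eqref{sss-mudjump} amount to the $\mu_d(k)$-module structure of $\Lie(\mathscr{A}(d)_k)$, and Raynaud's theorem converts this into the equivariant cohomology group $H^1(\mathscr{C}(d)_k,\mathscr{O}_{\mathscr{C}(d)_k})$. The divergence, and the genuine gap, is in your third step: you propose to carry out the equivariant computation for \emph{every} $d\in\N'$, whereas the paper's argument only treats $d$ prime to $\lambda(\mathscr{C})=\lcm_i\{N_i\}$ and recovers the full filtration $\mathscr{F}^{\bullet}\mathscr{A}_k$ from these $d$ by density of $\Z_{(\lambda(\mathscr{C}))}\cap\Q'\cap[0,1]$ in $\Q'\cap[0,1]$ (note that your own reduction via \eqref{sss-jumpslim} uses a divisibility-cofinal tower of degrees, which are \emph{not} prime to $\lambda(\mathscr{C})$, so without this density observation you are forced to handle all $d$). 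For $d$ prime to $\lambda(\mathscr{C})$ every $F_i\to E_i$ is an isomorphism, the Galois action is trivial on all non-exceptional components, and the whole computation localizes at the intersection points via the explicit cyclic-quotient resolutions; this is exactly what makes the argument of \cite{halle-neron} work, and the paper explicitly remarks that for general $d$ the geometry and the Galois action are \emph{not} sufficiently well understood.

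Concretely, the unjustified assertions in your dévissage for general $d$ are: (a) that the character decomposition of $H^1(F_i,\mathscr{O}_{F_i})$ is determined by the combinatorial data — the cover $F_i\to E_i$ over a positive-genus component is governed by an eigen-line-bundle whose isomorphism class (e.g.\ whether it is trivial, and hence the number of connected components of $F_i$ and the $h^0$/$h^1$ of degree-zero eigenpieces) is not visibly a combinatorial invariant of $\mathscr{C}_k$; and (b) that the graded pieces of the multiplicity filtration contribute dimensions computable from ``combinatorially computable degrees'' — on a component of genus $g_i>0$ a line bundle of intermediate degree $0\le\deg\le 2g_i-2$ does not have cohomology determined by its degree alone. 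These are precisely the points where ``collating the contributions'' does not follow, and condition $(*)$ does not help here: it constrains the multiplicities of adjacent components relative to $p$, not the interaction of $d$ with the $N_i$. The paper sidesteps both issues by the restriction on $d$ (the statement for all $d$, Corollary \ref{cor-jumps}, is then deduced a posteriori from the rational-index filtration, not proved directly). To repair your proposal you would either need to insert the density reduction and then follow the fixed-point/cotangent-space analysis of \cite{halle-neron}, or genuinely prove the equivariant Chevalley--Weil-type statements above for arbitrary $d$, which is a substantially harder task than the proposal acknowledges.
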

\begin{proof}
See \cite[8.2]{halle-neron}.
\end{proof}

We will now show that the condition $(*)$ can be omitted, which
yields the following stronger theorem.

\begin{theorem} \label{Theorem-jumps} Let $C$ be a smooth, projective, geometrically
connected $K$-curve of genus $g>0$ and index one. We denote by $A$
the Jacobian of $C$.
 Let $\mathscr{C}/R$ be an $sncd$-model of $C$. Then the jumps of
$A$ and their multiplicities only depend on the combinatorial data
of $ \mathscr{C}_k $. In particular, they do not depend on the
characteristic exponent $p$ of $k$.
\end{theorem}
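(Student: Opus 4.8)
The plan is to reduce the computation of the jumps of $A=\Jac(C)$ to a purely combinatorial calculation on the tame base changes $\mathscr{C}(d)$ of an $sncd$-model of $C$, bypassing the hypothesis~$(*)$ of Theorem~\ref{Theorem-jumps*} by means of the explicit resolution results of Section~\ref{sec-sncd}. First I would recall that the jumps of $A$ and their multiplicities are invariants of $A$ itself, and that by \eqref{sss-jumpslim} together with \eqref{sss-mudjump} they are entirely determined by the family of multiplicity functions $m_{A,K(d)}$, $d\in\N'$: concretely, $m_{A,K(d)}(i/d)$ is the dimension of the subspace of $\Lie(\mathscr{A}(d)_k)$ on which every $\zeta\in\mu_d(k)=\Gal(K(d)/K)$ acts by multiplication with $\zeta^i$. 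Hence it suffices to express each function $m_{A,K(d)}$ in terms of the combinatorial data of $\mathscr{C}_k$, with no reference to the characteristic exponent $p$.

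The next step is to move from the N\'eron model to the curve. Since $C$ has index one, so does $C(d)$, and $\mathscr{C}(d)$ is a regular (indeed $sncd$) model of $C(d)$ by Proposition~\ref{prop-normdesing}; Raynaud's theorem therefore gives a canonical isomorphism $\mathscr{A}(d)^{o}\cong\Pic^{0}_{\mathscr{C}(d)/R(d)}$. This isomorphism is equivariant for the $\Gal(K(d)/K)$-action, which acts on $\mathscr{C}(d)$ through its action on $\mathscr{C}\times_{R}R(d)$ via the functoriality of normalization and of the minimal desingularization. Passing to Lie algebras of special fibres and using base change for the relative Picard scheme \cite[9.2.5 and 9.5.4]{neron}, one obtains a $\mu_d(k)$-equivariant identification $\Lie(\mathscr{A}(d)_k)\cong H^{1}(\mathscr{C}(d)_k,\mathcal{O})$. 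So it remains to prove that the eigenspace dimensions of the $\mu_d(k)$-module $H^{1}(\mathscr{C}(d)_k,\mathcal{O})$ depend only on the combinatorial data of $\mathscr{C}_k$ and on $d$. By Proposition~\ref{prop-normdesing}, together with the local toric computations of Section~\ref{subsec-loccomp} and Section~\ref{sec-loctor}, the full \emph{equivariant} combinatorial picture of $\mathscr{C}(d)_k$ is determined by that of $\mathscr{C}_k$: the irreducible components with their genera and multiplicities, the dual graph, the exceptional chains inserted at intersection points, and the $\mu_d(k)$-action permuting components with cyclic stabilizers that act on each component as a tame cyclic cover of the corresponding $E_i$ with explicitly prescribed branch locus and ramification -- all of this is read off from the combinatorial data of $\mathscr{C}_k$ and from $d$, independently of $p$.

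It then remains to compute the $\mu_d(k)$-module $H^{1}(\mathscr{C}(d)_k,\mathcal{O})$ from this equivariant combinatorial data. Because $p\nmid d$, every $k$-linear representation of $\mu_d(k)$ is semisimple, so eigenspace dimensions are additive along short exact sequences and coincide with those of the corresponding characteristic-zero representation; this permits a term-by-term computation. Using the devissage of $\mathcal{O}_{\mathscr{C}(d)_k}$ along the powers of its nilradical -- whose graded pieces are line bundles on the reduced components $F_{ij}$, twisted by powers of the conormal bundles at the intersection points, all pinned down by the local toric description -- together with the normalization sequence on the reduced curve, one reduces the computation to the $\mu_d(k)$-action on $H_1$ of the dual graph $\Gamma(\mathscr{C}(d)_k)$ (the toric contribution) and to the $\mu_d(k)$-action on $H^{1}(F_{ij},\mathcal{O})$ of each component, which, $F_{ij}\to E_i$ being a tame cyclic cover, is given by the classical character formula (Hurwitz' theorem and its refinement by Chevalley and Weil) in terms of the genus of $E_i$ and the combinatorial ramification data. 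Assembling the contributions yields a universal expression for $m_{A,K(d)}$ in the combinatorial data of $\mathscr{C}_k$ and $d$ that makes no mention of $p$; this is exactly the computation carried out in \cite[8.2]{halle-neron} under hypothesis~$(*)$, now freed of that hypothesis. Plugging the resulting functions $m_{A,K(d)}$ into \eqref{sss-jumpslim} gives the jumps and multiplicities of $A$ in terms of the combinatorial data of $\mathscr{C}_k$ and independently of $p$; one may double-check, via Winters' theorem~\ref{theo-winters}, that in the case $p=1$ this recovers the formula of \cite{halle-neron}.

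The main obstacle is precisely the situation excluded by~$(*)$: controlling the $\mu_d(k)$-action on the exceptional components that appear when resolving the tame cyclic quotient singularities of $\mathscr{C}_d$ at points where two components of $\mathscr{C}_k$ whose multiplicities are both divisible by $p$ intersect, and verifying that their contribution to $H^{1}(\mathscr{C}(d)_k,\mathcal{O})$, with its $\mu_d(k)$-action, is still governed by the $p$-free combinatorial recipe. This is where the explicit resolution of locally toric singularities from Kiraly's thesis, reproduced in Section~\ref{sec-loctor}, and the local computations of Section~\ref{subsec-loccomp}, do the real work. Once these are in place, no characteristic hypothesis survives and the theorem follows.
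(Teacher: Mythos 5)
There is a genuine gap, and it sits exactly where the paper's proof does its real work. You attempt to treat \emph{every} $d\in\N'$ at once, and for this you assert that ``the full equivariant combinatorial picture of $\mathscr{C}(d)_k$ is determined by that of $\mathscr{C}_k$'': the components $F_{ij}$, their genera, the way $\mu_d(k)$ permutes them, and the stabilizer actions realizing each $F_{ij}\to E_i$ as a tame cyclic cover with ``explicitly prescribed branch locus and ramification''. Proposition \ref{prop-normdesing} does not give this. When $\gcd(N_i,d)=c>1$ and $E_i$ is a principal component (positive genus, or meeting the other components in three or more points), the degree-$c$ cover $F_i\to E_i$ is classified by a $\mu_c$-torsor on (an open part of) $E_i$; only its Euler characteristic, local behaviour over intersection points, and multiplicity data are combinatorial, while its global isomorphism class --- even the number of connected components of $F_i$ and the genera of the $F_{ij}$ --- is not read off from the weighted dual graph. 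This is precisely why the paper's proof first reduces, via the limit description of the jumps, to the subfamily of $d\in\N'$ prime to $\lambda(\mathscr{C})=\lcm_i\{N_i\}$ (dense among the relevant indices), for which all covers $F_i\to E_i$ are isomorphisms, the Galois action is trivial on the strict transforms, and everything is concentrated on the exceptional rational chains and the fixed intersection points, where the locally toric description applies; the authors explicitly remark that the needed description ``is not clear for general $d\in\N'$''. Your proposal omits this reduction and replaces it by the unsupported claim above, so the argument fails exactly in the case (multiplicities sharing factors with $d$, in particular the wild multiplicities) that the theorem is meant to reach.

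Two further steps would also need justification even for the $d$ you can control. First, the Chevalley--Weil character formula you invoke for $H^1(F_{ij},\mathscr{O})$ is a characteristic-zero statement; for tame covers in characteristic $p$ it is a nontrivial theorem (of Nakajima type) and must be cited or proved, not taken as ``classical''. Second, the d\'evissage of $\mathscr{O}_{\mathscr{C}(d)_k}$ along the nilradical has graded pieces that are \emph{global} line bundles on the $F_{ij}$ (restrictions of $\mathscr{O}(-D)$ for divisors supported on the fibre); they are not ``pinned down by the local toric description'', and their individual $h^0,h^1$ (with $\mu_d(k)$-eigenspace structure) are not functions of combinatorial degrees alone in the intermediate degree range, so reducing the whole computation to $H_1$ of the dual graph plus $H^1(F_{ij},\mathscr{O})$ requires an argument you have not supplied. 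The paper's route --- restrict to $d$ prime to $\lambda(\mathscr{C})$, use equivariance of $\Pic^0_{\mathscr{C}(d)/R(d)}\cong\mathscr{A}(d)^o$, and compute the $\mu_d(k)$-action on $H^1(\mathscr{C}(d)_k,\mathscr{O})$ from the cotangent spaces at the fixed intersection points via the explicit resolution of the tame cyclic quotient singularities, following \cite[8.2]{halle-neron} --- avoids all three of these issues, and the density argument then recovers the jumps and multiplicities for free.
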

\begin{proof} We will indicate how the proof of
\cite[8.2]{halle-neron} can be generalized. We write
$\mathscr{C}_k = \sum_{i \in I} N_i E_i $.
 For every $d$ in $\N'$, we denote by $\mathscr{C}(d)$ the minimal
 desingularization of the normalization $\mathscr{C}_d$ of
 $\mathscr{C}\times_R R(d)$ as in Chapter \ref{chap-jacobians},
 \eqref{sss-desingsetup}. Then $\mathscr{C}(d)$ is an $sncd$-model
 of $C\times_K K(d)$, by Proposition \ref{prop-normdesing} in Chapter \ref{chap-jacobians}.
  The jumps in
$\mathscr{F}^{\bullet} \mathscr{A}_k$ can be computed if one has a
sufficiently good description of the Galois action on the
$sncd$-models $\mathscr{C}(d)$ as $d$ varies in $\N'$. Indeed,
there is a natural $\Gal(K(d)/K)$-action on $ C \times_K K(d) $
which extends uniquely to $ \mathscr{C}(d) $, in such a way that
the natural isomorphism
$$ \Pic^0_{\mathscr{C}(d)/R(d)} \cong \mathscr{A}(d)^o $$
is equivariant. This yields an equivariant isomorphism
$$ H^1(\mathscr{C}(d)_k, \mathscr{O}_{\mathscr{C}(d)_k}) \cong \Lie(\mathscr{A}(d)_k) $$
(cf.~\cite[2.4]{halle-neron}).

Moreover, it is not necessary to treat \emph{all} $d \in \N'$. Let
us put $$ \lambda(\mathscr{C}) = \lcm \{ N_i \mid i \in I \}. $$
In order to prove Theorem \ref{Theorem-jumps}, it suffices to
show, for every $ d \in \mathbb{N}' $ prime to
$\lambda(\mathscr{C})$, that the $K(d)$-jumps and their
multiplicities only depend on the combinatorial structure of $
\mathscr{C}_k $. This follows from the fact that the set $
\mathbb{Z}_{(\lambda(\mathscr{C}))}\cap \Q' \cap [0,1] $ is dense
in $ \Q' \cap [0,1] $. The reason for restricting to these values
$d$ is that both the geometry of $\mathscr{C}(d)$ and the
$\Gal(K(d)/K)$-action can be described sufficiently well. This is
not clear for general $ d \in \N' $.

 Thus let $d$ be an element of $\N'$ that is prime to
$\lambda(\mathscr{C})$.
 Using Lemma \ref{lemma-fiberdescription} in Chapter \ref{chap-jacobians} and
 the results in Section  \ref{sec-loctor} of that chapter, it is easy to check that both the statement and proof of \cite[3.4]{halle-neron} extend directly to our situation.
  This means that the following properties hold.

\begin{enumerate}
\item Every irreducible component $F$ of $\mathscr{C}(d)_k$ is
stable under the $\Gal(K(d)/K)$-action.

\item $\Gal(K(d)/K)$ acts trivially on $F$ unless $F$ belongs to
the exceptional locus of the minimal desingularization $ \rho :
\mathscr{C}(d) \to \mathscr{C}_d $.

\item Every intersection point $ x \in F \cap F' $ of distinct
irreducible components of $\mathscr{C}(d)_k$ is a fixed point for
$\Gal(K(d)/K)$.
\end{enumerate}

Then in order to compute the irreducible components of the
 action of $\Gal(K(d)/K) \cong \mu_d(k)$ on
$$H^1(\mathscr{C}(d)_k, \mathscr{O}_{\mathscr{C}(d)_k}),$$ it
suffices to determine the irreducible components of the
 action of $\Gal(K(d)/K)$ on the cotangent space $\Omega_x$ of $\mathscr{C}(d)$ at $x$ for
 each intersection point $x \in F \cap F'$ in the special fiber
$\mathscr{C}(d)_k$. In fact, the proof in \cite{halle-neron}
extends directly to our situation.

%The cotangent space $Cot(\mathscr{C}(d),x)$ has a natural basis $ \{v_F, v_{F'}\} $ corresponding to the branches $F$ and $ F'$ of $\mathscr{C}(d)_k$ intersecting at $x$.

 The Galois action on the cotangent space $\Omega_x$ can be
 computed on the completed local ring
 $\widehat{\mathscr{O}}_{\mathscr{C}(d),x}$.
The point $x$ maps to an intersection point $y$ in the special
fiber of $\mathscr{C}_d$. We gave an explicit description of
$\widehat{\mathscr O}_{\mathscr{C}_d, y}$ in Lemma
\ref{lemma-invariantring}, and with this presentation it is
straightforward to describe the $\Gal(K(d)/K)$-action, similarly
as in \cite[\S4.1]{halle-neron}. Moreover, using the explicit
description of the minimal desingularization
$$ \rho_y : \mathscr{Z}_y \to \Spec \widehat{\mathscr O}_{\mathscr{C}_d, y} $$
in Section \ref{sec-loctor} of Chapter \ref{chap-jacobians}, it is
easy to describe the unique lifting of the $\Gal(K(d)/K)$-action
to $\mathscr{Z}_y$, and we can use the same arguments as in
\cite[8.2]{halle-neron}.
\end{proof}

\begin{cor}\label{cor-jumps} We keep the notations of Theorem
\ref{Theorem-jumps}. For every $ d \in \mathbb{N}' $, the
$K(d)$-jumps of $A$ and their multiplicities only depend on the
combinatorial data of the special fiber $\mathscr{C}_k$.
\end{cor}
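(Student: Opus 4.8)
The plan is to obtain Corollary \ref{cor-jumps} as a formal consequence of Theorem \ref{Theorem-jumps}, without any further analysis of the models $\mathscr{C}(d)$: the point is that, for a fixed $d\in\N'$, the $K(d)$-jumps of $A$ and their multiplicities are entirely determined by the jumps of the real-indexed Edixhoven filtration $\mathscr{F}^\bullet\mathscr{A}_k$ together with their multiplicities, and the latter datum is exactly what Theorem \ref{Theorem-jumps} pins down in terms of the combinatorial data of $\mathscr{C}_k$. This is why the corollary, unlike the theorem, does not need to be restricted to $d$ prime to some invariant of $\mathscr{C}$.

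In more detail, first recall from \cite[4.11]{HaNi} that $\mathscr{F}^{a/b}\mathscr{A}_k=F^a_b\mathscr{A}_k$ for all $a\in\N$ and $b\in\N'$ with $a/b\in[0,1[$; in particular $F^j_d\mathscr{A}_k=\mathscr{F}^{j/d}\mathscr{A}_k$ for every $j\in\{0,\dots,d-1\}$, whence
$$m_{A,K(d)}(j)=\dim\mathscr{F}^{j/d}\mathscr{A}_k-\dim\mathscr{F}^{(j+1)/d}\mathscr{A}_k$$
with the convention $\dim\mathscr{F}^{d/d}\mathscr{A}_k=0$. Next I would observe that $\dim\mathscr{F}^0\mathscr{A}_k=g$ and that $\dim\mathscr{F}^\alpha\mathscr{A}_k$, as $\alpha$ runs over $\Q'\cap[0,1[$, decreases by exactly the multiplicity $m_A(\rho)$ at each jump $\rho$ of $\mathscr{A}_k$ that is crossed; thus $\alpha\mapsto\dim\mathscr{F}^\alpha\mathscr{A}_k$, and therefore each $m_{A,K(d)}(j)$ together with the set $\mathcal{J}_{A,K(d)}$, is an explicit function of the list of jumps of $A$ counted with multiplicities (this is essentially the recipe of \cite[4.13]{HaNi}; see also \eqref{sss-jumpslim}). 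Invoking Theorem \ref{Theorem-jumps}, this list depends only on the combinatorial data of $\mathscr{C}_k$, and the corollary follows.

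The only delicate point in carrying this out is the bookkeeping at the endpoints of the intervals $[j/d,(j+1)/d[$, i.e.\ deciding which graded piece $\Gr^\bullet_d\mathscr{A}_k$ receives the contribution of a jump $\rho$ of $\mathscr{A}_k$ when $\rho$ equals some $j/d$; here one uses that the jumps of $A$ are rational with $e(C)\cdot\rho\in\Z$, by Corollary \ref{cor-minindex}, together with the compatibility between $F^\bullet_d\mathscr{A}_k$ and $\mathscr{F}^\bullet\mathscr{A}_k$ recorded in \cite[4.13]{HaNi} (which ultimately rests on the stability of the identity component of the N\'eron model of $A\times_K L$ under further separable extensions of $L$, \cite[IX.3.3]{sga7.1}). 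In any case, once a convention for $\mathscr{F}^\bullet\mathscr{A}_k$ is fixed, $\dim\mathscr{F}^\alpha\mathscr{A}_k$ becomes a fixed function of the multiplicity function $m_A$, so this point does not affect the validity of the argument; I expect it to be the only item requiring attention, the rest being immediate from Theorem \ref{Theorem-jumps}.
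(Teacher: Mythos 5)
Your argument is essentially the paper's own proof: the paper deduces the corollary in exactly this way, from the identity $F^i_d\mathscr{A}_k=\mathscr{F}^{i/d}\mathscr{A}_k$ together with the observation that $i$ is a $K(d)$-jump of multiplicity $m$ if and only if $\dim F^i_d\mathscr{A}_k-\dim F^{i+1}_d\mathscr{A}_k=m$, so that Theorem \ref{Theorem-jumps} applies. The endpoint bookkeeping you single out (the value of $\mathscr{F}^{\alpha}\mathscr{A}_k$ when $\alpha$ is itself a jump) is left entirely implicit in the paper, so your write-up matches, and if anything slightly elaborates on, the published argument.
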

\begin{proof}
An integer $ i \in \{0, \ldots, d-1 \} $ is a $K(d)$-jump of
multiplicity $m>0$ of $A$ if and only if
$$ \dim (F^i_d \mathscr{A}_k) - \dim (F^{i+1}_d \mathscr{A}_k) = m. $$
But by construction, $ F^k_d \mathscr{A}_k = \mathscr{F}^{k/d}
\mathscr{A}_k $ for every $ k \in \{0, \ldots, d-1 \} $, so the
result follows from Theorem \ref{Theorem-jumps}.
\end{proof}

\begin{cor} \label{cor-minindex} We keep the notations of Theorem
\ref{Theorem-jumps}. The jumps of $A$ are rational numbers, and
the stabilization index $e(C)$ is the smallest integer $ n >0 $
such that $ n \cdot j$ is integer for every jump $j$ of $A$. In
particular, $e(C)$ only depends on the
 abelian $K$-variety $A$, and not on $C$.
\end{cor}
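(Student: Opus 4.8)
The plan is to derive Corollary~\ref{cor-minindex} from Theorem~\ref{Theorem-jumps} together with the characterization of $e(C)$ via the characteristic polynomial $P_C(t)$ established in Proposition~\ref{prop-charpol-e}, and the reduction-to-characteristic-zero technique of Theorem~\ref{theorem-compser}. First I would fix a relatively minimal $sncd$-model $\mathscr{C}/R$ of $C$ (the minimal one, which exists since $g>0$). By Winters' theorem (Theorem~\ref{theo-winters}), there is a curve $D$ over $\C((t))$ with a minimal $sncd$-model $\mathscr{D}$ over $\C[[t]]$ such that $\mathscr{C}_k$ and $\mathscr{D}_{\C}$ have the same combinatorial data; moreover $D$ has the same genus and has index one, and setting $B=\Jac(D)$ we have $e(C)=e(D)$ (the stabilization index depends only on the multiplicities of the principal components, which are part of the combinatorial data). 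By Theorem~\ref{Theorem-jumps}, the jumps of $A=\Jac(C)$ and their multiplicities coincide with the jumps of $B$ and their multiplicities. Hence it suffices to prove both assertions for $B$, i.e. in the case where $K$ has equal characteristic zero, so that $B$ is tamely ramified.

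Once we are in the tamely ramified case, the rationality of the jumps is immediate: by Proposition~\ref{prop-numrel} (or the discussion in \eqref{sss-jumpslim}), for a tamely ramified abelian variety the jumps, counted with multiplicity, form the tuple $(c_1(B),\dots,c_g(B))$ of elementary divisors divided by an appropriate ramification index, and in any case the jumps lie in $(1/e)\Z$ where $e=[L:K]$ and $L$ is the minimal extension where $B$ acquires semi-abelian reduction (this is spelled out in \eqref{sss-jumpslim} and the paragraph on $\mathscr{F}^\bullet$). Transporting back along Theorem~\ref{Theorem-jumps}, the jumps of $A$ are rational. For the identification of $e(C)$, I would argue as follows: let $n>0$ be the smallest integer such that $nj\in\Z$ for every jump $j$ of $A$. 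By the previous paragraph, $n$ is also the smallest integer such that $nj\in\Z$ for every jump $j$ of $B$. In the tamely ramified case, the paragraph following \eqref{sss-jumpslim} (invoking \cite[4.13]{HaNi} and \cite[4.20]{HaNi}) tells us that this smallest integer equals $[L:K]$, the degree of the minimal semi-stable extension of $D$. Since $g(D)=g(C)>0$ and $D$ is either not of genus one or, if $g=1$, we may assume $D$ is an elliptic curve (as $C$ has index one, so does $D$, hence $D$ is itself an elliptic curve when $g=1$), Proposition~\ref{prop-min} gives $e(D)=[L:K]$. Combining, $n=[L:K]=e(D)=e(C)$, which is the desired statement.

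The claim that $e(C)$ depends only on $A$ and not on $C$ is then a formal consequence: we have just exhibited $e(C)$ as the smallest positive integer $n$ with $nj\in\Z$ for every jump $j$ of $A$, and the jumps of $A$ and their multiplicities are invariants of the N\'eron model of $A$ (they are defined via Edixhoven's filtration $\mathscr{F}^\bullet\mathscr{A}_k$), hence of $A$ itself. I would close by remarking that the hypothesis $g>0$ and index one is used to guarantee the existence of a unique minimal $sncd$-model and the applicability of Raynaud's theorem $\Pic^0_{\mathscr{C}/R}\cong\mathscr{A}^o$ underlying Theorem~\ref{Theorem-jumps}.

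The main obstacle is not in this corollary itself --- which is a fairly short deduction --- but in making sure the genus-one case is handled cleanly: one must observe that $C$ of index one and genus one forces $C$ to have a rational point, hence $C=\Jac(C)=A$ is an elliptic curve, so that Proposition~\ref{prop-min} (which is stated precisely under the hypothesis "$g\neq 1$ or $C$ is an elliptic curve") applies directly, and likewise the Winters partner $D$ is an elliptic curve. A secondary point requiring a little care is the compatibility of the jump data under Winters' construction: Theorem~\ref{Theorem-jumps} gives exactly that the jumps and multiplicities are read off from the combinatorial data of $\mathscr{C}_k$, so equality of combinatorial data between $\mathscr{C}_k$ and $\mathscr{D}_{\C}$ yields equality of jump data between $A$ and $B$ --- but one should note explicitly that the relevant models on both sides are the minimal $sncd$-models, which is where the "relatively minimal implies relatively minimal" remark after Winters' theorem is used, together with the fact that $e(\mathscr{C})$ for the minimal model equals $e(C)$.
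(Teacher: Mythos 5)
Your proposal is correct and follows essentially the same route as the paper's own proof: reduce via Winters' theorem and Theorem \ref{Theorem-jumps} to the equal characteristic zero (hence tame) case, then identify the smallest common denominator of the jumps with $[L:K]=e(C)$ using Proposition \ref{prop-min} and the known description of jumps of tamely ramified abelian varieties. The only cosmetic difference is that you invoke the in-text statement after \eqref{sss-jumpslim} (via \cite[4.13, 4.20]{HaNi}) where the paper cites \cite[2.4]{halle-neron}, and your explicit handling of the genus-one/index-one case is a welcome clarification rather than a deviation.
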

\begin{proof}
Let $\mathscr C$ be the minimal $sncd$-model of $C$. By
 Theorem \ref{theo-winters} we can find a smooth, projective and
geometrically connected $\mathbb{C}((t))$-curve $D$ of genus $g$
and an $sncd$-model $\mathscr{D}/\mathbb{C}[[t]]$ of $D$ such that
the special fibers $\mathscr{C}_k$ and $\mathscr{D}_{\C}$ have the
same combinatorial data. Then $\mathscr{D}$ is the minimal
$sncd$-model of $D$ and $ e(C) = e(D) $. Set $B = \Jac(D)$. By
Theorem \ref{Theorem-jumps}, the jumps of $B$ and their
multiplicities are the same as those of $A$. Thus we may assume
that $K=\C((t))$. Then $A$ is tamely ramified, and $e(A)$ equals
the degree of the minimal extension of $\mathbb{C}((t))$ where $A$
acquires stable reduction, by Proposition \ref{prop-min}. The
result now follows from \cite[2.4]{halle-neron}.
\end{proof}

 The following corollary shows how the jumps of $A=\Jac(C)$ are
 related to the characteristic polynomial $P_C(t)$ that we introduced in
 Chapter \ref{chap-jacobians}, Definition \ref{def-charpol}.
\begin{cor} \label{cor-roots} We keep the notations of Theorem
\ref{Theorem-jumps}. For every jump $j$ of $A$, the value
$\exp(2\pi i j)$ is a root of the characteristic polynomial
$P_C(t)$ of multiplicity at least $m_A(j)$.
\end{cor}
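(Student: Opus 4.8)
The plan is to reduce to the equal-characteristic-zero situation, where $C$ becomes tamely ramified, then to rewrite $P_C(t)$ cohomologically as a characteristic polynomial of tame monodromy, and finally to invoke the known relationship between the jumps of a tamely ramified abelian variety and the eigenvalues of the Galois action on its Tate module.

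First I would apply Winters' Theorem \ref{theo-winters}: choose a smooth, projective, geometrically connected curve $D$ over $\C((t))$, of genus $g$, admitting an $sncd$-model $\mathscr{D}/\C[[t]]$ whose special fiber has the same combinatorial data as $\mathscr{C}_k$. By Theorem \ref{Theorem-jumps} the jumps of $\Jac(C)$ and $\Jac(D)$, with their multiplicities, agree, and by Definition \ref{def-charpol} one has $P_C(t)=P_D(t)$, since this polynomial is read off from the combinatorial data alone; the complex numbers $\exp(2\pi i j)$ attached to the (common) jumps are therefore the same. Hence it suffices to prove the statement for $D$, i.e.\ one may assume $K=\C((t))$. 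In particular $A=\Jac(C)$ is tamely ramified, and (since $C$ has index one, so $C$ is an elliptic curve when $g=1$) the cohomological comparisons below all apply.

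Next I would rewrite $P_C(t)$ cohomologically. Fix a topological generator $\sigma$ of $\Gal(K^t/K)$. Since $C$ is tamely ramified, Proposition \ref{prop-charpol} gives $P_C(t)=P'_C(t)$, the characteristic polynomial of $\sigma$ on $H^1(C\times_K K^t,\Q_\ell)$. Using the canonical Galois-equivariant isomorphisms $H^1(A\times_K K^s,\Q_\ell)\cong H^1(C\times_K K^s,\Q_\ell)$ and $H^1(A\times_K K^s,\Q_\ell)\cong \Hom_{\Z_\ell}(T_\ell A,\Q_\ell)$ recalled in Chapter \ref{chap-preliminaries}, together with the fact that over $\C((t))$ the $\ell$-adic cyclotomic character is trivial (all roots of unity lie in $\C\subset K$), so that the Weil pairing makes $T_\ell A\otimes_{\Z_\ell}\Q_\ell$ self-dual as a $\sigma$-module, one sees that $P_C(t)$ is also the characteristic polynomial of $\sigma$ acting on $T_\ell A\otimes_{\Z_\ell}\Q_\ell$. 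It then remains to show that for every jump $j$ of $A$ (rational by Corollary \ref{cor-minindex}), $\exp(2\pi i j)$ is an eigenvalue of $\sigma$ on $T_\ell A\otimes_{\Z_\ell}\Q_\ell$ of multiplicity at least $m_A(j)$.

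For this last point I would appeal to the relation between the jumps and the monodromy action studied in \cite{HaNi-jumps}, reducing as follows. By non-archimedean uniformization $0\to M\to E^{\an}\to A^{\an}\to 0$ together with Proposition \ref{prop-bcuniform}, the jumps of $A$ with multiplicities coincide with those of the semi-abelian $K$-variety $E$, while $T_\ell A\otimes_{\Z_\ell}\Q_\ell$ is an extension of a $\sigma$-module with finite-order action by $T_\ell E\otimes_{\Z_\ell}\Q_\ell$; and, by the additivity of the base change conductor and of the jump multiplicities in the tame case \cite[4.23]{HaNi} along $0\to E_{\tor}\to E\to E_{\ab}\to 0$, one reduces to the torus case (treated via the $\sigma$-action on the cocharacter module, as in \cite{chai-yu}) and to the case of an abelian variety with potential good reduction. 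In the latter case, writing $L=K(e)$ for the minimal extension realizing good reduction and $j=a/e$, the description in \eqref{sss-mudjump} identifies $m_A(j)$ with the dimension of the subspace of $\Lie(\mathscr{A}(e)_k)$ on which each $\zeta\in\mu_e(k)$ acts by $\zeta^a$; over $\C$ this is a Hodge summand of $H^1(A\times_K K^s,\C)$ on which the semisimple part of $\sigma$ acts by $\exp(2\pi i a/e)$, so its dimension is at most the multiplicity of $\exp(2\pi i j)$ as an eigenvalue of $\sigma$, which is what we need. I expect the main obstacle to be exactly this eigenvalue comparison: matching the $\mu_e$-equivariant structure of $\Lie(\mathscr{A}(e)_k)$, which governs the jump multiplicities, with the Hodge decomposition of $H^1(A\times_K K^s,\C)$ carrying the semisimple part of the monodromy — this is the technical heart of \cite{HaNi-jumps} — together with the bookkeeping needed to propagate it through the uniformization and the toric part.
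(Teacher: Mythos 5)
Your argument follows the paper's own route: reduce to $K=\C((t))$ via Winters' theorem and Theorem \ref{Theorem-jumps} (both $P_C(t)$ and the jumps with their multiplicities depend only on the combinatorial data of $\mathscr{C}_k$), then use Proposition \ref{prop-charpol} in the tame case to identify $P_C(t)$ with the characteristic polynomial of tame monodromy on $H^1(C\times_K K^s,\Q_\ell)\cong (T_\ell A)^{\vee}\otimes_{\Z_\ell}\Q_\ell$. The only divergence is the last step: where you sketch the jump--eigenvalue comparison via uniformization, additivity and a Lie/Hodge matching, the paper simply quotes \cite[5.14]{HaNi}, which is precisely this statement for tamely ramified abelian varieties, so your final paragraph can be replaced by that citation (and the duality/sign issues you flag are harmless in any case, since $P_C(t)\in\Z[t]$ has only roots of unity as roots, so $\exp(2\pi i j)$ and $\exp(-2\pi i j)$ occur with equal multiplicity).
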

\begin{proof}
Let $\mathscr C$ be the minimal $sncd$-model of $C$. It is clear
that $P_C(t)$ can be computed from the combinatorial data of
$\mathscr{C}_k$, and by Theorem \ref{Theorem-jumps}, the same is
true for the jumps of $A$ and their multiplicities. Thus we can
reduce to the case $K=\C((t))$ as in the proof of Corollary
\ref{cor-minindex}.

Let $\sigma$ be a topological generator of the absolute Galois
group $\Gal(K^s/K)$. We know by Proposition \ref{prop-charpol}
that $P_C(t)$ is the characteristic polynomial of the action of
$\sigma$ on
$$H^1(C\times_K K^s,\Q_\ell)\cong (T_\ell
A)^{\vee}\otimes_{\Z_\ell}\Q_\ell$$ so that the result follows
from \cite[5.14]{HaNi}.
\end{proof}

\chapter{The base change conductor and the Artin
conductor}\label{chap-artin} In this chapter, we assume that $k$
is algebraically closed. We will compare the base change conductor
of the Jacobian variety of a $K$-curve $C$ to Saito's Artin
conductor of $C$ and other invariants of the curve, assuming that
the genus of $C$ is $1$ or $2$.

\section{Some comparison results}
\subsection{Algebraic tori}
\sss Let $T$ be an algebraic torus over $K$. We denote its
cocharacter module by $X_{\!*}(T)$. The Artin conductor $\Art(T)$
of $T$ is defined as the Artin conductor of the
$\Gal(K^s/K)$-module $X_{\!*}(T)\otimes_{\Z}\Q$. Likewise, we
define the Swan conductor $\mathrm{Sw}(T)$ as the wild part of the
Artin conductor of $X_{\!*}(T)\otimes_{\Z}\Q$.

\sss A deep result of Chai and Yu  \cite[\S11 and \S12]{chai-yu}
states that the base change conductor $c(T)$ of $T$ is related to
the Artin conductor by the formula
\begin{equation}\label{eq-artin}
c(T)=\frac{\Art(T)}{2}.\end{equation} In particular, $c(T)$ is
invariant under isogeny. This is not the case for the elementary
divisors of $T$, which depend on the
 integral structure of the Galois module $X_{\!*}(T)$ \cite[8.5]{chai}.
 We will now give a similar interpretation of the tame base change
 conductor of $T$.

 \begin{prop}\label{prop-ctame}
Let $T$ be an algebraic $K$-torus of dimension $g$ with character
module $X(T)$. Denote by $\mathrm{Sw}(T)$ the wild part of the
Artin conductor $\Art(T)$. The tame part
$$\Art(T)-\mathrm{Sw}(T)=g-\mathrm{rank}_{\Z}X(T)^I$$ of the Artin conductor is equal to the
unipotent rank $u(T)$ of $T$, i.e., the unipotent rank of the
identity component of the special fiber of the N\'eron model of
$T$. Moreover, ordering $\N'$ by the divisibility relation, we
have the following equalities:
\begin{eqnarray}
 \frac{\mathrm{Sw}(T)}{2}&=& \lim_{\stackrel{\longrightarrow}{d\in \N'}}
\frac{c(T(d))}{d}, \label{eq-ctame2}
\\[2ex]
\frac{u(T)}{2}&=&c_{\tame}(T). \label{eq-ctame1}
\end{eqnarray}
 \end{prop}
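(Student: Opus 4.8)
The strategy is to reduce everything to the known Chai--Yu formula $c(T) = \Art(T)/2$ from \eqref{eq-artin} together with the behaviour of the Artin conductor under tame base change (Proposition \ref{prop-swan}) and the relation between the tame base change conductor and base change conductors up a tower of tame extensions (Corollary \ref{cor-sup}(2)). First I would handle the tame-part identity. The tame part of $\Art(T)$ is by definition $\dim_\Q(X_{\!*}(T)\otimes\Q) - \dim_\Q((X_{\!*}(T)\otimes\Q)^I) = g - \rank_\Z X_{\!*}(T)^I$, and since $X(T)$ and $X_{\!*}(T)$ are $I$-dual lattices of the same rank, $\rank_\Z X_{\!*}(T)^I = \rank_\Z X(T)^I$; so the tame part equals $g - \rank_\Z X(T)^I$. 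To identify this with $u(T)$, recall that the identity component $\mathscr T^o_k$ of the special fiber of the N\'eron $lft$-model has a Chevalley decomposition with trivial abelian part, reductive rank $t(T) = \rank_\Z X(T)^I$ (this is \cite[3.13]{HaNi-comp}, already cited in \eqref{sss-ranks}), and unipotent part of dimension $u(T)$; since $\dim \mathscr T^o_k = \dim T = g$, we get $u(T) = g - t(T) = g - \rank_\Z X(T)^I$, which is exactly the tame part of $\Art(T)$.

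Next I would prove \eqref{eq-ctame2}. Apply \eqref{eq-artin} to the torus $T(d) = T\times_K K(d)$, whose cocharacter module is the restriction of $X_{\!*}(T)$ to $\Gal(K^s/K(d))$; thus $c(T(d)) = \Art(T(d))/2 = \Art(X_{\!*}(T)(d)\otimes\Q)/2$. Dividing by $d$ and taking the limit over $\N'$ ordered by divisibility, Proposition \ref{prop-swan} (with $V = X_{\!*}(T)\otimes\Q$) gives
$$\lim_{\stackrel{\longrightarrow}{d\in\N'}}\frac{c(T(d))}{d} = \frac{1}{2}\lim_{\stackrel{\longrightarrow}{d\in\N'}}\frac{\Art(V(d))}{d} = \frac{\Sw(V)}{2} = \frac{\Sw(T)}{2},$$
which is \eqref{eq-ctame2}.

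Finally, \eqref{eq-ctame1}. By Corollary \ref{cor-sup}(2), $c_{\tame}(T) = \lim_{\stackrel{\longrightarrow}{d\in\N'}} c(T,K(d))$, where $c(T,K(d))$ is the $K(d)$-base change conductor. The point is that $c(T,K(d))$ is \emph{not} the same as $c(T(d))$; rather, by the additivity in towers \eqref{sss-addbc} (or directly from \cite[4.18]{HaNi}, i.e.\ Proposition \ref{prop-numrel}), $c(T,K(d)) = c(T) - c(T(d))/d$ once $K(d)$ is large enough, but for general $d$ one has $c(T(d))/d + c(T,K(d)) = c(T,K(d'))$-type relations; the clean way is to use \eqref{eq-artin} for both $T$ and $T(d)$ to write $c(T,K(d)) = c(T) - c(T(d))/d = \tfrac12(\Art(T) - \Art(T(d))/d)$ when $T(d)$ has semi-abelian (i.e.\ split) reduction, but in general I would instead combine Proposition \ref{prop-numrel} with the fact that $c(T,K(d))\cdot d$ is the sum of the $K(d)$-jumps. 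The cleanest route: since $\Art(T(d)) = \Art(T(d)) - \Sw(T(d)) + \Sw(T(d)) = u(T(d)) + d\cdot\Sw(T)$ by \eqref{eq-ctame2}'s computation and the first part applied to $T(d)$, and since $c(T,K(d)) = c(T) - c(T(d))/d$ for $d$ divisible by $[L:K]$ where $L$ splits $T$, we get for such $d$ that $c(T,K(d)) = \tfrac12(\Art(T) - \Sw(T)) = \tfrac12\,u(T)$, independent of $d$; the monotone limit in Corollary \ref{cor-sup}(2) is then constant equal to $u(T)/2$, giving \eqref{eq-ctame1}. The main obstacle I anticipate is bookkeeping the distinction between $c(T(d))$, $c(T,K(d))$, and $c(T)$ correctly — in particular making sure the "for $d$ divisible by $[L:K]$" reduction is legitimate, which it is because $u(T(d)) = 0$ once $T(d)$ is split and the tower of $K(d)$ with $[L:K]\mid d$ is cofinal in all tame extensions, so the limit over that cofinal subfamily computes $c_{\tame}(T)$.
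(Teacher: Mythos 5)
Your treatment of the tame part of $\Art(T)$ and of \eqref{eq-ctame2} is correct and is essentially the proof in the paper: the identity $u(T)=g-\rank_{\Z}X(T)^I$ is the cited fact from \cite[3.14]{HaNi-comp} (your derivation via $t(T)=\rank_\Z X(T)^I$ and $\dim=t+u$ is fine), and \eqref{eq-ctame2} is obtained, exactly as in the paper, by applying the Chai--Yu formula \eqref{eq-artin} over each $K(d)$ and invoking Proposition \ref{prop-swan} for $V=X_{\!*}(T)\otimes_\Z\Q$.

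The problem is in your final step for \eqref{eq-ctame1}. You restrict to those $d\in\N'$ that are divisible by $[L:K]$, where $L$ is the splitting field of $T$, and claim this family is cofinal among finite tame extensions. But when $T$ is wildly ramified, $p$ divides $[L:K]$ (since $k$ is algebraically closed, $L/K$ is totally ramified, so wild ramification forces $p\mid[L:K]$), hence \emph{no} element of $\N'$ is divisible by $[L:K]$: your family is empty and the limit cannot be computed along it. This is precisely the case where the proposition has content -- in the tame case $\Sw(T)=0$ and $c_{\tame}(T)=c(T)$ already follow from Proposition \ref{prop-numrel} and \eqref{eq-artin}. Moreover, the hedge that drove you to this restriction is unfounded: the identity $c(T,K(d))=c(T)-c(T(d))/d$ holds for \emph{every} $d\in\N'$, not just for ``large'' $K(d)$; apply \eqref{sss-addbc} to the tower $K\subset K(d)\subset K''$ with $K''=L\cdot K(d)$, over which $T$ is split, noting $e(K(d)/K)=d$. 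Granting this, the argument closes at once and is the paper's: by Corollary \ref{cor-sup}(2) and your \eqref{eq-ctame2},
$c_{\tame}(T)=\lim_{d} c(T,K(d))=c(T)-\lim_{d} c(T(d))/d=\tfrac{1}{2}\Art(T)-\tfrac{1}{2}\Sw(T)=\tfrac{1}{2}u(T)$,
the limits being taken over $\N'$ ordered by divisibility. (Your computation $\Art(T(d))=u(T(d))+d\cdot\Sw(T)$ is correct and shows $u(T(d))/d\to 0$, but it is not needed once the tower identity is used unconditionally.) So: right strategy, first two assertions proved, but the proof of \eqref{eq-ctame1} as written fails exactly in the wild case and needs the unconditional form of \eqref{sss-addbc} to be repaired.
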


 \begin{proof}
 It is well-known that $$u(T)=g-\mathrm{rank}_{\Z}X(T)^I.$$ A
 proof can be found,
 for instance, in \cite[3.14]{HaNi-comp}.  The equality
 \eqref{eq-ctame2} is a direct consequence of \eqref{eq-artin} and Proposition
 \ref{prop-swan} in Chapter \ref{chap-preliminaries}. The equality
  \eqref{eq-ctame1}  follows immediately from \eqref{eq-ctame2}, using
  \eqref{sss-addbc} in Chapter \ref{chap-conductor} and Corollary \ref{cor-sup}.
 \end{proof}

\subsection{Saito's discriminant-conductor formula}
\sss For abelian varieties,
 the base change conductor can change under isogenies
 \cite[6.10]{chai}, so that we cannot hope to express the base change conductor in terms of
  the $\Q_{\ell}$-adic Tate module -- except in some special cases; see for
 instance \cite[5.2]{chai} for the case of potential multiplicative reduction.

\sss Nevertheless, it is possible to rewrite the formula for the
base change conductor of an elliptic curve in Proposition
\ref{prop-c-elliptic} in such a way that the Artin conductor of
the curve appears in the expression. The proper notion of Artin
conductor to use in this setting is the one from Saito's
influential paper \cite{saito-cond}. If $\mathscr{X}$ is a regular
proper flat $R$-scheme and $\ell$ is a prime different from $p$,
then the $\ell$-Artin conductor of $\cX$ is defined as
$$\Art_\ell(\cX)=\chi(\cX_K)-\chi(\cX_k)+\sum_{i\geq
1}(-1)^i \mathrm{Sw}_R H^i(\cX_K\times_K K^s,\Q_\ell)$$ where
$\mathrm{Sw}_R$ denotes the Swan conductor. If $\cX$ has relative
dimension at most one over $R$, then it is known that the Artin
conductor is independent of $\ell$; it will be simply denoted by
$\Art(\cX)$. When $C$ is a geometrically connected smooth
projective $K$-curve of genus $g\geq 1$, then we set
$$\Art(C)=\Art(\cC)$$ where $\cC$ is the minimal regular $R$-model
of $\cC$. The invariant $\Art(C)$ vanishes if and only if $C$ has
good reduction, except if $C$ is a genus one curve without
rational point whose Jacobian has good reduction. We define the
tame part of the Artin conductor of $C$ by
$$\Art_{\tame}(C)=\chi(C)-\chi(\cC_k).$$

\sss If $K'$ is a finite separable extension of $K$ with valuation
ring $R'$, and if $S=\Spec R'$, then
$$\Art(S)=\dgr{K'}{K}-1+\mathrm{Sw}_R \Q_\ell[K'/K]$$
which is the usual Artin conductor of the extension $K'/K$. Thus
$\Art_\ell(\cX)$ is a natural generalization of the Artin
conductor for finite separable extensions of $K$. The classical
discriminant-conductor formula states that
$$\Art(S)=v_K(\Delta_{K'/K})$$ where $\Delta_{K'/K}$ denotes the
discriminant of the extension $K'/K$. Saito generalized this
result to curves $C$ over $K$, where the valuation of the
discriminant is defined by measuring the degeneration of a certain
morphism of rank one free $R$-modules that is an isomorphism over
$K$ \cite[Thm.1]{saito-cond}. Here we will only consider elliptic
and hyperelliptic curves, in which case the valuation of the
discriminant can be defined in a more explicit way.

\section{Elliptic curves}
\subsection{The potential degree of degeneration}
\sss For elliptic curves, the valuation of Saito's discriminant
coincides with the
 valuation of the classical minimal discriminant of a Weierstrass equation \cite[\S1,\,Cor.2]{saito-cond}.
 Let $E$ be an elliptic $K$-curve with minimal discriminant
$\Delta$ and $j$-invariant $j(E)$, and let $\mathscr{X}$ be the
minimal regular $R$-model of $E$. Then
$$\Art(E)= -\chi(\mathscr{X}_k)-\delta(E)$$ where $\delta(E)$ denotes the
 wild part of the conductor of $E$ (i.e., the Swan conductor of $T_\ell E\otimes_{\Z_\ell}\Q_\ell$). Since $\Art(E)$
 vanishes if and only if $E$ has good reduction, it is not
 reasonable to expect that we can compute $c(E)$ directly from
 $\Art(E)$ (recall that $c(E)$ vanishes as soon as $E$ has
 semi-abelian reduction). There should be a second term involved that measures the potential degeneration. A natural candidate is
 the invariant $\frak{d}_{\pot}(E)$ that we define as follows.

\begin{definition}
Let $E$ be an elliptic curve over $K$, and let $L$ be a finite
separable extension of $K$ such that $E\times_K L$ has
semi-abelian reduction. We define the potential degree of
degeneration $\frak{d}_{\pot}(E)$ as
$$\frak{d}_{\pot}(E) =\left\{\begin{array}{ll}0 & \mbox{if $E\times_K L$ has good reduction},
\\[2ex] \displaystyle \frac{1}{\dgr{L}{K}}\cdot |\Comp{E\times_K L}| &\mbox{if $E\times_K L$ has multiplicative reduction.} \end{array}\right.$$
\end{definition}

 This definition does not depend on the choice of $L$, by the following lemma.
\begin{lemma}\label{lemm-jcomp}
For every elliptic $K$-curve $E$, we have
$$\frak{d}_{\pot}(E)=\left\{\begin{array}{ll}0&\mbox{ if }E \mbox{
has potential good reduction},
\\[2ex] -v_K(j(E)) &\mbox{ if }E
\mbox{ has potential multiplicative reduction}.
\end{array}\right.$$
\end{lemma}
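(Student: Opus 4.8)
The plan is to reduce the statement to the case of potential multiplicative reduction, where all the content lies. Recall that $E$ has potential good reduction if and only if $j(E)\in R$, i.e.\ $v_K(j(E))\geq 0$, and potential multiplicative reduction otherwise; so the two cases of the lemma are exhaustive and match the case split in the definition of $\frak{d}_{\pot}(E)$, once one knows the latter is well posed. In the potential good reduction case there is nothing to prove: by definition $\frak{d}_{\pot}(E)=0$, which is the asserted value. So from now on I assume $v_K(j(E))<0$.

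First I would fix a finite separable extension $L/K$ such that $E\times_K L$ has semi-abelian reduction, and observe that $E\times_K L$ then necessarily has \emph{multiplicative} reduction. This is the well-definedness of the potential reduction type: it follows from Theorem \ref{prop-semiab}(3) together with the stability of the identity component of the N\'eron model under further separable extensions \cite[IX.3.3]{sga7.1}, by comparing $\mathscr{E}^o_k$ over the valuation ring of $L$ with $\mathscr{E}^o_k$ over the valuation ring of a common extension of $L$ and of the minimal semi-stabilizing extension of $K$. Hence $\frak{d}_{\pot}(E)=|\Comp{E\times_K L}|/[L:K]$. Since $k$ is algebraically closed, $L/K$ is totally ramified, so the normalized valuations satisfy $v_L=[L:K]\cdot v_K$ on $K^{*}$; in particular $v_L(j(E))=[L:K]\cdot v_K(j(E))$, and trivially $j(E\times_K L)=j(E)$.

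Next I would identify $|\Comp{E\times_K L}|$. As $k$ is algebraically closed, the multiplicative reduction of $E\times_K L$ is split, so $E\times_K L$ has Kodaira type $I_{\nu}$ for some integer $\nu>0$; equivalently, by Tate's non-archimedean uniformization, $E\times_K L$ is the Tate curve attached to a parameter $q\in L^{*}$ with $v_L(q)=\nu$. In either description one has $\nu=-v_L(j(E\times_K L))$ and $\Comp{E\times_K L}\cong \Z/\nu\Z$. Combining, $|\Comp{E\times_K L}|=-v_L(j(E))=-[L:K]\cdot v_K(j(E))$, and dividing by $[L:K]$ yields $\frak{d}_{\pot}(E)=-v_K(j(E))$, as claimed. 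Since the right-hand side is visibly independent of $L$, this argument also proves that the definition of $\frak{d}_{\pot}(E)$ does not depend on the choice of $L$.

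I do not expect a genuine obstacle: the argument is an assembly of standard facts about elliptic curves ---the shape of the Kodaira--N\'eron reduction table (or, equivalently, Tate's uniformization), the behaviour of the $j$-invariant and of the component group under multiplicative reduction, and total ramification over an algebraically closed residue field. The one point that deserves a little care is precisely the well-definedness of ``potential good/multiplicative reduction'', namely that the type of $\mathscr{E}^o_k$ over the valuation ring of $L$ (an elliptic curve versus $\mathbb{G}_m$) is independent of the semi-stabilizing extension $L$; this is the stability statement invoked in the second step.
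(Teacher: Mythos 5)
Your proposal is correct and follows essentially the same route as the paper: the whole content is the identity $|\Comp{E'}|=-v_{L}(j(E'))$ for an elliptic curve $E'$ with multiplicative reduction (Tate curve / type $I_\nu$, which the paper simply cites from \cite[9.2(d)]{silverman-advanced}), combined with $v_L=[L:K]\cdot v_K$ on $K^*$ since $L/K$ is totally ramified over the algebraically closed residue field. The extra care you take with the well-definedness of the potential reduction type and of $\frak{d}_{\pot}(E)$ is sound and matches what the paper implicitly relies on.
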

\begin{proof}
This follows immediately from the equality $|\Comp{E}|=-v_K(j(E))$
for elliptic $K$-curves $E$ with multiplicative reduction
\cite[9.2(d)]{silverman-advanced}.
 \end{proof}

\subsection{A formula for the base
change conductor} We will now explain how to compute
 $c(E)$ from  $\Art(E)$ and $\frak{d}_{\pot}(E)$.
 \sss
  Saito's discriminant-conductor formula
  for curves \cite[Thm.1]{saito-cond} implies that
 $$v_K(\Delta)=-\Art(E)=\chi(\mathscr{X}_k)+\delta(E),$$
 which can be rewritten as Ogg's formula
 $$v_K(\Delta)= f(E/K)+m-1$$ where $f(E/K)$ is the
 conductor of $E$ and $m$ denotes the number of irreducible
 components of $\mathscr{X}_k$. The equality
 $$-\Art(C)=f(C/K)+m-1$$ holds for curves of arbitrary genus
 $g\geq 1$, by \cite[Prop.\,1]{liu-genre2}; for elliptic curves,
 it can also  easily be checked by a case-by-case computation on the
 Kodaira-N\'eron reduction table.

 \sss
Using Saito's discriminant-conductor formula, we can rewrite the
formula for $c(E)$ in
 the following way.
 \begin{prop}\label{prop-c-ell-artin} For every elliptic $K$-curve $E$, we have
 $$c(E)=-\frac{1}{12}(\Art(E)+\frak{d}_{\pot}(E)).$$
\end{prop}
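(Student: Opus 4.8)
The plan is to deduce the identity by bookkeeping three facts already at our disposal: the explicit formula for $c(E)$ in Proposition \ref{prop-c-elliptic}, Saito's discriminant-conductor formula for curves \cite[Thm.1]{saito-cond} in the form $v_K(\Delta)=-\Art(E)$ recalled above (which for elliptic curves uses that the valuation of Saito's discriminant equals the valuation of the minimal discriminant of a Weierstrass equation, \cite[\S1,\,Cor.2]{saito-cond}), and the computation of the potential degree of degeneration in Lemma \ref{lemm-jcomp}.

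First I would note that every elliptic $K$-curve has either potential good reduction or potential multiplicative reduction, and no third possibility occurs: by the Semi-Stable Reduction Theorem (Theorem \ref{prop-semiab}) there is a finite separable extension $L$ of $K$ over which $E$ acquires semi-abelian reduction, and the identity component of the special fiber of the N\'eron model of $E\times_K L$ is then a one-dimensional semi-abelian $k$-variety, hence either an elliptic $k$-curve (good reduction) or $\mathbb{G}_{m,k}$ (multiplicative reduction). These are exactly the two cases that appear in Proposition \ref{prop-c-elliptic} and in Lemma \ref{lemm-jcomp}, so it suffices to treat each of them separately.

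In the potential good reduction case, Proposition \ref{prop-c-elliptic} gives $c(E)=v_K(\Delta)/12$, Saito's formula gives $v_K(\Delta)=-\Art(E)$, and Lemma \ref{lemm-jcomp} gives $\frak{d}_{\pot}(E)=0$; combining these yields $c(E)=-\Art(E)/12=-\tfrac{1}{12}(\Art(E)+\frak{d}_{\pot}(E))$. In the potential multiplicative reduction case, Proposition \ref{prop-c-elliptic} gives $c(E)=(v_K(\Delta)+v_K(j(E)))/12$; substituting $v_K(\Delta)=-\Art(E)$ together with $v_K(j(E))=-\frak{d}_{\pot}(E)$ from Lemma \ref{lemm-jcomp}, we again obtain $c(E)=-\tfrac{1}{12}(\Art(E)+\frak{d}_{\pot}(E))$.

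Since the argument is a pure combination of previously established formulas, I do not expect any genuine obstacle; the only points deserving a word of justification are the exhaustiveness of the two reduction cases (handled via the Semi-Stable Reduction Theorem as above) and the correct reading of Saito's discriminant-conductor formula in the elliptic-curve setting, both of which have already been recorded in the preceding paragraphs of this chapter.
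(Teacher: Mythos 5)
Your proposal is correct and follows essentially the same route as the paper: the paper's proof is likewise an immediate combination of Proposition \ref{prop-c-elliptic}, Saito's discriminant-conductor formula $v_K(\Delta)=-\Art(E)$ recalled just before the statement, and Lemma \ref{lemm-jcomp}. Your extra remark on the exhaustiveness of the two reduction cases is a harmless (and correct) elaboration of what the paper leaves implicit.
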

\begin{proof} Immediate from Proposition
\ref{prop-c-elliptic} and Lemma \ref{lemm-jcomp}.
\end{proof}

\if false
 \sss The formula
%\refstepcounter{subsubsection}\begin{equation}
%\label{eq-ogg}
$$-\Art(E)= f(E/K)+m-1$$
%\end{equation}
 makes it easy to compute the
base change conductor for tamely ramified elliptic curves, since a
tamely ramified elliptic curve with additive reduction has
potential multiplicative reduction if and only if it has reduction
type $I^*_n$ for some $n>0$. In particular, we see that the base
change conductor of a tamely ramified elliptic curve only depends
on the reduction type, and not on the residue characteristic. \fi

 Using this result, we can also interpret the wild part of the
 base change conductor, i.e., the difference
$c(E)-c_{\tame}(E)$, when $E$ is wildly ramified.

\begin{prop}\label{prop-diffcctame}
Let $E$ be an elliptic curve over $K$, and denote by $\delta(E)$
the wild part of the conductor of $E$.
\begin{enumerate}
\item We have
\begin{eqnarray*}c_{\tame}(E)&=&-\frac{1}{12}(\Art_{\tame}(E)+\frak{d}_{\pot}(E)),
\\[1.5ex] c(E)-c_{\tame}(E)&=&\frac{\delta(E)}{12},\end{eqnarray*}
 unless
$p=2$ and $E$ has reduction type $I_n^*$ for some $n>0$. \item
Assume that $p=2$ and that $E$ has reduction type $I_n^*$ for some
$n>0$. Then $E$ is wildly ramified. If $E$ has potential good
reduction, then
$$c(E)-c_{\tame}(E)=\frac{1}{12}(\delta(E)+n).$$ If $E$ has potential
multiplicative reduction, then
$$c(E)-c_{\tame}(E)=\frac{\delta(E)}{4}.$$ Moreover, in the latter case, $$v_K(j(E))=2\delta(E)-n\mbox{ and }c(E)=\frac{1}{4}(\delta(E)+2).$$
 \item We have $c(E)=c_{\tame}(E)$ if and only if $E$ is
tamely ramified.
\end{enumerate}
\end{prop}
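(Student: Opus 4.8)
The plan is to derive all three assertions from the formula $c(E)=-\tfrac{1}{12}\bigl(\Art(E)+\frak{d}_{\pot}(E)\bigr)$ of Proposition \ref{prop-c-ell-artin}, together with the splitting $\Art(E)=\Art_{\tame}(E)-\delta(E)$. The latter is immediate from the definition of Saito's Artin conductor: for a curve the only cohomology group contributing a Swan conductor is $H^1(E\times_K K^s,\Q_\ell)$, which is dual to $T_\ell E\otimes_{\Z_\ell}\Q_\ell$ and therefore has Swan conductor exactly $\delta(E)$. Granting the first displayed formula of part~(1), the second is then obtained by subtracting it from Proposition \ref{prop-c-ell-artin}: the term $\frak{d}_{\pot}(E)$ cancels and one is left with $c(E)-c_{\tame}(E)=\tfrac{1}{12}\bigl(\Art_{\tame}(E)-\Art(E)\bigr)=\tfrac{1}{12}\delta(E)$, which is valid under the same restriction ($p\ne 2$, or $E$ not of type $I^*_n$) as the formula for $c_{\tame}(E)$.

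The heart of part~(1) is thus an independent evaluation of $c_{\tame}(E)$ and a comparison with $-\tfrac{1}{12}\bigl(\Art_{\tame}(E)+\frak{d}_{\pot}(E)\bigr)$. For the former I would invoke Theorem \ref{Theorem-jumps}: the jumps of $E=\Jac(E)$ and their multiplicities, hence $c_{\tame}(E)$, depend only on the combinatorial data of the special fiber of the minimal $sncd$-model, i.e.\ only on the Kodaira type; since $c_{\tame}(E)=c(E)$ when $p=1$ by Proposition \ref{prop-numrel}, and that value is read off the Kodaira-N\'eron reduction table via Proposition \ref{prop-c-elliptic}, the entries of Table \ref{table-elliptic} hold in every residue characteristic. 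For the right-hand side, $\Art_{\tame}(E)=-\chi(\mathscr{C}_k)$ (where $\mathscr{C}$ is the minimal regular model and $\chi(E)=0$) is determined by the reduction type, and $\frak{d}_{\pot}(E)$ equals $0$ or $-v_K(j(E))$ by Lemma \ref{lemm-jcomp}. A short case-by-case check over the Kodaira types then confirms the identity; the only failure is type $I^*_n$, where $\chi(\mathscr{C}_k)$ grows with $n$ while $c_{\tame}(E)=1/2$ is fixed. This discrepancy is invisible when $p\ne 2$, because in that case type $I^*_n$ forces $E$ to be tamely ramified with $\delta(E)=0$ (so that $c(E)=c_{\tame}(E)$ and $\Art(E)=\Art_{\tame}(E)$), but it is genuine when $p=2$.

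For part~(2) I would first record that type $I^*_n$ with $p=2$ forces $E$ to be wildly ramified: the minimal $sncd$-model of $E$ carries a principal component of multiplicity $2$, so its stabilization index is $2$, which is divisible by $p$, and Proposition \ref{prop-min} applies. In the potential good reduction subcase $\frak{d}_{\pot}(E)=0$, so $c(E)=-\tfrac{1}{12}\Art(E)=\tfrac{1}{12}\bigl(\chi(\mathscr{C}_k)+\delta(E)\bigr)$; since the $I^*_n$-configuration is a tree with $n+5$ rational components one gets $\chi(\mathscr{C}_k)=n+6$ (equivalently, Ogg's formula $v_K(\Delta)=f(E)+m-1$ with $m=n+5$ and $f(E)=2+\delta(E)$), and subtracting $c_{\tame}(E)=1/2$ from Table \ref{table-elliptic} gives $c(E)-c_{\tame}(E)=\tfrac{1}{12}(\delta(E)+n)$. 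In the potential multiplicative subcase I would use Lorenzini's classification exactly as in Example \ref{ex-elliptic-mult}: $E$ acquires multiplicative reduction over the Artin-Schreier extension $L=K[w]/(w^2+w+D)$ with $v_K(D)=1-2s$ odd, so the quadratic character $\eta$ with fixed field $L$ has Swan conductor $2s-1$, whence $\delta(E)=2\,\Sw(\eta)=4s-2$; combined with $c(E)=s$ (Example \ref{ex-elliptic-mult}) and $c_{\tame}(E)=1/2$ this yields $c(E)-c_{\tame}(E)=s-\tfrac12=\delta(E)/4$ and $c(E)=\tfrac14(\delta(E)+2)$, while $v_K(j(E))=-\frak{d}_{\pot}(E)=12\,c(E)-v_K(\Delta)=12s-(n+6+\delta(E))$, which simplifies to $2\delta(E)-n$.

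Finally, part~(3): the implication ``tamely ramified $\Rightarrow c(E)=c_{\tame}(E)$'' is contained in Proposition \ref{prop-numrel}. Conversely, if $E$ is wildly ramified then the wild inertia acts nontrivially on $T_\ell E$ (a finite $p$-group acting unipotently on a $\Q_\ell$-space acts trivially, so nontriviality of the wild inertia action is equivalent to wild ramification by Grothendieck's semistable reduction theorem), hence $\delta(E)>0$; by parts~(1) and~(2), $c(E)-c_{\tame}(E)$ then equals one of $\delta(E)/12$, $(\delta(E)+n)/12$ or $\delta(E)/4$, each strictly positive, so $c(E)\ne c_{\tame}(E)$. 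The step I expect to be the main obstacle is the bookkeeping in the wild $I^*_n$ subcase of part~(2): one must fit the parameters $s$, $n$, $v_K(\Delta)$, $v_K(j(E))$ and $\delta(E)$ together consistently, which requires both Lorenzini's normal form and an independent computation of $\delta(E)$ from the ramification break of the relevant Artin-Schreier extension; the other substantial input is Theorem \ref{Theorem-jumps}, which is what legitimizes using Table \ref{table-elliptic} in residue characteristic $2$ and $3$.
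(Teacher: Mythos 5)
Your overall architecture is essentially the paper's: everything is reduced to Proposition \ref{prop-c-ell-artin} together with the splitting $\Art(E)=\Art_{\tame}(E)-\delta(E)$, and Theorem \ref{Theorem-jumps} is what legitimizes Table \ref{table-elliptic} in every residue characteristic (the paper packages the same computation as a comparison with an equal-characteristic-zero curve $E'$ of the same reduction type, via $c_{\tame}(E)=c(E')$). Part (1), part (3), and the potential-good-reduction half of (2) go through this way. One imprecision in your narrative for (1): the identity for $c_{\tame}(E)$ does \emph{not} fail for type $I_n^*$ when $p\neq 2$, because in that case $\frak{d}_{\pot}(E)=n$ and this exactly cancels the growth of $\chi(\mathscr{C}_k)=n+6$; the failure occurs only when $p=2$, where $\frak{d}_{\pot}(E)$ equals $0$ or $n-2\delta(E)$ instead of $n$. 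Your fallback argument via tameness (for $p\neq 2$ a type $I_n^*$ curve is tame, so the formula follows directly from Propositions \ref{prop-numrel} and \ref{prop-c-ell-artin}) rescues the conclusion, so this is only a matter of bookkeeping, not a gap.

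The genuine gap is in the potential-multiplicative subcase of (2). Your derivation of $c(E)=s$, $\Sw(\eta)=2s-1$, $\delta(E)=4s-2$, and hence of $c(E)-c_{\tame}(E)=\delta(E)/4$, $c(E)=\frac14(\delta(E)+2)$ and $v_K(j(E))=2\delta(E)-n$, rests on Lorenzini's Artin--Schreier normal form and on Example \ref{ex-elliptic-mult}, both of which require $K$ to have \emph{equal} characteristic $2$. The proposition, however, is asserted for every complete discretely valued $K$ with algebraically closed residue field of characteristic $2$, in particular in mixed characteristic $(0,2)$, where there is no Artin--Schreier quadratic extension, Example \ref{ex-elliptic-mult} does not apply, and the parameter $s$ has no meaning; so as written your proof does not cover that case. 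What is missing is a characteristic-free proof of $c(E)=\frac14(\delta(E)+2)$. The paper gets it from Chai's result \cite[5.2]{chai}; equivalently, one can use non-archimedean uniformization (Proposition \ref{prop-bcuniform}), which gives $c(E)=c(T)$ for the norm-one torus $T$ of the quadratic extension $L/K$, and then the Chai--Yu formula \eqref{eq-artin}, $c(T)=\frac12\Art(T)=\frac12(1+\Sw(\eta))$, together with $\delta(E)=2\Sw(\eta)$ (which itself uses that the semisimplification of $H^1(E\times_K K^s,\Q_\ell)$ is $\eta\oplus\eta(1)$ and that the cyclotomic character is unramified). With that input, the rest of your bookkeeping ($v_K(\Delta)=n+6+\delta(E)$ via Saito's formula and $v_K(j(E))=12\,c(E)-v_K(\Delta)$) is valid in all characteristics and recovers the stated identities.
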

\begin{proof}
Let $E'$ be an elliptic curve over $\C((t))$ with the same
reduction type as $E$. Theorem \ref{Theorem-jumps} tells us that
$c_{\tame}(E)=c_{\tame}(E')$, and we know that
$c_{\tame}(E')=c(E')$ because $E'$ is tamely ramified. Thus
$$c(E)-c_{\tame}(E)=c(E)-c(E').$$

(1)  First, assume that $p\neq 2$. Then $E$ and $E'$ have the same
potential reduction type, since $E$ has potential multiplicative
reduction if and only if it is of type $I_n$ or $I_n^*$ with
$n>0$, in which case $\frak{d}_{\pot}(E)=n$ (this can be deduced
from Table 4.1 in \cite[IV]{silverman}).
 Proposition \ref{prop-c-ell-artin} now yields
\begin{eqnarray*}
c_{\tame}(E)&=&c(E')=-\frac{1}{12}(\Art_{\tame}(E)+\frak{d}_{\pot}(E)),
\\[1.5ex] c(E)-c(E')&=&\frac{1}{12}\cdot \mathrm{Sw}_R H^1(E\times_K
 K^s,\Q_\ell)=\frac{\delta(E)}{12}.\end{eqnarray*}

 The case $p=2$ is more delicate, because there exist elliptic
 curves with reduction type $I^*_{n}$, $n>0$, and potential good
 reduction. The elliptic curves with potential multiplicative
 reduction have been classified by Lorenzini in
 \cite[2.8]{lorenzini-wild}. His result implies that each such curve $E$ has reduction type $I^*_n$ for
 some  $n>0$ and that $\frak{d}_{\pot}(E)=n$. Thus if $E$ does not
 have reduction type $I^*_{>0}$,
 our argument
 from the $p\neq 2$ case again yields
\begin{eqnarray*}
c_{\tame}(E)&=&-\frac{1}{12}(\Art_{\tame}(E)+\frak{d}_{\pot}(E)),
\\[1.5ex] c(E)-c(E')&=&\frac{\delta(E)}{12}.\end{eqnarray*} This concludes the proof of (1).

 (2) Assume that $p=2$ and that $E$
has reduction type $I_n^*$ for some $n>0$. Then $E$ is wildly
ramified, by Saito's criterion \cite[3.11]{Saito}.
 If $E$ has potential
good reduction, Proposition \ref{prop-c-ell-artin} yields
$$c(E)-c(E')=\frac{1}{12}(\delta(E)+\frak{d}_{\pot}(E'))=\frac{1}{12}(\delta(E)+n).$$ If $E$ has potential multiplicative
reduction, then
 we have
$\frak{d}_{\pot}(E)=-v_K(j(E))$ so that
$$c(E)-c(E')=\frac{1}{12}(\delta(E)+n+v_K(j(E))).$$
 Moreover, $c(E')=1/2$ by Table \ref{table-elliptic}, and it follows from  \cite[5.2]{chai} that
$$c(E)=\frac{2+\delta(E)}{4}.$$ Thus
$$c(E)-c(E')=\frac{\delta(E)}{4}\mbox{ and
}v_K(j(E))=2\delta(E)-n.$$ (3) This is obvious, since
$\delta(E)>0$ when $E$ is wildly ramified.
\end{proof}

%\sss It is interesting to note that Proposition
%\ref{prop-diffcctame} shows that one can find elliptic curves with
%arbitrarily large base change conductor. This is not possible in
%the tamely ramified case, since the tame base change conductor of
%a semi-abelian $K$-variety $G$ is always bounded by the dimension
%of $G$.
%
%\sss It would be quite interesting to find a formula similar to
%\ref{prop-c-ell-artin} for Jacobians of curves of higher genus.
%This question will be considered in future work; basic examples
%show that the situation gets much more involved.

\section{Genus two curves}
\label{sec-genus2}
 Looking at the formula in Proposition \ref{prop-c-ell-artin},
 it is natural to wonder if there exists a similar formula for higher genus curves,
 expressing the base change conductor of the Jacobian in terms of
 the Artin conductor of the curve and a suitable measure of
 potential degeneration.
 To conclude this chapter we will now briefly discuss the case of
 genus $2$ curves, and we will see that already there the
 situation gets more delicate.  Let $C$ be a smooth geometrically connected projective
$K$-curve of genus $2$. We will describe the relationship between
the base change conductor of the Jacobian of $C$ and the minimal
discriminant and  Artin conductor of $C$. Detailed proofs, as well
as generalizations to curves of higher genus, will be given in
future work.

\subsection{Hyperelliptic equations}
\sss We start by recalling a few facts that can be found in
\cite{liu-genre2}. By a \emph{hyperelliptic equation} for $C$, we
mean an affine equation
\begin{equation}\tag{$\mathcal{E}$}\label{eq-hyperell} y^2 + Q(x) y = P(x)
\end{equation} where $ P, Q \in K[x] $ satisfy $ \deg(Q) \leq 3 $
and $ \deg(P) \leq 6 $, and with the property that
$$\Spec~K[x,y]/(y^2 + Q(x) y - P(x)) $$
is isomorphic to an open dense subset of $C$. Such an equation can
always be found.

\sss The \emph{discriminant} of \eqref{eq-hyperell} is defined as
$$ \Delta(\mathcal{E}) = 2^{-12} \mathrm{disc}_6(4P(x) + Q(x)^2) $$
where $\mathrm{disc}_6(4P(x) + Q(x)^2)$ denotes the discriminant
of $4P(x) + Q(x)^2$ considered as a polynomial of degree $6$ in
the variable $x$.

Also associated to $ (\mathcal{E})$ are the
 differential forms
$$  \omega_i(\mathcal{E}) = \frac{x^{i-1}dx}{2y + Q(x)} $$
for $ i \in \{1, 2 \}$, which form a $K$-basis of $ H^0(C,\Omega_C^1) $.

\sss If
\begin{equation}\tag{$\mathcal{E}'$}\label{eq-hyperell2}
 z^2 + Q'(v) z = P'(v)
 \end{equation}
 is another
hyperelliptic equation for $C$, there exists an element $ u \in K $ such that
\begin{equation*}\label{eq-traf1}
\Delta(\mathcal{E}') = u^{10} \cdot \Delta(\mathcal{E})
\end{equation*}
%and
%$$\begin{bmatrix}
%\omega_1(\mathcal{E}') \\
%\omega_2(\mathcal{E}')
%\end{bmatrix}
%= e^{-1} (ad-bc)
%\begin{bmatrix}
%d & c \\
%b & a
%\end{bmatrix}
%\begin{bmatrix}
%\omega_1(\mathcal{E}) \\
%\omega_2(\mathcal{E})
%\end{bmatrix}.
%$$
and
\begin{equation*}\label{eq-traf2}
\omega_1(\mathcal{E}') \wedge \omega_2(\mathcal{E}') = u^{-1} \cdot \omega_1(\mathcal{E}) \wedge \omega_2(\mathcal{E}).
\end{equation*}
Thus the element
$$ \Delta(\mathcal{E}) (\omega_1(\mathcal{E}) \wedge \omega_2(\mathcal{E}))^{\otimes 10} \in (\wedge^2 H^0(C, \Omega^1_{C/K}))^{\otimes 10} $$
is independent of the choice of hyperelliptic equation.

\subsection{Minimal equations}\label{seq-mineq}
\sss Let $\mathscr X $ be a regular model of $C$ over $R$, and
denote by $ \omega_{\mathscr X/R} $ its relative canonical sheaf.
Then
 $H^0(\mathscr X, \omega_{\mathscr X/R})$ is a free
$R$-module of rank $2$, and we have a canonical morphism of
$K$-vector spaces
$$ H^0(\mathscr X, \omega_{\mathscr X/R})\otimes_R K \to H^0(C, \Omega^1_{C/K}) $$
so that we can view $H^0(\mathscr X, \omega_{\mathscr X/R})$ as an
$R$-lattice in $H^0(C, \Omega^1_{C/K})$.
\begin{definition}[Definition 1 of \cite{liu-genre2}]
Let $ \mathscr C $ be the minimal regular model of $C$. We say
that \eqref{eq-hyperell} is a minimal equation of $C$ if $
\{\omega_1(\mathcal{E}), \omega_2(\mathcal{E})\}$ is an $R$-basis
of the lattice $H^0(\mathscr C, \omega_{\mathscr C/R})$. In this
case, we call the integer $  v_K(\Delta(\mathcal{E})) $ the
minimal discriminant of $C$, and we denote it by
$v(\Delta(C)_{\mathrm{min}})$.
%It does not depend on the choice of minimal equation.
\end{definition}

\sss By \cite[Prop.~2]{liu-genre2} one can always find a minimal
equation $ (\mathcal{E}) $. The definition of $
v(\Delta(C)_{\mathrm{min}}) $ is independent of choice of such $
(\mathcal{E}) $.

\sss \label{sss-mindiscbc} Let $ K'/K $ be a finite separable
field extension and let $ (\mathcal{E}) $ be a minimal equation
for $C$. It can also be viewed as a hyperelliptic equation for $
C' = C \times_K K' $,
 but over $K'$, the equation $ (\mathcal{E}) $ may no longer be minimal. If $
(\mathcal{E}') $ is a minimal equation for $C'$, we have
\begin{equation}\label{eq-disc-change}
\Delta(\mathcal{E}') = r^{10} \cdot \Delta(\mathcal{E})
\end{equation}
and
\begin{equation}
\omega_1(\mathcal{E}) \wedge \omega_2(\mathcal{E}) = r \cdot \omega_1(\mathcal{E}') \wedge \omega_2(\mathcal{E}')
\end{equation}
for a certain element $ r \in K' $.

\subsection{Comparison of the base change conductor and the minimal discriminant}\label{sec-comp-disc-cond}
%Let $ K'/K $ be a finite separable field extension such that $ C'
%:= C \times_K K'$ has stable reduction over $R'$. Let us denote
%the N\'eron model of $ A \times_K K' $ by $ \mathscr{A}' $. Then
%the canonical morphism
%$$ h : \mathscr{A} \times_R R' \to \mathscr{A}' $$
%yields an injection
%$$ \phi : \omega_{ \mathscr{A}'/R'} \to \omega_{ \mathscr{A}/R} \otimes_R R' $$
%and we have
%$$ c(A) = \frac{1}{[K':K]} \cdot v_{K'}(\mathrm{det}(\phi)). $$

\sss Assume that the curve $C$ has index one. We put $ A =
Jac(C)$, and we denote by $\mathscr A$ the N\'eron model of $A$.
Let moreover $ f : \mathscr C \to \Spec R $ be the minimal regular
model of $C$. One can show that there exists a canonical
isomorphism
$$ \omega_{\mathscr A/R} \to f_* (\omega_{\mathscr{C}/R}), $$
where $\omega_{\mathscr A/R}$ is the module of invariant
differential forms associated to $\mathscr A$. In particular, the
differentials $\omega_1(\mathcal{E}) $ and $ \omega_2(\mathcal{E})
$ form an $R$-basis of $\omega_{\mathscr A/R}$, and using this
fact it is straightforward to see that, for every finite separable
extension $K'/K$, we can compute the $K'$-base change conductor as
$$c(A,K') = \frac{1}{[K':K]} \cdot v_{K'}(r)$$ where the element
$r$ of $K'$ is defined as in  \eqref{sss-mindiscbc}.

\sss We assume now in addition that $ A $ acquires semi-abelian
reduction over $K'$, which is equivalent to the property that
$C'=C\times_K K'$ has a semi-stable model over $R'$. Equation
(\ref{eq-disc-change}) yields the following formula:
 \begin{equation}\label{eq-mindisc}
v(\Delta(C)_{\min}) = 10 \cdot c(A) + v(\Delta(C')_{\min})/[K':K].
\end{equation}
%\begin{proof}
%Let us choose minimal equations $ (\mathcal E) $ for $C$ and $
%(\mathcal E') $ for $C'$. By Corollary \ref{cor-basis}, $
%\omega_1(\mathcal E) \wedge \omega_2(\mathcal E) $ is a basis
%element of $ \mathrm{det}(\omega_{\mathscr{A}/R}) $ and $
%\omega_1(\mathcal E') \wedge \omega_2(\mathcal E') $ is a basis
%element of $ \mathrm{det}(\omega_{\mathscr{A}'/R'}) $. We find
%that
%$$ \omega_1(\mathcal E') \wedge \omega_2(\mathcal E') = r \cdot \omega_1(\mathcal E) \wedge \omega_2(\mathcal E) $$
%via $\mathrm{det}(\phi)$, for some $ r \in R' $. Hence $ c(A) =
%v_{K'}(r)/[K':K] $.

%The transformation rule (\ref{eq-traf1}) now asserts that $
%\Delta(\mathcal E') = r^{-10} \cdot \Delta(\mathcal E) $. Taking
%valuations, we find that
%$$ v_{K'}(\Delta(\mathcal E')) = - 10 \cdot v_{K'}(r) + v_{K'}(\Delta(\mathcal E)) = - 10 \cdot [K':K] \cdot c(A) + [K':K] \cdot v_K(\Delta(\mathcal E)), $$
%and the result follows.
%\end{proof}

We next compute $v(\Delta(C')_{\min})$ purely in terms of a
semi-stable reduction of $C'$. In order to do this, recall that
Saito  introduced in \cite{saito-genus2} three ``discriminants'' $
\Delta(C) $, $\Delta'(C)$ and $\Delta_1(C)$ to which he associated
three values $\mathrm{ord}~\Delta(C)$, $\mathrm{ord}~\Delta'(C)$
and $\mathrm{ord}~\Delta_1(C)$ in $\Z$. Moreover, he showed that
$$ \mathrm{ord}~\Delta(C) = - \mathrm{Art}_{\mathscr C/S} $$
and that
\begin{equation}\label{eq-saito}
\mathrm{ord}~\Delta'(C) = - \mathrm{Art}_{\mathscr C/S} +
\mathrm{ord}~\Delta_1(C).
\end{equation}
 On the other hand, Liu showed in \cite{liu-genre2}  that
\begin{equation}\label{eq-liu}
\mathrm{ord}~\Delta'(C) = v(\Delta(C)_{\min}).
\end{equation}

Let $\mathscr{C}'$ denote the minimal regular model of $ C' $. By
our assumption, the special fiber $\mathscr{C}'_k $ is a
semi-stable curve. We denote by $\sigma$ the cardinality of the
set $ \mathrm{Sing}(\mathscr{C}'_k)$ of singular points on
$\mathscr{C}'_k$. It is easy to verify that
\begin{equation}\label{eq-semi-stable-cond}
- \mathrm{Art}_{\mathscr{C}'/R'} = \sigma
\end{equation}
(see for instance \cite[Prop.~1]{liu-genre2}). Let moreover $\tau$
denote the cardinality of the set of points $ p \in
\mathrm{Sing}(\mathscr{C}'_k) $ such that $ \mathscr{C}'_k - \{p\}
$ is disconnected. Then the proposition on page 234 of
 \cite{saito-genus2} tells us that
\begin{equation}\label{eq-semi-stable-saito}
\mathrm{ord}~\Delta_1(C') = \tau.
\end{equation}
%\begin{lemma}\label{lemma-disc}
%The following formulas hold:
%\begin{enumerate}
%\item $- \mathrm{Art}_{\mathscr{C}'/R'} = \sigma$. \item
%$\mathrm{ord}~\Delta_1(C') = \tau$.
%\end{enumerate}
%\end{lemma}
%\begin{proof}
%By \cite[Prop.~1]{liu-genre2} we have
%$$ - \mathrm{Art}_{\mathscr{C}'/R'} = n - 1 + f, $$
%where $n$ is the number of irreducible components of $
%\mathscr{C}'_k $ and $ f = \varepsilon + \delta $ is the exponent
%of the conductor of the representation of the absolute Galois
%group on the $\ell$-adic Tate module of $A$. Since $
%\mathscr{C}'_k $ is semi-stable, the wild part $ \delta $ of $ f $
%vanishes. In the proof of \cite[Prop.~1]{liu-genre2} it is
%asserted that $ \varepsilon = t(\mathscr{C}'_k) $, since the
%unipotent rank is zero by semi-stability. Now (1) follows from the
%fact that $t(\mathscr{C}'_k)$ equals the first Betti number of the
%dual graph of $\mathscr{C}'_k$. Assertion (2) is the statement of
%\cite[Prop.]{Saito2}.
%\end{proof}

Combining Equations (\ref{eq-mindisc}) -
(\ref{eq-semi-stable-saito}) now yields an interesting
relationship between the minimal discriminant and the base change
conductor. Note the similarity to the formula for elliptic curves
in Proposition \ref{prop-c-ell-artin}: here the value
$\sigma+\tau$ plays the role of the degree of potential
degeneration. In particular, it vanishes if and only if $C$ has
potential good reduction.

\begin{prop}\label{prop-formula-disc-cond}
With the notation introduced above, we have
$$ v(\Delta_{\min}) = 10 \cdot c(A) + (\sigma + \tau)/[K':K]. $$
\end{prop}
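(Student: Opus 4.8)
The argument is a straightforward concatenation of the identities collected above, applied both to $C$ and to the base-changed curve $C'=C\times_K K'$. First I would observe that \eqref{eq-saito} and \eqref{eq-liu} hold for \emph{any} smooth, geometrically connected, projective curve of genus two over a complete discretely valued field with algebraically closed residue field, since Saito's formula \cite{saito-genus2} and Liu's formula \cite{liu-genre2} are proved without a semi-stability hypothesis. Applying them to $C'$ over $K'$, with $\mathscr{C}'$ the minimal regular $R'$-model of $C'$, gives
$$v(\Delta(C')_{\min})=\mathrm{ord}\,\Delta'(C')=-\mathrm{Art}_{\mathscr{C}'/R'}+\mathrm{ord}\,\Delta_1(C').$$

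Next I would invoke the hypothesis that $A$ acquires semi-abelian reduction over $K'$, which is equivalent to $\mathscr{C}'_k$ being a semi-stable curve. Under this assumption \eqref{eq-semi-stable-cond} identifies $-\mathrm{Art}_{\mathscr{C}'/R'}$ with the number $\sigma$ of singular points of $\mathscr{C}'_k$, and the proposition on page~234 of \cite{saito-genus2}, recalled in \eqref{eq-semi-stable-saito}, identifies $\mathrm{ord}\,\Delta_1(C')$ with the number $\tau$ of points $p\in\mathrm{Sing}(\mathscr{C}'_k)$ such that $\mathscr{C}'_k-\{p\}$ is disconnected. Hence $v(\Delta(C')_{\min})=\sigma+\tau$.

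Finally I would feed this into the base-change formula \eqref{eq-mindisc} for the minimal discriminant --- whose derivation uses the index-one assumption through the canonical isomorphism $\omega_{\mathscr{A}/R}\cong f_*(\omega_{\mathscr{C}/R})$ and the resulting identity $c(A,K')=v_{K'}(r)/[K':K]$ --- to obtain
$$v(\Delta(C)_{\min})=10\cdot c(A)+v(\Delta(C')_{\min})/[K':K]=10\cdot c(A)+(\sigma+\tau)/[K':K],$$
which is the claimed formula. There is no genuine obstacle in this proof: the only points requiring care are checking that $C'$ again has index one (so that \eqref{eq-mindisc} applies verbatim), and observing that, since $k$ is algebraically closed, every finite separable extension $K'/K$ is totally ramified, so that the factor $[K':K]$ occurring here agrees with the ramification index $e(K'/K)$ entering the definition of the base change conductor.
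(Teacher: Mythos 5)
Your proposal is correct and follows exactly the route the paper intends: the proposition is obtained by applying Liu's identity \eqref{eq-liu} and Saito's conductor--discriminant relation \eqref{eq-saito} to $C'=C\times_K K'$, using semi-stability of $\mathscr{C}'_k$ via \eqref{eq-semi-stable-cond} and \eqref{eq-semi-stable-saito} to get $v(\Delta(C')_{\min})=\sigma+\tau$, and substituting into the base-change formula \eqref{eq-mindisc}. Your added remarks (index one for $C'$, $[K':K]=e(K'/K)$ since $k$ is algebraically closed) are sound points of care but do not change the argument.
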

%\begin{proof}
%Combine Proposition \ref{prop-mindisc}, Lemma \ref{lemma-disc} and
%Equations (\ref{eq-saito}) and (\ref{eq-liu}).
%\end{proof}
%\subsubsection{Comparison of the base change conductor and the Artin conductor}

\sss The relationship between the base change conductor $c(A)$ and
the Artin conductor $ \mathrm{Art}_{\mathscr{C}/S} $  is more
subtle. By \eqref{eq-saito} and \eqref{eq-liu}, the difference
between $ v(\Delta(C)_{\min}) $ and $ -
\mathrm{Art}_{\mathscr{C}/S} $ is precisely measured by $
\mathrm{ord}~\Delta_1 $. In \cite{liu-genre2}, Liu shows that $
\mathrm{ord}~\Delta_1 $ can be computed entirely from the
\emph{numerical type} of the minimal regular model $\mathscr{C}$.
In particular, $ \mathrm{ord}~\Delta_1 \neq 0 $ for certain
numerical types (see \cite[Prop.~7+8]{liu-genre2}). In view of
Proposition \ref{prop-formula-disc-cond}, this means that $ -
\mathrm{Art}_{\mathscr{C}/S} $ differs ``more'' from $ c(A)$ than
$ v(\Delta_{\min}) $.

\part{Applications to motivic zeta functions}\label{part-motzeta}
\chapter{Motivic zeta functions of semi-abelian varieties}\label{chap-motzeta}
In this chapter, we assume that $k$ is algebraically closed. We
will prove in Theorem \ref{theorem-zetamain} the rationality of
the motivic zeta function of a Jacobian variety, and we show that
it has a unique pole, which coincides with the tame base change
conductor from Chapter \ref{chap-conductor}. We will also
investigate the case of Prym varieties.

\section{The motivic zeta function}
\subsection{Definition}
\sss  We'll recall the definition of the motivic zeta function
$Z_G(T)$ of a semi-abelian $K$-variety $G$, which was introduced
in \cite[8.1]{HaNi}. It measures how the N\'eron model of $G$
varies under tamely ramified extensions of $K$.

\sss First we need to introduce some notation. Let $ K_0(\Var_k) $
be the Grothendieck ring of $k$-varieties. For each $k$-variety
$V$, we denote by $ [V] $ its class in $ K_0(\Var_k) $. We set $
\LL = [\mathbb{A}^1_k] $.
%, and we denote by $\mathscr{M}_k =
%K_0(Var_k)[\LL^{-1}] $ the localized Grothendieck ring.
 We refer
to \cite{NS-K0} for the definition of the Grothendieck ring and
its basic properties.

\sss For every $d \in \N'$, we denote by $\mathscr{G}^{\qc}(d)$
the N\'eron model of $G \times_K K(d)$ as defined in Chapter
\ref{chap-preliminaries},\eqref{sss-neronqc}. Recall that, by
definition, $\mathscr{G}^{\qc}(d)$ is the largest quasi-compact
open subgroup scheme of the N\'eron $lft$-model $\cG(d)$ of
$G(d)$. We'll write $\cG$ and $\cG^{\qc}$ instead of $\cG(1)$ and
$\cG^{\qc}(1)$ for the N\'eron $lft$-model, resp.~N\'eron model,
of $G$. We denote by $h_d$ the canonical base change morphism
 $$h_d:\mathscr{G}^{\qc}\times_R R(d)\to \mathscr{G}^{\qc}(d).$$
\begin{definition}
The order function
$$\ord_G:\N'\to \N$$ is defined by
$$ \ord_{G}(d) = c(G,K(d)) \cdot d = \mathrm{length}_{R(d)}\coker(\Lie(h_d)). $$
\end{definition}
\sss The values $\ord_G(d)$ were defined in a different way in
\cite[7.2]{HaNi}, but we proved the formula $ \ord_{G}(d) =
c(G,K(d)) \cdot d $ in \cite[7.5]{HaNi}. For the purpose of this
paper, this formula can serve as a definition. Note that  we would
have obtained the same function $\ord_G$ by working with N\'eron
$lft$-models instead of N\'eron models. The reason for working
with N\'eron models is that in the following definition, we need
the property that $\mathscr{G}^{\qc}(d)_k$ is of finite type over
$k$, in order to take its class in the Grothendieck ring
$K_0(\Var_k)$ (note that the Grothendieck ring of $k$-schemes
locally of finite type is trivial by the existence of infinite
coproducts).

\begin{definition}\label{def-abzeta}
We define the motivic zeta function $Z_G(T)$ of $G$ as
$$Z_G(T)=\sum_{d\in\N'}[\mathscr{G}^{\qc}(d)_k] \LL^{\ord_{G}(d)} T^d \in K_0(\Var_k)[[T]].$$
\end{definition}

\subsection{Decomposing the identity component}
 \sss We see from the definition that the motivic zeta function
$Z_G(T)$ depends on two factors: the behaviour of the class
$[\mathscr{G}^{\qc}(d)_k]$, and the order function $\ord_G$. We
will now analyze the class $[\mathscr{G}^{\qc}(d)_k]$ in more
detail. For each $d$ in $\N'$, we consider the Chevalley
decomposition $$ 0 \to T(d) \times U(d) \to \mathscr{G}(d)_k^o \to
B(d) \to 0 $$ of $\mathscr{G}(d)_k^o$, with $T(d)$ a $k$-torus,
$U(d)$ a unipotent $k$-group and $B(d)$ an abelian $k$-variety.
The dimensions of $T(d)$ and $U(d)$ are called the toric and
unipotent rank of $G(d)$ and denoted by $t(G(d))$ and $u(G(d))$;
see Section
 \ref{ss-torrank} in Chapter \ref{chap-preliminaries}.

\begin{prop}\label{prop-classidcomp}
For each $d$ in $\N'$, we have
$$[\mathscr{G}^{\qc}(d)_k]=|\Comp{G(d)}_{\tors}|\cdot [\cG(d)^o_k] =|\Comp{G(d)}_{\tors}|\cdot (\LL-1)^{t(G(d))}\LL^{u(G(d))}[B(d)]$$
in $K_0(\Var_k)$.
\end{prop}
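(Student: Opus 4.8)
The plan is to prove the two displayed equalities separately. The first is a bookkeeping statement about connected components, and the second is the standard computation of the class of a smooth connected commutative algebraic $k$-group in the Grothendieck ring.

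First I would treat the equality $[\mathscr{G}^{\qc}(d)_k]=|\Comp{G(d)}_{\tors}|\cdot [\cG(d)^o_k]$. Since $\mathscr{G}^{\qc}(d)$ is an open subgroup scheme of the N\'eron $lft$-model $\cG(d)$ containing its identity component, we have $(\mathscr{G}^{\qc}(d)_k)^o=\cG(d)^o_k$, and by \eqref{sss-neronqc} applied to $G(d)$ the group of connected components of $\mathscr{G}^{\qc}(d)_k$ is $\Comp{G(d)}_{\tors}$. As $\mathscr{G}^{\qc}(d)_k$ is smooth and $k$ is algebraically closed, each of its connected components is a nonempty smooth $k$-scheme, hence carries a $k$-rational point, and translation by such a point is an isomorphism of $k$-varieties from $\cG(d)^o_k$ onto that component. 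Decomposing $\mathscr{G}^{\qc}(d)_k$ into its $|\Comp{G(d)}_{\tors}|$ connected components and using additivity of classes in $K_0(\Var_k)$ gives the claim.

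Next I would prove $[\cG(d)^o_k]=(\LL-1)^{t(G(d))}\LL^{u(G(d))}[B(d)]$. The Chevalley decomposition $0\to T(d)\times U(d)\to \cG(d)^o_k\to B(d)\to 0$ recalled just before the statement (see also \eqref{sss-unipotabrank}) exhibits $\cG(d)^o_k$ as a torsor over the abelian variety $B(d)$ under the smooth connected linear $k$-group $L(d)=T(d)\times U(d)$. Since $k$ is algebraically closed, $T(d)$ is a split torus and $U(d)$ is a smooth connected unipotent group, and torsors under such groups, hence under $L(d)$, are Zariski-locally trivial; stratifying $B(d)$ into locally closed subvarieties over which the torsor trivializes and using additivity and multiplicativity in $K_0(\Var_k)$ yields $[\cG(d)^o_k]=[T(d)]\cdot [U(d)]\cdot [B(d)]$. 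Finally $[T(d)]=(\LL-1)^{t(G(d))}$, because a split torus of dimension $t(G(d))$ is an open subvariety of $\A^{t(G(d))}_k$ with this class, and $[U(d)]=\LL^{u(G(d))}$, because over the perfect field $k$ the group $U(d)$ admits a composition series with successive quotients isomorphic to $\mathbb{G}_{a,k}$ and is therefore an iterated Zariski-locally trivial $\mathbb{G}_a$-bundle over $\Spec k$. Combining this with the first equality proves the proposition; alternatively, the identity $[\cG(d)^o_k]=(\LL-1)^{t(G(d))}\LL^{u(G(d))}[B(d)]$ may be quoted directly from \cite[3.1]{Nicaise}. The argument is essentially formal, so I do not expect a real obstacle: the only point needing care is precisely this Grothendieck-ring computation of $[\cG(d)^o_k]$, i.e.\ the Zariski-local triviality used to turn the Chevalley decomposition into a product of classes, which is standard and in any case contained in \cite[3.1]{Nicaise}.
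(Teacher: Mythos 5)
Your proof is correct and follows essentially the same route as the paper: the paper likewise notes that, since $k$ is algebraically closed, each connected component of $\mathscr{G}^{\qc}(d)_k$ is isomorphic (by translation) to $\cG(d)^o_k$, and then invokes \cite[3.1]{Nicaise} for the class of a connected smooth commutative algebraic $k$-group, which is exactly the torsor/Chevalley computation you spell out and also offer to quote.
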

\begin{proof}
 Since $k$ is algebraically closed, each connected component of $\mathscr{G}^{\qc}(d)_k$ is isomorphic to the identity component
 $\mathscr{G}(d)^o_k$. The statement now follows from the computation of the class in $K_0(\Var_k)$ of a connected smooth commutative algebraic $k$-group in  \cite[3.1]{Nicaise}.
\end{proof}

\sss We've made an extensive study of the groups
$\Comp{G(d)}_{\tors}$ in Part \ref{part-comp}. If $G$ is tamely
ramified, then we have analyzed the behaviour of the order
function $\ord_G$ and the invariants $t(G(d))$, $u(G(d))$ and
$B(d)$ in \cite[\S6-7]{HaNi}. In the next section, we will extend
these results to (possibly wildly ramified) Jacobians, using our
results in Chapters \ref{chap-jacobians} and \ref{chap-conductor}.

\section{Motivic zeta functions of Jacobians}
\subsection{Behaviour of the identity component}
\sss \label{sss-notidcomp} Let $C$ be a smooth, projective,
geometrically connected $K$-curve of genus $ g
> 0 $ and index one.
%, and assume that $C$ has index one (i.e., that $C$ admits a zero divisor of degree one).
 We
denote by $\mathscr{C}$ the minimal $sncd$-model of $C$ over $R$.
Recall that for every $d \in \N'$ we construct from $\mathscr C$
an $sncd$-model $\mathscr{C}(d)$ of $C(d)=C \times_K K(d)$ by the
procedure explained in Section \ref{sec-sncd} of Chapter
\ref{chap-jacobians}. We write its special fiber as
$$\mathscr{C}(d)_k = \sum_{i \in I(d)} N(d)_i E(d)_i.$$ We denote
by $e(C)$ the stabilization index of $C$, which was introduced in
Chapter \ref{chap-jacobians}, Definition \ref{def-stabind}.

%\sss  Let $\mathscr{A} $ be the N\'eron model of $ A = \Jac(C) $.
%For any $d \in \N'$, let
%$$ 0 \to T(d) \times U(d) \to \mathscr{A}(d)_k^o \to B(d) \to 0 $$
%be the Chevalley decomposition of $\mathscr{A}(d)_k^o$, with
%$T(d)$ a $k$-torus, $U(d)$ a unipotent $k$-group and $B(d)$ an
%abelian $k$-variety. We set $ b(d) = \dim\,B(d) $, $ t(d) =
%\dim\,T(d)$ and $ u(d) = \dim\,U(d) $. Thus $t(d)$ is the toric
%rank of $A(d)$.

\sss We set $A=\Jac(C)$. For each $d$ in $\N'$, we denote by
$\cA(d)$ the N\'eron model of $A(d)=A\times_K K(d)$ and by  $B(d)$
the abelian quotient in the Chevalley decomposition of
$\mathscr{A}(d)^o_k$. We'll write $\cA$ and $B$ instead of
$\cA(1)$ and $B(1)$.

\begin{prop}\label{prop-idclass}
Let $d$ be an element of $\N'$ that is prime to the stabilization
index $e(C)$ of $C$. Then $ B \cong B(d)$, $ t(A) = t(A(d)) $ and
$ u(A) = u(A(d)) $. Moreover, we have
$$ [\mathscr{A}(d)^o_k] = [\mathscr{A}^o_k] $$
in $K_0(\Var_k)$.
\end{prop}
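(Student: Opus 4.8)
The plan is to deduce everything from the explicit description of $\mathscr{C}(d)_k$ in terms of $\mathscr{C}_k$ that is available when $\gcd(d,e(C))=1$, namely Proposition \ref{prop-normdesing} and Lemma \ref{lemma-fiberdescription}, together with Raynaud's identification of $\mathscr{A}^o$ with $\Pic^0_{\mathscr{C}/R}$ and the resulting combinatorial formulas for the Chevalley data recalled in the preliminaries. First I would invoke Raynaud: $\mathscr{A}(d)^o_k$ and $\mathscr{A}^o_k$ have abelian parts $\prod_{i\in I(d)}\Pic^o_{E(d)_i/k}$ and $\prod_{i\in I}\Pic^o_{E_i/k}$, and toric ranks equal to the first Betti numbers of $\Gamma(\mathscr{C}(d)_k)$ and $\Gamma(\mathscr{C}_k)$ respectively. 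So the statements $B\cong B(d)$ and $t(A)=t(A(d))$ become purely combinatorial assertions about how $\mathscr{C}(d)_k$ is built from $\mathscr{C}_k$.

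For the abelian part: by Proposition \ref{prop-normdesing}(3) every exceptional component of $\rho:\mathscr{C}(d)\to\mathscr{C}_d$ is a rational curve, hence contributes a trivial factor to the product of Jacobians. By Lemma \ref{lemma-fiberdescription}, since $d$ is prime to $e(\mathscr{C})=e(C)$, for every principal component $E_i$ of $\mathscr{C}_k$ the map $F_i\to E_i$ is an isomorphism, while every non-principal (rational) component of $\mathscr{C}_k$ lifts to rational curves. Therefore the only components of $\mathscr{C}(d)_k$ of positive genus are precisely the isomorphic copies of the positive-genus components of $\mathscr{C}_k$, giving $\prod_{i\in I(d)}\Pic^o_{E(d)_i/k}\cong\prod_{i\in I}\Pic^o_{E_i/k}$, i.e.\ $B\cong B(d)$. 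For the toric rank: by Lemma \ref{lemma-torrank} (whose proof I would reuse), $\Gamma(\mathscr{C}(d)_k)$ is obtained from $\Gamma(\mathscr{C}_k)$ by subdividing each edge into a chain of $n_\varepsilon\geq 1$ edges, which does not change the first Betti number, so $t(A)=t(A(d))$. Then $u(A)=u(A(d))$ follows because $t(A)+a(A)+u(A)=\dim A=g=\dim A(d)=t(A(d))+a(A(d))+u(A(d))$ and $a(A)=\dim B=\dim B(d)=a(A(d))$.

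Finally, for the class in $K_0(\Var_k)$: by the formula in the preliminaries (or \cite[3.1]{Nicaise}, as used in Proposition \ref{prop-classidcomp}), $[\mathscr{A}(d)^o_k]=(\LL-1)^{t(A(d))}\LL^{u(A(d))}[B(d)]$ and likewise for $\mathscr{A}^o_k$; since all three invariants agree, the two classes are equal. The main obstacle, such as it is, is bookkeeping rather than conceptual: one must be careful that Lemma \ref{lemma-fiberdescription} and Lemma \ref{lemma-torrank} were stated for the minimal $sncd$-model $\mathscr{C}$ (so that $e(\mathscr{C})=e(C)$), which is exactly the model fixed in \eqref{sss-notidcomp}, and that the excluded genus-one-without-rational-point cases do not arise here because $C$ is assumed to have index one. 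I would also note explicitly that $B(d)$ is defined via the Chevalley decomposition of $\mathscr{A}(d)^o_k$ and that Raynaud's isomorphism $\Pic^0_{\mathscr{C}(d)/R(d)}\cong\mathscr{A}(d)^o$ is what licenses computing it on $\mathscr{C}(d)$, using that $\mathscr{C}(d)$ is an $sncd$-model of $C(d)$ by Proposition \ref{prop-normdesing}.
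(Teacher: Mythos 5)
Your proof is correct and follows essentially the same route as the paper: Raynaud's identification plus Lemma \ref{lemma-fiberdescription} for $B\cong B(d)$, Lemma \ref{lemma-torrank} for the toric rank, the dimension count for $u$, and Proposition \ref{prop-classidcomp} (i.e.\ \cite[3.1]{Nicaise}) for the equality of classes in $K_0(\Var_k)$. Your extra remarks about exceptional components and the index-one hypothesis are accurate but only spell out details the paper leaves implicit.
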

\begin{proof}
For each $d \in \N'$, there is a canonical isomorphism
$$ \prod_{i \in I(d)} \Pic^0_{E(d)_i/k} \cong B(d) $$
by \cite[9.2.5 and 9.2.8]{neron}. If $d$ is prime to $e(C)$, one
concludes from Lemma \ref{lemma-fiberdescription} in Chapter
\ref{chap-jacobians} that the disjoint union of the principal
components of $\mathscr{C}_k$ is isomorphic to the disjoint union
of the principal components of $\mathscr{C}(d)_k$. This implies
that $ B \cong B(d) $, since any non-principal component is
rational and $ \Pic^0_{\mathbb{P}^1_k/k} $ is trivial.

The equality $ t(A) = t(A(d)) $ was proven already in Lemma
\ref{lemma-torrank} of Chapter \ref{chap-jacobians}. Since $$ g =
 t(A) + u(A)+\dim B = t(A(d)) + u(A(d))+\dim B(d),
$$ we find that $u(A)=u(A(d))$.
 The remainder of the statement follows from Proposition
 \ref{prop-classidcomp}.
\end{proof}

\subsection{Behaviour of the order function}
\sss We keep the notations and assumptions of
\eqref{sss-notidcomp}. We denote by $c_{\tame}(A)$ the tame base
change conductor of $A$, which was introduced in Chapter
\ref{chap-conductor}, Definition \ref{def-tamebc}.

\begin{prop}\label{prop-ordjav}  For every element $\alpha$ in   $$ \{1, \ldots, e(C)\}
\cap \mathbb{N}' $$ and every integer $q\geq 0$ such that $ \alpha
+ q e(C) \in \mathbb{N}' $, we have
$$ \ord_A(\alpha + q
e(C)) = \ord_A(\alpha) + q  e(C) c_{\tame}(A). $$
\end{prop}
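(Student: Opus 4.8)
The plan is to reduce to the case where $K$ has equal characteristic zero, and then to express $\ord_A(d)$, for every $d\in\N'$, directly in terms of the jumps of $A=\Jac(C)$.

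\textbf{Reduction to the tame case.} By the very definition of the order function and by Proposition \ref{prop-numrel} (recall that $K(d)/K$ is tame), we have
$$\ord_A(d)=c(A,K(d))\cdot d=\sum_{i\in\mathcal{J}_{A,K(d)}}m_{A,K(d)}(i)\cdot i.$$
By Corollary \ref{cor-jumps} the $K(d)$-jumps of $A$ and their multiplicities depend only on the combinatorial data of $\mathscr{C}_k$; hence so does $\ord_A(d)$, for every $d\in\N'$. Similarly, the jumps of $A$ and their multiplicities, and therefore also $c_{\tame}(A)$, depend only on the combinatorial data of $\mathscr{C}_k$ by Theorem \ref{Theorem-jumps}, and $e(C)$ does so by its definition. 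Applying Winters' theorem \ref{theo-winters}, I would replace $C$ by a smooth projective geometrically connected curve over $\C((t))$ of the same genus and index one whose minimal $sncd$-model carries the same combinatorial data; since all three quantities $\ord_A(\cdot)$, $c_{\tame}(A)$ and $e(C)$ are thereby preserved, it suffices to prove the statement in the case where $A$ is tamely ramified.

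\textbf{The key formula.} So assume $A$ tamely ramified, and set $e=e(C)$; by Proposition \ref{prop-min} this is the degree $[L:K]$ of the minimal extension over which $A$ acquires semi-abelian reduction, $L/K$ is tame, and each jump of $A$ lies in $\tfrac1e\Z$. Write the jumps with multiplicity as $0\le j_1\le\dots\le j_g<1$. Since $F^i_d\mathscr{A}_k=\mathscr{F}^{i/d}\mathscr{A}_k$ for all $i\in\{0,\dots,d-1\}$ (with $F^d_d\mathscr{A}_k=0$) and the graded pieces of the finite decreasing filtration $\mathscr{F}^{\bullet}\mathscr{A}_k$ are exactly the spaces $\Gr^{\rho}\mathscr{A}_k$ attached to the jumps $\rho$, a telescoping argument gives $\dim_k\mathscr{F}^{\beta}\mathscr{A}_k=\#\{l:j_l\ge\beta\}$ for every $\beta\in\Q'\cap[0,1[$. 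Consequently
$$m_{A,K(d)}(i)=\dim_k\mathscr{F}^{i/d}\mathscr{A}_k-\dim_k\mathscr{F}^{(i+1)/d}\mathscr{A}_k=\#\{\,l:\lfloor dj_l\rfloor=i\,\},$$
so that $i$ is a $K(d)$-jump of $A$ precisely when $i=\lfloor dj_l\rfloor$ for some $l$ (this is, essentially, the content of \cite[4.13]{HaNi}). Summing yields the key formula
$$\ord_A(d)=\sum_{i\in\mathcal{J}_{A,K(d)}}m_{A,K(d)}(i)\cdot i=\sum_{l=1}^{g}\lfloor dj_l\rfloor\qquad\text{for every }d\in\N'.$$

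\textbf{Conclusion.} Now let $\alpha\in\{1,\dots,e\}\cap\N'$ and $q\ge 0$ be such that $\alpha+qe\in\N'$. Using that $ej_l\in\Z$ for all $l$ (Corollary \ref{cor-minindex}, which in the tame case is Proposition \ref{prop-numrel}), I compute
$$\ord_A(\alpha+qe)=\sum_{l=1}^{g}\lfloor(\alpha+qe)j_l\rfloor=\sum_{l=1}^{g}\bigl(\lfloor\alpha j_l\rfloor+qe j_l\bigr)=\ord_A(\alpha)+qe\sum_{l=1}^{g}j_l=\ord_A(\alpha)+qe\cdot c_{\tame}(A),$$
the last equality being the definition of the tame base change conductor. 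The only genuinely delicate point is the identification of the $K(d)$-jumps with the integers $\lfloor dj_l\rfloor$, i.e.\ the formula $\dim_k\mathscr{F}^{\beta}\mathscr{A}_k=\#\{l:j_l\ge\beta\}$ together with $F^i_d\mathscr{A}_k=\mathscr{F}^{i/d}\mathscr{A}_k$; once this is in place, the reduction to equal characteristic zero and the divisibility $e(C)\cdot j_l\in\Z$ (Corollary \ref{cor-minindex}) make everything else purely formal.
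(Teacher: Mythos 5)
Your proof is correct and follows essentially the same route as the paper: express $\ord_A(d)$ through the $K(d)$-jumps via Proposition \ref{prop-numrel}, use Theorem \ref{Theorem-jumps}, Corollary \ref{cor-jumps} and Winters' theorem to reduce to $K=\C((t))$, where $A$ is tamely ramified and $e(C)=[L:K]$ by Proposition \ref{prop-min}. The only difference is that, for the tame case, the paper simply cites \cite[7.6]{HaNi}, whereas you re-derive that statement from the identity $\ord_A(d)=\sum_{l=1}^g\lfloor d j_l\rfloor$, which is indeed the content of \cite[4.13]{HaNi} combined with Proposition \ref{prop-numrel}.
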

\begin{proof}
Using Theorem \ref{Theorem-jumps} and Corollary \ref{cor-jumps} in
Chapter \ref{chap-jacobians}, we can again reduce to the case
$K=\C((t))$ as in the proof of Corollary \ref{cor-minindex} in
Chapter \ref{chap-jacobians}. Then $A$ is tamely ramified,
 and $e(C)$ equals the degree of the minimal extension of $K$
in $K^s$ where $A$ acquires semi-abelian reduction, by Proposition
\ref{prop-min} in Chapter \ref{chap-jacobians}. The result now
follows from \cite[7.6]{HaNi}.
\end{proof}

\section{Rationality and poles}
\subsection{Rationality of the zeta function}
\sss The following theorem is the main result of this chapter.
\begin{theorem} \label{theorem-zetamain} Let $G$ be either a tamely ramified semi-abelian
$K$-variety or the Jacobian of a smooth, projective, geometrically
connected $K$-curve of index one. We denote by $G_{\ab}$ the
abelian part of $G$; in particular, $G=G_{\ab}$ when $G$ is a
Jacobian.

 The motivic zeta function $Z_G(T)$ is rational, and belongs
to the subring
$$\mathscr{R}_k^{c_{\tame}(G)}=K_0(\Var_k)\left[T,\frac{1}{1-\LL^a T^b}\right]_{(a,b)\in
\Z \times \Z_{>0},\,a/b=c_{\tame}(G)}$$ of $K_0(\Var_k)[[T]]$. The
zeta function $Z_G(\LL^{-s})$ has a unique pole at
$s=c_{\tame}(G)$, whose order is equal to $\ttame{G_{\ab}}+1$. The
degree of $Z_G(T)$ is zero if $p=1$ and $G$ has potential good
reduction, and negative in all other cases.
\end{theorem}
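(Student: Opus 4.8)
The plan is to reduce to a Jacobian and then follow the scheme of \cite[\S6--8]{HaNi} and of the proof of Theorem \ref{theorem-compser}, feeding in the two technical results of this chapter: constancy of the identity component (Proposition \ref{prop-idclass}) and the eventual arithmetic behaviour of the order function (Proposition \ref{prop-ordjav}). The tamely ramified semi-abelian case needs no new idea — one copies \cite[\S6--8]{HaNi}, replacing the rationality of the component series of a tamely ramified abelian variety by Theorem \ref{thm-compsersab} — so I concentrate on $A=\Jac(C)$ with $C$ of index one and $g>0$. The tamely ramified case of such a Jacobian is itself covered by \cite{HaNi}, so I may assume $A$ is wildly ramified; then $p>1$ and $p\mid e(C)$ by Proposition \ref{prop-min}. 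Since $A$ is an abelian variety, $\mathscr{A}^{\qc}(d)=\mathscr{A}(d)$ for all $d$, and Proposition \ref{prop-classidcomp} gives $Z_A(T)=\sum_{d\in\N'}|\Comp{A(d)}|\cdot[\mathscr{A}(d)^o_k]\cdot\LL^{\ord_A(d)}T^d$. Put $e=e(C)$.

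First I would decompose the sum according to $a=\gcd(d,e)$, which — as $d$ is prime to $p$ — ranges over the set $\mathscr S$ of divisors of $e$ prime to $p$; for such $a$, $d=ad'$ with $\gcd(d',e/a)=1$ and $e/a=e(C(a))$ by Corollary \ref{cor-e(C)}. Combining this with the tower additivity \eqref{sss-addbc} of the base change conductor (which gives $\ord_A(ad')=d'\,\ord_A(a)+\ord_{A(a)}(d')$) and the identification of the N\'eron model of $A(ad')$ with that of $A(a)(d')$, one obtains
$$Z_A(T)=\sum_{a\in\mathscr S}Y_{C(a)}\!\bigl(\LL^{\ord_A(a)}T^{a}\bigr),$$
where, for a smooth projective geometrically connected $K$-curve $D$ of index one with Jacobian $B$, we set $Y_D(U):=\sum_{d'\in\N',\ \gcd(d',e(D))=1}[\mathscr{B}(d')_k]\,\LL^{\ord_B(d')}\,U^{d'}$. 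So it suffices to handle $Y_C(T)$, i.e.\ the case $a=1$.

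For $d\in\N'$ prime to $e$ one has $|\Comp{A(d)}|=d^{\trank{A}}|\Comp A|$ (Proposition \ref{prop-compfu}), $[\mathscr{A}(d)^o_k]=[\mathscr{A}^o_k]$ (Proposition \ref{prop-idclass}), and $\ord_A(d)=\ord_A(\alpha)+q\,e\,c_{\tame}(A)$, where $\alpha\in\mathscr P_e$ is the residue of $d$ modulo $e$ (here $\gcd(\alpha,e)=1$ forces $p\nmid\alpha$, so $\alpha\in\N'$) and $q=(d-\alpha)/e$ (Proposition \ref{prop-ordjav}; and $e\,c_{\tame}(A)\in\Z$ by Corollary \ref{cor-minindex}). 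Writing $X=\LL^{e\,c_{\tame}(A)}T^{e}$,
$$Y_C(T)=|\Comp A|\cdot[\mathscr{A}^o_k]\cdot\sum_{\alpha\in\mathscr P_e}\LL^{\ord_A(\alpha)}T^{\alpha}\sum_{q\ge0}(qe+\alpha)^{\trank{A}}X^{q},$$
and the inner sum equals, by an elementary computation as in \cite[6.2]{HaNi-comp}, $P_\alpha(X)/(1-X)^{\trank{A}+1}$ with $P_\alpha\in\Z[X]$, $\deg P_\alpha\le\trank{A}$ and $P_\alpha(1)=e^{\trank{A}}\cdot\trank{A}!\ne0$. Thus $Y_C(T)\in\mathscr{R}_k^{c_{\tame}(A)}$ — the pair $(e\,c_{\tame}(A),e)$ has ratio $c_{\tame}(A)$ — and $Y_C(\LL^{-s})$ has a pole at $s=c_{\tame}(A)$ of order exactly $\trank{A}+1$. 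Doing this for every $a\in\mathscr S$, and recalling from the proof of Theorem \ref{theorem-compser} (Lemma \ref{lemma-torrank}) that $\ttame{A}=\max_{a\in\mathscr S}\trank{A(a)}$ while all the summands have denominators that are powers of the single factor $1-\LL^{e\,c_{\tame}(A)}T^{e}$, one gets $Z_A(T)\in\mathscr{R}_k^{c_{\tame}(A)}$, of negative degree (same degree bookkeeping as in \cite{HaNi}; the excluded degree-$0$ case $p=1$ is tame), with $Z_A(\LL^{-s})$ having a unique pole, at $s=c_{\tame}(A)$, of order at most $\ttame{A}+1$.

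The hard part is to show this pole has order exactly $\ttame{A}+1$, i.e.\ that the leading coefficients of the order-$(\ttame{A}+1)$ summands do not cancel. These are the $a\in\mathscr S$ with $\trank{A(a)}=\ttame{A}$, and the relevant coefficient is, up to a common nonzero integer factor, a positive-integer combination of the classes $[\mathscr{A}(a)^o_k]=(\LL-1)^{\ttame{A}}\LL^{u(A(a))}[B(a)]$; one must check it is nonzero in $K_0(\Var_k)$. Since these are effective, nonzero classes all divisible by $(\LL-1)^{\ttame{A}}$, this is carried out as in the proof of the main theorem of \cite{HaNi}, by applying a realization that does not annihilate $(\LL-1)^{\ttame{A}}$ (a weight-polynomial-type realization, or a reduction to equal characteristic zero via Winters' Theorem \ref{theo-winters}). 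This is the only step where one uses the ring structure of $K_0(\Var_k)$ rather than formal manipulations in $\mathscr{R}_k^{c_{\tame}(A)}$.
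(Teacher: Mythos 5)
Your proposal is correct and follows essentially the same route as the paper: the tame semi-abelian case is reduced to \cite{HaNi} with the base-change results of Chapter \ref{chap-uniform}, and for a wildly ramified Jacobian the series is split into arithmetic progressions modulo $e(C)$, where Propositions \ref{prop-compfu}, \ref{prop-idclass} and \ref{prop-ordjav} together with Corollary \ref{cor-e(C)} reduce everything to the elementary geometric-series computation and the pole/degree bookkeeping of \cite[8.6]{HaNi}. The only (harmless) difference is organizational: you first split by $a=\gcd(d,e(C))$, using tower additivity of the base change conductor and the identification $A(ad')\cong A(a)(d')$, and then pass to the coprime auxiliary series, whereas the paper sums directly over residue classes $\alpha$ modulo $e(C)$ and applies the key propositions relative to $C(\gcd(\alpha,e(C)))$; the resulting computation is identical.
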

\begin{proof}
 If $G$ is tamely ramified, then one can simply copy the proof of
 \cite[8.6]{HaNi}, invoking Theorem \ref{thm-main} in Chapter
 \ref{chap-uniform} instead of \cite[5.7]{HaNi-comp}.
 Thus we can assume that $G$ is the Jacobian of a smooth projective geometrically connected $K$-curve $C$
 of index one, and that $G$ is wildly ramified.
 Then $p>1$ and $e(C)$ is divisible by
$p$, by Proposition \ref{prop-min} in Chapter
\ref{chap-jacobians}. The proof of this case follows the same
lines as the proof of Theorem \ref{theorem-compser} in Chapter
\ref{chap-jacobians}.

We set $e=e(C)$ to ease notation. For every element $ \alpha $ in
$\{1, \ldots, e\} \cap \mathbb{N}'$ we define the auxiliary series
$$Z^{(\alpha)}_G(T)=\sum_{d \in (\alpha+\N e)}\left(|\Comp{G(d)}|[\mathscr{G}(d)_k^o] \LL^{\ord_G(d)}
   T^d\right)\quad \in K_0(\Var_k)[[T]].$$
Then we have
$$Z_G(T)=\sum_{\alpha \in \{1, \ldots, e\} \cap \mathbb{N}'} Z^{(\alpha)}_{G}(T).$$

 We fix an element $\alpha$ in $\{1, \ldots, e\} \cap \mathbb{N}'$
 and we set
 $\alpha'=\gcd(\alpha,e)$. We know that $e(C(\alpha'))=e/\alpha'$ by Proposition \ref{prop-e(C)} in Chapter \ref{chap-jacobians}.
  Then for every integer $d=\alpha+qe$ with $q\in \N$,
  we have $[\mathscr{A}(d)_k^o]=[\mathscr{A}(\alpha')_k^o]$ in
 $K_0(\Var_k)$ by Proposition \ref{prop-idclass},
$$|\Comp{G(d)}|=(d/\alpha')^{t(G(\alpha'))}|\Comp{G(\alpha')}|$$
 by Proposition \ref{prop-compfu} in Chapter \ref{chap-jacobians}, and
$$\ord_G(d)=\ord_G(\alpha) + q  e c_{\tame}(G)$$ by Proposition \ref{prop-ordjav}. We can
therefore write
$$Z^{(\alpha)}_G(T)=|\Comp{G(\alpha)}|\cdot [\mathscr{G}(\alpha)_k^o]\LL^{\ord_G(\alpha)}
S^{(\alpha)}_G(T)$$ with
$$S^{(\alpha)}_G(T)=\sum_{q\in \N}\left( \frac{qe+\alpha}{\alpha'}\right)^{t(G(\alpha'))}\LL^{q e c_{\tame}(G) } T^{qe+\alpha}.$$
 The rest of the proof is completely analogous to \cite[8.6]{HaNi}.
\end{proof}

\subsection{Poles and monodromy}
\sss If $G$ is a tamely ramified abelian $K$-variety, we can
relate the unique pole $s=c_{\tame}(G)$ of $Z_G(T)$ to the
monodromy action on the cohomology of $G$, as follows.  As we've
explained in \cite[\S2.5]{HaNi}, this result can be viewed as a
global version of the Motivic Monodromy Conjecture for
semi-abelian $K$-varieties.

\begin{theorem}
Let $G$ be a tamely ramified semi-abelian $K$-variety of dimension
$g$. Let $m$ be the order of $c(G)=c_{\tame}(G)$ in $\Q/\Z$, and
let $\Phi_m(T)$ be the cyclotomic polynomial whose roots are the
primitive $m$-th roots of unity. Then, for every topological
generator $\sigma$ of the tame inertia group $\Gal(K^t/K)$ and
every prime $\ell\neq p$, the polynomial $\Phi_m(T)$ divides the
characteristic polynomial of the action of $\sigma$ on
$H^g(G\times_K K^t,\Q_\ell)$. Thus for every embedding of
$\Q_\ell$ in $\C$, the value $\exp(2\pi c(G)i)$ is an eigenvalue
of $\sigma$ on $H^g(G\times_K K^t,\Q_\ell)$.
\end{theorem}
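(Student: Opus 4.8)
The plan is to reduce the statement to the toric part and the abelian part of $G$ separately, using that $H^\bullet(G\times_K K^t,\Q_\ell)$ is the exterior algebra on $H^1(G\times_K K^t,\Q_\ell)$ and that the base change conductor is additive along the Chevalley decomposition of a tamely ramified semi-abelian variety. First I would record the structure of $W:=H^1(G\times_K K^t,\Q_\ell)$. Since $G$ is tamely ramified, the wild inertia acts trivially on $H^\bullet(G\times_K K^s,\Q_\ell)$, so this agrees with $H^\bullet(G\times_K K^t,\Q_\ell)$; and since $G$ is a commutative group scheme, the group law makes $H^\bullet(G\times_K K^t,\Q_\ell)$ a graded-commutative Hopf algebra over $\Q_\ell$ generated in degree $1$, hence $H^\bullet(G\times_K K^t,\Q_\ell)\cong\bigwedge^\bullet W$ as graded $\Gal(K^t/K)$-modules. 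In particular $H^g(G\times_K K^t,\Q_\ell)=\bigwedge^g W$, so its $\sigma$-eigenvalues are exactly the products of $g$ eigenvalues of $\sigma$ on $W$ taken with distinct indices, and its characteristic polynomial has integer coefficients. Moreover the exact sequence $0\to G_{\tor}\to G\to G_{\ab}\to 0$ induces, by pullback along $G\to G_{\ab}$ and restriction along $G_{\tor}\hookrightarrow G$, a short exact sequence of $\Gal(K^t/K)$-modules
$$0\to H^1(G_{\ab}\times_K K^t,\Q_\ell)\to W\to H^1(G_{\tor}\times_K K^t,\Q_\ell)\to 0,$$
where exactness on the right holds because the transgression $H^1(G_{\tor})\to H^2(G_{\ab})$ vanishes (the extension class of $G$ lives in $\Hom(X(G_{\tor}),\widehat{G}_{\ab})$, and degree-zero line bundles have trivial first Chern class). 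Hence the multiset of $\sigma$-eigenvalues on $W$ is the disjoint union of those on $H^1(G_{\ab}\times_K K^t,\Q_\ell)$ and on $H^1(G_{\tor}\times_K K^t,\Q_\ell)$.

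Next I would treat the toric contribution by an elementary computation. As a subtorus of $G$, the torus $G_{\tor}$ is tamely ramified, so $\Sw(G_{\tor})=0$, and Chai and Yu's formula $c(G_{\tor})=\Art(G_{\tor})/2$ combined with Proposition \ref{prop-ctame} gives $c(G_{\tor})=u(G_{\tor})/2\in\tfrac12\Z$. On the other hand $H^1(G_{\tor}\times_K K^t,\Q_\ell)\cong X(G_{\tor})\otimes_{\Z}\Q_\ell(-1)$, on which $\sigma$ acts through its action on the character lattice, the Tate twist being $\sigma$-trivial since $k$ is separably closed. Decomposing $X(G_{\tor})\otimes\Q$ into $\Q$-irreducible $\langle\sigma\rangle$-isotypic pieces and using that the product of the primitive $d$-th roots of unity equals $1$ for $d\geq 3$ (where also $\varphi(d)$ is even) and $-1$ for $d=2$, one gets $\det(\sigma\mid X(G_{\tor}))=(-1)^{u(G_{\tor})}=\exp(2\pi i\,c(G_{\tor}))$; equivalently, the product of all $\dim G_{\tor}$ eigenvalues of $\sigma$ on $H^1(G_{\tor}\times_K K^t,\Q_\ell)$ is $\exp(2\pi i\,c(G_{\tor}))$.

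For the abelian part I would invoke \cite[5.14]{HaNi}: since $G_{\ab}$ is tamely ramified, for each jump $j$ of $G_{\ab}$ the value $\exp(2\pi i j)$ is an eigenvalue of $\sigma$ on $H^1(G_{\ab}\times_K K^t,\Q_\ell)=(T_\ell G_{\ab})^{\vee}\otimes\Q_\ell$ of multiplicity at least $m_{G_{\ab}}(j)$; and by Proposition \ref{prop-numrel} the jumps of $G_{\ab}$, listed with multiplicities, form the tuple $(c_1(G_{\ab}),\dots,c_a(G_{\ab}))$ with $a=\dim G_{\ab}$ and $\sum_k c_k(G_{\ab})=c(G_{\ab})$. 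Thus $\{\exp(2\pi i\,c_k(G_{\ab}))\}_{k=1}^a$ is a sub-multiset of the $\sigma$-eigenvalues on $H^1(G_{\ab}\times_K K^t,\Q_\ell)$ with product $\exp(2\pi i\,c(G_{\ab}))$. Putting the toric and abelian sub-multisets together inside $W$ produces $g=\dim G_{\tor}+\dim G_{\ab}$ eigenvalues of $\sigma$ on $W$ whose product, using the additivity $c(G)=c(G_{\tor})+c(G_{\ab})$ for tamely ramified $G$ (\cite[4.23]{HaNi}), equals $\exp(2\pi i\,c(G))$. Hence $\exp(2\pi i\,c(G))$ is an eigenvalue of $\sigma$ on $\bigwedge^g W=H^g(G\times_K K^t,\Q_\ell)$. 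Since $\exp(2\pi i\,c(G))$ is a primitive $m$-th root of unity and the characteristic polynomial of $\sigma$ on $H^g(G\times_K K^t,\Q_\ell)$ has integer coefficients, its minimal polynomial $\Phi_m(T)$ divides this characteristic polynomial; the final assertion of the theorem is the corresponding statement for the single root $\exp(2\pi i\,c(G))$, after fixing an embedding $\Q_\ell\hookrightarrow\C$.

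I expect the main obstacle to be the bookkeeping in the reduction step rather than any single deep input: one has to verify carefully that the weight filtration on $H^1$ is honestly a short exact sequence of Galois modules (vanishing of the transgression), that the exterior-algebra identification is Galois-equivariant, and that the multiplicity bounds of \cite[5.14]{HaNi} suffice to realize the jump eigenvalues of the abelian part as an \emph{honest sub-multiset} of the eigenvalues on $H^1(G_{\ab})$ (so that their product genuinely occurs in $\bigwedge^{a}$). The toric sign computation $\exp(2\pi i\,c(G_{\tor}))=\det(\sigma\mid X(G_{\tor}))$, although elementary, is the one genuinely new ingredient and should be isolated as a lemma on finite-order automorphisms of lattices.
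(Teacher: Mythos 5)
Your argument is correct, but note that the monograph itself does not reprove this statement: the proof given is simply a citation of Corollary 5.15 in \cite{HaNi}, and the route taken there is more direct than yours. In \cite{HaNi}, the relation between jumps and monodromy eigenvalues (the result you quote as \cite[5.14]{HaNi}) is applied to the tamely ramified semi-abelian variety $G$ itself: the $g$ jumps of $G$, counted with multiplicities, sum to $c(G)=c_{\tame}(G)$ by Proposition \ref{prop-numrel}, each $\exp(2\pi i j)$ occurs as an eigenvalue of $\sigma$ on $H^1(G\times_K K^t,\Q_\ell)$ with multiplicity at least $m_G(j)$, and taking the $g$-th exterior power gives the eigenvalue $\exp(2\pi i c(G))$ on $H^g$. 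Your proposal instead reduces along the Chevalley decomposition, which forces you to import two extra ingredients that the direct route avoids: the additivity $c(G)=c(G_{\tor})+c(G_{\ab})$ in the tame case (\cite[4.23]{HaNi}) and the toric sign computation $\det(\sigma\mid X(G_{\tor})\otimes\Q)=(-1)^{u(G_{\tor})}=\exp(2\pi i\,c(G_{\tor}))$ via Chai--Yu and Proposition \ref{prop-ctame}. That lemma is correct (for $d\geq 3$ the primitive $d$-th roots of unity have even count and product $1$, so only the $d=2$ isotypic pieces contribute, and their number is congruent to $u(G_{\tor})$ mod $2$), and your extraction of the jump eigenvalues of $G_{\ab}$ as an honest sub-multiset works because distinct jumps lie in $[0,1[$ and hence give distinct eigenvalues. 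Two small simplifications: the short exact sequence of tame $H^1$'s needs no transgression argument, since it is obtained by dualizing the exact sequence of Tate modules and taking $P$-invariants, exactly as in the proof of Proposition \ref{prop-charpolsab}; and the Galois-equivariant identification $H^i\cong\bigwedge^i H^1$ is already recorded there (via \cite[5.7]{HaNi}), as is the integrality and cyclotomic nature of the relevant characteristic polynomials, which yields the divisibility by $\Phi_m(T)$ once the single root $\exp(2\pi i c(G))$ is produced. In short: your proof is valid and self-contained modulo the quoted results, but it buys the same conclusion at the cost of the decomposition bookkeeping, whereas the cited proof applies the jump--eigenvalue correspondence to $G$ directly.
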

\begin{proof}
See Corollary 5.15 in \cite{HaNi}.
\end{proof}

\sss It is not at all clear how to find a similar cohomological
interpretation of $c_{\tame}(G)$ when $G$ is wildly ramified. The
tame cohomology spaces $H^i(G\times_K K^t,\Q_\ell)$ certainly
contain too little information; for instance, $H^1(G\times_K
K^t,\Q_\ell)$ is trivial for every wildly ramified elliptic
$K$-curve. On the other hand, the inertia group $I$ is not
 procyclic if $p>1$, so that we cannot associate eigenvalues to its
action on $H^g(G\times_K K^s,\Q_\ell)$ in any straightforward way.

\subsection{Prym varieties}
\sss The results that we've obtained for Jacobians can be extended
to a larger class of abelian $K$-varieties, the so-called Prym
varieties. In particular, we will explain how one can prove
 the rationality of Edixhoven's jumps and of the motivic zeta
 function for this class. A suitable reference for the theory of Prym varieties is \cite{AlBiHu}, where it
is not assumed that the ground field is algebraically closed.
 However, whenever we work with Prym varieties, we assume that $
\mathrm{char}(k) \neq 2 $.

\sss Let $C$ be a smooth, projective and geometrically connected
$K$-curve of genus $g>0$. Assume that $C$ carries an involution $$
\iota : C \to C $$  with either $0$ or $2$ fixed points. We'll
refer to the first situation as case (0) and to the latter as case
(2). We
 denote by
$$ \pi : C \to C_1 $$
 the quotient map. Put $ A = \mathrm{Jac}(C) $ and $ A_1 =
\mathrm{Jac}(C_1) $. There is induced a \emph{norm map}
$$ Nm : A \to A_1 $$
which can be described (on points) as $$ \sum m_Q [Q] \to \sum m_Q
[\pi(Q)]. $$The following result is well-known
(cf.~\cite[Ch.~1]{AlBiHu}).

\begin{prop}\label{prop-Prym}\item
\begin{enumerate}
\item The norm map is surjective with smooth kernel. The identity
component
$$ P := ker(Nm)^0 $$
is an abelian $K$-variety.

\item In case (0), the kernel $ ker(Nm) $ has $ 2 $ connected
components, and in case (2) it is connected.

\item Let $ \mathcal{L} = \mathcal{L}(\Theta) $ be the theta
divisor on $A$. The restriction $ \mathcal{M} = \mathcal{L}|_P $
is twice a principal polarization.
\end{enumerate}
\end{prop}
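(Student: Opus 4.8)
All three assertions are classical and are carried out in detail in \cite[Ch.~1]{AlBiHu} over an arbitrary base field; since the only hypothesis we impose is $\mathrm{char}(k)\neq 2$, that treatment applies here without change, and I will only indicate the structure of the argument. For (1), the plan is to start from the two standard identities $Nm\circ\pi^{*}=[2]$ on $A_{1}$ and $\pi^{*}\circ Nm=1+\iota^{*}$ on $A$, which one obtains by pushing forward and pulling back divisors, using $\deg(\pi)=2$. The first identity gives surjectivity of $Nm$, since $A_{1}$ is divisible and hence $\mathrm{Im}(Nm)\supseteq [2](A_{1})=A_{1}$; applying $\Lie$ to it and using that $[2]$ is invertible on the $K$-vector space $\Lie(A_{1})$ because $\mathrm{char}(k)\neq 2$, one gets that $\Lie(Nm)$ is surjective, so that $Nm$ is a smooth morphism and $\ker(Nm)$ is a smooth closed subgroup scheme of $A$. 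I would then conclude that $P=\ker(Nm)^{0}$, being smooth, connected, commutative and proper (it is closed in the abelian variety $A$), is an abelian $K$-variety.

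For (2), I would deduce from $\pi^{*}\circ Nm=1+\iota^{*}$ that $\iota^{*}$ acts by $-1$ on $\ker(Nm)$, and that there is an exact sequence $0\to\ker(Nm)\to\ker(1+\iota^{*})\to\ker(\pi^{*})$. Here $\ker(\pi^{*})$ is trivial in case (2), where $\pi$ is ramified and $\pi^{*}$ is injective, and is the order-two subgroup generated by the line bundle defining the cover in case (0), where $\pi$ is étale. Since $\ker(1+\iota^{*})^{0}=\mathrm{Im}(1-\iota^{*})$ and $\ker(1+\iota^{*})/\mathrm{Im}(1-\iota^{*})$ is a finite $2$-torsion group (as $2=(1+\iota^{*})+(1-\iota^{*})$), the plan is to combine these facts with the geometry of the fixed locus of $\iota$ to pin down the component group of $\ker(Nm)$ exactly: trivial in case (2), of order two in case (0). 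I expect this component count to be the main obstacle — the formal manipulations above only bound the component group, and identifying it with the stated value genuinely uses that $\iota$ has precisely $0$, resp.\ $2$, fixed points, so at this step I would fall back on the explicit analysis of \cite[Ch.~1]{AlBiHu}.

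For (3), the plan is to invoke autoduality of Jacobians, which identifies $\phi_{\mathcal{L}(\Theta)}$ with the canonical isomorphism $A\xrightarrow{\sim}\hat A$ (and similarly for $A_{1}$) and realises $\widehat{\pi^{*}}=Nm$ and $\widehat{Nm}=\pi^{*}$. Writing $\iota_{P}\colon P\hookrightarrow A$ for the inclusion, the polarisation attached to $\mathcal{M}=\iota_{P}^{*}\mathcal{L}$ is the composite $\widehat{\iota_{P}}\circ\phi_{\mathcal{L}}\circ\iota_{P}\colon P\to\hat P$, and I would use the identity $\pi^{*}\circ Nm=1+\iota^{*}$ together with the $(-1)$-action of $\iota^{*}$ on $P$ to show that this composite is divisible by $2$ in $\Hom(P,\hat P)$; the remaining point is to verify that $\tfrac12\,\phi_{\mathcal{M}}$ has degree one, i.e.\ is a principal polarisation. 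That last degree computation is the only non-formal step, and it rests on the fact that $\pi^{*}A_{1}$ and $P$ are complementary abelian subvarieties of the principally polarised abelian variety $(A,\phi_{\mathcal{L}})$; once more I would take the details from \cite[Ch.~1]{AlBiHu}.
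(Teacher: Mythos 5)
Your proposal matches the paper's treatment: the paper gives no proof of this proposition at all, simply labelling it as well known and citing \cite[Ch.~1]{AlBiHu}, which is exactly the source you fall back on for the non-formal steps (the component count in (2) and the degree computation in (3)). The additional sketch you provide — the identities $Nm\circ\pi^{*}=[2]$ and $\pi^{*}\circ Nm=1+\iota^{*}$, smoothness via $\Lie$ using $\mathrm{char}(k)\neq 2$, and the bound on the component group via $\ker(1+\iota^{*})/\mathrm{Im}(1-\iota^{*})$ — is correct and consistent with the classical argument.
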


\begin{definition}\label{def-Prym}
The abelian $K$-variety $P$ is called the Prym variety associated
to $(C,\iota)$.
\end{definition}

\sss We denote by $u_K:P\to A$ the inclusion morphism. Let $
\phi_{\mathcal L} : A \to A^{\vee} $ be the principal polarization
associated to $ \mathcal L $. We denote by $ \overline{P} $ the
complement of $P$ in $A$; it is given by
$$ \overline{P} := (\ker(u_K^{\vee} \circ \phi_{\mathcal L})_{\mathrm{red}})^0. $$
We consider the difference map
$$ v_K : P \times_K \overline{P} \to A:(x,y)\mapsto x-y $$
 and the
morphism $ w_K : \overline{P} \to A_1  $ induced by the norm map.
\begin{lemma}\label{lemma-Prym}
 The morphisms $v_K$ and $w_K$  are
 isogenies whose degree is a power of $2$.
\end{lemma}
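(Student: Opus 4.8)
The plan is to reduce Lemma~\ref{lemma-Prym} to the standard dictionary between the double cover $\pi\colon C\to C_1$ and the pullback and norm maps on Jacobians, which makes every degree occurring in the statement visibly a power of $2$. I would first collect the elementary identities attached to a degree-$2$ cover with $\pi\circ\iota=\pi$: on $A=\Jac(C)$ and $A_1=\Jac(C_1)$ one has $Nm\circ\pi^*=[2]_{A_1}$, $\pi^*\circ Nm=1+\iota^*$ and $\iota^*\circ\pi^*=\pi^*$, and, by adjointness of $\pi^*$ and $\pi_*=Nm$ with respect to the canonical principal polarizations $\phi_{\mathcal L}$ of $A$ and $\phi_{\mathcal L_1}$ of $A_1$, the relation $(\pi^*)^\vee\circ\phi_{\mathcal L}=\phi_{\mathcal L_1}\circ Nm$. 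Since in case (0) the cover is \'etale (so $\ker\pi^*$ is cyclic of order $2$, generated by the line bundle defining $\pi$) and in case (2) it is ramified (so $\pi^*$ is a closed immersion), $\pi^*(A_1)$ is an abelian subvariety of $A$, and $\pi^*\colon A_1\to\pi^*(A_1)$ is an isogeny of degree $\delta$, where $\delta=2$ in case (0) and $\delta=1$ in case (2). As $Nm$ is surjective by Proposition~\ref{prop-Prym}(1), $\dim P=\dim A-\dim A_1$, so $\dim\pi^*(A_1)=\dim A_1=\dim A-\dim P$.

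The crux of the argument is to identify $\overline{P}$ with $\pi^*(A_1)$. Dualizing the homomorphism $u_K^\vee\circ\phi_{\mathcal L}\circ\pi^*\colon A_1\to P^\vee$ and using the relation $(\pi^*)^\vee\circ\phi_{\mathcal L}=\phi_{\mathcal L_1}\circ Nm$ together with $Nm\circ u_K=0$ (which holds because $u_K$ is the inclusion of $P=\ker(Nm)^0$), one finds $u_K^\vee\circ\phi_{\mathcal L}\circ\pi^*=0$; hence the connected reduced subgroup $\pi^*(A_1)$ lies in $\ker(u_K^\vee\circ\phi_{\mathcal L})$, and therefore in $\overline{P}$. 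By the general theory of complementary abelian subvarieties $\dim\overline{P}=\dim A-\dim P$, which by the first paragraph equals $\dim\pi^*(A_1)$, so $\overline{P}=\pi^*(A_1)$; alternatively this identification can simply be quoted from \cite{AlBiHu}. I expect this step to be the only substantive point of the proof, the rest being bookkeeping with degrees; the hypothesis $\mathrm{char}(k)\neq2$ will be used only to ensure that the finite group schemes appearing below are \'etale, so that their orders can be read off on geometric points.

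Granting $\overline{P}=\pi^*(A_1)$, I would finish as follows. For $w_K$: since $w_K$ is the restriction of $Nm$ to $\overline{P}$, we have $w_K\circ\pi^*=Nm\circ\pi^*=[2]_{A_1}$, an isogeny of degree $2^{2\dim A_1}$; as $\pi^*\colon A_1\to\overline{P}$ is an isogeny of degree $\delta\in\{1,2\}$, it follows that $w_K$ is an isogeny of degree $2^{2\dim A_1}/\delta$, a power of $2$. For $v_K\colon P\times_K\overline{P}\to A$: composing with $\mathrm{id}_P\times(-1)$ turns $v_K$ into the sum morphism, so $\ker(v_K)\cong P\cap\overline{P}$. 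If $x\in P\cap\overline{P}$, write $x=\pi^*(z)$ with $z\in A_1$; then $\iota^*x=\iota^*\pi^*z=\pi^*z=x$, while $x\in\ker(Nm)$ forces $(1+\iota^*)x=\pi^*Nm(x)=0$, i.e. $\iota^*x=-x$, whence $2x=0$. Therefore $P\cap\overline{P}\subseteq P[2]$, a finite group of order $2^{2\dim P}$; since moreover $\dim(P\times_K\overline{P})=\dim P+\dim\overline{P}=\dim A$, the morphism $v_K$ has finite kernel and full-dimensional (hence all of $A$) image, so it is an isogeny, of degree dividing $2^{2\dim P}$ and thus a power of $2$. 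If a sharper bound on $|\ker(v_K)|$ were wanted, one could feed in Proposition~\ref{prop-Prym}(3) --- that $\mathcal{M}=\mathcal{L}|_P$ induces twice a principal polarization, so $\phi_{\mathcal{M}}=2\psi$ with $\psi$ an isomorphism, forcing $P\cap\overline{P}$ into $\ker\phi_{\mathcal M}=P[2]$ --- but this refinement is not needed for the statement of the lemma.
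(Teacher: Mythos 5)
Your argument is correct, but it takes a genuinely different route from the paper's. The paper never identifies $\overline{P}$ with $\pi^*(A_1)$: for $v_K$ it simply observes that $\ker(v_K)=P\times_A\overline{P}$ is a subgroup scheme of $\ker(u_K^{\vee}\circ\phi_{\mathcal L}\circ u_K)=\ker(\phi_{\mathcal M})$, whose order is a power of $2$ because $\mathcal M$ is twice a principal polarization (Proposition \ref{prop-Prym}); for $w_K$ it factors $w_K$ through $A/P$, the first factor having kernel $P\times_A\overline{P}$ and the second factor $\beta:A/P\to A_1$ admitting a morphism $\gamma:A_1\to A/P$ induced by $\pi$ with $\beta\circ\gamma=[2]$. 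You instead invest in the structural identification $\overline{P}=\pi^*(A_1)$ (via the adjunction $(\pi^*)^{\vee}\circ\phi_{\mathcal L}=\phi_{\mathcal L_1}\circ Nm$ and a dimension count) and then read off both degrees from the double-cover identities $Nm\circ\pi^*=[2]$, $\pi^*\circ Nm=1+\iota^*$, $\iota^*\circ\pi^*=\pi^*$; in particular you do not use Proposition \ref{prop-Prym}(3) at all, whereas the paper relies on it and avoids any discussion of $\pi^*(A_1)$. Your route yields a bit more (explicit degree $2^{2\dim A_1}/\delta$ for $w_K$, and $\ker(v_K)\subseteq P[2]$), at the price of the extra identification step. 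One point to tighten: the degree of $v_K$ is the order of the scheme-theoretic kernel, so the containment $P\cap\overline{P}\subseteq P[2]$ should be verified as group schemes rather than on geometric points only (otherwise, when $p>2$, an infinitesimal subgroup of order a power of $p$ could a priori sit inside the kernel); this follows from your own identities, since $\iota^*\circ\pi^*=\pi^*$ with $\pi^*:A_1\to\overline{P}$ faithfully flat gives $\iota^*=\mathrm{id}$ on $\overline{P}$, while $\pi^*\circ Nm=1+\iota^*$ vanishes on $\ker(Nm)$, so $[2]$ kills $P\cap\overline{P}$ scheme-theoretically and the kernel lies in the \'etale group scheme $P[2]$.
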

\begin{proof}
The  kernel of $v_K$ can be identified with $ P \times_{A}
\overline{P} $. By construction, it is a subgroup of the kernel of
$$u_K^{\vee}\circ \phi_{\mathcal{L}}\circ u_K:P\to P^{\vee}$$ but this
morphism is precisely the polarization $\phi_{\mathcal{M}}$
associated to $\mathcal{M}=u_K^*\mathcal{L}$.
 Since $\mathcal M $ is twice a principal
polarization by Proposition \ref{prop-Prym}, the degree of
 $\phi_{\mathcal M} $  is a power of $2$.
 Thus the degree of $v_K$ is also a power of $2$.

Now we turn our attention to $ w_K : \overline{P} \to A_1 $.  We
can factor $w_K$ as $ \alpha : \overline{P} \to A/P $ followed by
$ \beta : A/P \to A_1 $. We've already seen that the degree of
$$\ker(\alpha)= P \times_A \overline{P} $$ is a power of $2$.
 The morphism $\pi:C\to C_1$ induces a morphism of abelian
 $K$-varieties $\gamma:A_1\to A/P$ such that the composition
 $\beta\circ \gamma$ is multiplication by $\mathrm{deg}(\pi)=2$.
 Thus the degree of $\beta$ is a power of 2, as well, and the same conclusion holds for the composition $ w_K =
\alpha \circ \beta $.
\end{proof}

\begin{prop}\label{prop-Prymjumps}
For every $ d \in \N' $, we have
$$ m_{A,K(d)} = m_{P,K(d)} + m_{A_1,K(d)}. $$
In particular,
$$ m_{A} = m_{P} + m_{A_1}. $$
%The set of jumps $\{j(P)_i\}_{1 \leq i \leq \mathrm{dim}(P)}$ of $P$ forms a subset of the set of jumps $ \{j(A)_i\}_{1 \leq i \leq g(C)}$. In particular, every jump of $P$ is a rational number.
\end{prop}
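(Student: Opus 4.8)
The plan is to reduce the statement to the invariance of the $K(d)$-multiplicity function under isogenies of degree prime to $p$, and then to feed in the two isogenies produced in Lemma \ref{lemma-Prym}. Recall from \eqref{sss-mudjump} that $m_{A,K(d)}$ is read off from the $\mu_d(k)$-representation $\Lie(\mathcal{A}(d)_k)$: the value at $i/d$ is the dimension of the subspace on which each $\zeta\in\mu_d(k)$ acts by $\zeta^i$. So the heart of the matter is the following auxiliary claim: if $f\colon G\to H$ is an isogeny of abelian $K$-varieties whose degree $n$ is prime to $p$, then for every $d\in\N'$ the morphism $f$ induces an isomorphism of $\mu_d(k)$-representations between $\Lie(\mathcal{G}(d)_k)$ and $\Lie(\mathcal{H}(d)_k)$, and hence $m_{G,K(d)}=m_{H,K(d)}$ and $m_G=m_H$.

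To prove this claim I would choose $g\colon H\to G$ with $g\circ f=[n]_G$ and $f\circ g=[n]_H$. By the N\'eron mapping property, $f$ and $g$ extend to morphisms of N\'eron $lft$-models, and after base change to $K(d)$ they become $\Gal(K(d)/K)$-equivariant morphisms $\mathcal{G}(d)\to\mathcal{H}(d)$ and $\mathcal{H}(d)\to\mathcal{G}(d)$ whose composites are multiplication by $n$ (by uniqueness in the N\'eron mapping property). Passing to the Lie algebras of the special fibers and using $\Lie([n])=n\cdot\mathrm{id}$, which is invertible because $n$ is a unit in $R$, both induced maps on $\Lie(\cdot(d)_k)$ are isomorphisms, and they are $\mu_d(k)$-equivariant. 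The equality $m_{G,K(d)}=m_{H,K(d)}$ then follows from \eqref{sss-mudjump}, and $m_G=m_H$ follows by passing to a cofinal tower of finite tame extensions and invoking the limit description of the jumps in \eqref{sss-jumpslim}.

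With the claim in hand I would finish as follows. Since the formation of N\'eron models commutes with finite products, $\Lie$ of the special fiber of the N\'eron model of $P\times_K\overline{P}$ is the direct sum $\Lie(\mathcal{P}(d)_k)\oplus\Lie(\overline{\mathcal{P}}(d)_k)$ as $\mu_d(k)$-representations, so $m_{P\times_K\overline{P},K(d)}=m_{P,K(d)}+m_{\overline{P},K(d)}$. By Lemma \ref{lemma-Prym}, the difference map $v_K\colon P\times_K\overline{P}\to A$ and the map $w_K\colon\overline{P}\to A_1$ are isogenies of $2$-power degree; since our standing hypothesis when working with Prym varieties is $\mathrm{char}(k)\neq 2$, these degrees are prime to $p$. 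Applying the claim to $v_K$ gives $m_{A,K(d)}=m_{P,K(d)}+m_{\overline{P},K(d)}$, and applying it to $w_K$ gives $m_{\overline{P},K(d)}=m_{A_1,K(d)}$; combining, $m_{A,K(d)}=m_{P,K(d)}+m_{A_1,K(d)}$ for every $d\in\N'$, and then $m_A=m_P+m_{A_1}$ by \eqref{sss-jumpslim}. Note that $P$, $\overline{P}$ and $A_1$ are genuine abelian $K$-varieties by Proposition \ref{prop-Prym}(1) and the fact that quotients of abelian varieties are abelian, so all the invariants above are defined.

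I expect the only delicate point to be the $\mu_d(k)$-equivariance in the auxiliary claim: one must check that the Galois descent data implicit in Edixhoven's Weil-restriction construction of the filtration $F^\bullet_d$ is compatible with the morphisms of N\'eron models induced by $f$ and $g$. This is a formal consequence of the functoriality of the N\'eron model and of Weil restriction, so no genuine obstruction arises; everything else is bookkeeping with the product decomposition and with the passage to $\Lie$ of special fibers.
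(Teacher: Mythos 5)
Your proof is correct and follows essentially the same route as the paper: extend the prime-to-$p$ isogenies $v_K$ and $w_K$ of Lemma \ref{lemma-Prym} to Galois-equivariant morphisms of N\'eron models over $R(d)$, pass to the $\mu_d(k)$-representations $\Lie(\cdot(d)_k)$, and read off the multiplicities via \eqref{sss-mudjump}. The only (harmless) difference is that you obtain the Lie-level isomorphisms from a quasi-inverse isogeny together with invertibility of the degree in $k$, whereas the paper invokes \cite[7.3.6]{neron} to extend the isogenies to N\'eron models with degree still prime to $p$, so that they are \'etale and hence isomorphisms on Lie algebras.
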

\begin{proof}
For each $ d \in \N' $ we have isogenies
$$ v_{K(d)} : P(d) \times_{K(d)} \overline{P}(d) \to A(d) $$
and
$$ w_{K(d)} : \overline{P}(d) \to A_1(d) $$ whose degree is prime to $p$, by Lemma
\ref{lemma-Prym} and our assumption that $p\neq 2$.
 By \cite[7.3.6]{neron}, these morphisms extend uniquely to Galois
equivariant  isogenies
$$ v(d) : \mathscr{P}(d) \times_{R(d)} \overline{\mathscr{P}}(d) \to \mathscr{A}(d) $$
and
$$ w(d) : \overline{\mathscr{P}}(d) \to \mathscr{A}_1(d) $$
on the level of N\'eron models, and the degrees of these isogenies
 are still prime to $p$.

 Passing to the special fibers and considering the tangent spaces
 at the origin, we get isomorphisms of $k[\mu_d(k)]$-modules
$$ \mathrm{Lie}(v(d)_k) = \mathrm{Lie}( \mathscr{P}(d)_k ) \oplus \mathrm{Lie}(\overline{\mathscr{P}}(d)_k)  \to \mathrm{Lie}(\mathscr{A}(d)_k) $$
and
$$ \mathrm{Lie}(w(d)_k) : \mathrm{Lie}(\overline{\mathscr{P}}(d)_k) \to \mathrm{Lie}(\mathscr{A}_1(d)_k). $$
 By \eqref{sss-mudjump} in Chapter \ref{chap-conductor}, this implies that
 $m_{P,K(d)}+m_{\overline{P},K(d)}=m_A$ and
 $m_{\overline{P},K(d)}=m_{A_1,K(d)}$, so that
 $m_{A,K(d)}=m_{P,K(d)}+m_{A_1,K(d)}$.
\end{proof}
\begin{cor}
The jumps of $P$ are rational numbers.
\end{cor}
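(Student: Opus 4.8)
The plan is to deduce the statement directly from the additivity of multiplicity functions established in Proposition~\ref{prop-Prymjumps} together with the rationality of jumps of Jacobians from Corollary~\ref{cor-minindex}. Write $A=\Jac(C)$ and $A_{1}=\Jac(C_{1})$; Proposition~\ref{prop-Prymjumps} gives the equality of multiplicity functions $m_{A}=m_{P}+m_{A_{1}}$ on $\R\cap[0,1)$.

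The key (and essentially only) observation is that all three functions $m_{P}$, $m_{A}$, $m_{A_{1}}$ are everywhere non-negative, being dimensions of the graded pieces of Edixhoven's filtration. Hence $m_{P}(j)>0$ forces $m_{A}(j)>0$, i.e.\ $\mathcal{J}_{P}\subseteq\mathcal{J}_{A}$. It therefore suffices to know that every jump of $A=\Jac(C)$ is rational, which is precisely Corollary~\ref{cor-minindex} applied to $C$: the jumps of the Jacobian of a smooth, projective, geometrically connected $K$-curve of index one are rational numbers, with common denominator the stabilization index $e(C)$. Thus $\mathcal{J}_{P}\subseteq\mathcal{J}_{A}\subseteq\Q$, which is the assertion. (If $A_{1}=0$, for instance when $C_{1}$ has genus zero, the same argument applies with $m_{A}=m_{P}$.)

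Since Proposition~\ref{prop-Prymjumps} and Corollary~\ref{cor-minindex} carry all the weight, the argument is essentially formal and I do not expect a serious obstacle. The one point deserving attention is that Corollary~\ref{cor-minindex} requires the curve to have index one: under the running hypotheses on $(C,\iota)$ one checks this for $C$, and then automatically for $C_{1}$, since pushing a degree-one divisor forward along the degree-two quotient map $\pi\colon C\to C_{1}$ yields a degree-one divisor on $C_{1}$ (the map $\pi_{*}$ preserves degrees of divisors); alternatively one simply records index one as a standing hypothesis in this subsection.
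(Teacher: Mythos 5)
Your argument is correct and is essentially the paper's own proof: the corollary is deduced immediately from the additivity $m_A = m_P + m_{A_1}$ of Proposition \ref{prop-Prymjumps} together with the rationality of the jumps of Jacobians from Corollary \ref{cor-minindex}. Your extra remark on the index-one hypothesis (inherited by $C_1$ via pushforward of a degree-one divisor along $\pi$) is a sensible point that the paper leaves implicit.
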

\begin{proof}
This is an immediate consequence of Proposition
\ref{prop-Prymjumps} and Corollary \ref{cor-minindex} in Chapter
\ref{chap-conductor}.
\end{proof}

\begin{theorem}
Assume that $ P $ has potential multiplicative reduction. Then the
motivic zeta function $Z_P(T)$ is rational, and belongs to the
subring
$$\mathscr{R}_k^{c_{\tame}(P)}=\mathcal{M}_k\left[T,\frac{1}{1-\LL^a T^b}\right]_{(a,b)\in
\Z \times \Z_{>0},\,a/b=c_{\tame}(P)}$$ of $\mathcal{M}_k[[T]]$.
The zeta function $Z_P(\LL^{-s})$ has a unique pole at
$s=c_{\tame}(P)$, whose order is equal to $\ttame{P}+1$. The
degree of $Z_P(T)$ is zero if $p=1$ and $P$ has potential good
reduction, and strictly negative in all other cases.
\end{theorem}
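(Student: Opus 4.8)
The plan is to transpose the proof of Theorem \ref{theorem-zetamain} (itself modelled on \cite[8.6]{HaNi}): one splits $\N'$ into residue classes modulo a suitable integer $e$, checks that on each class the three ingredients of $Z_P(T)$ from Definition \ref{def-abzeta} — the class $[\mathscr{P}^{\qc}(d)_k]$, the order $\ord_P(d)$, and the component group $\Comp{P(d)}$ — become, respectively, a fixed element of $\mathcal{M}_k$, an affine-linear function of $d$ with slope $c_{\tame}(P)$, and a polynomial in $d$, and then sums the resulting geometric series. The role of the Prym hypotheses is to reduce each of these ingredients to facts already available for Jacobians and for tori; throughout I would assume (as one may) that $C$ and $C_1$ have index one, so that the results of Chapters \ref{chap-jacobians} and \ref{chap-conductor} apply to $A=\Jac(C)$ and $A_1=\Jac(C_1)$, and recall that $\mathrm{char}(k)\neq 2$ so that the isogenies $v_{K}$, $w_{K}$ of Lemma \ref{lemma-Prym} have degree prime to $p$.

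For the order function, the key input is Proposition \ref{prop-Prymjumps}, which gives $m_{P,K(d)}=m_{A,K(d)}-m_{A_1,K(d)}$ on $\{0,\dots,d-1\}$ for every $d\in\N'$. Combined with the identity $\ord_G(d)=c(G,K(d))\cdot d=\sum_j m_{G,K(d)}(j)\cdot j$ of Proposition \ref{prop-numrel}, this yields $\ord_P(d)=\ord_A(d)-\ord_{A_1}(d)$ for all $d$, and, summing over the jumps, $m_P=m_A-m_{A_1}$ and $c_{\tame}(P)=c_{\tame}(A)-c_{\tame}(A_1)$; in particular the jumps of $P$ are rational numbers, by Corollary \ref{cor-minindex} applied to $A$ and $A_1$. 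Taking $e$ to be a common multiple of the stabilization indices $e(C)$, $e(C_1)$ and of the degrees entering below, Proposition \ref{prop-ordjav} applied to $A$ and to $A_1$ then gives, for every $\alpha$ in $\{1,\dots,e\}\cap\N'$ and every $q$ with $\alpha+qe\in\N'$,
$$\ord_P(\alpha+qe)=\ord_P(\alpha)+qe\,c_{\tame}(P).$$

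For the identity component, I would first note that since $P$ is an abelian variety its N\'eron model is quasi-compact, so $\mathscr{P}^{\qc}(d)=\mathscr{P}(d)$ and $[\mathscr{P}(d)_k]=|\Comp{P(d)}|\cdot[\mathscr{P}(d)^o_k]$. Because $P$ has potential multiplicative reduction, the semi-abelian $K$-variety $E$ occurring in the non-archimedean uniformization $0\to M\to E^{\an}\to P^{\an}\to 0$ of $P$ (see \eqref{sss-uniformAV} in Chapter \ref{chap-preliminaries}) has trivial abelian part, i.e.\ $E$ is a $K$-torus; since the uniformization is compatible with base change, the uniformization of $P(d)$ is by the $K(d)$-torus $E(d)$. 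By (the proof of) Proposition \ref{prop-bcuniform} the formal N\'eron models of $E(d)$ and of $P(d)$ are locally isomorphic, so $\mathscr{P}(d)^o_k\cong\mathscr{E}(d)^o_k$, whose Chevalley decomposition has no abelian part; hence
$$[\mathscr{P}(d)^o_k]=(\LL-1)^{t(P(d))}\LL^{\dim P-t(P(d))}.$$
The toric rank $t(P(d))=\rho_{\spl}(E(d))$ (equivalently $t(P(d))=t(A(d))-t(A_1(d))$, via $v_{K(d)}$, $w_{K(d)}$ and Lemma \ref{lemma-torrank}) is constant on residue classes modulo $e$, with maximum $\ttame{P}$ over all $d\in\N'$. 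Finally, potential multiplicative reduction means the couple $(P,e)$ satisfies condition \eqref{sss-condition} for every multiple $e$ of $[L:K]$ (see \eqref{sss-satisf}), so Theorem \ref{thm-main}, applied with $G=P=G_{\ab}$, gives $|\Comp{P(d)}|=|\Comp{P}|\cdot d^{t(P)}$ for $d$ prime to $[L:K]$, and by the residue-class bookkeeping of \cite[6.5]{HaNi-comp} and the proof of Theorem \ref{theorem-compser} the restriction of $|\Comp{P(d)}|$ to the class $\alpha+e\N$ is a polynomial in $d$ of degree $t(P(\alpha'))$, where $\alpha'=\gcd(\alpha,e)$.

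Assembling these facts exactly as in the proof of Theorem \ref{theorem-zetamain} then gives $Z_P(T)=\sum_\alpha Z_P^{(\alpha)}(T)$, where each $Z_P^{(\alpha)}(T)$ is a fixed element of $\mathcal{M}_k$ times $\sum_{q\in\N}\mathrm{poly}_\alpha(q)\,\LL^{qe\,c_{\tame}(P)}T^{\alpha+qe}$; such a series lies in $\mathscr{R}_k^{c_{\tame}(P)}$, has a unique pole at $\LL^{-s}$ with $s=c_{\tame}(P)$ of order $t(P(\alpha'))+1$, and has the stated degree, so the sum has pole of order $\ttame{P}+1$. I expect the hard part to be the step asserting that $\mathscr{P}(d)^o_k$ has trivial abelian part for \emph{all} $d\in\N'$ simultaneously — that is, that the uniformizing group of $P$ really is a torus and that the local isomorphism of formal N\'eron models survives arbitrary tame base change — together with the need to make the residue-class decompositions governing $\ord_P$, $t(P(d))$ and $|\Comp{P(d)}|$ mutually compatible; once these are in place, the remainder is a routine transcription of the Jacobian argument.
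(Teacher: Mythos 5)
Your proposal is correct and follows essentially the same route as the paper: additivity of the multiplicity functions (Proposition \ref{prop-Prymjumps}) combined with Proposition \ref{prop-numrel} gives $\ord_P=\ord_A-\ord_{A_1}$ and $c_{\tame}(P)=c_{\tame}(A)-c_{\tame}(A_1)$, hence the affine behaviour of $\ord_P$ on residue classes via Proposition \ref{prop-ordjav}; the prime-to-$p$ isogenies of Lemma \ref{lemma-Prym} give constancy of the reduction data; potential multiplicative reduction gives the growth $d^{t(P)}$ of the component group; and one then repeats the assembly of Theorem \ref{theorem-zetamain}. The only real divergence is the sub-step you single out as the hard part: the paper does not pass through the rigid uniformization at all, but simply observes that potential multiplicative reduction forces the abelian rank of $P(d)$ to vanish for every $d\in\N'$, so that $[\mathscr{P}(d)^o_k]=\LL^{u(P(d))}(\LL-1)^{t(P(d))}$, and then obtains $t(P(d))=t(P)$ and $u(P(d))=u(P)$ for $d$ prime to $e(P)=\lcm\{[L:K],e(C),e(C_1)\}$ from Proposition \ref{prop-idclass} applied to $A$ and $A_1$, together with the invariance of $a$, $t$, $u$ under prime-to-$p$ isogenies and their additivity in products --- exactly the isogeny argument you already use for $t$. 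Your detour through the uniformization (the uniformizing group is a torus, and the identification of identity components of formal N\'eron models is compatible with tame base change) is also valid but heavier than necessary, and what you flag as the delicate point is in fact a one-line rank argument; likewise, invoking Theorem \ref{thm-main} via condition \eqref{sss-condition} for the component groups is fine, though the paper cites \cite[5.7]{HaNi-comp} directly, which already covers the potential multiplicative case.
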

\begin{proof}
We can assume that $P$ is wildly ramified, since the tame case was
settled in \cite[8.6]{HaNi}. Let $L$ be the minimal extension of
$K$ in $K^s$ such that $P\times_K L$ has semi-abelian reduction,
and set $ e_P = [L:K]$. Then we define
$$ e(P) = \mathrm{lcm} \{e_P,e(C),e(C_1)\}. $$
For any $ d \in \N' $ we have that $ e(P(d)) =
e(P)/\mathrm{gcd}(d, e(P)) $, since the same property holds for
$e_P$, $e(C)$ and $e(C_1)$ individually.

We first observe that by Proposition \ref{prop-Prym}, the
equalities
$$ \mathrm{ord}_P(d) = c(P,K(d)) \cdot d = c(A,K(d)) \cdot d - c(A_1,K(d)) \cdot d = \mathrm{ord}_A(d) - \mathrm{ord}_{A_1}(d) $$
hold for all $ d \in \N' $. Hence, for all integers $ \alpha \in
\N' $ and $q$ such that $ \alpha + q \cdot e(P) \in \N' $, it
follows from Proposition \ref{prop-ordjav} that

\begin{eqnarray*}
\mathrm{ord}_P(\alpha + q \cdot e(P)) &=& \mathrm{ord}_A(\alpha) +
q \cdot e(P) \cdot c_{\tame}(A) \\ & & -
(\mathrm{ord}_{A_1}(\alpha) + q \cdot e(P) \cdot c_{\tame}(A_1))
\\ &=& (\mathrm{ord}_A(\alpha) - \mathrm{ord}_{A_1}(\alpha)) +  q
\cdot e(P) \cdot (c_{\tame}(A) - c_{\tame}(A_1)) \\ &=&
\mathrm{ord}_P(\alpha) + q \cdot e(P) \cdot c_{\tame}(P).
\end{eqnarray*}

Secondly, we claim that $ [\mathscr{P}_k^0] = [\mathscr{P}(d)_k^0]
$ for each $ d \in \N' $ that is prime to $ e(P) $. To see this,
note that $$ [\mathscr{P}(n)_k^0] = \LL^{u(P(n))} \cdot (\LL -
1)^{t(P(n))} $$ for all $ n \in \N' $ because we assume that $P$
has potential multiplicative reduction. Thus it suffices to show
that $u(P(d)) = u(P) $ and $ t(P(d)) = t(P) $ for all $d$ prime to
$e(P)$.

If $G$ denotes either $A$ or $A_1$, we know from Proposition
\ref{prop-idclass} that $ a(G(d)) = a(G) $, $ t(G(d)) = t(G) $ and
$ u(G(d)) = u(G) $ for each $d$ prime to $e(P)$. It follows from
 \cite[7.3.6]{neron} that the invariants $a$, $t$ and $u$ are preserved under isogenies of degree prime to $p$, and
it is obvious that they behave additively with respect to products
of abelian varieties. We therefore conclude that
$$ t(P(d)) = t(A(d)) - t(A_1(d)) = t(A) - t(A_1) = t(P) $$
and likewise that
$$ u(P(d)) = u(A(d)) - u(A_1(d)) = u(A) - u(A_1) = u(P). $$
This proves the claim.

Since we assume that $P$ has potential multiplicative reduction,
\cite[5.7]{HaNi-comp} asserts that
$$ \phi_P(d) = d^{t(P)} \cdot \phi_P $$
for every $ d \in \N' $ prime to $e(P)$. This gives us all the
necessary ingredients to repeat the proof of Theorem
\ref{theorem-zetamain}.
\end{proof}

%%%%%%%%%%%%%%%%%%%%%%%%%%%%%%%%%%%%%
%%%%%%%%%%%%%%%%%%%%%%Jacobians

\chapter{Cohomological interpretation of the motivic zeta function}\label{chap-cohomological}
In this chapter, we assume that $k$ is algebraically closed. We
will show how the motivic zeta function of a tamely ramified
semi-abelian $K$-variety admits a cohomological interpretation by
means of a trace formula, which is quite similar to the
Grothendieck-Lefschetz trace formula for varieties over finite
fields. The material in this chapter supersedes the unpublished
preprint \cite{Ni-traceab} of the second author on the trace
formula for abelian varieties. The case of algebraic tori was
discussed in \cite{Nicaise}.

\section{The trace formula for semi-abelian varieties}
\subsection{The rational volume}
\sss  We say that a $K$-variety $X$ is bounded if $X(K)$ is
bounded in $X$, in the sense of \cite[1.1.2]{neron}. This
boundedness property is equivalent to the existence of a
quasi-compact open subvariety of the rigid analytification
$X^{\an}$ of $X$ that contains all $K$-rational points of $X$
\cite[4.3]{Ni-tracevar}.

\sss Let $X$ be a smooth $K$-variety. A weak N\'eron model for $X$
is a smooth $R$-variety $\mathcal{X}$, endowed with an isomorphism
of $K$-varieties
$$\mathcal{X}\times_R K\to X,$$
such that the natural map
$$\mathcal{X}(R)\to X(K)$$ is a bijection \cite[3.5.1]{neron}. It follows from \cite[3.1.3 and 3.5.7]{neron} that $X$ has a weak N\'eron model if and only
if $X$ is bounded.

\sss A weak N\'eron model $\mathcal{X}$ is not unique, but, using
the change of variables formula for motivic integrals, one can
show that the Euler characteristic $\chi(\mathcal{X}_k)$ of the
special fiber of $\mathcal{X}$ does not depend on the choice of
the weak N\'eron model; see \cite[4.5.3]{motrigid},
\cite[5.2]{Ni-tracevar} and \cite[\S2.4]{nise-err}. Therefore, the
following definition only depends on $X$, and not on
$\mathcal{X}$.

\begin{definition} Let $X$ be a smooth and bounded $K$-variety, and let $\mathcal{X}$ be a weak N\'eron
model of $X$. Then the rational volume of $X$ is the integer
$$ s(X) := \chi(\mathcal{X}_k)\in \Z.$$
\end{definition}

\sss One can consider the rational volume $s(X)$ as a measure for
the set of $K$-rational points on $X$; loosely speaking, via the
reduction map
$$X(K)=\mathcal{X}(R)\to \mathcal{X}_k(k),$$ we can view $X(K)$ as
a family of balls in $K^d$ parameterized by the $k$-variety
$\mathcal{X}_k$. In particular, $s(X)$ vanishes if $X(K)$ is
empty, but the converse implication does not hold.

\sss As explained in \cite[\S1]{Nicaise}, it is possible to extend
the definition of the rational volume to varieties that are not
smooth and bounded. For a semi-abelian $K$-variety $G$, the
rational volume can be computed directly on the N\'eron model
$\cG^{\qc}$ of $G$: it is simply given by
$$s(G)=\chi(\cG_k^{\qc}).$$
 Note that $\cG_k^{\qc}$ is a weak N\'eron model of $G$ if and
 only if $G$ is bounded, in which case $\cG^{\qc}$ coincides with the $lft$-N\'eron model $\cG$ of
 $G$. We can further refine this expression for $s(G)$ in the
 following way. We say that $G$ has additive reduction if
 $\cG^o_k$ is unipotent.

 \begin{prop}\label{prop-ratvolsab}
Let $G$ be a semi-abelian $K$-variety.  Then $s(G)=0$ unless $G$
has additive reduction. In that case, $G$ is bounded and
$$s(G)=|\Comp{G}|.$$
 \end{prop}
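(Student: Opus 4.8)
The plan is to reduce everything to the structure of the identity component $\mathscr{G}^o_k$ of the N\'eron $lft$-model together with its component group $\Comp{G}$, and to the multiplicativity of the $\ell$-adic Euler characteristic under the natural fibrations. First I would recall the formula $s(G) = \chi(\mathscr{G}^{\qc}_k)$ mentioned just above the statement, and note that since $k$ is algebraically closed every connected component of $\mathscr{G}^{\qc}_k$ is isomorphic to $\mathscr{G}^o_k$; by definition of $\mathscr{G}^{\qc}$ the set of components is exactly $\Comp{G}_{\tors}$, so by additivity of $\chi$ over the (finite) disjoint union we get $s(G) = |\Comp{G}_{\tors}| \cdot \chi(\mathscr{G}^o_k)$. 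So the whole problem reduces to computing $\chi(\mathscr{G}^o_k)$ and showing that $|\Comp{G}_{\tors}| = |\Comp{G}|$ when $G$ has additive reduction.

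Next I would analyze $\chi(\mathscr{G}^o_k)$ via the Chevalley decomposition $0 \to T \times_k U \to \mathscr{G}^o_k \to B \to 0$ of \eqref{sss-unipotabrank}, where $T$ is a torus of dimension $t(G)$, $U$ a unipotent group of dimension $u(G)$, and $B$ an abelian variety of dimension $a(G)$. Since $\chi$ is multiplicative in such group extensions (the map $\mathscr{G}^o_k \to B$ is a Zariski-locally trivial fibration with fiber $T\times_k U$, being a torsor under a smooth affine group — this is the kind of computation carried out in \cite[3.1]{Nicaise}), we get $\chi(\mathscr{G}^o_k) = \chi(T)\cdot\chi(U)\cdot\chi(B)$. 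Now $\chi(U) = 1$ since $U$ is an iterated extension of $\mathbb{G}_a$'s (so $U \cong \A^{u(G)}_k$ as a variety), $\chi(\mathbb{G}_{m,k}) = 0$ so $\chi(T) = 0$ as soon as $t(G) > 0$, and $\chi(B) = 0$ as soon as $a(G) > 0$ (an abelian variety of positive dimension has vanishing Euler characteristic, since it is a torsor under itself and $\chi(\mathbb{G}_{m,k})=0$, or alternatively since translation by a nontrivial point of finite order acts freely; one can also invoke \cite[3.1]{Nicaise} directly). Hence $\chi(\mathscr{G}^o_k) = 0$ unless $t(G) = a(G) = 0$, i.e.\ unless $\mathscr{G}^o_k = U$ is unipotent, which is precisely the condition that $G$ has additive reduction. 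This already gives $s(G) = 0$ in the non-additive case.

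Finally I would treat the additive case. If $G$ has additive reduction, then $\mathscr{G}^o_k$ is unipotent, hence in particular its reductive rank is zero, so by \eqref{sss-qcneron} the rank of $\Comp{G}$ is zero, i.e.\ $\Comp{G}$ is finite and $\Comp{G} = \Comp{G}_{\tors}$; equivalently $G$ contains no copy of $\mathbb{G}_{m,K}$ and the N\'eron $lft$-model is already quasi-compact, so $\mathscr{G}^{\qc} = \mathscr{G}$ and $G$ is bounded. Then $\chi(\mathscr{G}^o_k) = \chi(U) = 1$ by the computation above, and the formula $s(G) = |\Comp{G}_{\tors}|\cdot\chi(\mathscr{G}^o_k)$ collapses to $s(G) = |\Comp{G}|$. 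I expect the only genuinely delicate point to be the multiplicativity of $\chi$ over the Chevalley extension and the vanishing $\chi(B) = 0$ for a positive-dimensional abelian variety; both are handled cleanly by citing \cite[3.1]{Nicaise}, which computes the class in $K_0(\Var_k)$ of an arbitrary connected smooth commutative algebraic $k$-group and in particular shows $[\mathscr{G}^o_k] = (\LL-1)^{t}\LL^{u}[B]$, whose image under $\chi$ is $0^{t}\cdot 1^{u}\cdot\chi(B)$, vanishing unless $t = 0$ and $\chi(B) \neq 0$, the latter forcing $a(G)=0$ as well.
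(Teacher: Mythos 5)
Your proof is correct and follows essentially the same route as the paper: the identity $s(G)=\chi(\mathscr{G}^{\qc}_k)$, the class formula $[\mathscr{G}^{\qc}_k]=|\Comp{G}_{\tors}|(\LL-1)^{t(G)}\LL^{u(G)}[B]$ of Proposition \ref{prop-classidcomp} (i.e.\ \cite[3.1]{Nicaise}), and the vanishing of $\chi$ for nontrivial tori and abelian varieties. The only small imprecision is in the boundedness step: \eqref{sss-qcneron} bounds the rank of $\Comp{G}$ by the dimension of the maximal split subtorus of $G$ over $K$, not by the reductive rank of $\mathscr{G}^o_k$, so you still need the fact that a nontrivial split subtorus of $G$ forces $t(G)>0$ (the paper cites \cite[3.2]{Nicaise} for exactly this; it also follows from \eqref{sss-ranks} and Corollary \ref{cor-torrank}).
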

 \begin{proof}
It follows from \cite[3.2]{Nicaise} that the toric rank of $G$ is
non-zero if $G$ contains a non-trivial split subtorus. Thus we
deduce from \eqref{sss-qcneron} in Chapter
\ref{chap-preliminaries} that $G$ is bounded when $G$ has additive
reduction. The remainder of the statement follows from Proposition
\ref{prop-classidcomp} in Chapter \ref{chap-motzeta} by applying
the Euler characteristic $\chi(\cdot)$, since the Euler
characteristics of a non-trivial $k$-torus and a non-trivial
abelian $k$-variety are zero.
 \end{proof}

\subsection{The trace formula and the number of N\'eron components}
\sss Let $X$ be a smooth, proper, geometrically connected
$K$-variety. Assume that $X(K^t)$ is non-empty, and that the wild
inertia $P$ of $K$ acts trivially on the $\ell$-adic cohomology
spaces
$$H^i(X\times_K K^s,\Q_\ell)$$ for all $i\geq 0$. Let $\sigma$ be
a topological generator of the tame inertia $\Gal(K^t/K)$.
 In
\cite[4.1.4]{ni-saito}, the second author asked whether the
following cohomological interpretation of the rational volume
$s(X)$ always holds:
\begin{equation}\label{eq-traceform}
s(X)=\sum_{i\geq 0}(-1)^i\mathrm{Trace}(\sigma\,|\,H^i(X\times_K
K^t,\Q_\ell)).\end{equation} He proved this result when $k$ has
characteristic zero \cite[6.5]{Ni-tracevar}, and when $X$ is a
curve \cite[\S7]{Ni-tracevar}. Moreover, in
\cite[4.2.1]{ni-saito}, he gave an explicit formula for the error
term in this formula in terms of an $sncd$-model of $X$. We will
now prove that the trace formula \eqref{eq-traceform} is valid if
we replace $X$ by a tamely ramified semi-abelian $K$-variety.

\sss Let $G$ be a semi-abelian $K$-variety. We denote by
$P_{G}(T)$ the characteristic polynomial
$$P_{G}(T)=\det(T\cdot \mathrm{Id}-\sigma\,|\,H^1(G\times_K K^t,\Q_\ell))$$
of $\sigma$ on $H^1(G\times_K K^t,\Q_\ell)$.

\begin{prop}\label{prop-charpolsab}\item
\begin{enumerate}
\item If we denote by $G_{\tor}$ and $G_{\ab}$ the toric and
abelian part of $G$, then
$$P_G(T)=P_{G_{\tor}}(T)\cdot P_{G_{\ab}}(T).$$

 \item The polynomial $P_{G}(T)$ is independent of the prime
$\ell\neq p$. It is a product of cyclotomic polynomials in
$\Z[T]$, and its roots are roots of unity of order prime to $p$.

\item We have $P_G(1)\neq 0$ if and only if $G$ has additive
reduction.

\item If $G$ is tamely ramified, then
$$P_{G}(1)=\sum_{i\geq 0}(-1)^i \mathrm{Trace}(\sigma\,|\,H^i(G\times_K
K^t,\Q_\ell)).$$
\end{enumerate}
\end{prop}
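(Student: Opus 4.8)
The plan is to reduce all four assertions to the cases of an algebraic torus and of an abelian variety, via the exact sequence $0\to G_{\tor}\to G\to G_{\ab}\to 0$, and throughout I will use that for every smooth $K$-variety $X$ one has $H^i(X\times_K K^t,\Q_\ell)=H^i(X\times_K K^s,\Q_\ell)^P$: a continuous $\Q_\ell$-representation of the pro-$p$ group $P=\Gal(K^s/K^t)$ factors through a finite $p$-group quotient, on which $\Q_\ell$ is semisimple, so $H^{>0}(P,-)$ vanishes on such modules and Hochschild--Serre degenerates. For Part (1) I would first note that, since $G_{\tor}(K^s)$ is $\ell$-divisible, the Tate module sequence $0\to T_\ell G_{\tor}\to T_\ell G\to T_\ell G_{\ab}\to 0$ is exact as in \eqref{eq-tatemod0}; dualizing (and using that $k$ is algebraically closed, so that $\Q_\ell(1)\cong\Q_\ell$ as a Galois module) gives a short exact sequence of $\Gal(K^s/K)$-representations $0\to H^1(G_{\ab}\times_K K^s,\Q_\ell)\to H^1(G\times_K K^s,\Q_\ell)\to H^1(G_{\tor}\times_K K^s,\Q_\ell)\to 0$. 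Taking $P$-invariants is exact by the semisimplicity just noted, so the same sequence holds over $K^t$, it is $\sigma$-equivariant, and Part (1) follows from the multiplicativity of characteristic polynomials in short exact sequences.

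For Part (2), by Part (1) it suffices to treat tori and abelian varieties. For a torus $T$ one has $H^1(T\times_K K^t,\Q_\ell)=X^*(T)^P\otimes_{\Z}\Q_\ell$, on which $\sigma$ acts through the finite cyclic quotient of $\Gal(K^t/K)$ through which the inertia acts on $X^*(T)^P$; this quotient has order $m$ prime to $p$ because $\Gal(K^t/K)\cong\widehat{\Z}^{(p')}$. Hence $P_T(t)$ is the characteristic polynomial of a finite-order $\Z$-linear automorphism of the free $\Z$-module $X^*(T)^P$: it lies in $\Z[t]$, is manifestly independent of $\ell$, and all its roots are $m$-th roots of unity, so it is a product of cyclotomic polynomials with roots of order prime to $p$. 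For an abelian variety $A$, the Monodromy Theorem \cite[Exp.~I,\,1.3]{sga7.1} gives that $\sigma$ acts quasi-unipotently on $H^1(A\times_K K^t,\Q_\ell)$, and the semisimple part of the monodromy has finite order, necessarily prime to $p$ since $\Gal(K^t/K)\cong\widehat{\Z}^{(p')}$; thus all eigenvalues of $\sigma$ are roots of unity of prime-to-$p$ order, and by rationality of the characteristic polynomial it is a product of cyclotomic polynomials. Its independence of $\ell$ is part of the independence-of-$\ell$ statements for abelian varieties in \cite[Exp.~IX]{sga7.1}; alternatively, for Jacobians it is Proposition \ref{prop-charpol}.

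For Part (3), I would write $P_G(1)=\det(\mathrm{Id}-\sigma\mid H^1(G\times_K K^t,\Q_\ell))$, so that $P_G(1)\neq 0$ is equivalent to $\sigma$ having no fixed vector, i.e.\ to $H^1(G\times_K K^s,\Q_\ell)^I=(H^1(G\times_K K^t,\Q_\ell))^{\langle\sigma\rangle}=0$. By Part (1) this means $H^1(G_{\tor}\times_K K^s,\Q_\ell)^I=0$ and $H^1(G_{\ab}\times_K K^s,\Q_\ell)^I=0$. The first vanishes exactly when $X^*(G_{\tor})^I=0$, i.e.\ when $G_{\tor}$ is anisotropic, i.e.\ $t(G_{\tor})=0$. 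For the second, I would use that $(T_\ell A)^I$ agrees up to finite index with $T_\ell(\mathscr{A}^o_k)$ by the argument of \cite[IX.2.2.5]{sga7.1} already invoked in the proof of Proposition \ref{prop-add}, so that $\dim_{\Q_\ell}H^1(A\times_K K^s,\Q_\ell)^I=t(A)+2a(A)$, which vanishes precisely when $A$ has additive reduction. Combining this with the additivity $t(G)=t(G_{\tor})+t(G_{\ab})$ (Corollary \ref{cor-torrank}) and the analogous additivity $a(G)=a(G_{\ab})$ of abelian ranks in our exact sequence, $G$ has additive reduction (equivalently $t(G)=a(G)=0$) if and only if $G_{\tor}$ is anisotropic and $G_{\ab}$ has additive reduction, i.e.\ if and only if $P_G(1)\neq 0$. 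Finally, for Part (4): if $G$ is tamely ramified then $P$ acts trivially on all $H^i(G\times_K K^s,\Q_\ell)$, so $H^i(G\times_K K^t,\Q_\ell)=H^i(G\times_K K^s,\Q_\ell)=\bigwedge^i H^1(G\times_K K^s,\Q_\ell)$, using that the $\ell$-adic cohomology ring of a semi-abelian variety is the exterior algebra on its $H^1$; since $\sigma$ acts on $\bigwedge^i H^1$ by $\bigwedge^i$ of its action on $H^1$, the identity $\sum_{i\geq 0}(-1)^i\mathrm{Tr}(\bigwedge^i\phi\mid\bigwedge^i V)=\det(\mathrm{Id}-\phi\mid V)$ yields $\sum_i(-1)^i\mathrm{Tr}(\sigma\mid H^i(G\times_K K^t,\Q_\ell))=\det(\mathrm{Id}-\sigma\mid H^1(G\times_K K^t,\Q_\ell))=P_G(1)$.

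The hard part will be Part (3) in the wildly ramified case, where one needs $\dim_{\Q_\ell}H^1(A\times_K K^s,\Q_\ell)^I=t(A)+2a(A)$ for an arbitrary abelian variety $A$ — this rests on Grothendieck's analysis of the inertia action in \cite[Exp.~IX]{sga7.1} together with the identification of $(T_\ell A)^I$ with the Tate module of the connected Néron special fiber — and the additivity $a(G)=a(G_{\ab})$ in $0\to G_{\tor}\to G\to G_{\ab}\to 0$; both are routine but must be cited and applied with care. The $\ell$-independence in Part (2) for non-Jacobian abelian varieties relies on the same circle of results.
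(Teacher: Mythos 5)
Your treatment of (1), of the torus case of (2), and of (4) is essentially the paper's own argument: dualize the exact Tate-module sequence $0\to T_\ell G_{\tor}\to T_\ell G\to T_\ell G_{\ab}\to 0$, take $P$-invariants (harmless since $P$ is pro-$p$ and $\ell\neq p$), and use multiplicativity of characteristic polynomials; for tori, reduce to the finite prime-to-$p$ action of $\sigma$ on $X(G)^P$; for (4), use the Galois-equivariant exterior-algebra description of the cohomology of a semi-abelian variety (\cite[5.7]{HaNi}) together with $\sum_i(-1)^i\mathrm{Tr}(\wedge^i\sigma)=\det(\mathrm{Id}-\sigma)$. So on these points the proposal and the paper coincide.

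Two steps need repair. First, in (2) for a wildly ramified abelian variety, quasi-unipotency only tells you that the eigenvalues of $\sigma$ are roots of unity of prime-to-$p$ order; to conclude that $P_{G_{\ab}}(T)$ lies in $\Z[T]$, is a product of cyclotomic polynomials, and is independent of $\ell$, you need the integrality and $\ell$-independence of the characteristic polynomial of $\sigma$ on $H^1(G_{\ab}\times_K K^t,\Q_\ell)$ as an input — your phrase ``by rationality of the characteristic polynomial'' is exactly the crux, not a consequence of the Monodromy Theorem, and it is not clearly available in \cite{sga7.1} in the form you invoke. The paper cites \cite[2.10]{lorenzini} for precisely this fact; some such reference (or argument) must be supplied. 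Second, in (3) your reduction via part (1) rests on the equivalence ``$G$ has additive reduction iff $G_{\tor}$ is anisotropic and $G_{\ab}$ has additive reduction'', i.e.\ on the additivity $a(G)=a(G_{\ab})$ of abelian ranks in $0\to G_{\tor}\to G\to G_{\ab}\to 0$. This is true, but the paper only proves additivity of the toric rank (Proposition \ref{prop-add}, Corollary \ref{cor-torrank}), and you neither prove nor cite the abelian-rank statement. The paper sidesteps it by applying \cite[IX.2.2.3.3, IX.2.2.5 and IX.2.1.11]{sga7.1} directly to the semi-abelian variety $G$ (noting the proofs there carry over from the abelian case), which gives $\mathrm{rank}_{\Z_\ell}(T_\ell G)^I=t(G)+2a(G)$ in one stroke and hence the equivalence with additive reduction without any splitting. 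If you prefer your route, you can obtain $a(G)=a(G_{\ab})$ by running the choice-of-$\ell$ Tate-module argument of Proposition \ref{prop-add} for the quantity $t+2a$, but that argument has to be written out; as it stands this is a genuine gap in your proof of (3).
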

\begin{proof}
(1) The sequence of $\ell$-adic Galois representations $$0\to
H^1(G_{\ab}\times_K K^s,\Q_\ell)\to H^1(G\times_K K^s,\Q_\ell)\to
H^1(G_{\tor}\times_K K^s,\Q_\ell)\to 0$$ is exact, as it can be
obtained by dualizing the exact sequence of $\ell$-adic Tate
modules
$$0\to T_\ell G_{\tor}\to T_\ell G\to T_\ell G_{\ab}\to 0$$ and inverting
$\ell$ \cite[5.7]{HaNi}. Since the wild inertia $P$ is a
pro-$p$-group and $p$ is different from $\ell$, taking
$P$-invariants yields an exact sequence of $\ell$-adic
$\Gal(K^t/K)$-representations
$$0\to
H^1(G_{\ab}\times_K K^t,\Q_\ell)\to H^1(G\times_K K^t,\Q_\ell)\to
H^1(G_{\tor}\times_K K^t,\Q_\ell)\to 0.$$ The result now follows
from multiplicativity of the characteristic polynomial in short
exact sequences.

(2) By (1), we may assume that $G$ is a $K$-torus or an abelian
$K$-variety. If $G$ is a torus with character module $X(G)$, then
the statement follows from the canonical isomorphism of
$\ell$-adic $\Gal(K^t/K)$-representations
$$H^1(G\times_K K^t,\Q_\ell)\cong (X(G)\otimes_{\Z}\Q_\ell(1))^P$$
and the fact that the order of $\sigma$ on $X(G)^P$ is finite and
prime to $p$. If $G$ is an abelian $K$-variety, then it was proven
in \cite[2.10]{lorenzini} that $P_G(t)$ belongs to $\Z[T]$ and is
independent of $\ell$. Then $P_G(t)$ must be a product of
cyclotomic polynomials by quasi-unipotency of the
 Galois action on $T_\ell G$ \cite[IX.4.3]{sga7.1}, and the orders of its roots
 are prime to $p$ by triviality of the
 pro-$p$ part of $\Gal(K^t/K)$.

(3) The inequality
 $P_{G}(1)\neq 0$ is equivalent to the property that $(T_\ell
 G)^I=0$. Denoting by $\cG$ the N\'eron $lft$-model of $G$, we can
 identify $(T_\ell G)^I$ with $T_\ell \cG^o_k$, by
 \cite[IX.2.2.3.3 and IX.2.2.5]{sga7.1} (the results
 in \cite[IX]{sga7.1} are formulated for abelian $K$-varieties,
 but the proofs remain valid for semi-abelian varieties). Thus it
 follows from \cite[IX.2.1.11]{sga7.1} that the rank of the free $\Z_\ell$-module $(T_\ell
 G)^I$ equals the toric rank of $G$ plus twice the abelian rank of
 $G$. Therefore, $G$ has additive reduction if and only if
 $(T_\ell
 G)^I=0$.

(4) By \cite[5.7]{HaNi} and the fact that $P$ acts trivially on
the cohomology spaces $$H^i(G\times_K K^s,\Q_\ell),$$ there exists
for every $i\geq 0$ a Galois-equivariant isomorphism
$$H^i(G\times_K K^t,\Q_\ell)\cong\bigwedge
H^1(G\times_K K^t,\Q_\ell).$$ A straightforward computation now
shows that
$$P_{G}(1)=\sum_{i\geq 0}(-1)^i \mathrm{Trace}(\sigma\,|\,H^i(G\times_K
K^t,\Q_\ell)).$$
\end{proof}

\begin{cor}\label{cor-notadd}
If $G$ is a tamely ramified abelian $K$-variety that does not have
additive reduction, then
$$s(G)= \sum_{i\geq 0}(-1)^i \Trace(\sigma\,|\,H^i(G\times_K
K^t,\Q_\ell))=0.$$
\end{cor}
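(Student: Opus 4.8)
The plan is to deduce the statement directly by combining three results already in hand, the point being that the hypothesis ``$G$ does not have additive reduction'' controls \emph{both} sides of the asserted equality at once. No new construction is needed; the argument is pure bookkeeping.

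First I would dispose of the left-hand side. Since $G$ is an abelian $K$-variety that does not have additive reduction, Proposition \ref{prop-ratvolsab} applies: its dichotomy asserts that $s(G)=0$ unless $G$ has additive reduction, so here $s(G)=0$ immediately. The proposition is stated for arbitrary semi-abelian $K$-varieties, so there is nothing to check about boundedness.

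Next I would treat the alternating sum of traces. Because $G$ is tamely ramified, Proposition \ref{prop-charpolsab}(4) identifies $\sum_{i\geq 0}(-1)^i\Trace(\sigma\,|\,H^i(G\times_K K^t,\Q_\ell))$ with $P_G(1)$, the value at $T=1$ of the characteristic polynomial of $\sigma$ on $H^1(G\times_K K^t,\Q_\ell)$. Then Proposition \ref{prop-charpolsab}(3), which says that $P_G(1)\neq 0$ if and only if $G$ has additive reduction, forces $P_G(1)=0$ under our hypothesis. Hence the trace sum vanishes as well, and both quantities equal $0$, which is exactly the claim.

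I do not expect a genuine obstacle. The only minor point of care is that Proposition \ref{prop-charpolsab}(4) is phrased for semi-abelian varieties, so one should note that an abelian variety is in particular semi-abelian and that ``tamely ramified'' is precisely the hypothesis invoked there; the triviality of the wild inertia action on $H^\bullet(G\times_K K^s,\Q_\ell)$ needed to make sense of the tame cohomology is already implicit in that statement (it follows from the fact that the splitting field of the semi-abelian reduction lies in $K^t$, so $P$ acts unipotently on $T_\ell G$, and a continuous unipotent action of a pro-$p$ group on a $\Q_\ell$-vector space is trivial). So the corollary is an immediate formal consequence of Propositions \ref{prop-ratvolsab} and \ref{prop-charpolsab}.
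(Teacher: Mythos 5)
Your argument is correct and is essentially identical to the paper's own proof: $s(G)=0$ by Proposition \ref{prop-ratvolsab}, and the alternating trace equals $P_G(1)$ by Proposition \ref{prop-charpolsab}(4), which vanishes by part (3) since $G$ does not have additive reduction. Your extra remark on the triviality of the wild inertia action is fine but already subsumed in the hypotheses of Proposition \ref{prop-charpolsab}(4).
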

\begin{proof}
 The equality $s(G)=0$ follows from Proposition
 \ref{prop-ratvolsab}. By Proposition
 \ref{prop-charpolsab}, we have $$\sum_{i\geq 0}(-1)^i \Trace(\sigma\,|\,H^i(G\times_K
K^t,\Q_\ell))=P_{G}(1)=0$$ because
 $G$ does not have additive reduction.
\end{proof}

In order to investigate the case where $G$ has additive reduction,
we'll need some elementary lemmas.
\begin{lemma}\label{cyclo}
Fix an integer $d>1$ and let $\Phi_d(T)\in \Z[T]$ be the
cyclotomic polynomial whose roots are the primitive $d$-th roots
of unity. Then $\Phi_d(1)$ is a positive divisor of $d$.
\end{lemma}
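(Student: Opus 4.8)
The plan is to deduce this from the standard factorization $T^d-1=\prod_{e\mid d}\Phi_e(T)$ in $\Z[T]$ (where the product runs over all positive divisors $e$ of $d$), together with the fact that $\Phi_1(T)=T-1$, by specializing at $T=1$.

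First I would divide that factorization by $T-1$, which is legitimate since $d>1$, to obtain the identity
$$1+T+\cdots+T^{d-1}=\prod_{e\mid d,\, e>1}\Phi_e(T)$$
in $\Z[T]$. Evaluating both sides at $T=1$ yields $d=\prod_{e\mid d,\, e>1}\Phi_e(1)$, an equality of integers in which $\Phi_d(1)$ occurs as one of the factors. Since $d\neq 0$, this already shows that $\Phi_d(1)$ is a nonzero integer dividing $d$.

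It then remains only to check that $\Phi_d(1)>0$, and I would do this by induction on $d$. For the base case, if $d$ is prime then the displayed identity collapses to $\Phi_d(T)=1+T+\cdots+T^{d-1}$, so $\Phi_d(1)=d>0$; in particular $d=2$ is covered. For the inductive step, suppose $d$ is composite and that $\Phi_e(1)>0$ for every $e$ with $1<e<d$. Isolating the $e=d$ term, $d=\Phi_d(1)\cdot\prod_{e\mid d,\, 1<e<d}\Phi_e(1)$; by the inductive hypothesis the product over the proper divisors is a positive integer, and $d>0$, hence $\Phi_d(1)>0$. (Alternatively one can prove positivity directly: $\Phi_d(1)=\prod_\zeta(1-\zeta)$, the product over the primitive $d$-th roots of unity $\zeta$, and since $d>1$ none of these equals $1$; pairing each $\zeta$ with its complex conjugate groups the factors into terms $(1-\zeta)(1-\bar\zeta)=|1-\zeta|^2>0$, with $d=2$ contributing the single positive factor $1-(-1)=2$.)

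I do not expect any genuine obstacle: once the cyclotomic factorization of $T^d-1$ is available the divisibility is immediate, and the only point needing a little care is the positivity of $\Phi_d(1)$, which either of the two arguments above settles cleanly.
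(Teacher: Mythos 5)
Your proof is correct and follows essentially the same route as the paper: evaluate $\frac{T^d-1}{T-1}=\prod_{e\mid d,\,e>1}\Phi_e(T)$ at $T=1$ to get $d=\prod_{e\mid d,\,e>1}\Phi_e(1)$, hence divisibility, and then obtain positivity by induction on $d$. The alternative direct positivity argument via conjugate pairing is a nice extra but not needed.
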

\begin{proof}
We proceed by induction on $d$. For $d=2$ the result is clear, so
assume that it holds for each value $d'$ with $1<d'<d$. We can
write
$$\frac{T^d-1}{T-1}=\prod_{e|d,\,e>1}\Phi_e(T)$$ and evaluating at
$T=1$ we get
$$d=\prod_{e|d,\,e>1}\Phi_e(1)$$ so $\Phi_d(1)|d$.
By the induction hypothesis, $\Phi_e(1)>0$ for $1<e<d$, so
$\Phi_d(1)>0$ as well.
\end{proof}
\begin{lemma}\label{linalg}
Let $q$ be a prime, $M$ a free $\Z_q$-module of finite type, and
$\alpha$ an endomorphism of $M$. Then $M/\alpha M$ is torsion if
and only if $\alpha$ induces an automorphism on
$M\otimes_{\Z_q}\Q_q$. In this case, the order $|M/\alpha M|$ of
$M/\alpha M$ satisfies
$$|M/\alpha M|=|\det(\alpha\,|\,M\otimes_{\Z_q}\Q_q)|_q^{-1}$$
where $|\cdot|_q$ denotes the $q$-adic absolute value.
\end{lemma}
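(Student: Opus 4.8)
The plan is to reduce everything to the Smith normal form of $\alpha$, using that $\Z_q$ is a discrete valuation ring with uniformizer $q$. First I would invoke the elementary divisor theorem for finitely generated modules over a principal ideal domain: there exist $\Z_q$-bases $(e_1,\dots,e_n)$ and $(f_1,\dots,f_n)$ of $M$ and elements $d_1,\dots,d_n\in \Z_q$ such that $\alpha(e_i)=d_i f_i$ for all $i$. Since $\Z_q$ is a discrete valuation ring, each nonzero $d_i$ can be written as $q^{a_i}u_i$ with $u_i\in \Z_q^\times$ and $a_i\in \N$. With respect to these bases one gets
$$M/\alpha M\ \cong\ \bigoplus_{i=1}^n \Z_q/(d_i),$$
so $M/\alpha M$ is a torsion $\Z_q$-module if and only if $d_i\neq 0$ for all $i$, which is exactly the condition that the matrix of $\alpha$ be invertible over the fraction field, i.e.\ that $\alpha\otimes_{\Z_q}\Q_q$ be injective; as $M\otimes_{\Z_q}\Q_q$ is finite-dimensional, injectivity is equivalent to bijectivity, so this is the same as saying $\alpha$ induces an automorphism of $M\otimes_{\Z_q}\Q_q$. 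This settles the first assertion.

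For the second assertion, assume $M/\alpha M$ is torsion, so that all $d_i$ are nonzero. Then each $\Z_q/(d_i)$ has cardinality $q^{a_i}$, whence $|M/\alpha M|=q^{a_1+\cdots+a_n}$. On the other hand, the two change-of-basis matrices relating the standard matrix of $\alpha$ to its diagonal form $\mathrm{diag}(d_1,\dots,d_n)$ lie in $\mathrm{GL}_n(\Z_q)$, hence have determinant a unit of $\Z_q$; therefore $\det(\alpha\,|\,M\otimes_{\Z_q}\Q_q)$ equals $\prod_i d_i$ up to a unit of $\Z_q$, and its $q$-adic valuation is $\sum_i a_i$. Thus $|\det(\alpha\,|\,M\otimes_{\Z_q}\Q_q)|_q = q^{-\sum_i a_i}$, and comparing with the previous computation gives $|M/\alpha M| = |\det(\alpha\,|\,M\otimes_{\Z_q}\Q_q)|_q^{-1}$, as claimed.

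There is no genuine obstacle here; the only points demanding minimal care are the passage from "$\alpha$ injective over $\Z_q$" to "$\alpha$ bijective over $\Q_q$" (which uses finite-dimensionality of $M\otimes_{\Z_q}\Q_q$) and the bookkeeping that the base-change matrices have unit determinant. An alternative, essentially equivalent route would avoid Smith normal form and instead argue by multiplicativity of $\alpha\mapsto |M/\alpha M|$ for injective $\alpha$ together with a check on a single diagonal matrix, but the elementary divisor argument is the most direct and self-contained.
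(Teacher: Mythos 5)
Your proof is correct and follows essentially the same route as the paper: both reduce to the invariant factors (Smith normal form) of $\alpha$ over the discrete valuation ring $\Z_q$, read off torsionness of $M/\alpha M$ from the nonvanishing of the elementary divisors (equivalently, bijectivity of $\alpha$ on $M\otimes_{\Z_q}\Q_q$), and compare $|M/\alpha M|=q^{\sum_i a_i}$ with the $q$-adic valuation of the determinant, which equals $\prod_i d_i$ up to a unit. The only cosmetic difference is that the paper phrases the first equivalence via tensoring with $\Q_q$ and surjectivity, whereas you read it off directly from the diagonal form; the substance is identical.
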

\begin{proof}
The module $M/\alpha M$ is torsion if and only if  $(M/\alpha
M)\otimes_{\Z_q}\Q_q=0$, i.e., if and only if $\alpha$ induces a
surjective and, hence, bijective endomorphism on
$M\otimes_{\Z_q}\Q_q$. In this case, we have
$$M/\alpha M\cong \Z_q/q^{c_1}\Z_q\oplus\ldots \oplus \Z_q/q^{c_r}\Z_q$$
where $q^{c_1},\ldots,q^{c_r}$ are the invariant factors of
$\alpha$ on $M$. Since $\det(\alpha\,|\,M\otimes_{\Z_q}\Q_q)$
equals $q^{c_1+\ldots+c_r}$ times a unit in $\Z_q$, we find
$$|M/\alpha M|=q^{c_1+\ldots+c_r}=|\det(\alpha\,|\,M\otimes_{\Z_q}\Q_q)|_q^{-1}.$$
\end{proof}
\begin{theorem}\label{theo-trace}
Let $G$ be a semi-abelian $K$-variety with
 additive reduction. If we denote by $\Comp{G}'$ the prime-to-$p$ part of
 the component group $\Comp{G}$,
  then
 $$P_G(1)=|\Comp{G}'|.$$
 If, moreover, $G$ is tamely ramified, then
$$\sum_{i \geq 0} (-1)^i \Trace(\sigma\,|\,H^i(G\times_K
K^t,\Q_\ell))=|\Comp{G}'|=|\Comp{G}|.$$
\end{theorem}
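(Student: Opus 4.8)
The statement splits into two parts: first the identity $P_G(1) = |\Comp{G}'|$ for every semi-abelian $K$-variety $G$ with additive reduction, and then the trace-formula consequence for tamely ramified $G$. For the second part I would argue exactly as in Corollary \ref{cor-notadd}: by Proposition \ref{prop-charpolsab}(4), when $G$ is tamely ramified we have $\sum_i (-1)^i \Trace(\sigma \mid H^i(G\times_K K^t,\Q_\ell)) = P_G(1)$, and since $G$ tamely ramified forces the component group to have trivial $p$-part (the $p$-part of $\Gal(K^t/K)$ is trivial, so there is no wild contribution — alternatively invoke that for a tamely ramified semi-abelian variety $\Comp{G}$ is prime-to-$p$, as follows from the Edixhoven/Weil-restriction picture in Chapter \ref{chap-conductor} together with \cite[Theorem 1]{ELL} for the potential-good-reduction part and the torus/multiplicative case handled in \eqref{eq-torcomp}), we get $|\Comp{G}'| = |\Comp{G}|$. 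So the whole theorem reduces to proving $P_G(1) = |\Comp{G}'|$ in general.

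For the main identity I would first reduce to the case where $G$ does not have additive reduction being excluded — that is, we are given additive reduction, so by Proposition \ref{prop-charpolsab}(3) we know $P_G(1)\neq 0$. Fix a prime $\ell\neq p$. The idea is to compute $P_G(1) = \det(\mathrm{Id} - \sigma \mid H^1(G\times_K K^t,\Q_\ell))$ by relating it to an endomorphism of a $\Z_\ell$-lattice and applying Lemma \ref{linalg}. Concretely, $H^1(G\times_K K^t,\Q_\ell)$ is the $P$-invariant part of $H^1(G\times_K K^s,\Q_\ell) \cong \Hom_{\Z_\ell}(T_\ell G,\Q_\ell)$; since $P$ acts through a finite quotient and $\ell\neq p$, taking $P$-invariants is exact and corresponds on Tate modules to the dual of $(T_\ell G)_P = $ (coinvariants), but more usefully $(T_\ell G)^P$ is a $\Z_\ell$-lattice in a $\Gal(K^t/K)$-stable way. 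The key input is the identification, via \cite[IX.2.2.3.3 and IX.2.2.5]{sga7.1} (valid for semi-abelian varieties as noted in the proof of Proposition \ref{prop-charpolsab}), of $(T_\ell G)^I$ with $T_\ell(\cG^o_k)$ and, more to the point, the identification of the $\sigma$-coinvariants of $(T_\ell G)^P$ with something computing the $\ell$-part of $\Comp{G}$. The cleanest route: by \cite[IX.11.3.1 or IX.11.4]{sga7.1} (the component-group exact sequences), for each prime $q\neq p$ the $q$-part of $\Comp{G}$ is computed as $H^1(I, T_q G)_{\mathrm{tors}}$ or equivalently as the cokernel of $(\sigma - 1)$ acting on $(T_q G)^P$ (using that $I/P$ is topologically generated by $\sigma$ and $H^1$ of a procyclic group is a coinvariants/cokernel). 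Then Lemma \ref{linalg} gives $|(T_q G)^P / (\sigma-1)(T_q G)^P| = |\det(\sigma - 1 \mid (T_q G)^P\otimes \Q_q)|_q^{-1}$, and taking the product over all $q\neq p$ assembles $|\Comp{G}'|$ as $\prod_{q\neq p} q^{v_q}$ where $q^{v_q}$ is the $q$-part of $\prod(\text{eigenvalues } (1-\zeta))$ over eigenvalues $\zeta\neq 1$ of $\sigma$. Since $P_G(1) = \prod_{\zeta}(1-\zeta)$ over all eigenvalues (those with $\zeta = 1$ contribute $0$, but there are none as $P_G(1)\neq 0$), and $P_G(1)$ is a positive integer by Proposition \ref{prop-charpolsab}(2) together with Lemma \ref{cyclo} (each cyclotomic factor $\Phi_d(1)$ with $d>1$ is a positive divisor of $d$, hence prime to $p$ when the roots have order prime to $p$), it follows that $P_G(1)$ is a positive integer prime to $p$ whose $q$-adic valuation for each $q\neq p$ equals $v_q$. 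Hence $P_G(1) = \prod_{q\neq p} q^{v_q} = |\Comp{G}'|$.

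The one structural point that needs care is the precise form of the exact sequence computing $\Comp{G}$ from the Tate module for a \emph{semi-abelian} (not just abelian) $K$-variety. For abelian varieties this is \cite[IX.11.3]{sga7.1}; I would extend it to the semi-abelian case by the standard dévissage: use the exact sequence $0\to G_{\tor}\to G\to G_{\ab}\to 0$, apply Proposition \ref{prop-bx} (exactness of $\Comp{T}\to\Comp{G}\to\Comp{G_{\ab}}\to 0$) together with the multiplicativity $P_G(T) = P_{G_{\tor}}(T)P_{G_{\ab}}(T)$ from Proposition \ref{prop-charpolsab}(1), and handle the torus factor directly from \eqref{eq-torcomp} (where $\Comp{T}$ is computed from $X(T)^\vee$ and $K^*/R^*$, so its prime-to-$p$ part is visibly $H^1(I, T_\ell G_{\tor})'$). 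Additivity of all three quantities $P_G(1)$, $|\Comp{G}'|$, and the rank/ramification data in this short exact sequence — using that additive reduction of $G$ is equivalent to $(T_\ell G)^I = 0$, which is inherited by sub and quotient here — then lets one reduce the main identity to the torus case and the abelian-variety case separately.

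**Main obstacle.** The hard part will be pinning down the comparison between $\Comp{G}'$ and the cokernel of $(\sigma-1)$ on $(T_\ell G)^P$ with the correct normalization for semi-abelian varieties and making sure no spurious torsion (from $H^1(P,-)$ or from the difference between invariants and coinvariants) is introduced; the abelian-variety prototype \cite[IX]{sga7.1} does all the real work, so the task is careful bookkeeping rather than a new idea, but one must verify that the SGA 7 arguments genuinely carry over in the semi-abelian setting (as already asserted, with justification, in the proof of Proposition \ref{prop-charpolsab}(3)). Everything else — positivity and integrality of $P_G(1)$, the passage to the trace formula, the prime-to-$p$ vanishing in the tame case — is immediate from the results already established in the excerpt.
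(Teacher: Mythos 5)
Your computational core is the same as the paper's proof: fix a prime $q\neq p$, identify the $q$-primary part of $\Comp{G}$ with $H^1(\Gal(K^t/K),(T_qG)^P)\cong T^t_qG/(\mathrm{Id}-\sigma)T^t_qG$, use additive reduction (Proposition \ref{prop-charpolsab}(3)) so that $1$ is not an eigenvalue, apply Lemma \ref{linalg} to get $|\Comp{G}_q|=|P_G(1)|_q^{-1}$, invoke $\ell$-independence, and finish with Lemma \ref{cyclo} ($|P_G(1)|_p=1$ and $P_G(1)>0$); the tame statement then follows from Proposition \ref{prop-charpolsab}(4). Where you diverge is in justifying the key identification of $\Comp{G}_q$ with Galois cohomology of the Tate module for a \emph{semi-abelian} $G$: the paper does not dévissage at all, it quotes \cite[4.4]{HaNi-comp}, which is already stated and proved for semi-abelian varieties, so the "main obstacle" you flag simply does not arise in the paper's argument.

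Your proposed substitute for that input is where the genuine gaps lie. First, the dévissage you sketch — reduce $P_G(1)=|\Comp{G}'|$ to the torus and abelian cases using additivity of $|\Comp{\cdot}'|$ along $0\to G_{\tor}\to G\to G_{\ab}\to 0$ — is circular: by Propositions \ref{prop-bx} and \ref{prop-bx2} the sequence $\Comp{G_{\tor}}\to\Comp{G}\to\Comp{G_{\ab}}\to 0$ is only right exact, with kernel merely known to be torsion (and Corollary \ref{cor-trace} kills that torsion only by $[L:K]$, which in the wild case gives no control of its prime-to-$p$ part); the multiplicativity $|\Comp{G}'|=|\Comp{G_{\tor}}'|\cdot|\Comp{G_{\ab}}'|$ is exactly what the paper \emph{deduces} from Theorem \ref{theo-trace} in the corollary on exactness of $0\to\Comp{G_{\tor}}\to\Comp{G}\to\Comp{G_{\ab}}\to 0$, so it cannot be used as an input. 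Second, your justification that tameness forces $\Comp{G}'=\Comp{G}$ does not work as stated: "the $p$-part of $\Gal(K^t/K)$ is trivial" is not an argument (an elliptic curve of type $I_n$ with $p\mid n$ is tamely ramified and has $\Comp{}\cong\Z/n\Z$, so tameness alone never suffices — the additive-reduction hypothesis must enter), and \cite{ELL} only covers tamely ramified abelian varieties with potential \emph{good} reduction, leaving out the additive-with-potential-multiplicative case. The paper instead bounds the whole group: using the right-exact component sequence together with \cite[1.8]{liu-lorenzini} (abelian part killed by $[L:K]^2$) and \cite[3.4]{Nicaise} (torsion of the toric component group killed by $[L:K]$), it shows $\Comp{G}$ is killed by $[L:K]^3$, which is prime to $p$ since $L/K$ is tame. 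Both gaps are repairable by citing the semi-abelian statement \cite[4.4]{HaNi-comp} and the killing bounds, but as written the proposal does not supply proofs of these two steps.
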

\begin{proof}
 We first show that the $p$-primary part of $\Comp{G}$ is trivial
 when $G$ is tamely ramified. By Proposition \ref{prop-bx}, the sequence
$$\Comp{G_{\tor}}\to \Comp{G}\to \Comp{G_{\ab}}\to 0$$ is exact. Let $L$ be the
minimal extension of $K$ in $K^s$ such that $G\times_K L$ has
semi-abelian reduction. It follows from \cite[4.1]{HaNi-comp} that
the torus $G_{\tor}\times_K L$ is split, and that the abelian
variety $G_{\ab}\times_K L$ has semi-abelian reduction. Thus we
can deduce from \cite[1.8]{liu-lorenzini} that  $\Comp{G_{\ab}}$
is killed by $[L:K]^2$, and from \cite[3.4]{Nicaise} that
$\Comp{G_{\tor}}_{\tors}$ is killed by $[L:K]$. (In fact,
$G_{\tor}$ is anisotropic because $G$ has additive reduction, so
that $\Comp{G_{\tor}}$ is torsion, but we don't need this fact
here.) Hence, $\Comp{G}$ is killed by $[L:K]^3$. In particular,
its $p$-primary part is trivial, since $L$ is a tame extension of
$K$.

Thus, in view of Proposition \ref{prop-charpolsab}(4), it suffices
to prove the first assertion in the statement. Let $q$ be a prime
different from $p$, and denote by
$$T^t_qG=(T_q G)^P$$ the tame $q$-adic Tate module of $G$. By
\cite[4.4]{HaNi-comp}, the order of the $q$-primary part
$\Comp{G}_{q}$ of $\Comp{G}$ equals the cardinality of
$$H^1(G(K^s/K),T_qG)_{\tors}\cong H^1(G(K^t/K),T^t_{q}G)\cong T^t_q G/(\mathrm{Id}-\sigma)T^t_q G$$
where the first isomorphism follows from the fact that the functor
$(\cdot)^P$ is exact on pro-$q$-groups, and the second from the
fact that $\sigma$ is a topological generator of $\Gal(K^t/K)$.

 Since $G$ has additive reduction, Proposition \ref{prop-charpolsab} tells us that $1$ is not an eigenvalue
 of $\sigma$ on $$T^t_qG\otimes_{\Z_q}\Q_q\cong H^1(G\times_K K^t,\Q_q)^{\vee}.$$
 Thus by Lemma \ref{linalg}, the $\Z_q$-module
$T^t_q G/(\mathrm{Id}-\sigma)T^t_q G$ is torsion, and
 its order is given by
$$|\det(\mathrm{Id}-\sigma\,|\,H^1(G\times_K K^t,\Q_q))|_q^{-1}$$ where $|\cdot|_q$ denotes the
$q$-adic absolute value. Since $P_G(T)$ is independent of
$\ell$, we find
\begin{equation}\label{eq-qprim}|\Comp{G}_{q}|=|P_G(1)|_{q}^{-1}\end{equation}
for every prime $q\neq p$.

 We know by Proposition \ref{prop-charpolsab} that
 each root of $P_G(T)$ is a root of unity of order prime
 to $p$. Thus Lemma \ref{cyclo} implies that
 $|P_G(1)|_p=1$ if $p>1$.
Hence, taking the product of \eqref{eq-qprim} over all primes
$q\neq p$, we get
$$|\Comp{G}'|=\prod_{q\neq p}|P_G(1)|_{q}^{-1}=\prod_{r\,\mathrm{prime}}|P_G(1)|_{r}^{-1}=|P_G(1)|=P_G(1)$$
where the last equality follows from Lemma \ref{cyclo}. This
concludes the proof.
\end{proof}
\begin{cor}[Trace formula for semi-abelian
varieties]\label{cor-traceform} If $G$ is a tamely ramified
semi-abelian $K$-variety, then
$$s(G)=\sum_{i\geq 0}(-1)^i \Trace (\sigma\,|\,H^i(G\times_K
K^t,\Q_\ell)).$$
\end{cor}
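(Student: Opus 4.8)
The plan is to split into two cases according to whether $G$ has additive reduction, and in each case to match the two sides of the claimed identity against invariants that have already been computed in this chapter.

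First I would treat the case where $G$ does not have additive reduction. Here Proposition \ref{prop-ratvolsab} gives $s(G)=0$ immediately. On the cohomological side, Proposition \ref{prop-charpolsab}(3) tells us that $P_G(1)=0$ precisely because $G$ fails to have additive reduction, and Proposition \ref{prop-charpolsab}(4), which applies since $G$ is assumed tamely ramified, identifies $P_G(1)$ with $\sum_{i\geq 0}(-1)^i\Trace(\sigma\,|\,H^i(G\times_K K^t,\Q_\ell))$. Hence both sides of the desired equality vanish.

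Next I would treat the case where $G$ has additive reduction. Then Proposition \ref{prop-ratvolsab} gives $s(G)=|\Comp{G}|$, and $G$ is moreover bounded so that $s(G)$ really is computed on the N\'eron model. On the other side, the second assertion of Theorem \ref{theo-trace}, whose hypotheses are exactly ``additive reduction'' together with ``tamely ramified'', gives $\sum_{i\geq 0}(-1)^i\Trace(\sigma\,|\,H^i(G\times_K K^t,\Q_\ell))=|\Comp{G}'|=|\Comp{G}|$. So the two sides agree in this case as well, and combining the two cases proves the corollary.

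Since all the substance has been absorbed into Propositions \ref{prop-ratvolsab} and \ref{prop-charpolsab} and Theorem \ref{theo-trace}, there is essentially no obstacle left at this stage. The only points requiring care are to invoke the tameness hypothesis exactly where it is needed (part (4) of Proposition \ref{prop-charpolsab} and the final assertion of Theorem \ref{theo-trace}), and to note that these results are already phrased for semi-abelian $K$-varieties, so no reduction to the abelian case or appeal to non-archimedean uniformization is required here.
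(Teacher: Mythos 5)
Your proof is correct and follows essentially the same route as the paper, which simply combines Proposition \ref{prop-ratvolsab}, Corollary \ref{cor-notadd} and Theorem \ref{theo-trace}; your treatment of the non-additive case just re-derives the content of Corollary \ref{cor-notadd} directly from Propositions \ref{prop-ratvolsab} and \ref{prop-charpolsab}(3)--(4), which is the same substance.
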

\begin{proof}
Combine Proposition \ref{prop-ratvolsab}, Corollary
\ref{cor-notadd} and Theorem \ref{theo-trace}.
\end{proof}
\begin{cor}
If $G$ is a semi-abelian $K$-variety with additive reduction, then
$|\Comp{G}'|$ is invariant under isogeny.
\end{cor}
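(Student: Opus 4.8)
The plan is to reduce the statement to the fact that the polynomial $P_G(T)$ is an isogeny invariant, via Theorem \ref{theo-trace}. Indeed, if $G$ has additive reduction, that theorem identifies the order of the prime-to-$p$ part $\Comp{G}'$ of the N\'eron component group with the value $P_G(1)$, where $P_G(T)$ is the characteristic polynomial of a topological generator $\sigma$ of $\Gal(K^t/K)$ on $H^1(G\times_K K^t,\Q_\ell)$. So it suffices to show that an isogeny $f:G\to H$ of semi-abelian $K$-varieties induces an equality $P_G(T)=P_H(T)$, and that $H$ then automatically has additive reduction as well, so that Theorem \ref{theo-trace} applies to both.

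The first step is to observe that, for every prime $\ell\neq p$, the isogeny $f$ induces a $\Gal(K^s/K)$-equivariant isomorphism on the rational $\ell$-adic Tate modules $T_\ell G\otimes_{\Z_\ell}\Q_\ell\to T_\ell H\otimes_{\Z_\ell}\Q_\ell$. This is obtained exactly as in the proof of Lemma \ref{lemm-redisogeny}: writing $d$ for the degree of $f$, there is a morphism of algebraic $K$-groups $g:H\to G$ with $g\circ f=d_G$ and $f\circ g=d_H$, so that on Tate modules the composites $T_\ell(g)\circ T_\ell(f)$ and $T_\ell(f)\circ T_\ell(g)$ are multiplication by $d$, which is invertible on the finite-dimensional $\Q_\ell$-vector spaces $T_\ell G\otimes_{\Z_\ell}\Q_\ell$ and $T_\ell H\otimes_{\Z_\ell}\Q_\ell$ (these have the same dimension $\dim G_{\tor}+2\dim G_{\ab}$, as recalled in the proof of Lemma \ref{lemm-isog}). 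All the maps involved are defined over $K$, so the isomorphism is Galois-equivariant. The second step is to transfer this to cohomology: using \cite[5.7]{HaNi} together with the identifications of $H^1(\,\cdot\,\times_K K^s,\Q_\ell)$ with the $\Q_\ell$-linear dual of the rational Tate module for abelian varieties and for tori, and the exact sequence of Tate modules $0\to T_\ell G_{\tor}\to T_\ell G\to T_\ell G_{\ab}\to 0$, one gets a Galois-equivariant, functorial isomorphism $H^1(G\times_K K^s,\Q_\ell)\cong \Hom_{\Q_\ell}(T_\ell G\otimes_{\Z_\ell}\Q_\ell,\Q_\ell)$; these are precisely the exact sequences appearing in the proof of Proposition \ref{prop-charpolsab}. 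Passing to $P$-invariants (which is exact on $\Q_\ell[P]$-modules, as $P$ is a pro-$p$-group and $\ell\neq p$) shows that $f$ induces a $\Gal(K^t/K)$-equivariant isomorphism $H^1(G\times_K K^t,\Q_\ell)\cong H^1(H\times_K K^t,\Q_\ell)$, hence $P_G(T)=P_H(T)$.

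To conclude, since $G$ has additive reduction we have $P_G(1)\neq 0$ by Proposition \ref{prop-charpolsab}(3); therefore $P_H(1)=P_G(1)\neq 0$, so $H$ also has additive reduction by the same criterion, and Theorem \ref{theo-trace} gives $|\Comp{G}'|=P_G(1)=P_H(1)=|\Comp{H}'|$. The only slightly delicate point is the Galois-equivariant identification of $H^1(G\times_K K^s,\Q_\ell)$ with the dual of the rational Tate module for a general semi-abelian variety, rather than just an abelian variety or a torus; but as noted this is already contained in the sequences used to prove Proposition \ref{prop-charpolsab}, and the remainder of the argument is formal.
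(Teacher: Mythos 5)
Your proof is correct and follows essentially the same route as the paper: Theorem \ref{theo-trace} combined with the fact that an isogeny induces a Galois-equivariant isomorphism on the $\Q_\ell$-adic cohomology, so that $P_G(1)=P_H(1)$. The only minor difference is that the paper cites \cite[IX.2.2.7]{sga7.1} for the preservation of additive reduction under isogeny, whereas you deduce it from Proposition \ref{prop-charpolsab}(3) via $P_H(1)=P_G(1)\neq 0$, which is a perfectly valid shortcut.
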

\begin{proof}
This is an immediate consequence of Theorem \ref{theo-trace},
since an isogeny induces an isomorphism on $\Q_\ell$-adic
cohomology spaces. Note that the property of having additive
reduction is preserved under isogeny, by the same arguments as in
\cite[IX.2.2.7]{sga7.1}.
\end{proof}
\begin{cor}
 If
 $G$ is a tamely ramified semi-abelian $K$-variety with additive reduction, then
$$0\to \Comp{G_{\tor}}\to \Comp{G}\to \Comp{G_{\ab}}\to 0$$ is
exact.
\end{cor}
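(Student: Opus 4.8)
The plan is to derive the statement from the already-established right-exact sequence together with a cardinality count. By Proposition \ref{prop-bx}(2) we have an exact sequence
$$\Comp{G_{\tor}}\stackrel{\phi}{\longrightarrow}\Comp{G}\longrightarrow \Comp{G_{\ab}}\longrightarrow 0,$$
so the only thing left to prove is that $\phi$ is injective. First I would observe that $G_{\tor}$ and $G_{\ab}$ again have additive reduction. Indeed, since $G$ has additive reduction we have $t(G)=a(G)=0$, hence $t(G_{\tor})=t(G_{\ab})=0$ by Corollary \ref{cor-torrank}; for the torus $G_{\tor}$ this already yields additive reduction, since a torus has trivial abelian part. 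For $G_{\ab}$ one needs in addition $a(G_{\ab})=0$, which I would obtain by taking $I$-invariants of the exact sequence of $\ell$-adic Tate modules $0\to T_\ell G_{\tor}\to T_\ell G\to T_\ell G_{\ab}\to 0$ for a suitable prime $\ell$ (as in the proof of Proposition \ref{prop-add}) and comparing $\Z_\ell$-ranks: the rank of the Tate module of the N\'eron identity component of a semi-abelian $K$-variety $J$ equals $t(J)+2a(J)$, so $t(G)+2a(G)=t(G_{\tor})+2a(G_{\tor})+t(G_{\ab})+2a(G_{\ab})$, and since $a(G_{\tor})=0$ and the toric ranks cancel by Corollary \ref{cor-torrank}, this gives $a(G)=a(G_{\ab})=0$. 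I would also record that $G_{\tor}$ and $G_{\ab}$ are tamely ramified, since they acquire semi-abelian reduction over the same (tame) minimal extension $L/K$ as $G$: use Proposition \ref{lemm-semiab} for the subtorus $G_{\tor}$ and \eqref{sss-sabtorab} for the quotient $G_{\ab}=(G\times_K L)_{\ab}$.

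With this in place, the main step is the count. All three groups $\Comp{G}$, $\Comp{G_{\tor}}$, $\Comp{G_{\ab}}$ are finite, because each of $G$, $G_{\tor}$, $G_{\ab}$ has vanishing split reductive rank (by additive reduction and Corollary \ref{cor-torrank}) and the rank of the component group equals the split reductive rank by Proposition \ref{prop-split}. Since $G$, $G_{\tor}$ and $G_{\ab}$ are tamely ramified with additive reduction, Theorem \ref{theo-trace} gives $|\Comp{G}|=P_G(1)$, $|\Comp{G_{\tor}}|=P_{G_{\tor}}(1)$ and $|\Comp{G_{\ab}}|=P_{G_{\ab}}(1)$. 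Combining this with the factorization $P_G(T)=P_{G_{\tor}}(T)\cdot P_{G_{\ab}}(T)$ of Proposition \ref{prop-charpolsab}(1), evaluated at $T=1$, yields $|\Comp{G}|=|\Comp{G_{\tor}}|\cdot|\Comp{G_{\ab}}|$. On the other hand, exactness of the sequence above gives $|\Comp{G}|=(|\Comp{G_{\tor}}|/|\ker\phi|)\cdot|\Comp{G_{\ab}}|$. Comparing the two expressions forces $|\ker\phi|=1$, i.e.\ $\phi$ is injective, which is exactly what is needed.

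The step I expect to be the main obstacle is the bookkeeping in the first paragraph: one has to be careful to check that $G_{\ab}$ genuinely inherits additive reduction from $G$ (equivalently, that $a(G)=a(G_{\ab})$) and that all three varieties $G$, $G_{\tor}$, $G_{\ab}$ satisfy the hypotheses of Theorem \ref{theo-trace} simultaneously. Once the additive-reduction and tameness hypotheses are verified for the three varieties, the injectivity of $\phi$ follows from the one-line comparison of cardinalities above.
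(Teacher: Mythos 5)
Your proof is correct and follows essentially the same route as the paper: right exactness from Proposition \ref{prop-bx} plus the cardinality identity $|\Comp{G}|=|\Comp{G_{\tor}}|\cdot|\Comp{G_{\ab}}|$, obtained by combining Theorem \ref{theo-trace} (applied to $G$, $G_{\tor}$, $G_{\ab}$, all tamely ramified with additive reduction) with the factorization $P_G(T)=P_{G_{\tor}}(T)\cdot P_{G_{\ab}}(T)$ from Proposition \ref{prop-charpolsab}. The only cosmetic difference is that the paper deduces additive reduction of $G_{\tor}$ and $G_{\ab}$ directly from $P_{G_{\tor}}(1)\neq 0$ and $P_{G_{\ab}}(1)\neq 0$ via Proposition \ref{prop-charpolsab}(3), whereas you re-derive it by a Tate-module rank count; both are valid.
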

\begin{proof}
We already know from Proposition \ref{prop-bx} that this sequence
is right exact. Thus it is enough to show that $\Comp{G_{\tor}}$
is finite and
\begin{equation}\label{eq-compexact}|\Comp{G}|=|\Comp{G_{\tor}}|\cdot
|\Comp{G_{\ab}}|.\end{equation}

The torus $G_{\tor}$ and the abelian variety $G_{\ab}$ must have
additive reduction, since $P_G(1)\neq 0$ so that
$P_{G_{\tor}}(1)\neq 0$ and $P_{G_{\ab}}(1)\neq 0$ (see
Proposition \ref{prop-charpolsab}). Thus $G_{\tor}$ is anisotropic
and $\Comp{G_{\tor}}$ is finite. Proposition \ref{prop-charpolsab}
also tells us that
$$P_G(1)=P_{G_{\tor}}(1)\cdot P_{G_{\ab}}(1)$$ so that the
equality \eqref{eq-compexact} follows from Theorem
\ref{theo-trace}.
\end{proof}

\begin{remark}
In the proof of Theorem \ref{theo-trace}, we invoked
\cite[1.8]{liu-lorenzini}. The proof of \cite[1.8]{liu-lorenzini}
is based on \cite[5.6 and 5.9]{B-X}; we refer to
\eqref{sss-BXcorrect} in Chapter \ref{chap-uniform} for a comment
on these results.
\end{remark}

\subsection{Cohomological interpretation of the motivic zeta
function} \sss Let $G$ be a semi-abelian $K$-variety. The trace
formula in Corollary \ref{cor-traceform} yields a cohomological
interpretation of the motivic zeta function $Z_G(T)$ from Chapter
\ref{chap-motzeta}, in the following way. We denote by
$\chi(Z_G(T))$ the element of $\Z[[T]]$ that we obtain by taking
the images of the coefficients of the series $Z_G(T)\in
K_0(\Var_k)[[T]]$ under the ring morphism
$$\chi:K_0(\Var_k)\to \Z$$ that sends the class of a $k$-variety $X$ to the Euler
characteristic $\chi(X)$. Explicitly, we have
$$\chi(Z_G(T))=\sum_{d>0}\chi(\cG(d)^{\qc}_k)T^d\quad \in \Z[[T]]$$
where $\cG(d)^{\qc}$ denotes the N\'eron model of $G(d)=G\times_K
K(d)$.

\begin{theorem}[Cohomological interpretation of the motivic zeta
function]\label{thm-cohint} Let $G$ be a semi-abelian $K$-variety,
and denote by $\mathrm{Add}_G$ the set of elements $d$ in $\N'$
such that $G(d)$ has additive reduction. Then
\begin{eqnarray*}
\chi(Z_G(T))&=& \sum_{d\in \N'}s(G\times_K K(d))T^d
\\ &=&\sum_{d\in \mathrm{Add}_G}|\Comp{G(d)}|T^d
\\ &=& \sum_{d\in \N'}\sum_{i\geq 0}(-1)^i
\Trace(\sigma^d\,|\,H^i(G\times_K K^t,\Q_\ell))T^d
\end{eqnarray*}
in $\Z[[T]]$.
\end{theorem}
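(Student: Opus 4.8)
The plan is to derive the three equalities successively, each by a brief formal step on top of two facts already established: the identity $s(G')=\chi\big((\mathscr{G}')^{\qc}_k\big)$ for the rational volume of a semi-abelian $K'$-variety $G'$ (with $(\mathscr{G}')^{\qc}$ its N\'eron model), sharpened by Proposition \ref{prop-ratvolsab}, and the trace formula of Corollary \ref{cor-traceform}. First I would apply the Euler-characteristic ring homomorphism $\chi\colon K_0(\Var_k)\to\Z$ coefficientwise to $Z_G(T)=\sum_{d\in\N'}[\mathscr{G}^{\qc}(d)_k]\,\LL^{\ord_G(d)}\,T^d$; since $\chi(\LL)=\chi(\A^1_k)=1$, all powers of $\LL$ drop out and $\chi(Z_G(T))=\sum_{d\in\N'}\chi(\mathscr{G}^{\qc}(d)_k)\,T^d$. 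Applying the rational-volume identity to the semi-abelian $K(d)$-variety $G(d)=G\times_K K(d)$, whose N\'eron model is $\mathscr{G}^{\qc}(d)$, turns each coefficient into $s(G\times_K K(d))$; this is the first equality. The second equality is immediate from Proposition \ref{prop-ratvolsab}: for each $d\in\N'$ one has $s(G(d))=0$ unless $G(d)$ has additive reduction, and $s(G(d))=|\Comp{G(d)}|$ in that case, so the surviving coefficients are precisely those with $d\in\mathrm{Add}_G$.

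For the third equality I would fix $d\in\N'$ and feed $G(d)$, viewed over $K(d)$, into Corollary \ref{cor-traceform}. Two easy compatibilities are needed. First, $\sigma^d$ is a topological generator of the tame inertia group $\Gal(K^t/K(d))$: under the identification of $\Gal(K^t/K)$ with the procyclic group topologically generated by $\sigma$, the degree-$d$ subextension $K(d)$ corresponds to the unique open subgroup of index $d$, which is topologically generated by $\sigma^d$. Second, $G(d)\times_{K(d)}K^t$ is literally the scheme $G\times_K K^t$, so $H^i\big(G(d)\times_{K(d)}K^t,\Q_\ell\big)=H^i(G\times_K K^t,\Q_\ell)$ and the $\sigma^d$-action on the left is just the restriction of the $\sigma^d$-action on the right. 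Because the wild inertia $P=\Gal(K^s/K^t)$ is unchanged on replacing $K$ by $K(d)$, tame ramification of $G$ passes to every $G(d)$, so Corollary \ref{cor-traceform} applies and gives $s(G(d))=\sum_{i\geq0}(-1)^i\Trace\big(\sigma^d\mid H^i(G\times_K K^t,\Q_\ell)\big)$; summing over $d\in\N'$ yields the last series.

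I do not expect a genuine obstacle in this argument: the theorem is a repackaging of earlier results, and all the real work sits inside Corollary \ref{cor-traceform}---that is, inside Proposition \ref{prop-ratvolsab} and Theorem \ref{theo-trace}, with their $q$-adic count of component groups through cyclotomic polynomials and their appeal to results of Lorenzini and of Bosch and Xarles. The single point that deserves care is that the third equality uses tameness essentially and should be read under the standing assumption that $G$---equivalently, every $G(d)$---is tamely ramified; for wildly ramified $G$ the tame cohomology $H^i(G\times_K K^t,\Q_\ell)$ is too small and the identity already fails for elliptic curves, whereas the first two equalities hold for an arbitrary semi-abelian $K$-variety.
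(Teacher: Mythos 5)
Your proposal is correct and follows exactly the paper's route: the first equality from $s(G(d))=\chi(\mathscr{G}(d)^{\qc}_k)$ (with $\chi(\LL)=1$ killing the powers of $\LL$), the second from Proposition \ref{prop-ratvolsab}, and the third by applying Corollary \ref{cor-traceform} to $G(d)$ over $K(d)$ with $\sigma^d$ generating $\Gal(K^t/K(d))$. Your caveat that only the last equality needs tame ramification matches the paper's own parenthetical remark in its proof.
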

\begin{proof}
The first equality follows from the definition of the rational
volume, and the second from Proposition \ref{prop-ratvolsab}
(these two equalities do not require $G$ to be tamely ramified).
The last equality is a consequence of the trace formula in
Corollary \ref{cor-traceform}.
\end{proof}

\sss Theorem \ref{thm-cohint} was stated for abelian varieties in
\cite[8.4]{HaNi}, with a reference to the second author's
unpublished preprint \cite{Ni-traceab} for the proof. Theorem
\ref{thm-cohint} supersedes that statement, and extends it to
semi-abelian varieties.

\section{The trace formula for Jacobians}
To conclude this chapter, we give an alternative proof of Theorem
\ref{theo-trace} if $G$ is the Jacobian $\Jac(C)$ of a $K$-curve
$C$. The proof is based on an explicit expression for the
characteristic polynomial $P_{G}(T)$ in terms of an $sncd$-model
of the curve $C$.
\subsection{The monodromy zeta function}\label{ss-monzeta}
\sss Let $C$ be a smooth, projective, geometrically connected
$K$-curve of genus $g(C)>0$. Let $\mathscr{C}$ be
 a minimal $sncd$-model of $C$, with special fiber
$$\mathscr{C}_k=\sum_{i\in I}N_iE_i.$$ We set
$\delta(C)=\gcd\{N_i\,|\,i\in I\}$. For each
$i\in I$, we denote by $N_i'$ the prime-to-$p$ part of $N_i$.
Moreover, we set $E_i^o=E_i\setminus \cup_{j\neq i}E_j$ and we
denote by $d_i$ the cardinality of $E_i\setminus E_i^o$.

\sss We denote by $A$ the Jacobian of $C$. Then there exists an
isomorphism of $G(K^t/K)$-representations
$$H^1(C\times_K K^t,\Q_\ell)\cong H^1(A\times_K K^t,\Q_\ell).$$
 Thus $C$ is cohomologically tame if and only if $A$ is tamely
 ramified, and the polynomial $P_A(T)$ is equal to the
 characteristic polynomial of $\sigma$ on $$H^1(C\times_K
 K^t,\Q_\ell).$$
 We denote by $\zeta_C(T)$ the reciprocal of the monodromy
zeta function of $C$, i.e.,
$$\zeta_C(T)=\prod_{i=0}^{2}\mathrm{det}(T\cdot \mathrm{Id}- \sigma\,|\,H^i(X\times_K
K^t,\Q_\ell))^{(-1)^{i+1}}\ \in \Q_\ell(T).$$

\begin{theorem}\label{acampo}
 We have
\begin{eqnarray}\label{eq-zeta1.1}
\zeta_C(T)&=&\prod_{i\in I}(T^{N'_i}-1)^{-\chi(E_i^o)},
\\ P_{A}(T)&=&(T-1)^2\prod_{i\in
I}(T^{N'_i}-1)^{-\chi(E_i^o)}. \label{eq-zeta1.2}
\end{eqnarray}
 If $C$ is cohomologically tame, and either $\delta(C)$ is prime to $p$, or $g(C)\neq 1$, then
\begin{eqnarray}\label{eq-zeta2.1}
\zeta_C(T)&=&\prod_{i\in I}(T^{N_i}-1)^{-\chi(E_i^o)},
\\ P_{A}(T)&=&(T-1)^2\prod_{i\in
I}(T^{N_i}-1)^{-\chi(E_i^o)}. \label{eq-zeta2.2}
\end{eqnarray}
\end{theorem}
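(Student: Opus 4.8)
The plan is to derive all four identities from results already available in the excerpt, together with a short case analysis for the improvement from $N_i'$ to $N_i$. I would first handle \eqref{eq-zeta1.2}: by the $\Gal(K^t/K)$-equivariant isomorphism $H^1(C\times_K K^t,\Q_\ell)\cong H^1(A\times_K K^t,\Q_\ell)$ recalled in Section \ref{ss-monzeta}, the polynomial $P_A(T)$ is the characteristic polynomial of $\sigma$ on $H^1(C\times_K K^t,\Q_\ell)$, which is the polynomial $P'_C(T)$ of Proposition \ref{prop-charpol}; hence \eqref{eq-zeta1.2} is exactly Proposition \ref{prop-charpol}(1). For \eqref{eq-zeta1.1} I would unwind the definition of the reciprocal monodromy zeta function: since $C$ is smooth, proper and geometrically connected of dimension one, $H^i(C\times_K K^s,\Q_\ell)$ vanishes for $i>2$, while $H^0(C\times_K K^s,\Q_\ell)=\Q_\ell$ and $H^2(C\times_K K^s,\Q_\ell)=\Q_\ell(-1)$ carry a trivial inertia action, so that $\det(T\cdot\mathrm{Id}-\sigma\,|\,H^i(C\times_K K^t,\Q_\ell))=T-1$ for $i\in\{0,2\}$. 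Therefore $\zeta_C(T)=P'_C(T)/(T-1)^2$, and substituting \eqref{eq-zeta1.2} yields \eqref{eq-zeta1.1}. This last identity is the tame analogue of A'Campo's formula; it could also be obtained directly from the nearby-cycle computation underlying \cite{ni-saito}, but that is not needed here.

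For the second half it suffices to prove that, under the hypotheses, $P'_C(T)=P_C(T)$: then \eqref{eq-zeta2.2} follows from \eqref{eq-zeta1.2} and Definition \ref{def-charpol}, and \eqref{eq-zeta2.1} from \eqref{eq-zeta1.1} in the same way. By Proposition \ref{prop-charpol}(2) this equality holds precisely when $C$ is tamely ramified, that is (unwinding the definition of $e(C)$) when every principal component of the minimal $sncd$-model $\mathscr{C}$ has multiplicity prime to $p$, equivalently when $e(C)=\lcm_{i\in I_{\prin}}\{N_i\}$ is prime to $p$. I would verify this in two cases. If $g(C)\neq 1$, then $C$ is not a genus-one curve without rational point; cohomological tameness means that the wild inertia $P$ acts trivially, hence unipotently, on $H^1(C\times_K K^s,\Q_\ell)$, and a continuous unipotent action of a pro-$p$ group on a finite-dimensional $\Q_\ell$-vector space with $\ell\neq p$ is trivial, so by Saito's criterion \cite[3.8]{Saito} the curve $C$ acquires semi-stable reduction over a tame extension of $K$; Proposition \ref{prop-min} then gives that $e(C)$ is prime to $p$. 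If instead $g(C)=1$ and $\delta(C)$ is prime to $p$, I would pass to the elliptic curve $E=\Jac(C)$: the comparison isomorphism shows $E$ is tamely ramified, so Proposition \ref{prop-min}, applicable since $E$ is elliptic, gives that every principal component of the minimal $sncd$-model of $E$ has multiplicity prime to $p$; by \cite[6.6]{liu-lorenzini-raynaud} (as in the proof of Corollary \ref{cor-e(C)}), the special fiber of $\mathscr{C}$ is that of the minimal $sncd$-model of $E$ with every multiplicity multiplied by the index $m$ of $C$, and $m=\delta(C)$ because $E$ has index one. In both cases every principal component of $\mathscr{C}_k$ has multiplicity prime to $p$, as wanted.

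The main obstacle is precisely this last subcase, and more specifically the need to keep two different notions of tameness apart. The naive implication ``$C$ cohomologically tame $\Rightarrow P'_C(T)=P_C(T)$'' is false: a genus-one torsor $C$ over a tamely ramified elliptic curve with additive reduction whose order in the Weil-Ch\^atelet group is divisible by $p$ is cohomologically tame, yet its minimal $sncd$-model has a principal component of multiplicity divisible by $p$, so that $P_C(T)\neq P'_C(T)$. This is exactly the configuration that the hypothesis ``$\delta(C)$ prime to $p$ or $g(C)\neq 1$'' is designed to exclude, and handling it correctly is what forces the passage to $\Jac(C)$ via \cite[6.6]{liu-lorenzini-raynaud}. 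Everything else --- the identification $P_A(T)=P'_C(T)$, the triviality of inertia on $H^0$ and $H^2$, Propositions \ref{prop-charpol} and \ref{prop-min}, and Definition \ref{def-charpol} --- is bookkeeping with material already established in the text.
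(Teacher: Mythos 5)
Your handling of \eqref{eq-zeta1.1} and \eqref{eq-zeta1.2} is correct and amounts to the paper's own argument: both identities reduce to the arithmetic A'Campo formula underlying Proposition \ref{prop-charpol}(1), together with the triviality of the $\sigma$-action on the degree $0$ and degree $2$ cohomology of $C$. For the second half, however, your route differs from the paper's in a way that matters. The paper never discusses whether $C$ itself is tamely ramified; it invokes Saito's geometric criterion for cohomological tameness (\cite[3.11]{Saito}, in the form of \cite[3.3.2]{ni-saito}), which under the stated hypotheses yields $E_i^o\cong \G_{m,k}$, hence $\chi(E_i^o)=0$, for every component with $p\mid N_i$; the factors where $N_i\neq N_i'$ then simply drop out of \eqref{eq-zeta1.1} and \eqref{eq-zeta1.2}, and the hypothesis ``$\delta(C)$ prime to $p$ or $g(C)\neq 1$'' enters only to guarantee that the criterion applies (when $\delta(C)$ is prime to $p$ one has $C(K^t)\neq\emptyset$). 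You are right that cohomological tameness alone does not give $P_C=P'_C$ and that the genus-one torsors are the danger; but your substitute for the geometric criterion has a hole exactly there.

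The gap is the bridge you use to return from multiplicities to $P'_C=P_C$: you identify ``$C$ tamely ramified'' with ``every principal component of $\mathscr{C}_k$ has multiplicity prime to $p$'', i.e.\ with ``$e(C)$ prime to $p$''. That equivalence is Proposition \ref{prop-min}, which is stated (and is only true) under the hypothesis ``$g\neq 1$ or $C$ elliptic'', and the discussion in \ref{subsection-genus1withoutpoint}(1) warns explicitly that it fails for genus-one curves without a rational point. So in the case $g(C)=1$, $\delta(C)$ prime to $p$ --- the only case where your argument does genuine work --- you do establish that all principal components of $\mathscr{C}_k$ have multiplicity prime to $p$ (via \cite[6.6]{liu-lorenzini-raynaud} and tameness of $E=\Jac(C)$), but none of the results you quote converts this into $P_C=P'_C$: Proposition \ref{prop-charpol}(2) requires tameness of $C$, which you have not proved, and Proposition \ref{prop-min} is unavailable. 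The missing ingredient is either the paper's local statement ($\chi(E_i^o)=0$ whenever $p\mid N_i$, from the geometric criterion), or a proof that $C$ is in fact tamely ramified in this case; the latter is within reach, since for $k$ algebraically closed the index of $C$ equals $\delta(C)$, so $C$ acquires a rational point over a finite tame extension, after which the equivalence between semi-abelian reduction of $\Jac(C)$ and semi-stable reduction of $C$ applies and Proposition \ref{prop-charpol}(2) can be used --- but you do not make this argument. A smaller point: in the $g\neq1$ case, the step from ``wild inertia acts trivially on $H^1$'' to ``$C$ acquires semi-stable reduction over a tame extension'' skips the quasi-unipotency/eigenvalue-order argument (or, equivalently, the passage through $\Jac(C)$ and Theorem \ref{prop-semiab}); it is standard and fixable, but as written the implication is not justified.
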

\begin{proof}
The expressions for $P_{A}(T)$ follow immediately from the
expressions for $\zeta_C(T)$, since $\sigma$ acts trivially on the
degree $0$ and degree $2$ cohomology of $C$. Formula
\eqref{eq-zeta1.1} is a special case of the arithmetic A'Campo
formula in \cite[2.6.2]{ni-saito}.

Now assume that $C$ is cohomologically tame and that $g(C)\neq 1$
or that $\delta(C)$ is prime to $p$. Note that the property that
$\delta(C)$ is prime to $p$ implies that $C(K^t)$ is non-empty, by
\cite[3.1.4]{ni-saito}.
 Then Saito's geometric criterion for cohomological tameness
 \cite[3.11]{Saito}, phrased in the form of
 \cite[3.3.2]{ni-saito},
implies that $E_i^o\cong \mathbb{G}_{m,k}$ if  $N_i$ is not prime
to $p$. Thus $\chi(E_i^o)=0$ for all $i\in I$ such that $N_i\neq
N'_i$. Therefore,
 (\ref{eq-zeta2.1}) follows from (\ref{eq-zeta1.1}).
\end{proof}

\sss As an immediate corollary, we obtain an alternative proof of
Theorem 2.1(i) in \cite{lorenzini}. Denote by $g(E_i)$ the genus
of $E_i$, for each $i\in I$. We put $a=\sum_{i\in I}g(E_i)$. We
denote by $\Gamma$ the dual graph of $\mathscr{C}_k$, and by $t$
its first Betti number.
\begin{cor}[Lorenzini]\label{cor-lorenzini}  We have
$$P_{A}(T)=(T-1)^{2a+2t}\prod_{i\in
I}\left(\frac{T^{N'_i}-1}{T-1}\right)^{2g(E_i)+d_i-2}.$$
\end{cor}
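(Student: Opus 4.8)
The plan is to start from the product formula for $P_A(T)$ established in Theorem \ref{acampo}, namely
$$P_A(T)=(T-1)^2\prod_{i\in I}(T^{N_i'}-1)^{-\chi(E_i^o)},$$
and rewrite each factor so that only the ratio $(T^{N_i'}-1)/(T-1)$ appears, after extracting the appropriate power of $(T-1)$. The key numerical identity is that $\chi(E_i^o)=\chi(E_i)-d_i=2-2g(E_i)-d_i$, where $d_i=|E_i\setminus E_i^o|$ is the number of points in which $E_i$ meets the other components; this is just additivity of the Euler characteristic applied to $E_i=E_i^o\sqcup(E_i\setminus E_i^o)$, together with $\chi(E_i)=2-2g(E_i)$ for a smooth projective curve over an algebraically closed field. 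Hence $-\chi(E_i^o)=2g(E_i)+d_i-2$.

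First I would substitute this into the product and split $(T^{N_i'}-1)^{-\chi(E_i^o)}$ as
$$(T^{N_i'}-1)^{-\chi(E_i^o)}=(T-1)^{-\chi(E_i^o)}\left(\frac{T^{N_i'}-1}{T-1}\right)^{-\chi(E_i^o)}=(T-1)^{2g(E_i)+d_i-2}\left(\frac{T^{N_i'}-1}{T-1}\right)^{2g(E_i)+d_i-2}.$$
Collecting the $(T-1)$-powers coming from all components and from the prefactor $(T-1)^2$, the total exponent of $(T-1)$ is $2+\sum_{i\in I}(2g(E_i)+d_i-2)=2+2\sum_i g(E_i)+\sum_i d_i-2|I|$. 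Now $\sum_i d_i=\sum_i|E_i\setminus E_i^o|$ counts each intersection point of two distinct components exactly twice, so $\sum_i d_i=2|\mathrm{Edges}(\Gamma)|$ (counting edges with multiplicity, i.e.\ $\sum_{i<j}|E_i\cdot E_j|$, the number of edges of the dual graph $\Gamma$). Since $t=|\mathrm{Edges}(\Gamma)|-|I|+1$ is the first Betti number of the connected graph $\Gamma$, we get $\sum_i d_i-2|I|=2(t-1)+2|I|-2|I|=2t-2$, so the total exponent of $(T-1)$ equals $2+2a+2t-2=2a+2t$ with $a=\sum_i g(E_i)$. This yields exactly the claimed formula
$$P_A(T)=(T-1)^{2a+2t}\prod_{i\in I}\left(\frac{T^{N_i'}-1}{T-1}\right)^{2g(E_i)+d_i-2}.$$

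The only genuinely delicate point is the bookkeeping of $\sum_i d_i$ in terms of $\Gamma$: one must be careful that $d_i$ counts geometric points of $E_i\setminus E_i^o$ (so a node where $E_i$ meets itself, if it occurred, would need separate treatment), but for an $sncd$-model this is precisely the number of edges incident to the vertex $\upsilon_i$ in the dual graph $\Gamma(\mathscr C_k)$, counted with multiplicity, and $\sum_i d_i$ is then twice the edge count by the handshake lemma. The rest is routine algebraic manipulation. I do not expect the connectedness of $\Gamma$ (needed for $t=|E|-|I|+1$) to be an issue, since $C$ is geometrically connected and $\mathscr C_k$ is connected by Zariski's connectedness theorem; alternatively one can avoid invoking connectedness by noting $t-(\text{number of connected components of }\Gamma)+|I|=|\mathrm{Edges}(\Gamma)|$ and that $\Gamma$ is connected here. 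This establishes the corollary and, as noted, recovers Lorenzini's Theorem 2.1(i) in \cite{lorenzini}.
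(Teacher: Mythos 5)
Your proof is correct and follows essentially the same route as the paper: both start from the formula $P_A(T)=(T-1)^2\prod_{i\in I}(T^{N'_i}-1)^{-\chi(E_i^o)}$ of Theorem \ref{acampo}, substitute $\chi(E_i^o)=2-2g(E_i)-d_i$, and balance the powers of $T-1$ using the identity $\sum_{i\in I}(2-d_i)=2-2t$, which you derive via the handshake lemma and the Betti number of the (connected) dual graph exactly as the paper does by identifying both sides with twice $\chi(\Gamma)$. The bookkeeping of $\sum_i d_i$ as twice the edge count is fine for an $sncd$-model, so no gap remains.
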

\begin{proof}
This follows immediately from Theorem \ref{acampo}, the formula
$$\chi(E_i^o)=2-2g(E_i)-d_i,$$ and the fact that
$$2-2t=\sum_{i\in I}(2-d_i)$$ (both sides equal twice the Euler
characteristic of $\Gamma$).
\end{proof}
\sss In \cite{lorenzini}, it was assumed that $\delta(C)=1$, but
our arguments show that this is not necessary. If $\delta(C)=1$,
then $a$ equals the abelian rank of $A=\Jac(C)$ and $t$ its toric
rank (see \cite[p.\,148]{Lorenzini-jac}).

\subsection{The trace formula for Jacobians}
We can use the above results to give an alternative proof  of
 Theorem \ref{theo-trace} and Corollary \ref{cor-traceform} for the
 Jacobian $A=\Jac(C)$ of the curve $C$.
\begin{prop} We keep the notations of Section \ref{ss-monzeta},
and we assume that $\delta(C)=1$.
\begin{enumerate}
\item If $A$ does not have additive reduction, then $P_{A}(1)=0$.

\item If $A$ has additive reduction, then $P_{A}(1)=|\Comp{A}'|$,
where $\Comp{A}'$ denotes the prime-to-$p$ part of $\Comp{A}$.

\item If $A$ has additive reduction and is tamely ramified, then
$\Comp{A}'=\Comp{A}$ and
 $$s(A)=\sum_{i\geq 0}(-1)^i \Trace (\sigma\,|\,H^i(A\times_K
K^t,\Q_\ell)).$$
\end{enumerate}
\end{prop}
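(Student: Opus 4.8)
The plan is to read off all three assertions from the explicit form of $P_A(T)$ in Corollary \ref{cor-lorenzini}, combined with the description of the $q$-primary parts of $\Comp{A}$ via Tate modules that already underlies Theorem \ref{theo-trace}. I would begin with (1) and with pinning down $P_A(1)$. Since $\delta(C)=1$, the exponent $2a+2t$ of $(T-1)$ in Corollary \ref{cor-lorenzini} is twice the sum of the abelian rank $a$ and the toric rank $t$ of $A$ (see the closing remark of Section \ref{ss-monzeta}). Writing
\[ P_A(T)=(T-1)^{2a+2t}\,Q(T),\qquad Q(T)=\prod_{i\in I}\left(\frac{T^{N'_i}-1}{T-1}\right)^{2g(E_i)+d_i-2}, \]
one observes that $\frac{T^{N'_i}-1}{T-1}=1+T+\cdots+T^{N'_i-1}$ has the nonzero value $N'_i$ at $T=1$, so $Q$ is a rational function with neither a zero nor a pole at $T=1$ and with $Q(1)=\prod_{i\in I}(N'_i)^{-\chi(E_i^o)}$ (using $\chi(E_i^o)=2-2g(E_i)-d_i$). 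Hence the order of vanishing of $P_A$ at $1$ is exactly $2a+2t$, which is positive precisely when $A$ does not have additive reduction; this proves (1). When $A$ has additive reduction, $a=t=0$, so $P_A(1)=Q(1)=\prod_{i\in I}(N'_i)^{-\chi(E_i^o)}$; this is a strictly positive integer (it is $P_A(1)$ for the polynomial $P_A\in\Z[T]$ and equals a product of positive rationals), and since each $N'_i$ is prime to $p$ one has $|P_A(1)|_p=1$.

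For (2), assume $A$ has additive reduction and fix a prime $q\neq p$; write $T^t_qA=(T_qA)^P$ for the tame $q$-adic Tate module. As in the proof of Theorem \ref{theo-trace}, \cite[4.4]{HaNi-comp} gives that the $q$-primary part $\Comp{A}_q$ has order $|H^1(\Gal(K^s/K),T_qA)_{\tors}|=|T^t_qA/(\mathrm{Id}-\sigma)T^t_qA|$, using exactness of $(\cdot)^P$ on pro-$q$-groups and that $\sigma$ topologically generates $\Gal(K^t/K)$. Since $A$ has additive reduction, part (1) shows $P_A(1)\neq 0$, so $\mathrm{Id}-\sigma$ is invertible on $H^1(A\times_K K^t,\Q_q)$, whose characteristic polynomial for $\sigma$ is $P_A(T)$ by the $\ell$-independence in Proposition \ref{prop-charpolsab}; Lemma \ref{linalg} then yields $|\Comp{A}_q|=|\det(\mathrm{Id}-\sigma\mid H^1(A\times_K K^t,\Q_q))|_q^{-1}=|P_A(1)|_q^{-1}$. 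Taking the product over all $q\neq p$ and using $|P_A(1)|_p=1$ together with $P_A(1)>0$,
\[ |\Comp{A}'|=\prod_{q\neq p}|\Comp{A}_q|=\prod_{q\neq p}|P_A(1)|_q^{-1}=\prod_{r\,\mathrm{prime}}|P_A(1)|_r^{-1}=P_A(1), \]
which is (2).

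For (3), I would note, as in the proof of Theorem \ref{theo-trace}, that if $A$ is moreover tamely ramified then the $p$-primary part of $\Comp{A}$ vanishes: taking $L/K$ minimal such that $A\times_K L$ has semi-abelian reduction, $\Comp{A}$ is killed by $[L:K]^2$ by \cite[1.8]{liu-lorenzini}, and $[L:K]$ is prime to $p$ since $L/K$ is tame, so $\Comp{A}'=\Comp{A}$ and $|\Comp{A}|=P_A(1)$ by (2). Since $A$ has additive reduction, Proposition \ref{prop-ratvolsab} gives $s(A)=|\Comp{A}|$ and Proposition \ref{prop-charpolsab} gives $P_A(1)=\sum_{i\geq 0}(-1)^i\Trace(\sigma\mid H^i(A\times_K K^t,\Q_\ell))$, so chaining the equalities yields (3). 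The one step that needs care is the global-to-local passage in (2): deducing $|\Comp{A}'|=P_A(1)$ from the equalities $|\Comp{A}_q|=|P_A(1)|_q^{-1}$ requires that $P_A(1)$ be a strictly positive integer prime to $p$, which is not obvious in the abstract but drops out of the A'Campo--Lorenzini formula $P_A(1)=\prod_{i\in I}(N'_i)^{-\chi(E_i^o)}$ --- this is exactly where the explicit expression for $P_A(T)$ (Theorem \ref{acampo}, Corollary \ref{cor-lorenzini}), which I take as given, does the work.
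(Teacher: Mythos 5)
Your argument is correct, but for parts (2) and (3) it follows a genuinely different route from the paper's. Part (1) is identical (order of vanishing $2a+2t$ at $T=1$ from Corollary \ref{cor-lorenzini}, using $\delta(C)=1$ to identify $a$ and $t$ with the abelian and toric ranks). For (2), however, you re-run the Galois-cohomological mechanism of Theorem \ref{theo-trace}: the identification $|\Comp{A}_q|=|T^t_qA/(\mathrm{Id}-\sigma)T^t_qA|$ via \cite[4.4]{HaNi-comp}, Lemma \ref{linalg}, and a product over $q\neq p$, with the explicit formula only supplying $P_A(1)>0$ and $|P_A(1)|_p=1$ in place of Lemma \ref{cyclo}; and for (3) you reuse the bound that $\Comp{A}$ is killed by $[L:K]^2$ from \cite[1.8]{liu-lorenzini}. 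The paper instead stays entirely on the $sncd$-model: for (2) it quotes Lorenzini's formula $|\Comp{A}|=\prod_{i\in I}N_i^{d_i-2}$ (\cite[1.5]{Lorenzini-jac}, valid since $t(A)=0$ and $\delta(C)=1$) and compares it term by term with $P_A(1)=\prod_{i\in I}(N'_i)^{d_i-2}$; for (3) it invokes Saito's geometric criterion for cohomological tameness \cite[3.11]{Saito} to get $d_i=2$ whenever $N_i\neq N'_i$, whence $\prod_i (N'_i)^{d_i-2}=\prod_i N_i^{d_i-2}$ and so $\Comp{A}'=\Comp{A}$. Since the whole point of this proposition is to give an \emph{alternative} proof of Theorem \ref{theo-trace} for Jacobians, purely in terms of the combinatorics of $\mathscr{C}_k$ and independent of the Tate-module computation (and of inputs such as \cite[4.4]{HaNi-comp} and \cite[1.8]{liu-lorenzini}, the latter resting on the corrected Bosch--Xarles results, cf.\ the closing Remark of the chapter), your version is logically sound but largely reproduces the general proof rather than replacing it; on the other hand, it does avoid any appeal to \cite[1.5]{Lorenzini-jac} and to Saito's criterion, needing only the explicit evaluation of $P_A(1)$, which is a modest economy of its own.
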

\begin{proof}
(1) It follows from Corollary \ref{cor-lorenzini} that the order
of $1$ as a root of $P_{A}(T)$ equals $2a+2t$. Hence, if $A$ does
not have additive reduction, then $P_{A}(1)=0$.

 (2)  By
\cite[1.5]{Lorenzini-jac}, we have $$|\Comp{A}|=\prod_{i\in
I}N_i^{d_i-2}$$ because the toric rank of $A$ is zero. By
Corollary \ref{cor-lorenzini} we know that
$$P_{A}(T)=\prod_{i\in
I}\left(\frac{T^{N'_i}-1}{T-1}\right)^{d_i-2}$$ because $a=t=0$.
This yields
$$P_{A}(1)=\prod_{i\in I}(N'_i)^{d_i-2}= \vert \Comp{A}' \vert.$$

(3) If $A$ is tamely ramified, then Saito's criterion for
cohomological tameness \cite[3.11]{Saito} implies that $d_i=2$ if
$N_i\neq N_i'$, so that
$$|\Comp{A}'|=\prod_{i\in I}(N'_i)^{d_i-2}=\prod_{i\in
I}N_i^{d_i-2}=|\Comp{A}|.$$ Combining this with (1), (2) and
Propositions \ref{prop-ratvolsab} and \ref{prop-charpolsab}(4), we
find $$s(A)=\sum_{i\geq 0}(-1)^i \Trace (\sigma\,|\,H^i(A\times_K
K^t,\Q_\ell)).$$
\end{proof}

\part{Some open problems}\label{sec-ques}
To conclude, we will formulate some open problems and directions
for future research stemming from the results in the preceding
chapters. We assume that $k$ is algebraically closed.

\setcounter{section}{0}
\section{The stabilization index}\label{sec-ques-e}
  Let $A$ be an abelian $K$-variety, and let $ L/K $ be the minimal
extension of $K$ in $K^s$ such that $A\times_K L$ has semi-abelian
reduction. Let $K'$ be a finite tame extension of $K$ and denote
by $R'$ the integral closure of $R$ in $K'$. We set $A'=A\times_K
K'$. We denote by $\mathscr{A}'$ the N\'eron model of $A'$, and by
$$h:\mathscr{A}\times_R R'\to \mathscr{A}'$$ the canonical
 base change morphism.

 A central theme in this monograph was the study of the properties
 of this base change morphism $h$. As we've explained in Section
 \ref{subsec-guide} of the introduction, the basic idea is that
 the N\'eron models
 $\mathscr{A}$ and $\mathscr{A}'$ should differ
 as little as possible if the  extension $K'/K$
 is sufficiently orthogonal to the extension $L/K$. This
principle was a crucial ingredient in establishing rationality and
 determining the poles of the component series $ S^{\Phi}_A(T) $
and the motivic zeta function $ Z_A(T)$. The qualification ``as
little as possible'' includes, in particular, the following
properties.
  \begin{itemize}
  \item The number of components grows as if $\mathscr{A}$ had
  semi-abelian reduction, i.e., the equality in Proposition
  \ref{prop-compfu} of the introduction holds:
  $$|\Comp{A(d)}|=d^{t(A)}\cdot |\Comp{A}|.$$
  \item The $k$-varieties $\mathscr{A}^o_k$ and $(\mathscr{A}')^o_k$ define the same class in
  $K_0(\Var_k)$.
  \end{itemize}

 As we've seen, the meaning of ``sufficiently orthogonal'' is less
 clear. We'd like to express this property by saying that the
 degree of the extension $K'/K$ is coprime to a certain invariant
 $e(A)$ of the abelian $K$-variety $A$, that we will call the
 {\em stabilization index}. We can define the stabilization index
 in the following cases.

 \begin{enumerate}
\item If $A$ is tamely ramified or $A$ has potential
multiplicative reduction, then we can set $e(A)=[L:K]$. As
 we've explained in Section
 \ref{subsec-guide} of the introduction, the two
 above properties are satisfied for every finite tame extension
 $K'/K$ of degree prime to $e(A)$.

\item If $A$ is the Jacobian of a smooth proper $K$-curve $C$ of
index one, then we can set $e(A)=e(C)$, where $e(C)$ is the
stabilization index of $C$ that we introduced in Chapter
\ref{chap-jacobians}, Definition \ref{def-stabind}. We've shown in
Corollary  \ref{cor-minindex} of Chapter \ref{chap-conductor} that
$e(A)$ only depends on $A$, and not on $C$. The two above
properties are satisfied for every finite tame extension $K'/K$ of
degree prime to $e(A)$, by Chapter \ref{chap-jacobians},
Proposition \ref{prop-compfu} and Chapter \ref{chap-motzeta},
Proposition \ref{prop-idclass}.
 \end{enumerate}

This immediately raises several questions. If $A$ is a tamely
ramified Jacobian, then the two definitions of $e(A)$ are
equivalent, by Proposition \ref{prop-min} in Chapter
\ref{chap-jacobians}. We expect that the two definitions are also
equivalent for wildly ramified Jacobians with potential
multiplicative reduction; this is true if and only if the
following question has a positive answer. We say that $C$ has
multiplicative reduction if $\mathrm{Jac}(C)$ has multiplicative
reduction; this is equivalent to the property that the special
fiber of the minimal $sncd$-model of $C$ consists of rational
curves.

\begin{openquestion}\label{ques-compat}
Let $C$ be a smooth proper $K$-curve of index one. Let $L$ be the
minimal extension of $K$ in $K^s$ such that $C\times_K L$ has
semistable reduction, and assume that $C\times_K L$ has
multiplicative reduction. Is it true that $[L:K]=e(C)$?
\end{openquestion}
 If $L/K$ is tame this follows from Proposition \ref{prop-min} in
 Chapter \ref{chap-jacobians}, but if $L/K$ is wild, it is a very
 difficult problem to determine the degree of $L/K$ in terms of
 the geometry of an $sncd$-model of $C$. The equality $e(C)=[L:K]$
 can fail if we don't assume that $C\times_K L$ has
 multiplicative reduction: see Examples \ref{ex-diff} and \ref{ex-diff2} in Chapter
 \ref{chap-jacobians}.

  A second question is whether we can find a suitable definition
  of the stabilization index  $e(A)$ for arbitrary abelian
  $K$-varieties $A$.
 A possible candidate is the following. Assume that all the jumps
of $A$ are rational numbers. This is currently still an open
problem, except in the tame case and for Jacobians. Then we define
the stabilization index $e(A)$ of $A$ as the smallest integer
$e>0$ such that $e\cdot j$ belongs to $\Z$ for every jump $j$ of
$A$.
 Corollary \ref{cor-minindex} in Chapter \ref{chap-conductor} guarantees that $ e(A) = e(C) $ if
 $A $ is the Jacobian of a $K$-curve $C$. Moreover, it follows
 easily from \cite[4.20 and 5.1]{HaNi} that $e(A)$ is equal to $[L:K]$ when $A$ is tamely
 ramified. Thus our new definition of the stabilization index $e(A)$
 is equivalent to Definition (1) above for tamely
 ramified abelian varieties and to Definition (2) for Jacobians. We believe that it is
 also equivalent to Definition (1) if $A$ is a wildly
 ramified abelian $K$-variety with potential multiplicative
 reduction. In this case, the uniformization space of
 $A$ is an algebraic $K$-torus. Thus, in view of Proposition \ref{prop-bcuniform}, it would be enough to show that the following question has a positive answer.

 \begin{openquestion}\label{ques-tori}
Let $T$ be an algebraic $K$-torus, and let $L$ be the minimal
splitting field of $K$ in $K^s$. Is it true that the jumps of $T$
are rational, and that $[L:K]$ is the smallest integer $e>0$ such
that $e\cdot j$ belongs to $\Z$ for every jump $j$ of $T$?
 \end{openquestion}

Note that a positive answer to Question \ref{ques-tori} implies
that Definitions (1) and (2) are equivalent for Jacobians with
potential multiplicative reduction, so that it would also provide
an affirmative answer to Question \ref{ques-compat}. This seems to
be a promising approach to solve Question \ref{ques-compat}. A key
step would be to find a suitable interpretation for the jumps of a
torus in terms of its character module.

Finally, for arbitrary abelian $K$-varieties $A$, the question
remains whether the jumps of $A$ are indeed rational numbers and
whether our candidate for $e(A)$ has the required properties.  All
 of these problems will be investigated in future research.

\section{The characteristic
polynomial}\label{sec-ques-charpol}
 We defined the characteristic polynomial $P_C(t)$ of a smooth, projective,
 geometrically connected $K$-curve $C$ in Chapter \ref{chap-jacobians}, Definition
 \ref{def-charpol}. When $C$ is tamely ramified, the
 characteristic polynomial has a natural cohomological
 interpretation: by Proposition \ref{prop-charpol} in Chapter \ref{chap-jacobians}, it is the characteristic polynomial $P'_C(t)$ of the action
 of any topological generator of $\Gal(K^t/K)$ on $H^1(C\times_K
 K^t,\Q_\ell)$. When $C$ is wildly ramified, then $P'_C(t)$ still
 divides $P_C(t)$, but we do not know how to interpret the
 remaining factor in terms of the cohomology space $H^1(C\times_K
 K^s,\Q_\ell)$. In view of Proposition \ref{prop-charpol-e} in Chapter \ref{chap-jacobians}, such
 an interpretation would also yield a cohomological expression for
 the stabilization index $e(C)$, and this might lead to a
 definition of the stabilization index for all wildly ramified abelian
 $K$-varieties and a proof of the rationality of the jumps (see Section \ref{sec-ques-e}).

\section{The motivic zeta function and the monodromy
conjecture} Let $A$ be a tamely ramified  abelian $K$-variety of
dimension $g$. We proved in \cite[8.6]{HaNi} that the base change
conductor $c(A)$ is the only pole of the motivic zeta function
$Z_A(\LL^{-s})$. Moreover, it follows from our result in
\cite[5.13]{HaNi} that for every embedding of $\Q_\ell$ in $\C$
and for every topological generator $\sigma$ of $\Gal(K^t/K)$, the
value $\exp(2\pi i c(A))$ is an eigenvalue of the action of
$\sigma$ on the tame $\ell$-adic cohomology group $$ H^{g}(A
\times_K K^t, \mathbb{Q}_{\ell}). $$ As we've explained in
\cite[\S2.5]{HaNi}, this result can be viewed as a version of
Denef and Loeser's motivic monodromy conjecture for abelian
varieties.

 We've seen in Chapter \ref{chap-motzeta}, Theorem \ref{theorem-zetamain}, that the tame base
 change conductor
 $c_{\tame}(A)$ is still the unique pole of the motivic zeta function
$Z_A(\LL^{-s})$ when $A=\Jac(C)$ is a (possibly wildly ramified)
Jacobian. But it is not clear at all how a suitable version of the
monodromy conjecture could be formulated in this case, because the
tame cohomology spaces do not contain enough information.  For
instance, for every wildly ramified elliptic $K$-curve $E$, the
tame cohomology space
$$ H^1(E \times_K K^t, \mathbb{Q}_{\ell}) $$ is trivial.

This question is closely related to the problem of finding a
cohomological interpretation of the characteristic polynomial
$P_C(t)$ (Section \ref{sec-ques-charpol}). We know by Proposition
\ref{cor-roots} in Chapter \ref{chap-conductor} that every jump
$j$ of $A$ is a root of $P_C(t)$ of multiplicity at least
$m_A(j)$. If $A$ is a tamely ramified Jacobian, then by
Proposition \ref{prop-charpol} in Chapter \ref{chap-jacobians},
this implies that $\exp(2\pi i j)$ is an eigenvalue of order at
least $m_A(j)$ of the action of $\sigma$ on
$$H^1(A\times_K K^t,\Q_\ell)$$ so that $$\exp(2\pi i c_{\tame}(A))=\prod_{j\in
\mathcal{J}_A} \exp(2\pi i m_A(j)j)$$ is an eigenvalue of the
action of $\sigma$ on
 $$H^g(A\times_K K^t,\Q_\ell)\cong \bigwedge^g H^1(A\times_K K^t,\Q_\ell).$$
 Thus it seems plausible that a suitable cohomological
 interpretation of $P_C(t)$ in the wildly ramified case would also
 give rise to a cohomological interpretation of $c_{\tame}(A)$.

 We expect that, for arbitrary wildly ramified
 abelian $K$-varieties $A$, the motivic zeta function $Z_A(\LL^{-s})$ is
 still rational, with a unique pole at $c_{\tame}(A)$. The key
 step in the proof of such a result would be a suitable characterization of
 the stabilization index (Section \ref{sec-ques-e}).
  One may
 also ask for a cohomological interpretation of $c_{\tame}(A)$ in
 this case.

\section{Base change conductor for Jacobians}
Let $A$ be the Jacobian of a $K$-curve $C$ of index one. In
Chapter \ref{chap-artin}  we compared the base change conductor
$c(A)$ to the Artin conductor $\mathrm{Art}(C)$ when $C$ had genus
$1$ or $2$. Based on these results, one is led to speculate
whether it is still possible to compute $c(A)$ in terms of other
arithmetic invariants of the curve $C$ if the genus of $C$is
bigger than $2$. More precisely, Proposition
\ref{prop-c-ell-artin} and Proposition
\ref{prop-formula-disc-cond} both state that a certain multiple of
$c(A)$ equals the valuation of a ``minimal'' discriminant, up to a
correction term that can be computed from the special fiber of a
semi-stable minimal regular model of $C$. In future work, we plan
to investigate in how far these results generalize to curves of
higher genus.

\section{Component groups of Jacobians}
A key element in the proof of our results on the component series
and the motivic zeta function of a Jacobian $ A = \mathrm{Jac}(C)$
is Proposition \ref{prop-compfu} in Chapter \ref{chap-jacobians},
which asserts that
$$ \vert \Phi(A(d)) \vert = d^t \cdot \vert \Phi(A) \vert $$
 for every $ d \in \NN' $ prime to $e(C)$. We deduced this
formula in a somewhat indirect fashion, using Winters' theorem to
transfer it from the equal characteristic zero case. It would be
quite interesting to have a more direct proof of Proposition
\ref{prop-compfu} that avoids using Winters' theorem.

In the literature one can find numerous results concerning the
computation of the order of $\Phi(A)$ in terms of a regular
$R$-model $\mathscr C $ of $C$ (see \cite[9.6]{neron} for an
overview). All of these results essentially involve computing
certain minors of the intersection matrix associated to the
special fiber $\mathscr{C}_k$. Even though, starting with the
minimal $sncd$-model $ \mathscr C $ of $C$, our results in Chapter
\ref{chap-jacobians} provide us with good control over the special
fiber of $\mathscr{C}(d) $ for every $ d \in \NN' $ prime to
$e(C)$, one is faced with the problem that the rank of the
intersection matrix of $\mathscr{C}(d)_k $ grows together with
$d$. If $t(A)=0$, an easy application of \cite[9.6/6]{neron}
enables one to derive the formula mentioned above, but whenever
$t(A)>0$ it remains a combinatorial challenge to use this approach
for computing the order of the component group of $A(d)$ as a
function of $d$.

\backmatter


\begin{thebibliography}{10}
\bibitem[SGA1]{sga1}
{\em Rev\^etements \'etales et groupe fondamental.} \newblock
\newblock S\'eminaire de G\'eom\'etrie Alg\'ebrique du Bois Marie
1960/61 (SGA 1).
 Dirig\'e par A.~Grothendieck. Volume~224 of {\em Lecture Notes in Mathematics.}
Springer, Berlin, 1971.


\bibitem[SGA3-I]{sga3.1}
{\em Sch\'emas en groupes. {I}: {P}ropri\'et\'es g\'en\'erales des
sch\'emas en
  groupes}.
\newblock S\'eminaire de G\'eom\'etrie Alg\'ebrique du Bois Marie 1962/64 (SGA
  3). Dirig\'e par M.~Demazure et A.~Grothendieck. Volume~151 of {\em Lecture Notes in
  Mathematics.} Springer, Berlin, 1970.

  \bibitem[SGA3-II]{sga3.2}
{\em Sch\'emas en groupes. {II}: Groupes de type multiplicatif, et
structure
  des sch\'emas en groupes g\'en\'eraux}.
\newblock S\'eminaire de G\'eom\'etrie Alg\'ebrique du Bois Marie 1962/64 (SGA
  3). Dirig\'e par M. Demazure et A. Grothendieck. Volume~152 of {\em Lecture Notes in
  Mathematics.} Springer, Berlin, 1970.



\bibitem[SGA7-I]{sga7.1}
{\em Groupes de monodromie en g\'eom\'etrie alg\'ebrique. {I}}.
\newblock S\'eminaire de G\'eom\'etrie Alg\'ebrique du Bois-Marie 1967--1969
  (SGA 7 {I}). Dirig\'e par A.~Grothendieck. Avec la collaboration de
  M.~Raynaud et D.S.~Rim. Volume~288 of {\em Lecture Notes in Mathematics.} Springer-Verlag, Berlin, 1972.


\bibitem[ABH02]{AlBiHu}
V.~Alexeev, Ch.~Birkenhake, K.~Hulek.
\newblock {Degenerations of {P}rym varieties}.
\newblock {\em J. Reine Angew. Math.}, 553:73-116, 2002.

\bibitem[BL93]{formrigII}
 S.~Bosch and W.~L\"utkebohmert.
 \newblock Formal and rigid geometry II. Flattening techniques.
 \newblock {\em Math. Ann.} 296(3):403-–429, 1993.

\bibitem[BLR90]{neron}
S.~Bosch, W.~{L\"u}tkebohmert, and M.~Raynaud.
\newblock {\em {N\'eron models.}} Volume~21 of
 {\em Ergebnisse der Mathematik und ihrer Grenzgebiete}. \newblock Springer-Verlag, 1990.

\bibitem[BS95]{bosch-neron}
S.~Bosch and K.~Schl{\"o}ter.
\newblock N\'eron models in the setting of formal and rigid geometry.
\newblock {\em Math. Ann.}, 301(2):339--362, 1995.

\bibitem[BX96]{B-X}
S.~Bosch and X.~Xarles.
\newblock {Component groups of N{\'e}ron models via rigid uniformization}.
\newblock {\em Math. Ann.}, 306:459--486, 1996.

\bibitem[Ch00]{chai}
C.-L. Chai.
\newblock {N\'eron models for semiabelian varieties: congruence and change of
  base field}.
\newblock {\em Asian J. Math.}, 4(4):715--736, 2000.

\bibitem[CY01]{chai-yu}
C.-L. Chai and J.-K. Yu.
\newblock {Congruences of N\'eron models for tori and the Artin conductor
(with an appendix by E.~de Shalit).}
\newblock {\em Ann. Math. (2)}, 154:347--382, 2001.

\bibitem[Co02]{conrad-chevalley}
B.~Conrad.
\newblock {A modern proof of Chevalley's theorem on algebraic groups.}
\newblock {\em J. Ramanujan Math. Soc.}, 17(1):1--18, 2002.

\bibitem[CES03]{CES}
B.~Conrad, B.~Edixhoven and W.~Stein. \newblock $J_1(p)$ has
connected fibers. \newblock {\em Doc. Math.} 8:331-–408, 2003.

\bibitem[CLN11]{CLN}
R.~Cluckers, F.~Loeser and J.~Nicaise.
\newblock{Chai's conjecture and Fubini properties of dimensional motivic
integration.} {\em Submitted}, arXiv:1102.5653.

%\bibitem[CES03]{CES}
%B.~Conrad, B.~Edixhoven, and W.~Stein.
%\newblock {$J\sb 1(p)$} has connected fibers.
%\newblock {\em Doc. Math.}, 8:331--408 (electronic), 2003.

%\bibitem{Don}
%P.~Donovan.
%\newblock{The Lefschetz-Riemann-Roch formula.}
%\newblock{\em Bull. Soc. Math. France}, 97, 257-273, 1969.

\bibitem[Ed92]{edix}
B.~Edixhoven.
\newblock N\'eron models and tame ramification.
\newblock {\em Compos. Math.}, 81:291--306, 1992.

\bibitem[ELL96]{ELL}
B.~Edixhoven, Q.~Liu and D. Lorenzini.
\newblock The $p$-part of
the group of components of a N\'eron model.
\newblock{\em J. Algebr. Geom.}, 5(4):801--813, 1996.

\bibitem[Fu93]{fulton}
W.~Fulton.
\newblock { Introduction to toric varieties.}
\newblock Volume 131 of {\em Annals of Mathematics Studies}.  Princeton University Press, Princeton, NJ,
1993.

\bibitem[EGA~I]{ega1}
A.~Grothendieck and J.~Dieudonn\'e.
\newblock {El\'ements de {G}\'eom\'etrie {A}lg\'ebrique, I.}
\newblock {\em Publ. Math., Inst. Hautes \'Etud. Sci.}, 4:5--228, 1960.

%\bibitem[EGA~IV-3]{ega4.3}
%A.~Grothendieck and J.~Dieudonn\'e.
%\newblock {El\'ements de {G}\'eom\'etrie {A}lg\'ebrique, IV, Troisi\`eme
%  partie.}
%\newblock {\em Publ. Math., Inst. Hautes \'Etud. Sci.}, 28:5--255, 1966.


\bibitem[Ha10a]{Halle-stable}
L.H.~Halle.
\newblock{Stable reduction of curves and tame ramification.}
\newblock{\em Math.~Zeit.}, 265(3): 529--550, 2010.


\bibitem[Ha10b]{halle-neron}
L.H.~Halle.
\newblock{Galois actions on N\'eron models of Jacobians.}
\newblock{\em Ann.~Inst.~Fourier}, 60(3):853--903, 2010.



\bibitem[HN10]{HaNi-comp}
L.H.~Halle and J.~Nicaise.
\newblock{The N\'eron component series of an abelian variety.}
\newblock{\em Math.~Ann.}, 348(3):749--778, 2010.

\bibitem[HN11a]{HaNi}
L.H.~Halle and J.~Nicaise.
\newblock{Motivic zeta functions of abelian varieties, and the monodromy conjecture.}
\newblock{\em Adv.~Math.}, 227:610--653, 2011.

\bibitem[HN11b]{HaNi-jumps}
L.H.~Halle and J.~Nicaise.
\newblock{Jumps and monodromy of abelian varieties.}
\newblock{\em Doc.~Math.}, 16:937--968, 2011.

\bibitem[HN12]{HaNi-survey}
L.H.~Halle and J.~Nicaise.
\newblock{Motivic zeta functions for degenerations of abelian varieties and
Calabi-Yau varieties}. In: A. Campillo et al. (eds.), {\em Recent
Trends on Zeta Functions in Algebra and Geometry}. Vol.~566 of
Contemporary Mathematics, Amer. Math. Soc., Providence, RI, pages
233--259, 2012.

\bibitem[Ha77]{hartshorne}
R.~Hartshorne.
\newblock{Algebraic Geometry}. Volume 52 of {\em Graduate Texts in
Mathematics}, Springer Verlag, New York, 1977.

\bibitem[Il05]{FGA-illusie}
L.~Illusie. \newblock Grothendieck's existence theorem in formal
geometry. In: B.~Fantechi et al.~(editors), {\em Fundamental
algebraic geometry. Grothendieck's FGA explained}. Vol.~123 of
Math. Surveys Monogr., Amer. Math. Soc., Providence, RI, pages
179-–233, 2005.

\bibitem[Ka94]{kato}
K.~Kato.
\newblock{Toric singularities.}
\newblock {\em Am. J. Math.}, 116(5):1073--1099, 1994.

\bibitem[Ki10]{Kir}
F.~Kiraly.
\newblock {Wild quotient singularities of arithmetic surfaces and their regular models.}
\newblock{\em PhD Thesis}, Ulm University, 2010, available at
http://d-nb.info/1008594172/34.

\bibitem[Li69]{Lip}
J.~Lipman.
\newblock {Rational singularities, with applications to algebraic surfaces and unique factorization.}
\newblock {\em Publ. Math., Inst. Hautes \'Etudes Sci.} 36:195--279, 1969.

\bibitem[Li94]{liu-genre2}
Q.~Liu. \newblock Conducteur et discriminant minimal de courbes de
genre 2.
\newblock
{\em Compositio Math.}, 94(1):51-–79, 1994.


\bibitem[Li02]{Liubook}
Q.~Liu.
\newblock {\em Algebraic geometry and arithmetic curves}. Volume~6 of {\em
  Oxford Graduate Texts in Mathematics}.
\newblock Oxford University Press, Oxford, 2002.

\bibitem[LL01]{liu-lorenzini}
Q.~Liu and D.~Lorenzini.
\newblock {Special fibers of N\'eron models and wild ramification.}
\newblock {\em J. Reine Angew. Math.}, 532:179--222, 2001.

\bibitem[LLR04]{liu-lorenzini-raynaud}
Q.~Liu, D.~Lorenzini, and M.~Raynaud.
\newblock {N\'eron models, Lie algebras, and reduction of curves of genus one.}
\newblock {\em Invent. Math.}, 157(3):455--518, 2004.

\bibitem[LS03]{motrigid}
F.~Loeser and J.~Sebag.
\newblock {Motivic integration on smooth rigid varieties and invariants of
  degenerations}.
\newblock {\em Duke Math. J.}, 119:315--344, 2003.


\bibitem[Lo90]{Lorenzini-jac}
D.~Lorenzini.
\newblock {Groups of components of N\'eron models of Jacobians.}
\newblock {\em Compos. Math.}, 73(2):145--160, 1990.


\bibitem[Lo93]{lorenzini}
D.~Lorenzini.
\newblock{ The characteristic polynomial of a monodromy
transformation attached to a family of curves.}
\newblock{\em Comment. Math. Helvetici}, 68:111-137, 1993.



\bibitem[Lo10]{lorenzini-wild}
D.~Lorenzini.
\newblock{Models of curves and wild ramification.}
\newblock{\em Appl. Math. Q.} 6(1), Special Issue in honor of John Tate. Part 2, pages 41--82, 2010.

\bibitem[Lu10]{lu}
H.~Lu. \newblock  R\'eduction de courbes elliptiques.
\newblock{\em PhD thesis}, Bordeaux, 2010.

\bibitem[Mi80]{milne}
J.~S. Milne.
\newblock {\em \'Etale Cohomology.} Volume~33 of {\em Princeton Mathematical
  Series}.
\newblock Princeton University Press, 1980.

\bibitem[N\'e64]{AN}
A.~N\'eron.
\newblock {Mod\`eles minimaux des vari\'et\'es ab\'eliennes sur les corps locaux et globaux.}
\newblock {\em Publ. Math., Inst. Hautes \'Etudes Sci.} No. 21, 1964.

\bibitem[Ni09]{Ni-traceab}
J.~Nicaise.
\newblock {Trace formula for component groups of N\'eron models}.
\newblock {\em Preprint}, arXiv:0901.1809v2.

\bibitem[Ni11a]{Ni-tracevar}
J.~Nicaise.
\newblock {A trace formula for varieties over a discretely valued field}.
\newblock   {\em J. Reine Angew. Math.}, 650:193-238, 2011.

\bibitem[Ni11b]{Nicaise}
J.~Nicaise.
\newblock Motivic invariants of algebraic tori.
\newblock {\em Proc. Amer. Math. Soc.} 139:1163--1174, 2011.

\bibitem[Ni12]{ni-saito}
J.~Nicaise.
\newblock {Geometric criteria for tame ramification.}
\newblock To appear in {\em Math. Z.}, arXiv:0910.3812.

\bibitem[NS11]{NS-K0}
J.~Nicaise and J.~Sebag.
\newblock{The Grothendieck ring of varieties.} In: R.~Cluckers,
J.~Nicaise and J.~Sebag (editors). {\em Motivic integration and
its interactions with model theory and non-archimedean geometry.}
 Volume 383 of {\em London Mathematical Society Lecture Notes
 Series}.
Cambridge University Press, pages 145--188, 2011.

\bibitem[NS11]{nise-err}
J.~Nicaise and J.~Sebag.
\newblock{Motivic invariants of rigid varieties, and applications to complex
singularities.} In: R. Cluckers, J.~Nicaise and J.~Sebag
(editors). {\em Motivic integration and its interactions with
model theory and non-archimedean geometry.} Volume 383 of {\em
London Mathematical Society Lecture Notes Series}. Cambridge
University Press, pages 244--304, 2011.

\bibitem[Ra70]{raynaud}
M.~Raynaud.
\newblock {Sp{\'e}cialisation du foncteur de Picard.}
\newblock {\em Publ. Math., Inst. Hautes \'Etud. Sci.}, 38:27--76, 1970.

\bibitem[Sa87]{Saito}
T.~Saito.
\newblock Vanishing cycles and geometry of curves over a discrete valuation
  ring.
\newblock {\em Amer. J. Math.}, 109(6):1043--1085, 1987.

\bibitem[Sa88]{saito-cond}
T.~Saito. \newblock  Conductor, discriminant, and the Noether
formula of arithmetic surfaces. \newblock {\em Duke Math. J.},
57(1):151-–173, 1988.

\bibitem[Sa89]{saito-genus2}
T.~Saito.
\newblock  {The discriminants of curves of genus 2.}
\newblock {\em Compositio Math.}, 69, no.~2, 229-240, 1989.


\bibitem[Se68]{serre-corpslocaux}
J.-P. Serre,
\newblock {\em Corps locaux (Troisi\`eme \'edition)},
\newblock {Hermann, Paris, 1968.}

\bibitem[Si86]{silverman}
J.H.~Silverman. \newblock {\em The arithmetic of elliptic curves.}
Volume 106 of {\em Graduate Texts in Mathematics.}
Springer-Verlag, New York, 1986.

\bibitem[Si94]{silverman-advanced}
J.H.~Silverman. \newblock{\em Advanced topics in the arithmetic of
elliptic curves.} Volume 151 of {\em Graduate Texts in
Mathematics}. Springer-Verlag, New York, 1994.

\bibitem[Wi74]{winters}
G.~B. Winters.
\newblock {On the existence of certain families of curves.}
\newblock {\em Am. J. Math.}, 96:215--228, 1974.

\end{thebibliography}
\end{document}